\newtheorem{theorem}{Theorem}[section]
\newtheorem{corollary}[theorem]{Corollary}
\newtheorem{lemma}[theorem]{Lemma}
\newtheorem{proposition}[theorem]{Proposition}
\newtheorem{remark}[theorem]{Remark}
\newtheorem{definition}[theorem]{Definition}
\newcommand{\divx}{\mathop{\mathrm{div}}}
\newcommand{\esssup}{\mathop{\mathrm{ess~sup}}}
\newcommand{\essinf}{\mathop{\mathrm{ess~inf}}}
\newcommand{\D}{\mathbf{D}}
\newcommand{\bu}{\mathbf{u}}
\newcommand{\bv}{\mathbf{v}}
\newcommand{\bw}{\mathbf{w}}
\newcommand{\buf}{\mathbf{f}}
\newcommand{\bg}{\boldsymbol\gamma}
\newcommand{\bG}{\boldsymbol\Gamma}
\newcommand{\A}{\mathbf{A}}
\newcommand{\G}{\boldsymbol{\mathcal G}}
\newcommand{\V}{\boldsymbol{\mathcal V}}
\newcommand{\bphi}{\boldsymbol\varphi}
\newcommand{\bxi}{\boldsymbol\xi}
\newcommand{\bta}{\boldsymbol\eta}
\newcommand{\bz}{\boldsymbol\zeta}
\renewcommand{\d}{\mathrm{d}}
\numberwithin{equation}{section}
\def\Yint#1{\mathchoice
    {\YYint\displaystyle\textstyle{#1}}%
    {\YYint\textstyle\scriptstyle{#1}}%
    {\YYint\scriptstyle\scriptscriptstyle{#1}}%
    {\YYint\scriptscriptstyle\scriptscriptstyle{#1}}%
      \!\iint}
\def\YYint#1#2#3{{\setbox0=\hbox{$#1{#2#3}{\iint}$}
    \vcenter{\hbox{$#2#3$}}\kern-.51\wd0}}
\def\longdash{{-}\mkern-3.5mu{-}} 
\def\tiltlongdash{\rotatebox[origin=c]{15}{$\longdash$}}
\def\fiint{\Yint\tiltlongdash}
\title{Gradient continuity for the parabolic $(1,\,p)$-Laplace system}
\author{Shuntaro Tsubouchi \footnote{Graduate School of Mathematical Sciences, The University of Tokyo, 3-8-1 Komaba, Meguro-ku, Tokyo, 153-8914, Japan. \textit{Email}: \texttt{tsubos@g.ecc.u-tokyo.ac.jp}\\  AMS Mathematics Subject Classification (2020): 35B45, 35B65, 35K40, 35K92\\ Keywords: $p$-Laplace operator, one-Laplace operator, gradient continuity}}
\date{}
\begin{document}
\maketitle 
\begin{abstract}
This paper deals with the parabolic $(1,\,p)$-Laplace system, a parabolic system that involves the one-Laplace and $p$-Laplace operators with $p\in(1,\,\infty)$. We aim to prove that a spatial gradient is continuous in space and time. An external force term is treated in a parabolic Lebesgue space under the optimal regularity assumption. We also discuss a generalized parabolic system with the Uhlenbeck structure. A main difficulty is that the uniform ellipticity of the $(1,\,p)$-Laplace operator is violated on a facet, or the degenerate region of a spatial gradient. The gradient continuity is proved by showing local H\"{o}lder continuity of a truncated gradient, whose support is far from the facet. This is rigorously demonstrated by considering approximate parabolic systems and deducing various regularity estimates for approximate solutions by classical methods such as De Giorgi's truncation, Moser's iteration, and freezing coefficient arguments. A weak maximum principle is also utilized when $p$ is not in the supercritical range.
\end{abstract}

\tableofcontents
\section{Introduction}\label{Section: Introduction}
 In this paper, we consider an $N$-dimensional vector-valued function $\bu=(u^{j}(x,\,t))_{j}$, which is defined in $\Omega_{T}\coloneqq \Omega\times (0,\,T)$ with $\Omega\subset{\mathbb R}^{n}$ being an $n$-dimensional bounded Lipschitz domain, and satisfies
 \begin{equation}\label{Eq (Section 1): Parabolic (1,p)-Laplace}
    \partial_{t}u^{j}+\partial_{x_{\beta}}\left(\lvert\D\bu\rvert^{-1}\partial_{x_{\alpha}} u^{j}+\lvert\D\bu\rvert^{p-2}\partial_{x_{\alpha}} u^{j} \right)=f^{j}\quad \text{in}\quad \Omega_{T}
 \end{equation}
 for each $j\in\{\,1,\,\dots\,,\,N\,\}$ in a weak sense.
 Here, we let $p\in(1,\,\infty)$, $T\in(0,\,\infty)$, $n\ge 2$ and $N\ge 1$ be fixed, and use the convention to sum all of $\alpha,\,\beta\in\{\,1,\,\dots\,,\,n\,\}$.  
 For the left-hand side of (\ref{Eq (Section 1): Parabolic (1,p)-Laplace}), the time derivative and the space derivative of $u^{j}$ are respectively denoted by $\partial_{t}u^{j}$ and $\nabla u^{j}=(\partial_{x_{\alpha}}u^{j}(x,\,t))_{\alpha}$ for each $j\in\{\,1,\,\dots\,,\,n\,\}$.
 The symbol $\D\bu=(\partial_{x_{\alpha}}u^{j})_{\alpha,\,j}$ denotes the spatial gradient, or the $N\times n$ Jacobian matrix of $\bu$.
 Finally, we assume that $\buf=(f^{j}(x,\,t))_{j}$ is a given external force term that belongs to the Lebesgue space $L^{r}(0,\,T;\,L^{q}(\Omega))^{N}$ with the pair $(q,\,r)\in(n,\,\infty\rbrack\times (2,\,\infty\rbrack$ satisfying
 \begin{equation}\label{Eq (Section 1): Condition for q,r}
    \frac{n}{q}+\frac{2}{r}<1.
 \end{equation}
 In this paper, we call (\ref{Eq (Section 1): Parabolic (1,p)-Laplace}) as the parabolic $(1,\,p)$-Laplace system, which consists of the one-Laplace and $p$-Laplace operators.
 The main aim of this paper is to show that $\D\bu$ is continuous over $\Omega_{T}$.
 It should be noted that (\ref{Eq (Section 1): Condition for q,r}) is optimal when one treats the external force term in Lebesgue spaces, and considers the gradient continuity for classical heat equations or systems (see \cite{LSU MR0241822} for a classical monograph).
 The regularity assumptions of $\bu$ differ, depending on whether $p\in(1,\,\infty)$ is greater than the critical exponent $p_{\mathrm c}\coloneqq 2n/(n+2)\in\lbrack 1,\,2)$ or not (see \cite{Choe MR1135917, DiBenedetto-monograph MR1230384, DiBenedetto-Friedman MR743967,DiBenedetto-Friedman MR783531,DiBenedetto-Friedman MR814022}).
 When $p$ is in the supercritical range $p\in(p_{\mathrm c},\,\infty)$, no additional regularity assumption is required. 
 In the remaining case (i.e., $p\in(1,\,p_{\mathrm c}\rbrack$ and $n\ge 3$), however, we additionally let
 \begin{equation}\label{Eq (Section 1): Higher Integrability of u for subcritical case}
     \bu\in L_{\mathrm{loc}}^{\varsigma}(\Omega_{T})^{N}\quad \textrm{with}\quad \varsigma>\varsigma_{\mathrm c}\coloneqq \frac{(2-p)n}{p}\ge 2,
 \end{equation}
 so that we can improve the interior regularity of solutions. 
 This paper aims to establish a qualitative $C^{0}$-regularity result for (\ref{Eq (Section 1): Parabolic (1,p)-Laplace}) by adapting the truncation method that is discussed in \cite{T-scalar, T-system} for the elliptic problems.
 Although this paper is a vectorial version of the regularity results for parabolic equations \cite{T-subcritical, T-supercritical}, we remove some restrictions on the external force term $\buf$ in the previous works.

 More generally, this paper deals with a general parabolic system of the form
 \begin{equation}\label{Eq (Section 1): General System}
     \partial_{t}u^{j}-\partial_{x_{\beta}}\left(\gamma_{\alpha\beta}\left[a_{1}(x,\,t)\lvert \D\bu\rvert_{\bg}^{-1}+a_{p}(x,\,t)g_{p}(\lvert\D\bu\rvert_{\bg}^{2})\right]\partial_{x_{\alpha}}u^{j} \right)=f^{j}\quad \text{in}\quad \Omega_{T},
 \end{equation}
 where $g_{p}\colon (0,\,\infty)\to(0,\,\infty)$ is a positive function that includes $g_{p}(\sigma)=\sigma^{p/2-1}$ for $\sigma\in(0,\,\infty)$. 
 The scalar-valued functions $a_{1}(x,\,t)$ and $a_{p}(x,\,t)$ are respectively non-negative and positive, and the matrix-valued function $\bg=(\gamma_{\alpha\beta}(x,\,t))_{\alpha,\,\beta}\colon \Omega_{T}\to{\mathbb R}^{n\times n}$ is symmetric and positive definite for all $(x,\,t)\in\Omega_{T}$, whence $\bg=\bg(x,\,t)$ naturally provides an inner product.
 Hereinafter, for each $(x,\,t)\in\Omega_{T}$, let $\lvert\,\cdot\,\rvert_{\bg}=\lvert \,\cdot\,\rvert_{\bg(x,\,t)}$ denote the norm that is induced by this inner product.
 The detailed conditions are explained later in Subsection \ref{Subsect: Structural conditions}, where the definition of weak solutions to (\ref{Eq (Section 1): General System}) are also given.
 \subsection{Some specific mathematical sources in fluid mechanics}
 The $(1,\,p)$-Laplace operator is found in some mathematical of the crystal surface growth under the roughening temperature for $p=3$ \cite{spohn1993surface}, and of the motion of Bingham fluids for $p=2$ \cite{Duvaut-Lions MR0521262}.
 Among them, we mainly explain the latter model.

 A Bingham flow is a non-Newtonian visco-plastic fluid that contains the two completely different aspects of plasticity and viscosity, which are respectively reflected by the one-Laplace and the Laplace operator.
 The model equation is a modified incompressible parabolic Navier-Stokes system of the form
 \[\left\{\begin{array}{rcl}
     \partial_{t}\bu-(\bu\cdot \nabla)\bu-\frac{\delta}{\delta \bu}\left(\int_{V}\lvert \D\bu\rvert+\frac{\lvert\D\bu \rvert^{2}}{2}\,\d x\right)+\nabla\pi&=&0,\\ 
     \divx \bu&=&0,
 \end{array}  \right. \quad \text{for}\quad (x,\,t)\in V\times (0,\,\infty),\]
 where $V\subset {\mathbb R}^{3}$ is a given domain, and the ${\mathbb R}^{3}$-valued function $\bu=(u_{1},\,u_{2},\,u_{3})$ and the scalar-valued function respectively denote the velocity of the Bingham fluids, and the pressure function.
 This problem can be more simplified under suitable settings. 
 When $\bu$ is sufficiently small, it will not be restrictive to discard the convective acceleration term $(\bu\cdot \nabla)\bu$.
 Then, the resulting system becomes (\ref{Eq (Section 1): Parabolic (1,p)-Laplace}) with $p=2$ and $\buf=-\nabla\pi$. 
 If $\nabla \pi$ is in $L^{r}(0,\,\infty;\,L^{q}(\Omega))^{3}$ with $3/q+2/r<1$, then our main theorem implies that $\D\bu$ is continuous.
 This system could be simplified under the laminar flow condition, which is found in \cite[Chapter VI]{Duvaut-Lions MR0521262} for the elliptic case. 
 More precisely, let $V$ be a cylinder pipe of the form $V=\Omega\times {\mathbb R}$ with $\Omega\subset {\mathbb R}^{2}$ being a domain, and the velocity $\bu$ be of the uni-directional form $\bu=(0,\,0,\,u(x_{1},\,x_{2},\,t))$.
 Then, we have $\partial_{x_{1}}\pi=\partial_{x_{2}}\pi=0$ and therefore $\pi$ is of the form $\pi=\pi(x_{3},\,t)$.
 Moreover, both the incompressible condition $\divx U=0$ and the identity $(\bu\cdot \nabla)\bu=0$ automatically follow.
 As a result, the given parabolic system boils down to the parabolic equation $\partial_{t}u-\Delta_{1}u-\Delta_{2}u=-\partial_{x_{3}}\pi$, where $\Delta_{s}u\coloneqq \divx (\lvert \nabla u\rvert^{s-2}\nabla u)$ stands for the $s$-Laplace operator with $s\in\lbrack 1,\,\infty)$.
 We keep in mind that the terms on the left-hand side depend at most on $x_{1},\,x_{2}$ and $t$, while $-\partial_{x_{3}}\pi$ depends at most on $x_{3}$ and $t$.
 This implies that we may write $-\partial_{x_{3}}\pi=f(t)$ for some function $f\colon (0,\,\infty)\to {\mathbb R}$.
 Applying our result to this problem with $q=\infty$, we conclude that the slope of the component $u$ is continuous, provided $f\in L^{r}(0,\,\infty;\,L^{\infty}(\Omega))$ with $r>2$.

 It is also worth mentioning the Plandtl--Eyring fluids, which also have some plastic structure that is however suitably hardened by logarithmic power.
 The mathematical difference between Bingham and Plandtl--Eyring fluids, particularly on regularity problems, is when uniform ellipticity breaks.
 As is already explained, the $(1,\,p)$-Laplace operator becomes no longer uniformly elliptic as a gradient shrinks, while the uniform ellipticity for the latter problem gets violated as a gradient blows up.
 Hence, the latter problem is rather one of the classical non-uniformly elliptic problems.
 The full regularity results of the minimizers of nearly linear growth functionals are found in \cite{Fuchs-Mingione}, where the gradient H\"{o}lder regularity is shown.
 Also, more generalizations to non-autonomous problems are discussed in \cite{CDFM, FDFP}. 
 Regularity theories in non-uniformly elliptic problems have been well-established under various settings, see \cite{Mingione-survey MR2291779, Mingione-R survey} for the detailed surveys.

 The mathematical analyses concerning stationary or non-stationary Bingham fluid problems at least trace back to the classical textbook by Duvaut--Lions in the 1970s \cite{Duvaut-Lions MR0521262}, where the mathematical formulations are mainly based on variational inequalities.
 See also \cite{Fuchs-Seregin Monograph} as a related material that provides some mathematical analysis concerning a Plandtl--Eyring fluid, as well as a Bingham fluid.
 This paper aims to show the gradient continuity for the $(1,\,p)$-parabolic systems, which has been open for almost fifty years.

 \subsection{Literature overview and our strategy}
 We briefly overview mathematical research on the $p$-Laplace regularity theory.
 Also, we would like to mention some recent progress on regularity for very singular and degenerate problems.

 For the $p$-Laplace problem with $p\in (1,\,\infty)$, it is well-known that a weak solution admits its H\"{o}lder gradient continuity.
 In other words, the spatial gradient, which is treated in the $L^{p}$-sense, is indeed H\"{o}lder continuous.
 Such a regularity result was shown by many experts for the elliptic case; see \cite{Evans MR672713, Uhlenbeck, Ural'ceva MR0244628} for $p\ge 2$ and \cite{Acerbi-Fusco, Lewis MR721568, Dibenedetto-elliptic MR709038, Tolksdorf MR727034} for $p>1$.
 For the parabolic case, the H\"{o}lder gradient was shown by DiBenedetto--Friedmann \cite{DiBenedetto-Friedman MR783531,DiBenedetto-Friedman MR814022} in the supercritical range $p\in(p_{\mathrm c},\,\infty)$ in 1985 (see also \cite{Alikakos-Evans,DiBenedetto-Friedman MR743967,Evans MR672713,Wiegner MR0886719} for weaker regularity results, and \cite{BDLS-parabolic} for a generalized result).
 Later in 1991, Choe \cite{Choe MR1135917} proved the same regularity result for general $p\in(1,\,\infty)$, where (\ref{Eq (Section 1): Higher Integrability of u for subcritical case}) is assumed particularly for $p\in(1,\,p_{\mathrm c}\rbrack$.
 The requirement of (\ref{Eq (Section 1): Higher Integrability of u for subcritical case}) in the case $p\in(1,\,p_{\mathrm c}\rbrack$ seems essential, since no improved regularity is generally expected for $p$-Laplace flows, provided $p$ is close to one (see e.g., \cite{DiBenedetto-Herrero MR1066761} or \cite[Chapter XII, \S 13]{DiBenedetto-monograph MR1230384}).
 To show improved regularity for $p\le p_{\mathrm c}$, we have to require some better conditions, such as a higher integrability assumption (\ref{Eq (Section 1): Higher Integrability of u for subcritical case}) or an $L^{\infty}$-bound of the parabolic boundary datum.

 For the $(1,\,p)$-Laplace problems, the $C^{1,\,\alpha}$-regularity is the best possibly expexted smoothness of solutions.
 Indeed, the Fenschel dual of the energy density $E(z)\coloneqq \lvert z\rvert+\lvert z\rvert^{p}/p\,(z\in{\mathbb R}^{n})$ is given by $u(x)\coloneqq (\lvert x\rvert-1)_{+}^{p^{\prime}}/p^{\prime}\,(x\in{\mathbb R}^{n})$, where $p^{\prime}\coloneqq p/(p-1)$ stands for the H\"{o}lder conjugate exponent of $p$.
 This special function satisfies the scalar-valued stationary problem $\Delta_{1}u+\Delta_{p}u=n$, and it is in the class $C^{1,\,\alpha}$ with $\alpha\coloneqq \max\{\,1,\,1/(p-1)\,\}$, which fact may indicate the best possible regularity of a weak solution to $(1,\,p)$-Laplace problems.
 However, the H\"{o}lder continuity of a spatial gradient is an open problem even for stationary problems, since the $(1,\,p)$-Laplace operator violates its uniform ellipticity as a gradient vanishes.
 This is explained by formally computing the ellipticity ratio, the ratio defined as the largest eigenvalue of $\nabla^{2}E(z)$ divided by the smallest one.
 The ellipticity ratio for the $(1,\,p)$-Laplace problem makes sense as long as $z\neq 0$, and it contains a singular term $\lvert z\rvert^{1-p}$.
 In other words, the Hessian matrix $\nabla^{2}E(\nabla u)$ becomes no longer uniformly elliptic or uniformly parabolic as $\nabla u\to 0$.
 This is due to the well-known fact on the one-Laplace operator; that is, its ellipticity always degenerates in the gradient direction and has singularity in any others.
 Such a purely anisotropic structure is similarly found in the general system (\ref{Eq (Section 1): General System}), and it strongly appears on the degenerate region $\{\D\bu=0\}$, which is often called the facet of $\bu$. 
 In other words, it appears difficult to show any quantitative continuity estimates of $\D\bu$ across the facet $\{\D\bu=0\}$, since (\ref{Eq (Section 1): General System}) will be no longer uniformly parabolic as the norm $\lvert\D\bu\rvert_{\bg}$ tends to zero.

 When it comes just to show the gradient continuity, where we do not necessarily try to show quantitative continuity estimates, we can provide an affirmative answer by classical analyses.
 More precisely, to prove $\D\bu\in C^{0}$ for the problem (\ref{Eq (Section 1): General System}), we introduce a truncated gradient, defined as
 \[\G_{\delta}(\D\bu)\coloneqq (\lvert\D\bu\rvert_{\bg}-\delta)_{+}\frac{\D\bu}{\lvert\D\bu\rvert_{\bg}}\]
 with $\delta\in(0,\,1)$ denoting a truncation parameter. 
 The most important viewpoint is that (\ref{Eq (Section 1): General System}) can be regarded as uniformly parabolic in $\{\lvert \D\bu\rvert_{\bg}\ge \delta\}$, the support of $\G_{\delta}(\D\bu)$. 
 In other words, the truncation parameter $\delta$ plays a fine role of suitably neglecting a non-uniformly elliptic structure of the $(1,\,p)$-Laplace operator.
 Hence, the continuity of $\G_{\delta}(\D\bu)$ is naturally expected, although the H\"{o}lder exponent may depend on $\delta$.
 Still, we are able to conclude $\D\bu\in C^{0}$ from $\G_{\delta}(\D\bu)\in C^{0}$, since $\G_{\delta}(\D\bu)$ uniformly converges to $\D\bu$ as $\delta\to 0$.
 This truncation approach is found in widely degenerate elliptic problems; see \cite{BDGPdN, Colombo-Figalli MR3133426,Santambrogio Vespri MR2728558}.
 Among them, the paper \cite{BDGPdN} by B\"{o}gelein--Duzaar--Giova--Passarelli di Napoli gives continuity estimates of truncated gradients by the classical methods such as De Giorgi's truncation and the freezing coefficient method.
 This strategy is successfully extended to parabolic problems in the recent paper \cite{BDGPdN-p} (see also \cite{Ambrosio-PdN} for another regularity result).
 Highly inspired by \cite{BDGPdN}, the author showed the gradient continuity for elliptic $(1,\,p)$-Laplace problems in the previous works; \cite{T-scalar} for the scalar-valued case, and \cite{T-system} for the vector-valued case (see also \cite{Giga-Tsubouchi MR4408168} for a weaker result).
 The author also would like to note that before these papers \cite{Giga-Tsubouchi MR4408168, T-scalar, T-system} appeared, the special case $n=p=2$ had already been discussed in \cite[Theorems 3.3.3 \& 3.4.3]{Fuchs-Seregin Monograph} by a different approach.

 To rigorously deduce the continuity of $\G_{\delta}(\D\bu)$, we need to consider an approximate parabolic system.
 For the general system (\ref{Eq (Section 1): General System}), where we let $g_{p}(\sigma)\equiv \sigma^{p/2-1}$ and $a_{1}=a_{p}\equiv 1$ for simplicity, the approximate system is given by
 \[\partial_{t}u_{\varepsilon}^{j}+\partial_{x_{\beta}}\left((\varepsilon^{2}+\lvert \D\bu_{\varepsilon}\rvert_{\bg}^{2})^{-1/2}\partial_{x_{\alpha}}u_{\varepsilon}^{j}+\left(\varepsilon^{2}+\lvert\D\bu_{\varepsilon}\rvert_{\bg}^{2} \right)^{(p-2)/2}\partial_{x_{\alpha}}u_{\varepsilon}^{j} \right)=f_{\varepsilon}^{j}.\]
 Here $\varepsilon\in(0,\,1)$ stands for the approximation parameter, and $\buf_{\varepsilon}=(f_{\varepsilon}^{1},\,\dots\,,\,f_{\varepsilon}^{N})$ converges to $\buf$ in some weak sense.
 In particular, the diffusion coefficients $\lvert \D\bu\rvert_{\bg}^{-1}$ and $\lvert \D\bu\rvert_{\bg}^{p-2}$ are suitably regularized, so that their possible singularities at $\D\bu=0$ are avoided (see also Remark \ref{Rmk: Section 4}).
 Along with this approximation, we need to introduce another truncated gradient of the form 
 \[\G_{\delta,\,\varepsilon}(\D\bu_{\varepsilon})\coloneqq\left(v_{\varepsilon}-\delta \right)_{+}\frac{\D\bu_{\varepsilon}}{\lvert\D\bu_{\varepsilon}\rvert_{\bg}},\quad \text{where}\quad v_{\varepsilon}\coloneqq \sqrt{\varepsilon^{2}+\lvert \D\bu_{\varepsilon}\rvert_{\bg}^{2}}\quad \text{for}\quad \varepsilon\in(0,\,\delta),\]
 since the uniform ellipticity of the approximate system is measured by $v_{\varepsilon}$.
 The strategy broadly consists of the following two parts; the demonstration of the strong congergence of $\D\bu_{\varepsilon}$, and the deduction of local a priori H\"{o}lder estimates of the truncated gradients
 \[\G_{2\delta,\,\varepsilon}(\D\bu_{\varepsilon})=\left(v_{\varepsilon}-2\delta \right)_{+}\frac{\D\bu_{\varepsilon}}{\lvert\D\bu_{\varepsilon}\rvert_{\bg}},\]
 uniformly for $\varepsilon\in(0,\,\delta/4)$. From these results, the continuity $\G_{2\delta}(\D\bu)$ is easily concluded by the Aezel\`{a}--Ascoli theorem.
 In the rest of this subsection, we would like to briefly explain each part, and express the novelty of this paper.
 
 In showing the strong convergence of $\D\bu_{\varepsilon}$, we should keep in mind that the one-Laplace operator lacks any fine properties that the $p$-Laplace operator has.
 More precisely, the $p$-Laplace operator has so-called strong monotonicity, while the one-Laplace operator is merely monotone.
 This is easily noticed by the fact that the one-Laplace operator is always degenerate elliptic in the direction of a gradient, which implies that no quantitative monotonicity estimates appear to be expected.
 We mainly appeal to the fact that the $(1,\,p)$-Laplace operator or its approximate operator still has strong monotonicity.
 The detailed proof becomes different when $p$ is in the supercritical range or not.
 To be precise, when $\buf_{\varepsilon}$ is non-trivial (i.e., $\buf_{\varepsilon}\not\equiv 0$), we have to check the strong convergence of $\bu_{\varepsilon}$.
 This assertion is shown by two different approaches, depending on $p\in(p_{\mathrm c},\,\infty)$ or $p\in(1,\,p_{\mathrm c}\rbrack$.
 The former case is easier; indeed, the compact embedding $W_{0}^{1,\,p}(\Omega)\subset L^{2}(\Omega)$ and the continuous embedding $L^{2}(\Omega)\subset W^{-1,\,p^{\prime}}(\Omega)$ allow us to use the Aubin--Lions lemma.
 In the latter case, this strategy will no longer work, since the compact embedding $W_{0}^{1,\,p}(\Omega)\subset L^{2}(\Omega)$ is violated.
 Instead, we utilize parabolic regularity estimates, including a weak maximum principle for the approximate systems.
 More precisely, by (\ref{Eq (Section 1): Higher Integrability of u for subcritical case}) and the weak maximum principle, we would like to show uniform $L^{\infty}$-bounds of $\bu_{\varepsilon}$ under a suitable setting.
 From this regularity result, we also deduce local uniform $L^{\infty}$-bounds of $\D\bu_{\varepsilon}$, and local $C^{1,\,1/2}$-bounds of $\bu_{\varepsilon}$.
 Utilizing these a priori estimates, we conclude the strong convergence of $\bu_{\varepsilon}$ from the bounded convergence theorem.

 A priori estimates of $\bu_{\varepsilon}$ are shown by the classical methods, including De Giorgi's truncation, Moser's iteration, and the freezing coefficient argument.
 To be precise, we prove the uniform bound of $\D\bu_{\varepsilon}$ by Moser's iteration.
 Here we require uniform $L^{\infty}$-bounds of $\bu_{\varepsilon}$ when $p\in(1,\,p_{\mathrm c}\rbrack$, which is verified by the weak maximum principle.
 The key estimate of $\G_{2\delta,\,\varepsilon}(\D\bu_{\varepsilon})$ is shown by careful modifications of \cite{BDLS-parabolic}.
 Roughly speaking, we mainly consider the two cases where the super-level set is suitably large (i.e., the non-degenerate case) or not (i.e., the degenerate case).
 In the latter case, we use De Giorgi's truncation to deduce an oscillation lemma.
 In the former case, we compare $\bu_{\varepsilon}$ with a comparison function that solves some sort of classical second-order heat system.
 There, we also verify that the average integral of $\D\bu_{\varepsilon}$ cannot degenerate.
 Although these divisions by cases are found in the classical monograph \cite[Chapter IX]{DiBenedetto-monograph MR1230384} or the recent paper \cite{BDLS-parabolic}, the main difference is that we carefully use the truncation parameter $\delta$.
 In fact, instead of $\G_{\delta,\,\varepsilon}(\D\bu_{\varepsilon})$, we often consider a more strictly truncated gradient $\G_{2\delta,\,\varepsilon}(\D\bu_{\varepsilon})$.
 Such a replacement helps us to avoid some delicate cases where a spatial gradient could vanish.
 This strategy is completely different from the celebrated intrinsic scaling method (see \cite{DiBenedetto-monograph MR1230384, DiBenedetto-Gianazza-Vespri MR2865434} for the materials), which plays an important role in deducing quantitative everywhere regularity estimates for parabolic $p$-Laplace problems.
 Our method rather avoids any mathematical analysis, particularly around the degenerate region of $v_{\varepsilon}$.
 As a sacrifice, our H\"{o}lder estimates of $\G_{2\delta,\,\varepsilon}(\D\bu_{\varepsilon})$ depends on the truncation parameter $\delta$.

 This paper provides a parabolic extension to \cite{T-system}.
 Although parabolic regularity results are already shown in \cite{T-subcritical,T-supercritical} for $N=1$, this paper mainly provides the two novelties.
 The first is that the external force term $\buf$ is treated for all $p\in(1,\,\infty)$ and $(q,\,r)\in(n,\,\infty\rbrack\times (2,\,\infty\rbrack$ under the optimal condition (\ref{Eq (Section 1): Condition for q,r}), while the previous works \cite{T-subcritical,T-supercritical} require some technical restrictions on $\buf$.
 More precisely, \cite{T-supercritical} treats an external force term in the class $L^{q}(\Omega_{T})$ with $q>n+2$ for $p\in(p_{\mathrm c},\,\infty)$ (i.e., $q=r$ is assumed), and \cite{T-subcritical} deals with no external force term for $p\in(1,\,p_{\mathrm c}\rbrack$ (i.e., $\buf\equiv 0$ is technically required).
 As already explained, the absence of the external force term in \cite{T-subcritical} is due to the lack of parabolic compact embeddings.
 We remove this technical problem by utilizing parabolic regularity estimates, including a weak maximum principle.
 The second is that the general matrix $\bg$ is treated by following the strategy of \cite{BDLS-parabolic}.
 This generalization is motivated by the possible applications to boundary regularity estimates, found in \cite{BDLS-boundary}.
 Since the main purpose of this paper is \textit{everywhere} $C^{0}$-regularity of a spatial gradient for a parabolic system, we treat a generalized $(1,\,p)$-Laplace operator that has the Uhlenbeck structure \cite{Uhlenbeck}.
 This symmetric structure is essentially used to deduce various regularity estimates of vector-valued solutions in this paper, while \cite{T-subcritical,T-supercritical} treats energy densities without the Uhlenbeck structure.
 However, it is worth noting that compared with the previous elliptic regularity result \cite{T-system}, where $\bg$ is assumed to be the identity matrix, this paper provides a generalization of the one-Laplace operator.

 \subsection{Notations}
 Before stating the main result, we fix some notations in this subsection.
 
 The symbols  ${\mathbb N}=\{\,1,\,2,\,\dots\,\}$ and ${\mathbb Z}_{\ge 0}\coloneqq \{0\}\cup {\mathbb N}$ respectively denote the collection of natural numbers and non-negative integers.
 For given real numbers $a,\,b\in{\mathbb R}$, we write $a\wedge b\coloneqq \min\{\,a,\,b\,\}\in{\mathbb R}$ and $a\vee b\coloneqq \max\{\,a,\,b\,\}\in{\mathbb R}$.
 We often use the abbreviations ${\mathbb R}_{\ge 0}\coloneqq \lbrack 0,\,\infty)\subset {\mathbb R}$ and ${\mathbb R}_{>0}\coloneqq (0,\,\infty)\subset {\mathbb R}$.
 For the pair $(q,\,r)\in(n,\,\infty\rbrack\times (2,\,\infty\rbrack$ satisfying (\ref{Eq (Section 1): Condition for q,r}), we fix the exponents
 \begin{equation}\label{Eq (Section 1): Exponents beta q-hat r-hat}
     \beta\coloneqq \left\{ \begin{array}{cc}
        \beta_{0} & (q=r=\infty), \\
        1-\displaystyle\frac{n}{q}-\displaystyle\frac{2}{r} & (\text{otherwise}),
     \end{array} \right.\quad 
     \widehat{q}\coloneqq \left(\frac{q}{2} \right)^{\prime}=\frac{q}{q-2},\quad \widehat{r}\coloneqq \left(\frac{r}{2} \right)^{\prime}=\frac{r}{r-2},
 \end{equation}
 where $\beta_{0}\in(0,\,1)$ is an arbitrarily fixed constant.

 For given $x_{0}\in{\mathbb R}^{n}$, $t_{0}\in{\mathbb R}$, $R\in(0,\,\infty)$, and $c_{0}\in(0,\,\infty)$, we set an open ball $B_{R}(x_{0})\coloneqq \{x\in{\mathbb R}^{n}\mid \lvert x-x_{0}\rvert<R\}$, half intervals $I_{R}(c_{0};\,t_{0})\coloneqq (t_{0}-c_{0}R^{2},\,t_{0}\rbrack$, $I_{R}(t_{0})\coloneqq I_{R}(1;\,t_{0})=(t_{0}-R^{2},\,t_{0}\rbrack$, and a standard parabolic cylinder $Q_{R}(x_{0},\,t_{0})\coloneqq B_{R}(x_{0})\times I_{R}(t_{0})$.
 The center points $x_{0}$ and $t_{0}$ are often omitted when they are clear.
 In the similar manner, we define $\widetilde{I}_{R}(c_{0};\,t_{0})\coloneqq \lbrack t_{0},\,t_{0}+c_{0}R^{2})$ and $\widetilde{Q}_{R}(c_{0};\,x_{0},\,t_{0})\coloneqq B_{R}(x_{0})\times \widetilde{I}_{R}(c_{0};\,t_{0})$.
 The space ${\mathbb R}^{n}$ is equipped with the canonical inner product and the Euclidean norm, defined as 
 \[\langle x\mid y\rangle\coloneqq x_{1}y_{1}+\cdots+x_{n}y_{n}\in{\mathbb R},\quad \lvert x\rvert\coloneqq\sqrt{\langle x\mid x\rangle}\]
 for $x=(x_{1},\,\dots\,,\,x_{n}),\,y=(y_{1},\,\dots\,,\,y_{n})\in{\mathbb R}^{n}$.
 In the same manner, we may introduce the inner product and the Euclidean norm for the spaces ${\mathbb R}^{Nn}$ and ${\mathbb R}^{Nn^{2}}$.

 For a $k$-dimensional Lebesgue measurable set $U\subset {\mathbb R}^{k}$ with $k\in{\mathbb N}$, the symbol $\lvert U\rvert$ stands for the $k$-dimensional Lebesgue measure of $U$.
 For ${\mathbb R}^{m}$-valued functions $\mathbf{g}=\mathbf{g}(x)$, and $\mathbf{h}=\mathbf{h}(x,\,t)$ that are respectively integrable in $U\subset {\mathbb R}^{n}$, and $U\times I\subset{\mathbb R}^{n+1}$ with $0<\lvert U\rvert,\,\lvert I\rvert<\infty$, let 
 \(\fint_{U}\mathbf{g}(x)\,{\mathrm d}x\coloneqq \lvert U\rvert^{-1}\int_{U}\mathbf{g}(x)\,{\mathrm d}x\in{\mathbb R}^{m}\), \(\fiint_{U\times I}\mathbf{h}(x,\,t)\,{\mathrm d}X\coloneqq \lvert U\rvert^{-1}\lvert I\rvert^{-1}\iint_{U}\mathbf{h}(x,\,t)\,{\mathrm d}X\in{\mathbb R}^{m}\)
 denote the average integrals.
 These average integrals are often written as $(\mathbf{f})_{U}$ and $({\mathbf h})_{U\times I}$ for simplicity.

 For a given interval $I\subset {\mathbb R}$, a given exponent $\pi\in\lbrack 1,\,\infty\rbrack$, and a given Banach space $E$ equipped with the norm $\lVert\,\cdot\,\rVert_{E}$, let $L^{s}(I;\,E)$ denote the standard Bochner space, equipped with the norm 
 \[\lVert u\rVert_{L^{s}(I;\,E)}\coloneqq \left\{\begin{array}{cc} \left(\displaystyle\int_{I}\lVert u(t)\rVert_{E}^{\pi}\,\d t\right)^{1/\pi} & (1\le \pi<\infty), \\ \esssup\limits_{t\in I}\,\lVert u(t)\rVert_{E} & (\pi=\infty), \end{array}  \right.\quad \text{for }u\in L^{\pi}(I;\,E).\]
 Following the notations in \cite{DiBenedetto-monograph MR1230384}, we abbreviate $L^{\pi_{1},\,\pi_{2}}(U\times I)\coloneqq L^{\pi_{2}}(I;\,L^{\pi_{1}}(U))$ for given exponents $\pi_{1},\,\pi_{2}\in\lbrack 1,\,\infty\rbrack$, and Lebesgue measurable sets $U\subset {\mathbb R}^{n}$, $I\subset {\mathbb R}$.
 We often write $L^{\pi}(U\times I)\coloneqq L^{\pi,\,\pi}(U\times I)$ for $\pi\in\lbrack 1,\,\infty\rbrack$.
 For a Lipschitz domain $U\subset {\mathbb R}^{n}$, the symbol $W^{1,\,\pi}(U)$ stands for the Sobolev space, equipped with the standard norm $\lVert u \rVert_{W^{1,\,\pi}(U)}\coloneqq \lVert u\rVert_{L^{\pi}(U)}+\lVert \nabla u\rVert_{L^{\pi}(U)}$ for $u\in W^{1,\,\pi}(U)$.
 The closed subspace $W_{0}^{1,\,\pi}(\Omega)\subset W^{1,\,\pi}(U)$ is defined as the closure of $C_{\mathrm c}^{1}(U)\subset W^{1,\,\pi}(U)$ with respect to the strong topology induced by this norm.

 Following \cite[Chapitre 2]{Lions-monotone MR0259693} or \cite[Chapter III]{Showalter MR1422252}, we would like to introduce parabolic function spaces.
 We fix the function space
 \[V_{0}=V_{0}(\Omega)\coloneqq \left\{\begin{array}{cc}    W_{0}^{1,\,p}(\Omega)& (p_{\mathrm c}<p<\infty),\\ W_{0}^{1,\,p}(\Omega)\cap L^{2}(\Omega)& (1<p\le p_{\mathrm c}), \end{array}  \right.\] 
 and equip them with the standard norms \[\lVert u \rVert_{V_{0}}\coloneqq \left\{\begin{array}{cc} \lVert \nabla u\rVert_{L^{p}(\Omega)} & (p_{\mathrm c}<p<\infty), \\ \lVert \nabla u \rVert_{L^{p}(\Omega)}+\lVert u\rVert_{L^{2}(\Omega)}, & (1<p\le p_{\mathrm c}), \end{array}\right.\quad \text{for}\quad u\in V_{0}(\Omega).\]
 Then, the continuous embeddings $V_{0}(\Omega)\subset L^{2}(\Omega)\subset V_{0}^{\prime}(\Omega)$ always holds, where $V_{0}^{\prime}=V_{0}^{\prime}(\Omega)$ denotes the continuous dual space of $V_{0}(\Omega)$ with respect to the equipped norm as above.
 Hereinafter, $\langle \mathbf{F},\, \bphi\rangle \in{\mathbb R}$ stands for the duality pairing for ${\mathbf F}=(F^{1},\,\dots\,,\,F^{N})\in V_{0}^{\prime}(\Omega)^{N}$ and $\bphi=(\varphi^{1},\,\dots\,,\,\varphi^{N})\in V_{0}(\Omega)^{N}$.

 The parabolic function spaces are now given as follows;
 \[\begin{array}{rcl}   X_{0}^{p}(0,\,T;\,\Omega)&\coloneqq& \left\{ u\in L^{p}(0,\,T;\,V_{0}(\Omega))\mid \partial_{t}u\in L^{p^{\prime}}(0,\,T;\,V_{0}^{\prime}(\Omega)) \right\},\\     X^{p}(0,\,T;\,\Omega)&\coloneqq& \left\{ u\in L^{p}(0,\,T;\,V(\Omega))\mid \partial_{t}u\in L^{p^{\prime}}(0,\,T;\,V_{0}^{\prime}(\Omega)) \right\}.\end{array}\]
 The inclusion $X_{0}^{p}(0,\,T;\,\Omega)\subset C(\lbrack 0,\,T\rbrack;\,L^{2}(\Omega))$ follows from the Lions--Magenes lemma \cite[Chapter III, Proposition 1.2]{Showalter MR1422252}.
 Moreover, when $p>p_{\mathrm c}$, the compact embedding $V_{0}(\Omega)=W_{0}^{1,\,p}(\Omega)\subset L^{2}(\Omega)$ allows us to apply the Aubin--Lions lemma \cite[Proposition 1.3]{Showalter MR1422252}.
 In particular, the compact embedding $X_{0}^{p}(0,\,T;\,\Omega)\subset L^{p}(0,\,T;\,L^{2}(\Omega))$ is useful in the supercritical range $p\in(p_{\mathrm c},\,\infty)$.
 We must keep in mind that these compact embeddings no longer hold for $p\in(1,\,p_{\mathrm c}\rbrack$.

 Throughout this paper, we treat (\ref{Eq (Section 1): General System}) under the classical setting found in \cite{Lions-monotone MR0259693,Showalter MR1422252}.
 In other words, the parabolic system (\ref{Eq (Section 1): General System}) is treated in the sense of the functional space $L^{p^{\prime}}(0,\,T;\,V_{0}^{\prime}(\Omega))^{N}$.
 For this reason, we let $\buf\in L^{p^{\prime}}(0,\,T;\,V_{0}^{\prime}(\Omega))^{N}$ in defining a weak solution to (\ref{Eq (Section 1): General System}).

 \subsection{Structural assumptions and the definition of a weak solution}\label{Subsect: Structural conditions}
 After providing structural conditions of $a_{1}$, $a_{p}$, $g_{p}$, and $\bg=(\gamma_{\alpha\beta})$, we would like to define a weak solution to (\ref{Eq (Section 1): General System}).

 Throughout this paper, we let $\bg=(\gamma_{\alpha\beta})_{\alpha,\,\beta}\colon \Omega_{T}\to{\mathbb R}^{n\times n}$ be a matrix-valued function that is symmetric and positive definite.
 In other words, $\gamma_{\alpha\beta}=\gamma_{\beta\alpha}$ holds for all $\alpha,\,\beta\in\{\,1,\,\dots\,,\,n\,\}$, and there exists a universal constant $\gamma_{0}\in(0,\,1)$ such that
 \begin{equation}\label{Eq (Section 1): Matrix Gamma}
     \gamma_{0}\lvert \bz\rvert^{2}\le \gamma_{\alpha\beta}(x,\,t)\zeta_{\alpha}\zeta_{\beta}\le \gamma_{0}^{-1}\lvert \bz\rvert^{2}
 \end{equation}
 for all $(x,\,t)\in\Omega_{T}$ and $\bz=(\zeta_{\alpha})\in{\mathbb R}^{n}$, where we use the convention to sum over $\alpha$ and $\beta$.
 For this $\bg$ and $k\in{\mathbb N}$, we introduce the inner product and the norm over ${\mathbb R}^{kn}$ as  \(\langle \bz\mid \bta  \rangle_{\bg(x,\,t)}\coloneqq \gamma_{\alpha\beta}(x,\,t)\zeta_{\alpha}^{j}\eta_{\beta}^{j}\) and \(\lvert \bz \rvert_{\bg(x,\,t)}\coloneqq \langle \bz\mid \bz \rangle_{\bg(x,\,t)}^{1/2}\) for $\bz=(\zeta_{\alpha}^{j}),\,\bta=(\eta_{\eta}^{j})\in{\mathbb R}^{kn}$, where we omit the summation symbol over $j\in\{\,1,\,\dots\,,\,k\,\}$, as well as $\alpha,\,\beta\in\{\,1,\,\dots\,,\,n\}$. 
 The point $(x,\,t)$ is often omitted for notational simplicity; in other words, we write $\langle \bz\mid \bta\rangle_{\bg}$ and $\lvert \bz\rvert_{\bg}$ for short.
 In this paper, we mainly treat $k=1,\,N,\,Nn$.

 For the ellipticity, we let $g_{p}$ admit another universal constant $\kappa_{0}\in(0,\,\infty)$ such that 
 \begin{equation}\label{Eq (Section 1): Ellipticity of p-Laplace-type operator}
     g_{p}(\varepsilon^{2}+\sigma)+2\sigma \min\{\,0,\,g_{p}^{\prime}(\varepsilon^{2}+\sigma)\,\} \ge \kappa_{0}\left(\varepsilon^{2}+\sigma\right)^{p/2-1}
 \end{equation}
 for all $\sigma\in\lbrack 0,\,\infty)$ and $\varepsilon\in(0,\,1)$.
 For the smoothness of $a_{1}$, $a_{p}$, $g_{p}$ and $\bg=(\gamma_{\alpha\beta})$, we only require $a_{1},\,a_{s}\in L^{\infty}(\Omega_{T})$, $\nabla a_{1},\,\nabla a_{p}\in L^{\infty}(\Omega_{T};\,{\mathbb R}^{n})$, $g_{p}\in C^{1}(0,\,\infty)$ and $\gamma_{\alpha\beta}\in W^{1,\,\infty}(\Omega_{T})$.
 More precisely, there exists a universal constant $\Gamma_{0}\in(1,\,\infty)$ such that 
 \begin{equation}\label{Eq (Section 1): Growth of p-Laplace-type operator}
     g_{p}(\sigma)+\sigma\lvert g_{p}^{\prime}(\sigma) \rvert\le \Gamma_{0}\sigma^{p/2-1}\quad \text{for all }\sigma\in(0,\,\infty),
 \end{equation} 
 \begin{equation}\label{Eq (Section 1): Bound Assumptions for Coefficients}
    \esssup_{\Omega_{T}}\left(a_{1}+ a_{p}+
        \lvert\nabla a_{1} \rvert+\lvert\nabla a_{p} \rvert+\lvert \gamma_{\alpha\beta} \rvert+\lvert\nabla\gamma_{\alpha\beta} \rvert+\lvert \partial_{t}\gamma_{\alpha\beta} \rvert\right)\le \Gamma_{0},
 \end{equation}
 \begin{equation}\label{Eq (Section 1): Positivity Assumptions for Coefficients}
   0\le \essinf_{\Omega_{T}}a_{1}, \quad \textrm{and}\quad \Gamma_{0}^{-1}\le \essinf_{\Omega_{T}}a_{p}.
 \end{equation} 
 The continuity of $g_{p}^{\prime}\in C^{0}(0,\,\infty)$ is assumed to be locally controlled by some family of the non-decreasing and concave functions.
 More precisely, for given $0<c_{1}<c_{2}<\infty$, there exists a continuous function $\omega_{p,\,c_{1},\,c_{2}}\colon {\mathbb R}_{\ge 0}\to{\mathbb R}_{\ge 0}$ that is non-decreasing and concave, and satisfies $\omega_{c_{1},\,c_{2}}(0)=0$ and  
 \begin{equation}\label{Eq (Section 1): Modulus of Continuity for p-Laplace-type-operator}
     \left\lvert g_{p}^{\prime}(\sigma_{1})-g_{p}^{\prime}(\sigma_{2})\right\rvert\le \omega_{p,\,c_{1},\,c_{2}}(\lvert \sigma_{1}-\sigma_{2}\rvert)\quad \textrm{for all }\sigma_{1},\,\sigma_{2}\in\lbrack c_{1},\,c_{2}\rbrack.
 \end{equation}
 A typical example of $g_{p}$ is 
 \begin{equation}\label{Eq (Section 1): gp}
    g_{p}(\sigma)\coloneqq \sigma^{p/2-1}\quad \text{for}\quad \sigma\in(0,\,\infty).
 \end{equation} 
 For this choice, the structural conditions (\ref{Eq (Section 1): Ellipticity of p-Laplace-type operator})--(\ref{Eq (Section 1): Growth of p-Laplace-type operator}) are easy to check by direct computations. 
 In particular, (\ref{Eq (Section 1): Ellipticity of p-Laplace-type operator}) holds with $\kappa_{0}\coloneqq \min\{\,1,\,p-1\,\}$, which clearly becomes $0$ when $p=1$.
 Since the second order derivative $g_{p}^{\prime\prime}(\sigma)=(p/2-1)(p/2-2)\sigma^{p/2-3}$ is locally bounded in $(0,\,\infty)$, (\ref{Eq (Section 1): Modulus of Continuity for p-Laplace-type-operator}) holds with $\omega_{p,\,c_{1},\,c_{2}}(\sigma)=\left(\max\limits_{\lbrack c_{1},\,c_{2} \rbrack}\lvert g_{p}^{\prime\prime}\rvert\right)\sigma$. 
 In this paper, we fix 
 \begin{equation}\label{Eq (Section 1): g1}
    g_{1}(\sigma)\coloneqq \sigma^{-1/2}\quad \text{for}\quad \sigma\in(0,\,\infty).
 \end{equation}
 Similarly to (\ref{Eq (Section 1): gp}), this $g_{1}$ satisfies the following (\ref{Eq (Section 1): Ellipticity of 1-Laplace-type operator})--(\ref{Eq (Section 1): Growth of 1-Laplace-type operator});
 \begin{equation}\label{Eq (Section 1): Ellipticity of 1-Laplace-type operator}
    g_{1}(\varepsilon^{2}+\sigma)+2\sigma \min\{\,0,\,g_{1}^{\prime}(\varepsilon^{2}+\sigma)\,\} \ge 0\quad \text{for all }\sigma\in\lbrack 0,\,\infty),\,\varepsilon\in(0,\,1), 
 \end{equation}
 \begin{equation}\label{Eq (Section 1): Growth of 1-Laplace-type operator}
    g_{1}(\sigma)+\sigma\lvert g_{1}^{\prime}(\sigma) \rvert\le \frac{3}{2}\sigma^{-1/2}\quad \text{for all }\sigma\in(0,\,\infty).
 \end{equation} 
 Also, for given $0<c_{1}<c_{2}<\infty$, we have
 \begin{equation}\label{Eq (Section 1): Modulus of Continuity for 1-Laplace-type-operator}
     \left\lvert g_{1}^{\prime}(\sigma_{1})-g_{1}^{\prime}(\sigma_{2})\right\rvert\le \omega_{1,\,c_{1},\,c_{2}}(\lvert \sigma_{1}-\sigma_{2}\rvert)\quad \textrm{for all }\sigma_{1},\,\sigma_{2}\in\lbrack c_{1},\,c_{2}\rbrack,
 \end{equation}
 where $\omega_{1,\,c_{1},\,c_{2}}(\sigma)\coloneqq (4c_{1})^{-1}\sigma$.
 When we let $a_{1}=a_{p}=1$, set $g_{1}$ and $g_{p}$ as (\ref{Eq (Section 1): gp})--(\ref{Eq (Section 1): g1}), and choose $\bg$ as the identity matrix, the parabolic system (\ref{Eq (Section 1): General System}) becomes (\ref{Eq (Section 1): Parabolic (1,p)-Laplace}).

 To define a weak solution to (\ref{Eq (Section 1): General System}), we treat the term $\lvert \D\bu\rvert_{\bg(x,\,t)}^{-1}\D\bu$ in the sense of a subgradient.
 For given $(x,\,t)\in\Omega_{T}$, the subdifferential of $\lvert\,\cdot\, \rvert_{\bg(x,\,t)}\colon{\mathbb R}^{Nn}\to{\mathbb R}_{\ge 0}$ at each point $\bz\in{\mathbb R}^{Nn}$ is defined as
 \[\partial_{\bg(x,\,t)}\lvert\,\cdot\,\rvert_{\bg(x,\,t)}(\bz)\coloneqq \left\{{\mathbf Z}\in{\mathbb R}^{Nn}\mathrel{}\middle| \mathrel{} \lvert \bxi\rvert_{\bg(x,\,t)}\ge \lvert \bz\rvert_{\bg(x,\,t)}+\langle{\mathbf Z}\mid \bxi-\bz \rangle_{\bg(x,\,t)} \text{ for all }\bxi\in{\mathbb R}^{Nn} \right\}.\]
 This set is a singleton $\left\{\lvert \bz\rvert_{\bg(x,\,t)}^{-1}\bz\right\}$ for $\bz\neq 0$, since the convex function $\lvert\,\cdot\, \rvert_{\bg(x,\,t)}$ is differentiable except at the origin.
 Otherwise, this set is the closed unit ball 
 \(\left\{{\mathbf Z}\in{\mathbb R}^{Nn}\mathrel{}\middle| \mathrel{} \lvert {\mathbf Z} \rvert_{\bg(x,\,t)}\le 1 \right\}\) (see \cite[Theorem 1.8]{Andreu-Vaillo et al}).
 For a given mapping ${\mathbf Z}={\mathbf Z}(x,\,t)\colon \Omega_{T}\to{\mathbb R}^{Nn}$, we define 
 \begin{equation}\label{Eq (Section 1): Flux Term}
    {\mathbf A}(x,\,t,\,\bz,\,{\mathbf Z})\coloneqq a_{1}(x,\,t){\mathbf Z}+a_{p}(x,\,t)g_{p}(\lvert \bz\rvert_{\bg}^{2})\bz.
 \end{equation}
 \begin{definition}
    \upshape \label{Definition (Section 1): A weak solution}
    Fix $p\in(1,\,\infty)$ and $\buf\in L^{2,\,1}(\Omega_{T})^{N}\cap L^{p^{\prime}}(0,\,T;\,V_{0}^{\prime}(\Omega))^{N}$.
    A function $\bu$ in the class $X^{p}(0,\,T;\,\Omega)^{N}\cap C(\lbrack 0,\,T\rbrack;\,L^{2}(\Omega))^{N}$ is called a \textit{weak} solution to (\ref{Eq (Section 1): General System}) if there exists ${\mathbf Z}\in L^{\infty}(\Omega_{T};\,{\mathbb R}^{Nn})$ such that
    \[{\mathbf Z}(x,\,t)\in \partial_{\bg(x,\,t)}\lvert\, \cdot\, \rvert_{\bg(x,\,t)}(\D\bu(x,\,t))\]
    for a.e.~$(x,\,t)\in\Omega_{T}$, and the term $\A(x,\,t,\,\D\bu,\,{\mathbf Z})$, defined as (\ref{Eq (Section 1): Flux Term}), satisfies
    \[\int_{0}^{T}\langle \partial_{t}\bu,\,\bphi \rangle\,\d t+\iint_{\Omega_{T}}\langle \A(x,\,t,\,\D\bu,\,{\mathbf Z})\mid\D\bphi \rangle_{\bg}\,\d x\d t=\iint_{\Omega_{T}}\langle \buf\mid \bphi \rangle\,\d x\d t\]
    for all $\bphi\in L^{p}(0,\,T;\,V_{0}(\Omega))^{N}$. 
 \end{definition}
 Finally, we introduce the datum set 
 \[{\mathcal D}\coloneqq \{\,n,\,N,\,p,\,q,\,r,\,\gamma_{0},\,\kappa_{0},\,\Gamma_{0},\,\{\omega_{p,\,c_{1},\,c_{2}}\}_{0<c_{1}<c_{2}<\infty}\}.\]
 We often use the abbreviation $C=C({\mathcal D})$ when the constant $C$ found in our estimates may depend on some members of $\mathcal{D}$.
 \subsection{Main result and plan of the paper}
 In this paper, we would like to prove Theorem \ref{Theorem (Section 1): Gradient continuity}.
 \begin{theorem}\label{Theorem (Section 1): Gradient continuity}
     Let $p\in(1,\,\infty)$, $\buf\in L^{p^{\prime}}(0,\,T;\,V_{0}^{\prime})^{N}\cap L^{r}(0,\,T;\,L^{q}(\Omega))^{N}$ with $(q,\,r)\in(n,\,\infty\rbrack\times (2,\,\infty\rbrack$ satisfying (\ref{Eq (Section 1): Condition for q,r}).
     Assume that $\bu\in X^{p}(0,\,T;\,\Omega)^{N}\cap  C(\lbrack 0,\,T\rbrack;\,L^{2}(\Omega))^{N}$ is a weak solution to (\ref{Eq (Section 1): General System}).
     When $p\in(1,\,p_{\mathrm c}\rbrack$, let (\ref{Eq (Section 1): Higher Integrability of u for subcritical case}) be additionally assumed.
     Then, the spatial gradient $\D\bu$ is continuous in $\Omega_{T}$.
 \end{theorem}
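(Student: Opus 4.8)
The plan is to follow the approximation-and-truncation scheme outlined in the introduction. First I would regularize the problem: replace $g_{1}(\sigma)=\sigma^{-1/2}$ and $g_{p}(\sigma)$ by $g_{1}(\varepsilon^{2}+\lvert\D\bu_{\varepsilon}\rvert_{\bg}^{2})$ and $g_{p}(\varepsilon^{2}+\lvert\D\bu_{\varepsilon}\rvert_{\bg}^{2})$, so that the resulting operator $\A_{\varepsilon}$ is genuinely uniformly parabolic on bounded regions, and mollify the forcing term so that $\buf_{\varepsilon}\to\buf$ in the relevant weak topology while $\buf_{\varepsilon}\in L^{r}(0,\,T;\,L^{q}(\Omega))^{N}$ with norms controlled uniformly in $\varepsilon$. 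Standard monotone-operator theory (Lions--Showalter) gives a unique weak solution $\bu_{\varepsilon}\in X^{p}(0,\,T;\,\Omega)^{N}\cap C(\lbrack 0,\,T\rbrack;\,L^{2}(\Omega))^{N}$ of the approximate system, together with the energy bound $\lVert\D\bu_{\varepsilon}\rVert_{L^{p}(\Omega_{T})}+\lVert\bu_{\varepsilon}\rVert_{C(\lbrack 0,\,T\rbrack;\,L^{2}(\Omega))}\le C$ uniform in $\varepsilon$.

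Next I would establish the a priori regularity estimates for $\bu_{\varepsilon}$ in the interior. Using Moser's iteration, with the structural ellipticity (\ref{Eq (Section 1): Ellipticity of p-Laplace-type operator}) and (\ref{Eq (Section 1): Ellipticity of 1-Laplace-type operator}) applied to the regularized coefficients, I would derive a local $L^{\infty}$-bound on $v_{\varepsilon}=\sqrt{\varepsilon^{2}+\lvert\D\bu_{\varepsilon}\rvert_{\bg}^{2}}$ on compactly contained cylinders, uniform in $\varepsilon$. In the supercritical range $p\in(p_{\mathrm c},\,\infty)$ this Moser scheme closes directly; in the subcritical range $p\in(1,\,p_{\mathrm c}\rbrack$ I would first invoke a weak maximum principle (or a De Giorgi-type argument for the approximate system) combined with the higher-integrability hypothesis (\ref{Eq (Section 1): Higher Integrability of u for subcritical case}) to obtain a local $L^{\infty}$-bound on $\bu_{\varepsilon}$, which feeds into the Moser iteration to control $v_{\varepsilon}$. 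From the uniform local bound on $v_{\varepsilon}$ I would bootstrap, via Caccioppoli inequalities and the continuity modulus (\ref{Eq (Section 1): Modulus of Continuity for p-Laplace-type-operator}), to a local $C^{1,\,1/2}$-bound on $\bu_{\varepsilon}$, uniform in $\varepsilon$. Passing to a subsequence and using the Arzel\`a--Ascoli theorem together with the dominated convergence theorem, I would then conclude that $\bu_{\varepsilon}\to\bu$ strongly and, crucially, that $\D\bu_{\varepsilon}\to\D\bu$ strongly in $L^{p}_{\mathrm{loc}}(\Omega_{T})$; strong monotonicity of the $p$-part of the operator forces the limit $\bu$ to coincide with the given weak solution (so the identification of limits and the energy-dissipation identity must be checked carefully here).

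The heart of the argument is the $\delta$-truncated gradient estimate. Fixing $\delta\in(0,\,1)$ and working on a cylinder where $v_{\varepsilon}\le M$ uniformly, I would prove that $\G_{2\delta,\,\varepsilon}(\D\bu_{\varepsilon})$ is H\"older continuous with a modulus depending on $\delta$ and $M$ but \emph{not} on $\varepsilon\in(0,\,\delta/4)$. On the support $\{v_{\varepsilon}\ge 2\delta\}$ the regularized operator is uniformly elliptic with ellipticity ratio controlled in terms of $\delta$ and $M$, so the classical parabolic De Giorgi--Nash--Moser machinery applies. Following the division-by-cases scheme of \cite{BDLS-parabolic} and \cite[Chapter IX]{DiBenedetto-monograph MR1230384}, I would treat the degenerate case (small super-level set) by a De Giorgi oscillation lemma applied to the components of $\G_{2\delta,\,\varepsilon}(\D\bu_{\varepsilon})$, and the non-degenerate case by comparison with a solution of a linear constant-coefficient heat system obtained by freezing the coefficients at a point, using (\ref{Eq (Section 1): Modulus of Continuity for p-Laplace-type-operator}) and (\ref{Eq (Section 1): Bound Assumptions for Coefficients}) to control the frozen-coefficient error. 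The use of the strictly larger truncation level $2\delta$ rather than $\delta$ is what lets these two cases close without ever analysing behaviour at $\{\D\bu_{\varepsilon}=0\}$. Having this $\varepsilon$-uniform H\"older estimate, Arzel\`a--Ascoli yields $\G_{2\delta,\,\varepsilon}(\D\bu_{\varepsilon})\to\G_{2\delta}(\D\bu)$ locally uniformly, so $\G_{2\delta}(\D\bu)\in C^{0}(\Omega_{T})$ for every $\delta\in(0,\,1)$. Finally, since $\lVert\G_{2\delta}(\D\bu)-\D\bu\rVert_{L^{\infty}}\le 2\delta$ pointwise, letting $\delta\to 0$ shows $\D\bu$ is a locally uniform limit of continuous functions, hence continuous on $\Omega_{T}$.

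The main obstacle I anticipate is the $\varepsilon$-uniform H\"older estimate for $\G_{2\delta,\,\varepsilon}(\D\bu_{\varepsilon})$ in the degenerate/non-degenerate dichotomy: one must verify that the average integral of $\D\bu_{\varepsilon}$ cannot itself degenerate below a $\delta$-dependent threshold in the non-degenerate alternative, and that the comparison estimates and the oscillation decay can be iterated with constants independent of $\varepsilon$. A secondary but delicate point, specific to $p\in(1,\,p_{\mathrm c}\rbrack$ where the compact embedding $W_{0}^{1,\,p}(\Omega)\subset L^{2}(\Omega)$ fails, is obtaining the uniform $L^{\infty}$-bound on $\bu_{\varepsilon}$ via the weak maximum principle under the optimal forcing condition (\ref{Eq (Section 1): Condition for q,r}) and the integrability (\ref{Eq (Section 1): Higher Integrability of u for subcritical case}), which is needed to make the strong-convergence argument and the Moser iteration go through.
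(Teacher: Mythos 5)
Your proposal is correct and follows essentially the same route as the paper: regularized approximate systems, Moser iteration for uniform gradient bounds (with the weak maximum principle supplying the $L^{\infty}$-bound on $\bu_{\varepsilon}$ in the subcritical range), strong $L^{p}$-convergence of $\D\bu_{\varepsilon}$, the degenerate/non-degenerate dichotomy yielding $\varepsilon$-uniform H\"older estimates for $\G_{2\delta,\,\varepsilon}(\D\bu_{\varepsilon})$, and finally Arzel\`a--Ascoli plus $\delta\to 0$. The two obstacles you single out (non-degeneracy of the gradient average in the non-degenerate alternative, and the $L^{\infty}$-bound for $p\le p_{\mathrm c}$ in the absence of compact embeddings) are precisely the points the paper treats in Sections \ref{Section: Non-Degenerate} and \ref{Section: L-infty}.
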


 This paper is organized as follows.
 Section \ref{Section: Preliminaries} provides preliminaries. 
 There we particularly introduce an approximate system for (\ref{Eq (Section 1): General System}).
 Section \ref{Section: L-infty} is focused on deducing bound estimates for weak solutions in the singular range $p\in(1,\,2)$.
 In particular, we use Moser's iteration to show a weak maximum principle for the approximate problems, and local $L^{\infty}$-estimates for (\ref{Eq (Section 1): General System}).
 In Section \ref{Section: Weak Formulations}, we would like to list the basic regularity estimates for approximate problems.
 In particular, local gradient bounds (Theorem \ref{Theorem (Section 4): Gradient bounds}), a De Giorgi-type oscillation lemma (Proposition \ref{Proposition (Section 4): Degenerate Case}), and a Campanato-type growth estimate (Proposition \ref{Proposition (Section 4): Non-Degenerate Case}) are shown in Sections \ref{Section: Gradient Bounds}--\ref{Section: Non-Degenerate}.
 Various weak formulations and energy estimates are also discussed in Section \ref{Section: Weak Formulations} as preliminaries for Sections \ref{Section: Gradient Bounds}--\ref{Section: Non-Degenerate}.
 Section \ref{Section: Gradient Bounds} aims to show local gradient bounds (Theorem \ref{Theorem (Section 4): Gradient bounds}) by Moser's iteration.
 The analytic approaches therein become different, depending on whether $p>p_{\mathrm c}$ or $p\le p_{\mathrm c}$.
 In Section \ref{Section: Degenerate}, we use De Giorgi's truncation to show an oscillation lemma for a certain subsolution (Proposition \ref{Proposition (Section 4): Degenerate Case}).
 Section \ref{Section: Non-Degenerate} shows Campanato-type growth estimates by the comparisons with some sort of heat flows.
 The proofs in Sections \ref{Section: Degenerate}--\ref{Section: Non-Degenerate} are carefully carried out by the truncation method, so that non-uniformly elliptic structures are suitably discarded. 
 The resulting estimates therein depend on the truncation parameter $\delta$.
 Section \ref{Section: Convergence Result} aims to prove the strong convergence of approximate solutions.
 More precisely, we would like to prove that a spatial gradient converges strongly in $L^{p}$.
 There, the treatment of external force terms will differ, depending on whether $p>p_{\mathrm c}$ or not.
 Section \ref{Section: Convergence Result} also includes some solvability results of the $(1,\,p)$-Laplace parabolic Dirichlet boundary problems, where the external force term is in $L^{2,\,1}(\Omega_{T})^{N}$.
 The proof of Theorem \ref{Theorem (Section 1): Gradient continuity} is finally given in Section \ref{Section: Gradient Continuity}.
 There, the local H\"{o}lder estimate for truncated gradients of approximate solutions (Theorem \ref{Theorem (Section 4): Holder Truncated-Gradient Continuity}) is also shown by using Propositions \ref{Proposition (Section 4): Degenerate Case}--\ref{Proposition (Section 4): Non-Degenerate Case}.

 \begin{remark}\upshape
    In Sections \ref{Section: L-infty}--\ref{Section: Convergence Result}, we often provide formal computations, in the sense that the time derivatives $\partial_{t}\bu,\,\partial_{t}\bu_{\varepsilon}$ are treated as some sort of \textit{functions}, although they are merely \textit{functionals}.
    These formal computations are justified by using the Steklov average, found in \cite{LSU MR0241822} (see also \cite[Appendix B]{BDM MR3073153} for an alternative approach).
 \end{remark}

 \subsection*{Acknowledgments}
 The author is supported by JSPS KAKENHI Grant No.~JP24K22828.
 The author would like to thank Prof.~Takahito Kashiwabara for kindly informing him of the literature \cite{Fuchs-Seregin Monograph}. 
 \section{Preliminaries}\label{Section: Preliminaries}
 \subsection{Approximate systems and a convergence lemma}
 We introduce an approximation parameter $\varepsilon\in(0,\,1)$, and assume that $\buf_{\varepsilon}\in L^{\infty}(\Omega_{T})^{N}$ satisfies
 \begin{equation}\label{Eq (Section 2): Weak Conv of f-epsilon}
     \buf_{\varepsilon} \stackrel{\star}{\rightharpoonup} \buf\quad \text{in}\quad L^{q,\,r}(\Omega_{T})^{N}\quad \text{and}\quad \buf_{\varepsilon}\rightharpoonup \buf \quad \text{in}\quad L^{p^{\prime}}(0,\,T;\,V_{0}^{\prime})^{N}.
 \end{equation}
 This condition is not restrictive by straightforward approximations.
 In fact, extending $\buf\in L^{q,\,r}(\Omega_{T})^{N}\cap L^{p^{\prime}}(0,\,T;\,V_{0}^{\prime}(\Omega))$ in ${\mathbb R}^{n+1}\setminus \Omega_{T}$ by the zero function, and convoluting this extended function with the $(n+1)$-dimensional Friedrichs mollifier, we easily show (\ref{Eq (Section 2): Weak Conv of f-epsilon}).
 Moreover, when both $q$ and $r$ are finite, the first weak convergence of (\ref{Eq (Section 2): Weak Conv of f-epsilon}) is replaced by the strong convergence in $L^{q,\,r}(\Omega_{T})^{N}$.
 For each $\varepsilon\in(0,\,1)$, we consider a parabolic system of the form
 \begin{equation}\label{Eq (Section 2): Approximate System}
    \partial_{t}u^{j}-\partial_{x_{\beta}}\left(\gamma_{\alpha\beta}a_{s}(x,\,t)g_{s}(\varepsilon^{2}+\lvert\D\bu_{\varepsilon}\rvert_{\bg}^{2})\partial_{x_{\alpha}}u_{\varepsilon}^{j} \right)=f_{\varepsilon}^{j}\quad \text{in}\quad \Omega_{T}
 \end{equation}
 for every $j\in\{\,1,\,\dots\,,\,N\,\}$, where we sum over $\alpha,\,\beta\in\{\,1,\,\dots\,,\,n\,\}$ and $s\in\{\,1,\,p\,\}$.
 In addition to (\ref{Eq (Section 2): Approximate System}), we treat the parabolic Dirichlet boundary condition
 \begin{equation}\label{Eq (Section 2): Parabolic Dirichlet Boundary}
     \bu_{\varepsilon}=\bu_{\star}\quad \textrm{on}\quad \partial_{\mathrm p}\Omega_{T}
 \end{equation}
 with $\bu_{\star}\in X^{p}(0,\,T;\,\Omega)^{N}\cap C(\lbrack 0,\,T\rbrack;\,L^{2}(\Omega))^{N}$.
 In other words, we consider $\bu_{\varepsilon}\in \bu_{\star}+X_{0}^{p}(0,\,T;\,\Omega)^{N}\subset C(\lbrack 0,\,T\rbrack;\,L^{2}(\Omega))^{N}$ that satisfies
 \begin{equation}\label{Eq (Section 2): Approximate Weak Form}
     \int_{0}^{T}\langle\partial_{t}\bu_{\varepsilon},\,\bphi \rangle\,\d t+\iint_{\Omega_{T}}\langle \A_{\varepsilon}(x,\,t,\,\D\bu)\mid \D\bphi \rangle_{\bg} =\iint_{\Omega_{T}}\langle \buf_{\varepsilon}\mid\bphi\rangle\,\d x\d t
 \end{equation}
 for all $\bphi\in X_{0}^{p}(0,\,T;\,\Omega)^{N}$, and $(\bu_{\varepsilon}-\bu_{\star})(\,\cdot\,,\,0)=0$ in $L^{2}(\Omega)^{N}$.
 Here the mapping ${\mathbf A}_{\varepsilon}\colon \Omega_{T}\times {\mathbb R}^{Nn}\to{\mathbb R}^{Nn}$ is defined as
 \[{\mathbf A}_{\varepsilon}(x,\,t,\,\bz)\coloneqq {\mathbf A}_{1,\,\varepsilon}(x,\,t,\,\bz)+{\mathbf A}_{p,\,\varepsilon}(x,\,t,\,\bz)\] with \(\A_{s,\,\varepsilon}(x,\,t,\,\bz)\coloneqq a_{s}(x,\,t)g_{s}(\varepsilon^{2}+\lvert\bz\rvert_{\bg(x,\,t)}^{2})\bz\)
 for $s\in\{\,1,\,p\,\}$, $(x,\,t)\in \Omega_{T}$, and $\bz\in{\mathbb R}^{Nn}$.
 The unique existence of the boundary value problem (\ref{Eq (Section 2): Approximate System})--(\ref{Eq (Section 2): Parabolic Dirichlet Boundary}) is in the scope of the classical monotone operator theory.
 More precisely, since (\ref{Eq (Section 1): Ellipticity of p-Laplace-type operator}), (\ref{Eq (Section 1): Positivity Assumptions for Coefficients}) and (\ref{Eq (Section 1): Ellipticity of 1-Laplace-type operator}) imply that $\A_{\varepsilon}$ satisfies 
 \begin{equation}\label{Eq (Section 2): Coercivity of A-epsilon}
     \langle \A_{\varepsilon}(x,\,t,\,\bz)\mid \bz \rangle_{\bg(x,\,t)}\ge \lambda_{0}\left(\varepsilon^{2}+\lvert \bz\rvert_{\bg}^{2} \right)^{p/2-1}\ge \left\{\begin{array}{cc}
        \lambda_{0}\lvert \bz\rvert_{\bg}^{p} & (2\le p<\infty), \\ \lambda_{0}\left(\lvert \bz\rvert_{\bg}^{p}-\varepsilon^{p} \right) & (1<p<2),
     \end{array} \right.
 \end{equation} 
 with $\lambda_{0}\coloneqq \kappa_{0}\Gamma_{0}^{-1}>0$,
 we can construct the weak solution of (\ref{Eq (Section 2): Approximate System})--(\ref{Eq (Section 2): Parabolic Dirichlet Boundary}) by the Faedo--Galerkin method (see e.g., \cite[Chapitre 2]{Lions-monotone MR0259693}, \cite[Chapter III]{Showalter MR1422252}).
 We note that the classical method therein might not work directly for (\ref{Eq (Section 2): Approximate System}), since the total variation energy lacks its differentiability, in the sense of G\^{a}teaux or Fr\'{e}chet derivative.
 In Section \ref{Section: Convergence Result}, we will later justify that we can construct weak solutions to (\ref{Eq (Section 1): General System}) as the limit of weak solutions to (\ref{Eq (Section 2): Approximate System}).
 In this sense, we would like to call (\ref{Eq (Section 2): Approximate System}) as an \textit{approximate} system, and a weak solution to (\ref{Eq (Section 2): Approximate System}) as an \textit{approximate} solution.
 Lemma \ref{Lemma (Section 2): Fundamental Lemma for convergence of solutions} below is useful in showing the convergence of approximate solutions.
 \begin{lemma}\label{Lemma (Section 2): Fundamental Lemma for convergence of solutions}
    For the mapping ${\mathbf A}_{\varepsilon}={\mathbf A}_{1,\,\varepsilon}+{\mathbf A}_{p,\,\varepsilon}$, we have the following.
    \begin{enumerate}
        \item \label{Item 1/3} For each fixed $\bv\in L^{p}(\Omega_{T};\,{\mathbb R}^{Nn})$, we have
        \begin{equation}\label{Eq (Section 3): Dummy Conv}
            {\mathbf A}_{\varepsilon}(x,\,t,\,\bv)\to{\mathbf A}_{0}(x,\,t,\,\bv)\quad \text{in}\quad L^{p^{\prime}}(\Omega_{T};\,{\mathbb R}^{Nn}).
        \end{equation}
        Here, for $(x,\,t)\in\Omega_{T}$ and $\bz\in{\mathbb R}^{Nn}$, ${\mathbf A}_{0}(x,\,t,\,\bz)$ is defined as $\A_{0}(x,\,t,\,\bz)\coloneqq a_{1}(x,\,t)g_{1}(\lvert \bz\rvert^{2})\bz +a_{p}(x,\,t)g_{p}(\lvert \bz\rvert^{2})\bz$ when $\bz\neq 0$, and otherwise $\A_{\varepsilon}(x,\,t,\,0)\coloneqq 0$.
        \item \label{Item 2/3} Let $\bv_{\varepsilon}\in L^{p}(\Omega_{T};\,{\mathbb R}^{Nn})$ be given for each $\varepsilon\in(0,\,1)$. 
        Assume that there hold $\varepsilon_{k}\to 0$ and $\bv_{\varepsilon_{k}}\to \bv_{0}$ in $L^{p}(\Omega_{T};\,{\mathbb R}^{Nn})$ for some $\bv_{0}\in L^{p}(\Omega_{T};\,{\mathbb R}^{Nn})$, as $k\to\infty$.
        Then, by taking a subsequence if necessary, we may let
        \begin{equation}\label{Eq (Section 3): Conv on A-p-epsilon}
            {\mathbf A}_{p,\,\varepsilon_{k}}(x,\,t,\,\bv_{\varepsilon_{k}})\to g_{p}(\lvert \bv_{0} \rvert_{\bg}^{2})\bv_{0}\quad \text{in}\quad L^{p^{\prime}}(\Omega_{T};\,{\mathbb R}^{Nn}),
        \end{equation}
        and
        \begin{equation}\label{Eq (Section 3): Conv on A-1-epsilon}
            {\mathbf Z}_{\varepsilon_{k}}\coloneqq \frac{\bv_{\varepsilon_{k}}}{\sqrt{\varepsilon^{2}+\lvert \bv_{\varepsilon_{k}}\rvert_{\bg}^{2}}}\stackrel{\star}{\rightharpoonup} {\mathbf Z}_{0}\quad \text{in}\quad L^{\infty}(\Omega_{T};\,{\mathbb R}^{Nn}),
        \end{equation}
        where the limit ${\mathbf Z}_{0}\in L^{\infty}(\Omega_{T};\,{\mathbb R}^{Nn})$ satisfies
        \begin{equation}\label{Eq (Section 3): Subgradient}
            {\mathbf Z}_{0}(x,\,t)\in \partial_{\bg(x,\,t)}\lvert \,\cdot\,\rvert_{\bg(x,\,t)}(\bv_{0}(x,\,t))\quad \text{for a.e.~}(x,\,t)\in\Omega_{T}.
        \end{equation}
        In particular, along with this subsequence, we have
        \[{\mathbf A}_{\varepsilon_{k}}(x,\,t,\,\bv_{\varepsilon_{k}}) \rightharpoonup {\mathbf A}(x,\,t,\,\bv_{0},\,{\mathbf Z}_{0})\quad \textrm{in}\quad L^{p^{\prime}}(\Omega_{T};\,{\mathbb R}^{Nn}).\]
        \item \label{Item 3/3} For every $m\in{\mathbb N}$, let the pair $(\bv_{m},\,{\mathbf Z}_{m})\in L^{1}(\Omega_{T};\,{\mathbb R}^{Nn})\times L^{\infty}(\Omega_{T};\,{\mathbb R}^{Nn})$ satisfy ${\mathbf Z}_{m}(x,\,t)\in \partial_{\bg (x,\,t)}\lvert\,\cdot\, \rvert_{\bg(x,\,t)}(\bv_{m}(x,\,t))$ for a.e.~$(x,\,t)\in\Omega_{T}$. 
        If $\bv_{m}\to \bv$ a.e. in $\Omega_{T}$ as $m\to\infty$, then by taking a subsequence if necessary, we may let
        \begin{equation}\label{Eq (Section 3): Conv on Z-m}
            {\mathbf Z}_{m}\stackrel{\star}{\rightharpoonup}{\mathbf Z}\quad \textrm{in}\quad L^{\infty}(\Omega_{T};\,{\mathbb R}^{N}),
        \end{equation}
        where the weak limit ${\mathbf Z}$ satisfies 
        \begin{equation}\label{Eq (Section 3): Subgradient again}
            {\mathbf Z}(x,\,t)\in\partial_{\bg(x,\,t)}\lvert \,\cdot\,\rvert_{\bg(x,\,t)}(\bv(x,\,t))\quad \text{for a.e.~}    (x,\,t)\in\Omega_{T}.
        \end{equation}
    \end{enumerate}
 \end{lemma}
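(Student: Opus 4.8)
The plan is to reduce everything to two ingredients: the regularized fluxes converge pointwise and are dominated in $L^{p^{\prime}}$, and the only structurally delicate object — membership ${\mathbf Z}\in\partial_{\bg}\lvert\,\cdot\,\rvert_{\bg}(\bv)$ — is stable under weak-$\star$ limits of the base point $\bv$. First I would record that, for a.e.\ $(x,\,t)$, one has ${\mathbf Z}\in\partial_{\bg}\lvert\,\cdot\,\rvert_{\bg}(\bv)$ if and only if $\lvert{\mathbf Z}\rvert_{\bg}\le 1$ and $\langle{\mathbf Z}\mid\bv\rangle_{\bg}=\lvert\bv\rvert_{\bg}$; this follows from the defining inequality by inserting $\bxi=0$ and $\bxi=2\bv$, and it pinpoints exactly what must survive the limit. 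Throughout I use the norm equivalence (\ref{Eq (Section 1): Matrix Gamma}), the growth bounds (\ref{Eq (Section 1): Growth of p-Laplace-type operator}) and (\ref{Eq (Section 1): Growth of 1-Laplace-type operator}), the $L^{\infty}$-bounds on $a_{1},\,a_{p},\,\gamma_{\alpha\beta}$ from (\ref{Eq (Section 1): Bound Assumptions for Coefficients}), and $\lvert\Omega_{T}\rvert<\infty$. For \ref{Item 1/3} I would argue by dominated convergence: where $\bv(x,\,t)\neq 0$, continuity of $g_{1},\,g_{p}$ on $(0,\,\infty)$ gives $\A_{\varepsilon}(x,\,t,\,\bv)\to\A_{0}(x,\,t,\,\bv)$, and both sides vanish where $\bv(x,\,t)=0$; as a dominant, $\lvert\A_{1,\,\varepsilon}(x,\,t,\,\bv)\rvert=a_{1}\lvert\bv\rvert/\sqrt{\varepsilon^{2}+\lvert\bv\rvert_{\bg}^{2}}\le\Gamma_{0}\gamma_{0}^{-1/2}$ is constant and (\ref{Eq (Section 1): Growth of p-Laplace-type operator}) with $\varepsilon<1$ gives $\lvert\A_{p,\,\varepsilon}(x,\,t,\,\bv)\rvert^{p^{\prime}}\le C(1+\lvert\bv\rvert^{p})\in L^{1}(\Omega_{T})$ — distinguishing $p\ge 2$ from $1<p<2$ when handling the factor $(\varepsilon^{2}+\lvert\bv\rvert_{\bg}^{2})^{p/2-1}$. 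This proves (\ref{Eq (Section 3): Dummy Conv}).

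For \ref{Item 2/3}, after passing to a subsequence I may assume $\bv_{\varepsilon_{k}}\to\bv_{0}$ a.e.\ with $\lvert\bv_{\varepsilon_{k}}\rvert\le h$ for some $h\in L^{p}(\Omega_{T})$. Then (\ref{Eq (Section 3): Conv on A-p-epsilon}) follows as in \ref{Item 1/3}: pointwise convergence of $a_{p}g_{p}(\varepsilon_{k}^{2}+\lvert\bv_{\varepsilon_{k}}\rvert_{\bg}^{2})\bv_{\varepsilon_{k}}$ off $\{\bv_{0}=0\}$, its vanishing on that set, and the $L^{p^{\prime}}$-domination by $C(1+h^{p})$. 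For (\ref{Eq (Section 3): Conv on A-1-epsilon}), $\lvert{\mathbf Z}_{\varepsilon_{k}}\rvert_{\bg}\le 1$ a.e.\ gives an $L^{\infty}(\Omega_{T};\,{\mathbb R}^{Nn})$-bound and hence a weak-$\star$ limit ${\mathbf Z}_{0}$ along a further subsequence; testing $\langle{\mathbf Z}_{\varepsilon_{k}}\mid\bxi\rangle_{\bg}\le\lvert\bxi\rvert_{\bg}$ against $\chi_{E}$ for measurable $E$ (admissible since $\gamma_{\alpha\beta}\in L^{\infty}$) and letting $\bxi$ range over a countable dense subset of ${\mathbb R}^{Nn}$ gives $\lvert{\mathbf Z}_{0}\rvert_{\bg}\le 1$ a.e. The decisive point is the contact identity $\langle{\mathbf Z}_{0}\mid\bv_{0}\rangle_{\bg}=\lvert\bv_{0}\rvert_{\bg}$ a.e.; I would write $\langle{\mathbf Z}_{\varepsilon_{k}}\mid\bv_{0}\rangle_{\bg}=\langle{\mathbf Z}_{\varepsilon_{k}}\mid\bv_{\varepsilon_{k}}\rangle_{\bg}-\langle{\mathbf Z}_{\varepsilon_{k}}\mid\bv_{\varepsilon_{k}}-\bv_{0}\rangle_{\bg}$, note that $\langle{\mathbf Z}_{\varepsilon_{k}}\mid\bv_{\varepsilon_{k}}\rangle_{\bg}=\lvert\bv_{\varepsilon_{k}}\rvert_{\bg}^{2}/\sqrt{\varepsilon_{k}^{2}+\lvert\bv_{\varepsilon_{k}}\rvert_{\bg}^{2}}\to\lvert\bv_{0}\rvert_{\bg}$ in $L^{1}$ (dominated by $\gamma_{0}^{-1/2}h$) and $\lvert\langle{\mathbf Z}_{\varepsilon_{k}}\mid\bv_{\varepsilon_{k}}-\bv_{0}\rangle_{\bg}\rvert\le\lvert\bv_{\varepsilon_{k}}-\bv_{0}\rvert_{\bg}\to 0$ in $L^{1}$, while weak-$\star$ convergence of ${\mathbf Z}_{\varepsilon_{k}}$ against $\chi_{E}\gamma_{\alpha\beta}\bv_{0}\in L^{1}$ identifies $\lim_{k}\int_{E}\langle{\mathbf Z}_{\varepsilon_{k}}\mid\bv_{0}\rangle_{\bg}=\int_{E}\langle{\mathbf Z}_{0}\mid\bv_{0}\rangle_{\bg}$; arbitrariness of $E$ then gives the identity, and (\ref{Eq (Section 3): Subgradient}) holds. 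Finally, writing $\A_{\varepsilon_{k}}(x,\,t,\,\bv_{\varepsilon_{k}})=a_{1}{\mathbf Z}_{\varepsilon_{k}}+a_{p}g_{p}(\varepsilon_{k}^{2}+\lvert\bv_{\varepsilon_{k}}\rvert_{\bg}^{2})\bv_{\varepsilon_{k}}$, the first summand converges weakly in $L^{p^{\prime}}(\Omega_{T};\,{\mathbb R}^{Nn})$ (weak-$\star$ in $L^{\infty}$ implies weak in $L^{p^{\prime}}$ on the bounded domain, since $p>1$) and the second strongly, so the sum converges weakly to $a_{1}{\mathbf Z}_{0}+a_{p}g_{p}(\lvert\bv_{0}\rvert_{\bg}^{2})\bv_{0}=\A(x,\,t,\,\bv_{0},\,{\mathbf Z}_{0})$ by (\ref{Eq (Section 1): Flux Term}).

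Part \ref{Item 3/3} is the one-Laplace half of \ref{Item 2/3} with a.e.\ convergence in place of $L^{p}$-convergence: $\lvert{\mathbf Z}_{m}\rvert_{\bg}\le 1$ a.e.\ yields an $L^{\infty}$-bound, a weak-$\star$ limit ${\mathbf Z}$ along a subsequence (so (\ref{Eq (Section 3): Conv on Z-m}) holds), and $\lvert{\mathbf Z}\rvert_{\bg}\le 1$ a.e.\ exactly as above; for (\ref{Eq (Section 3): Subgradient again}) I would write $\langle{\mathbf Z}_{m}\mid\bv\rangle_{\bg}=\lvert\bv_{m}\rvert_{\bg}-\langle{\mathbf Z}_{m}\mid\bv_{m}-\bv\rangle_{\bg}$, whence $\lvert\langle{\mathbf Z}_{m}\mid\bv_{m}-\bv\rangle_{\bg}\rvert\le\lvert\bv_{m}-\bv\rvert_{\bg}\to 0$ and $\lvert\bv_{m}\rvert_{\bg}\to\lvert\bv\rvert_{\bg}$ a.e.\ force $\langle{\mathbf Z}_{m}\mid\bv\rangle_{\bg}\to\lvert\bv\rvert_{\bg}$ a.e.; this sequence is dominated by $\lvert\bv\rvert_{\bg}$, finite a.e.\ and hence integrable on each sublevel set $\{\lvert\bv\rvert\le R\}$, so dominated convergence together with the weak-$\star$ convergence of ${\mathbf Z}_{m}$ (tested against $\chi_{E\cap\{\lvert\bv\rvert\le R\}}\gamma_{\alpha\beta}\bv\in L^{1}$, then letting $R\to\infty$) gives $\langle{\mathbf Z}\mid\bv\rangle_{\bg}=\lvert\bv\rvert_{\bg}$ a.e. The step I expect to be the main obstacle is exactly this passage of the contact identity $\langle{\mathbf Z}_{m}\mid\bv_{m}\rangle_{\bg}=\lvert\bv_{m}\rvert_{\bg}$ (and its $\varepsilon$-regularized analogue in \ref{Item 2/3}) to the limit, since it is the pairing of a weakly-$\star$ convergent factor with an only a.e.\ (or strongly) convergent factor; the splitting above is what turns it into the pairing of ${\mathbf Z}_{m}$ with a \emph{fixed} $L^{1}$ datum plus an error that is controlled in $L^{1}$, and the admissibility of the $\bg$-pairing test functions is precisely where $\gamma_{\alpha\beta}\in L^{\infty}$ is used.
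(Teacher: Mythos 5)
Your proof is correct, but it follows a genuinely different route from the paper's, so let me compare the two. The paper splits the subgradient inclusion into a pair of assertions: ${\mathbf Z}_{0}=\bv_{0}/\lvert\bv_{0}\rvert_{\bg}$ a.e.\ on $D=\{\bv_{0}\neq 0\}$, and $\lvert{\mathbf Z}_{0}\rvert_{\bg}\le 1$ a.e.\ in $\Omega_{T}$. The first assertion is obtained by observing that ${\mathbf Z}_{\varepsilon_{k}}\to\bv_{0}/\lvert\bv_{0}\rvert_{\bg}$ pointwise a.e.\ on $D$ (after passing to an a.e.-convergent subsequence of $\bv_{\varepsilon_{k}}$), and since the ${\mathbf Z}_{\varepsilon_{k}}$ are uniformly bounded, this pointwise limit must coincide with the weak-$\star$ limit on $D$; the second follows from the sequential lower semicontinuity of $\mathbf{W}\mapsto\esssup_{\Omega_{T}}\lvert\mathbf{W}\rvert_{\bg}$ under weak-$\star$ convergence (cited from \cite[Lemma 2.8 (3)]{T-scalar}). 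You instead characterize the subdifferential by the contact identity $\langle{\mathbf Z}\mid\bv\rangle_{\bg}=\lvert\bv\rvert_{\bg}$ together with $\lvert{\mathbf Z}\rvert_{\bg}\le 1$, and prove that the contact identity survives the limit by splitting $\langle{\mathbf Z}_{\varepsilon_{k}}\mid\bv_{0}\rangle_{\bg}$ into the strongly $L^{1}$-convergent piece $\langle{\mathbf Z}_{\varepsilon_{k}}\mid\bv_{\varepsilon_{k}}\rangle_{\bg}-\langle{\mathbf Z}_{\varepsilon_{k}}\mid\bv_{\varepsilon_{k}}-\bv_{0}\rangle_{\bg}$ and identifying its integral against $\chi_{E}$ via the weak-$\star$ pairing with the fixed datum $\chi_{E}\gamma_{\alpha\beta}\bv_{0}\in L^{1}$; the norm bound $\lvert{\mathbf Z}_{0}\rvert_{\bg}\le 1$ you obtain by testing against $\chi_{E}$ and a countable dense set of directions $\bxi$ rather than by invoking the lower-semicontinuity lemma. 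Your version avoids the case split over $D$ and its complement and makes the argument self-contained; the paper's is shorter because it delegates two steps to the cited lemma. Your handling of part \ref{Item 3/3}, where the a.e.\ limit $\bv$ need not be integrable, is also right: the truncation to $\{\lvert\bv\rvert\le R\}$ is exactly what is needed since $L^{1}$-functions need not have $L^{1}$ a.e.\ limits, and the monotone passage $R\to\infty$ closes the gap. Both arguments rest on the same two pillars (uniform $L^{\infty}$ bound, dominated convergence against a fixed datum); the difference is which pointwise relation one chooses to track through the limit.
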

 We briefly outline the proof of Lemma \ref{Lemma (Section 2): Fundamental Lemma for convergence of solutions}. 
 The detailed discussions are found in \cite[Lemma 2.8]{T-scalar} and \cite[Lemma 5]{Giga-Tsubouchi MR4408168}.
 \begin{proof}
    The strong convergence results (\ref{Eq (Section 3): Dummy Conv})--(\ref{Eq (Section 3): Conv on A-p-epsilon}) are easy consequences from Lebesgue's dominated convergence theorem (see \cite[Lemma 2.8 (1)--(2)]{T-scalar}).
    Although (\ref{Eq (Section 3): Conv on A-1-epsilon}) and (\ref{Eq (Section 3): Conv on Z-m}) are clear by a weak compactness argument, we need to verify the inclusion properties (\ref{Eq (Section 3): Subgradient}) and (\ref{Eq (Section 3): Subgradient again}) respectively.
    The claim (\ref{Eq (Section 3): Subgradient}) is equivalent to the two assertions; ${\mathbf Z}=\bv_{0}/\lvert\bv_{0}\rvert_{\bg}$ a.e.~in $D\coloneqq \{\Omega_{T}\mid \bv_{0}\neq 0\}$, and $\lvert {\mathbf Z}\rvert_{\bg}\le 1$ a.e.~in $\Omega_{T}$.
    We remark that $\bv_{\varepsilon_{k}}\to \bv_{0}$ a.e.~in $\Omega_{T}$, which is not restrictive by relabelling a sequence, implies ${\mathbf Z}_{\varepsilon_{k}} \to\bv_{0}/\lvert \bv_{0}\rvert_{\bg}$ a.e.~in $D$.
    Combining this fact and (\ref{Eq (Section 3): Conv on A-1-epsilon}) yields the first assertion.
    The second one is easy, since $\lvert {\mathbf Z}_{\varepsilon_{k}}\rvert_{\bg}\le 1$ clearly holds a.e.~in $\Omega_{T}$ and this inequality is preserved under limit passage (\ref{Eq (Section 3): Conv on A-1-epsilon}). 
    Indeed, the mapping $L^{\infty}(\Omega_{T};\,{\mathbb R}^{Nn}) \ni {\mathbf W}\mapsto \esssup\limits_{\Omega_{T}}\, \lvert {\mathbf W}\rvert_{\bg}\in{\mathbb R}_{\ge 0}$ is sequentially lower semicontinuous with respect to the weak${}^{\star}$ topology (see \cite[Lemma 2.8 (3)]{T-scalar} for a general result).
    In the same way, (\ref{Eq (Section 3): Subgradient again}) is shown (see also \cite[Lemma 5]{Giga-Tsubouchi MR4408168}). 
 \end{proof}
 Among the three convergence results listed in Lemma \ref{Lemma (Section 2): Fundamental Lemma for convergence of solutions}, the second one plays an important role.
 Roughly speaking, we can construct a weak solution to (\ref{Eq (Section 1): General System}) as the limit function of weak solutions to (\ref{Eq (Section 2): Approximate System}), as long as the strong convergence of a spatial derivative is verified. 
 Lemma \ref{Lemma (Section 2): Fundamental Lemma for convergence of solutions} is used later in Section \ref{Section: Convergence Result}.
 \subsection{Basic structures of the approximate operators and some related mappings}
 We introduce basic bilinear forms related to $\A_{\varepsilon}(x,\,t,\,\bz)$.
 For $(x,\,t)\in \Omega_{T}$ and $\bz\in {\mathbb R}^{Nn}$, we define
 \[
 {\mathcal A}_{\varepsilon}(x,\,t,\,\bz)(\bxi,\,\bta)\coloneqq a_{s}(x,\,t)\left[g_{s}(\varepsilon^{2}+\lvert \bz\rvert_{\bg}^{2})\gamma_{\alpha\beta}\delta^{ij}+2g_{s}^{\prime}(\varepsilon^{2}+\lvert\bz\rvert_{\bg}^{2})\gamma_{\alpha\kappa}\zeta_{\kappa}^{i}\gamma_{\beta\lambda}\zeta_{\lambda}^{j} \right]\gamma_{\mu\nu}\xi_{\alpha\mu}^{i}\eta_{\beta\nu}^{j}
 \]
 for $\bxi=(\xi_{\alpha\mu}^{i})$, $\bta=(\eta_{\beta\nu}^{j})\in{\mathbb R}^{Nn^{2}}$,
 \[
 {\mathcal B}_{\varepsilon}(x,\,t,\,\bz)(\bxi,\,\bta)\coloneqq a_{s}(x,\,t)\left[g_{s}(\varepsilon^{2}+\lvert \bz\rvert_{\bg}^{2})\gamma_{\alpha\beta}\delta^{ij}+2g_{s}^{\prime}(\varepsilon^{2}+\lvert\bz\rvert_{\bg}^{2})\gamma_{\alpha\kappa}\zeta_{\kappa}^{i}\gamma_{\beta\lambda}\zeta_{\lambda}^{j} \right]\xi_{\alpha}^{i}\eta_{\beta}^{j}
 \]
 for $\bxi=(\xi_{\alpha}^{i})$, $\bta=(\eta_{\beta}^{j})\in{\mathbb R}^{Nn}$, and
 \[
 {\mathcal C}_{\varepsilon}(x,\,t,\,\bz)(\bxi,\,\bta)\coloneqq a_{s}(x,\,t)\left[g_{s}(\varepsilon^{2}+\lvert \bz\rvert_{\bg}^{2})\gamma_{\alpha\beta}+2g_{s}^{\prime}(\varepsilon^{2}+\lvert\zeta\rvert_{\bg}^{2})\gamma_{\alpha\kappa}\zeta_{\kappa}^{i}\gamma_{\beta\lambda}\zeta_{\lambda}^{j} \right]\xi_{\alpha}\eta_{\beta}
 \]
 for $\bxi=(\xi_{\alpha})$, $\bta=(\eta_{\beta})\in{\mathbb R}^{n}$.
 Here $\delta^{ij}$ denotes the Kronecker delta, and we use the convention to sum all of $\alpha,\,\beta,\,\kappa,\,\lambda,\,\mu,\,\nu\in\{\,1,\,\dots\,n\,\}$, $i,\,j\in\{\,1,\,\dots\,,\,N\,\}$, and $s\in\{\,1,\,p\,\}$.
 In this subsection, we would like to deduce basic estimates concerning these symmetric bilinear forms or some other mappings.
 These estimates are used later in showing a priori estimates of approximate solutions.

 Firstly, we check the ellipticity estimates of these symmetric bilinear forms.
 \begin{lemma}\label{Lemma (Section 2): Ellipticity of Bilinear Forms}
    For $\varepsilon\in(0,\,1)$, $(x,\,t)\in \Omega_{T}$, $\bz\in{\mathbb R}^{Nn}$, the symmetric bilinear forms ${\mathcal A}_{\varepsilon}(x,\,t,\,\bz\,)$, ${\mathcal B}_{\varepsilon}(x,\,t,\,\bz)$, and ${\mathcal C}_{\varepsilon}(x,\,t,\,\bz)$ satisfy
    \begin{equation}\label{Eq (Section 2): Ellipticity of Bilinear Forms}
        \begin{array}{ccccc}
            \lambda_{0}h_{p,\,\varepsilon}(\bz)\lvert \bxi_{1}\rvert_{\bg}^{2} &\le & {\mathcal A}_{\varepsilon}(x,\,t,\,{\bz})(\bxi_{1},\,\bxi_{1})&\le &\Lambda_{0}\left(h_{1,\,\varepsilon}(\bz)+h_{p,\,\varepsilon}(\bz)\right)\lvert\bxi_{1}\rvert_{\bg}^{2},\\
            \lambda_{0}h_{p,\,\varepsilon}(\bz)\lvert \bxi_{2}\rvert_{\bg}^{2} &\le & {\mathcal B}_{\varepsilon}(x,\,t,\,{\bz})(\bxi_{2},\,\bxi_{2})&\le &\Lambda_{0}\left(h_{1,\,\varepsilon}(\bz)+h_{p,\,\varepsilon}(\bz)\right)\lvert\bxi_{2}\rvert_{\bg}^{2},\\ 
            \lambda_{0}h_{p,\,\varepsilon}(\bz)\lvert \bxi_{3}\rvert_{\bg}^{2} &\le & {\mathcal C}_{\varepsilon}(x,\,t,\,{\bz})(\bxi_{3},\,\bxi_{3})&\le &\Lambda_{0}\left(h_{1,\,\varepsilon}(\bz)+h_{p,\,\varepsilon}(\bz)\right)\lvert\bxi_{3}\rvert_{\bg}^{2}, 
        \end{array}
    \end{equation}
    for all $\bxi_{1}\in{\mathbb R}^{Nn^{2}}$, $\bxi_{2}\in{\mathbb R}^{Nn}$, $\bxi_{3}\in{\mathbb R}^{n}$.
    Here $\lambda_{0}\coloneqq \kappa_{0}\Gamma_{0}^{-1}$, $h_{s,\,\varepsilon}(\bz)\coloneqq \left(\varepsilon^{2}+\lvert \bz\rvert_{\bg}^{2} \right)^{s/2-1}$ for $s\in\lbrack 1,\,\infty)$, $\varepsilon\in(0,\,1)$, $\bz\in{\mathbb R}^{Nn}$, and the constant $\Lambda_{0}\in(1,\,\infty)$ depends at most on the datum set ${\mathcal D}$.
 \end{lemma}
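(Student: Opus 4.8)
The three estimates in (\ref{Eq (Section 2): Ellipticity of Bilinear Forms}) are proved in essentially the same way, so I would treat them in parallel, reducing everything to one elementary inequality. First I would evaluate each symmetric bilinear form on the diagonal and organize the result, for each fixed $s\in\{1,\,p\}$ and with the shorthand $v^{2}\coloneqq\varepsilon^{2}+\lvert\bz\rvert_{\bg}^{2}$, as the sum of a \emph{diffusion} part $a_{s}(x,\,t)g_{s}(v^{2})\lvert\bxi_{i}\rvert_{\bg}^{2}$ and a \emph{degenerate-direction} part $2a_{s}(x,\,t)g_{s}^{\prime}(v^{2})Q_{i}$, where $Q_{i}\ge 0$. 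For $\mathcal{B}_{\varepsilon}$ one computes directly $Q_{2}=\langle\bz\mid\bxi_{2}\rangle_{\bg}^{2}$, and for $\mathcal{C}_{\varepsilon}$ a similar single sum of squares, so Cauchy--Schwarz in the $\bg$-inner product of $\mathbb{R}^{Nn}$ (resp.\ $\mathbb{R}^{n}$) immediately gives $0\le Q_{i}\le\lvert\bz\rvert_{\bg}^{2}\lvert\bxi_{i}\rvert_{\bg}^{2}\le v^{2}\lvert\bxi_{i}\rvert_{\bg}^{2}$.

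The only genuinely delicate point is the analogous bound for $\mathcal{A}_{\varepsilon}$, and this is what I expect to be the main obstacle: here $Q_{1}=\gamma_{\mu\nu}Y_{\mu}Y_{\nu}$ with $Y_{\mu}\coloneqq\gamma_{\alpha\kappa}\zeta_{\kappa}^{i}\xi_{\alpha\mu}^{i}=\langle\bz\mid\bxi_{1}^{(\mu)}\rangle_{\bg}$, where $\bxi_{1}^{(\mu)}\coloneqq(\xi_{\alpha\mu}^{i})_{i,\,\alpha}\in\mathbb{R}^{Nn}$ is the $\mu$-slice of $\bxi_{1}$ and $\lvert\bxi_{1}\rvert_{\bg}^{2}=\gamma_{\alpha\beta}\gamma_{\mu\nu}\xi_{\alpha\mu}^{i}\xi_{\beta\nu}^{i}$. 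A crude eigenvalue estimate only yields $Q_{1}\le\gamma_{0}^{-1}\lvert\bz\rvert_{\bg}^{2}\lvert\bxi_{1}\rvert_{\bg}^{2}$, which is not sharp enough to recover the stated constant $\lambda_{0}=\kappa_{0}\Gamma_{0}^{-1}$. To remove the factor $\gamma_{0}^{-1}$ I would argue by duality: $Q_{1}^{1/2}=\sup\bigl\{\sum_{\mu}d_{\mu}Y_{\mu}\mathrel{}\colon\mathrel{}d\in\mathbb{R}^{n},\ d^{\mathsf T}\bg^{-1}d\le 1\bigr\}=\sup\langle\bz\mid\sum_{\mu}d_{\mu}\bxi_{1}^{(\mu)}\rangle_{\bg}$, and since $d^{\mathsf T}\bg^{-1}d\le 1$ forces the rank-one matrix $dd^{\mathsf T}$ to satisfy $dd^{\mathsf T}\le\bg$, testing the positive semidefinite matrix $(\gamma_{\alpha\beta}\xi_{\alpha\mu}^{i}\xi_{\beta\nu}^{i})_{\mu\nu}$ against $dd^{\mathsf T}\le\bg$ gives $\bigl\lvert\sum_{\mu}d_{\mu}\bxi_{1}^{(\mu)}\bigr\rvert_{\bg}^{2}=\gamma_{\alpha\beta}d_{\mu}d_{\nu}\xi_{\alpha\mu}^{i}\xi_{\beta\nu}^{i}\le\gamma_{\alpha\beta}\gamma_{\mu\nu}\xi_{\alpha\mu}^{i}\xi_{\beta\nu}^{i}=\lvert\bxi_{1}\rvert_{\bg}^{2}$. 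Combined with $\lvert\bz\rvert_{\bg}\lvert\sum_{\mu}d_{\mu}\bxi_{1}^{(\mu)}\rvert_{\bg}$ from Cauchy--Schwarz this gives $Q_{1}\le\lvert\bz\rvert_{\bg}^{2}\lvert\bxi_{1}\rvert_{\bg}^{2}\le v^{2}\lvert\bxi_{1}\rvert_{\bg}^{2}$, with constant one.

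With this reduction, the lower bound follows by splitting on the sign of $g_{s}^{\prime}(v^{2})$: when $g_{s}^{\prime}(v^{2})\ge 0$ the degenerate term is nonnegative, and when $g_{s}^{\prime}(v^{2})<0$ one uses $Q_{i}\le\lvert\bz\rvert_{\bg}^{2}\lvert\bxi_{i}\rvert_{\bg}^{2}$; in both cases the $s$-contribution is bounded below by $a_{s}\bigl(g_{s}(v^{2})+2\lvert\bz\rvert_{\bg}^{2}\min\{\,0,\,g_{s}^{\prime}(v^{2})\,\}\bigr)\lvert\bxi_{i}\rvert_{\bg}^{2}$. For $s=p$, (\ref{Eq (Section 1): Ellipticity of p-Laplace-type operator}) with $\sigma=\lvert\bz\rvert_{\bg}^{2}$ and (\ref{Eq (Section 1): Positivity Assumptions for Coefficients}) give a lower bound $\kappa_{0}\Gamma_{0}^{-1}h_{p,\,\varepsilon}(\bz)\lvert\bxi_{i}\rvert_{\bg}^{2}$; for $s=1$, (\ref{Eq (Section 1): Ellipticity of 1-Laplace-type operator}) and $a_{1}\ge 0$ give a lower bound $0$; adding the two yields the claimed estimate with $\lambda_{0}=\kappa_{0}\Gamma_{0}^{-1}$. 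For the upper bound, $\lvert 2g_{s}^{\prime}(v^{2})Q_{i}\rvert\le 2\lvert g_{s}^{\prime}(v^{2})\rvert v^{2}\lvert\bxi_{i}\rvert_{\bg}^{2}$, so each $s$-contribution is at most $2a_{s}\bigl(g_{s}(v^{2})+v^{2}\lvert g_{s}^{\prime}(v^{2})\rvert\bigr)\lvert\bxi_{i}\rvert_{\bg}^{2}$; applying (\ref{Eq (Section 1): Growth of p-Laplace-type operator}) for $s=p$, (\ref{Eq (Section 1): Growth of 1-Laplace-type operator}) for $s=1$, and the bounds $a_{1},\,a_{p}\le\Gamma_{0}$ from (\ref{Eq (Section 1): Bound Assumptions for Coefficients}) gives the $s=p$ term $\le 2\Gamma_{0}^{2}h_{p,\,\varepsilon}(\bz)\lvert\bxi_{i}\rvert_{\bg}^{2}$ and the $s=1$ term $\le 3\Gamma_{0}h_{1,\,\varepsilon}(\bz)\lvert\bxi_{i}\rvert_{\bg}^{2}$, whence the upper bound holds with $\Lambda_{0}\coloneqq 2\Gamma_{0}^{2}+3\Gamma_{0}$, which depends only on $\mathcal{D}$. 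The same three-step scheme (diagonalization, generalized Cauchy--Schwarz for $Q_{i}$, invocation of the structure conditions) applies verbatim to $\mathcal{B}_{\varepsilon}$ and $\mathcal{C}_{\varepsilon}$, where the absence of the extra index $\mu$ makes the control of $Q_{i}$ completely routine.
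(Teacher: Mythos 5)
Your proof is correct and follows the same route the paper takes, but it fills in the details that the paper outsources to \cite[Lemma 2.7]{BDLS-parabolic}: you decompose each quadratic form into a diffusion term $a_s g_s(v^2)\lvert\bxi_i\rvert_\bg^2$ plus a degenerate-direction term $2a_s g_s'(v^2)Q_i$ with $Q_i\ge 0$, prove the sharp bound $Q_i\le\lvert\bz\rvert_\bg^2\lvert\bxi_i\rvert_\bg^2$ with no $\gamma_0^{-1}$ loss, and then close with (\ref{Eq (Section 1): Ellipticity of p-Laplace-type operator}), (\ref{Eq (Section 1): Positivity Assumptions for Coefficients}) for the lower bound and (\ref{Eq (Section 1): Growth of p-Laplace-type operator})--(\ref{Eq (Section 1): Bound Assumptions for Coefficients}), (\ref{Eq (Section 1): Growth of 1-Laplace-type operator}) for the upper bound, precisely as the paper indicates. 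Your observation that a naive eigenvalue estimate gives only $Q_1\le\gamma_0^{-1}\lvert\bz\rvert_\bg^2\lvert\bxi_1\rvert_\bg^2$, which would break the lower bound for $p$ near $1$, is exactly the delicate point the paper's citation to ``the Cauchy--Schwarz inequality for the positive definite matrix $\bg$'' is meant to address, and your duality/trace argument ($d^{\mathsf T}\bg^{-1}d\le 1$ forces $dd^{\mathsf T}\le\bg$, and $\operatorname{tr}(M(\bg-dd^{\mathsf T}))\ge 0$ for $M\ge 0$) is a clean self-contained way to establish it. One small wrinkle: for $Q_3$ (the $\mathcal{C}_\varepsilon$ case) the degenerate part is not a single square as with $Q_2$ but a sum $\sum_i(\gamma_{\alpha\kappa}\zeta_\kappa^i\xi_\alpha)^2$ over $i$, so the appropriate one-liner is the componentwise Cauchy--Schwarz $\lvert\gamma_{\alpha\kappa}\zeta_\kappa^i\xi_\alpha\rvert\le\lvert\bz^{(i)}\rvert_\bg\lvert\bxi_3\rvert_\bg$ summed in $i$ (or the same trace argument you used for $Q_1$); calling it ``completely routine'' elides that this is the same phenomenon, just one tensor index smaller. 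That does not affect correctness.
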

 \begin{proof}
    The right-hand-side inequalities are easily shown by (\ref{Eq (Section 1): Growth of p-Laplace-type operator})--(\ref{Eq (Section 1): Bound Assumptions for Coefficients}).
    The left-hand-side ones are shown by (\ref{Eq (Section 1): Ellipticity of p-Laplace-type operator}), (\ref{Eq (Section 1): Positivity Assumptions for Coefficients}), and the Cauchy--Schwarz inequality for the positive definite matrix $\bg$ (see \cite[Lemma 2.7]{BDLS-parabolic}).
 \end{proof}
 Secondly, we prove a few estimates related to $\A_{\varepsilon}$ or $\lvert\,\cdot\,\rvert_{\bg}$, as in Lemmata \ref{Lemma (Section 2): Continuity of Hessian}--\ref{Lemma (Section 2): Continuity on Norms}.
 \begin{lemma}\label{Lemma (Section 2): Continuity of Hessian}
     Let the positive parameters $\delta$ and $\varepsilon$ satisfy $\delta\in(0,\,1)$ and $\varepsilon\in(0,\,\delta/4)$ respectively.
     Fix the point $(x,\,t)\in\Omega_{T}$ and the constants $0<c_{1}<c_{2}<\infty$.
     Then, we have
     \begin{equation}\label{Eq (Section 2): Ellipticity outside the origin}
         \langle \A_{\varepsilon}(x,\,t,\,\bz_{1})-\A_{\varepsilon}(x,\,t,\,\bz_{0})\mid \bz_{1}-\bz_{0} \rangle_{\bg(x,\,t)}\ge C({\mathcal D},\,c_{1},\,c_{2})\lvert\bz_{1}-\bz_{0}\rvert^{2},
     \end{equation}
     \begin{equation}\label{Eq (Section 2): Continuity outside the origin}
         \lvert \langle \A_{\varepsilon}(x,\,t,\,\bz_{1})-\A_{\varepsilon}(x,\,t,\,\bz_{2})\mid \bxi\rangle_{\bg(x,\,t)} \rvert\le C({\mathcal D},\,c_{1},\,c_{2})\lvert \bz_{1}-\bz_{0}\rvert \lvert \bxi\rvert,
     \end{equation}
     \begin{equation}\label{Eq (Section 2): Continuity of Hessian Matrix}
         \left\lvert {\mathcal B}_{\varepsilon}(x,\,t,\,\bz_{0})(\bz_{1}-\bz_{0},\,\bxi)-\langle \A_{\varepsilon}(x,\,t,\,\bz_{1})-\A_{\varepsilon}(x,\,t,\,\bz_{0})\mid \bxi \rangle_{\bg(x,\,t)} \right\rvert\le \omega(\lvert \bz_{1}-\bz_{0} \rvert)\lvert \bz_{1}-\bz_{0}\rvert \lvert \bxi \rvert,
     \end{equation}
     for any $\bz_{0},\,\bz_{1}\,,\,\bxi\in{\mathbb R}^{Nn}$ satisfying $c_{1}\le \lvert \bz_{0}\rvert_{\bg(x,\,t)}\le c_{2}$ and $\lvert \bz_{1}\rvert_{\bg(x,\,t)}\le c_{2}$.
     Here the concave function $\omega\colon \lbrack 0,\,\infty)\to \lbrack 0,\,\infty)$ is of the form 
     \[\omega(\sigma)\coloneqq C({\mathcal D},\,\delta,\,c_{1},\,c_{2})\left(\omega_{1,\,\widetilde{c}_{1},\,\widetilde{c}_{2}} ( (c_{1}+2c_{2})\sigma/\gamma_{0})+\omega_{p,\,\widetilde{c}_{1},\,\widetilde{c}_{2}} ( (c_{1}+2c_{2})\sigma/\gamma_{0})+\sigma\right)\quad \textrm{for}\quad \sigma\in\lbrack 0,\,\infty)\]
     with $\widetilde{c}_{1}\coloneqq c_{1}^{2}/4$, $\widetilde{c}_{2}\coloneqq (c_{1}/2+c_{2})^{2}+\delta^{2}/16$.
    \end{lemma}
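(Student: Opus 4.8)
The three inequalities concern the approximate operator $\A_\varepsilon(x,t,\cdot)=\A_{1,\varepsilon}(x,t,\cdot)+\A_{p,\varepsilon}(x,t,\cdot)$ restricted to the annulus-like region $c_1\le\lvert\bz_0\rvert_{\bg}\le c_2$, $\lvert\bz_1\rvert_{\bg}\le c_2$, where the point $(x,t)$ is frozen. Since $\varepsilon<\delta/4$ is tiny compared with $c_1$, the quantities $\varepsilon^2+\lvert\bz\rvert_{\bg}^2$ are bounded away from $0$ and from $\infty$ uniformly in $\varepsilon$ on this region, so $g_1$ and $g_p$ together with their derivatives are evaluated only on a fixed compact subinterval $[\widetilde c_1,\widetilde c_2]\subset(0,\infty)$; this is exactly what the choice of $\widetilde c_1=c_1^2/4$ and $\widetilde c_2=(c_1/2+c_2)^2+\delta^2/16$ records (with room to spare for the $\varepsilon$-shift and for passing through intermediate points on the segment $[\bz_0,\bz_1]$). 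On this compact range, $g_s(\varepsilon^2+\lvert\bz\rvert_{\bg}^2)\approx(\varepsilon^2+\lvert\bz\rvert_{\bg}^2)^{s/2-1}\approx 1$ up to constants depending on $\mathcal D,c_1,c_2$, so the weight functions $h_{1,\varepsilon}(\bz)$ and $h_{p,\varepsilon}(\bz)$ appearing in Lemma~\ref{Lemma (Section 2): Ellipticity of Bilinear Forms} are comparable to universal positive constants. The plan is therefore to upgrade the weighted bilinear estimates of Lemma~\ref{Lemma (Section 2): Ellipticity of Bilinear Forms} to the unweighted Euclidean estimates \eqref{Eq (Section 2): Ellipticity outside the origin}--\eqref{Eq (Section 2): Continuity of Hessian Matrix} by integrating along the segment $\bz_\theta\coloneqq\bz_0+\theta(\bz_1-\bz_0)$, $\theta\in[0,1]$, and to locate the modulus of continuity in the term \eqref{Eq (Section 2): Continuity of Hessian Matrix} by a further such integration that isolates the increment of the Hessian.

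For \eqref{Eq (Section 2): Ellipticity outside the origin} and \eqref{Eq (Section 2): Continuity outside the origin}, the key identity is
\[
\langle\A_\varepsilon(x,t,\bz_1)-\A_\varepsilon(x,t,\bz_0)\mid\bxi\rangle_{\bg}=\int_0^1{\mathcal B}_\varepsilon(x,t,\bz_\theta)(\bz_1-\bz_0,\bxi)\,\d\theta,
\]
which follows from differentiating $\theta\mapsto\A_\varepsilon(x,t,\bz_\theta)$ and recognising ${\mathcal B}_\varepsilon$ as the corresponding Hessian-type bilinear form; here one must first check that the whole segment stays in the region where $\varepsilon^2+\lvert\bz_\theta\rvert_{\bg}^2\in[\widetilde c_1,\widetilde c_2]$, which is immediate since $\lvert\bz_\theta\rvert_{\bg}\le c_2$ by convexity of the norm and $\lvert\bz_\theta\rvert_{\bg}$ need not be bounded below — but $\varepsilon^2+\lvert\bz_\theta\rvert_{\bg}^2\ge\varepsilon^2>0$ is all we use for the upper bound, and for the lower bound in \eqref{Eq (Section 2): Ellipticity outside the origin} we only need the coercivity of ${\mathcal B}_\varepsilon$, i.e. the left inequality of Lemma~\ref{Lemma (Section 2): Ellipticity of Bilinear Forms} with $h_{p,\varepsilon}(\bz_\theta)\ge(\widetilde c_2)^{p/2-1}\wedge(\widetilde c_1)^{p/2-1}>0$ a fixed positive constant. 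Taking $\bxi=\bz_1-\bz_0$ and using $\gamma_0\lvert\cdot\rvert^2\le\lvert\cdot\rvert_{\bg}^2$ from \eqref{Eq (Section 1): Matrix Gamma} gives \eqref{Eq (Section 2): Ellipticity outside the origin}; taking general $\bxi$ and using the upper bound of Lemma~\ref{Lemma (Section 2): Ellipticity of Bilinear Forms} together with $\lvert\cdot\rvert_{\bg}^2\le\gamma_0^{-1}\lvert\cdot\rvert^2$ gives \eqref{Eq (Section 2): Continuity outside the origin}. The constants here depend on $\mathcal D$, $c_1$, $c_2$ only (not on $\delta$), because the compact interval $[\widetilde c_1,\widetilde c_2]$ and the bounds \eqref{Eq (Section 1): Ellipticity of p-Laplace-type operator}--\eqref{Eq (Section 1): Positivity Assumptions for Coefficients}, \eqref{Eq (Section 1): Matrix Gamma} suffice.

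For the refined estimate \eqref{Eq (Section 2): Continuity of Hessian Matrix}, I would write, using the same integral identity,
\[
{\mathcal B}_\varepsilon(x,t,\bz_0)(\bz_1-\bz_0,\bxi)-\langle\A_\varepsilon(x,t,\bz_1)-\A_\varepsilon(x,t,\bz_0)\mid\bxi\rangle_{\bg}=\int_0^1\bigl[{\mathcal B}_\varepsilon(x,t,\bz_0)-{\mathcal B}_\varepsilon(x,t,\bz_\theta)\bigr](\bz_1-\bz_0,\bxi)\,\d\theta,
\]
so the task reduces to bounding the increment of the coefficient tensor of ${\mathcal B}_\varepsilon$ between $\bz_0$ and $\bz_\theta$. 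Inspecting the definition of ${\mathcal B}_\varepsilon$, this increment is controlled by: (i) the increment of $g_s(\varepsilon^2+\lvert\cdot\rvert_{\bg}^2)$, which by the mean value theorem and \eqref{Eq (Section 1): Growth of p-Laplace-type operator}, \eqref{Eq (Section 1): Growth of 1-Laplace-type operator} on $[\widetilde c_1,\widetilde c_2]$ is Lipschitz in $\lvert\bz\rvert_{\bg}^2$, hence linear in $\lvert\bz_0-\bz_\theta\rvert\le\lvert\bz_1-\bz_0\rvert$ (contributing the ``$+\sigma$'' summand in $\omega$); (ii) the increment of $g_s'(\varepsilon^2+\lvert\cdot\rvert_{\bg}^2)$, which by the moduli \eqref{Eq (Section 1): Modulus of Continuity for p-Laplace-type-operator} and \eqref{Eq (Section 1): Modulus of Continuity for 1-Laplace-type-operator} — evaluated at arguments in $[\widetilde c_1,\widetilde c_2]$ differing by $\lvert(\varepsilon^2+\lvert\bz_0\rvert_{\bg}^2)-(\varepsilon^2+\lvert\bz_\theta\rvert_{\bg}^2)\rvert\le(c_1+2c_2)\gamma_0^{-1}\lvert\bz_1-\bz_0\rvert$ (this is where the argument scaling $(c_1+2c_2)\sigma/\gamma_0$ in the statement comes from, using $\big|\,\lvert\bz_0\rvert_{\bg}^2-\lvert\bz_\theta\rvert_{\bg}^2\,\big|=\lvert\bz_0-\bz_\theta\rvert_{\bg}\,\lvert\bz_0+\bz_\theta\rvert_{\bg}\le\gamma_0^{-1}\lvert\bz_1-\bz_0\rvert\cdot(c_1+2c_2)$) — is bounded by $\omega_{1,\widetilde c_1,\widetilde c_2}+\omega_{p,\widetilde c_1,\widetilde c_2}$ of that quantity; (iii) the increment of the quadratic factors $\gamma_{\alpha\kappa}\zeta_\kappa^i\gamma_{\beta\lambda}\zeta_\lambda^j$, which is again linear in $\lvert\bz_1-\bz_0\rvert$ with constant controlled by $c_2$ and $\Gamma_0$ (absorbed into the ``$+\sigma$'' term). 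Multiplying by the slots $\lvert\bz_1-\bz_0\rvert$ and $\lvert\bxi\rvert$, bounding the surviving $g_s$, $g_s'$, $a_s$, $\gamma_{\alpha\beta}$ factors by constants on the compact range, and using concavity of the $\omega_{s,\widetilde c_1,\widetilde c_2}$ to pull them through the $\theta$-integral, yields \eqref{Eq (Section 2): Continuity of Hessian Matrix} with $\omega$ of the asserted form.

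I expect the only genuinely delicate point to be bookkeeping: verifying that every argument at which $g_s$, $g_s'$, or a $\gamma$-weighted quadratic is evaluated along the segment $[\bz_0,\bz_1]$ (shifted by $\varepsilon^2$) stays inside the fixed compact interval $[\widetilde c_1,\widetilde c_2]$ \emph{uniformly in $\varepsilon\in(0,\delta/4)$}, and pinning down the exact argument of the moduli $\omega_{s,\widetilde c_1,\widetilde c_2}$, i.e. that it is the increment $\lvert\sigma_1-\sigma_2\rvert$ in the $\sigma=\varepsilon^2+\lvert\bz\rvert_{\bg}^2$ variable rather than in $\lvert\bz\rvert$ — this is what forces the factor $(c_1+2c_2)/\gamma_0$. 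Everything else is routine: repeated mean value theorems, the two-sided bound \eqref{Eq (Section 1): Matrix Gamma}, the growth/ellipticity bounds \eqref{Eq (Section 1): Ellipticity of p-Laplace-type operator}--\eqref{Eq (Section 1): Positivity Assumptions for Coefficients}, and Lemma~\ref{Lemma (Section 2): Ellipticity of Bilinear Forms}. A reference to the analogous elliptic computation in \cite{T-system} or \cite{BDLS-parabolic} should let one compress most of it.
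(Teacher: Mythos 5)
Your overall route---the integral identity $\langle\A_{\varepsilon}(x,t,\bz_{1})-\A_{\varepsilon}(x,t,\bz_{0})\mid\bxi\rangle_{\bg}=\int_{0}^{1}{\mathcal B}_{\varepsilon}(x,t,\bz_{\tau})(\bz_{1}-\bz_{0},\bxi)\,\d\tau$ along the segment, the bounds of Lemma~\ref{Lemma (Section 2): Ellipticity of Bilinear Forms}, and the moduli $\omega_{s,\widetilde{c}_{1},\widetilde{c}_{2}}$ applied to the increment of $\varepsilon^{2}+\lvert\cdot\rvert_{\bg}^{2}$---is indeed the paper's route. But there is a genuine gap exactly at the point you flag as ``delicate'': the segment $[\bz_{0},\bz_{1}]$ does \emph{not} in general stay in the region where $\varepsilon^{2}+\lvert\bz_{\tau}\rvert_{\bg}^{2}\in[\widetilde{c}_{1},\widetilde{c}_{2}]$. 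Only $\lvert\bz_{0}\rvert_{\bg}\ge c_{1}$ is assumed, while $\bz_{1}$ is merely bounded above; taking $\bz_{1}=-\bz_{0}$ makes the midpoint vanish, so $\varepsilon^{2}+\lvert\bz_{1/2}\rvert_{\bg}^{2}=\varepsilon^{2}$, which is not bounded below uniformly in $\varepsilon\in(0,\delta/4)$. Consequently your pointwise claim $h_{p,\varepsilon}(\bz_{\theta})\ge\widetilde{c}_{1}^{\,p/2-1}\wedge\widetilde{c}_{2}^{\,p/2-1}$ is false for $p>2$, and the upper bound of Lemma~\ref{Lemma (Section 2): Ellipticity of Bilinear Forms} contains $h_{1,\varepsilon}(\bz_{\theta})=(\varepsilon^{2}+\lvert\bz_{\theta}\rvert_{\bg}^{2})^{-1/2}$, which can be as large as $\varepsilon^{-1}$; the remark that ``$\varepsilon^{2}+\lvert\bz_{\theta}\rvert_{\bg}^{2}\ge\varepsilon^{2}$ is all we use for the upper bound'' does not help, because an $\varepsilon$-dependent constant is useless here. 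The same defect infects \eqref{Eq (Section 2): Continuity of Hessian Matrix}: the moduli $\omega_{s,\widetilde{c}_{1},\widetilde{c}_{2}}$ in \eqref{Eq (Section 1): Modulus of Continuity for p-Laplace-type-operator} and \eqref{Eq (Section 1): Modulus of Continuity for 1-Laplace-type-operator} are only available for arguments lying in $[\widetilde{c}_{1},\widetilde{c}_{2}]$, so you may not evaluate them at $\varepsilon^{2}+\lvert\bz_{\tau}\rvert_{\bg}^{2}$ without first securing that inclusion.

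The repair, which is what the paper does, is a dichotomy on $\lvert\bz_{1}-\bz_{0}\rvert_{\bg}$. If $\lvert\bz_{1}-\bz_{0}\rvert_{\bg}\le c_{1}/2$, the triangle inequality gives $c_{1}/2\le\lvert\bz_{\tau}\rvert_{\bg}\le c_{1}/2+c_{2}$ for all $\tau\in[0,1]$, hence $\widetilde{c}_{1}\le\varepsilon^{2}+\lvert\bz_{\tau}\rvert_{\bg}^{2}\le\widetilde{c}_{2}$ --- this is precisely where $\widetilde{c}_{1}=c_{1}^{2}/4$ and $\widetilde{c}_{2}=(c_{1}/2+c_{2})^{2}+\delta^{2}/16$ come from (they encode the sub-case hypothesis, not ``room to spare'') --- and your computation then goes through verbatim. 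If $\lvert\bz_{1}-\bz_{0}\rvert_{\bg}>c_{1}/2$, one abandons the segment: ${\mathcal B}_{\varepsilon}(x,t,\bz_{0})$ is bounded because $\lvert\bz_{0}\rvert_{\bg}\ge c_{1}$, and $\lvert\A_{\varepsilon}(x,t,\bz_{i})\rvert$ is bounded by \eqref{Eq (Section 1): Growth of p-Laplace-type operator}, \eqref{Eq (Section 1): Growth of 1-Laplace-type operator} and $\lvert\bz_{i}\rvert_{\bg}\le c_{2}$ alone, so the left-hand side of \eqref{Eq (Section 2): Continuity of Hessian Matrix} is at most $C\lvert\bxi\rvert\le C\cdot 2(\gamma_{0}c_{1})^{-1}\lvert\bz_{1}-\bz_{0}\rvert\lvert\bxi\rvert$, which the ``$+\sigma$'' summand of $\omega$ absorbs; \eqref{Eq (Section 2): Continuity outside the origin} is handled the same way, and for \eqref{Eq (Section 2): Ellipticity outside the origin} the lower bound in this case follows from the strong monotonicity \eqref{Eq (Section 8): Strong Monotonicity of A-epsilon} together with $\gamma_{0}c_{1}/2<\lvert\bz_{1}-\bz_{0}\rvert\le 2\gamma_{0}^{-1/2}c_{2}$. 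Without this case distinction the constants you produce depend on $\varepsilon$, and the lemma as stated is not proved.
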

 \begin{proof}
     For given $\bz_{0},\,\bz_{1}\in{\mathbb R}^{Nn}$, we define $\bz_{\tau}\coloneqq \bz_{0}+\tau(\bz_{1}-\bz_{0})$ for $\tau\in(0,\,1)$.
     Then, we have
     \[
        \langle \A_{\varepsilon}(x,\,t,\,\bz_{1})-\A_{\varepsilon}(x,\,t,\,\bz_{0})\mid \bxi \rangle_{\bg(x,\,t)}=\int_{0}^{1}{\mathcal B}_{\varepsilon}(x,\,t,\,\bz_{\tau})(\bz_{1}-\bz_{0},\,\bxi)\,\d\tau.
     \]
     Using this identity and (\ref{Eq (Section 2): Ellipticity of Bilinear Forms}), we can prove (\ref{Eq (Section 2): Ellipticity outside the origin})--(\ref{Eq (Section 2): Continuity outside the origin}) are proved, similarly to \cite[Lemma 2.6]{T-scalar}.
     To prove (\ref{Eq (Section 2): Continuity of Hessian Matrix}), we first consider $\lvert \bz_{1}-\bz_{0} \rvert_{\bg(x,\,t)}\le c_{1}/2$. 
     Then, the triangle inequality implies
     \begin{equation}\label{Eq (Section 2): range of zeta-tau}
        c_{1}/2 \le \lvert \bz_{\tau}\rvert_{\bg(x,\,t)}\le c_{1}/2+c_{2}\quad \text{and} \quad  \widetilde{c}_{1} \le \varepsilon^{2}+\lvert \bz_{\tau}\rvert_{\bg(x,\,t)}^{2}\le \widetilde{c}_{2}
     \end{equation}
     for all $\tau\in\lbrack 0,\,1\rbrack$.
     We use the above identity to compute 
     \begin{align*}
        &\left\lvert {\mathcal B}_{\varepsilon}(x,\,t,\,\bz_{0})(\bz_{1}-\bz_{0},\,\bxi)-\langle \A_{\varepsilon}(x,\,t,\,\bz_{1})-\A_{\varepsilon}(x,\,t,\,\bz_{0})\mid \bxi \rangle_{\bg(x,\,t)} \right\rvert\\ 
        &\le \int_{0}^{1}\lvert {\mathcal B}_{\varepsilon}(x,\,t,\,\bz_{\tau})(\bz_{1}-\bz_{0},\,\bxi)-{\mathcal B}_{\varepsilon}(x,\,t,\,\bz_{0})(\bz_{1}-\bz_{0},\,\bxi)\rvert\,\d\tau\\ 
        &\le C({\mathcal D},\,\delta,\,c_{1},\,c_{2})\lvert \bz_{1}-\bz_{0}\rvert \lvert \bxi\rvert \sum_{l=0,\,1}\sum_{s=1,\,p} \int_{0}^{1}\left\lvert g_{s}^{(l)}(\varepsilon^{2}+\lvert \bz_{\tau}\rvert_{\bg(x,\,t)}^{2})-g_{s}^{(l)}(\varepsilon^{2}+\lvert \bz_{0}\rvert_{\bg(x,\,t)}^{2}) \right\rvert \,\d \tau.
     \end{align*}
     With (\ref{Eq (Section 2): range of zeta-tau}) and $\lvert \bz_{\tau}-\bz_{0}\rvert\le \lvert \bz_{1}-\bz_{0}\rvert$ in mind, we can easily deduce (\ref{Eq (Section 2): Continuity of Hessian Matrix}) by using (\ref{Eq (Section 1): Matrix Gamma}), (\ref{Eq (Section 1): Growth of p-Laplace-type operator}), (\ref{Eq (Section 1): Modulus of Continuity for p-Laplace-type-operator}) and (\ref{Eq (Section 1): Growth of 1-Laplace-type operator})--(\ref{Eq (Section 1): Modulus of Continuity for 1-Laplace-type-operator}). 
     In the remaining case $\lvert \bz_{1}-\bz_{0} \rvert_{\bg(x,\,t)}>c_{1}/2$, we simply compute 
     \[
        \left\lvert {\mathcal B}_{\varepsilon}(x,\,t,\,\bz_{0})(\bz_{1}-\bz_{0},\,\bxi)-\langle \A_{\varepsilon}(x,\,t,\,\bz_{1})-\A_{\varepsilon}(x,\,t,\,\bz_{0})\mid \bxi \rangle_{\bg(x,\,t)} \right\rvert\le C({\mathcal D},\,\delta,\,c_{1},\,c_{2})\lvert\bz_{1}-\bz_{0} \rvert\lvert\bxi \rvert
     \]
     and use $2(\gamma_{0}c_{1})^{-1}\lvert \bz_{1}-\bz_{0}\rvert>1$ to conclude (\ref{Eq (Section 2): Continuity of Hessian Matrix}).
 \end{proof}
 \begin{lemma}\label{Lemma (Section 2): Continuity on Norms}
    Let $(x,\,t)\in Q_{\rho}(x_{0},\,t_{0})\subset \Omega_{T}$ with $\rho\in(0,\,1)$.
    Then, we have
    \begin{equation}\label{Eq (Section 2): Continuity of Norm}
        \left\lvert \lvert \bz\rvert_{\bg(x,\,t)}-\lvert \bz\rvert_{\bg(x_{0},\,t_{0})}\right\rvert\le c_{\dagger}\rho\lvert \bz\rvert,
    \end{equation}
    \begin{equation}\label{Eq (Section 2): Continuity of A-epsilon in x}
        \left\lvert \left\langle {\mathbf A}_{\varepsilon}(x,\,t,\,\bz)\mathrel{}\middle|\mathrel{}\bxi \right\rangle_{\bg(x,\,t)}-\left\langle {\mathbf A}_{\varepsilon}(x_{0},\,t,\,\bz)\mathrel{}\middle|\mathrel{}\bxi \right\rangle_{\bg(x_{0},\,t)} \right\rvert\le C\left(1+h_{p,\,\varepsilon}(\bz)\right)\rho\lvert \bxi\rvert
    \end{equation}
    for all $\bxi,\,\bz\in{\mathbb R}^{Nn}$.
 \end{lemma}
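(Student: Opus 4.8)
The plan is to obtain both estimates directly from the Lipschitz continuity of the coefficients $a_1,a_p,\gamma_{\alpha\beta}$ together with the ellipticity (\ref{Eq (Section 1): Matrix Gamma}) and the growth bounds (\ref{Eq (Section 1): Growth of p-Laplace-type operator}), (\ref{Eq (Section 1): Growth of 1-Laplace-type operator}). The basic observation is that, since $(x,t)\in Q_{\rho}(x_0,t_0)$ with $\rho\in(0,1)$, we have $\lvert x-x_0\rvert<\rho$ and $\lvert t-t_0\rvert<\rho^{2}\le\rho$, so the $W^{1,\infty}$-bounds in (\ref{Eq (Section 1): Bound Assumptions for Coefficients}) give $\lvert\gamma_{\alpha\beta}(x,t)-\gamma_{\alpha\beta}(x_0,t_0)\rvert\le C\Gamma_0\rho$ for all $\alpha,\beta$, and likewise for $a_1,a_p$ and when only the space variable moves.

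For (\ref{Eq (Section 2): Continuity of Norm}) I would then write $\lvert\bz\rvert_{\bg(x,t)}^{2}-\lvert\bz\rvert_{\bg(x_0,t_0)}^{2}=\bigl(\gamma_{\alpha\beta}(x,t)-\gamma_{\alpha\beta}(x_0,t_0)\bigr)\zeta_{\alpha}^{j}\zeta_{\beta}^{j}$, bound the right-hand side by $C(n)\Gamma_0\rho\lvert\bz\rvert^{2}$, factor it as $\bigl(\lvert\bz\rvert_{\bg(x,t)}-\lvert\bz\rvert_{\bg(x_0,t_0)}\bigr)\bigl(\lvert\bz\rvert_{\bg(x,t)}+\lvert\bz\rvert_{\bg(x_0,t_0)}\bigr)$, and divide by $\lvert\bz\rvert_{\bg(x,t)}+\lvert\bz\rvert_{\bg(x_0,t_0)}\ge 2\sqrt{\gamma_0}\,\lvert\bz\rvert$, where I use that (\ref{Eq (Section 1): Matrix Gamma}) extends to $\mathbb{R}^{Nn}$ by summing over $j$. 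This yields (\ref{Eq (Section 2): Continuity of Norm}) with $c_{\dagger}=c_{\dagger}(n,\gamma_0,\Gamma_0)$; the case $\bz=0$ is trivial.

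For (\ref{Eq (Section 2): Continuity of A-epsilon in x}) I would expand $\langle\A_{\varepsilon}(x,t,\bz)\mid\bxi\rangle_{\bg(x,t)}=\sum_{s\in\{1,p\}}a_{s}(x,t)\,g_{s}\bigl(\varepsilon^{2}+\lvert\bz\rvert_{\bg(x,t)}^{2}\bigr)\,\langle\bz\mid\bxi\rangle_{\bg(x,t)}$ and estimate the $x$-increment of each summand by the product rule in its three factors. The factor $a_{s}(x,t)$ is bounded by $\Gamma_0$ and $\Gamma_0$-Lipschitz in $x$; the factor $\langle\bz\mid\bxi\rangle_{\bg(x,t)}=\gamma_{\alpha\beta}(x,t)\zeta_{\alpha}^{j}\xi_{\beta}^{j}$ is bounded by $\gamma_0^{-1}\lvert\bz\rvert\lvert\bxi\rvert$ (Cauchy--Schwarz for $\bg$) and Lipschitz in $x$ with constant $\le C(n)\Gamma_0\lvert\bz\rvert\lvert\bxi\rvert$. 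The delicate factor is the composition $x\mapsto g_{s}\bigl(\varepsilon^{2}+\lvert\bz\rvert_{\bg(x,t)}^{2}\bigr)$: its $x_k$-derivative equals $g_{s}'\bigl(\varepsilon^{2}+\lvert\bz\rvert_{\bg(x,t)}^{2}\bigr)\,\partial_{x_k}\gamma_{\mu\nu}\,\zeta_{\mu}^{j}\zeta_{\nu}^{j}$, and combining the consequence $\lvert g_{s}'(\sigma)\rvert\le\Gamma_0\sigma^{s/2-2}$ of (\ref{Eq (Section 1): Growth of p-Laplace-type operator}) (and of (\ref{Eq (Section 1): Growth of 1-Laplace-type operator}) for $s=1$) with $\lvert\bz\rvert^{2}\le\gamma_0^{-1}\bigl(\varepsilon^{2}+\lvert\bz\rvert_{\bg}^{2}\bigr)$ bounds it by $C(\mathcal{D})\,h_{s,\varepsilon}(\bz)$. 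Along the segment from $x_0$ to $x$ the quantity $\varepsilon^{2}+\lvert\bz\rvert_{\bg(\cdot,t)}^{2}$ stays comparable up to constants depending only on $\mathcal{D}$ (as $\rho<1$), so $h_{s,\varepsilon}(\bz)$ may be evaluated at either endpoint, and $g_{s}\bigl(\varepsilon^{2}+\lvert\bz\rvert_{\bg}^{2}\bigr)\le\Gamma_0\,h_{s,\varepsilon}(\bz)$. Collecting the three factors, the $s=1$ summand is bounded by $C(\mathcal{D})\,h_{1,\varepsilon}(\bz)\lvert\bz\rvert\,\rho\lvert\bxi\rvert\le C(\mathcal{D})\rho\lvert\bxi\rvert$, since $h_{1,\varepsilon}(\bz)\lvert\bz\rvert\le\gamma_0^{-1/2}$; the $s=p$ summand is bounded by $C(\mathcal{D})\,h_{p,\varepsilon}(\bz)\lvert\bz\rvert\,\rho\lvert\bxi\rvert$, and one controls $h_{p,\varepsilon}(\bz)\lvert\bz\rvert$ via $\lvert\bz\rvert\le\gamma_0^{-1/2}\bigl(\varepsilon^{2}+\lvert\bz\rvert_{\bg}^{2}\bigr)^{1/2}$ and the growth bound $g_p(\sigma)\le\Gamma_0\sigma^{p/2-1}$ to recover the factor $\bigl(1+h_{p,\varepsilon}(\bz)\bigr)$ on the right-hand side of (\ref{Eq (Section 2): Continuity of A-epsilon in x}).

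The first estimate is entirely routine. The only care needed in the second is the bookkeeping of the (possibly singular or vanishing) weights $g_{s}$, $g_{s}'$ against the powers of $\lvert\bz\rvert$ so that the products are bounded for $s=1$ and $h_{p,\varepsilon}$-controlled for $s=p$, together with the harmless switch of the base point of $\bg$ (hence of $h_{s,\varepsilon}$) permitted by $\rho<1$. I do not foresee any genuine obstacle: everything reduces to (\ref{Eq (Section 1): Matrix Gamma}), (\ref{Eq (Section 1): Growth of p-Laplace-type operator}), (\ref{Eq (Section 1): Bound Assumptions for Coefficients}), (\ref{Eq (Section 1): Growth of 1-Laplace-type operator}), and the Lipschitz continuity of $a_1,a_p,\gamma_{\alpha\beta}$.
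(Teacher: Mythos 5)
Your argument for (\ref{Eq (Section 2): Continuity of Norm}) is exactly the paper's: factor the difference of squared $\bg$-norms, control the numerator via the Lipschitz bound (\ref{Eq (Section 1): Bound Assumptions for Coefficients}) on $\gamma_{\alpha\beta}$, and divide by $\lvert\bz\rvert_{\bg(x,t)}+\lvert\bz\rvert_{\bg(x_0,t_0)}\gtrsim\lvert\bz\rvert$ from (\ref{Eq (Section 1): Matrix Gamma}). For (\ref{Eq (Section 2): Continuity of A-epsilon in x}) you are actually \emph{more} careful than the paper, whose proof asserts that the left-hand side \emph{equals} $\bigl\lvert\sum(a_s(x,t)\gamma_{\alpha\beta}(x,t)-a_s(x_0,t)\gamma_{\alpha\beta}(x_0,t))g_s(\varepsilon^2+\lvert\bz\rvert_\bg^2)\zeta_\alpha^j\xi_\beta^j\bigr\rvert$, tacitly ignoring that the argument of $g_s$ also moves with the base point because $\lvert\bz\rvert_{\bg(x,t)}^2\neq\lvert\bz\rvert_{\bg(x_0,t)}^2$. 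You correctly isolate that extra variation and bound it through $g_s'$, the Lipschitz bound on $\gamma_{\alpha\beta}$, and (\ref{Eq (Section 1): Matrix Gamma}); that contribution has the same size as the other two, so the bookkeeping up to this point is sound.

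Where your argument does not close is the very last step: the assertion that $h_{p,\varepsilon}(\bz)\lvert\bz\rvert$ is controlled by $1+h_{p,\varepsilon}(\bz)$ is false. Writing $W=\varepsilon^2+\lvert\bz\rvert_\bg^2$, your estimates give $h_{p,\varepsilon}(\bz)\lvert\bz\rvert\lesssim W^{(p-1)/2}$ while $1+h_{p,\varepsilon}(\bz)=1+W^{(p-2)/2}$, and $W^{(p-1)/2}/\bigl(1+W^{(p-2)/2}\bigr)\to\infty$ as $W\to\infty$ for every $p>1$. In fact (\ref{Eq (Section 2): Continuity of A-epsilon in x}) as printed cannot hold for all $\bz\in{\mathbb R}^{Nn}$: taking $p=2$, $a_1\equiv 0$, $a_p\equiv 1$, $g_p\equiv 1$, $\bxi=\bz$, the left side is of order $\rho\lvert\bz\rvert^2$ whereas the right side is $2C\rho\lvert\bz\rvert$. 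What your computation (and the paper's own sketch) actually proves is the bound $C\bigl(1+h_{p,\varepsilon}(\bz)\lvert\bz\rvert\bigr)\rho\lvert\bxi\rvert$, i.e.\ the stated right-hand side is missing a factor of $\lvert\bz\rvert$. This does not break anything downstream, since the lemma is only invoked in Lemma \ref{Lemma (Section 7): Comparison Lemma} with $\bz=\D\bu_\varepsilon$ uniformly bounded by (\ref{Eq (Section 4): Bound Assumption of v-epsilon}), where the extra $\lvert\bz\rvert$ is absorbed into the constant; but your write-up should display the larger (correct) right-hand side rather than assert the false pointwise inequality $h_{p,\varepsilon}(\bz)\lvert\bz\rvert\lesssim 1+h_{p,\varepsilon}(\bz)$.
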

 \begin{proof}
    We use (\ref{Eq (Section 1): Matrix Gamma}) and (\ref{Eq (Section 1): Bound Assumptions for Coefficients}) to get
    \begin{align*}
        \lvert \bz\rvert_{\bg(x,\,t)}\left\lvert \lvert \bz\rvert_{\bg(x,\,t)}-\lvert \bz\rvert_{\bg(x_{0},\,t_{0})}\right\rvert
        &\le\left(\lvert \bz\rvert_{\bg(x,\,t)}+\lvert \bz\rvert_{\bg(x_{0},\,t_{0})}\right) \left\lvert \lvert \bz\rvert_{\bg(x,\,t)}-\lvert \bz\rvert_{\bg(x_{0},\,t_{0})}\right\rvert\\ 
        &=\left\lvert \lvert \bz\rvert_{\bg(x,\,t)}^{2}-\lvert \bz\rvert_{\bg(x_{0},\,t_{0})}^{2} \right\rvert=\left\lvert(\gamma_{\alpha\beta}(x,\,t)-\gamma_{\alpha\beta}(x_{0},\,t_{0}))\zeta_{\alpha}^{j}\zeta_{\beta}^{j}\right\rvert\\ 
        &\le c\rho\lvert \bz\rvert^{2}\le c_{\dagger}\rho\lvert\bz\rvert\lvert \bz\rvert_{\bg(x,\,t)},
    \end{align*}
    which implies (\ref{Eq (Section 2): Continuity of Norm}).
    Since the left-hand-side of (\ref{Eq (Section 2): Continuity of A-epsilon in x}) is equal to
    \[\left\lvert \sum_{s=1,\,p}\sum_{j=1}^{N}\sum_{\alpha,\,\beta=1}^{n}(a_{s}(x,\,t)\gamma_{\alpha\beta}(x,\,t)-a_{s}(x_{0},\,t)\gamma_{\alpha\beta}(x_{0},\,t))g_{s}(\varepsilon^{2}+\lvert\bz\rvert_{\bg}^{2})\zeta_{\alpha}^{j}\xi_{\beta}^{j} \right\rvert,\]
    (\ref{Eq (Section 2): Continuity of A-epsilon in x}) is straightforwardly shown by (\ref{Eq (Section 1): Growth of p-Laplace-type operator})--(\ref{Eq (Section 1): Bound Assumptions for Coefficients}), and (\ref{Eq (Section 1): Growth of 1-Laplace-type operator}).
 \end{proof}
 Thirdly, following \cite[Lemma 2.3]{T-scalar} and using (\ref{Eq (Section 1): Matrix Gamma}), we can straightforwardly show Lemma \ref{Lemma (Section 2): G-p-epsilon}.
 \begin{lemma}\label{Lemma (Section 2): G-p-epsilon}
     Fix $(x,\,t)\in\Omega_{T}$ and $\varepsilon\in(0,\,1)$, and define the bijective vector field ${\mathbf G}_{p,\,\varepsilon}(x,\,t;\,\cdot\,)\colon{\mathbb R}^{Nn}\to{\mathbb R}^{Nn}$ as ${\mathbf G}_{p,\,\varepsilon}(x,\,t;\,\bz)\coloneqq h_{2p,\,\varepsilon}(\bz)\bz$, or shortly ${\mathbf G}_{p,\,\varepsilon}(\bz)$, for $\bz\in{\mathbb R}^{Nn}$, where $h_{2p,\,\varepsilon}$ is given by Lemma \ref{Lemma (Section 2): Ellipticity of Bilinear Forms}.
     Then, there exists a constant $c=c({\mathcal D})\in(0,\,1)$ such that
     \begin{equation}
         \lvert {\mathbf G}_{p,\,\varepsilon}(\bz_{1})-{\mathbf G}_{p,\,\varepsilon}(\bz_{2})\rvert\ge c({\mathcal D})\left(\lvert \bz_{1}\rvert\vee\lvert \bz_{2}\rvert\right)^{p-1} \lvert \bz_{1}-\bz_{2} \rvert
     \end{equation}
     holds for all $\bz_{1},\,\bz_{2}\in{\mathbb R}^{Nn}$. In particular, there also holds
     \begin{equation}
         \lvert {\mathbf G}_{p,\,\varepsilon}^{-1}(\bta) \rvert\le c({\mathcal D})^{-1}\lvert \bta\rvert^{1/p}
     \end{equation}
     for all $\bta\in{\mathbb R}^{Nn}$.
 \end{lemma}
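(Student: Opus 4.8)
The plan is to treat this as a routine ``$V$-type'' monotonicity estimate and to reduce the $\bg$-anisotropy to the Euclidean situation. First I would record that, because $\varepsilon\in(0,\,1)$ is fixed, the scalar weight $h_{2p,\,\varepsilon}$ is smooth and bounded away from zero on ${\mathbb R}^{Nn}$, so ${\mathbf G}_{p,\,\varepsilon}(\bz)=h_{2p,\,\varepsilon}(\bz)\bz$ is a $C^{1}$ vector field with ${\mathbf G}_{p,\,\varepsilon}(0)=0$; and that (\ref{Eq (Section 1): Matrix Gamma}) gives $\gamma_{0}^{1/2}\lvert\bz\rvert\le\lvert\bz\rvert_{\bg}\le\gamma_{0}^{-1/2}\lvert\bz\rvert$ uniformly in $(x,\,t)$, so it suffices to prove the two inequalities with $\lvert\,\cdot\,\rvert$ replaced by $\lvert\,\cdot\,\rvert_{\bg}$ throughout, the constants changing only by powers of $\gamma_{0}\in{\mathcal D}$. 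After this reduction one is in the isotropic scalar situation of \cite[Lemma 2.3]{T-scalar}, which I would follow.

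For the first inequality I would set $\bz_{\tau}\coloneqq(1-\tau)\bz_{2}+\tau\bz_{1}$, $\tau\in\lbrack 0,\,1\rbrack$, and start from the fundamental theorem of calculus,
\[
{\mathbf G}_{p,\,\varepsilon}(\bz_{1})-{\mathbf G}_{p,\,\varepsilon}(\bz_{2})=\int_{0}^{1}D{\mathbf G}_{p,\,\varepsilon}(\bz_{\tau})\lbrack\bz_{1}-\bz_{2}\rbrack\,\d\tau .
\]
A direct computation of $D{\mathbf G}_{p,\,\varepsilon}(\bz)$ shows that the quadratic form $\bxi\mapsto\langle D{\mathbf G}_{p,\,\varepsilon}(\bz)\lbrack\bxi\rbrack\mid\bxi\rangle_{\bg}$ splits as a positive scalar weight $w(\bz)$ times $\lvert\bxi\rvert_{\bg}^{2}$ plus a ``radial'' term proportional to $\langle\bz\mid\bxi\rangle_{\bg}^{2}$, whose coefficient is non-negative precisely because $p>1$. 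I would then pair the displayed identity with $\bz_{1}-\bz_{2}$ in $\langle\,\cdot\mid\cdot\,\rangle_{\bg}$, discard the radial term, and invoke the Cauchy--Schwarz inequality for $\langle\,\cdot\mid\cdot\,\rangle_{\bg}$ to obtain
\[
\lvert{\mathbf G}_{p,\,\varepsilon}(\bz_{1})-{\mathbf G}_{p,\,\varepsilon}(\bz_{2})\rvert_{\bg}\ge\left(\int_{0}^{1}w(\bz_{\tau})\,\d\tau\right)\lvert\bz_{1}-\bz_{2}\rvert_{\bg},
\]
where $w$ is explicit and non-decreasing in $\lvert\,\cdot\,\rvert_{\bg}$. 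To bound the scalar integral from below I would assume $\lvert\bz_{1}\rvert_{\bg}\ge\lvert\bz_{2}\rvert_{\bg}$ (without loss of generality) and use the convexity of $\tau\mapsto\lvert\bz_{\tau}\rvert_{\bg}$ together with $\lvert\bz_{1}-\bz_{2}\rvert_{\bg}\le 2\lvert\bz_{1}\rvert_{\bg}$, which give $\lvert\bz_{\tau}\rvert_{\bg}\ge\tfrac12\lvert\bz_{1}\rvert_{\bg}$ for $\tau\in\lbrack 3/4,\,1\rbrack$; since $w$ is non-decreasing, restricting the integral to $\lbrack 3/4,\,1\rbrack$ and inserting the explicit form of $w$ (here $p>1$ is used again) yields $\int_{0}^{1}w(\bz_{\tau})\,\d\tau\ge c({\mathcal D})(\lvert\bz_{1}\rvert_{\bg}\vee\lvert\bz_{2}\rvert_{\bg})^{p-1}$. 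Combining the last two displays and reverting to $\lvert\,\cdot\,\rvert$ via (\ref{Eq (Section 1): Matrix Gamma}) gives the first inequality with some $c=c({\mathcal D})\in(0,\,1)$.

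For the second inequality I would apply the first one with $\bz_{2}=0$: since ${\mathbf G}_{p,\,\varepsilon}(0)=0$, this reads $\lvert{\mathbf G}_{p,\,\varepsilon}(\bz)\rvert\ge c({\mathcal D})\lvert\bz\rvert^{p}$ for every $\bz\in{\mathbb R}^{Nn}$. The first inequality also makes ${\mathbf G}_{p,\,\varepsilon}$ injective, and $\lvert{\mathbf G}_{p,\,\varepsilon}(\bz)\rvert\to\infty$ as $\lvert\bz\rvert\to\infty$ together with continuity makes it surjective, so ${\mathbf G}_{p,\,\varepsilon}^{-1}$ is well defined; writing $\bz={\mathbf G}_{p,\,\varepsilon}^{-1}(\bta)$ and inverting the previous bound gives $\lvert{\mathbf G}_{p,\,\varepsilon}^{-1}(\bta)\rvert\le c({\mathcal D})^{-1/p}\lvert\bta\rvert^{1/p}\le c({\mathcal D})^{-1}\lvert\bta\rvert^{1/p}$, using $c({\mathcal D})\in(0,\,1)$ and $1/p<1$. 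I do not expect a genuine obstacle: the whole argument transcribes \cite[Lemma 2.3]{T-scalar}. The one point I would be careful about is keeping the $\bg$-anisotropy under uniform control, which (\ref{Eq (Section 1): Matrix Gamma}) handles once and for all, and carrying out the differential computation by hand (rather than quoting a generic $V$-map inequality), so that the non-negativity of the radial term — hence the monotonicity — is visibly valid over the entire range $p\in(1,\,\infty)$, both the singular and the degenerate cases.
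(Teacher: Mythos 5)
Your overall strategy is exactly the one the paper intends: reduce to the Euclidean situation via (\ref{Eq (Section 1): Matrix Gamma}), apply the fundamental theorem of calculus along the segment $\bz_{\tau}=(1-\tau)\bz_{2}+\tau\bz_{1}$, observe that the ``radial'' part of $D\G_{p,\,\varepsilon}$ has a non-negative coefficient because $p>1$, discard it, invoke Cauchy--Schwarz, and then bound the remaining scalar integral from below using the monotonicity of the weight and the restriction to $\tau\in\lbrack 3/4,\,1\rbrack$. The paper provides no proof of its own and simply points to \cite[Lemma 2.3]{T-scalar} plus the norm equivalence, so you are matching the intended argument. The items you list as points to be careful about (uniform control of the anisotropy, carrying out the differential computation by hand) are exactly the right ones.

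The one place where you gloss over something genuinely load-bearing is the step ``inserting the explicit form of $w$ yields $\int_{0}^{1}w(\bz_{\tau})\,\d\tau\ge c(\lvert\bz_{1}\rvert_{\bg}\vee\lvert\bz_{2}\rvert_{\bg})^{p-1}$.'' You should write out what $w$ actually is, because if one takes the statement of the lemma at face value, $h_{2p,\,\varepsilon}(\bz)=\left(\varepsilon^{2}+\lvert\bz\rvert_{\bg}^{2}\right)^{p-1}$, so that $w(\bz_{\tau})=\left(\varepsilon^{2}+\lvert\bz_{\tau}\rvert_{\bg}^{2}\right)^{p-1}$, and the restricted integral then only gives a lower bound of order $\lvert\bz_{1}\rvert_{\bg}^{2(p-1)}$, not $\lvert\bz_{1}\rvert_{\bg}^{p-1}$ --- which is weaker for $\lvert\bz_{1}\rvert_{\bg}<1$ and does not yield the claim. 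Indeed the estimate as literally written is \emph{false} with $h_{2p,\,\varepsilon}$: taking $\bz_{2}=0$, $\bg$ the identity, and $\lvert\bz_{1}\rvert\sim\varepsilon$, one computes $\lvert\G_{p,\,\varepsilon}(\bz_{1})\rvert\sim\varepsilon^{2(p-1)}\lvert\bz_{1}\rvert$ while the right-hand side is $c\lvert\bz_{1}\rvert^{p}\sim c\varepsilon^{p}$, and the ratio tends to $0$ as $\varepsilon\to 0$, so no $\varepsilon$-independent $c\in(0,\,1)$ exists. The resolution is that the subscript $2p$ is a typo: the intended map is $\G_{p,\,\varepsilon}(\bz)=\left(\varepsilon^{2}+\lvert\bz\rvert_{\bg}^{2}\right)^{(p-1)/2}\bz$, i.e.\ $h_{p+1,\,\varepsilon}$, which is what is actually used throughout the paper (Lemma \ref{Lemma (Section 4): L2-estimates for spatial derivatives} identifies $\G_{p,\,\varepsilon}(\D\bu_{\varepsilon})=v_{\varepsilon}^{p-1}\D\bu_{\varepsilon}$, and the proof of Lemma \ref{Lemma (Section 7): Energy decay from level set} applies the present lemma with $\boldsymbol{\Sigma}_{p}=\fint v_{\varepsilon}^{p-1}\D\bu_{\varepsilon}$). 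With this definition $w(\bz_{\tau})=\left(\varepsilon^{2}+\lvert\bz_{\tau}\rvert_{\bg}^{2}\right)^{(p-1)/2}\ge\lvert\bz_{\tau}\rvert_{\bg}^{p-1}$, the restricted integral gives $\int_{0}^{1}w\,\d\tau\ge 4^{-1}2^{-(p-1)}\lvert\bz_{1}\rvert_{\bg}^{p-1}$, and your argument closes exactly as you describe. So: your plan is right, but you should have carried out the scalar calculation, which is where the (typo-sensitive) content of the lemma lives; had you done so you would have noticed that the stated subscript $2p$ must be read as $p+1$.
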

 Finally, we introduce the mapping
 \begin{equation}\label{Eq (Section 2): Def of G-2delta-epsilon}
 \G_{2\delta,\,\varepsilon}(x,\,t;\,\bz)\coloneqq \left(\sqrt{\varepsilon^{2}+\lvert \bz\rvert_{\bg(x,\,t)}^{2}}-2\delta \right)_{+}\frac{\bz}{\lvert \bz\rvert_{\bg(x,\,t)}}\in{\mathbb R}^{Nn}
 \end{equation}
 for $(x,\,t)\in\Omega_{T}$ and $\bz\in{\mathbb R}^{Nn}$, or $\G_{2\delta,\,\varepsilon}(\bz)$ for short.
 We note that the mapping $\G_{\delta,\,\varepsilon}(\bz)$, defined in the same manner, makes sense as long as $\delta>\varepsilon$ holds.
 Without a proof, we infer to Lemma \ref{Lemma (Section 2): Continuity of G-2delta-epsilon}, which is shown completely similarly to \cite[Lemma 2.4]{T-scalar}.
 \begin{lemma}\label{Lemma (Section 2): Continuity of G-2delta-epsilon}
 Let $\delta\in(0,\,1)$ and $\varepsilon\in(0,\,\varsigma\delta)$ for some fixed $\varsigma\in(0,\,2)$.
 Then, there exists a constant $c_{\dagger\dagger}\in(1,\,\infty)$, depending at most on $\varsigma$ and $\gamma_{0}$, such that the mapping $\G_{2\delta,\,\varepsilon}$, defined as (\ref{Eq (Section 2): Def of G-2delta-epsilon}), satisfies
 \[\left\lvert \boldsymbol{\mathcal G}_{2\delta,\,\varepsilon}(x,\,t;\,\bz_{1})-\boldsymbol{\mathcal G}_{2\delta,\,\varepsilon}(x,\,t;\,\bz_{2}) \right\rvert\le c_{\dagger\dagger}\lvert \bz_{1}-\bz_{2}\rvert\]
 for any $(x,\,t)\in \Omega_{T}$, and $\bz_{1},\,\bz_{2}\in{\mathbb R}^{Nn}$.
 \end{lemma}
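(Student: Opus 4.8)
The final statement to prove is Lemma \ref{Lemma (Section 2): Continuity of G-2delta-epsilon}, the uniform Lipschitz bound for the truncation map $\G_{2\delta,\,\varepsilon}(x,\,t;\,\cdot\,)$.

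\medskip

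\textbf{Approach.} The plan is to reduce the statement to a one-dimensional calculus fact about the scalar map $\phi(s)\coloneqq (\sqrt{\varepsilon^{2}+s^{2}}-2\delta)_{+}/s$ composed with the $\bg$-norm, and then handle the passage between $\lvert\,\cdot\,\rvert_{\bg}$ and the Euclidean norm $\lvert\,\cdot\,\rvert$ using the uniform equivalence \eqref{Eq (Section 1): Matrix Gamma}. The key observation is that $\G_{2\delta,\,\varepsilon}(x,\,t;\,\bz)$ is the gradient in $\bz$ (with respect to the $\bg(x,\,t)$ inner product) of the convex radial function $\Psi_{\delta,\,\varepsilon}(r)$ where $r=\lvert\bz\rvert_{\bg(x,\,t)}$ and $\Psi_{\delta,\,\varepsilon}'(r)=(\sqrt{\varepsilon^{2}+r^{2}}-2\delta)_{+}$; indeed, $\nabla_{\bg}\Psi_{\delta,\,\varepsilon}(\lvert\bz\rvert_{\bg})=\Psi_{\delta,\,\varepsilon}'(\lvert\bz\rvert_{\bg})\,\bz/\lvert\bz\rvert_{\bg}=\G_{2\delta,\,\varepsilon}(\bz)$. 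Since $r\mapsto\sqrt{\varepsilon^{2}+r^{2}}$ is $1$-Lipschitz and nondecreasing, its positive-part-minus-constant $\Psi_{\delta,\,\varepsilon}'$ is also $1$-Lipschitz and nondecreasing, so $\Psi_{\delta,\,\varepsilon}$ is convex with second derivative (where it exists) bounded by $1$. The gradient of such a function is Lipschitz with constant controlled by $\sup(\Psi_{\delta,\,\varepsilon}')'=1$ on the support and by $\Psi_{\delta,\,\varepsilon}'(r)/r\le 1$ off it.

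\medskip

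\textbf{Key steps.} First I would record the radial structure: fix $(x,\,t)$, write $\lvert\,\cdot\,\rvert_\ast\coloneqq\lvert\,\cdot\,\rvert_{\bg(x,\,t)}$, and note $\G_{2\delta,\,\varepsilon}(\bz)=\Phi(\lvert\bz\rvert_\ast)\,\bz$ with $\Phi(r)\coloneqq(\sqrt{\varepsilon^{2}+r^{2}}-2\delta)_{+}/r$ for $r>0$ and $\G_{2\delta,\,\varepsilon}(0)=0$. Second, I would estimate the $\bg$-Lipschitz constant of $\bz\mapsto\G_{2\delta,\,\varepsilon}(\bz)$ by the standard gradient-of-convex-function argument: for $\bz_{1},\,\bz_{2}\neq 0$ write the difference along the segment $\bz_{\tau}=\bz_{1}+\tau(\bz_{2}-\bz_{1})$ and differentiate, obtaining a bound by $\sup_{\tau}\big(|\Phi(\lvert\bz_{\tau}\rvert_\ast)|+\lvert\bz_{\tau}\rvert_\ast\,|\Phi'(\lvert\bz_{\tau}\rvert_\ast)|\big)$ times $\lvert\bz_{1}-\bz_{2}\rvert_\ast$; here one checks $|\Phi(r)|\le 1$ (since $\sqrt{\varepsilon^{2}+r^{2}}-2\delta\le r$ when it is positive, because $2\delta>\varepsilon$ forces $\sqrt{\varepsilon^{2}+r^{2}}-r\le\varepsilon<2\delta$... more carefully: on the support $r\ge\sqrt{4\delta^{2}-\varepsilon^{2}}$ and there $\sqrt{\varepsilon^{2}+r^{2}}-2\delta\le\sqrt{\varepsilon^{2}+r^{2}}\le r+\varepsilon\le r(1+\varepsilon/\sqrt{4\delta^{2}-\varepsilon^{2}})$, which is bounded since $\varepsilon<\varsigma\delta<2\delta$) and $r|\Phi'(r)|\le C(\varsigma)$ by an elementary computation using $\varepsilon<\varsigma\delta$. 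Alternatively, and more cleanly, one invokes that $\Phi(r)\,\bz=\nabla_{\bg}\Psi(\lvert\bz\rvert_\ast)$ with $\Psi'=(\sqrt{\varepsilon^{2}+\cdot^{2}}-2\delta)_{+}$ nondecreasing and $1$-Lipschitz, so the monotone map $\G_{2\delta,\,\varepsilon}$ is the gradient of a $C^{1,1}$ convex function with Hessian bound $\le 1$ in the $\bg$-metric, giving directly $\lvert\G_{2\delta,\,\varepsilon}(\bz_{1})-\G_{2\delta,\,\varepsilon}(\bz_{2})\rvert_\ast\le\lvert\bz_{1}-\bz_{2}\rvert_\ast$. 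Third, I would convert between the two norms: by \eqref{Eq (Section 1): Matrix Gamma}, $\gamma_{0}^{1/2}\lvert\,\cdot\,\rvert\le\lvert\,\cdot\,\rvert_{\bg}\le\gamma_{0}^{-1/2}\lvert\,\cdot\,\rvert$, so the Euclidean Lipschitz constant is at most $\gamma_{0}^{-1}$ times the $\bg$-one, and setting $c_{\dagger\dagger}\coloneqq c\,\gamma_{0}^{-1}$ with $c=c(\varsigma)$ finishes it. The degenerate cases where $\bz_{1}=0$ or $\bz_{2}=0$ follow by continuity (or directly, since $\G_{2\delta,\,\varepsilon}(\bz)\to 0$ as $\bz\to 0$ and then $\lvert\G_{2\delta,\,\varepsilon}(\bz)\rvert\le|\Phi(\lvert\bz\rvert_\ast)|\lvert\bz\rvert\le\lvert\bz\rvert$).

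\medskip

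\textbf{Main obstacle.} There is no deep obstacle here — this is a routine Lipschitz estimate, and the excerpt itself says it is proved "completely similarly to \cite[Lemma 2.4]{T-scalar}." The only point requiring a little care is ensuring the constant does \emph{not} blow up as $\delta\to 0$ for fixed $\varsigma$: the truncation threshold $2\delta$ and the regularization $\varepsilon<\varsigma\delta$ scale together, so the bound $r|\Phi'(r)|$ must be estimated using the coupling $\varepsilon/\delta<\varsigma$ rather than $\varepsilon,\,\delta$ separately, which is exactly why the hypothesis is phrased as $\varepsilon\in(0,\,\varsigma\delta)$; the slick route via $\Psi'$ being $1$-Lipschitz sidesteps even this, since the Hessian bound is simply $1$, independent of $\delta$ and $\varepsilon$, and only the norm-equivalence constant $\gamma_{0}$ (and a harmless factor depending on $\varsigma$ from comparing $\lvert\,\cdot\,\rvert$ with $\lvert\,\cdot\,\rvert_\ast$ on the support) enters $c_{\dagger\dagger}$.
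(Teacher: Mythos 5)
Your proposal is correct. The paper omits the proof of this lemma, deferring to the direct computation in \cite[Lemma 2.4]{T-scalar}, and your argument is essentially that same standard route: the radial structure $\G_{2\delta,\,\varepsilon}(\bz)=\Psi'(\lvert\bz\rvert_{\bg})\,\bz/\lvert\bz\rvert_{\bg}$ with $\Psi'(r)=(\sqrt{\varepsilon^{2}+r^{2}}-2\delta)_{+}$ non-decreasing and $1$-Lipschitz gives the two Hessian eigenvalues $\Psi''(r)\le 1$ and $\Psi'(r)/r\le 1$ (the latter because $\sqrt{\varepsilon^{2}+r^{2}}\le r+2\delta$ for all $r\ge 0$ once $\varepsilon<2\delta$, which is exactly what $\varepsilon<\varsigma\delta$ with $\varsigma<2$ guarantees — a cleaner bound than the detour via $r\ge\sqrt{4\delta^{2}-\varepsilon^{2}}$ in your aside), so the map is $1$-Lipschitz in the $\lvert\cdot\rvert_{\bg}$-metric and the norm equivalence (\ref{Eq (Section 1): Matrix Gamma}) converts this to the Euclidean estimate with $c_{\dagger\dagger}=\gamma_{0}^{-1}$.
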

 
 \subsection{Composite mappings}
 Throughout this paper, let $\psi\colon {\mathbb R}_{\ge 0}\to{\mathbb R}_{\ge 0}$ be a globally Lipschitz function that is non-decreasing and continuously differentiable except at finitely many points. 
 For this $\psi$, we consider a convex composite function $\Psi\colon {\mathbb R}_{\ge 0}\to{\mathbb R}_{\ge 0}$ of the form
 \begin{equation}\label{Eq (Section 2): Def of Psi}
    \Psi(\sigma)\coloneqq \int_{0}^{\sigma}\tau\psi(\tau)\,{\mathrm d}\tau+C\quad \text{for }\sigma\in{\mathbb R}_{\ge 0}
 \end{equation}
 with $C\in{\mathbb R}_{\ge 0}$ denoting the constant of integration.
 In other words, $\Psi=\Psi(\sigma)$ is an antiderivative of the function $\sigma\psi(\sigma)$.
 Since $\psi$ is non-decreasing, it is easy to check that $\Psi$ defined as (\ref{Eq (Section 2): Def of Psi}) satisfies
 \begin{equation}\label{Eq (Section 2): Control of Psi}
     \Psi(\sigma)\le \sigma^{2}\psi(\sigma)\quad \text{for all }\sigma\in{\mathbb R}_{\ge 0},\quad \textrm{provided}\quad C=0.
 \end{equation}
 We list some choices of $\psi$ adopted in this paper as follows.
    \begin{itemize}
    \item For given $k\in(0,\,\infty)$, we choose $\psi_{1,\,k}\coloneqq 2\chi_{(k,\,\infty)}$ by considering a piecewise linear function
    \begin{equation}\label{Eq (Section 2): Approximation}
    \psi_{1,\,k,\,{\widetilde \varepsilon}}(\sigma)\coloneqq \min\left\{\,(\sigma-k)_{+}/\widetilde{\varepsilon},\,1\,\right\}  \quad \text{for small }\widetilde{\varepsilon}>0,
    \end{equation}
    and letting $\widetilde{\varepsilon}\to 0$.
    The corresponding composite function $\Psi_{1,\,k}$ is of the form $\Psi_{1,\,k}(\sigma)\coloneqq (\sigma^{2}-k^{2})_{+}+C$.
    \item For given $k\in(0,\,\infty)$, we choose $\psi_{2,\,k}(\sigma)\coloneqq 2(1-k/\sigma)_{+}$, so that the corresponding composite is $\Psi_{2,\,k}(\sigma)\coloneqq (\sigma-k)_{+}^{2}+C$. 
    For this choice, we easily check the following identity;
    \begin{equation}\label{Eq (Section 2): psi-2}
        \psi_{2,\,k}(\sigma)+\psi_{2,\,k}^{\prime}(\sigma)\sigma=2\chi_{\{\sigma>k\}}(\sigma)\quad \text{for all }\sigma\in{\mathbb R}_{>0}.
    \end{equation}
    Here, for a measurable set $A\subset {\mathbb R}_{\ge 0}$, $\chi_{A}\colon {\mathbb R}_{\ge 0}\to \{\,0,\,1\,\}$ is the characteristic function of $A$.
    \item For given $m\in\lbrack 0,\,\infty)$ and $l\in(0,\,\infty)$, we set $\psi_{3,\,m,\,l}(\sigma)\coloneqq (\sigma\wedge l)^{m}$. 
    Then, we have  
    \begin{equation}\label{Eq (Section 2): Psi-3}
        \frac{(\sigma\wedge l)^{m+2}}{m+2}\le \Psi_{3,\,m,\,l}(\sigma)\coloneqq \int_{0}^{\sigma}\tau\psi_{3,\,m,\,l}(\tau)\,{\mathrm d}\tau\le \frac{\sigma^{m+2}}{m+2}\quad \textrm{for all }\sigma\in{\mathbb R}_{\ge 0}.
    \end{equation}
    As $l\to\infty$, $\Psi_{3,\,m,\,l}(\sigma)$ monotonically converges to the right-hand side of (\ref{Eq (Section 2): Psi-3}).
    \item For given $m\in\lbrack 0,\,\infty)$ and $l\in(1,\,\infty)$, we set $\psi_{4,\,m,\,l}(\sigma)\coloneqq (1-1/\sigma)_{+}\sigma^{m}\wedge (1-1/l)_{+}l^{m}$, and often consider the limit $\psi_{4,\,m}(\sigma)\coloneqq \lim\limits_{l\to\infty}\psi_{4,\,m,\,l}(\sigma)=(1-1/\sigma)_{+}\sigma^{m}$.
    Then, for every $\sigma\in{\mathbb R}_{\ge 0}$, we have 
    \begin{equation}\label{Eq (Section 2): Psi-4}
        \Psi_{4,\,m,\,l}(\sigma)\coloneqq \int_{0}^{\sigma}\tau\psi_{4,\,m,\,l}(\tau)\,\d\tau\uparrow\int_{0}^{\sigma}\tau\psi_{4,\,m}(\tau)\,\d\tau\ge \frac{(\sigma-1)_{+}^{m+2}}{m+2}
    \end{equation}
    as $l\to \infty$.
    Moreover, the function $\psi_{4,\,m,\,l}$ satisfies
    \begin{equation}\label{Eq (Section 2): Growth of psi-4}
        \psi_{4,\,m,\,l}(\sigma)+\psi_{4,\,m,\,l}^{\prime}(\sigma)\sigma\le (m+1)\sigma^{m}\quad \text{for all }\sigma\in{\mathbb R}_{>0},
    \end{equation}
    which is easy to check by direct computations.
   \end{itemize}
   These choices are to appear in the proof of various parabolic regularity estimates of this paper.
 \subsection{Basic lemmata for parabolic regularity}
 This subsection provides basic lemmata that are used to prove parabolic regularity.

 Lemma \ref{Lemma (Section 2): Interpolation Abosorbing lemma} is a well-known lemma, the proof of which was first given in \cite[Lemma 1.1]{Giaquinta-Giusti}.
 \begin{lemma}\label{Lemma (Section 2): Interpolation Abosorbing lemma}
     Fix a bounded closed interval $\lbrack R_{1},\,R_{2}\rbrack\subset{\mathbb R}$.
     Let $F\colon \lbrack R_{1},\,R_{2}\rbrack\to \lbrack 0,\,\infty)$ be a non-decreasing and bounded function that satisfies
     \[F(r_{1})\le \theta F(r_{2})+\left[\frac{A}{(r_{2}-r_{1})^{m}}+B \right]\quad \text{for any}\quad R_{1}\le r_{1}<r_{2}\le R_{2},\]
     where $A,\,B,\,m\in(0,\,\infty)$, and $\theta\in(0,\,1)$ are constant.
     Then, there exists a constant $C=C(m,\,\theta)\in(0,\,\infty)$ such that  
     \[F(R_{1})\le C\left[\frac{A}{(R_{2}-R_{1})^{m}}+B\right].\]
 \end{lemma}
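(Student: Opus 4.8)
The plan is to prove the classical Giaquinta--Giusti iteration lemma by a standard hole-filling argument combined with a geometric decreasing sequence of radii. I would fix the target radius $R_1$ and construct an increasing sequence $r_0<r_1<r_2<\cdots$ converging to $R_2$, apply the hypothesis across consecutive radii, and iterate the resulting inequality. The convergence of the resulting series is what pins down the constant $C=C(m,\theta)$, and here the crucial point is the freedom to choose the ratio of the geometric sequence so that the product of $\theta$ with the growth factor of $(r_{k+1}-r_k)^{-m}$ stays below $1$.

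Concretely, first I would pick a parameter $\tau\in(0,1)$, to be fixed later, and set $r_k\coloneqq R_1+(1-\tau^k)(R_2-R_1)$, so that $r_0=R_1$, $r_k\uparrow R_2$, and $r_{k+1}-r_k=(1-\tau)\tau^k(R_2-R_1)$. Applying the hypothesis with $r_1=r_k$ and $r_2=r_{k+1}$ gives
\[
F(r_k)\le\theta F(r_{k+1})+\frac{A}{(1-\tau)^m\tau^{mk}(R_2-R_1)^m}+B.
\]
Iterating this from $k=0$ up to $k=K-1$ yields
\[
F(R_1)=F(r_0)\le\theta^K F(r_K)+\sum_{k=0}^{K-1}\theta^k\left[\frac{A}{(1-\tau)^m\tau^{mk}(R_2-R_1)^m}+B\right].
\]
Since $F$ is bounded, $\theta^K F(r_K)\to0$ as $K\to\infty$. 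For the sum, I would now \emph{choose} $\tau$ so that $\theta\tau^{-m}<1$, for instance $\tau\coloneqq(2/\theta)^{-1/m}$, equivalently $\theta\tau^{-m}=\theta/2<1$; this $\tau$ depends only on $m$ and $\theta$. Then $\sum_k(\theta\tau^{-m})^k=\sum_k(\theta/2)^k=2$, and also $\sum_k\theta^k=(1-\theta)^{-1}$, so letting $K\to\infty$ gives
\[
F(R_1)\le\frac{2A}{(1-\tau)^m(R_2-R_1)^m}+\frac{B}{1-\theta}\le C(m,\theta)\left[\frac{A}{(R_2-R_1)^m}+B\right],
\]
where $C(m,\theta)\coloneqq\max\{2(1-\tau)^{-m},(1-\theta)^{-1}\}$ depends only on $m$ and $\theta$ through the explicit choice of $\tau$.

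There is no serious obstacle here; this is a textbook argument and the only point requiring a little care is the bookkeeping in the iteration and the verification that the chosen $\tau$ indeed makes the geometric series converge with a bound depending only on $m$ and $\theta$. The boundedness of $F$ is used solely to kill the tail term $\theta^K F(r_K)$, and monotonicity of $F$ is not actually needed for this particular deduction once the radii are nested, though it costs nothing to assume it. One minor subtlety worth flagging: the hypothesis is stated for $R_1\le r_1<r_2\le R_2$, and all pairs $(r_k,r_{k+1})$ lie in this range since $R_1=r_0\le r_k<r_{k+1}<R_2$, so every application is legitimate. Thus the lemma follows.
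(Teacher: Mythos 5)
Your overall strategy is exactly the Giaquinta--Giusti hole-filling iteration, which is the argument the paper refers to without reproducing, and the structure of your iteration is correct. However, there is a concrete computational slip in the choice of $\tau$ that, as written, makes the key geometric series diverge.

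You set $\tau\coloneqq(2/\theta)^{-1/m}$, which gives $\tau^{m}=\theta/2$ and hence
\[
\theta\tau^{-m}=\theta\cdot\frac{2}{\theta}=2>1,
\]
not $\theta/2$ as you claim. With this $\tau$ the sum $\sum_{k}(\theta\tau^{-m})^{k}$ does not converge, and the argument breaks down. The correct requirement is $\theta\tau^{-m}<1$, i.e.\ $\tau^{m}>\theta$, i.e.\ $\tau\in(\theta^{1/m},1)$; your choice sits on the wrong side of this threshold. A clean valid choice is $\tau\coloneqq\theta^{1/(2m)}\in(\theta^{1/m},1)$, which yields $\theta\tau^{-m}=\sqrt{\theta}<1$ and thus $\sum_{k}(\theta\tau^{-m})^{k}=(1-\sqrt{\theta})^{-1}$. (Also, even under your intended value $\theta\tau^{-m}=\theta/2$, the sum would be $(1-\theta/2)^{-1}=2/(2-\theta)$, not exactly $2$, though the bound $\le 2$ would still be fine.) With $\tau=\theta^{1/(2m)}$ one obtains $F(R_{1})\le\frac{A}{(1-\tau)^{m}(1-\sqrt{\theta})(R_{2}-R_{1})^{m}}+\frac{B}{1-\theta}$, and $C(m,\theta)$ can be read off. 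Everything else in your write-up — the telescoping of the radii $r_{k}=R_{1}+(1-\tau^{k})(R_{2}-R_{1})$, the identity $r_{k+1}-r_{k}=(1-\tau)\tau^{k}(R_{2}-R_{1})$, the iteration, the use of boundedness to kill $\theta^{K}F(r_{K})$, and the observation that monotonicity is not strictly needed — is correct.
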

 Lemma \ref{Lemma (Section 2): Moser Iteration Lemma} is no more than a short-cut lemma to easily deduce $L^{\infty}$-estimates by Moser's iteration. 
 Although the proof is provided in \cite[Lemma 4.2]{T-supercritical}, the basic computations therein are naturally found when one carries out Moser's iteration (see e.g., \cite[Lemma 2.3]{BDLS-boundary}, \cite[Theorem 1.2]{BDLS-parabolic}).
 \begin{lemma}\label{Lemma (Section 2): Moser Iteration Lemma}
    Fix $A,\,B,\,\kappa\in(1,\,\infty)$, and $\mu\in(0,\,\infty)$.
    Let the sequences $\{p_{l}\}_{l=0}^{\infty}\subset (0,\,\infty),\,\{Y_{l}\}_{l=0}^{\infty}\subset \lbrack0,\,\infty)$ satisfy
    \(Y_{l+1}^{p_{l+1}}\le \left(AB^{l}Y_{l}^{p_{l}}\right)^{\kappa}\), \(p_{l}\ge \mu(\kappa^{l}-1)\)
    for all $l\in{\mathbb Z}_{\ge 0}$, and $\kappa^{-l}p_{l}\to \mu$ as $l\to\infty$.
    Then, we have \(\limsup\limits_{l\to\infty}Y_{l}\le A^{\frac{\kappa^{\prime}}{\mu}}B^{\frac{(\kappa^{\prime})^{2}}{\mu}}Y_{0}^{\frac{p_{0}}{\mu}}\).
 \end{lemma}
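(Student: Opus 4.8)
The plan is to unwind the recursion, take roots, and let $l\to\infty$ using the prescribed asymptotics of $p_l$. First I would note that we may assume $Y_l>0$ for every $l$: if $Y_L=0$ for some $L$, then $Y_{L+1}^{p_{L+1}}\le(AB^LY_L^{p_L})^\kappa=0$ forces $Y_{L+1}=0$, so the sequence vanishes from $L$ onward and the asserted bound (whose right-hand side is nonnegative) is trivial. Throughout, write $\kappa'\coloneqq\kappa/(\kappa-1)$ for the H\"older conjugate of $\kappa$.

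Setting $Z_l\coloneqq Y_l^{p_l}$, the hypothesis becomes $Z_{l+1}\le A^\kappa B^{l\kappa}Z_l^\kappa$. A straightforward induction on $l$ then gives
\[ Z_l\le A^{S_l^{(1)}}\,B^{S_l^{(2)}}\,Z_0^{\kappa^l},\qquad S_l^{(1)}\coloneqq\sum_{j=1}^{l}\kappa^j,\quad S_l^{(2)}\coloneqq\sum_{j=1}^{l-1}(l-j)\kappa^j, \]
with empty sums read as zero; the inductive step only uses the identities $\kappa+\kappa S_l^{(1)}=S_{l+1}^{(1)}$ and $l\kappa+\kappa S_l^{(2)}=S_{l+1}^{(2)}$. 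Taking $p_l$-th roots, and using $A,\,B\ge1$ together with the elementary estimates
\[ \frac{S_l^{(1)}}{\kappa^l}=\sum_{i=0}^{l-1}\kappa^{-i}\le\kappa',\qquad \frac{S_l^{(2)}}{\kappa^l}=\sum_{i=1}^{l-1}i\kappa^{-i}\le\Bigl(\sum_{i\ge0}\kappa^{-i}\Bigr)^{2}=(\kappa')^{2} \]
(the last inequality being the term-by-term comparison of $\sum_{i\ge1}ix^i$ with $(\sum_{i\ge0}x^i)^2$ valid for $0<x<1$), I obtain
\[ Y_l=Z_l^{1/p_l}\le A^{\,\kappa'\kappa^l/p_l}\,B^{\,(\kappa')^2\kappa^l/p_l}\,Y_0^{\,p_0\kappa^l/p_l}. \]

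Finally, the hypothesis $\kappa^{-l}p_l\to\mu$ yields $\kappa^l/p_l\to1/\mu$, so each factor on the right converges: $A^{\kappa'\kappa^l/p_l}\to A^{\kappa'/\mu}$, $B^{(\kappa')^2\kappa^l/p_l}\to B^{(\kappa')^2/\mu}$, and, since $Y_0>0$, $Y_0^{p_0\kappa^l/p_l}\to Y_0^{p_0/\mu}$; passing to $\limsup_{l\to\infty}$ gives the claim. There is no genuine obstacle here: the only points needing a word of care are the reduction to the case $Y_l>0$, the bookkeeping of the double sum $S_l^{(2)}$ in the induction, and the crude bound $S_l^{(2)}\le(\kappa')^2\kappa^l$, which is what produces the exponent $(\kappa')^2$ in the statement (the sharp constant $\kappa/(\kappa-1)^2$ is smaller, but since $B>1$ the weaker bound suffices).
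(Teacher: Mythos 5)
Your proof is correct and is the standard unrolling argument for Moser-type iteration lemmas; the paper itself defers the proof to \cite[Lemma 4.2]{T-supercritical}, which proceeds in essentially the same way (iterate the recursion, bound the exponent sums $S_l^{(1)}/\kappa^l\le\kappa'$ and $S_l^{(2)}/\kappa^l\le(\kappa')^2$ by geometric series, and pass to the limit using $\kappa^{-l}p_l\to\mu$). The only remark worth making is that your argument never uses the hypothesis $p_l\ge\mu(\kappa^l-1)$ — for the $\limsup$ conclusion the asymptotic $\kappa^{-l}p_l\to\mu$ indeed suffices — but that is a harmless redundancy, not a gap.
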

 We infer a well-known lemma (see \cite[Chapter II, Lemma 5.7]{LSU MR0241822} for the proof).
 \begin{lemma}\label{Lemma (Section 2): Geometric Decay Lemma}
     Let the sequence $\{Y_{l}\}_{l=0}^{\infty},\,\{Z_{l}\}_{l=0}^{\infty}\subset \lbrack 0,\,\infty)$ satisfy the following recursive inequalities;
     \[Y_{l+1}\le AB^{l}\left(Y_{l}^{1+\upsilon}+Y_{l}^{\upsilon}Z_{l}^{1+\varkappa} \right),\quad Z_{l+1}\le AB^{l}\left( Y_{l}+Z_{l}^{1+\varkappa} \right)\quad \text{for all }l\in{\mathbb Z}_{\ge 0},\]
     where $B\in(1,\,\infty)$ and $A,\,\upsilon,\,\varkappa\in(0,\,\infty)$ are constants. If both $Y_{0}\le \varTheta$ and $Z_{0}\le \varTheta^{1/(1+\varkappa)}$ hold with
     \[\varpi\coloneqq \min\left\{\,\upsilon,\,\frac{\varkappa}{1+\varkappa} \, \right\},\quad \text{and}\quad\varTheta\coloneqq\min\left\{\,(2A)^{-\upsilon^{-1}}B^{-(\upsilon\varpi)^{-1}},\, (2A)^{-(1+\varkappa)\varkappa^{-1}}B^{-(\varkappa\varpi)^{-1}}\, \right\},\]
     then $Y_{l}\to 0$ and $Z_{l}\to 0$ as $l\to\infty$.
 \end{lemma}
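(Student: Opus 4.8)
The plan is to upgrade the qualitative conclusion to the quantitative geometric decay
\[
Y_{l}\le \varTheta B^{-l/\varpi},\qquad Z_{l}\le \varTheta^{1/(1+\varkappa)}B^{-l/((1+\varkappa)\varpi)}\qquad\text{for all }l\in{\mathbb Z}_{\ge 0},
\]
and to prove these two estimates simultaneously by induction on $l$. Once they are established, letting $l\to\infty$ and using $B>1$, $\varpi>0$ gives $Y_{l}\to 0$ and $Z_{l}\to 0$. The base case $l=0$ is precisely the smallness hypothesis $Y_{0}\le\varTheta$ and $Z_{0}\le\varTheta^{1/(1+\varkappa)}$, so no work is needed there.

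For the inductive step I would first extract from the hypothesis at level $l$ the two bounds $Y_{l}^{1+\upsilon}\le\varTheta^{1+\upsilon}B^{-(1+\upsilon)l/\varpi}$ and $Z_{l}^{1+\varkappa}\le\varTheta B^{-l/\varpi}$ (the latter because $(\varTheta^{1/(1+\varkappa)})^{1+\varkappa}=\varTheta$), and note that these force $Y_{l}^{\upsilon}Z_{l}^{1+\varkappa}\le\varTheta^{1+\upsilon}B^{-(1+\upsilon)l/\varpi}$ as well; thus both inhomogeneous terms in the $Y$-recursion carry the \emph{same} power of $B$. Inserting this into $Y_{l+1}\le AB^{l}(Y_{l}^{1+\upsilon}+Y_{l}^{\upsilon}Z_{l}^{1+\varkappa})$ yields $Y_{l+1}\le 2A\varTheta^{1+\upsilon}B^{l(1-(1+\upsilon)/\varpi)}$, and the target bound $Y_{l+1}\le\varTheta B^{-(l+1)/\varpi}$ is then equivalent to $2A\varTheta^{\upsilon}\le B^{-1/\varpi+l(\upsilon/\varpi-1)}$. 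Since $\varpi\le\upsilon$, the coefficient $\upsilon/\varpi-1$ of $l$ is nonnegative, so it is enough that $2A\varTheta^{\upsilon}\le B^{-1/\varpi}$, i.e.\ $\varTheta\le(2A)^{-1/\upsilon}B^{-1/(\upsilon\varpi)}$, which is exactly the first term in the minimum defining $\varTheta$.

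The estimate for $Z_{l+1}$ is handled in the same way: from $Y_{l}\le\varTheta B^{-l/\varpi}$ and $Z_{l}^{1+\varkappa}\le\varTheta B^{-l/\varpi}$ the recursion $Z_{l+1}\le AB^{l}(Y_{l}+Z_{l}^{1+\varkappa})$ gives $Z_{l+1}\le 2A\varTheta B^{l(1-1/\varpi)}$. Writing $1-1/(1+\varkappa)=\varkappa/(1+\varkappa)$ and collecting the powers of $l$, the desired bound $Z_{l+1}\le\varTheta^{1/(1+\varkappa)}B^{-(l+1)/((1+\varkappa)\varpi)}$ reduces to $2A\varTheta^{\varkappa/(1+\varkappa)}\le B^{-1/((1+\varkappa)\varpi)+l(\varkappa/((1+\varkappa)\varpi)-1)}$, and since $\varpi\le\varkappa/(1+\varkappa)$ the coefficient of $l$ is again nonnegative, so it suffices that $2A\varTheta^{\varkappa/(1+\varkappa)}\le B^{-1/((1+\varkappa)\varpi)}$, i.e.\ $\varTheta\le(2A)^{-(1+\varkappa)/\varkappa}B^{-1/(\varkappa\varpi)}$, which is exactly the second term in the minimum defining $\varTheta$. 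This closes the induction.

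I do not expect any genuine obstacle: this is the two-component version of the classical iteration lemma \cite[Chapter II, Lemma 5.7]{LSU MR0241822}. The only delicate points are the two design choices that are already encoded in the statement and that one simply has to reproduce, namely that the decay exponents $1/\varpi$ and $1/((1+\varkappa)\varpi)$ are calibrated so that the two forcing terms in each recursion are dominated by a common power of $B$, and that $\varpi=\min\{\upsilon,\varkappa/(1+\varkappa)\}$ is chosen precisely so that the $B$-exponent produced at each inductive step is nonincreasing in $l$ — which is what permits the step-dependent smallness condition to be replaced by the single threshold $\varTheta$.
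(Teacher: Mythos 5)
Your induction is correct: the exponent bookkeeping in both steps checks out, the choice $\varpi=\min\{\upsilon,\varkappa/(1+\varkappa)\}$ does make the residual $B$-exponent nonincreasing in $l$, and the two smallness conditions you derive are exactly the two terms in the minimum defining $\varTheta$. The paper gives no proof of this lemma — it only cites \cite[Chapter II, Lemma 5.7]{LSU MR0241822} — and your argument is essentially the classical induction used there, so there is nothing further to compare.
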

 As well as Lemma \ref{Lemma (Section 2): Geometric Decay Lemma}, we need Lemma \ref{Lemma (Section 2): Dimensionless Lemma} in showing the De Giorgi-type oscillation lemma.
 \begin{lemma}\label{Lemma (Section 2): Dimensionless Lemma}
     Let $B_{\rho}(x_{0})\subset {\mathbb R}^{n}$, $I_{\rho}(\gamma;\,t_{0})\coloneqq (t_{0}-\gamma\rho^{2},\,t_{0}\rbrack\subset {\mathbb R}$ for some $\gamma\in(0,\,1\rbrack$, $x_{0}\in{\mathbb R}^{n}$, $t_{0}\in{\mathbb R}$, and $\rho\in(0,\,\infty)$.
     For a non-negative measurable function $\varphi\colon Q_{\rho}(\gamma;\,x_{0},\,t_{0})\coloneqq B_{\rho}(x_{0})\times I_{\rho}(\gamma;\,t_{0})\to {\mathbb R}_{\ge 0}$, we define $A\colon I_{\rho}(\gamma;\,t_{0})\to{\mathbb R}_{\ge 0}$ as $A(t)\coloneqq \lvert \{x\in B_{\rho}(x_{0})\mid \varphi(x,\,t)>0\}\rvert$ for $t\in I_{\rho}(\gamma;\,t_{0})$. 
     Let $(q,\,r)$ satisfy (\ref{Eq (Section 1): Condition for q,r}), and define $c_{q,\,r} \coloneqq 1/{\widehat q}-1/{\widehat r}\in\lbrack -1,\,1\rbrack$, where $({\widehat q},\,{\widehat r})$ is defined as (\ref{Eq (Section 1): Exponents beta q-hat r-hat}).
     Then, we have the following;
     \begin{itemize}
        \item The dimensionless quantities
        \[Y\coloneqq \frac{\lVert A \rVert_{L^{1}(I_{\rho}(\gamma;\,t_{0}) )}}{\lvert Q_{\rho}(\gamma;\,x_{0},\,t_{0})\rvert}\in\lbrack 0,\,1\rbrack,\quad Z\coloneqq \gamma^{\frac{c_{q,\,r}n}{n+2}}\frac{\left\lVert A^{1/{\widehat q}} \right\rVert_{L^{{\widehat r}}(I_{\rho}(\gamma;\,t_{0}))}}{\lvert Q_{\rho}(\gamma;\,x_{0},\,t_{0})\rvert^{\frac{n+2\beta}{n+2}}}\in\lbrack 0,\,1\rbrack\]
        admit a constant $C=C(n,\,q,\,r)\in(0,\,\infty)$ such that 
        \begin{equation}\label{Eq (Section 2): Dimensionless Comparisons}
           Y\le C Z^{\min\{\,{\widehat q},\,{\widehat r}\,\}},\quad Z\le CY^{\min\{\,1/{\widehat q},\, 1/{\widehat r}\,\}}.
        \end{equation}     
         \item If $\varphi\in L^{2,\,\infty}(I_{\rho}(\gamma;\,t_{0})\times B_{\rho}(x_{0}))\cap L^{2}(I_{\rho}(\gamma;\,t_{0});\,W_{0}^{1,\,2}(B_{\rho}(x_{0})))$, then $\varphi\in L^{2{\widehat q},\,2{\widehat r}}(Q_{\rho}(\gamma;\,x_{0},\,t_{0}))$. Moreover, we have
         \begin{align}\label{Eq (Section 2): Embedding for level set}
            &\lVert \varphi \rVert_{L^{2{\widehat q},\,2{\widehat r}}(Q_{\rho}(\gamma;\,x_{0},\,t_{0}))}^{2}\nonumber\\ &\le C(n,\,q,\,r)\lVert A^{1/{\widehat q}}\rVert_{L^{{\widehat r}}(I_{\rho}(\gamma;\,t_{0}))}^{\frac{2\beta}{n+2\beta}}\left(\lVert \varphi\rVert_{L^{2,\,\infty}(Q_{\rho}(\gamma;\,x_{0},\,t_{0}) )}^{2}+\lVert\nabla\varphi \rVert_{L^{2}(Q_{\rho}(\gamma;\,x_{0},\,t_{0}))}^{2} \right).
         \end{align}
     \end{itemize}
 \end{lemma}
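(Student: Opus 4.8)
The plan is to prove the two bullet points separately, both being essentially parabolic Sobolev--type statements at the level of super-level sets. For the first bullet, the quantities $Y$ and $Z$ are built from the single function $A(t) = \lvert\{x \in B_\rho(x_0) \mid \varphi(x,t) > 0\}\rvert$, so the comparisons in \eqref{Eq (Section 2): Dimensionless Comparisons} are really inequalities between the $L^1$-norm in time of $A$ and a weighted $L^{\widehat r}$-norm in time of $A^{1/\widehat q}$. First I would normalize by rescaling to the unit cylinder $Q_1(\gamma)$, so that the claim reduces to comparing $\fint_{I_1(\gamma)} a(t)\,\d t$ with $\gamma^{c_{q,r}n/(n+2)}\bigl(\fint_{I_1(\gamma)} a(t)^{\widehat r/\widehat q}\,\d t\bigr)^{1/\widehat r}$, where $a(t) \coloneqq A(t)/\lvert B_\rho\rvert \in [0,1]$. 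Since $0 \le a \le 1$, one of the exponents $\widehat r/\widehat q$ and $1$ dominates the other, so a direct application of Jensen's/Hölder's inequality on the finite measure space $I_1(\gamma)$, together with bookkeeping of the $\gamma$-powers coming from $\lvert I_1(\gamma)\rvert = \gamma$, yields both directions of \eqref{Eq (Section 2): Dimensionless Comparisons}. The exponent $c_{q,r} = 1/\widehat q - 1/\widehat r$ is designed precisely so that this bookkeeping closes; identifying $\min\{\widehat q, \widehat r\}$ vs.\ $\min\{1/\widehat q, 1/\widehat r\}$ as the correct powers is a matter of tracking whether $\widehat q \le \widehat r$ or not.

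For the second bullet, the key input is the standard parabolic Gagliardo--Nirenberg--Sobolev inequality: for $\varphi \in L^{2,\infty}(Q_\rho(\gamma)) \cap L^2(I_\rho(\gamma); W^{1,2}_0(B_\rho))$ one has $\varphi \in L^{2\widehat q, 2\widehat r}$ with
\[
\lVert \varphi\rVert_{L^{2\widehat q, 2\widehat r}(Q_\rho(\gamma))}^2 \le C\,\lVert\varphi\rVert_{L^{2,\infty}(Q_\rho(\gamma))}^{2(1-\vartheta)}\Bigl(\lVert\varphi\rVert_{L^{2,\infty}}^2 + \lVert\nabla\varphi\rVert_{L^2}^2\Bigr)^{\vartheta}
\]
for an appropriate interpolation parameter; this follows by applying Sobolev's inequality $\lVert\varphi(\cdot,t)\rVert_{L^{2n/(n-2)}(B_\rho)} \le C\lVert\nabla\varphi(\cdot,t)\rVert_{L^2(B_\rho)}$ (or the $n=2$ analogue) slicewise and interpolating against the $L^\infty$-in-time $L^2$-in-space bound, then integrating in time with Hölder. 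The extra factor $\lVert A^{1/\widehat q}\rVert_{L^{\widehat r}(I_\rho(\gamma))}^{2\beta/(n+2\beta)}$ appears because $\varphi(x,t) = 0$ outside the set of measure $A(t)$: on each time slice one gains a factor $A(t)^{\theta}$ from Hölder's inequality comparing the $L^{2\widehat q}$-norm on the support with a lower Lebesgue exponent, and integrating these gains in time produces exactly the $L^{\widehat r}$-norm of $A^{1/\widehat q}$ with the exponent $2\beta/(n+2\beta)$ dictated by the scaling relation $n/q + 2/r = 1 - \beta$. I would choose the Lebesgue exponents in the slicewise Hölder step so that the time integrability of the resulting product is exactly $\widehat r$, which pins down all the numerology.

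The main obstacle I anticipate is not any single hard estimate but the careful matching of exponents across the two steps so that the three scaling parameters $\beta$, $\widehat q$, $\widehat r$ (defined in \eqref{Eq (Section 1): Exponents beta q-hat r-hat}) fit together consistently — in particular verifying that the interpolation parameter in the parabolic Sobolev inequality, the Hölder exponent on the support of $\varphi$, and the power of $\lvert Q_\rho(\gamma)\rvert$ hidden in the definition of $Z$ all conspire to give the stated homogeneous inequality \eqref{Eq (Section 2): Embedding for level set}, uniformly in $\gamma \in (0,1]$. The borderline case $q = r = \infty$, where $\beta = \beta_0$ is chosen arbitrarily, must be handled separately since then $\widehat q = \widehat r = 1$ and the super-level set gain is the only nontrivial ingredient. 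Once the exponent bookkeeping is settled, both bullets follow from classical inequalities with no further difficulty.
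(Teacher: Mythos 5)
Your proposal is correct in substance. For the first bullet, your route (normalize $a(t)=A(t)/\lvert B_{\rho}\rvert\in\lbrack 0,\,1\rbrack$, apply H\"{o}lder/Jensen in time according to whether $\widehat q\le \widehat r$ or not, and track the $\gamma$-powers via the identity $n/\widehat q+2/\widehat r=n+2\beta$, equivalently (\ref{Eq (Section 2): c-q-r})) is essentially identical to the paper's proof, which uses H\"{o}lder's inequality in $t$ together with the crude bound $A(t)\le c(n)\rho^{n}$ and the same exponent identity. For the second bullet the paper does not give a proof at all — it cites \cite[Chapter II, (3.6)]{LSU MR0241822} — whereas you sketch the standard derivation: slicewise gain of a power of $A(t)$ on the support, Gagliardo--Nirenberg interpolation against the $L^{2,\infty}$ bound, and a final H\"{o}lder in time; I checked that with $\theta=n\widehat q/(n\widehat r+2\widehat q)$ the exponents close and produce exactly the factor $\lVert A^{1/\widehat q}\rVert_{L^{\widehat r}}^{2\beta/(n+2\beta)}$, again because $n/\widehat q+2/\widehat r=n+2\beta$. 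So your proposal proves slightly more than the paper does explicitly, by the classical argument behind the cited reference.

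Two small points. First, the slicewise step must compare the $L^{2\widehat q}$-norm on the support with a \emph{higher} Lebesgue exponent (an interpolate between $L^{2}$ and $L^{2n/(n-2)}$), since only then does $\lvert\{\varphi(\cdot,\,t)>0\}\rvert=A(t)$ enter with a positive power; your phrase ``a lower Lebesgue exponent'' has the direction reversed, though the mechanism you describe is the right one. Second, your flag that $q=r=\infty$ needs separate treatment is well taken: there $\beta=\beta_{0}$ is no longer tied to $q,\,r$ by $\beta=1-n/q-2/r$, the identity (\ref{Eq (Section 2): c-q-r}) fails, and the reduction to a dimensionless statement leaves a residual power of $\lvert Q_{\rho}(\gamma)\rvert$; the paper's proof silently assumes the non-degenerate relation between $\beta$ and $(q,\,r)$, so your extra caution here is warranted rather than a defect.
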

 As related topics to Lemma \ref{Lemma (Section 2): Dimensionless Lemma}, we refer to \cite[Chapter II, \S 3 \& 7]{LSU MR0241822}. 
 In particular, (\ref{Eq (Section 2): Embedding for level set}) is found as a special case of \cite[Chapter II, (3.6)]{LSU MR0241822}, and (\ref{Eq (Section 2): Dimensionless Comparisons}) is a dimensionless version of \cite[Chapter II, (7.9)]{LSU MR0241822}. 
 For the reader's convenience, we provide the proof of (\ref{Eq (Section 2): Dimensionless Comparisons}).
 \begin{proof}
     By the definition of $c_{q,\,r}$, the identity 
     \begin{equation}\label{Eq (Section 2): c-q-r}
         \frac{n+2\beta}{n+2}=\frac{1}{\widehat{r}}+\frac{c_{q,\,r}n}{n+2}
     \end{equation}
     holds. From (\ref{Eq (Section 2): c-q-r}), we easily check $0\le Y,\,Z\le 1$.
     When $\widehat{q}\le \widehat{r}$, we use H\"{o}lder's inequality to get
     \[\int_{I_{\rho}(\gamma;\,t_{0})}A(t)\,\d t\le \lvert I_{\rho}(\gamma;\,t_{0}) \rvert^{1-{\widehat q}/{\widehat r}}\left(\int_{I_{\rho}(\gamma;\,t_{0})} A^{{\widehat r}/{\widehat q}}(t)\,\d t\right)^{{\widehat q}/{\widehat r}}=\left(\gamma\rho^{2}\right)^{1-{\widehat q}/{\widehat r}}\left(\lvert Q_{\rho}(\gamma,\,x_{0},\,t_{0}) \rvert^{\frac{n+2\beta}{n+2}} Z \right)^{{\widehat q}}.\]
     Dividing this inequality by $\lvert Q_{\rho}(\gamma;\,x_{0},\,t_{0})\rvert=c(n)\gamma\rho^{n+2}$, and noting (\ref{Eq (Section 2): c-q-r}), we have $Y\le C(n,\,q,\,r)Z^{\widehat{q}}$.
     Keeping $A(t)\le c(n)\rho^{n}$ and (\ref{Eq (Section 2): c-q-r}) in mind, we also compute
     \begin{align*}
        \gamma^{\frac{c_{q,\,r}n}{n+2}}\left(\int_{I_{\rho}(\gamma;\,t_{0})}A(t)^{\widehat{r}/\widehat{q}}\,\d t\right)^{1/\widehat{r}}
        &\le \gamma^{\frac{c_{q,\,r}n}{n+2}}\left[\left(c(n)\rho^{n}\right)^{1/\widehat{q}-1/\widehat{r}}\int_{I_{\rho}(\gamma;\,t_{0})}A(t)\,\d t\right]^{1/\widehat{r}}\\  
        &=c(n,\,q,\,r)\gamma^{\frac{n+2\beta}{n+2}}\rho^{n+2\beta} Y^{1/\widehat{r}}.
     \end{align*}
     Dividing this inequality by $\lvert Q_{\rho}(\gamma;\,x_{0},\,t_{0})\rvert^{\frac{n+2\beta}{n+2}}=(c(n)\gamma)^{\frac{n+2\beta}{n+2}}\rho^{n+2\beta}$, we have $Z\le C(n,\,q,\,r)Y^{1/\widehat{r}}$.
     The remaining case $\widehat{r}\le \widehat{q}$ is similarly shown.
 \end{proof}

 Lemma \ref{Lemma (Section 2): Campanato Integral Growth Lemma}, concerned with Campanato spaces, is easily shown by straightforward computations.
 \begin{lemma}\label{Lemma (Section 2): Campanato Integral Growth Lemma}
     Let $\bv\in L^{2}(Q_{R}(x_{0},\,t_{0});\,{\mathbb R}^{k})$ admit the constants $A\in(0,\,\infty),\,\beta\in(0,\,1)$ such that
     \[\fiint_{Q_{\tau R}(x_{0},\,t_{0})}\lvert \bv-(\bv)_{Q_{\tau R}(x_{0},\,t_{0})}\rvert^{2}\,\d x\d t\le A\tau^{2\beta}\quad \text{for all $\tau \in (0,\,1\rbrack$.}\]
     Then, the limit $\V(x_{0},\,t_{0})\coloneqq \lim\limits_{\tau \to 0}(\bv)_{Q_{\tau R}(x_{0},\,t_{0})}\in{\mathbb R}^{k}$ exists.
     Moreover, there exists a constant $c_{\dagger\dagger\dagger}=c_{\dagger\dagger\dagger}(\beta,\,n)$ such that
     \[
         \fiint_{Q_{\tau R}(x_{0},\,t_{0})}\lvert \bv-\V(x_{0},\,t_{0})  \rvert^{2}\,\d x\d t\le c_{\dagger\dagger\dagger}A\tau ^{2\beta}\quad \text{for all }\tau \in(0,\,1\rbrack.
     \]
 \end{lemma}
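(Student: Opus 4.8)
The proof is a routine telescoping argument over dyadic scales; I sketch the plan. Throughout write $Q_\rho\coloneqq Q_\rho(x_0,t_0)$, and set $a_j\coloneqq (\bv)_{Q_{2^{-j}R}}$ for $j\in{\mathbb Z}_{\ge 0}$. The starting point is the elementary comparison of means on nested parabolic cylinders: if $Q_\rho\subseteq Q_\sigma$ with $\rho\le\sigma$, then by Jensen's inequality
\[
  \lvert (\bv)_{Q_\rho}-(\bv)_{Q_\sigma}\rvert^2\le \fiint_{Q_\rho}\lvert \bv-(\bv)_{Q_\sigma}\rvert^2\,\d x\d t\le \frac{\lvert Q_\sigma\rvert}{\lvert Q_\rho\rvert}\fiint_{Q_\sigma}\lvert \bv-(\bv)_{Q_\sigma}\rvert^2\,\d x\d t,
\]
and $\lvert Q_\sigma\rvert/\lvert Q_\rho\rvert=(\sigma/\rho)^{n+2}$ because the cylinders are parabolically scaled. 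Applying this with $\rho=2^{-(j+1)}R$ and $\sigma=2^{-j}R$, so that the volume ratio equals $2^{n+2}$, together with the hypothesis at $\tau=2^{-j}$, yields $\lvert a_{j+1}-a_j\rvert\le 2^{(n+2)/2}\sqrt{A}\,2^{-j\beta}$.

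Since $\beta>0$, the series $\sum_j 2^{-j\beta}$ converges, so $\{a_j\}$ is Cauchy in ${\mathbb R}^k$; call its limit $\V=\V(x_0,t_0)$. Telescoping and letting the upper summation index tend to infinity gives the decay rate
\[
  \lvert a_j-\V\rvert\le 2^{(n+2)/2}\sqrt{A}\sum_{l\ge j}2^{-l\beta}=\frac{2^{(n+2)/2}}{1-2^{-\beta}}\sqrt{A}\,2^{-j\beta}.
\]

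For an arbitrary $\tau\in(0,1]$ I would choose $j\in{\mathbb Z}_{\ge 0}$ with $2^{-(j+1)}<\tau\le 2^{-j}$, so that $Q_{\tau R}\subseteq Q_{2^{-j}R}$ with volume ratio at most $2^{n+2}$ and $2^{-j\beta}\le(2\tau)^\beta$. Writing $\bv-\V=(\bv-a_j)+(a_j-\V)$, using $\lvert x+y\rvert^2\le 2\lvert x\rvert^2+2\lvert y\rvert^2$, and combining the hypothesis on $Q_{2^{-j}R}$ with the previous display produces
\[
  \fiint_{Q_{\tau R}}\lvert \bv-\V\rvert^2\,\d x\d t\le c_{\dagger\dagger\dagger}(\beta,n)\,A\,\tau^{2\beta},
\]
with $c_{\dagger\dagger\dagger}$ collecting the constants $2^{n+2}$, $2^{2\beta}$ and $(1-2^{-\beta})^{-2}$. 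As a byproduct, Jensen's inequality applied to this last estimate gives $\lvert (\bv)_{Q_{\tau R}}-\V\rvert\le\sqrt{c_{\dagger\dagger\dagger}A}\,\tau^\beta\to 0$, which promotes the dyadic convergence to the genuine limit $\lim_{\tau\to0}(\bv)_{Q_{\tau R}}=\V$ asserted in the statement. There is essentially no obstacle here: the only points needing a little care are the parabolic volume factor $(\sigma/\rho)^{n+2}$ and checking that every constant depends only on $\beta$ and $n$.
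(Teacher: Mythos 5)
Your argument is correct and is precisely the standard Campanato telescoping computation that the paper has in mind when it remarks the lemma is "easily shown by straightforward computations" without supplying a proof: dyadic comparison of means via Jensen with the parabolic volume factor $2^{n+2}$, geometric summation to get the Cauchy property and the decay $\lvert a_j-\V\rvert\lesssim\sqrt{A}\,2^{-j\beta}$, then interpolation to an arbitrary $\tau$ and the closing observation that the final estimate upgrades dyadic convergence to the full limit $\lim_{\tau\to 0}(\bv)_{Q_{\tau R}}=\V$. All constants manifestly depend only on $n$ and $\beta$, as required; no gaps.
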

 We often use Lemma \ref{Lemma (Section 2): Average = L-2 minimizer}, which is easily proved by direct computations.
 \begin{lemma}\label{Lemma (Section 2): Average = L-2 minimizer}
     Let $k,\,m\in{\mathbb N}$.
     Fix a $k$-dimensional Lebesgue measurable set $X\subset {\mathbb R}^{k}$ with finite measure. 
     For any ${\mathbb R}^{m}$-valued measurable function $\bv=\bv(z)$ in the class $L^{2}(X;\,{\mathbb R}^{m})$, we have 
     \[
         \fint_{X}\lvert \bv-(\bv)_{U}\rvert^{2}\,\d z=\min_{\bxi\in{\mathbb R}^{m}}\fint_{X}\lvert\bv-\bxi \rvert^{2} \,\d z.
     \]
 \end{lemma}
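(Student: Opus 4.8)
The plan is to exploit the Hilbert-space structure of $L^{2}(X;\,{\mathbb R}^{m})$ and reduce the assertion to the orthogonal decomposition of $\bv$ into its average plus a mean-zero remainder. First I would observe that, since $\lvert X\rvert<\infty$, H\"{o}lder's inequality gives $\bv\in L^{1}(X;\,{\mathbb R}^{m})$, so that the average $(\bv)_{X}=\fint_{X}\bv\,\d z\in{\mathbb R}^{m}$ is well defined; moreover every integral appearing below is finite because $\bv\in L^{2}(X;\,{\mathbb R}^{m})$ and constants are square-integrable over a set of finite measure. In particular both sides of the claimed identity make sense.

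Next, for an arbitrary $\bxi\in{\mathbb R}^{m}$ I would write $\bv-\bxi=\bigl(\bv-(\bv)_{X}\bigr)+\bigl((\bv)_{X}-\bxi\bigr)$ and expand the squared Euclidean norm pointwise, then integrate, obtaining
\[
\fint_{X}\lvert\bv-\bxi\rvert^{2}\,\d z=\fint_{X}\lvert\bv-(\bv)_{X}\rvert^{2}\,\d z+2\Bigl\langle (\bv)_{X}-\bxi\mathrel{\Big|}\fint_{X}\bigl(\bv-(\bv)_{X}\bigr)\,\d z\Bigr\rangle+\lvert (\bv)_{X}-\bxi\rvert^{2}.
\]
By the very definition of the average one has $\fint_{X}\bigl(\bv-(\bv)_{X}\bigr)\,\d z=(\bv)_{X}-(\bv)_{X}=0$, so the cross term vanishes identically and
\[
\fint_{X}\lvert\bv-\bxi\rvert^{2}\,\d z=\fint_{X}\lvert\bv-(\bv)_{X}\rvert^{2}\,\d z+\lvert (\bv)_{X}-\bxi\rvert^{2}\ge \fint_{X}\lvert\bv-(\bv)_{X}\rvert^{2}\,\d z,
\]
with equality precisely when $\bxi=(\bv)_{X}$. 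Hence the infimum over $\bxi\in{\mathbb R}^{m}$ is attained, equals $\fint_{X}\lvert\bv-(\bv)_{X}\rvert^{2}\,\d z$, and the minimum asserted in the statement is justified.

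I do not anticipate any real obstacle here: the argument is simply the orthogonal projection of $\bv$ onto the finite-dimensional subspace of ${\mathbb R}^{m}$-valued constant functions inside $L^{2}(X;\,{\mathbb R}^{m})$. The only subtlety worth a sentence is that the constant $(\bv)_{X}$ must be meaningful and finite, which is exactly what the hypothesis $\lvert X\rvert<\infty$ together with $\bv\in L^{2}(X;\,{\mathbb R}^{m})$ guarantees.
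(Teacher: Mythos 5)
Your proof is correct and is exactly the direct computation the paper alludes to (the paper omits the proof, remarking only that the lemma "is easily proved by direct computations"): expand the square around $(\bv)_{X}$, note the cross term vanishes, and read off the minimizer. No gaps; the only cosmetic point is that the statement's $(\bv)_{U}$ is a typo for $(\bv)_{X}$, which you implicitly correct.
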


 Finally, we recall the Poincar\'{e}--Sobolev inequality for parabolic function spaces.
 \begin{lemma}\label{Lemma (Secion 2): Interpolation among parabolic function spaces}
     Fix $s\in(1,\,\infty)$, and choose $\kappa=\kappa_{s}\in(1,\,2)$ as $\kappa_{s}\coloneqq 1+s/n$ when $s<n$, and otherwise as an arbitrary exponent in $(1,\,2)$.
     For arbitrary scalar-valued functions $\varphi_{1}\in L^{s,\,\infty}(\Omega_{T})$, $\varphi_{2}\in L^{s}(0,\,T;\,W_{0}^{1,\,s}(\Omega))$, we have the following;
     \begin{itemize}
         \item There exists a constant $C=C(n,\,s,\,\kappa,\,\Omega)\in(0,\,\infty)$ such that 
         \begin{equation}\label{Eq (Section 2): Parabolic Sobolev}
             \iint_{\Omega_{T}}\lvert\varphi_{1}\rvert^{(\kappa-1)s}\lvert \varphi_{2}\rvert^{s}\,{\mathrm d}x{\mathrm d}t\le C\lVert \varphi_{1} \rVert_{L^{s,\,\infty}(\Omega_{T})}^{s(\kappa-1)}\lVert \nabla\varphi_{2} \rVert_{L^{s}(\Omega_{T})}^{s}
         \end{equation}
         \item Let the exponents $(\pi_{1},\,\pi_{2})\in (n/s,\,\infty\rbrack\times (1,\,\infty\rbrack$ satisfy
         \[
             \frac{1}{(\kappa_{s}-1)\pi_{1}}+\frac{1}{\pi_{2}}<1, \text{ and set } \pi_{3}\coloneqq \left(1-\frac{1}{(\kappa_{s}-1)\pi_{1}}-\frac{1}{\pi_{2}} \right)^{-1}-1\in(0,\,\infty).
         \]
         If $\lvert \varphi_{1}\rvert\le \lvert\varphi_{2}\rvert$ holds a.e.~in $\Omega_{T}$, then we have 
         \begin{equation}\label{Eq (Section 2): Parabolic absorbing}
             \lVert \varphi_{1} \rVert_{L^{s\pi_{1}^{\prime},\,s\pi_{2}^{\prime}}(\Omega_{T})}^{s}\le \sigma\left(\lVert \varphi_{1} \rVert_{L^{s,\,\infty}(\Omega_{T})}^{s}+\lVert \nabla\varphi_{2} \rVert_{L^{s}(\Omega_{T})}^{s} \right)+C\sigma^{-\pi_{3}}\lVert \varphi_{1} \rVert_{L^{s}(\Omega_{T})}^{s}
         \end{equation}
         for any $\sigma\in(0,\,\infty)$ with $C=C(n,\,s,\,\pi_{1},\,\pi_{2},\,\Omega)$.
        \end{itemize}
    \end{lemma}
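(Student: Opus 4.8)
The plan is to derive both parts of Lemma \ref{Lemma (Secion 2): Interpolation among parabolic function spaces} from the standard parabolic Gagliardo--Nirenberg--Sobolev embedding combined with an interpolation argument. First I would record the basic parabolic embedding: for $\varphi_2\in L^{s,\infty}(\Omega_T)\cap L^s(0,T;W_0^{1,s}(\Omega))$ one has, with $\kappa=\kappa_s$ as in the statement,
\[
 \iint_{\Omega_T}\lvert\varphi_2\rvert^{\kappa s}\,\d x\d t
 \le C\,\lVert\varphi_2\rVert_{L^{s,\infty}(\Omega_T)}^{s(\kappa-1)}\,\lVert\nabla\varphi_2\rVert_{L^s(\Omega_T)}^{s}.
\]
This is the classical interpolation inequality of Ladyzhenskaya--Solonnikov--Ural'tseva \cite[Chapter II, (3.4)]{LSU MR0241822}: for each fixed time slice one applies the Sobolev inequality $\lVert\varphi_2(t)\rVert_{L^{s^\ast}(\Omega)}\le C\lVert\nabla\varphi_2(t)\rVert_{L^s(\Omega)}$ (using $\varphi_2(t)\in W_0^{1,s}(\Omega)$, extended by the Poincar\'e inequality when $s\ge n$), interpolates $L^{\kappa s}$ between $L^s$ (controlled by $L^{s,\infty}$) and $L^{s^\ast}$, and then integrates the resulting power of $\lVert\nabla\varphi_2(t)\rVert_{L^s(\Omega)}^s$ in time; the exponents are chosen precisely so that the time integral closes to $L^1$. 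For \eqref{Eq (Section 2): Parabolic Sobolev} I would then apply this to $\varphi_2$ and H\"older's inequality in the pointwise product $\lvert\varphi_1\rvert^{(\kappa-1)s}\lvert\varphi_2\rvert^s$, writing $(\kappa-1)s+s=\kappa s$ and splitting $\lvert\varphi_1\rvert^{(\kappa-1)s}$ off against the $L^{s,\infty}$ factor produced above--actually more directly, using $\lvert\varphi_1\rvert^{(\kappa-1)s}\le\lVert\varphi_1\rVert_{L^{s,\infty}(\Omega_T)}^{(\kappa-1)s}$ only on the region where this is meaningful is too crude, so instead I treat $\varphi_1$ on the same footing and interpolate $\iint\lvert\varphi_1\rvert^{(\kappa-1)s}\lvert\varphi_2\rvert^s$ by H\"older with exponents $\tfrac{\kappa}{\kappa-1}$ and $\kappa$ to bound it by $\lVert\varphi_1\rVert_{L^{\kappa s}(\Omega_T)}^{(\kappa-1)s}\lVert\varphi_2\rVert_{L^{\kappa s}(\Omega_T)}^{s}$, then apply the displayed embedding to both $\varphi_1$ (via $L^{s,\infty}$) and $\varphi_2$ (via $L^{s,\infty}$ and $\nabla\varphi_2$); since the hypothesis only gives $\varphi_1\in L^{s,\infty}$ without a gradient bound, I instead keep the asymmetric form: bound $\iint\lvert\varphi_1\rvert^{(\kappa-1)s}\lvert\varphi_2\rvert^s\le\lVert\varphi_1\rVert_{L^{s,\infty}(\Omega_T)}^{(\kappa-1)s}\iint\lvert\varphi_2\rvert^s$ fails dimensionally, so the correct route is H\"older in space at fixed time with exponents $n/((\kappa-1)\cdot\text{something})$ chosen to match $s^\ast$--this bookkeeping is the one place care is needed, and it is exactly the computation in \cite[Chapter II]{LSU MR0241822}.

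Second, for \eqref{Eq (Section 2): Parabolic absorbing}, I would use the hypothesis $\lvert\varphi_1\rvert\le\lvert\varphi_2\rvert$ a.e.\ to run an interpolation-and-absorption argument in the spirit of Lemma \ref{Lemma (Section 2): Interpolation Abosorbing lemma} but at the level of Lebesgue exponents. The target norm $\lVert\varphi_1\rVert_{L^{s\pi_1',s\pi_2'}(\Omega_T)}$ sits, by the mixed-norm interpolation inequality, between the ``energy'' quantity $\lVert\varphi_1\rVert_{L^{s,\infty}}^s+\lVert\nabla\varphi_2\rVert_{L^s}^s$ (which controls $\lVert\varphi_1\rVert_{L^{s\kappa_s}(\Omega_T)}^s$ via part (i), using $\lvert\varphi_1\rvert\le\lvert\varphi_2\rvert$ to replace $\varphi_2$ by $\varphi_1$ inside the $L^{s\kappa_s}$ norm) and the low norm $\lVert\varphi_1\rVert_{L^s(\Omega_T)}^s$. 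The condition $\frac{1}{(\kappa_s-1)\pi_1}+\frac{1}{\pi_2}<1$ guarantees that the pair $(s\pi_1',s\pi_2')$ is an \emph{intermediate} mixed-norm exponent strictly between $L^s$ and the Sobolev endpoint, with the interpolation weight $\theta\in(0,1)$ explicitly computable; then Young's inequality $ab\le \sigma a^{1/\theta}+C\sigma^{-\theta/(1-\theta)}b^{1/(1-\theta)}$ converts the product estimate into the absorbing form, and one checks $\pi_3=\theta/(1-\theta)$ after unwinding the definitions--this matches the stated formula $\pi_3=(1-\frac{1}{(\kappa_s-1)\pi_1}-\frac{1}{\pi_2})^{-1}-1$.

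The main obstacle I anticipate is the exponent bookkeeping in the mixed-norm interpolation: one must verify that the triple $(s\pi_1',s\pi_2')$ really lies on the interpolation segment between $(s,s)$ and the parabolic-Sobolev endpoint $(s\kappa_s/(\kappa_s-1)$-type exponents$)$, that the resulting interpolation parameter $\theta$ is the one producing exactly $\pi_3$, and that the condition $\pi_1>n/s$ is precisely what is needed for the spatial Sobolev exponent to be admissible (it ensures the endpoint space $L^{s\pi_1',\cdot}$ is below $L^{s^\ast,\cdot}$ in the spatial slot). Everything else--the slicewise Sobolev inequality, interpolation of $L^p$ between two $L^p$'s, H\"older, and Young--is routine; the content is entirely in choosing exponents so the two displayed inequalities come out with the stated constants and the stated $\pi_3$. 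I would present the computation compactly by first proving \eqref{Eq (Section 2): Parabolic Sobolev} and then deducing \eqref{Eq (Section 2): Parabolic absorbing} from it together with \cite[Chapter II, \S3]{LSU MR0241822}, since the latter reference already contains the mixed-norm interpolation in the form needed.
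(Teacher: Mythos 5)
Your overall route coincides with the paper's: both inequalities rest on the slicewise Sobolev embedding $W_{0}^{1,\,s}(\Omega)\hookrightarrow L^{\frac{s}{2-\kappa}}(\Omega)$, H\"older in space at fixed time, interpolation of Lebesgue norms, H\"older in time, and Young's inequality. For \eqref{Eq (Section 2): Parabolic absorbing} your plan is structurally identical to the paper's proof (interpolate $L^{s\pi_{1}^{\prime}}(\Omega)$ between $L^{s}(\Omega)$ and $L^{\frac{s}{2-\kappa}}(\Omega)$ with weight $\theta=(\pi_{1}^{\prime}-1)\frac{2-\kappa}{\kappa-1}$, then H\"older in time to split into the three factors $\lVert\varphi_{1}\rVert_{L^{s,\infty}}^{sc_{1}}\lVert\nabla\varphi_{2}\rVert_{L^{s}}^{sc_{2}}\lVert\varphi_{1}\rVert_{L^{s}}^{sc_{3}}$ with $c_{1}+c_{2}+c_{3}=1$ and $c_{3}=1/(\pi_{3}+1)$, then Young); you defer the exponent bookkeeping, which is the entire content of that half of the proof, but the scheme is right.

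The one concrete gap is in \eqref{Eq (Section 2): Parabolic Sobolev}. Your first split (H\"older with exponents $\tfrac{\kappa}{\kappa-1}$ and $\kappa$, producing $\lVert\varphi_{1}\rVert_{L^{\kappa s}(\Omega_{T})}$) is a dead end, as you yourself observe, since $\varphi_{1}$ carries no gradient information; and your final sentence leaves the correct exponents as a literal placeholder. The split you are groping for is H\"older \emph{in space at each fixed time} with the conjugate pair $\frac{1}{\kappa-1}$ and $\frac{1}{2-\kappa}$ (note $(\kappa-1)+(2-\kappa)=1$):
\[
\int_{\Omega}\lvert\varphi_{1}(t)\rvert^{(\kappa-1)s}\lvert\varphi_{2}(t)\rvert^{s}\,\d x
\le \left(\int_{\Omega}\lvert\varphi_{1}(t)\rvert^{s}\,\d x\right)^{\kappa-1}\left(\int_{\Omega}\lvert\varphi_{2}(t)\rvert^{\frac{s}{2-\kappa}}\,\d x\right)^{2-\kappa}
\le C\,\lVert\varphi_{1}\rVert_{L^{s,\infty}(\Omega_{T})}^{(\kappa-1)s}\,\lVert\nabla\varphi_{2}(t)\rVert_{L^{s}(\Omega)}^{s},
\]
where the second step uses $W_{0}^{1,\,s}(\Omega)\hookrightarrow L^{\frac{s}{2-\kappa}}(\Omega)$ (for $s<n$ one has $\frac{s}{2-\kappa_{s}}=s^{\ast}$; for $s\ge n$ any $\kappa\in(1,2)$ is admissible). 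Integrating in time gives \eqref{Eq (Section 2): Parabolic Sobolev} immediately. With that line supplied, your argument closes and agrees with the paper's.
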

    The inequality (\ref{Eq (Section 2): Parabolic Sobolev}) is easy to prove by H\"{o}lder's inequality and the continuous embedding $W_{0}^{1,\,s}(\Omega)\hookrightarrow L^{\frac{s}{2-\kappa}}(\Omega)$.
    In showing local or global bounds of solutions, we use (\ref{Eq (Section 2): Parabolic absorbing}) to treat force terms in the class $L^{r}(0,\,T;\,L^{q}(\Omega))$.
    For the reader's convenience, we provide the proof of (\ref{Eq (Section 2): Parabolic absorbing}).
    \begin{proof}
        Choose $\theta\coloneqq (\pi_{1}^{\prime}-1)\cdot\frac{2-\kappa}{\kappa-1}\in (0,\,1)$, which satisfies $1=\frac{1-\theta}{\pi_{1}^{\prime}}+\frac{\theta}{\pi_{1}^{\prime}(2-\kappa)}$.
        We note that the exponents $(c_{1},\,c_{2},\,c_{3})\coloneqq \left(\frac{1}{\pi_{2}},\,\frac{1}{(\kappa-1)\pi_{1}},\,\frac{1}{\pi_{3}+1} \right)\in(0,\,1)^{3}$ satisfy
        \(c_{1}+c_{3}=\frac{1-\theta}{\pi_{1}^{\prime}}\), \(c_{2}\pi_{1}^{\prime}=\frac{\theta}{2-\kappa}\), and \(c_{2}+c_{3}=\frac{1}{\pi_{2}^{\prime}}\).
        We use the first and the second identities to compute
        \[\lVert \varphi_{1}(t) \rVert_{L^{s\pi_{1}^{\prime}}(\Omega)}^{s\pi_{2}^{\prime}}\le C\lVert \varphi_{1}(t) \rVert_{L^{s}(\Omega)}^{(1-\theta)s\cdot \frac{\pi_{2}^{\prime}}{\pi_{1}^{\prime}}}\lVert\nabla\varphi_{2}(t) \rVert_{L^{s}(\Omega_{T})}^{\frac{s\theta}{2-\kappa}\cdot \frac{\pi_{2}^{\prime}}{\pi_{1}^{\prime}}}\le C\lVert \varphi_{1}(t) \rVert_{L^{s}(\Omega)}^{(c_{1}+c_{3})s\pi_{2}^{\prime}}\lVert\nabla\varphi_{2}(t) \rVert_{L^{s}(\Omega_{T})}^{c_{2}s\pi_{2}^{\prime}},\]
        where we note the continuous embedding $W_{0}^{1,\,s}(\Omega)\hookrightarrow L^{\frac{s}{2-\kappa}}(\Omega)$, and the interpolation among $L^{s}(\Omega)\subset L^{s\pi_{1}^{\prime}}(\Omega)\subset L^{\frac{s}{2-\kappa}}(\Omega)$.
        By the third identity and H\"{o}lder's inequality in the time variable, we get $\lVert \varphi \rVert_{L^{s\pi_{1}^{\prime},\,s\pi_{2}^{\prime}}(\Omega_{T})}^{s}\le C\lVert \varphi_{1}\rVert_{L^{s,\,\infty}(\Omega_{T})}^{sc_{1}}\lVert \nabla\varphi_{2}\rVert_{L^{s}(\Omega_{T})}^{sc_{2}} \lVert \varphi_{1}\rVert_{L^{s}(\Omega_{T})}^{sc_{3}}$.
        Since \(c_{1}+c_{2}+c_{3}=1\) holds, we use Young's inequality to deduce (\ref{Eq (Section 2): Parabolic absorbing}).
    \end{proof}
    \begin{remark}\label{Remark (Section 2): Scaling Constant} \upshape
        The constant $C$ found in (\ref{Eq (Section 2): Parabolic Sobolev})--(\ref{Eq (Section 2): Parabolic absorbing}) depends on the best possible constant $S=S(n,\,s,\,\kappa_{s},\,\Omega)$ satisfying $\lVert \varphi \rVert_{L^{\frac{s}{2-\kappa_{s}}}}\le S\lVert \nabla\varphi\rVert_{L^{s}(\Omega)}$ for all $\varphi\in W_{0}^{1,\,s}(\Omega)$.
        In the critical case $n=s$, the constant $C$ found in (\ref{Eq (Section 2): Parabolic Sobolev}) depends on the diameter of $\Omega$.
        In particular, when $s=n=2$ and the domain $\Omega\subset{\mathbb R}^{2}$ is an open ball $B_{\rho}\subset {\mathbb R}^{2}$ with its radius $\rho\in(0,\,1\rbrack$, a standard scaling argument implies (\ref{Eq (Section 2): Parabolic Sobolev}) with $C=c(\kappa,\,B_{1})\rho^{2\widetilde\kappa}$, where ${\widetilde\kappa}\coloneqq 2-\kappa_{s}\in\lbrack 0,\,1)$.

        It is worth noting that in the special case $\varphi_{1}=\varphi_{2}$, we have a parabolic Poincar\'{e} inequality that is valid for each $s\in(1,\,\infty)$. 
        More precisely, for $\varphi\in L^{s}(I;\,W_{0}^{1,\,s}(B))\cap L^{s,\,\infty}(B\times I)$ with $s\in(1,\,\infty)$, where $I\subset {\mathbb R}$ and $B\subset {\mathbb R}^{n}$ are respectively a bounded open interval and an open ball, there holds
        \begin{equation}\label{Eq (Section 2): PS-Ineq for any s}
            \iint_{B\times I}\lvert \varphi \rvert^{s+\frac{s^{2}}{n}}\,{\mathrm d}x{\mathrm d}t\le C(n,\,s)\lVert \varphi \rVert_{L^{s,\,\infty}(B\times I)}^{\frac{s^{2}}{n}}\lVert \nabla\varphi\rVert_{L^{s}(B\times I)}^{s}.
        \end{equation}
        In fact, by H\"{o}lder's inequality and the Sobolev embedding $W_{0}^{1,\,1}(B)\hookrightarrow L^{n^{\prime}}(B)$, we have 
        \begin{align*}
            \int_{B}\lvert \phi \rvert^{s+\frac{s^{2}}{n}}\,\d x&\le \left(\int_{B}\lvert\phi\rvert^{s}\,\d x\right)^{1/n}\left(\int_{B}\lvert \phi\rvert^{n^{\prime}\cdot \frac{s}{n}(n+s-1)}\,\d x \right)^{1/n^{\prime}}\\ 
            &\le C(n)\left(\int_{B}\lvert\phi\rvert^{s}\,\d x\right)^{1/n}\left(\int_{B}\left\lvert \nabla\lvert \phi\rvert^{\frac{s}{n}(n+s-1)} \right\rvert\,\d x \right)\\ 
            &\le C(n,\,s)\left(\int_{B}\lvert\phi\rvert^{s}\,\d x\right)^{1/n}\left(\int_{B}\lvert \nabla\phi \rvert^{s}\,\d x \right)^{1/s}\left(\int_{B}\lvert \phi\rvert^{s+\frac{s^{2}}{n}}\,\d x \right)^{1-1/s}
        \end{align*}
        for any $\phi\in C_{\mathrm c}^{1}(B)$.
        From this, (\ref{Eq (Section 2): PS-Ineq for any s}) is easily deduced.
    \end{remark}

\section{Boundedness of a solution for $p\in(1,\,2)$}\label{Section: L-infty}
 In Section \ref{Section: L-infty}, we consider $p\in(1,\,2)$.
 Instead of (\ref{Eq (Section 1): Condition for q,r}), we assume a weaker condition
 \begin{equation}\label{Eq (Section 3): Weak conditions for q,r}
    \frac{n}{pq}+\frac{1}{r}<1.
 \end{equation}

 \subsection{A weak maximum principle}
 We prove a weak maximum principle for (\ref{Eq (Section 2): Approximate System})--(\ref{Eq (Section 2): Parabolic Dirichlet Boundary}).
 \begin{proposition}\label{Proposition (Section 3): A Weak Maximum Principle}
     Fix $p\in(1,\,2)$.
     Assume that $\buf_{\varepsilon}\in L^{\infty}(\Omega_{T})^{N}$, and the pair $(q,\,r)\in(n/p,\,\infty\rbrack\times (1,\,\infty\rbrack$ satisfies (\ref{Eq (Section 3): Weak conditions for q,r}).
     Let $\bu_{\varepsilon}\in \bu_{\star}+X_{0}^{p}(0,\,T;\,\Omega)^{N}$ be the weak solution of (\ref{Eq (Section 2): Approximate System})--(\ref{Eq (Section 2): Parabolic Dirichlet Boundary}) with $\bu_{\star}\in L^{\infty}(\Omega_{T})$.
     Then, $\bu_{\varepsilon}\in L^{\infty}(\Omega_{T})^{N}$. Moreover, for any $\pi\in(2-p,\,\infty)$, there exists a constant $C=C({\mathcal D},\,\pi,\,\Omega,\,T)\in(1,\,\infty)$ such that the following estimate holds;
     \[\esssup_{\Omega_{T}}\lvert \bu_{\varepsilon}\rvert\le C\left(\lVert \bu_{\varepsilon} \rVert_{L^{\pi}(\Omega_{T})}^{\frac{\pi}{\pi-2+p}}+ \left[\lVert \bu_{\star} \rVert_{L^{\infty}(\Omega)}+\lVert \buf_{\varepsilon}\rVert_{L^{q,\,r}(\Omega_{T})}^{p-1}+1\right]^{\frac{p\pi}{\pi-2+p}} \right).\]
 \end{proposition}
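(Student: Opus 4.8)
The plan is to prove the bound by Moser's iteration, carried out on the scalar function $v\coloneqq\lvert\bu_{\varepsilon}\rvert$ and driven by the composite test functions of Section~\ref{Section: Preliminaries} (chiefly $\Psi_{4,\,m,\,l}$, with $\Psi_{2,\,k}$ in the first steps where the effective exponent is still below $2$), together with the coercivity (\ref{Eq (Section 2): Coercivity of A-epsilon}) and the symmetric (Uhlenbeck) structure of $\A_{\varepsilon}$. I would first set $\bar{k}\coloneqq\lVert\bu_{\star}\rVert_{L^{\infty}(\Omega_{T})}+\lVert\buf_{\varepsilon}\rVert_{L^{q,\,r}(\Omega_{T})}^{p-1}+1$, note $\bar{k}\ge 1>\varepsilon$, and pass to the normalized quantity $\tilde{v}\coloneqq v/\bar{k}$. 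Since $\lvert\bu_{\star}\rvert\le\lVert\bu_{\star}\rVert_{L^{\infty}(\Omega_{T})}<\bar{k}$ on $\partial_{\mathrm p}\Omega_{T}$, every test function $\varphi^{j}=\psi(\tilde{v})\,u_{\varepsilon}^{j}$ with $\psi$ globally Lipschitz, non-decreasing, and supported in $(1,\,\infty)$ belongs to $X_{0}^{p}(0,\,T;\,\Omega)^{N}$ and vanishes on the entire parabolic boundary, in particular at $t=0$. All formal manipulations of $\partial_{t}\bu_{\varepsilon}$ below are legitimized by the Steklov average, as recorded in the Remark above.

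For the energy estimate, fix $m\ge 0$, set $q\coloneqq m+2$, and test (\ref{Eq (Section 2): Approximate Weak Form}) with $\varphi^{j}=\psi_{4,\,m,\,l}(\tilde{v})\,u_{\varepsilon}^{j}$ (admissible thanks to the $l$-cutoff). Using $u_{\varepsilon}^{j}\partial_{x_{\alpha}}u_{\varepsilon}^{j}=v\,\partial_{x_{\alpha}}v$, the time term integrates to $\int_{\Omega}\Psi_{4,\,m}(\tilde{v}(\cdot,\,t))\,\d x$, which dominates $q^{-1}\int_{\Omega}(\tilde{v}(\cdot,\,t)-1)_{+}^{q}\,\d x$ by (\ref{Eq (Section 2): Psi-4}) and whose value at $t=0$ vanishes, while the diffusion term equals
\[\sum_{s=1,\,p}a_{s}\,g_{s}\bigl(\varepsilon^{2}+\lvert\D\bu_{\varepsilon}\rvert_{\bg}^{2}\bigr)\Bigl[\bar{k}^{-1}v\,\psi_{4,\,m,\,l}^{\prime}(\tilde{v})\,\lvert\nabla v\rvert_{\bg}^{2}+\psi_{4,\,m,\,l}(\tilde{v})\,\lvert\D\bu_{\varepsilon}\rvert_{\bg}^{2}\Bigr].\]
The first bracketed term is non-negative because $\psi_{4,\,m,\,l}$ is non-decreasing and $\bg$ is positive definite, and by (\ref{Eq (Section 2): Coercivity of A-epsilon}) the second is $\ge\lambda_{0}\,\psi_{4,\,m,\,l}(\tilde{v})\bigl(\lvert\D\bu_{\varepsilon}\rvert_{\bg}^{p}-\varepsilon^{p}\bigr)\ge\lambda_{0}\,\psi_{4,\,m,\,l}(\tilde{v})\bigl(\lvert\nabla v\rvert_{\bg}^{p}-\varepsilon^{p}\bigr)$, using $\lvert\nabla v\rvert_{\bg}\le\lvert\D\bu_{\varepsilon}\rvert_{\bg}$. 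The force term is at most $\bar{k}\iint_{\Omega_{T}}\lvert\buf_{\varepsilon}\rvert\,\psi_{4,\,m,\,l}(\tilde{v})\,\tilde{v}\,\d x\d t$. Integrating in time, absorbing the $\varepsilon^{p}$-term (using $\varepsilon<\bar{k}$) into constants plus an $L^{q}(\Omega_{T})$-quantity of $(\tilde{v}-1)_{+}$, estimating the force term by H\"{o}lder's inequality and then the parabolic absorbing inequality (\ref{Eq (Section 2): Parabolic absorbing}) of Lemma~\ref{Lemma (Secion 2): Interpolation among parabolic function spaces} (whose hypotheses hold precisely by (\ref{Eq (Section 3): Weak conditions for q,r})), and letting $l\to\infty$ monotonically, one reaches a Caccioppoli-type inequality bounding $\esssup_{0<t<T}\int_{\Omega}(\tilde{v}-1)_{+}^{q}\,\d x$ and $c\iint_{\Omega_{T}}\bigl\lvert\nabla(\tilde{v}-2)_{+}^{(q-2+p)/p}\bigr\rvert^{p}\,\d x\d t$ by a constant depending on $q,\,n,\,p,\,\lvert\Omega_{T}\rvert,\,\gamma_{0}$ times $\iint_{\Omega_{T}}(\tilde{v}-1)_{+}^{q}\,\d x\d t$, plus lower-order constants.

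For the iteration, feeding the gradient part into the parabolic Sobolev inequality (\ref{Eq (Section 2): Parabolic Sobolev}) converts this into a reverse-H\"{o}lder gain with exponent $\kappa\coloneqq1+p/n>1$. Organizing the estimates along a sequence of exponents $q_{l}\uparrow\infty$ starting at $q_{0}=\pi$ places one in the setting of Lemma~\ref{Lemma (Section 2): Moser Iteration Lemma} with $\mu=\pi-2+p$; the hypothesis $\pi>2-p$ is used here both to make the first test function admissible and to guarantee $\mu>0$. Lemma~\ref{Lemma (Section 2): Moser Iteration Lemma} then bounds $\esssup_{\Omega_{T}}\tilde{v}$ by a constant times a power of $\lVert\tilde{v}\rVert_{L^{\pi}(\Omega_{T})}$ and powers of $\bar{k}$; undoing the normalization (multiply by $\bar{k}$, use $\lVert\tilde{v}\rVert_{L^{\pi}}=\bar{k}^{-1}\lVert\bu_{\varepsilon}\rVert_{L^{\pi}}$) together with a Young-type splitting of the two contributions yields the asserted estimate, the exponent $p\pi/(\pi-2+p)$ on $\bar{k}$ emerging from the $\bar{k}^{\,2-p}$-weighting that the time derivative carries under the normalization, tracked through the constants $A,\,B$ of Lemma~\ref{Lemma (Section 2): Moser Iteration Lemma}. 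Finally, $\bu_{\varepsilon}\in L^{\infty}(\Omega_{T})^{N}$ because $\bu_{\varepsilon}\in X^{p}(0,\,T;\,\Omega)^{N}\subset L^{p(n+2)/n}(\Omega_{T})^{N}$ and $p(n+2)/n>2-p$.

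The hard part will be the singular regime $1<p<2$: the coercivity (\ref{Eq (Section 2): Coercivity of A-epsilon}) only gives $\lvert\D\bu_{\varepsilon}\rvert_{\bg}^{p}-\varepsilon^{p}$ rather than $\lvert\D\bu_{\varepsilon}\rvert_{\bg}^{p}$, so one needs $\bar{k}>\varepsilon$ and, crucially, the $L^{\infty}$-bound on $\bu_{\star}$: the shift by $\bar{k}$ is what makes all test functions vanish on the whole of $\partial_{\mathrm p}\Omega_{T}$ and effectively supplies a favorable zeroth-order term, so that the integrability threshold $\varsigma_{\mathrm c}=n(2-p)/p$ obstructing a Moser iteration for the un-shifted zero-data problem plays no role and one may indeed start from any $\pi>2-p$. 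Keeping careful track of the powers of $\bar{k}$ through this De~Giorgi--Moser bookkeeping, and verifying at every exponent that the $L^{q,\,r}$-forcing is absorbable via Lemma~\ref{Lemma (Secion 2): Interpolation among parabolic function spaces} under (\ref{Eq (Section 3): Weak conditions for q,r}), is the principal technical burden; the vectorial character of the system is accommodated throughout by the symmetric structure of $\A_{\varepsilon}$, which is precisely what gives the cross terms coming from $\nabla(\psi(\tilde{v})\bu_{\varepsilon})$ the right sign.
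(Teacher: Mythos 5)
Your approach is essentially the same as the paper's: you shift $\lvert\bu_{\varepsilon}\rvert$ by a constant dominating $\lVert\bu_{\star}\rVert_{L^{\infty}}$ plus $\lVert\buf_{\varepsilon}\rVert_{L^{q,r}}^{p-1}+1$ so that every test function supported in the shifted super-level set vanishes on the whole parabolic boundary, derive a Caccioppoli-type energy estimate for powers of the shifted quantity using the coercivity (\ref{Eq (Section 2): Coercivity of A-epsilon}) and the Uhlenbeck sign structure of the cross terms, absorb the force term via (\ref{Eq (Section 2): Parabolic absorbing}) under (\ref{Eq (Section 3): Weak conditions for q,r}), and run Moser's iteration through Lemma~\ref{Lemma (Section 2): Moser Iteration Lemma}. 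The two cosmetic differences are: (i) the paper subtracts, working with $\widehat{U}_{k}:=(\lvert\bu_{\varepsilon}\rvert-k)_{+}$, $\psi_{1,k}$-type indicators, and the renormalized forcing $\widehat{h}_{k}:=(\widehat{U}_{k}+k)^{-(p-1)}\lvert\buf_{\varepsilon}\rvert$ (which has $L^{q,r}$-norm at most $1$), whereas you divide by $\bar{k}$ and use $\psi_{4,m,l}$; (ii) the paper's weight $\widehat{U}_{k,l}^{p\alpha}$ avoids the multiplicative $\bar{k}$-bookkeeping that your normalization introduces. Both routes are valid.

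One point to tighten: the Caccioppoli step with a power weight $\beta=p\alpha+p$, $\alpha\ge 0$, naturally produces exponents $\beta\ge p$ only. So ``organizing the estimates along $q_{l}\uparrow\infty$ starting at $q_{0}=\pi$'' is fine for $\pi\ge p$, but for $\pi\in(2-p,\,p)$ the iteration cannot be launched at $q_{0}=\pi$ directly; the paper instead iterates from $p_{0}=2$ and then recovers the stated $L^{\pi}$-based bound a posteriori by interpolating $L^{\pi}(\Omega_{T})\subset L^{2}(\Omega_{T})\subset L^{\infty}(\Omega_{T})$ and absorbing via Young's inequality. Your parenthetical about using $\Psi_{2,k}$ ``in the first steps'' gestures at this, but the clean resolution is the post-hoc interpolation rather than a change of $\psi$ in the energy step, and it deserves an explicit line.
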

 \begin{proof}
 We set $k\coloneqq 1+M_{0}+\lVert \buf_{\varepsilon}\rVert_{L^{q,\,r}(\Omega_{T})}^{p-1}\in\lbrack 1,\,\infty)$.
 It suffices to prove that ${\widehat U}_{k}\coloneqq (\lvert \bu_{\varepsilon}\rvert-k)_{+}$ satisfies
 \[
    \esssup_{\Omega_{T}}{\widehat U}_{k}\le C({\mathcal D},\,\pi,\,\Omega,\,T)\left(\lVert {\widehat U}_{k} \rVert_{L^{\pi}(\Omega_{T})}^{\frac{\pi}{\pi-2+p}}+k^{\frac{p\pi}{\pi-2+p}} \right)
 \]
 for any $\pi\in(2-p,\,\infty)$. 
 We introduce a parameter $\alpha\in\lbrack 0,\,\infty)$, and test $\bphi\coloneqq \zeta\psi_{1,\,k,\,{\widetilde \varepsilon}}(\lvert\bu_{\varepsilon}\rvert)\bu_{\varepsilon}$
 into (\ref{Eq (Section 2): Approximate Weak Form}) with $\zeta\coloneqq {\widehat U}_{k,\,l}^{p\alpha}\phi(t)$, where $\psi_{1,\,k,\,\widetilde{\varepsilon}}$ is defined as (\ref{Eq (Section 2): Approximation}), $\widehat{U}_{k,\,l}\coloneqq \widehat{U}_{k}\wedge l$, and $\phi\colon\lbrack 0,\,T\rbrack\to \lbrack 0,\,1\rbrack$ is a non-increasing Lipschitz function satisfying $\phi(T)=0$. 
 We note that $\lvert \varphi^{j}\rvert\le l^{p\alpha}\phi(t)(\lvert \bu_{\varepsilon}\rvert-k)_{+}\in L^{p}(0,\,T;\,W_{0}^{1,\,p}(\Omega))$ holds for every $j\in\{\,1,\,\dots\,,\,N\,\}$, and hence $\bphi\in L^{p}(0,\,T;\,V_{0})$.
 The non-negative integral $\iint_{\Omega_{T}}a_{s}g_{s}(v_{\varepsilon}^{2})\langle \nabla \lvert \bu_{\varepsilon}\rvert^{2},\,\nabla \lvert \bu_{\varepsilon}\rvert \rangle_{\bg} \psi_{1,\,k,\,\widetilde{\varepsilon}}^{\prime}(\lvert \bu_{\varepsilon}\rvert)\,\d x\d t$ may be discarded, and letting ${\widetilde \varepsilon}\to 0$ yields
 \begin{align*}
     &\iint_{\Omega_{T}} \partial_{t}{\widetilde U}_{k}\cdot {\widehat U}_{k,\,l}^{p\alpha}\phi \,\d x\d t+\iint_{\Omega_{T}}a_{s}g_{s}(v_{\varepsilon}^{2})\langle \D\bu_{\varepsilon},\,\D\bu_{\varepsilon} \rangle_{\bg}{\widehat U}_{k,\,l}^{p\alpha}\phi\,\d x\d t\\ 
     &\quad +\iint_{\Omega_{T}} a_{s}g_{s}(v_{\varepsilon}^{2})\left\langle \nabla {\widetilde U}_{k}\mathrel{}\middle|\mathrel{} \nabla {\widehat U}_{k,\,l}^{p\alpha} \right\rangle_{\bg}\phi\,\d x\d t\le \iint_{\Omega_{T}}\lvert \buf_{\varepsilon}\rvert (\widehat{U}_{k}+k) {\widehat U}_{k,\,l}^{p\alpha}\phi\,\d x\d t,
 \end{align*}
 where ${\widetilde U}_{k}\coloneqq \left(\lvert \bu_{\varepsilon}\rvert^{2}-k^{2}\right)_{+}$.
 We may delete the third integral on the left-hand side, since it is non-negative.
 We rewrite $\partial_{t}{\widetilde U}_{k}\cdot U_{k,\,l}^{p\alpha}=2({\widehat U}_{k}+k)({\widehat U}_{k}\wedge l)^{p\alpha}\partial_{t}{\widehat U}_{k}=\partial_{t}({\widehat F}_{\alpha,\,l}({\widehat U}_{k}))$ with
 \[{\widehat F}_{\alpha,\,l}(\sigma)\coloneqq 2\int_{0}^{\sigma}(\tau +k)(\tau\wedge l)^{p\alpha}\,\d\tau\ge 2\int_{0}^{\sigma\wedge l}(\tau\wedge l)^{p\alpha+1}\,\d\tau=\frac{2}{p\alpha+2}(\sigma\wedge l)^{p\alpha+2}\]
 for $\sigma\in(0,\,\infty)$.
 By (\ref{Eq (Section 2): Coercivity of A-epsilon}), and the inequality $\lvert \nabla {\widehat U}_{k,\,l}\rvert\le \lvert \D\bu_{\varepsilon}\rvert$, we obtain
 \begin{align*}
    &\esssup_{0<t<T}\int_{\Omega}{\widehat F}_{\alpha,\,l}({\widehat U}_{k})\,\d x+\iint_{\Omega_{T}}\lvert \nabla {\widehat U}_{k,\,l}\rvert^{p}{\widehat U}_{k,\,l}^{p\alpha}\,\d x\d t\\
    &\le C({\mathcal D})\iint_{\Omega_{T}}\left({\widehat U}_{k,\,l}^{p\alpha}+{\widehat h}_{k}({\widehat U}_{k}+k)^{p}{\widehat U}_{k,\,l}^{p\alpha}\right)\,\d x\d t\\ 
    &\le C({\mathcal D})\iint_{\Omega_{T}}\left[\left(1+k^{p}\widehat{h}_{k}\right)\left(1+\widehat{U}_{k,\,l}^{p\alpha+p}\right)+\widehat{h}_{k}\widehat{U}_{k,\,l}^{p\alpha}\widehat{U}_{k}^{p}\right] \,\d x\d t
 \end{align*}
 with ${\widehat h}_{k}\coloneqq ({\widehat U}_{k}+k)^{-(p-1)}\lvert \buf_{\varepsilon} \rvert\in L^{q,\,r}(\Omega_{T})$.
 The inequality $\lVert {\widehat h}_{k}\rVert_{L^{q,\,r}(\Omega_{T})}\le k^{-(p-1)}\lVert \buf_{\varepsilon}\rVert_{L^{q,\,r}(\Omega_{T})}\le 1$ is easy to check by our choice of $k$.
 Hence, we use Young's inequality and H\"{o}lder's inequality to get
 \begin{equation}\label{Eq (Section 3): Before Absorbing}
    \esssup_{0<t<T}\int_{\Omega}{\widehat U}_{k,\,l}^{p\alpha+2}\,\d x+\iint_{\Omega_{T}}\left\lvert \nabla {\widehat U}_{k,\,l}^{1+\alpha} \right\rvert^{p}\,\d x\d t\le C(1+\alpha)^{p}\left[k^{p}+\left\lVert {\widehat U}_{k}{\widehat U}_{k,\,l}^{\alpha}\right\rVert_{L^{pq^{\prime},\,pr^{\prime}}(\Omega_{T})}^{p} \right]
 \end{equation}
 with $C=C({\mathcal D},\,\lvert\Omega\rvert,\,T)\in(0,\,\infty)$.
 Since $\buf_{\varepsilon}\in L^{\infty}(\Omega_{T})^{N}$, we may firstly consider $q=r=\infty$, where we use (\ref{Eq (Section 2): Parabolic Sobolev}) and (\ref{Eq (Section 3): Before Absorbing}) to get
 \[\iint_{\Omega_{T}}{\widehat U}_{k,\,l}^{(\kappa-1)(p\alpha+2)+p(1+\alpha)}\,\d x\d t\le C(1+\alpha)^{p\kappa}\left(k^{p}+\iint_{\Omega_{T}}{\widehat U}_{k}^{p}{\widehat U}_{k,\,l}^{p\alpha}\,\d x\d t \right)^{\kappa}\]
 with $\kappa\coloneqq 1+p/n\in(1,\,2)$.
 By this estimate and ${\widehat U}_{k}\in L^{p}(\Omega_{T})$, we carry out finitely many iterations to prove that ${\widehat U}_{k}\in L^{m}(\Omega_{T})$ holds for any $m\in(1,\,\infty)$.
 We also conclude the inclusions ${\widetilde \varphi}_{1}\coloneqq {\widehat U}_{k}^{\alpha+2/p}\in L^{p,\,\infty}(\Omega_{T})$ and ${\widetilde \varphi}_{2}\coloneqq {\widehat U}_{k}^{\alpha+1}\in L^{p}(0,\,T;\,W_{0}^{1,\,p}(\Omega))$.
 Moreover, letting $l\to\infty$ in (\ref{Eq (Section 3): Before Absorbing}), we have 
 \[\lVert {\widetilde\varphi}_{1}\rVert_{L^{p,\,\infty}(\Omega_{T})}^{p}+\lVert \nabla{\widetilde \varphi}_{2}\rVert_{L^{p}(\Omega_{T})}^{p} \le C(1+\alpha)^{p}\left[k^{p}+\lVert {\widetilde \varphi}_{2}\rVert_{L^{pq^{\prime},\,pr^{\prime}}(\Omega_{T})}^{p}\right].\]
 Noting $\widetilde{\varphi}_{2}\le \widetilde{\varphi}_{1}+1$ by Young's inequality, we apply (\ref{Eq (Section 2): Parabolic absorbing}) with $\varphi_{1}=\varphi_{2}=\widetilde{\varphi}_{2}$, $s\coloneqq p$, and $(\pi_{1},\,\pi_{2})\coloneqq (q,\,r)$.
 Then, by a standard absorbing argument and (\ref{Eq (Section 2): Parabolic Sobolev}), we have 
 \[\iint_{\Omega_{T}}{\widehat U}_{k}^{(\kappa-1)(p\alpha+2)+p(1+\alpha)}\,\d x\d t\le C(1+\alpha)^{\gamma\kappa}\left(\iint_{\Omega_{T}}\left({\widehat U}_{k}^{p\alpha+p}+k^{p}\right)\,\d x\d t\right)^{\kappa}\]
 for some $\gamma=\gamma(n,\,p,\,q)\in\lbrack p,\,\infty)$.
 By adding $\lvert \Omega_{T}\rvert$ into this estimate, and setting $\beta\coloneqq p\alpha+p\in\lbrack p,\,\infty)$, we have
 \[\iint_{\Omega_{T}}\left({\widehat U}_{k}^{\kappa\beta+(\kappa-1)(2-p)}+k^{p}\right)\,{\mathrm d}x{\mathrm d}t\le C({\mathcal D},\,\Omega,\,T)\beta^{\gamma \kappa}\left(\iint_{\Omega_{T}}\left({\widehat U}_{k}^{\beta}+k^{p} \right)\,\d x\d t \right)^{\kappa}\quad \text{for all }\beta\in\lbrack p,\,\infty).\]

 We first consider the case $\pi\in\lbrack p,\,\infty)$.
 For each $l\in{\mathbb Z}_{\ge 0}$, we define $Y_{l}\coloneqq \left(\iint_{\Omega_{T}}({\widehat U}_{k}^{p_{l}}+k^{p})\,\d x\d t \right)^{1/p_{l}}$ with $p_{l}\coloneqq (\pi-2+p)\kappa^{l}+2-p\in\lbrack \pi,\,\infty)\subset \lbrack p,\,\infty)$.
 By the last estimate, we are allowed to apply Lemma \ref{Lemma (Section 2): Moser Iteration Lemma} with $\mu\coloneqq \pi-(2-p)\in(0,\,\infty)$, $A=A({\mathcal D},\,\Omega,\,T)\in(1,\,\infty)$, and $B\coloneqq \kappa^{\gamma\kappa}\in(1,\,\infty)$.
 Finally, we have
 \[\esssup_{\Omega_{T}}{\widehat U}_{k}\le \limsup_{l\to\infty}Y_{l}\le C({\mathcal D},\,\pi,\,\Omega,\,T)\left(\lVert {\widehat U}_{k}\rVert_{L^{\pi}(\Omega_{T})}^{\pi/\mu}+k^{\frac{p\pi}{\pi-2+p}}\right),\]
 whence $\pi\in\lbrack p,\,\infty)$ is completed.
 For $\pi\in(2-p,\,p)$, we use the interpolation among $L^{\pi}(\Omega_{T})\subset L^{2}(\Omega_{T})\subset L^{\infty}(\Omega_{T})$ and Young's inequality to get 
 \[\esssup_{\Omega_{T}}{\widehat U}_{k}\le C\left(\lVert {\widehat U}_{k}\rVert_{L^{2}(\Omega_{T})}^{2/p}+k^{2} \right)\le \frac{1}{2}\esssup_{\Omega_{T}}{\widehat U}_{k}+C({\mathcal D},\,\pi,\,\Omega,\,T)\left(\lVert {\widehat U}_{k} \rVert_{L^{\pi}(\Omega_{T})}^{\frac{\pi}{\pi-2+p}}+k^{\frac{p\pi}{\pi-2+p}} \right),\]
 from which $\pi\in(2-p,\,p)$ is also completed.
 \end{proof}
 \subsection{A priori local bounds for the parabolic $(1,\,p)$-Laplace system}
 We provide local $L^{\infty}$-bounds of weak solutions to (\ref{Eq (Section 1): General System}) for $p\in(1,\,2)$.
 Here we only require $\buf\in L^{2,\,1}(\Omega_{T})^{N}\cap  L^{q,\,r}(\Omega_{T})^{N}$ with $(q,\,r)$ satisfying (\ref{Eq (Section 3): Weak conditions for q,r}), and we do not necessarily assume $\buf\in L^{\infty}(\Omega_{T})^{N}$.
 \begin{proposition}\label{Proposition (Section 3): Local Boundedness for subcritical cases}
    Let $\buf\in L^{2,\,1}(\Omega_{T})^{N} \cap L^{q,\,r}(\Omega_{T})^{N}$ with (\ref{Eq (Section 3): Weak conditions for q,r}), and $\bu$ be a weak solution to (\ref{Eq (Section 1): General System}) with $p\in(1,\,2)$. 
    For $p\in(1,\,p_{\mathrm c})$, let (\ref{Eq (Section 1): Higher Integrability of u for subcritical case}) be also in force.
    Then, for any $Q_{R}\Subset \Omega_{T}$ with $R\in(0,\,1)$, we have 
    \begin{equation}\label{Eq (Section 3): Local Bounds for sup-singular p}
        \esssup_{Q_{R/2}}\lvert\bu \rvert\le C({\mathcal D}) R^{-\frac{n}{2-\varsigma_{\mathrm c}}\cdot \frac{2-p}{p}} \left(1+\lVert \buf\rVert_{L^{q,\,r}(Q_{R})}^{2(p-1)}+\fiint_{Q_{R}}\lvert \bu\rvert^{2}\,\d x\d t \right)^{1/(2-\varsigma_{\mathrm c})}
    \end{equation}
    for $p\in(p_{\mathrm c},\,2)$, and
    \begin{equation}\label{Eq (Section 3): Local Bounds for sub-singular p}
        \esssup_{Q_{R/2}}\lvert\bu \rvert\le C({\mathcal D},\,\varsigma)R^{-\frac{n}{\varsigma-\varsigma_{\mathrm c}}\cdot \frac{2-p}{p}}\left(1+\lVert \buf \rVert_{L^{q,\,r}(Q_{R})}^{\varsigma(p-1)}+\fiint_{Q_{R}}\lvert \bu\rvert^{\varsigma}\,\d x\d t \right)^{1/(\varsigma-\varsigma_{\mathrm c})}
    \end{equation}
    for $p\in(1,\,p_{\mathrm c}\rbrack$.
 \end{proposition}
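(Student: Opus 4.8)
The plan is to prove (\ref{Eq (Section 3): Local Bounds for sup-singular p})--(\ref{Eq (Section 3): Local Bounds for sub-singular p}) by a localized Moser iteration performed directly on the weak solution $\bu$, closely paralleling the global argument of Proposition~\ref{Proposition (Section 3): A Weak Maximum Principle} but with space--time cutoffs inserted; every manipulation of $\partial_{t}\bu$ is to be legitimized through the Steklov average, as in the Remark. Fix $Q_{R}=Q_{R}(x_{0},t_{0})\Subset\Omega_{T}$ with $R\in(0,1)$ and set $k\coloneqq1+\lVert\buf\rVert_{L^{q,r}(Q_{R})}^{p-1}\ge1$, $\widehat{U}_{k}\coloneqq(\lvert\bu\rvert-k)_{+}$, $\widehat{U}_{k,l}\coloneqq\widehat{U}_{k}\wedge l$, so that $\widehat{h}_{k}\coloneqq\lvert\buf\rvert(\widehat{U}_{k}+k)^{-(p-1)}$ obeys $\lVert\widehat{h}_{k}\rVert_{L^{q,r}(Q_{R})}\le1$. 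For $\alpha\ge0$, radii $R/2\le\rho'<\rho\le R$, a cutoff $\eta\in C_{\mathrm c}^{\infty}(B_{\rho})$ with $\eta\equiv1$ on $B_{\rho'}$, $\lvert\nabla\eta\rvert\lesssim(\rho-\rho')^{-1}$, and a Lipschitz time cutoff $\phi$ on $I_{\rho}(t_{0})$ vanishing near its bottom, equal to $1$ on $I_{\rho'}(t_{0})$, with $\lvert\phi'\rvert\lesssim(\rho-\rho')^{-2}$, I would test (\ref{Eq (Section 1): General System}) with $\bphi\coloneqq\eta^{p}\phi\,\widehat{U}_{k,l}^{p\alpha}\,\psi_{1,k,\widetilde\varepsilon}(\lvert\bu\rvert)\,\bu$ ($\psi_{1,k,\widetilde\varepsilon}$ as in (\ref{Eq (Section 2): Approximation})) and send $\widetilde\varepsilon\to0$; note $\bphi\in L^{p}(0,T;V_{0})^{N}$ since $\widehat{U}_{k,l}$ is bounded.

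In the one-Laplace part of the flux, the contribution $a_{1}\langle\mathbf{Z}\mid\D\bu\rangle_{\bg}=a_{1}\lvert\D\bu\rvert_{\bg}$ from differentiating $\bu$, and the term carrying $\psi_{1,k,\widetilde\varepsilon}'$, are non-negative and are discarded, while the terms from $\nabla\eta$ and $\nabla\widehat{U}_{k,l}^{p\alpha}$ are controlled using $\lvert\mathbf{Z}\rvert\le\gamma_{0}^{-1/2}$, $a_{1}\le\Gamma_{0}$. For the $p$-Laplace part I would use the coercivity $\langle\A(x,t,\D\bu,\mathbf{Z})\mid\D\bu\rangle_{\bg}\ge\lambda_{0}\lvert\D\bu\rvert_{\bg}^{p}$ and the growth $\lvert\A\rvert\lesssim1+\lvert\D\bu\rvert_{\bg}^{p-1}$, absorb the cross terms by Young's inequality, split $\lvert\buf\rvert=\widehat{h}_{k}(\widehat{U}_{k}+k)^{p-1}$ with H\"older's inequality, and use $\lvert\nabla\widehat{U}_{k,l}\rvert\le\lvert\D\bu\rvert$ together with $\widehat{F}_{\alpha,l}(\sigma)\lesssim(\sigma\wedge l)^{p\alpha+2}+k(\sigma\wedge l)^{p\alpha+1}$ for the parabolic term. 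After reabsorbing $\widehat{U}_{k,l}^{p\alpha+p}$ and $k\widehat{U}_{k,l}^{p\alpha+1}$ into $\widehat{U}_{k,l}^{p\alpha+2}$ plus lower-order multiples of $k^{2}\lvert Q_{R}\rvert$, this should give a Caccioppoli inequality of the schematic shape
\begin{equation*}
\esssup_{I_{\rho'}(t_{0})}\int_{B_{\rho'}}\widehat{U}_{k,l}^{p\alpha+2}\,\d x+\iint_{Q_{\rho'}}\bigl\lvert\nabla\widehat{U}_{k,l}^{1+\alpha}\bigr\rvert^{p}\,\d x\d t\le\frac{C(\mathcal D)(1+\alpha)^{\gamma}}{(\rho-\rho')^{2}}\iint_{Q_{\rho}}\widehat{U}_{k,l}^{p\alpha+2}\,\d x\d t+C(\mathcal D)(1+\alpha)^{\gamma}\Bigl(k^{2}\lvert Q_{R}\rvert+\bigl\lVert\widehat{U}_{k}\widehat{U}_{k,l}^{\alpha}\bigr\rVert_{L^{pq',pr'}(Q_{R})}^{p}\Bigr)
\end{equation*}
for some $\gamma=\gamma(\mathcal D)\ge p$. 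I would then remove the $L^{pq',pr'}$-term via the absorbing inequality (\ref{Eq (Section 2): Parabolic absorbing}) with $s=p$, $(\pi_{1},\pi_{2})=(q,r)$, $\varphi_{1}=\varphi_{2}=\eta\widehat{U}_{k,l}^{1+\alpha}$ — exactly the step for which the hypothesis (\ref{Eq (Section 3): Weak conditions for q,r}) suffices — and feed the resulting energy estimate into the parabolic Sobolev inequality (\ref{Eq (Section 2): Parabolic Sobolev}) with $\kappa=\kappa_{p}=1+p/n$, $\varphi_{1}=\eta\widehat{U}_{k,l}^{\alpha+2/p}$ (so that $\lVert\varphi_{1}\rVert_{L^{p,\infty}}^{p}$ matches the $\esssup$-term exactly) and $\varphi_{2}=\eta\widehat{U}_{k,l}^{1+\alpha}$. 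Passing $l\to\infty$ yields, for every $\alpha\ge0$, a reverse-H\"older inequality
\begin{equation*}
\iint_{Q_{\rho'}}\widehat{U}_{k}^{\,\kappa_{p}(p\alpha+p)+(\kappa_{p}-1)(2-p)}\,\d x\d t\le\frac{C(\mathcal D)\,(1+\alpha)^{\gamma\kappa_{p}}}{(\rho-\rho')^{2\kappa_{p}}}\Bigl(\iint_{Q_{\rho}}\widehat{U}_{k}^{\,p\alpha+2}\,\d x\d t+k^{2}\lvert Q_{R}\rvert\Bigr)^{\kappa_{p}}.
\end{equation*}

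The decisive point — and the main obstacle — is the singular deficit $p<2$. Because the time-cutoff forces onto the right-hand side the \emph{larger} exponent $p\alpha+2$ (instead of the $p\alpha+p$ that appears in the non-localized Proposition~\ref{Proposition (Section 3): A Weak Maximum Principle}), identifying output and input exponents forces the working exponent $P=p\alpha+2$ to obey the recursion $P_{l+1}=\kappa_{p}P_{l}-(2-p)$; since $(2-p)=\varsigma_{\mathrm c}(\kappa_{p}-1)$, one gets $\kappa_{p}^{-l}P_{l}\to P_{0}-\varsigma_{\mathrm c}$, i.e.\ the loss per step is effectively $\varsigma_{\mathrm c}$ rather than the $(2-p)(\kappa_{p}-1)$ of the global case. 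Hence the iteration converges to an $L^{\infty}$-bound only when seeded at an exponent $\pi=P_{0}$ strictly above $\varsigma_{\mathrm c}$: for $p\in(p_{\mathrm c},2)$ one has $\varsigma_{\mathrm c}<2$, so $\pi=2$ works and the seed $\bu\in L^{2}_{\mathrm{loc}}(\Omega_{T})$ is supplied for free by $\bu\in C([0,T];L^{2}(\Omega))^{N}$; for $p\in(1,p_{\mathrm c}]$ one has $\varsigma_{\mathrm c}\ge2$, and the only admissible seed is $\pi=\varsigma>\varsigma_{\mathrm c}$, which is precisely the role of the standing hypothesis (\ref{Eq (Section 1): Higher Integrability of u for subcritical case}).

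To close, I would choose $\rho_{l}\coloneqq R/2+2^{-l-1}R$, set $P_{l}\coloneqq(\pi-\varsigma_{\mathrm c})\kappa_{p}^{l}+\varsigma_{\mathrm c}$ (so $P_{0}=\pi$, $P_{l}\ge(\pi-\varsigma_{\mathrm c})(\kappa_{p}^{l}-1)$, $\kappa_{p}^{-l}P_{l}\to\mu\coloneqq\pi-\varsigma_{\mathrm c}$) and $Y_{l}\coloneqq\bigl(\fiint_{Q_{\rho_{l}}}\widehat{U}_{k}^{\,P_{l}}\,\d x\d t+k^{\,P_{l}}\bigr)^{1/P_{l}}$; evaluating the reverse-H\"older inequality on $(\rho_{l+1},\rho_{l})$ and collecting the net powers of $R$ coming from $\lvert Q_{\rho_{l}}\rvert\sim R^{n+2}$ and $(\rho_{l}-\rho_{l+1})^{-2\kappa_{p}}$ gives $Y_{l+1}^{P_{l+1}}\le(AB^{l}Y_{l}^{P_{l}})^{\kappa_{p}}$ with $A=C(\mathcal D)R^{-(2-p)/\kappa_{p}}$, $B=B(\mathcal D)$. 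Lemma~\ref{Lemma (Section 2): Moser Iteration Lemma} then yields, using $\kappa_{p}/(\kappa_{p}-1)=(n+p)/p$ so that the $R$-exponent equals $-\frac{2-p}{(\kappa_{p}-1)(\pi-\varsigma_{\mathrm c})}=-\frac{n}{\pi-\varsigma_{\mathrm c}}\cdot\frac{2-p}{p}$,
\[
\esssup_{Q_{R/2}}\widehat{U}_{k}\le\limsup_{l\to\infty}Y_{l}\le C(\mathcal D)\,R^{-\frac{n}{\pi-\varsigma_{\mathrm c}}\cdot\frac{2-p}{p}}\Bigl(\fiint_{Q_{R}}\widehat{U}_{k}^{\,\pi}\,\d x\d t+k^{\pi}\Bigr)^{1/(\pi-\varsigma_{\mathrm c})}.
\]
Since $\widehat{U}_{k}\le\lvert\bu\rvert$, $k^{\pi}=(1+\lVert\buf\rVert_{L^{q,r}(Q_{R})}^{p-1})^{\pi}\lesssim1+\lVert\buf\rVert_{L^{q,r}(Q_{R})}^{\pi(p-1)}$, and $\esssup_{Q_{R/2}}\lvert\bu\rvert\le\esssup_{Q_{R/2}}\widehat{U}_{k}+k$ with the additive $k$ absorbed into the right-hand side (legitimate because $R<1$ and $\pi-\varsigma_{\mathrm c}\le\pi$), this reduces to (\ref{Eq (Section 3): Local Bounds for sup-singular p}) for $\pi=2$ and to (\ref{Eq (Section 3): Local Bounds for sub-singular p}) for $\pi=\varsigma$. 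I also note that $\buf\in L^{2,1}(\Omega_{T})^{N}$ is used only to ensure that $\bu$ is a weak solution in the sense of Definition~\ref{Definition (Section 1): A weak solution} and that the Steklov-averaged identity is valid; the quantitative estimate uses only $\buf\in L^{q,r}(Q_{R})^{N}$ under (\ref{Eq (Section 3): Weak conditions for q,r}), with no $L^{\infty}$-bound on $\buf$.
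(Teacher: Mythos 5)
Your proposal is correct and follows essentially the same route as the paper: a localized Moser iteration seeded at exponent $\pi$ ($\pi=2$ for $p>p_{\mathrm c}$, $\pi=\varsigma$ for $p\le p_{\mathrm c}$), with the force term absorbed via Lemma~\ref{Lemma (Secion 2): Interpolation among parabolic function spaces}, and with the recursion $P_{l+1}=\kappa_{p}P_{l}-(2-p)$ whose fixed point is $\varsigma_{\mathrm c}$; you also correctly identify both the $R$-exponent and the reason the higher integrability (\ref{Eq (Section 1): Higher Integrability of u for subcritical case}) is needed below $p_{\mathrm c}$. The one cosmetic difference is that the paper works with $U_{k}\coloneqq(\lvert\bu\rvert-k)_{+}+k\ge k\ge1$ rather than $\widehat{U}_{k}=(\lvert\bu\rvert-k)_{+}$, which makes comparisons such as $\widehat\varphi_{2}\le\widehat\varphi_{1}$ (needed for the (\ref{Eq (Section 2): Parabolic absorbing}) absorption, since $2/p>1$) automatic and dispenses with the explicit tracking of the $k^{P_{l}}$ terms that your version must handle.
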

 \begin{proof}
     We fix $Q_{R}=B_{R}\times I_{R}\Subset \Omega_{T}$, and omit the radius $R$ in the proof for the notational simplicity.
     Choose $k\coloneqq 1+\lVert \buf \rVert_{L^{q,\,r}(Q_{R})}^{p-1}\in\lbrack 1,\,\infty)$, and define $h_{k}\coloneqq k^{-(p-1)}\lvert \buf\rvert\in L^{2,\,1}(Q_{R})\cap L^{q,\,r}(Q_{R})$.
     We prove that the function $U_{k}\coloneqq (\lvert \bu \rvert-k)_{+}+k\ge k$ satisfies the following reversed H\"{o}lder inequality;
     \begin{equation}\label{Eq (Section 3): Reversed Holder for U-k}
         \iint_{Q_{R_{1}}}U_{k}^{\kappa\beta+p-2}\,{\mathrm d}x{\mathrm d}t \le  \left[\frac{C({\mathcal D})\beta^{\gamma}}{(R_{2}-R_{1})^{2}}\iint_{Q_{R_{2}}}U_{k}^{\beta}\,{\mathrm d}x{\mathrm d}t\right]^{\kappa},
     \end{equation}
     provided $U_{k}\in L^{\beta}(Q_{R_{2}})$ with $\beta\in\lbrack 2,\,\infty)$ and $0<R_{1}<R_{2}\le R$. Here $\kappa=\kappa_{p}\coloneqq 1+p/n\in(1,\,2)$, and $\gamma=\gamma(n,\,p,\,q,\,r)\in\lbrack p,\,\infty)$ are constant.
     To prove (\ref{Eq (Section 3): Reversed Holder for U-k}), we test $\varphi\coloneqq \phi_{1,\,k,\,\widetilde{\varepsilon}}(\lvert \bu\rvert)\zeta\bu$ into (\ref{Eq (Section 2): Approximate Weak Form}), where we abbreviate a bounded function $\zeta\coloneqq U_{k,\,l}^{p\alpha}\eta^{p}\phi$ with $\alpha\coloneqq (\beta-2)/p\in\lbrack0,\,\infty)$. 
     Here we note 
     \(\psi_{1,\,k,\,\widetilde{\varepsilon}}(\lvert \bu \rvert)\langle \mathbf{Z}\mid \D\bu\rangle_{\bg}+\psi_{1,\,k,\,\widetilde{\varepsilon}}^{\prime}(\lvert \bu \rvert)\gamma_{\alpha\beta}Z_{\alpha}^{j}u^{j}\partial_{x_{\beta}}\lvert \bu\rvert \ge 0.\)
     Indeed, the first term is $\psi_{1,\,k,\,\widetilde{\varepsilon}}(\lvert \bu \rvert)\lvert \D\bu\rvert_{\bg}\ge 0$, and the second term is $0$ when $\D\bu=0$, and otherwise it is $\psi_{1,\,k,\,\widetilde{\varepsilon}}^{\prime}(\lvert \bu \rvert)\lvert \bu\rvert\lvert \nabla \lvert \bu\rvert\rvert_{\bg}^{2}\lvert \D\bu\rvert_{\bg}^{-1}\ge 0$.
     We also keep in mind that $\partial_{t}\left(\lvert\bu\rvert^{2}-k^{2}\right)=2U_{k}\partial_{t}U_{k}\chi_{\{\lvert \bu_{\varepsilon}\rvert>k\}}=\partial_{t}U_{k}^{2}$, and $\nabla \left(\lvert\bu\rvert^{2}-k^{2}\right)=\nabla U_{k}^{2}$.
     Discarding some non-negative integrals, we deduce
     \begin{align*}
         &-\iint_{Q}\zeta \partial_{t}U_{k}^{2}\,{\mathrm d}x{\mathrm d}t+\iint_{Q}a_{p}g_{p}(\lvert \D\bu\rvert_{\bg}^{2})\langle\D\bu,\,\D\bu\rangle_{\bg}\zeta\chi_{\{\lvert \bu\rvert>k\}}\,\d x\d t+\iint_{Q}a_{1}\gamma_{\alpha\beta}Z_{\alpha}^{j}\partial_{x_{\beta}}\zeta\chi_{\{\lvert \bu\rvert>k\}}\,\d x\d t \nonumber\\ 
         &+\iint_{Q}a_{p}g_{p}(\lvert \D\bu\rvert_{\bg}^{2})\left\langle \nabla U_{k}^{2}\mathrel{}\middle|\mathrel{}\nabla\zeta \right\rangle_{\bg}\chi_{\{\lvert \bu\rvert>k\}}\,\d x\d t\le \iint_{Q}\lvert\buf\rvert U_{k}\zeta\chi_{\{\lvert \bu\rvert>k\}} \,\d x\d t,
     \end{align*}
     where the last integral makes sense by $\lvert \buf\rvert\in L^{2,\,1}(Q)$ and $U_{k}\in L^{2,\,\infty}(Q)$.
     In computing the third integral on the left-hand side, we note $\gamma_{\alpha\beta}Z_{\alpha}^{j}\psi_{1,\,k}(\lvert \bu\rvert)u^{j}\partial_{x_{\beta}}U_{k,\,l}^{p\alpha}\ge 0$. Indeed, the left-hand side is $0$ when $\D\bu=0$, and otherwise it is $\lvert\D\bu\rvert_{\bg}^{-1}\left\langle \nabla U_{k}^{2}\mathrel{}\middle|\mathrel{}\nabla U_{k,\,l}^{p\alpha} \right\rangle_{\bg}\ge 0$. As a consequence, we have 
     \begin{align*}
        &\iint_{Q}a_{1}\gamma_{\alpha\beta}Z_{\alpha}^{j}u^{j}\partial_{x_{\beta}}\zeta\chi_{\{\lvert \bu\rvert>k\}}\,\d x\d t\ge \iint_{Q}a_{1}U_{k,\,l}^{p\alpha}\gamma_{\alpha\beta}Z_{\alpha}^{j}u^{j}\partial_{x_{\beta}}\eta^{p}\phi\chi_{\{\lvert \bu\rvert>k\}}\,\d x\d t\\&\ge -C\iint_{Q}U_{k}U_{k,\,l}^{p\alpha}\eta^{p-1} \lvert \nabla\eta\rvert\phi\,\d x \d t\ge -C\iint_{Q}\left(\eta^{p}+\lvert\nabla\eta\rvert^{p}\right)U_{k}^{p}U_{k,\,l}^{p\alpha}\phi\,\d x\d t,
     \end{align*}
     where we keep in mind $U_{k}\ge 1$ to deduce the last inequality.
     Similarly to the above computations, we discard some non-negative integrals and use Young's inequality to estimate the second and fourth integrals as follows;
     \begin{align*}
         &\iint_{Q}a_{p}g_{p}(\lvert \D\bu\rvert_{\bg}^{2})\langle\D\bu,\,\D\bu\rangle_{\bg}\zeta\chi_{\{\lvert \bu\rvert>k\}}\,\d x\d t+\iint_{Q}a_{p}g_{p}(\lvert \D\bu\rvert_{\bg}^{2})\left\langle \nabla U_{k}^{2}\mathrel{}\middle|\mathrel{}\nabla\zeta \right\rangle_{\bg}\chi_{\{\lvert \bu\rvert>k\}}\,\d x\d t\\ 
         &\ge c\iint_{Q}\lvert \D\bu\rvert^{p}U_{k,\,l}^{p\alpha}\eta^{p}\phi\,\d x\d t-C\iint_{Q}\lvert \D\bu\rvert^{p-2} \lvert \nabla U_{k}\rvert \lvert\nabla\eta\rvert U_{k} \eta^{p-1} U_{k,\,l}^{p\alpha}\,\d x\d t\\ 
         &\ge \frac{c}{2}\iint_{Q}\lvert \nabla U_{k} \rvert^{p}U_{k,\,l}^{p\alpha}\eta^{p}\phi\,\d x\d t-C\iint_{Q}U_{k}^{p}U_{k,\,l}^{p\alpha} \lvert \nabla\eta\rvert^{p}\phi\,\d x\d t
        \end{align*}
     where we note $\lvert \nabla U_{k}\rvert\le \lvert \D\bu\rvert$.
     For the first integral on the left-hand side, we keep in mind the identities
     \[\partial_{t}\left(U_{k}^{2}U_{k,\,l}^{p\alpha}\right)=2U_{k,\,l}^{p\alpha}U_{k}\partial_{t}U_{k}+p\alpha U_{k,\,l}^{p\alpha+1}\partial_{t}U_{k,\,l}=\frac{2}{p\alpha+2}\partial_{t}[\Psi_{3,\,p\alpha,\,l}(U_{k})]+\frac{p\alpha}{p\alpha+2}\partial_{t}U_{k,\,l}^{p\alpha+2},\]
     where $\Psi_{3,\,p\alpha,\,l}$ is defined as (\ref{Eq (Section 2): Psi-3}).
     Hence, we have 
     \begin{align*}
         &-\iint_{Q}U_{k,\,l}^{p\alpha}U_{k}^{2}\phi\partial_{t}\eta^{p}\,\d x\d t-\iint_{Q}U_{k,\,l}^{p\alpha}U_{k}^{2}\eta^{p}\partial_{t}\phi\,\d x\d t
         \\&=-\iint_{Q}\left[\frac{2}{p\alpha+2}\Psi_{3,\,p\alpha,\,l}(U_{k})+\frac{p\alpha}{p\alpha+2}U_{k,\,l}^{p\alpha+2} \right]\partial_{t}(\eta^{p}\phi)\,\d x\d t
         \\&\le \iint_{Q}\Psi_{3,\,p\alpha,\,l}(U_{k})\lvert \partial_{t}\eta^{p}\rvert\phi\,\d x\d t-\iint_{Q}\Psi_{3,\,p\alpha,\,l}(U_{k}) \eta^{p}\partial_{t}\phi\,\d x\d t 
     \end{align*}
     where we use $U_{k,\,l}^{p\alpha}\partial_{t}U_{k}^{2}=2\partial_{t}\Phi_{3,\,p\alpha,\,l}(U_{k})$ and (\ref{Eq (Section 2): Psi-3}).
     Combining these computations, we have
     \begin{align*}
         &-\iint_{Q}{\widehat\varphi}_{1}^{p}\partial_{t}\phi\,\d x\d t+\iint_{Q}\lvert\nabla {\widehat\varphi}_{2} \rvert^{p}\phi\,\d x\d t\\ 
         &\le C(1+\alpha)^{p}\left[\iint_{Q}\left(\eta^{p}+\lvert\nabla \eta\rvert^{p}\right)U_{k}^{p}U_{k,\,l}^{p\alpha}\,\d x\d t +\iint_{Q}U_{k}^{2}U_{k,\,l}^{p\alpha}\lvert \partial_{t}\eta^{p}\rvert\,\d x\d t+\lvert\buf\rvert U_{k}U_{k,\,l}^{p\alpha}\,\d x\d t \right]
     \end{align*}
     where ${\widehat \varphi}_{1}\coloneqq \eta U_{k,\,l}^{\alpha}U_{k}^{2/p}\in L^{p,\,\infty}(Q_{R})$, ${\widehat\varphi}_{2}\coloneqq \eta U_{k,\,l}^{\alpha}U_{k}\in L^{p}(I_{R};\, W_{0}^{1,\,p}(B_{R}))$.
     Choosing $\phi$ suitably and recalling the definition of $h_{k}$, we have
     \begin{align}\label{Eq (Section 3): Local Reversed Holder for U-k}
        &\lVert \widehat{\varphi}_{1}\rVert_{L^{p,\,\infty}(Q)}^{p}+\lVert \nabla \widehat{\varphi}_{2}\rVert_{L^{p}(Q)}^{p}\nonumber\\ 
        &\le C(1+\alpha)^{p}\left[\iint_{Q}\left(\eta^{p}+\lvert\nabla \eta\rvert^{p}\right)U_{k}^{p}U_{k,\,l}^{p\alpha}\,\d x\d t +\iint_{Q}U_{k}^{2}U_{k,\,l}^{p\alpha}\lvert \partial_{t}\eta^{p}\rvert\,\d x\d t+\iint_{Q}\lvert \buf \rvert U_{k}U_{k,\,l}^{p\alpha} \,\d x\d t\right].
     \end{align}
     Since ${\widehat\varphi}_{2}\le {\widehat \varphi}_{1}$ holds by $U_{k}\ge 1$, we easily find the qualitative bound of $\lVert {\widehat\varphi}_{2}\rVert_{L^{pq^{\prime},\,pr^{\prime}}(Q)}^{p}$, similarly to the proof of (\ref{Eq (Section 2): Parabolic absorbing}).
     In particular, the last integral in (\ref{Eq (Section 3): Local Reversed Holder for U-k}) is computed as
     \[\iint_{Q}\lvert \buf \rvert U_{k}U_{k,\,l}^{p\alpha} \,\d x\d t=\iint_{Q}h_{k}U_{k}U_{k,\,l}^{p\alpha}\,\d x\d t \le \lVert h_{k}\rVert_{L^{q,\,r}(Q_{R})}\lVert{\widehat\varphi}_{2} \rVert_{L^{pq^{\prime},\,pr^{\prime}}(Q_{R})}^{p}\le \lVert{\widehat\varphi}_{2} \rVert_{L^{pq^{\prime},\,pr^{\prime}}(Q_{R})}^{p}\]
     by recalling the definition of $k$ and $h_{k}$.
     With ${\widehat\varphi}_{2}\le {\widehat \varphi}_{1}$ in mind, we apply (\ref{Eq (Section 2): Parabolic absorbing}) with $\varphi_{1}=\varphi_{2}=\widehat{\varphi}_{2}$, $s\coloneqq p$, and $(\pi_{1},\,\pi_{2})\coloneqq (q,\,r)$.
     By a standard absorbing argument and (\ref{Eq (Section 2): Parabolic Sobolev}), we have 
     \[\iint_{Q}\eta^{\kappa p}U_{k}^{2\kappa+p-2}U_{k,\,l}^{\kappa (\beta -2)}\, \d x\d t\le C\beta^{\gamma\kappa}\left[\iint_{Q}\left(\eta^{p}+\lvert\nabla\eta \rvert^{p}+\lvert\partial_{t}\eta^{p} \rvert\right)U_{k}^{\beta}\,\d x\d t\right]^{\kappa}\]
     for some $\gamma=\gamma(n,\,p,\,q,\,r)\in\lbrack p,\,\infty)$. 
     Letting $l\to\infty$ in the resulting estimate, and suitably choosing a cut-off function $\eta$, we conclude (\ref{Eq (Section 3): Reversed Holder for U-k}).

     To prove (\ref{Eq (Section 3): Local Bounds for sup-singular p})--(\ref{Eq (Section 3): Local Bounds for sub-singular p}), we choose $p_{l}\coloneqq \kappa^{l}(p_{0}-\varsigma_{\mathrm c})+\varsigma_{\mathrm c}$, which satisfies the recursive identity $p_{l+1}=\kappa p_{l}+p-2$.
     We choose $p_{0}\coloneqq 2$ when $p\in(p_{\mathrm c},\,2\rbrack$, and otherwise we let $p_{0}\coloneqq \varsigma$.
     In both choices, we have $\mu\coloneqq p_{0}-\varsigma_{\mathrm c}\in(0,\,\infty)$.
     We define $R_{l}\coloneqq R/2+2^{-l-1}R\in(R/2,\,R\rbrack$, and $Y_{l}\coloneqq \left(\iint_{Q_{R_{l}}}U_{k}^{p_{l}}\,\d x\d t\right)^{1/p_{l}}$.
     By (\ref{Eq (Section 3): Reversed Holder for U-k}), it is easy to check that the sequence $Y_{l}$ satisfies all of the assumptions of Lemma \ref{Lemma (Section 2): Moser Iteration Lemma} with $\mu=p_{0}-\varsigma_{\mathrm c}\in(0,\,\infty)$, $A=C({\mathcal D})R^{-2}$, $B\coloneqq 4\kappa^{\gamma\kappa}\in(1,\,\infty)$.
     The resulting inequality of Lemma \ref{Lemma (Section 2): Moser Iteration Lemma} implies $U_{k}\in L^{\infty}(Q_{R/2})$ and 
     \[\esssup_{Q_{R/2}}\,U_{k}\le  \frac{C({\mathcal D},\,p_{0})}{R^{2\cdot\frac{\kappa^{\prime}}{p_{0}-\varsigma_{\mathrm c}}}}\cdot R^{\frac{n+2}{p_{0}-\varsigma_{\mathrm c}}}\left(\fiint_{Q_{R}} U_{k}^{p_{0}}\,\d x\d t \right)^{\frac{1}{p_{0}-\varsigma_{\mathrm c}}}\le \frac{C({\mathcal D},\,p_{0})}{R^{\frac{n}{p_{0}-\varsigma_{\mathrm c}}\cdot\left(\frac{2}{p}-1\right)}} \left[k^{p_{0}}+\fiint_{Q_{R}}\lvert\bu\rvert^{p_{0}}\,\d x\d t \right]^{\frac{1}{p_{0}-\varsigma_{\mathrm c}}}  .\]
     where $U_{k}\le \lvert \bu\rvert+k$ is used. Recalling $\lvert \bu\rvert\le U_{k}$ and the definition of $k$, we conclude (\ref{Eq (Section 3): Local Bounds for sup-singular p})--(\ref{Eq (Section 3): Local Bounds for sub-singular p}).      
    \end{proof} 
    \begin{remark}\label{Rmk: Alternative Approach for L-infty} \upshape
        When $p\in(p_{\mathrm c},\,2)$, local $L^{\infty}$-bounds of an approximate solution $\bu_{\varepsilon}$ can be verified without using Proposition \ref{Proposition (Section 3): A Weak Maximum Principle}.
        Instead, similarly to Proposition \ref{Proposition (Section 3): Local Boundedness for subcritical cases}, we can deduce local $L^{\infty}$--$L^{2}$ estimates of $\bu_{\varepsilon}$.
    \end{remark}
 \section{Regularity estimates and weak formulations}\label{Section: Weak Formulations}
 Section \ref{Section: Weak Formulations} provides a priori estimates for a solution $\bu_{\varepsilon}$ to (\ref{Eq (Section 2): Approximate System}) in a subcylinder ${\mathcal Q}\Subset \Omega_{T}$ or more smaller one $\widetilde{\mathcal Q}\Subset {\mathcal Q}$. 
 Since (\ref{Eq (Section 2): Weak Conv of f-epsilon}) is assumed and we will later discuss the convergence of approximate solutions in Section \ref{Section: Convergence Result}, it is not restrictive to let 
 \begin{equation}\label{Eq (Section 4): Uniform Bounds F and U}
    \lVert \buf_{\varepsilon}\rVert_{L^{q,\,r}({\mathcal Q})}\le F,\quad \text{and}  \quad \lVert \D\bu_{\varepsilon}\rVert_{L^{p}({\mathcal Q})}\le U
 \end{equation}
 for some constants $F,\,U\in(0,\,\infty)$ that are independent of $\varepsilon\in(0,\,1)$.
 Also when $p\in(1,\,2)$, we require
 \begin{equation}\label{Eq (Section 4): Weak Sol Bound for Singular p}
     \esssup_{\mathcal Q}\,\lvert \bu_{\varepsilon}\rvert\le M_{0}
 \end{equation}
 for some constant $M_{0}\in(1,\,\infty)$.
 The assumption (\ref{Eq (Section 4): Weak Sol Bound for Singular p}) is not restrictive by Propositions \ref{Proposition (Section 3): A Weak Maximum Principle}--\ref{Proposition (Section 3): Local Boundedness for subcritical cases} or Remark \ref{Rmk: Alternative Approach for L-infty} in Section \ref{Section: L-infty}. 
 \subsection{A priori regularity estimates for approximate solutions}
 We outline what to prove on the interior a priori estimates for $\bu_{\varepsilon}$, which broadly consists of the two parts. 
 
 Firstly, we would like to show
 \begin{equation}\label{Eq (Section 4): Local Gradient bound}
     \esssup_{\widetilde{\mathcal Q}}v_{\varepsilon}\le \mu_{0},
 \end{equation}
 for some $\mu_{0}=\mu_{0}({\mathcal D},\,F,\,U,\,{\mathcal Q},\,\widetilde{\mathcal Q},\,M_{0})$ that is independent of $\varepsilon\in(0,\,1)$ (Theorem \ref{Theorem (Section 4): Gradient bounds}).
 In other words, we aim to find the uniform bound of $v_{\varepsilon}=\sqrt{\varepsilon^{2}+\lvert \D\bu_{\varepsilon}\rvert_{\bg}^{2}}$.
 \begin{theorem}\label{Theorem (Section 4): Gradient bounds}
     Let $\bu_{\varepsilon}$ be a weak solution to (\ref{Eq (Section 2): Approximate System}) in ${\mathcal Q}\Subset \Omega_{T}$ with (\ref{Eq (Section 4): Uniform Bounds F and U}) satisfied independent of $\varepsilon\in(0,\,1)$.
     For $p\in(1,\,2)$, let (\ref{Eq (Section 4): Weak Sol Bound for Singular p}) be in force.
     Then, (\ref{Eq (Section 4): Local Gradient bound}) holds, uniformly for $\varepsilon\in(0,\,1)$.
     Moreover, the uniform bound $\mu_{0}\in(1,\,\infty)$ depends on ${\mathcal D}$, $F$, $U$, ${\mathcal Q}$, and $\widetilde{\mathcal Q}$ when $p\in(p_{\mathrm c},\,\infty)$, and this bound also depends on $M_{0}$ when $p\in(1,\,p_{\mathrm c})$.
 \end{theorem}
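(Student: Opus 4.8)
The plan is to establish \eqref{Eq (Section 4): Local Gradient bound} by Moser's iteration applied to a suitable power of $v_\varepsilon$, splitting according to whether $p>p_{\mathrm c}$ or $p\le p_{\mathrm c}$. First I would differentiate the approximate system \eqref{Eq (Section 2): Approximate System} in a spatial direction $x_\gamma$ (justified via Steklov averaging, as in the Remark), obtaining an equation for $\partial_{x_\gamma}\bu_\varepsilon$ whose principal part is governed by the bilinear form $\mathcal{A}_\varepsilon(x,t,\D\bu_\varepsilon)$. Using the ellipticity of $\mathcal{A}_\varepsilon$ from Lemma \ref{Lemma (Section 2): Ellipticity of Bilinear Forms} (with lower bound $\lambda_0 h_{p,\varepsilon}(\D\bu_\varepsilon)$) together with the bounds \eqref{Eq (Section 1): Bound Assumptions for Coefficients} on $\nabla a_s$, $\nabla\gamma_{\alpha\beta}$, $\partial_t\gamma_{\alpha\beta}$, I would derive a differential inequality for $v_\varepsilon^2 = \varepsilon^2 + |\D\bu_\varepsilon|_{\bg}^2$ of subsolution type, picking up lower-order error terms controlled by $v_\varepsilon$ itself and a term involving $\buf_\varepsilon$ coming from differentiating the right-hand side (handled by duality, moving the $x_\gamma$-derivative onto test functions so that only $\buf_\varepsilon$, not $\nabla\buf_\varepsilon$, appears). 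Testing this inequality against $v_\varepsilon^{2\alpha}\eta^{2p}\phi$ with the composite functions $\Psi_{3,m,l}$ or $\Psi_{4,m,l}$ from Subsection 2.3 and the cut-off machinery, I obtain the basic energy estimate on nested cylinders.

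The next step is the Sobolev/interpolation input. For $p\ge p_{\mathrm c}$ the parabolic embedding combined with the energy estimate yields a reverse-Hölder-type inequality $\fiint_{Q_{R_1}} v_\varepsilon^{\kappa\beta + (p-2)} \le [\,C\beta^\gamma (R_2-R_1)^{-2}\fiint_{Q_{R_2}} v_\varepsilon^\beta\,]^\kappa$ with $\kappa = 1 + p/n$, where the force term $\buf_\varepsilon$ is absorbed using \eqref{Eq (Section 2): Parabolic absorbing} of Lemma \ref{Lemma (Secion 2): Interpolation among parabolic function spaces} with exponents $(q,r)$ satisfying \eqref{Eq (Section 1): Condition for q,r}; the $\lVert\buf_\varepsilon\rVert_{L^{q,r}({\mathcal Q})}\le F$ bound makes this uniform in $\varepsilon$. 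Then Lemma \ref{Lemma (Section 2): Moser Iteration Lemma} with $\mu = 2 - \varsigma_{\mathrm c}$ (or $\mu = 2$ when $p\ge 2$) and starting integrability coming from $\lVert\D\bu_\varepsilon\rVert_{L^p({\mathcal Q})}\le U$ produces the $L^\infty$ bound $\mu_0$ depending only on ${\mathcal D}, F, U, {\mathcal Q}, \widetilde{\mathcal Q}$. For $p\le p_{\mathrm c}$ the starting $L^p$-integrability of $\D\bu_\varepsilon$ is too weak to seed the iteration, which is the main obstacle: here I would instead feed in the a priori local $L^\infty$-bound of $\bu_\varepsilon$ from \eqref{Eq (Section 4): Weak Sol Bound for Singular p} (itself guaranteed by Proposition \ref{Proposition (Section 3): Local Boundedness for subcritical cases}), test additionally against powers of $\bu_\varepsilon$ to couple $v_\varepsilon$-energy with $\lVert\bu_\varepsilon\rVert_{L^\infty}$, and run the iteration of Lemma \ref{Lemma (Section 2): Moser Iteration Lemma} with the $L^\infty$-norm of $\bu_\varepsilon$ supplying the base level — this is precisely where the extra dependence on $M_0$ enters the final $\mu_0$.

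The hardest point will be making the iteration in the subcritical range $p\le p_{\mathrm c}$ genuinely uniform in $\varepsilon$: one must ensure that the singular $h_{1,\varepsilon}$ contribution from the one-Laplace part of the operator (which is the upper bound in Lemma \ref{Lemma (Section 2): Ellipticity of Bilinear Forms} and blows up like $v_\varepsilon^{-1}$) only enters as an upper bound and cancels or is dominated, so that no $\varepsilon$-dependent constants leak into the recursive inequality. I expect this to work because in all the energy estimates the one-Laplace diffusion coefficient $a_1 g_1(v_\varepsilon^2) = a_1 v_\varepsilon^{-1}$ multiplies manifestly non-negative quadratic quantities on the coercivity side and, on the error side, the structural identity \eqref{Eq (Section 1): Growth of 1-Laplace-type operator} combined with $v_\varepsilon \ge \varepsilon$ keeps $v_\varepsilon |g_1'(v_\varepsilon^2)| v_\varepsilon^2 \le \tfrac32 v_\varepsilon$ bounded linearly; the remaining terms are then $\varepsilon$-free. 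Once the recursive inequality of the form demanded by Lemma \ref{Lemma (Section 2): Moser Iteration Lemma} is in hand, the conclusion is immediate.
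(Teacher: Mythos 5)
Your proposal follows essentially the same route as the paper: differentiate the approximate system to obtain the weak formulation for $v_{\varepsilon}$ (Lemma \ref{Lemma (Section 4): Basic Weak Form}), derive Caccioppoli estimates for the truncation $W_{k}$ (Lemma \ref{Lemma (Section 4): Caccioppoli estimates for W-k}), absorb the force term via the parabolic interpolation inequality (\ref{Eq (Section 2): Parabolic absorbing}), and run Moser's iteration (Proposition \ref{Proposition (Section 5): Reversed Holder}), with the subcritical range handled by a preliminary higher-integrability step that exploits $\lvert \bu_{\varepsilon}\rvert\le M_{0}$ through an integration by parts (Lemma \ref{Lemma (Section 5): Higher Integrability of v-epsilon}). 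The only imprecisions are that $M_{0}$ enters through this bootstrap of $v_{\varepsilon}$ from $L^{p}$ to $L^{\widetilde p}$ with $\widetilde p>n(2-p)/2$, which then seeds the iteration rather than literally supplying its base level, and that the paper runs the iteration with the $L^{2}$-based Sobolev exponent $\kappa=1+2/n$ instead of $1+p/n$; these are details of bookkeeping, not gaps.
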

 Secondly, under the assumption (\ref{Eq (Section 4): Local Gradient bound}), we prove the uniform H\"{o}lder continuity of $\G_{2\delta,\,\varepsilon}(\D\bu_{\varepsilon})$ for $\varepsilon\in(0,\,\delta/4)$, as stated in Theorem \ref{Theorem (Section 4): Holder Truncated-Gradient Continuity}.
 \begin{theorem}\label{Theorem (Section 4): Holder Truncated-Gradient Continuity}
     Let $\bu_{\varepsilon}$ be a weak solution to (\ref{Eq (Section 2): Approximate System}) in $\widetilde{\mathcal Q}\Subset {\mathcal Q} \Subset \Omega_{T}$ with (\ref{Eq (Section 4): Uniform Bounds F and U}) and (\ref{Eq (Section 4): Local Gradient bound}) guaranteed.
     Then, the limit  
     \begin{equation}\label{Eq (Section 4): Limit Average of G-2delta-epsilon}
         \bG_{2\varepsilon,\,\delta}(x_{0},\,t_{0})\coloneqq \lim_{R \to 0}(\G_{2\delta,\,\varepsilon}(\D\bu_{\varepsilon}))_{Q_{R}(x_{0},\,t_{0})}\in{\mathbb R}^{Nn}
     \end{equation}
     is well-defined for every $(x_{0},\,t_{0})\in \widetilde{\mathcal Q}$, and the mapping $\bG_{2\delta,\,\varepsilon}$ is locally $(\alpha,\,\alpha/2)$-H\"{o}lder continuous in $\widetilde{\mathcal Q}$ for some $\alpha=\alpha({\mathcal D},\,F,\,\mu_{0},\,\delta)\in(0,\,\beta/2)$, whose continuity estimate is independent of $\varepsilon\in(0,\,\delta/4)$.
     In particular, ${\G}_{2\delta,\,\varepsilon}(\D\bu_{\varepsilon})$ also has the same continuity estimate.
 \end{theorem}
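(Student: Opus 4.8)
The plan is to derive, for every point $(x_{0},\,t_{0})\in\widetilde{\mathcal Q}$, a Campanato-type excess decay estimate for the vector field $\G_{2\delta,\,\varepsilon}(\D\bu_{\varepsilon})$ on the standard parabolic cylinders $Q_{R}(x_{0},\,t_{0})$, with all constants independent of $\varepsilon\in(0,\,\delta/4)$, and then to conclude via Lemma~\ref{Lemma (Section 2): Campanato Integral Growth Lemma}. Throughout I write $v_{\varepsilon}=\sqrt{\varepsilon^{2}+\lvert\D\bu_{\varepsilon}\rvert_{\bg}^{2}}$ and recall $v_{\varepsilon}\le\mu_{0}$ on $\widetilde{\mathcal Q}$ by (\ref{Eq (Section 4): Local Gradient bound}) (i.e.\ Theorem~\ref{Theorem (Section 4): Gradient bounds}). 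The engine is a dichotomy carried out at each scale $R$ with $Q_{R}(x_{0},\,t_{0})\Subset{\mathcal Q}$, whose threshold is chosen relative to both the truncation parameter $\delta$ and the local quantity $M(R)\coloneqq\esssup_{Q_{R}(x_{0},\,t_{0})}v_{\varepsilon}\le\mu_{0}$.

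First I would dispose of the trivial regime: whenever $M(R)\le 2\delta$ on some cylinder, the definition (\ref{Eq (Section 2): Def of G-2delta-epsilon}) forces $\G_{2\delta,\,\varepsilon}(\D\bu_{\varepsilon})\equiv 0$ on $Q_{R}(x_{0},\,t_{0})$ and on every concentric sub-cylinder, so the excess vanishes at $(x_{0},\,t_{0})$ at all smaller scales and there is nothing to prove there. Hence we may assume $M(R)>2\delta$, and in this regime I would run the classical alternative, in the spirit of \cite[Chapter IX]{DiBenedetto-monograph MR1230384} and \cite{BDLS-parabolic}, with the bookkeeping adapted to $\delta$. \emph{Degenerate alternative:} if the sub-level set $\{v_{\varepsilon}\le M(R)/2\}$ occupies a definite fraction of $Q_{R}(x_{0},\,t_{0})$, apply the De~Giorgi-type oscillation lemma, Proposition~\ref{Proposition (Section 4): Degenerate Case}, to the subsolution built from $v_{\varepsilon}$, obtaining $M(\sigma R)\le(1-\eta)M(R)$ for fixed $\sigma,\,\eta\in(0,\,1)$ depending only on $\mathcal D$. \emph{Non-degenerate alternative:} otherwise $v_{\varepsilon}$ stays comparable to $M(R)$ on a large portion of $Q_{R}(x_{0},\,t_{0})$; since $M(R)>2\delta$, there $v_{\varepsilon}$ is bounded below by a fixed multiple of $\delta$, so the approximate system is genuinely uniformly parabolic with ellipticity ratio controlled by $\mathcal D$ and $\delta$ (this is the point where the truncation parameter does its work), and Proposition~\ref{Proposition (Section 4): Non-Degenerate Case} yields a Campanato-type growth estimate for $\D\bu_{\varepsilon}$, hence — through Lemma~\ref{Lemma (Section 2): Continuity of G-2delta-epsilon} together with Lemma~\ref{Lemma (Section 2): Average = L-2 minimizer} — for $\G_{2\delta,\,\varepsilon}(\D\bu_{\varepsilon})$, modulo error terms coming from the Lipschitz dependence of $a_{1},\,a_{p},\,\bg$ on $(x,\,t)$ (of size $O(R)$) and from $\buf_{\varepsilon}\in L^{q,\,r}$ under (\ref{Eq (Section 1): Condition for q,r}) (lower-order powers of $R$, controlled by the parabolic embedding estimates).

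Since $M(R)$ cannot descend below $2\delta$ without landing us back in the trivial regime, the degenerate alternative can be invoked only boundedly many times in succession before one is forced into the non-degenerate alternative (or the trivial regime); moreover, in the degenerate alternative the scalar factor $(v_{\varepsilon}-2\delta)_{+}=\lvert\G_{2\delta,\,\varepsilon}(\D\bu_{\varepsilon})\rvert$ is being pushed toward the dead zone $\{v_{\varepsilon}\le 2\delta\}$ on which $\G_{2\delta,\,\varepsilon}$ vanishes, so the decay of $\esssup(v_{\varepsilon}-2\delta)_{+}$ alone — with no control on the direction $\D\bu_{\varepsilon}/\lvert\D\bu_{\varepsilon}\rvert_{\bg}$ needed — keeps the excess of $\G_{2\delta,\,\varepsilon}(\D\bu_{\varepsilon})$ under control. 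Feeding the two alternatives into a standard geometric iteration that simultaneously tracks the excess of $\G_{2\delta,\,\varepsilon}(\D\bu_{\varepsilon})$ and the quantity $M(\cdot)$, and absorbing the error terms, I would obtain an exponent $\alpha=\alpha({\mathcal D},\,F,\,\mu_{0},\,\delta)\in(0,\,\beta/2)$ and a constant $C$, both independent of $\varepsilon\in(0,\,\delta/4)$, for which the excess of $\G_{2\delta,\,\varepsilon}(\D\bu_{\varepsilon})$ on $Q_{\tau R}(x_{0},\,t_{0})$ is at most $C\tau^{2\alpha}$ times the corresponding quantity on $Q_{R}(x_{0},\,t_{0})$ (up to lower-order $R$-powers), for all small $\tau$. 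Lemma~\ref{Lemma (Section 2): Campanato Integral Growth Lemma} then gives the existence of the limit (\ref{Eq (Section 4): Limit Average of G-2delta-epsilon}) at every $(x_{0},\,t_{0})\in\widetilde{\mathcal Q}$ together with $\fiint_{Q_{\rho}(x_{0},\,t_{0})}\lvert\G_{2\delta,\,\varepsilon}(\D\bu_{\varepsilon})-\bG_{2\delta,\,\varepsilon}(x_{0},\,t_{0})\rvert^{2}\,\d x\d t\le C\rho^{2\alpha}$; comparing the averages of $\G_{2\delta,\,\varepsilon}(\D\bu_{\varepsilon})$ over a common small cylinder attached to two nearby points $(x_{1},\,t_{1}),\,(x_{2},\,t_{2})\in\widetilde{\mathcal Q}$ and invoking this decay yields $\lvert\bG_{2\delta,\,\varepsilon}(x_{1},\,t_{1})-\bG_{2\delta,\,\varepsilon}(x_{2},\,t_{2})\rvert\le C(\lvert x_{1}-x_{2}\rvert+\lvert t_{1}-t_{2}\rvert^{1/2})^{\alpha}$, i.e.\ local $(\alpha,\,\alpha/2)$-H\"{o}lder continuity of $\bG_{2\delta,\,\varepsilon}$, which by the same estimate is the continuous representative of $\G_{2\delta,\,\varepsilon}(\D\bu_{\varepsilon})$.

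The hardest part will be the bookkeeping of the intrinsic scaling in the dichotomy: one must pick the sub-level thresholds and the radius-reduction factors so that (i) the supremum $M(\cdot)$ is correctly ``stopped'' at the level $2\delta$, (ii) the degenerate and non-degenerate alternatives chain together into a convergent iteration, and (iii) every constant produced — in particular $\alpha$, the implicit constant $C$, and the maximal admissible radius — depends only on ${\mathcal D},\,F,\,\mu_{0},\,\delta$ and never on $\varepsilon$, so that the passage $\varepsilon\downarrow 0$ later (in Section~\ref{Section: Gradient Continuity}) is harmless. A subsidiary technical point is exactly the one already noted: controlling the excess of $\G_{2\delta,\,\varepsilon}(\D\bu_{\varepsilon})$, rather than that of $\D\bu_{\varepsilon}$, in the degenerate alternative, where the direction of $\D\bu_{\varepsilon}$ is not under control — this is handled by exploiting that $\G_{2\delta,\,\varepsilon}$ is identically zero near the facet.
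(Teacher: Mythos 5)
Your plan follows the paper's strategy in outline — fix a point, iterate on parabolic scales with a degenerate/non-degenerate alternative, invoke Proposition~\ref{Proposition (Section 4): Degenerate Case} in the first case and Proposition~\ref{Proposition (Section 4): Non-Degenerate Case} in the second, dispose of the ``dead zone'' $v_{\varepsilon}\le 2\delta$ where $\G_{2\delta,\,\varepsilon}$ vanishes identically, and conclude via Lemma~\ref{Lemma (Section 2): Campanato Integral Growth Lemma}. Two of your framing choices, however, diverge from what the paper actually does, and the second one is a genuine conceptual miss.

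First, the iteration in the paper is a \emph{one-sided stopping-time} argument, not a back-and-forth alternation. One fixes the geometric ratio $\sigma=\sqrt{\nu}/6$ and the contraction factor $\kappa\in\lbrack\sigma^{\beta},1)$ once and for all, sets $\rho_{l}=\sigma^{l}\rho$, $\mu_{l}=\kappa^{l}\mu_{0}$, and runs the De~Giorgi reduction of Proposition~\ref{Proposition (Section 4): Degenerate Case} for $l=0,\dots,l_{\star}$, where $l_{\star}$ is the first index for which the degenerate alternative fails. At that index the only two possibilities are $\mu_{l_{\star}}\le\delta$ (the trivial regime, $\G_{2\delta,\,\varepsilon}\equiv 0$ on $Q_{\rho_{l_{\star}}}$) or the non-degenerate alternative. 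Crucially, Proposition~\ref{Proposition (Section 4): Non-Degenerate Case} delivers the \emph{full} Campanato decay (\ref{Eq (Section 4): Campanato-Growth}) at all scales $\tau\rho_{l_{\star}}$, $\tau\in(0,1\rbrack$, in one shot; there is no need to ``chain the alternatives together into a convergent iteration'' or to track the excess of $\G_{2\delta,\,\varepsilon}(\D\bu_{\varepsilon})$ alongside $M(\cdot)$ during the degenerate phase — the sup bound $\mu_{l}$ alone suffices there, and the excess is estimated a posteriori by $\mu_{l}/\gamma_{0}$. This is why the exponent $\alpha=\log\kappa/\log\sigma$ comes out cleanly, with $\alpha<\beta$ because $\kappa\ge\sigma^{\beta}$.

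Second, you describe ``the bookkeeping of the intrinsic scaling in the dichotomy'' as the hardest part. There is no intrinsic scaling in this proof: the iteration runs on \emph{standard} parabolic cylinders $Q_{\sigma^{l}\rho}(x_{0},t_{0})$ with a fixed, $\mu$-independent ratio $\sigma$. This is precisely what the $\delta$-truncation buys — on the support of $\G_{2\delta,\,\varepsilon}(\D\bu_{\varepsilon})$ the ellipticity is bounded below by a $\delta$-dependent constant, so the whole degenerate/singular anisotropy of parabolic $p$-Laplace flows, which is the raison d'\^{e}tre of intrinsic scaling, is absent here. The paper emphasizes this departure from the DiBenedetto framework in the Introduction; reintroducing intrinsic geometry would complicate matters for no gain and the $\delta$-dependence of $\alpha$ is the acknowledged price. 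Your sub-level set threshold $\{v_{\varepsilon}\le M(R)/2\}$ should also be replaced by the paper's $S_{\rho,\mu}=\{v_{\varepsilon}-\delta>(1-\nu)\mu\}$, whose shift by $\delta$ is what makes the non-degenerate alternative couple correctly with the lower bound (\ref{Eq (Section 7): Average Assumption}) required by Lemma~\ref{Lemma (Section 7): Comparison Lemma}.
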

 Theorem \ref{Theorem (Section 4): Holder Truncated-Gradient Continuity} is shown by division by cases based on the size of the super-level set of $v_{\varepsilon}$.
 More precisely, for each $Q_{2\rho}(x_{0},\,t_{0})\Subset \widetilde{\mathcal Q}$, we assume 
 \begin{equation}\label{Eq (Section 4): Bound Assumption of v-epsilon}
    \esssup_{Q_{2\rho}(x_{0},\,t_{0})}v_{\varepsilon}\le \mu+\delta\le M
 \end{equation}
 or equivalently
 \begin{equation}\label{Eq (Section 4): Bound Assumption of G-delta-epsilon}
     \esssup_{Q_{2\rho}(x_{0},\,t_{0})}\lvert\G_{\delta,\,\varepsilon}(\D\bu_{\varepsilon}) \rvert_{\bg}\le \mu\le \mu+\delta\le M    
 \end{equation} 
 for some constant $M\in(1,\,\infty)$ and some parameter $\mu\in\lbrack 0,\,M-\delta\rbrack$.
 If $\mu\le \delta$, then we have $\G_{2\delta,\,\varepsilon}(\D\bu_{\varepsilon})\equiv 0$ in $Q_{2\rho}(x_{0},\,t_{0})$.
 With this in mind, we mainly consider the non-trivial case
 \begin{equation}\label{Eq (Section 4): delta vs mu}
     \delta<\mu.
 \end{equation}
 Our approach is essentially different from intrinsic scaling arguments found in $p$-Laplace regularity theory, since the delicate case $\mu\le \delta$, where a spatial gradient could vanish, is automatically ruled out.
 Under the assumptions (\ref{Eq (Section 4): Bound Assumption of v-epsilon})--(\ref{Eq (Section 4): delta vs mu}), we introduce the super-level set 
 \[S_{\rho,\,\mu}(x_{0},\,t_{0})\coloneqq \{(x,\,t)\in Q_{\rho}(x_{0},\,t_{0})\mid v_{\varepsilon}(x,\,t)-\delta>(1-\nu)\mu\},\]
 where the sufficiently small constant $\nu\in(0,\,1)$ is chosen later in Section \ref{Section: Non-Degenerate}.
 We often write this super-level set as $S_{\rho}$ or $S_{\rho,\,\mu}$ for simplicity.

 We mainly divide by cases, depending on whether the ratio of the sub-level set $\lvert Q_{\rho}\setminus S_{\rho,\,\mu}\rvert/\lvert Q_{\rho}\rvert$ exceeds the value $\nu$ or not.
 In the former and latter cases, which we respectively call degenerate and non-degenerate cases, we would like to show respectively the De Giorgi-type oscillation lemma (Proposition \ref{Proposition (Section 4): Degenerate Case}) and the Campanato-type growth estimates (Proposition \ref{Proposition (Section 4): Non-Degenerate Case}).
 
 In the degenerate case, we appeal to the fact that the scalar-valued function \[U_{\delta,\,\varepsilon}\coloneqq (v_{\varepsilon}-\delta)_{+}^{2}=\lvert \G_{\delta,\,\varepsilon}(\D\bu_{\varepsilon})\rvert_{\bg}^{2}\] is a weak subsolution to a uniformly parabolic equation.
 Here we should keep in mind that the support of $U_{\delta,\,\varepsilon}$ is in the non-degenerate region $\{v_{\varepsilon}\ge \delta\}$, where the approximate system (\ref{Eq (Section 2): Approximate System}) can be treated as uniformly parabolic in the classical sense.
 Combining this result with a level set assumption, we prove Proposition \ref{Proposition (Section 4): Degenerate Case} by standard iteration arguments.
 \begin{proposition}\label{Proposition (Section 4): Degenerate Case}
     Assume that $\bu_{\varepsilon}$ is a weak solution to (\ref{Eq (Section 2): Approximate System}) in $Q_{2\rho}(x_{0},\,t_{0})\subset \widetilde{\mathcal Q} \Subset {\mathcal Q} \Subset \Omega_{T}$ with $\varepsilon\in(0,\,\delta/4)$.
     In addition to (\ref{Eq (Section 4): Uniform Bounds F and U}) and (\ref{Eq (Section 4): Bound Assumption of v-epsilon})--(\ref{Eq (Section 4): delta vs mu}), let 
     \begin{equation}\label{Eq (Section 4): Measure Assumption: Degenerate}
        \lvert S_{\rho,\,\mu}\rvert<(1-\nu)\lvert Q_{\rho}\rvert
     \end{equation}
     hold for some $\nu\in(0,\,1)$. Then, there exists a constant $\kappa=\kappa({\mathcal D},\,\delta,\,M,\,\nu)\in\lbrack (\sqrt{\nu}/6)^{\beta},\, 1)$ such that we have 
     \begin{equation}\label{Eq (Section 4): Oscillation result}
        \esssup_{Q_{\sqrt{\nu}\rho/3}}\, \lvert \G_{\delta,\,\varepsilon}(\D\bu_{\varepsilon}) \rvert_{\bg} \le \kappa\mu,
     \end{equation}
     provided $\rho\le \widetilde{\rho}$ for some sufficiently small $\widetilde{\rho}=\widetilde{\rho}({\mathcal D},\,\delta,\,M,\,\nu)\in(0,\,1)$.
 \end{proposition}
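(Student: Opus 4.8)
The plan is to treat $U_{\delta,\varepsilon}=(v_\varepsilon-\delta)_+^2$ as a nonnegative weak subsolution of a uniformly parabolic equation on $Q_{2\rho}(x_0,t_0)$ and then run the standard De Giorgi machinery for subsolutions, keeping careful track of the fact that the super-level set assumption (\ref{Eq (Section 4): Measure Assumption: Degenerate}) is a \emph{measure-theoretic} smallness condition on the \emph{complement} of the degenerate region. First I would record the key structural fact: on the set $\{v_\varepsilon>\delta\}$ the bound (\ref{Eq (Section 4): Bound Assumption of v-epsilon}) gives $\delta\le v_\varepsilon\le M$, so by Lemma \ref{Lemma (Section 2): Ellipticity of Bilinear Forms} (applied with $h_{p,\varepsilon}$ and $h_{1,\varepsilon}$ comparable to constants depending on $\delta$ and $M$) the approximate system (\ref{Eq (Section 2): Approximate System}) is uniformly parabolic there, with ellipticity/boundedness ratio controlled by $\mathcal D,\delta,M$. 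Differentiating (\ref{Eq (Section 2): Approximate System}) in $x_\gamma$, testing with a suitable truncation of $U_{\delta,\varepsilon}$ (e.g.\ against $(U_{\delta,\varepsilon}-k)_+\eta^2$ with the usual cutoff $\eta$), and using the Uhlenbeck structure to absorb the vector-valued indices, I would derive the Caccioppoli inequality for $(U_{\delta,\varepsilon}-k)_+$ over nested cylinders. The lower-order terms produced by $\nabla a_s$, $\nabla\gamma_{\alpha\beta}$, $\partial_t\gamma_{\alpha\beta}$ and by $\buf_\varepsilon$ are handled via (\ref{Eq (Section 4): Uniform Bounds F and U}); here the force term contributes a term estimated in $L^{q,r}$, which is where the exponents $\widehat q,\widehat r$ and the parabolic embedding of Lemma \ref{Lemma (Section 2): Dimensionless Lemma} enter.

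Next I would set up the iteration on the super-level sets of $U_{\delta,\varepsilon}$. Fix $\mu$ as in (\ref{Eq (Section 4): Bound Assumption of v-epsilon})--(\ref{Eq (Section 4): delta vs mu}) and note $\esssup_{Q_{2\rho}}U_{\delta,\varepsilon}\le\mu^2$. Work on the shrunken cylinder $Q_{\sqrt\nu\rho/3}$, introduce the levels $k_j=\mu^2(1-2^{-j})$ (or a variant adapted to keeping the final bound as $\kappa\mu$ rather than $\kappa\mu^2$), radii $\rho_j\downarrow\sqrt\nu\rho/3$, and the quantities $Y_j=\fiint_{Q_{\rho_j}}\big(U_{\delta,\varepsilon}-k_j\big)_+^2$. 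The measure assumption (\ref{Eq (Section 4): Measure Assumption: Degenerate}) gives, for the \emph{first} level, that $U_{\delta,\varepsilon}\le(1-\nu)^2\mu^2$ on a set of proportion at least $\nu$ in $Q_\rho$ — this is exactly the ``initial smallness'' input one needs so that the Caccioppoli inequality, combined with the parabolic Sobolev inequality (\ref{Eq (Section 2): Embedding for level set}) and the dimensionless comparisons (\ref{Eq (Section 2): Dimensionless Comparisons}) of Lemma \ref{Lemma (Section 2): Dimensionless Lemma}, yields a recursive inequality of the type $Y_{j+1}\le A B^j(Y_j^{1+\upsilon}+Y_j^\upsilon Z_j^{1+\varkappa})$ with a companion inequality for $Z_j$. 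Then Lemma \ref{Lemma (Section 2): Geometric Decay Lemma} forces $Y_j\to0$ provided the starting data $Y_0,Z_0$ are below the threshold $\varTheta$; the threshold is met by choosing $\rho\le\widetilde\rho(\mathcal D,\delta,M,\nu)$ small enough to absorb the force-term contributions (whose size is $\rho^{2\beta}$-like after the embedding), after which one reads off $\esssup_{Q_{\sqrt\nu\rho/3}}U_{\delta,\varepsilon}\le(1-\eta_0)\mu^2$ for a fixed $\eta_0=\eta_0(\mathcal D,\delta,M,\nu)>0$, i.e.\ (\ref{Eq (Section 4): Oscillation result}) with $\kappa=\sqrt{1-\eta_0}<1$.

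The lower bound $\kappa\ge(\sqrt\nu/6)^\beta$ is essentially bookkeeping: the oscillation gain $\eta_0$ one extracts from a single De Giorgi pass degenerates as $\nu\to0$ (less initial room) and as the force term grows, and tracking the dependence through Lemma \ref{Lemma (Section 2): Dimensionless Lemma} gives a power of $\sqrt\nu$ with exponent proportional to $\beta$; the constant $6$ just accounts for the radius contraction $2\rho\to\sqrt\nu\rho/3$. I expect the main obstacle to be two intertwined points: (i) justifying that $U_{\delta,\varepsilon}$ really is a subsolution of a uniformly parabolic equation \emph{including} a careful treatment of what happens on $\{v_\varepsilon=\delta\}$, i.e.\ on the boundary of the support of $\nabla U_{\delta,\varepsilon}$ — this requires the truncation-parameter trick (the estimate is phrased for $\G_{\delta,\varepsilon}$, not $\G_{2\delta,\varepsilon}$, precisely so that the region $\{v_\varepsilon>\delta\}$ is genuinely uniformly parabolic and no analysis near the facet is needed), and the Steklov-average justification of the formal differentiation; and (ii) getting the dependence of all constants on $\varepsilon$ to drop out — this works because, once restricted to $\{\delta\le v_\varepsilon\le M\}$, the ellipticity constants from Lemma \ref{Lemma (Section 2): Ellipticity of Bilinear Forms} depend only on $\mathcal D,\delta,M$ and not on $\varepsilon\in(0,\delta/4)$. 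Everything else is the routine De Giorgi iteration packaged in Lemmata \ref{Lemma (Section 2): Geometric Decay Lemma} and \ref{Lemma (Section 2): Dimensionless Lemma}.
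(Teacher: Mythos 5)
Your high-level picture is right: $U_{\delta,\varepsilon}=(v_\varepsilon-\delta)_+^2$ is a nonnegative subsolution of a uniformly parabolic equation because its support lives in $\{\delta\le v_\varepsilon\le M\}$, the relevant Caccioppoli estimate is the one already recorded in Lemma~\ref{Lemma (Section 4): Caccioppoli estimates for U-delta-epsilon}, and the endgame is the decay lemma (Lemma~\ref{Lemma (Section 2): Geometric Decay Lemma}) fed through the dimensionless quantities of Lemma~\ref{Lemma (Section 2): Dimensionless Lemma}, with $\rho\le\widetilde\rho$ used to absorb the $\rho^{2\beta}$-size force-term contributions. You have also correctly flagged where $\varepsilon$-independence comes from.

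The gap is in the step you compress into ``this is exactly the initial smallness input one needs\ldots then Lemma~\ref{Lemma (Section 2): Geometric Decay Lemma} forces $Y_j\to0$.'' The measure assumption (\ref{Eq (Section 4): Measure Assumption: Degenerate}) says only that the \emph{complement} of $S_{\rho,\mu}$ occupies a proportion $\ge\nu$ of $Q_\rho$. That is a lower bound, not the smallness $Y_0\le\varTheta$ required by Lemma~\ref{Lemma (Section 2): Geometric Decay Lemma}: the super-level set can still fill a proportion up to $1-\nu$ of the cylinder, which is nowhere near the threshold $\varTheta$ when $\nu$ is tiny (and in the non-degenerate analysis $\nu$ is chosen extremely small, $\nu<10^{-23}\gamma_0^{16}$). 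Choosing $\rho\le\widetilde\rho$ does not fix this either, because the ratio $\lvert S_{\rho,\mu}\rvert/\lvert Q_\rho\rvert$ is scale-invariant; $\rho$-smallness only tames the force-term remainders. What is actually needed between ``proportion $\ge\nu$'' and ``$Y_0$ small'' is the full parabolic De Giorgi apparatus: first an \emph{expansion of positivity} (the paper's Lemma~\ref{Lemma (Section 6): Expansion of Positivity}), which extracts a good time slice $\tau_0$ where the spatial sub-level set is large and propagates this condition \emph{at every later time slice} using the weak-slice Caccioppoli bound (\ref{Eq (Section 4): De Giorgi Energy 2}); next a \emph{density/shrinking} step (Lemma~\ref{Lemma (Section 6): Density Lemma 1}), which runs the isoperimetric inequality (\ref{Eq (Section 6): Isoperimetric Ineq}) at each time slice over $l_\star$ dyadic level raises to force the super-level set measure below $\alpha_0\lvert\widetilde Q_{3\rho/2}\rvert$; only then does the decay iteration (Lemma~\ref{Lemma (Section 6): Density Lemma 2}, built on Lemmata~\ref{Lemma (Section 2): Geometric Decay Lemma} and \ref{Lemma (Section 2): Dimensionless Lemma}) apply. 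Finally, since these three steps give the oscillation bound only on a short time slab forward of $\tau_0$, a forward-in-time induction over roughly $9i_\star$ slabs is required to reach the terminal time $t_0$; this propagation is where the cascade of shrinking oscillation exponents $2^{-(l_\star+1)k}\theta_0\nu$ comes from and where the final $\kappa$ is assembled. Your sketch omits all three of these intermediate mechanisms and the forward-in-time iteration, so as written it cannot close. Also, the lower bound $\kappa\ge(\sqrt\nu/6)^\beta$ is not a consequence of bookkeeping the oscillation gain; in the paper it is simply imposed by taking a maximum so that the H\"{o}lder exponent $\alpha=\log\kappa/\log(\sqrt\nu/6)$ used later in Section~\ref{Section: Gradient Continuity} stays below $\beta$.
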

 In the non-degenerate case, we expect that a gradient $\D\bu_{\varepsilon}$ may not degenerate, which is indeed rigorously shown by deducing some non-trivial energy bounds to estimate $\lvert (\D\bu_{\varepsilon})_{Q_{\rho}}\rvert_{\bg(x_{0},\,t_{0})}$ by below.
 From this starting point, we compare $\bu_{\varepsilon}$ with a weak solution of some sort of heat system to deduce the classical Campanato-type integral growth estimate (\ref{Eq (Section 4): Campanato-Growth}). 
 \begin{proposition}\label{Proposition (Section 4): Non-Degenerate Case}
     Assume that $\bu_{\varepsilon}$ is a weak solution to (\ref{Eq (Section 2): Approximate System}) in $Q_{2\rho}(x_{0},\,t_{0})\subset \widetilde{\mathcal Q} \Subset {\mathcal Q} \Subset \Omega_{T}$ with $\varepsilon\in(0,\,\delta/4)$.
     Let (\ref{Eq (Section 4): Uniform Bounds F and U}) and (\ref{Eq (Section 4): Bound Assumption of v-epsilon})--(\ref{Eq (Section 4): delta vs mu}) be in force.
     Then, there exist a sufficiently small number $\nu=\nu({\mathcal D},\,\delta,\,M,\,F)\in (0,\,10^{-23}\gamma_{0}^{16})$ and a sufficiently small radius $\widehat{\rho}=\widehat{\rho}({\mathcal D},\,\delta,\,M,\,F)\in (0,\,1)$ such that if $\rho\le \widehat{\rho}$ and
     \begin{equation}\label{Eq (Section 4): Measure Assumption: Non-Degenerate}
         \lvert S_{\rho,\,\mu}\rvert\ge (1-\nu)\lvert Q_{\rho}\rvert
     \end{equation}
     hold, then the limit $\bG_{2\delta,\,\varepsilon}(x_{0},\,t_{0})$ as in (\ref{Eq (Section 4): Limit Average of G-2delta-epsilon}) is well-defined. 
     Moreover, following (\ref{Eq (Section 4): Growth of Gamma-2delta-epsilon})--(\ref{Eq (Section 4): Campanato-Growth}) hold;
     \begin{equation}\label{Eq (Section 4): Growth of Gamma-2delta-epsilon}
         \lvert \bG_{2\delta,\,\varepsilon}(x_{0},\,t_{0}) \rvert\le \frac{\mu}{\gamma_{0}},
     \end{equation}
     \begin{equation}\label{Eq (Section 4): Campanato-Growth}
         \fiint_{Q_{\tau\rho}}\lvert \G_{2\delta,\,\varepsilon}(\D\bu_{\varepsilon})-\bG_{2\delta,\,\varepsilon}(x_{0},\,t_{0}) \rvert^{2}\,\d x\d t\le \tau^{2\beta}\quad \text{for all }\tau\in(0,\,1\rbrack.
     \end{equation}
 \end{proposition}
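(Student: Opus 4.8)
The plan is to run a freezing-coefficient (Campanato) argument on the standard cylinder $Q_{\rho}=Q_{\rho}(x_{0},\,t_{0})$, exploiting the measure condition (\ref{Eq (Section 4): Measure Assumption: Non-Degenerate}) twice: first to see that $\D\bu_{\varepsilon}$ is, on the overwhelming part of $Q_{\rho}$, close to its average $\bz_{\rho}\coloneqq (\D\bu_{\varepsilon})_{Q_{\rho}}$ and that $\lvert \bz_{\rho}\rvert_{\bg(x_{0},\,t_{0})}$ is comparable with $\mu$, and then to compare $\bu_{\varepsilon}$ with the solution $\mathbf{h}$ of the linear heat system whose coefficients are frozen at $(x_{0},\,t_{0},\,\bz_{\rho})$. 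Since on $\{v_{\varepsilon}\ge\delta\}$ the quantities $h_{1,\,\varepsilon}(\D\bu_{\varepsilon})$, $h_{p,\,\varepsilon}(\D\bu_{\varepsilon})$ are pinched between positive constants depending only on $\mathcal D,\,\delta,\,M$, the bilinear forms of Lemma \ref{Lemma (Section 2): Ellipticity of Bilinear Forms} are there uniformly elliptic with such constants; once $\lvert \bz_{\rho}\rvert_{\bg}$ is bounded below by $c(\mathcal D)\mu$, this makes the frozen system a genuine second-order heat system enjoying the classical interior excess decay $\fiint_{Q_{\tau\sigma}}\lvert \nabla\mathbf{h}-(\nabla\mathbf{h})_{Q_{\tau\sigma}}\rvert^{2}\le C\tau^{2}\fiint_{Q_{\sigma}}\lvert \nabla\mathbf{h}-(\nabla\mathbf{h})_{Q_{\sigma}}\rvert^{2}$, $C=C(\mathcal D,\,\delta,\,M)$. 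As a preliminary I would record the crude bound that already yields (\ref{Eq (Section 4): Growth of Gamma-2delta-epsilon}): by (\ref{Eq (Section 4): Bound Assumption of v-epsilon}) one has $(v_{\varepsilon}-2\delta)_{+}\le\mu$, and (\ref{Eq (Section 1): Matrix Gamma}) gives $\lvert \bz\rvert\le\gamma_{0}^{-1/2}\lvert \bz\rvert_{\bg}$, so $\lvert \G_{2\delta,\,\varepsilon}(\D\bu_{\varepsilon})\rvert\le\gamma_{0}^{-1/2}\mu\le\mu/\gamma_{0}$ a.e., a bound which persists under every cylindrical average and hence under the limit (\ref{Eq (Section 4): Limit Average of G-2delta-epsilon}), once that limit is shown to exist.

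\textbf{Step 1 (the average gradient does not degenerate).} On the super-level set $S_{\rho,\,\mu}$ one has $v_{\varepsilon}>(1-\nu)\mu+\delta$, hence, using $\varepsilon<\delta/4$ and (\ref{Eq (Section 4): delta vs mu}), $\lvert \D\bu_{\varepsilon}\rvert_{\bg}^{2}=v_{\varepsilon}^{2}-\varepsilon^{2}\ge c\mu^{2}$; together with (\ref{Eq (Section 4): Measure Assumption: Non-Degenerate}) this makes $\fiint_{Q_{\rho}}\lvert \D\bu_{\varepsilon}\rvert_{\bg}^{2}\ge c\mu^{2}$. Writing $\lvert \D\bu_{\varepsilon}\rvert_{\bg}^{2}\le 2\lvert \bz_{\rho}\rvert_{\bg}^{2}+2\lvert \D\bu_{\varepsilon}-\bz_{\rho}\rvert_{\bg}^{2}$ pointwise, it remains to bound the excess $E_{\rho}\coloneqq\fiint_{Q_{\rho}}\lvert \D\bu_{\varepsilon}-\bz_{\rho}\rvert^{2}$ by a small \emph{fixed} fraction of $\mu^{2}$, and here I would invoke the energy estimates of Section \ref{Section: Weak Formulations}: a Caccioppoli inequality for the differentiated system — legitimate and uniformly parabolic on $\{v_{\varepsilon}\ge\delta\}$, the support of $\G_{\delta,\,\varepsilon}(\D\bu_{\varepsilon})$ — together with a parabolic Poincar\'{e}--Sobolev inequality (Lemma \ref{Lemma (Secion 2): Interpolation among parabolic function spaces}) and a Caccioppoli estimate for the subsolution $U_{\delta,\,\varepsilon}=\lvert \G_{\delta,\,\varepsilon}(\D\bu_{\varepsilon})\rvert_{\bg}^{2}$, which bound $E_{\rho}$ by $C(\mathcal D,\,M)\bigl(\nu+\rho^{2\beta}\bigr)\mu^{2}$ plus a force contribution of order $F^{2}\rho^{2\beta}$ (the latter via the subcritical scaling (\ref{Eq (Section 1): Condition for q,r})). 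Choosing $\nu$ and then $\rho$ small enough gives $E_{\rho}\le\tfrac14 c\mu^{2}$, hence $\lvert \bz_{\rho}\rvert_{\bg(x_{0},\,t_{0})}\ge c_{\ast}(\mathcal D)\mu$.

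\textbf{Step 2 (comparison, iteration, normalization).} With $\lvert \bz_{\rho}\rvert_{\bg}\ge c_{\ast}\mu$, let $\mathbf{h}$ solve on a subcylinder of $Q_{\rho}$ the linear heat system with the frozen bilinear form $\mathcal B_{\varepsilon}(x_{0},\,t_{0},\,\bz_{\rho})$ and $\mathbf{h}=\bu_{\varepsilon}$ on the parabolic boundary. A standard energy comparison — controlling the $(x,\,t)$-freezing of the coefficients by Lemma \ref{Lemma (Section 2): Continuity on Norms}, the replacement of $\D\bu_{\varepsilon}$ by $\bz_{\rho}$ in the coefficients on $\{v_{\varepsilon}\ge\delta\}$ by (\ref{Eq (Section 2): Continuity of Hessian Matrix}), the bad set $\{v_{\varepsilon}<\delta\}$ through a Chebyshev bound by $E_{\rho}$, and the force term by (\ref{Eq (Section 4): Uniform Bounds F and U}) and the subcritical scaling — yields $\fiint_{Q_{\rho/2}}\lvert \D\bu_{\varepsilon}-\nabla\mathbf{h}\rvert^{2}\le\omega_{\ast}(E_{\rho},\,\rho)\,\mu^{2}$ with $\omega_{\ast}(E,\,\rho)\to 0$ as $E,\,\rho\to 0$. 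Feeding in the interior decay of $\nabla\mathbf{h}$ and minimality of averages (Lemma \ref{Lemma (Section 2): Average = L-2 minimizer}), one obtains at scale $\tau\rho$ an estimate of the form $\fiint_{Q_{\tau\rho}}\lvert \D\bu_{\varepsilon}-(\D\bu_{\varepsilon})_{Q_{\tau\rho}}\rvert^{2}\le C\tau^{2}E_{\rho}+C\,\omega_{\ast}(E_{\rho},\,\rho)\mu^{2}$; since $\beta<1$, fixing $\tau$ small and running a standard iteration over the radii $\tau^{k}\rho$ — legitimate because $E_{\rho}$ is small by Step 1, using again a Chebyshev bound for $\{v_{\varepsilon}<\delta\}$ at each scale and that the gradient average stays $\ge\tfrac12 c_{\ast}\mu$ along the way — produces the geometric decay $\fiint_{Q_{\tau^{k}\rho}}\lvert \D\bu_{\varepsilon}-(\D\bu_{\varepsilon})_{Q_{\tau^{k}\rho}}\rvert^{2}\le C_{0}(\tau^{k})^{2\beta}$ with $C_{0}=C_{0}(\mathcal D,\,\delta,\,M,\,F)$, uniformly in $\varepsilon\in(0,\,\delta/4)$. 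Applying the Lipschitz map $\G_{2\delta,\,\varepsilon}(x_{0},\,t_{0};\,\cdot\,)$ (Lemma \ref{Lemma (Section 2): Continuity of G-2delta-epsilon}) and minimality of averages transfers this to $\G_{2\delta,\,\varepsilon}(\D\bu_{\varepsilon})$, and Lemma \ref{Lemma (Section 2): Campanato Integral Growth Lemma} then gives the limit $\bG_{2\delta,\,\varepsilon}(x_{0},\,t_{0})$ of (\ref{Eq (Section 4): Limit Average of G-2delta-epsilon}) together with $\fiint_{Q_{\tau\rho}}\lvert \G_{2\delta,\,\varepsilon}(\D\bu_{\varepsilon})-\bG_{2\delta,\,\varepsilon}(x_{0},\,t_{0})\rvert^{2}\le C_{1}\tau^{2\beta}$; choosing $\nu$ and $\widehat{\rho}$ smaller still, depending only on $\mathcal D,\,\delta,\,M,\,F$, so that $C_{1}\le 1$ gives exactly (\ref{Eq (Section 4): Campanato-Growth}), and (\ref{Eq (Section 4): Growth of Gamma-2delta-epsilon}) follows from the preliminary bound.

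The hard part will be Step 1, the lower bound $\lvert (\D\bu_{\varepsilon})_{Q_{\rho}}\rvert_{\bg(x_{0},\,t_{0})}\ge c_{\ast}(\mathcal D)\mu$. The one-Laplace term carries no quantitative monotonicity, so all of the coercivity behind the Caccioppoli and Poincar\'{e} estimates of the excess $E_{\rho}$ must be extracted from the $p$-Laplace part, which is uniformly elliptic only where $v_{\varepsilon}\ge\delta$; consequently every energy estimate in this step must be localized to $\{v_{\varepsilon}\ge\delta\}$ — equivalently, run with $\G_{\delta,\,\varepsilon}(\D\bu_{\varepsilon})$ rather than $\D\bu_{\varepsilon}$ — and its constants degenerate as $\delta\to 0$, which is precisely where the truncation parameter buys back uniform parabolicity. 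A secondary, more clerical difficulty is getting the order of the smallness choices right (first the decay factor $\tau$, then $\nu$, then $\rho\le\widehat{\rho}$, all depending only on $\mathcal D,\,\delta,\,M,\,F$) so that the perturbative errors are absorbed into $\tau^{2\beta}$ and the constant in (\ref{Eq (Section 4): Campanato-Growth}) is normalized to $1$.
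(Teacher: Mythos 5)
Your broad strategy — use (\ref{Eq (Section 4): Measure Assumption: Non-Degenerate}) to pin the average gradient away from zero, then freeze coefficients, compare with a linear heat flow, and iterate — is the same one the paper follows in Section \ref{Section: Non-Degenerate}. Your preliminary observation that $(v_{\varepsilon}-2\delta)_{+}\le\mu$ on $Q_{\rho}$ together with (\ref{Eq (Section 1): Matrix Gamma}) yields (\ref{Eq (Section 4): Growth of Gamma-2delta-epsilon}) is correct. However, there are two genuine gaps where the sketch cannot be completed as stated.

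First, in Step~1 the bound $E_{\rho}\le C(\nu+\rho^{2\beta})\mu^{2}$ cannot be obtained from the tools you name. The parabolic Poincar\'{e}--Sobolev inequality of Lemma \ref{Lemma (Secion 2): Interpolation among parabolic function spaces} requires $\varphi_{2}\in L^{s}(I;W_{0}^{1,s})$ and in any case only controls spatial integrability, not the oscillation of $\D\bu_{\varepsilon}$ against the \emph{space--time} average $\bz_{\rho}$. The temporal oscillation of the time slices is an independent quantity, and controlling it forces you back to the weak formulation: one needs to estimate $\esssup_{\tau_{1},\tau_{2}}\bigl\lvert\fint_{B_{\sigma\rho}}\D\bu_{\varepsilon}(\cdot,\tau_{1})-\fint_{B_{\sigma\rho}}\D\bu_{\varepsilon}(\cdot,\tau_{2})\bigr\rvert$ directly from (\ref{Eq (Section 2): Approximate Weak Form}) (this is the role of the quantity $\boldsymbol\Sigma_{1}(t)$ and estimate (\ref{Eq (Section 7): Non-Degenerate Energy 3}) in the paper's Lemma \ref{Lemma (Section 7): Energy decay from level set}). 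Moreover the spatial part cannot be run directly on $\D\bu_{\varepsilon}$ with a Caccioppoli bound on $\D^{2}\bu_{\varepsilon}$: the natural $L^{2}$-energy from Lemma \ref{Lemma (Section 4): Basic Weak Form} controls $v_{\varepsilon}^{p-2}\lvert\D^{2}\bu_{\varepsilon}\rvert^{2}$, which degenerates (or blows up, for $p<2$) where $v_{\varepsilon}$ is far from $1$; the paper resolves this by passing to the nonlinear quantity $v_{\varepsilon}^{p-1}\D\bu_{\varepsilon}$, using Lemma \ref{Lemma (Section 4): L2-estimates for spatial derivatives} to get $L^{2}$-bounds on $\D(v_{\varepsilon}^{p-1}\D\bu_{\varepsilon})$, and then undoing the nonlinearity with the Lipschitz-inverse bound of Lemma \ref{Lemma (Section 2): G-p-epsilon}. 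Without this nonlinear change of variables the Poincar\'{e} step has no uniform constant. Your proposal names none of these ingredients, and they are not routine.

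Second, in Step~2 the claim that the comparison error is $\omega_{\ast}(E_{\rho},\rho)\mu^{2}$ with $\omega_{\ast}\to 0$ is not reachable by the devices you list. The freezing error, after absorbing, reduces to $\fiint_{Q_{\rho/2}}\omega(\lvert\D\bu_{\varepsilon}-\bz_{\rho}\rvert)^{2}\lvert\D\bu_{\varepsilon}-\bz_{\rho}\rvert^{2}$, and to show that this is $o(\Phi(\rho))$ you must split the good and bad sets of $\lvert\D\bu_{\varepsilon}-\bz_{\rho}\rvert$; on the bad set Chebyshev alone gives you measure $\le\Phi(\rho)/\lambda^{2}$ but the integrand there is of order $\Phi(\rho)$, so without an exponent of integrability strictly above $2$ the best you get is $C\Phi(\rho)$ with a \emph{fixed} constant, which is not small and kills the iteration. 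A reverse-H\"{o}lder (Gehring) step — the paper's Lemma \ref{Lemma (Section 7): Higher integrability}, proved by testing against $\bw_{\varepsilon}-\widetilde{\bw}_{\varepsilon}$ on small subcylinders, which is where the lower bound $\lvert\bz_{\rho}\rvert_{\bg(x_{0},t_{0})}\ge\delta+\mu/4$ is indispensable to make the frozen form uniformly elliptic — is exactly what supplies the extra exponent $\vartheta$ and the factor $\omega(\sqrt{\Phi(\rho)})^{\vartheta/(1+\vartheta)}$ in (\ref{Eq: (Section 7) Comparison Estimate}). This is the key quantitative lemma your sketch skips, and the decay in the iteration cannot be closed without it.
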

 
 After Theorem \ref{Theorem (Section 4): Gradient bounds} is shown in Section \ref{Section: Gradient Bounds}, the proofs of Propositions \ref{Proposition (Section 4): Degenerate Case}, \ref{Proposition (Section 4): Non-Degenerate Case}, and Theorem \ref{Theorem (Section 4): Holder Truncated-Gradient Continuity} are given respectively in Sections \ref{Section: Degenerate}, \ref{Section: Non-Degenerate}, and \ref{Section: Gradient Continuity}.
 In the remaining parts of Section \ref{Section: Weak Formulations}, we would like to deduce various weak formulations and energy estimates. These results are fully used in Sections \ref{Section: Gradient Bounds}--\ref{Section: Non-Degenerate}.
 \subsection{Basic weak formulations} 
 We would like to deduce a weak formulation of $v_{\varepsilon}$ in a systematic approach. 
 We differentiate the system (\ref{Eq (Section 2): Approximate System}) in space to deduce a weak formulation, and hence we have to use some smoothness structures of $a_{1}$ and $a_{p}$ in space variables.
 In particular, the Lipschitz assumptions $\nabla a_{1},\,\nabla a_{p},\,\nabla\gamma_{\alpha,\,\beta}\in L^{\infty}$ are used.
 Another regularity assumption $\partial_{t}\gamma_{\alpha,\,\beta}\in L^{\infty}$ is also used in computing the time derivative $\partial_{t}v_{\varepsilon}$.
 Following \cite[Proposition 3.1]{BDLS-parabolic} (see also \cite[Chapter VIII]{DiBenedetto-monograph MR1230384}), we would like to deduce Lemma \ref{Lemma (Section 4): Basic Weak Form}.
 \begin{lemma}\label{Lemma (Section 4): Basic Weak Form}
    Let $\psi\colon{\mathbb R}_{\ge 0}\to{\mathbb R}_{\ge 0}$ be a non-decreasing Lipschitz function, and define $\Psi\colon {\mathbb R}_{\ge 0}\to{\mathbb R}_{\ge 0}$ as (\ref{Eq (Section 2): Def of Psi}).
    Assume that $\bu_{\varepsilon}$ is a weak solution to (\ref{Eq (Section 2): Approximate System}) with $\varepsilon\in(0,\,1)$, and that $\zeta\in C_{\mathrm c}^{1}(Q;\,{\mathbb R}_{\ge 0})$ be arbitrarily given.
    We set the integrals
    \[\begin{array}{rcl} 
        E_{0}&\coloneqq &-\displaystyle\iint_{Q}\Psi(v_{\varepsilon})\partial_{t}\zeta\,\d x\d t,\\ E_{1}&\coloneqq & \displaystyle\iint_{Q}{\mathcal C}_{\varepsilon}(x,\,t,\,\D\bu_{\varepsilon})(\nabla v_{\varepsilon},\,\nabla\zeta)\psi(v_{\varepsilon})v_{\varepsilon}\,\d x\d t\\ 
        & =&\displaystyle\iint_{Q}{\mathcal C}_{\varepsilon}(x,\,t,\,\D\bu_{\varepsilon})(\nabla[\Psi(v_{\varepsilon})],\,\nabla\zeta)\,\d x\d t,\\ E_{2}&\coloneqq & \displaystyle\iint_{Q}{\mathcal C}_{\varepsilon}(x,\,t,\,\D\bu_{\varepsilon})(\nabla v_{\varepsilon},\,\nabla v_{\varepsilon})\psi^{\prime}(v_{\varepsilon})v_{\varepsilon}\zeta \,\d x\d t,\\ E_{3}&\coloneqq & \displaystyle\iint_{Q}{\mathcal A}_{\varepsilon}(x,\,t,\,\D\bu_{\varepsilon})({\D}^{2}\bu_{\varepsilon},\,{\D}^{2}\bu_{\varepsilon})\psi(v_{\varepsilon})\zeta \,\d x\d t,\\ E_{4}&\coloneqq & \displaystyle\iint_{Q}\left(v_{\varepsilon}^{p}\left(1+v_{\varepsilon}^{1-p} \right)^{2}+\lvert\buf_{\varepsilon}\rvert^{2}v_{\varepsilon}^{2-p} \right) \left(\psi(v_{\varepsilon})+\psi^{\prime}(v_{\varepsilon})v_{\varepsilon}\right)\zeta\,\d x\d t, \\ E_{5}&\coloneqq & \displaystyle\iint_{Q}\left(v_{\varepsilon}^{p}\left(1+v_{\varepsilon}^{1-p}\right)+\lvert \buf_{\varepsilon}\rvert v_{\varepsilon} \right) \psi(v_{\varepsilon})\lvert\nabla\zeta\rvert \,\d x\d t,\\ 
        E_{6} &=&\displaystyle\iint_{Q}v_{\varepsilon}^{2}\psi(v_{\varepsilon})\zeta\,\d x\d t.\end{array}\]
        Then, there exists a constant $C=C({\mathcal D})\in(1,\,\infty)$ such that there holds
    \begin{equation}\label{Eq (Section 4): Weak Form of v-epsilon}
    E_{0}+E_{1}+\frac{1}{4}(E_{2}+E_{3})\le C(E_{4}+E_{5}+E_{6}).
    \end{equation}
    \end{lemma}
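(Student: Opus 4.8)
The plan is to differentiate the approximate system (\ref{Eq (Section 2): Approximate System}) in the spatial directions, test the resulting identity with a carefully chosen vector field, and then recombine the terms into the energy quantities $E_0,\dots,E_6$. First I would fix a direction $x_h$ ($h\in\{1,\dots,n\}$), formally differentiate (\ref{Eq (Section 2): Approximate System}) with respect to $x_h$ (justified by the Steklov-average remark), and test the differentiated system with $\bphi = \partial_{x_h}\bu_\varepsilon\,\psi(v_\varepsilon)\zeta$, summing over $h$. The key algebraic point is that, since $\A_{s,\varepsilon}(x,t,\bz)=a_s g_s(\varepsilon^2+|\bz|_{\bg}^2)\bz$, the chain rule produces exactly the bilinear form ${\mathcal A}_\varepsilon$ contracted against $\D^2\bu_\varepsilon\otimes\D^2\bu_\varepsilon$ (this is the principal term giving rise to $E_3$) together with a ${\mathcal C}_\varepsilon$-type term acting on $\nabla v_\varepsilon$ — here one uses the identity $\partial_{x_h}v_\varepsilon = v_\varepsilon^{-1}\langle\D\bu_\varepsilon\mid\partial_{x_h}\D\bu_\varepsilon\rangle_{\bg} + (\text{lower order from }\partial_{x_h}\bg)$, so that $\nabla v_\varepsilon$ is controlled pointwise by $|\D^2\bu_\varepsilon|$ up to terms involving $|\nabla\gamma_{\alpha\beta}|\,|\D\bu_\varepsilon|$. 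The time term is handled by rewriting $\sum_h \partial_t(\partial_{x_h}u_\varepsilon^j)\,\partial_{x_h}u_\varepsilon^j\,\psi(v_\varepsilon) = \tfrac12\partial_t(|\D\bu_\varepsilon|^2)\psi(v_\varepsilon) + (\text{terms from }\partial_t\gamma_{\alpha\beta})$, and then observing that $\tfrac12\partial_t(|\D\bu_\varepsilon|_{\bg}^2)\psi(v_\varepsilon) = v_\varepsilon\psi(v_\varepsilon)\partial_t v_\varepsilon + (\ldots) = \partial_t[\Psi(v_\varepsilon)] + (\ldots)$ by the defining property (\ref{Eq (Section 2): Def of Psi}) of $\Psi$; integrating by parts in $t$ against $\zeta$ then yields $E_0$ plus a remainder absorbed into $E_4$–$E_6$ via the $L^\infty$ bounds (\ref{Eq (Section 1): Bound Assumptions for Coefficients}) on $\partial_t\gamma_{\alpha\beta}$.

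Next I would split the spatial terms. Distributing $\nabla$ off the test function gives three groups: one where the derivative hits $\psi(v_\varepsilon)$, producing $\psi'(v_\varepsilon)$ times a ${\mathcal C}_\varepsilon$-contraction of $\nabla v_\varepsilon$ with $\nabla v_\varepsilon$ (the good sign-definite term $E_2$) and with $\nabla\zeta$ (which is $E_1$, using the second form of $E_1$ after noting $\psi(v_\varepsilon)v_\varepsilon\nabla v_\varepsilon = \nabla[\Psi(v_\varepsilon)]$); one where $\nabla$ hits $\zeta$, yielding $E_1$; and the principal second-order term giving $E_3$. The coefficients $a_1,a_p$ and the matrix $\bg$ being merely Lipschitz generate the commutator-type error terms: each time $\partial_{x_h}$ lands on $a_s$ or on $\gamma_{\alpha\beta}$ (rather than on a derivative of $\bu_\varepsilon$), one picks up a factor bounded by $\Gamma_0$ and a power of $v_\varepsilon$ dictated by the growth bound (\ref{Eq (Section 1): Growth of p-Laplace-type operator}) and (\ref{Eq (Section 1): Growth of 1-Laplace-type operator}), i.e. a factor $h_{1,\varepsilon}(\D\bu_\varepsilon)+h_{p,\varepsilon}(\D\bu_\varepsilon)\lesssim v_\varepsilon^{-1}+v_\varepsilon^{p-1} = v_\varepsilon^{p-1}(1+v_\varepsilon^{1-p})$. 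Collecting these, together with the external-force contribution $\iint \langle\partial_{x_h}\buf_\varepsilon\mid\partial_{x_h}\bu_\varepsilon\rangle\psi(v_\varepsilon)\zeta$ (which, after an integration by parts moving $\partial_{x_h}$ back onto $\partial_{x_h}\bu_\varepsilon\,\psi(v_\varepsilon)\zeta$, becomes the $|\buf_\varepsilon|^2 v_\varepsilon^{2-p}$ and $|\buf_\varepsilon|v_\varepsilon$ pieces of $E_4$ and $E_5$ after Young's inequality), accounts for the right-hand side $C(E_4+E_5+E_6)$.

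The main obstacle — and where care is needed — is the absorption step: several error terms carry a factor $|\D^2\bu_\varepsilon|$ or $|\nabla v_\varepsilon|$ with coefficient $v_\varepsilon^{p/2-1}$ (from the Hessian structure) and must be reabsorbed into the good terms $E_2$ and $E_3$, which by the ellipticity estimate (\ref{Eq (Section 2): Ellipticity of Bilinear Forms}) control $\lambda_0 h_{p,\varepsilon}(\D\bu_\varepsilon)|\nabla v_\varepsilon|_{\bg}^2\psi'(v_\varepsilon)v_\varepsilon\zeta$ and $\lambda_0 h_{p,\varepsilon}(\D\bu_\varepsilon)|\D^2\bu_\varepsilon|_{\bg}^2\psi(v_\varepsilon)\zeta$ respectively. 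By Young's inequality with a small parameter, one absorbs half of each, producing the factor $\tfrac14$ in front of $E_2+E_3$; the leftover after absorption is of the form $v_\varepsilon^{2-p}\cdot(v_\varepsilon^{p/2-1})^2\cdot(\ldots) = (\ldots)$, i.e. it collapses to terms already appearing in $E_4$, $E_5$, $E_6$ — one must verify that every residual exponent of $v_\varepsilon$ matches. The bookkeeping is delicate precisely because the $p<2$ singular powers $v_\varepsilon^{1-p}$ must be kept track of carefully, but no genuinely new idea beyond (\ref{Eq (Section 2): Ellipticity of Bilinear Forms}), (\ref{Eq (Section 1): Growth of p-Laplace-type operator})–(\ref{Eq (Section 1): Growth of 1-Laplace-type operator}), the identity (\ref{Eq (Section 2): Def of Psi}), and Young's inequality is required; this mirrors \cite[Proposition 3.1]{BDLS-parabolic} and \cite[Chapter VIII]{DiBenedetto-monograph MR1230384}.
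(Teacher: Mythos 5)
Your plan is correct and follows essentially the same route as the paper: differentiate the system in space, test against the ($\bg$-contracted) spatial gradient weighted by $\psi(v_\varepsilon)\zeta$, identify $E_0$--$E_3$ as the principal terms via $\partial_t[\Psi(v_\varepsilon)]=\psi(v_\varepsilon)v_\varepsilon\partial_t v_\varepsilon$ and $2v_\varepsilon\nabla v_\varepsilon=\nabla v_\varepsilon^2$, and absorb all commutator and force-term errors into $\tfrac14(E_2+E_3)$ and $C(E_4+E_5+E_6)$ by the ellipticity (\ref{Eq (Section 2): Ellipticity of Bilinear Forms}) and Young's inequality. The only point to be careful about is that the test function must carry the weight $\gamma_{\nu\sigma}\partial_{x_\sigma}u_\varepsilon^j$ (not the plain $\partial_{x_h}\bu_\varepsilon$ you first write) so that the time and space contractions produce $\partial_t v_\varepsilon^2$ and $\nabla v_\varepsilon^2$ exactly, up to the $\partial_t\gamma$ and $\nabla\gamma$ remainders you already account for.
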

    \begin{remark}\label{Rmk: Section 4} \upshape
        We must keep in mind that the computations in Sections \ref{Section: Weak Formulations}--\ref{Section: Gradient Bounds} are formal, in the sense that $\partial_{t}\partial_{x_{\sigma}}\bu_{\varepsilon}$ and $\D^{2}\bu_{\varepsilon}$ are treated as some sort of \textit{function}.
        In particular, the computations in the proof of Lemma \ref{Lemma (Section 4): Basic Weak Form} and the resulting estimate (\ref{Eq (Section 4): Weak Form of v-epsilon}) involves the integral of $v_{\varepsilon}^{2}$, which appears to be critical when $p\in(1,\,2)$.
        These formal computations, however, can be justified by noting $\bu_{\varepsilon}\in L^{2,\,\infty}(Q)^{N}\subset L^{2}(Q)^{N}$ and utilizing the difference quotient method (see \cite[Chapter VIII]{DiBenedetto-monograph MR1230384} or \cite[\S 3]{Strunk preprint}), as well as the Steklov average.
        This strategy works for (\ref{Eq (Section 2): Approximate System}), but seems invalid for the original problem (\ref{Eq (Section 1): General System}), since the $(1,\,p)$-Laplace operator lacks any uniform ellipticity on the facet, which fact may prevent us from deducing difference quotient estimates.
    \end{remark}
    \begin{remark}\label{Remark (Section 4): Positivity of E2 and E3} \upshape
        By (\ref{Eq (Section 2): Ellipticity of Bilinear Forms}), the integrals $E_{2}$ and $E_{3}$ respectively satisfy
        \begin{equation}\label{Eq (Section 4): Positivity of E2 and E3}
            E_{2}\ge \lambda_{0}\iint_{Q}v_{\varepsilon}^{p-1}\lvert \nabla v_{\varepsilon}\rvert_{\bg}^{2}\psi^{\prime}(v_{\varepsilon})\zeta\,\d x\d t,\quad \text{and}\quad E_{3}\ge \lambda_{0}\iint_{Q}v_{\varepsilon}^{p-2}\lvert \D^{2}\bu_{\varepsilon}\rvert_{\bg}^{2}\psi(v_{\varepsilon})\zeta \,\d x\d t.
        \end{equation}
        This fact is often carefully used to carry out absorbing arguments.
    \end{remark}
    \begin{proof}
    For each $\nu,\,\sigma\in\{\,1,\,\dots\,,\,n\,\}$, we formally test $\bphi\coloneqq ( [\zeta\psi(v_{\varepsilon})\gamma_{\nu\sigma}\partial_{x_{\sigma}}u_{\varepsilon}^{j}]_{x_{\sigma}})_{j}$ into (\ref{Eq (Section 2): Approximate Weak Form}), and sum over $\nu,\,\sigma\in\{\,1,\,\dots\,,\,n\,\}$. 
    Then, we have 
    \begin{align}\label{Eq (Section 4): Weak form mid}
    &\iint_{Q}\partial_{t}\partial_{x_{\nu}}u_{\varepsilon}^{j}\cdot \zeta\gamma_{\nu\sigma}\partial_{x_{\sigma}}u_{\varepsilon}^{j}\psi(v_{\varepsilon})\,\d x\d t+\iint_{Q}\left[a_{s}g_{s}(v_{\varepsilon}^{2})\gamma_{\alpha\beta}\partial_{x_{\alpha}} u_{\varepsilon}^{j} \right]_{x_{\nu}} \left[ \zeta\gamma_{\nu\sigma} \partial_{x_{\sigma}} u_{\varepsilon}^{j}\psi(v_{\varepsilon}) \right]_{x_{\beta}}\,\d x\d t \nonumber \\&\quad\quad +\iint_{Q}f_{\varepsilon}^{j}\left[ \zeta\gamma_{\nu\sigma} \partial_{x_{\sigma}} u_{\varepsilon}^{j}\psi(v_{\varepsilon}) \right]_{x_{\nu}}\,\d x\d t=0,
    \end{align}
    where we use the convention to sum over $\alpha,\,\beta,\,\nu,\,\sigma\in\{\,1,\,\dots\,,\,n\,\}$, $j\in\{\,1,\,\dots\,,\,N\,\}$, and $s\in\{\,1,\,p\,\}$.
    By the identities \(2v_{\varepsilon}\partial_{t}v_{\varepsilon}=\partial_{t}v_{\varepsilon}^{2}=2\gamma_{\nu\sigma}\partial_{t}\partial_{x_{\nu}}u_{\varepsilon}^{j}\partial_{x_{\sigma}}u_{\varepsilon}^{j}+\partial_{t}\gamma_{\nu\sigma}\partial_{x_{\nu}}u_{\varepsilon}^{j}\partial_{x_{\sigma}}u_{\varepsilon}^{j}\), and $\partial_{t}[\Psi(v_{\varepsilon})]=\psi(v_{\varepsilon})v_{\varepsilon}\partial_{t} v_{\varepsilon}$, we integrate by parts in time to compute 
    \[\iint_{Q}\partial_{t}\partial_{x_{\nu}}u_{\varepsilon}^{j}\cdot \zeta\gamma_{\nu\sigma}\partial_{x_{\sigma}}u_{\varepsilon}^{j}\psi(v_{\varepsilon})\,\d x\d t=E_{0}-\frac{1}{2}\iint_{Q}\zeta\psi(v_{\varepsilon})\partial_{t}\gamma_{\nu\sigma}\partial_{x_{\nu}}u_{\varepsilon}^{j}\partial_{x_{\sigma}}u_{\varepsilon}^{j}\,\d x\d t\ge E_{0}-C({\mathcal D})E_{6},\]
    where (\ref{Eq (Section 1): Bound Assumptions for Coefficients}) is used to deduce the last estimate.
    For each fixed $\alpha,\,\beta\in\{\,1,\,\dots\,n\,\}$, we abbreviate
    \[F_{\alpha}\coloneqq
    \gamma_{\kappa\lambda}\partial_{x_{\kappa}} u_{\varepsilon}^{k}\partial_{x_{\alpha}x_{\lambda}}u_{\varepsilon}^{k},\quad V_{\beta}\coloneqq 
    \partial_{x_{\beta}}\gamma_{\kappa\lambda}\partial_{x_{\kappa}}u_{\varepsilon}^{k}\partial_{x_{\lambda}}u_{\varepsilon}^{k}.\]
    where we sum over $\kappa,\,\lambda\in\{\,1,\,\dots\,,\,n\,\}$ and $k\in\{\,1,\,\dots\,,\,N\,\}$. Then, the ${\mathbb R}^{n}$-valued mappings ${\mathbf F}\coloneqq (F_{1},\,\dots\,,\,F_{n})$, ${\mathbf V}\coloneqq (V_{1},\,\dots\,,\,V_{n})$.
    satisfies
    \begin{equation}\label{Eq (Section 4): F and V}
    2v_{\varepsilon}\nabla v_{\varepsilon}=\nabla v_{\varepsilon}^{2}=2{\mathbf F}+{\mathbf V}.
    \end{equation}
    With (\ref{Eq (Section 4): F and V}) in mind, for each $s\in\{\,1,\,p\,\}$, we compute
    \begin{align*}
    \left[a_{s}g_{s}(v_{\varepsilon}^{2})\gamma_{\alpha\beta}\partial_{x_{\alpha}} u_{\varepsilon}^{j} \right]_{x_{\nu}}&=a_{s}\left( g_{s}(v_{\varepsilon}^{2})\gamma_{\alpha\beta}\partial_{x_{\alpha}x_{\nu}}u_{\varepsilon}^{j}+2g_{s}^{\prime}(v_{\varepsilon}^{2})F_{\nu}\gamma_{\alpha\beta}u_{\varepsilon}^{j} \right)\\&\quad +a_{s}g_{s}^{\prime}(v_{\varepsilon}^{2})V_{\nu} \gamma_{\alpha\beta}\partial_{x_{\alpha}}u_{\varepsilon}^{j}+g_{s}(v_{\varepsilon}^{2})\partial_{x_{\alpha}}u_{\varepsilon}^{j}\partial_{x_{\nu}}(a_{s}\gamma_{\alpha\beta})\\ &\eqqcolon {\mathbf I}_{1,\,s}+{\mathbf I}_{2,\,s}+{\mathbf I}_{3,\,s},
    \end{align*}
    and
    \begin{align*}
    \left[ \zeta\gamma_{\nu\sigma} \partial_{x_{\sigma}} u_{\varepsilon}^{j}\psi(v_{\varepsilon}) \right]_{x_{\beta}} &=\gamma_{\nu\sigma}\partial_{x_{\sigma}x_{\beta}}u_{\varepsilon}^{j}\psi(v_{\varepsilon})\zeta+\gamma_{\nu\sigma}\partial_{x_{\sigma}}u_{\varepsilon}^{j}\frac{2F_{\beta}+V_{\beta}}{2v_{\varepsilon}}\psi^{\prime}(v_{\varepsilon})\zeta\\
    &\quad +\gamma_{\nu\sigma}\partial_{x_{\sigma}}u_{\varepsilon}^{j}\partial_{x_{\beta}}\zeta\psi(v_{\varepsilon})+\partial_{x_{\beta}}\gamma_{\nu\sigma}\partial_{x_{\sigma}}u_{\varepsilon}^{j}\psi(v_{\varepsilon})\zeta\\
    &\eqqcolon {\mathbf J}_{1}+{\mathbf J}_{2}+{\mathbf J}_{3}+{\mathbf J}_{4}
    \end{align*}
    where ${\mathbf I}_{l,\,s}\,(l=1,\,2,\,3)$ and ${\mathbf J}_{m}\,(m=1,\,2,\,3,\,4)$ are ${\mathbb R}^{Nn^{2}}$-valued tensors.
    Recalling the definition of ${\mathcal A}_{\varepsilon}$ and ${\mathcal C}_{\varepsilon}$, and noting that (\ref{Eq (Section 4): F and V}) implies $4v_{\varepsilon}^{2}{\mathcal C}_{\varepsilon}(\nabla v_{\varepsilon},\,\nabla v_{\varepsilon})={\mathcal C}_{\varepsilon}(2{\mathbf F}+{\mathbf V},\,2{\mathbf F}+{\mathbf V})=4{\mathcal C}_{\varepsilon}({\mathbf F},\,{\mathbf F})+4{\mathcal C}_{\varepsilon}({\mathbf F},\,{\mathbf V})+{\mathcal C}_{\varepsilon}({\mathbf V},\,{\mathbf V})$, we have
    \[({\mathbf I}_{1,\,1}+{\mathbf I}_{1,\,p})\cdot {\mathbf J}_{1}={\mathcal A}_{\varepsilon}(x,\,t,\,\D\bu_{\varepsilon})({\D}^{2}\bu_{\varepsilon},\,{\D}^{2}\bu_{\varepsilon})\psi(v_{\varepsilon})\zeta,\]
    and
    \begin{align*}
    &({\mathbf I}_{1,\,1}+{\mathbf I}_{1,\,p})\cdot {\mathbf J}_{2}\\ &=\left({\mathcal C}_{\varepsilon}(x,\,t,\,\D\bu_{\varepsilon})({\mathbf F},\,{\mathbf F})+\frac{1}{2}{\mathcal C}_{\varepsilon}(x,\,t,\,\D\bu_{\varepsilon})({\mathbf F},\,{\mathbf V})\right)\frac{\psi^{\prime}(v_{\varepsilon})}{v_{\varepsilon}}\zeta\\
    &={\mathcal C}_{\varepsilon}(x,\,t,\,\D\bu_{\varepsilon})(\nabla v_{\varepsilon},\,\nabla v_{\varepsilon})\psi^{\prime}(v_{\varepsilon})v_{\varepsilon}\zeta\\
    &\quad -\left(\frac{1}{4}{\mathcal C}_{\varepsilon}(x,\,t,\,\D\bu_{\varepsilon})({\mathbf V},\,{\mathbf V})+\frac{1}{2}{\mathcal C}_{\varepsilon}(x,\,t,\,\D\bu_{\varepsilon})({\mathbf F},\,{\mathbf V}) \right)\frac{\psi^{\prime}(v_{\varepsilon})}{v_{\varepsilon}}\zeta\\
    &= \frac{1}{2}{\mathcal C}_{\varepsilon}(x,\,t,\,\D\bu_{\varepsilon})(\nabla v_{\varepsilon},\,\nabla v_{\varepsilon})\psi^{\prime}(v_{\varepsilon})v_{\varepsilon}\zeta\\ 
    &\quad +\frac{1}{2}{\mathcal C}_{\varepsilon}(x,\,t,\,\D\bu_{\varepsilon})({\mathbf F},\,{\mathbf F})\frac{\psi^{\prime}(v_{\varepsilon})}{v_{\varepsilon}}\zeta-\frac{1}{8}{\mathcal C}_{\varepsilon}(x,\,t,\,\D\bu_{\varepsilon})({\mathbf V},\,{\mathbf V})\frac{\psi^{\prime}(v_{\varepsilon})}{v_{\varepsilon}}\zeta.
    \end{align*}
    For the integrands that involve $\partial_{x_{\beta}}\zeta$, we use (\ref{Eq (Section 2): Ellipticity of Bilinear Forms}) and (\ref{Eq (Section 4): F and V}) to compute
    \begin{align*}
        ({\mathbf I}_{1,\,1}+{\mathbf I}_{1,\,p})\cdot {\mathbf J}_{3}&= {\mathcal C}_{\varepsilon}(x,\,t,\,\D\bu_{\varepsilon})({\mathbf F},\,\nabla \zeta)\psi(v_{\varepsilon})\\
     &={\mathcal C}_{\varepsilon}(x,\,t,\,\D\bu_{\varepsilon})(\nabla[\Psi(v_{\varepsilon})],\,\nabla \zeta)\psi(v_{\varepsilon})-\frac{1}{2}{\mathcal C}_{\varepsilon}(x,\,t,\,\D\bu_{\varepsilon})({\mathbf V},\,\nabla \zeta)\psi(v_{\varepsilon})\\
     &\ge {\mathcal C}_{\varepsilon}(x,\,t,\,\D\bu_{\varepsilon})(\nabla[\Psi(v_{\varepsilon})],\,\nabla \zeta)\psi(v_{\varepsilon})-Cv_{\varepsilon}^{p}\left(1+v_{\varepsilon}^{1-p}\right)\lvert\nabla\zeta\rvert\psi(v_{\varepsilon}).
    \end{align*}
    By (\ref{Eq (Section 4): Positivity of E2 and E3}) and Young's inequality, the remaining integrands are estimated as follows;
    \begin{align*}
    &\left(\lvert {\mathbf I}_{2,\,1}\rvert+\lvert {\mathbf I}_{3,\,1}\rvert+\lvert {\mathbf I}_{2,\,p}\rvert+\lvert {\mathbf I}_{3,\,p}\rvert\right)\left(\lvert {\mathbf J}_{1}\rvert+\lvert {\mathbf J}_{2}\rvert+\lvert {\mathbf J}_{3}\rvert+\lvert {\mathbf J}_{4}\rvert \right)\\ 
    &\le C\left(v_{\varepsilon}^{p-1}+1 \right)\left(\lvert {\D}^{2}\bu_{\varepsilon}\rvert_{\bg}\psi(v_{\varepsilon})+\lvert {\mathbf F}\rvert_{\bg}\psi^{\prime}(v_{\varepsilon}) \right)\zeta\\
    &\quad +C\left(v_{\varepsilon}^{p-1}+1 \right)v_{\varepsilon}\left(\psi(v_{\varepsilon})\zeta+\psi^{\prime}(v_{\varepsilon})v_{\varepsilon}\zeta+\psi(v_{\varepsilon})\lvert\nabla\zeta\rvert \right)\\
    &\le \frac{\lambda_{0}}{4}v_{\varepsilon}^{p-2}\lvert {\D}^{2}\bu_{\varepsilon}\rvert_{\bg}^{2}\psi(v_{\varepsilon}) \zeta +\frac{\lambda_{0}}{2}v_{\varepsilon}^{p-3}\lvert {\mathbf F}\rvert_{\bg}^{2}\psi^{\prime}(v_{\varepsilon})\zeta\\ 
    &\quad +\frac{C}{\lambda_{0}}v_{\varepsilon}^{p}\left(1+v_{\varepsilon}^{1-p}\right)^{2}(\psi(v_{\varepsilon})+\psi^{\prime}(v_{\varepsilon})v_{\varepsilon}) \zeta\\ &\quad\quad +Cv_{\varepsilon}^{p}\left(1+v_{\varepsilon}^{1-p} \right)\left(\psi(v_{\varepsilon})\zeta+\psi^{\prime}(v_{\varepsilon})v_{\varepsilon}\zeta+\psi(v_{\varepsilon})\lvert\nabla\zeta\rvert \right)\\ 
    &\le \frac{1}{4}{\mathcal A}_{\varepsilon}(x,\,t,\,\D\bu_{\varepsilon})({\D}^{2}\bu_{\varepsilon},\,{\D}^{2}\bu_{\varepsilon})\psi(v_{\varepsilon})\zeta+\frac{1}{2}{\mathcal C}_{\varepsilon}(x,\,t,\,\D\bu_{\varepsilon})({\mathbf F},\,{\mathbf F})\frac{\psi^{\prime}(v_{\varepsilon})}{v_{\varepsilon}}\zeta\\
    &\quad +Cv_{\varepsilon}^{p}\left(1+v_{\varepsilon}^{1-p}\right)^{2}\left(\psi(v_{\varepsilon})\zeta+\psi^{\prime}(v_{\varepsilon})v_{\varepsilon}\zeta\right)+Cv_{\varepsilon}^{p}\left(1+v_{\varepsilon}^{1-p}\right)\psi(v_{\varepsilon})\lvert\nabla\zeta\rvert,
    \end{align*}
    \begin{align*}
        (\lvert \mathbf{I}_{1,\,1}\rvert+\lvert \mathbf{I}_{1,\,p}\rvert )\cdot \lvert \mathbf{J}_{4}\rvert&\le Cv_{\varepsilon}^{p-1}\left(1+v_{\varepsilon}^{1-p}\right)\lvert {\D}^{2}\bu_{\varepsilon}\rvert_{\bg}\psi(v_{\varepsilon})\zeta\\ 
    &\le \frac{1}{4}{\mathcal A}_{\varepsilon}(x,\,t,\,\D\bu_{\varepsilon})({\D}^{2}\bu_{\varepsilon},\,{\D}^{2}\bu_{\varepsilon})\psi(v_{\varepsilon})\zeta+\frac{C}{\lambda_{0}}v_{\varepsilon}^{p}\left(1+v_{\varepsilon}^{1-p}\right)^{2}\psi(v_{\varepsilon})\zeta.
    \end{align*}
    Hence, from (\ref{Eq (Section 4): Weak form mid}) we have
    \begin{align*}
    &E_{0}+E_{1}+\frac{1}{2}E_{2}+\frac{1}{2}E_{3}
    \\ 
    &\le 
    C\left[\iint_{Q}v_{\varepsilon}^{p}\left(1+v_{\varepsilon}^{1-p} \right)^{2} \left(\psi(v_{\varepsilon})+\psi^{\prime}(v_{\varepsilon})v_{\varepsilon}\right)\zeta \,\d x\d t+\iint_{Q}v_{\varepsilon}^{p}\left(1+v_{\varepsilon}^{1-p}\right)\psi(v_{\varepsilon})\lvert\nabla\zeta\rvert\,\d x\d t\right]\\ 
    &\quad +\frac{1}{2}\iint_{Q}\partial_{t}\gamma_{\nu\sigma}\partial_{x_{\sigma}}u_{\varepsilon}^{j}\partial_{x_{\nu}}u_{\varepsilon}^{j}\psi(v_{\varepsilon})\zeta \,\d x\d t+ \frac{1}{8}\iint_{Q} {\mathcal C}_{\varepsilon}(x,\,t,\,\D\bu_{\varepsilon})({\mathbf V},\,{\mathbf V})\frac{\psi^{\prime}(v_{\varepsilon})}{v_{\varepsilon}} \zeta\,\d x\d t\\
    &\quad\quad  +\iint_{Q}f_{\varepsilon}^{j}\left[ \zeta\gamma_{\nu\sigma} \partial_{x_{\sigma}} u_{\varepsilon}^{j}\psi(v_{\varepsilon}) \right]_{x_{\nu}}\,\d x\d t.
    \end{align*} 
    It is easy to compute
    \[\frac{1}{8}\iint_{Q} {\mathcal C}_{\varepsilon}(x,\,t,\,\D\bu_{\varepsilon})({\mathbf V},\,{\mathbf V})\frac{\psi^{\prime}(v_{\varepsilon})}{v_{\varepsilon}} \zeta\,\d x\d t \le C\iint_{Q}v_{\varepsilon}^{p+1}\left(1+v_{\varepsilon}^{1-p} \right)\psi^{\prime}(v_{\varepsilon})\zeta\,\d x\d t,\]
    and 
    \begin{align*}
    &\iint_{Q}f_{\varepsilon}^{j}\left[ \zeta\gamma_{\nu\sigma} \partial_{x_{\sigma}} u_{\varepsilon}^{j}\psi(v_{\varepsilon}) \right]_{x_{\nu}}\,\d x\d t\\
    &\le C\iint_{Q}\lvert \buf_{\varepsilon}\rvert v_{\varepsilon}\psi(v_{\varepsilon})\lvert \nabla\zeta \rvert\,\d x\d t+C\iint_{Q}\lvert\buf_{\varepsilon}\rvert\lvert {\D}^{2}\bu_{\varepsilon}\rvert_{\bg}\psi(v_{\varepsilon})v_{\varepsilon}\zeta\,\d x\d t\\ 
    &\quad +C\iint_{Q}\lvert \buf_{\varepsilon}\rvert \lvert \nabla v_{\varepsilon}\rvert_{\bg}\psi^{\prime}(v_{\varepsilon})v_{\varepsilon}\zeta\,\d x\d t+C\iint_{Q}\lvert \buf_{\varepsilon}\rvert\psi(v_{\varepsilon})v_{\varepsilon} \zeta\,\d x\d t\\ 
    &\le \frac{1}{4}(E_{2}+E_{3})+\frac{C}{\lambda_{0}}\iint_{Q}\lvert \buf_{\varepsilon}\rvert^{2}v_{\varepsilon}^{2-p}\left(\psi(v_{\varepsilon})+\psi^{\prime}(v_{\varepsilon})v_{\varepsilon} \right)\zeta\,\d x\d t\\
    &\quad +C\iint_{Q}\left(v_{\varepsilon}^{p}+\lvert \buf_{\varepsilon}\rvert^{2}v_{\varepsilon}^{2-p} \right)\psi(v_{\varepsilon})\zeta\,\d x\d t
    +C\iint_{Q}\lvert \buf_{\varepsilon}\rvert v_{\varepsilon}\psi(v_{\varepsilon})\lvert \nabla\zeta \rvert\,\d x\d t
    \end{align*}
    by (\ref{Eq (Section 4): Positivity of E2 and E3}) and Young's inequality.
    Combining these estimates, we conclude (\ref{Eq (Section 4): Weak Form of v-epsilon}).
    \end{proof}
    To see the left-hand side of (\ref{Eq (Section 4): Weak Form of v-epsilon}), we observe the two basic and important properties of $\bu_{\varepsilon}$.
    Firstly, from $E_{0}$ and $E_{1}$, we can realize that the scalar-valued function $\Psi(v_{\varepsilon})$ is a subsolution to a certain parabolic equation.
    Secondly, from $E_{2}$ and $E_{3}$, we can deduce some energy estimates related to $\nabla v_{\varepsilon}$ and ${\D}^{2}\bu_{\varepsilon}$ respectively.

 \subsection{Estimates for subsolutions}
 In this subsection, we choose $\psi_{2,\,k}$ with suitable $k\ge \delta>0$.
 From Lemma \ref{Lemma (Section 4): Basic Weak Form}, it follows that the non-negative functions \((v_{\varepsilon}-k)_{+}^{2}+k^{2}\) and \((v_{\varepsilon}-\delta)_{+}^{2}\) are subsolutions to certain uniformly parabolic equations.
 For these subsolutions, we provide the Caccioppoli estimates (Lemmata \ref{Lemma (Section 4): Caccioppoli estimates for W-k}--\ref{Lemma (Section 4): Caccioppoli estimates for U-delta-epsilon}).

 Lemma \ref{Lemma (Section 4): Caccioppoli estimates for W-k} is to be used in showing local gradient bounds in Section \ref{Section: Gradient Bounds}.
 \begin{lemma}\label{Lemma (Section 4): Caccioppoli estimates for W-k}
     Fix a positive constant $k\in\lbrack 1,\,\infty)$.
     Let $\bu_{\varepsilon}$ be a weak solution to (\ref{Eq (Section 2): Approximate System}) in ${\mathcal Q}$, and fix $Q_{R}=I_{R}\times B_{R}=I_{R}(t_{0})\times B_{R}(x_{0})\Subset {\mathcal Q}$.
     If the function $W_{k}\coloneqq \sqrt{(v_{\varepsilon}-k)_{+}^{2}+k^{2}}$ satisfies $W_{k}\in L^{p+2\alpha}(Q_{R})\cap L^{2+2\alpha}(Q_{R})$ for some $\alpha\in\lbrack 0,\,\infty)$, then there hold $\eta W^{\alpha+1}\in L^{2,\,\infty}(Q_{R})$ and $\eta W_{k}^{\alpha+p/2}\in L^{2}(I_{R};\,W_{0}^{1,\,2}(B_{R}))$ for any non-negative function $\eta\in C_{\mathrm c}^{1}(Q_{R})$.
     Moreover, there exists a constant $C=C({\mathcal D})\in(0,\,\infty)$ such that
     \begin{align}\label{Eq (Section 4): Energy estimate for Moser Iteration}
        & \esssup_{\tau\in I_{R}}\int_{B_{R}\times \{\tau\}}\left(\eta W_{k}^{\alpha+1}\right)^{2} \,\d x\d t+\iint_{Q_{R}}\left\lvert \nabla \left(\eta W_{k}^{\alpha+p/2}\right) \right\rvert^{2}\,\d x\d t \nonumber\\ 
        &\le C(1+\alpha)^{2}\iint_{Q_{R}}\left(W_{k}^{p}+W_{k}^{2}+W_{k}^{2-p}\lvert \buf_{\varepsilon}\rvert^{2}\right)W_{k}^{2\alpha}\eta^{2}\,\d x\d t\nonumber\\
        &\quad  +C(1+\alpha)^{2}\iint_{Q_{R}}\left(W_{k}^{p}\lvert\nabla\eta\rvert^{2}+W_{k}^{2}\lvert\partial_{t}\eta^{2}\rvert\right)W_{k}^{2\alpha}\,\d x\d t.
    \end{align}
 \end{lemma}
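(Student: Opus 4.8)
The plan is to apply Lemma~\ref{Lemma (Section 4): Basic Weak Form} with the specific choice $\psi = \psi_{2,\,k}$, where $\psi_{2,\,k}(\sigma) = 2(1-k/\sigma)_{+}$, composed after multiplication by the truncated power $\psi_{3,\,2\alpha,\,l}$ of Subsection~2.3 to make the test function legitimate, and then let $l\to\infty$ at the end. More precisely, I would first work with $\psi(\sigma) \coloneqq \psi_{2,\,k}(\sigma)\,(\sigma\wedge l)^{2\alpha}$, which is non-decreasing and Lipschitz (since $\alpha \ge 0$ and $k \ge 1$), so that the corresponding $\Psi$ from \eqref{Eq (Section 2): Def of Psi} is well-defined. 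The key algebraic facts I would use are the identity \eqref{Eq (Section 2): psi-2}, namely $\psi_{2,\,k}(\sigma) + \psi_{2,\,k}'(\sigma)\sigma = 2\chi_{\{\sigma>k\}}$, together with the elementary observation that on $\{v_\varepsilon > k\}$ one has $(v_\varepsilon - k)_{+} \le v_\varepsilon \le 2(v_\varepsilon - k)_{+} + 2k \le C\,W_k$ (using $k \le W_k$), so that $W_k \sim v_\varepsilon$ on the relevant support and all powers of $v_\varepsilon$ appearing in $E_4, E_5, E_6$ can be traded for powers of $W_k$ up to constants depending only on $\mathcal D$. Also $W_k \ge k \ge 1$ kills the "$1 + v_\varepsilon^{1-p}$" factors at the cost of a constant, and makes $v_\varepsilon^p(1+v_\varepsilon^{1-p})^2 \le C(W_k^p + W_k^{2-p})$ while $v_\varepsilon^2 \le W_k^2$.

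Next I would identify the left-hand side of \eqref{Eq (Section 4): Weak Form of v-epsilon} with the quantities to be estimated. From $E_0 = -\iint \Psi(v_\varepsilon)\partial_t\zeta$ and the structure of $\Psi$, choosing $\zeta = \eta^2\phi$ with $\phi$ a Lipschitz cutoff in time (a standard device to extract the $\esssup_\tau$ term) gives control of $\esssup_{\tau\in I_R}\int_{B_R\times\{\tau\}}\eta^2\,\Psi(v_\varepsilon)\,\d x$; here one checks, as in \eqref{Eq (Section 2): Psi-3} / \eqref{Eq (Section 2): Control of Psi}, that $\Psi(v_\varepsilon) \gtrsim W_k^{2\alpha+2}$ (modulo the additive constant, which I would choose so that $\Psi \ge 0$ and comparable to $(W_k^{2\alpha+2} - k^{2\alpha+2})$, then add back $k^{2\alpha+2}|B_R|$, absorbed into the right-hand side since $k\le W_k$). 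From $E_1 + \tfrac14(E_2+E_3)$, together with the lower ellipticity bounds \eqref{Eq (Section 4): Positivity of E2 and E3} of Remark~\ref{Remark (Section 4): Positivity of E2 and E3} and the pointwise identity $\nabla(\eta W_k^{\alpha+p/2}) = (\alpha+p/2)\eta W_k^{\alpha+p/2-1}\nabla W_k + W_k^{\alpha+p/2}\nabla\eta$, I would extract a term $\gtrsim \iint |\nabla(\eta W_k^{\alpha+p/2})|^2$ after moving the $|\nabla\eta|^2 W_k^{2\alpha+p}$ contribution to the right (it already appears there). The factor $v_\varepsilon^{p-1}|\nabla v_\varepsilon|^2\psi'(v_\varepsilon)$ in $E_2$ on $\{v_\varepsilon>k\}$ is comparable to $W_k^{p-2}|\nabla W_k|^2 W_k^{2\alpha}$ using \eqref{Eq (Section 2): psi-2}, which is exactly $|\nabla W_k^{\alpha+p/2}|^2$ up to the constant $(\alpha+p/2)^{-2}$; this is where the $(1+\alpha)^2$ loss on the right enters, after Young's inequality to split $\psi' v_\varepsilon$ versus $\psi$.

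Then I would collect the right-hand side: $E_4$ contributes $\iint (W_k^p + W_k^2 + W_k^{2-p}|\buf_\varepsilon|^2)W_k^{2\alpha}\eta^2$ (with a factor $\psi + \psi'v_\varepsilon \le 2\chi_{\{v_\varepsilon>k\}}\cdot(W_k\wedge l)^{2\alpha} \cdot(\text{const})$, so modulated by $(1+\alpha)$ or less after the power $(\sigma\wedge l)^{2\alpha}$ is handled — note $\frac{d}{d\sigma}[(\sigma\wedge l)^{2\alpha}]$ produces the extra factor $2\alpha$, which is the source of $(1+\alpha)^2$), $E_5$ contributes the $|\nabla\zeta|$-terms $\iint (W_k^p + |\buf_\varepsilon|W_k)W_k^{2\alpha}\eta|\nabla\eta|$, bounded via Young by $\iint W_k^p|\nabla\eta|^2 W_k^{2\alpha}$ plus lower-order pieces already present, and $E_6 = \iint v_\varepsilon^2\psi(v_\varepsilon)\zeta \le C\iint W_k^{2+2\alpha}\eta^2$ which is of the stated form. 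The $\partial_t\eta^2$ term arises from expanding $\partial_t\zeta = \partial_t(\eta^2\phi)$ in $E_0$, giving $\iint \Psi(v_\varepsilon)|\partial_t\eta^2|\phi \le C\iint W_k^{2+2\alpha}|\partial_t\eta^2|$ (using $\Psi(v_\varepsilon)\lesssim v_\varepsilon^2\psi(v_\varepsilon) \lesssim W_k^{2+2\alpha}$ from \eqref{Eq (Section 2): Control of Psi}); this matches the last term of \eqref{Eq (Section 4): Energy estimate for Moser Iteration}. Finally, letting $l\to\infty$ via monotone convergence (all integrands are monotone in $l$ by the $\sigma\wedge l$ truncation, exactly as in \eqref{Eq (Section 2): Psi-3}) promotes $(W_k\wedge l)$ to $W_k$ throughout, and the a priori finiteness of $W_k \in L^{p+2\alpha}(Q_R)\cap L^{2+2\alpha}(Q_R)$ guarantees the right-hand side is finite, which then forces $\eta W_k^{\alpha+1}\in L^{2,\infty}(Q_R)$ and $\eta W_k^{\alpha+p/2}\in L^2(I_R;W_0^{1,2}(B_R))$.

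\emph{Main obstacle.} I expect the delicate point to be the bookkeeping of the singular factors in $E_4$ and $E_5$ when $p\in(1,2)$: the terms $v_\varepsilon^p(1+v_\varepsilon^{1-p})^2$ and $v_\varepsilon^{2-p}|\buf_\varepsilon|^2$ do not obviously reduce to the clean powers $W_k^p, W_k^2, W_k^{2-p}|\buf_\varepsilon|^2$ unless one exploits $W_k \ge k \ge 1$ carefully — this is precisely why the hypothesis $k \ge 1$ is imposed. One must check that $v_\varepsilon^p(1+v_\varepsilon^{1-p})^2 = (v_\varepsilon^{p/2}+v_\varepsilon^{1-p/2})^2 \le C(v_\varepsilon^p + v_\varepsilon^{2-p}) \le C(W_k^p + W_k^{2-p}) \le C W_k^p$ on $\{v_\varepsilon > k \ge 1\}$ since $W_k^{2-p} \le W_k^p$ there (as $2-p < p \iff p>1$), so in fact the $W_k^{2-p}$ term is only genuinely needed as a companion to $|\buf_\varepsilon|^2$ and could otherwise be absorbed into $W_k^p$; keeping it separate as in the statement is harmless. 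The second subtlety is that the time-derivative manipulation in $E_0$ must be run through the Steklov average (as flagged in Remark~\ref{Rmk: Section 4}) to be rigorous, since $\partial_t\bu_\varepsilon$ is only a functional; but this is routine for the approximate system \eqref{Eq (Section 2): Approximate System} and I would simply invoke that remark.
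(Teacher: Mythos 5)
Your overall plan---apply Lemma~\ref{Lemma (Section 4): Basic Weak Form} with a $\psi_{2,k}$-type weight and a truncated power, exploit $W_k\ge k\ge 1$ to clean up the low-order terms, and let $l\to\infty$ by monotone convergence---is the paper's route, and moving the truncated power $(\sigma\wedge l)^{2\alpha}$ into $\psi$ rather than into the test function $\zeta=\eta^2\phi\,\psi_{3,2\alpha,l}(W_k)$ (which is what the paper does) is a harmless variant. The gap is in the step where you extract the gradient energy. You claim that ``the factor $v_\varepsilon^{p-1}\lvert\nabla v_\varepsilon\rvert^2\psi'(v_\varepsilon)$ in $E_2$ on $\{v_\varepsilon>k\}$ is comparable to $W_k^{p-2}\lvert\nabla W_k\rvert^2 W_k^{2\alpha}$,'' and this fails for small $\alpha$. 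Take $\alpha=0$, so $\psi=\psi_{2,k}$ and $\psi'(v_\varepsilon)=2k/v_\varepsilon^2$ on $\{v_\varepsilon>k\}$; the $E_2$ integrand supplies only $2k\,v_\varepsilon^{p-3}\lvert\nabla v_\varepsilon\rvert^2$, whereas, using $\lvert\nabla W_k\rvert=\bigl((v_\varepsilon-k)_+/W_k\bigr)\lvert\nabla v_\varepsilon\rvert$, the target $W_k^{p-2}\lvert\nabla W_k\rvert^2$ is comparable to $v_\varepsilon^{p-4}(v_\varepsilon-k)_+^2\lvert\nabla v_\varepsilon\rvert^2$. The ratio behaves like $kv_\varepsilon/(v_\varepsilon-k)_+^2\to 0$ as $v_\varepsilon/k\to\infty$, so $E_2$ alone cannot control the gradient term. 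The deficit persists for every $\alpha<1/2$ (the helpful piece $2\alpha\psi_{2,k}(\sigma)(\sigma\wedge l)^{2\alpha-1}$ of your $\psi'$ carries the killing prefactor $\alpha$), and $\alpha=0$ is the base step of the Moser iteration in Proposition~\ref{Proposition (Section 5): Reversed Holder}, so it cannot be sidestepped.

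The missing ingredient is $E_3$ coupled with the Hessian--gradient link \eqref{Eq (Section 4): nabla v-epsilon vs Hessian}, namely $\lvert\nabla v_\varepsilon\rvert\le C(\lvert\D^{2}\bu_{\varepsilon}\rvert+v_\varepsilon)$, which you never invoke. From Remark~\ref{Remark (Section 4): Positivity of E2 and E3} one has $E_3\gtrsim\iint v_\varepsilon^{p-2}\lvert\D^{2}\bu_{\varepsilon}\rvert^2\psi_{2,k}(v_\varepsilon)(W_k\wedge l)^{2\alpha}\eta^2\phi$, and substituting \eqref{Eq (Section 4): nabla v-epsilon vs Hessian} yields, up to lower-order pieces already on the right-hand side, $\iint v_\varepsilon^{p-3}(v_\varepsilon-k)_+\lvert\nabla v_\varepsilon\rvert^2(W_k\wedge l)^{2\alpha}\eta^2\phi$, which does dominate the target because $v_\varepsilon\ge(v_\varepsilon-k)_+$. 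It is precisely the $\psi_{2,k}(v_\varepsilon)=2(v_\varepsilon-k)_+/v_\varepsilon$ factor sitting inside $E_3$, together with one power of $v_\varepsilon\ge(v_\varepsilon-k)_+$, that converts $\lvert\nabla v_\varepsilon\rvert^2$ into $\lvert\nabla W_k\rvert^2$; the identity \eqref{Eq (Section 2): psi-2} ties the $\psi_{2,k}$ in $E_3$ to the $v_\varepsilon\psi_{2,k}'$ in $E_2$, but it is $E_3$, not $E_2$, that carries the gradient energy. Your write-up as stated would not close the absorption.
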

 In the proof of Lemma \ref{Lemma (Section 4): Caccioppoli estimates for W-k}, we note that the function $W_{k}$ satisfies
 \begin{equation}\label{Eq (Section 5): v vs w}
     v_{\varepsilon}\le W_{k}\quad \text{in}\quad Q_{R}\quad \text{and} \quad W_{k}\le cv_{\varepsilon}\quad \text{in}\quad Q_{R}\cap \{v_{\varepsilon}>k\},
 \end{equation}
 where $c\in(1,\,\infty)$ is a universal constant.
 Therefore, for the integrands whose supports are contained in $\{v_{\varepsilon}>k\}$, we may replace $v_{\varepsilon}$ by $W_{k}$ if necessary.
 \begin{proof}
     We apply Lemma \ref{Lemma (Section 4): Basic Weak Form} with $\psi\coloneqq \psi_{2,\,k}$, so that we may choose $\Psi_{2,\,k}(v_{\varepsilon})=W_{k}^{2}$. 
     We test $\zeta\coloneqq \eta^{2}\phi \psi_{3,\,2\alpha,\,l}(W_{k})$ into (\ref{Eq (Section 4): Weak Form of v-epsilon}) with $l\in(k,\,\infty)$, where $\phi\colon \lbrack t_{0}-R^{2},\, t_{0}\rbrack\to\lbrack 0,\,1\rbrack$ is a non-increasing function satisfying $\phi(t_{0})=0$.
     Since $\psi_{2,\,k}(\sigma)+\sigma\psi_{2,\,k}^{\prime}(\sigma)=\chi_{\{\sigma>k\}}$ and $\lvert \nabla W_{k}\rvert\le \lvert \nabla v_{\varepsilon} \rvert$, we have
     \begin{align*}
         &E_{1}+\frac{1}{4}(E_{2}+E_{3})\\ 
         &\ge c(1+\alpha)\iint_{Q}\lvert \nabla W_{k}\rvert^{2}W_{k}^{p-2}W_{k,\,l}^{2\alpha}\eta^{2}\phi\,\d x\d t-C\iint_{Q}W_{k}^{p-2}W_{k,\,l}^{2\alpha}\lvert \nabla W_{k}\rvert\lvert\nabla\eta\rvert\eta\,\d x\d t\\ 
         &\ge \frac{c}{2}(1+\alpha)\iint_{Q}W_{k}^{p-2}W_{k,\,l}^{2\alpha}\eta^{2}\phi\,\d x\d t-C\iint_{Q_{R}}W_{k}^{p}W_{k,\,l}^{2\alpha}\lvert \nabla\eta\rvert^{2}\,\d x\d t
     \end{align*}
     for some $c=c({\mathcal D})\in(0,\,1)$ and $C=C({\mathcal D})\in(1,\,\infty)$, 
     where $W_{k,\,l}\coloneqq W_{k}\wedge l$.
     The right-hand side of (\ref{Eq (Section 4): Weak Form of v-epsilon}) is computed as 
     \begin{align*}
         C(E_{4}+E_{5}+E_{6})&\le C(1+\alpha)\iint_{Q}\left(W_{k}^{p}+W_{k}^{2}+W_{k}^{2-p}\lvert\buf_{\varepsilon}\rvert^{2}\right)W_{k,\,l}^{2\alpha-2}W_{k}^{2}\eta^{2}\phi\,\d x\d t\\ 
         &\quad +\frac{c}{4}(1+\alpha)\iint_{Q}W_{k}^{p-2}W_{k,\,l}^{2\alpha}\eta^{2}\phi\,\d x\d t
     \end{align*}
     by Young's inequality. Rewriting $\psi_{3,\,2\alpha,\,l}(W_{k})\partial_{t}W_{k}^{2}=2\partial_{t}\left(\Psi_{3,\,2\alpha,\,l}(W_{k})\right)$ and using (\ref{Eq (Section 2): Psi-3}), we obtain
     \begin{align*}
        &-\iint_{Q}W_{k,\,l}^{2\alpha+2}\eta^{2}\partial_{t}\phi\,\d x\d t+(1+\alpha)^{2}\iint_{Q}W_{k}^{p-2}\lvert \nabla W_{k}\rvert^{2}W_{k,\,l}^{2\alpha}\eta^{2}\phi\,\d x\d t\\ 
        &\le C(1+\alpha)^{2}\iint_{Q}\left(W_{k}^{p}\left[\eta^{2}+\lvert \nabla\eta \rvert^{2}\right]+W_{k}^{2}\left[\eta^{2}+\eta\lvert\partial_{t}\eta\rvert\right] \right)W_{k,\,l}^{2\alpha} \,\d x\d t\\ 
        &\quad +C(1+\alpha)^{2}\iint_{Q}\lvert\buf_{\varepsilon}\rvert^{2} W_{k}^{2-p}\eta^{2}\cdot W_{k}^{2}W_{k,\,l}^{2\alpha-2}\,\d x\d t.
     \end{align*}
     We note that the last integral makes sense by the inclusions $\buf_{\varepsilon}\in L^{\infty}(\Omega_{T})^{N}$ and $W_{k}\in L^{p+2\alpha}(Q)\cap L^{2+2\alpha}(Q)$.
     Suitably choosing $\phi=\phi(t)$, and letting $l\to\infty$, we easily conclude (\ref{Eq (Section 4): Energy estimate for Moser Iteration}) by the monotone convergence theorem.
 \end{proof}
 Lemma \ref{Lemma (Section 4): Caccioppoli estimates for U-delta-epsilon} states that $U_{\delta,\,\varepsilon}\coloneqq \lvert \G_{2\delta,\,\varepsilon}(\D\bu_{\varepsilon}) \rvert_{\bg}^{2}$ belongs to a parabolic De Giorgi class. This result is used in Section \ref{Section: Non-Degenerate}.
 \begin{lemma}\label{Lemma (Section 4): Caccioppoli estimates for U-delta-epsilon}
     Let all of the assumptions in Proposition \ref{Proposition (Section 4): Degenerate Case}, except (\ref{Eq (Section 4): Measure Assumption: Degenerate}), be in force.
     Fix a parabolic cylinder $Q=B_{R}(x_{0})\times (\tau_{0},\,\tau_{1} \rbrack \subset Q_{\rho}(x_{0},\,t_{0})$.
     We also fix non-negative functions $\eta=\eta(x,\,t)\in C_{\mathrm c}^{1}(Q)$ and $\widetilde{\eta}=\widetilde{\eta}(x) \in C_{\mathrm c}^{1}(B_{R})$.
     Then, for any $k\in(0,\,\infty)$, we have 
     \begin{align}\label{Eq (Section 4): De Girogi Energy 1}
         &\esssup_{\tau\in(\tau_{0},\,\tau_{1})}\iint_{B_{R}}(U_{\delta,\,\varepsilon}-k)_{+}^{2}\eta^{2}\,\d x\d t+\iint_{Q}\lvert \nabla(\eta (U_{\delta,\,\varepsilon}-k)_{+}) \rvert^{2}\,\d x\d t\nonumber \\ 
         &\le C({\mathcal D},\,\delta,\,M)\left[\iint_{Q}(U_{\delta,\,\varepsilon}-k)_{+}^{2}\left(\eta^{2}+\lvert\nabla\eta\rvert^{2}+\lvert\partial_{t}\eta^{2}\rvert\right)\,\d x\d t+\mu^{4}\iint_{A_{k}}\left(1+\lvert\buf_{\varepsilon}\rvert^{2}\right)\eta^{2}\,\d x\d t\right],
     \end{align}
     where $A_{k}\coloneqq \{(x,\,t)\in Q\mid U_{\delta,\,\varepsilon}(x,\,t)>k\}$, and 
     \begin{align}\label{Eq (Section 4): De Giorgi Energy 2}
         &\int_{B_{R}\times \{\tau\}}(\widetilde{\eta}(U_{\delta,\,\varepsilon}-k)_{+})^{2}\,\d x  -\int_{B_{R}\times \{\tau_{0}\}} (\widetilde{\eta}(U_{\delta,\,\varepsilon}-k)_{+})^{2}\,\d x \nonumber \\ 
         &\le C({\mathcal D},\,\delta,\,M)\left[\iint_{Q}(U_{\delta,\,\varepsilon}-k)_{+}^{2}\left(\widetilde{\eta}^{2}+\lvert\nabla\widetilde{\eta}\rvert^{2} \right)\,\d x\d t+\mu^{4}\iint_{A_{k}}\left(1+\lvert\buf_{\varepsilon}\rvert^{2} \right)\widetilde{\eta}^{2}\,\d x\d t \right]
     \end{align}
     for a.e.~$\tau\in(\tau_{0},\,\tau_{1})$.
 \end{lemma}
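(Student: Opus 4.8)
\textit{Proof strategy.} The plan is to recognize $U_{\delta,\,\varepsilon}$ as a weak subsolution of a uniformly parabolic equation whose forcing is controlled by $\buf_{\varepsilon}$, and then to run the standard parabolic De Giorgi-class testing on that subsolution inequality.

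For the first step I would invoke Lemma \ref{Lemma (Section 4): Basic Weak Form} with the choice $\psi=\psi_{2,\,2\delta}$, so that, taking the integration constant to be zero, $\Psi(v_{\varepsilon})=(v_{\varepsilon}-2\delta)_{+}^{2}=U_{\delta,\,\varepsilon}$; I would also record $\psi_{2,\,2\delta}(v_{\varepsilon})=2(v_{\varepsilon}-2\delta)_{+}/v_{\varepsilon}=2\sqrt{U_{\delta,\,\varepsilon}}/v_{\varepsilon}$ and, from (\ref{Eq (Section 2): psi-2}) with $k=2\delta$, $\psi_{2,\,2\delta}(v_{\varepsilon})+\psi_{2,\,2\delta}'(v_{\varepsilon})v_{\varepsilon}=2\chi_{\{v_{\varepsilon}>2\delta\}}$. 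In (\ref{Eq (Section 4): Weak Form of v-epsilon}) I discard the non-negative terms $E_{2},E_{3}$ (Remark \ref{Remark (Section 4): Positivity of E2 and E3}), leaving $E_{0}+E_{1}\le C(E_{4}+E_{5}+E_{6})$, in which $E_{0}+E_{1}=-\iint_{Q}U_{\delta,\,\varepsilon}\,\partial_{t}\zeta\,\d x\d t+\iint_{Q}\mathcal{C}_{\varepsilon}(x,\,t,\,\D\bu_{\varepsilon})(\nabla U_{\delta,\,\varepsilon},\,\nabla\zeta)\,\d x\d t$. The essential observation is that every integrand in these terms vanishes off $\{v_{\varepsilon}>2\delta\}=\{U_{\delta,\,\varepsilon}>0\}$, and there the standing hypotheses inherited from Proposition \ref{Proposition (Section 4): Degenerate Case} — in particular (\ref{Eq (Section 4): Bound Assumption of v-epsilon}) and $\varepsilon<\delta/4$ — force $2\delta\le v_{\varepsilon}\le M$, so that $h_{p,\,\varepsilon}(\D\bu_{\varepsilon})=v_{\varepsilon}^{p-2}$ and $h_{1,\,\varepsilon}(\D\bu_{\varepsilon})=v_{\varepsilon}^{-1}$ are two-sidedly comparable to positive constants $C(\mathcal{D},\,\delta,\,M)$. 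Hence by Lemma \ref{Lemma (Section 2): Ellipticity of Bilinear Forms} and (\ref{Eq (Section 1): Matrix Gamma}) the bilinear form $\mathcal{C}_{\varepsilon}(x,\,t,\,\D\bu_{\varepsilon})$ is uniformly elliptic with constants depending only on $\mathcal{D},\,\delta,\,M$, while $C(E_{4}+E_{5}+E_{6})$ is bounded by $\iint_{Q}(\lvert\mathbf{G}\rvert\lvert\nabla\zeta\rvert+g\zeta)\,\d x\d t$ for measurable $\mathbf{G}$, $g$ satisfying $\lvert\mathbf{G}\rvert\le C(\mathcal{D},\,\delta,\,M)(1+\lvert\buf_{\varepsilon}\rvert)\sqrt{U_{\delta,\,\varepsilon}}$ and $0\le g\le C(\mathcal{D},\,\delta,\,M)\bigl[(1+\lvert\buf_{\varepsilon}\rvert^{2})\chi_{\{U_{\delta,\,\varepsilon}>0\}}+\sqrt{U_{\delta,\,\varepsilon}}\bigr]$, the $\sqrt{U_{\delta,\,\varepsilon}}$ factors originating from $\psi_{2,\,2\delta}(v_{\varepsilon})\le\delta^{-1}\sqrt{U_{\delta,\,\varepsilon}}$. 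By density this inequality extends to all non-negative Lipschitz $\zeta$ compactly supported in $Q$; in other words $U_{\delta,\,\varepsilon}$ lies in a parabolic De Giorgi class. (The local $L^{2}(\,\cdot\,;\,W^{1,\,2})$-regularity of $U_{\delta,\,\varepsilon}$ needed for these manipulations, and the handling of the time derivative, belong to the Steklov-averaging/difference-quotient justification of Remark \ref{Rmk: Section 4}.)

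The second step is then the routine parabolic De Giorgi computation. Fixing $k\in(0,\,\infty)$, I would test the subsolution inequality with $\zeta=(U_{\delta,\,\varepsilon}-k)_{+}\eta^{2}\theta$ for a Lipschitz one-sided time cutoff $\theta$ approximating $\chi_{(-\infty,\,\tau)}$: the time term produces $\frac12\int_{B_{R}\times\{\tau\}}(U_{\delta,\,\varepsilon}-k)_{+}^{2}\eta^{2}\,\d x$ up to a quantity controlled by $\iint_{Q}(U_{\delta,\,\varepsilon}-k)_{+}^{2}\lvert\partial_{t}\eta^{2}\rvert\,\d x\d t$, the diffusion term produces a multiple of $\iint_{Q}\lvert\nabla(U_{\delta,\,\varepsilon}-k)_{+}\rvert^{2}\eta^{2}\,\d x\d t$ minus, after Young's inequality, a term controlled by $\iint_{Q}(U_{\delta,\,\varepsilon}-k)_{+}^{2}\lvert\nabla\eta\rvert^{2}\,\d x\d t$, and the forcing is handled using $\nabla(U_{\delta,\,\varepsilon}-k)_{+}=\chi_{A_{k}}\nabla U_{\delta,\,\varepsilon}$ and $(U_{\delta,\,\varepsilon}-k)_{+}=0$ off $A_{k}$ (so every forcing contribution lives on $A_{k}$), together with $\lvert\mathbf{G}\rvert^{2}\le C(\mathcal{D},\,\delta,\,M)(1+\lvert\buf_{\varepsilon}\rvert^{2})U_{\delta,\,\varepsilon}$, the bound $U_{\delta,\,\varepsilon}\le\mu^{2}$ guaranteed by (\ref{Eq (Section 4): Bound Assumption of v-epsilon}), and $\delta<\mu$ from (\ref{Eq (Section 4): delta vs mu}) to upgrade the residual powers $\mu^{2},\mu^{3}$ to $\mu^{4}$; this yields exactly the term $C(\mathcal{D},\,\delta,\,M)\mu^{4}\iint_{A_{k}}(1+\lvert\buf_{\varepsilon}\rvert^{2})\eta^{2}\,\d x\d t$. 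Absorbing the gradient term to the left, taking the $\esssup$ over $\tau$, and invoking $\lvert\nabla(\eta(U_{\delta,\,\varepsilon}-k)_{+})\rvert^{2}\le 2\eta^{2}\lvert\nabla(U_{\delta,\,\varepsilon}-k)_{+}\rvert^{2}+2(U_{\delta,\,\varepsilon}-k)_{+}^{2}\lvert\nabla\eta\rvert^{2}$ yields (\ref{Eq (Section 4): De Girogi Energy 1}); running the same computation with $\eta$ replaced by the purely spatial cutoff $\widetilde{\eta}(x)$ and with the time integral taken over $(\tau_{0},\,\tau)$ — so that no $\lvert\partial_{t}\eta^{2}\rvert$-term appears and the time term directly gives the difference of the two slice energies — yields (\ref{Eq (Section 4): De Giorgi Energy 2}).

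The obstacle is concentrated in the first step, i.e.\ in extracting the clean subsolution structure from the heavy identity (\ref{Eq (Section 4): Weak Form of v-epsilon}). Two points demand care: first, the potentially singular weight $\lvert\buf_{\varepsilon}\rvert^{2}v_{\varepsilon}^{2-p}$ (for $p<2$) occurring in $E_{4}$ is tamed only because the choice $\psi=\psi_{2,\,2\delta}$ confines the whole estimate to $\{v_{\varepsilon}\ge 2\delta\}$, where, together with $v_{\varepsilon}\le M$, every power of $v_{\varepsilon}$ collapses to a constant $C(\mathcal{D},\,\delta,\,M)$; second, the gain $\psi_{2,\,2\delta}(v_{\varepsilon})\lesssim\sqrt{U_{\delta,\,\varepsilon}}$, together with $(U_{\delta,\,\varepsilon}-k)_{+}\le\mu^{2}$ and $\delta<\mu$, is precisely what produces the $\mu^{4}$ weight on the forcing integral in (\ref{Eq (Section 4): De Girogi Energy 1})--(\ref{Eq (Section 4): De Giorgi Energy 2}).
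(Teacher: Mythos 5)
Your argument reproduces the paper's proof: apply Lemma \ref{Lemma (Section 4): Basic Weak Form} with $\psi=\psi_{2,k}$, discard the non-negative $E_{2},E_{3}$, note that $E_{0}+E_{1}$ and the forcing are all supported in the non-degenerate region $\{v_{\varepsilon}>k\}$ where, by (\ref{Eq (Section 4): Bound Assumption of v-epsilon}) and Lemma \ref{Lemma (Section 2): Ellipticity of Bilinear Forms}, the bilinear form $\mathcal{C}_{\varepsilon}(x,t,\D\bu_{\varepsilon})$ is uniformly elliptic with constants $C(\mathcal{D},\delta,M)$, and then run the standard parabolic De Giorgi test with $\zeta=(U_{\delta,\varepsilon}-k)_{+}\eta^{2}\phi$ for the two kinds of time cutoff you describe (a one-sided cutoff for (\ref{Eq (Section 4): De Girogi Energy 1}), a piecewise-linear $\phi_{\widetilde{\varepsilon}}\to\chi_{(\tau_{0},\tau)}$ with spatial $\widetilde{\eta}$ for (\ref{Eq (Section 4): De Giorgi Energy 2})); your extraction of $|\mathbf{G}|\lesssim(1+|\buf_{\varepsilon}|)\sqrt{U_{\delta,\varepsilon}}$ from $E_{5}$ via $\psi\lesssim\sqrt{U_{\delta,\varepsilon}}$ and the use of $U_{\delta,\varepsilon}\le\mu^{2}$ together with $\delta<\mu$ to manufacture the $\mu^{4}$ weight are exactly the paper's bookkeeping. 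The one discrepancy is your choice $k=2\delta$ (so $\Psi(v_{\varepsilon})=(v_{\varepsilon}-2\delta)_{+}^{2}$), which you took from the sentence immediately preceding the lemma that identifies $U_{\delta,\varepsilon}$ with $\lvert\G_{2\delta,\varepsilon}(\D\bu_{\varepsilon})\rvert_{\bg}^{2}$; that sentence conflicts with the definition $U_{\delta,\varepsilon}\coloneqq(v_{\varepsilon}-\delta)_{+}^{2}=\lvert\G_{\delta,\varepsilon}(\D\bu_{\varepsilon})\rvert_{\bg}^{2}$ stated earlier before Proposition \ref{Proposition (Section 4): Degenerate Case}, which is the one the paper's own proof (using $\psi_{2,\delta}$) and the applications in Section \ref{Section: Degenerate} actually adopt. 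Replacing $\psi_{2,2\delta}$ by $\psi_{2,\delta}$ throughout, your proof coincides with the paper's.
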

 \begin{proof}
     We apply Lemma \ref{Lemma (Section 4): Basic Weak Form} with $\psi=\psi_{2,\,\delta}$, so that we may take $\Psi_{2,\,\delta}(v_{\varepsilon})=U_{\delta,\,\varepsilon}$.
     Deleting the non-negative integrals $E_{2}$ and $E_{3}$, and utilizing (\ref{Eq (Section 4): Bound Assumption of v-epsilon}) and (\ref{Eq (Section 4): delta vs mu}), we obtain
     \begin{align*}
         &-\iint_{Q}U_{\delta,\,\varepsilon}\partial_{t}\zeta\,\d x\d t+\iint_{Q}{\mathcal C}_{\varepsilon}(x,\,t,\,\D\bu_{\varepsilon})(\nabla U_{\delta,\,\varepsilon},\,\nabla\zeta)\,\d x\d t\\ 
         &\le C({\mathcal D},\,\delta,\,M)\mu^{4}\left[\iint_{Q}(1+\lvert \buf_{\varepsilon}\rvert^{2})\zeta\,\d x\d t+\iint_{Q}\lvert\buf_{\varepsilon}\rvert\lvert\nabla\zeta\rvert\,\d x\d t \right],
     \end{align*}
     where we note that all of the integrals are supported in $\{\delta\le v_{\varepsilon}\le M\}$, where the matrix ${\mathcal C}_{\varepsilon}(x,\,t,\,\D\bu_{\varepsilon})$ becomes uniformly elliptic in the classical sense.
     With this in mind, we test $\zeta\coloneqq (U_{\delta,\,\varepsilon}-k)_{+}\eta^{2}\phi$ into this weak formulation, where $\phi\colon \lbrack \tau_{0},\,\tau_{1} \rbrack\to\lbrack 0,\,1\rbrack$ is a non-increasing function satisfying $\phi(\tau_{1})=0$.
     Carrying out standard absorbing arguments, we have
     \begin{align*}
        &-\iint_{Q}(U_{\delta,\,\varepsilon}-k)_{+}^{2}\eta^{2}\partial_{t}\phi\,\d x\d t+\iint_{Q}\lvert \nabla (\eta(U_{\delta,\,\varepsilon}-k)_{+})\rvert^{2}\phi\,\d x\d t\\ 
        &\le C({\mathcal D},\,\delta,\,M)\left[\iint_{Q}(U_{\delta,\,\varepsilon}-k)_{+}^{2}\left(\eta^{2}+\lvert\nabla\eta\rvert^{2}+\lvert\partial_{t}\eta^{2}\rvert\right)\,\d x\d t+\mu^{4}\iint_{Q}\left(1+\lvert\buf_{\varepsilon}\rvert^{2}\right)\eta^{2}\,\d x\d t\right].
    \end{align*}
     Suitably choosing $\phi$, we easily deduce (\ref{Eq (Section 4): De Girogi Energy 1}).
     To prove (\ref{Eq (Section 4): De Giorgi Energy 2}), we define a piecewise linear function $\phi_{\widetilde{\varepsilon}}\colon \lbrack\tau_{0},\,\tau_{1} \rbrack\to \lbrack 0,\,1\rbrack$ as $\phi_{\widetilde{\varepsilon}}(t)\coloneqq (\min\{\,1,\,(t-\tau_{0})/\widetilde{\varepsilon},\,-(t-\tau)/\widetilde{\varepsilon} \,\})_{+}$, which converges to the characteristic function $\chi_{(\tau_{0},\,\tau)}$ as $\widetilde{\varepsilon}\to 0$.
     Testing $\zeta\coloneqq (U_{\delta,\,\varepsilon}-k)_{+}\widetilde{\eta}^{2}\phi_{\widetilde{\varepsilon}}$, and making absorptions yield
     \begin{align*}
        &-\iint_{Q}(U_{\delta,\,\varepsilon}-k)_{+}^{2}\widetilde{\eta}^{2}\partial_{t}\phi_{\widetilde{\varepsilon}}\,\d x\d t+\iint_{Q}\lvert \nabla (\widetilde{\eta}(U_{\delta,\,\varepsilon}-k)_{+})\rvert^{2}\phi_{\widetilde{\varepsilon}}\,\d x\d t\\ 
        &\le C({\mathcal D},\,\delta,\,M)\left[\iint_{Q}(U_{\delta,\,\varepsilon}-k)_{+}^{2}\left(\widetilde{\eta}^{2}+\lvert\nabla \widetilde{\eta}\rvert^{2} \right)\,\d x\d t+\mu^{4}\iint_{Q}\left(1+\lvert\buf_{\varepsilon}\rvert^{2}\right)\widetilde{\eta}^{2}\,\d x\d t\right].
    \end{align*}
     Deleting the second integral on the left-hand side, and letting $\widetilde{\varepsilon}\to 0$, we obtain (\ref{Eq (Section 4): De Giorgi Energy 2}).
 \end{proof}

 \subsection{Energy estimates}
 In this subsection, we choose $\psi$ as $\psi_{4,\,\alpha,\,l}$ or $\psi_{5}$ to deduce local $L^{2}$-estimates for second-order spatial derivatives (Lemmata \ref{Lemma (Section 4): Energy estimates for higher integrability of v-epsilon}--\ref{Lemma (Section 4): L2-estimates for spatial derivatives}).
 Here we infer an inequality
 \begin{equation}\label{Eq (Section 4): nabla v-epsilon vs Hessian}
     \lvert \nabla v_{\varepsilon}\rvert\le C({\mathcal D})\left(\lvert \D^{2}\bu_{\varepsilon}\rvert+v_{\varepsilon} \right),
 \end{equation}
 which is easy to deduce by (\ref{Eq (Section 4): F and V}).

 Lemma \ref{Lemma (Section 4): Energy estimates for higher integrability of v-epsilon} is used to prove higher integrability of a gradient when $p\in(1,\,2)$.
 \begin{lemma}\label{Lemma (Section 4): Energy estimates for higher integrability of v-epsilon}
     Let $p\in(1,\,2)$, and $u_{\varepsilon}$ be a weak solution to (\ref{Eq (Section 2): Approximate System}) in ${\mathcal Q}\Subset \Omega_{T}$.
     Fix a subcylinder $Q=B\times I\Subset {\mathcal Q}$ and a cut-off function $\eta\in C_{\mathrm c}^{1}(Q;\,\lbrack 0,\,1\rbrack)$.
     If $\buf_{\varepsilon}\in L^{\infty}({\mathcal Q})^{N}$ and $v_{\varepsilon}\in L^{p+\alpha}(Q)$ hold for some $\alpha\in\lbrack 0,\,\infty)$, then we have 
     \begin{align}\label{Eq (Section 4); Qualitative Higher Integrability}
         &\iint_{Q}v_{\varepsilon}^{p-2}\left(\lvert \D\bu_{\varepsilon}\rvert^{2}\psi_{4,\,\alpha,\,l}(v_{\varepsilon})+\lvert \nabla v_{\varepsilon}\rvert^{2}\psi_{4,\,\alpha,\,l}^{\prime}(v_{\varepsilon})v_{\varepsilon}\right)\eta^{2}\,\d x\d t\nonumber
         \\ & \le C \iint_{Q}\left[\psi_{4,\,\alpha,\,l}(v_{\varepsilon})v_{\varepsilon}^{2}\left(\eta^{2}+\lvert\nabla\eta \rvert^{2} +\lvert\partial_{t}\eta^{2}\rvert \right)+v_{\varepsilon}^{\alpha+p}\left(\eta^{2}+\lvert\nabla\eta\rvert^{2}\right)+\lvert \buf_{\varepsilon}\rvert^{2}v_{\varepsilon}^{\alpha+2-p}\eta^{2}\right]\,\d x\d t
     \end{align}
     for all $l\in(1,\,\infty)$ with $C=C({\mathcal D},\,\alpha)\in(1,\,\infty)$.
     Moreover, if $\buf_{\varepsilon}$ satisfies (\ref{Eq (Section 4): Uniform Bounds F and U}) and $v_{\varepsilon}\in L^{2+\alpha}(Q)$ holds with $\alpha\in\lbrack 0,\,(pq-4)/2)$, then we have      
     \begin{align}\label{Eq (Section 4); Quantitative Higher Integrability}
        &\iint_{Q}v_{\varepsilon}^{p-2}\left(\lvert \D\bu_{\varepsilon}\rvert^{2}\psi_{4,\,\alpha}(v_{\varepsilon})+\lvert \nabla v_{\varepsilon}\rvert^{2}\psi_{4,\,\alpha}^{\prime}(v_{\varepsilon})v_{\varepsilon}\right)\eta^{2}\,\d x\d t\nonumber
        \\ & \le C\left[ \iint_{Q}\left[\psi_{4,\,\alpha}(v_{\varepsilon})v_{\varepsilon}^{2}\left(\eta^{2}+\lvert\nabla\eta \rvert^{2}+ \lvert\partial_{t}\eta^{2}\rvert \right)+v_{\varepsilon}^{\alpha+p}\left(\eta^{2}+\lvert\nabla\eta\rvert^{2}\right)\right]\,\d x\d t+F^{\frac{2(\alpha+2)}{p}}+1\right]
     \end{align}
     with $C=C({\mathcal D},\,\alpha)\in(1,\,\infty)$.
 \end{lemma}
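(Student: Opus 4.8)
The plan is to run the argument through the master weak formulation of Lemma~\ref{Lemma (Section 4): Basic Weak Form}, applied with $\psi=\psi_{4,\,\alpha,\,l}$ (which is non-decreasing and globally Lipschitz because it is capped at level $(1-1/l)_{+}l^{\alpha}$) and $\Psi=\Psi_{4,\,\alpha,\,l}$ as in (\ref{Eq (Section 2): Psi-4}), testing (\ref{Eq (Section 4): Weak Form of v-epsilon}) with $\zeta\coloneqq\eta^{2}\in C_{\mathrm c}^{1}(Q;\,{\mathbb R}_{\ge 0})$. On the left-hand side I keep $E_{0}+E_{1}+\tfrac14(E_{2}+E_{3})$ and proceed in three steps: (i) from $\tfrac14(E_{2}+E_{3})$ extract, via Remark~\ref{Remark (Section 4): Positivity of E2 and E3} and the norm equivalence $\gamma_{0}\lvert\,\cdot\,\rvert^{2}\le\lvert\,\cdot\,\rvert_{\bg}^{2}\le\gamma_{0}^{-1}\lvert\,\cdot\,\rvert^{2}$, the positive quantities $\iint_{Q}v_{\varepsilon}^{p-1}\lvert\nabla v_{\varepsilon}\rvert^{2}\psi_{4,\,\alpha,\,l}'(v_{\varepsilon})\eta^{2}$ and $\iint_{Q}v_{\varepsilon}^{p-2}\lvert\D^{2}\bu_{\varepsilon}\rvert^{2}\psi_{4,\,\alpha,\,l}(v_{\varepsilon})\eta^{2}$; (ii) bound $E_{0}$ below by $-\iint_{Q}\Psi_{4,\,\alpha,\,l}(v_{\varepsilon})\lvert\partial_{t}\eta^{2}\rvert\ge-\iint_{Q}v_{\varepsilon}^{2}\psi_{4,\,\alpha,\,l}(v_{\varepsilon})\lvert\partial_{t}\eta^{2}\rvert$ using (\ref{Eq (Section 2): Control of Psi}); (iii) estimate $E_{1}$ by the Cauchy--Schwarz inequality for the positive bilinear form $\mathcal C_{\varepsilon}$ together with the two-sided ellipticity bound (\ref{Eq (Section 2): Ellipticity of Bilinear Forms}), absorbing the resulting $\nabla v_{\varepsilon}$- and $\D^{2}\bu_{\varepsilon}$-contributions into $\tfrac18(E_{2}+E_{3})$. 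What remains of the left-hand side then dominates the left-hand side of (\ref{Eq (Section 4); Qualitative Higher Integrability}): indeed $v_{\varepsilon}^{p-2}\lvert\D\bu_{\varepsilon}\rvert^{2}\psi_{4,\,\alpha,\,l}(v_{\varepsilon})\le\gamma_{0}^{-1}v_{\varepsilon}^{p}\psi_{4,\,\alpha,\,l}(v_{\varepsilon})\le\gamma_{0}^{-1}v_{\varepsilon}^{\alpha+p}$ is already of the stated right-hand-side type, while $v_{\varepsilon}^{p-1}\lvert\nabla v_{\varepsilon}\rvert^{2}\psi_{4,\,\alpha,\,l}'(v_{\varepsilon})$ is precisely what $\tfrac18 E_{2}$ produces.

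For the right-hand side $C(E_{4}+E_{5}+E_{6})$ I would use the growth bound (\ref{Eq (Section 2): Growth of psi-4}), i.e.\ $\psi_{4,\,\alpha,\,l}(\sigma)+\psi_{4,\,\alpha,\,l}'(\sigma)\sigma\le(\alpha+1)\sigma^{\alpha}$ and $\psi_{4,\,\alpha,\,l}(\sigma)\le\sigma^{\alpha}$, the elementary inequalities $v_{\varepsilon}^{p}(1+v_{\varepsilon}^{1-p})^{2}\le C(1+v_{\varepsilon}^{p})$ and $v_{\varepsilon}^{p}(1+v_{\varepsilon}^{1-p})\le C(1+v_{\varepsilon}^{p})$ (valid since $p\in(1,\,2)$), the bound $\lvert\nabla\eta^{2}\rvert\le 2\lvert\nabla\eta\rvert$, and Young's inequality to split $\lvert\buf_{\varepsilon}\rvert v_{\varepsilon}^{\alpha+1}\le\tfrac12\lvert\buf_{\varepsilon}\rvert^{2}v_{\varepsilon}^{\alpha+2-p}+\tfrac12 v_{\varepsilon}^{\alpha+p}$. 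All the resulting terms fall into the three groups $\psi_{4,\,\alpha,\,l}(v_{\varepsilon})v_{\varepsilon}^{2}(\eta^{2}+\lvert\nabla\eta\rvert^{2}+\lvert\partial_{t}\eta^{2}\rvert)$, $v_{\varepsilon}^{\alpha+p}(\eta^{2}+\lvert\nabla\eta\rvert^{2})$ and $\lvert\buf_{\varepsilon}\rvert^{2}v_{\varepsilon}^{\alpha+2-p}\eta^{2}$ displayed on the right of (\ref{Eq (Section 4); Qualitative Higher Integrability}) (up to a harmless fixed multiple of $\iint_{Q}\eta^{2}$). The force integral is finite because $\buf_{\varepsilon}\in L^{\infty}(\mathcal Q)^{N}$ and $v_{\varepsilon}^{\alpha+2-p}\le 1+v_{\varepsilon}^{\alpha+p}\in L^{1}(Q)$ under $v_{\varepsilon}\in L^{p+\alpha}(Q)$; this yields (\ref{Eq (Section 4); Qualitative Higher Integrability}) for every $l\in(1,\,\infty)$.

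To reach (\ref{Eq (Section 4); Quantitative Higher Integrability}) I note first that every constant above is independent of $l$ (the $(\alpha+1)$ in (\ref{Eq (Section 2): Growth of psi-4}) and the bound $\psi_{4,\,\alpha,\,l}\le\sigma^{\alpha}$ are uniform in $l$), and that $\psi_{4,\,\alpha,\,l}$, $\psi_{4,\,\alpha,\,l}'$ and $\Psi_{4,\,\alpha,\,l}$ increase pointwise to $\psi_{4,\,\alpha}$, $\psi_{4,\,\alpha}'$ and its antiderivative as $l\to\infty$ (cf.\ (\ref{Eq (Section 2): Psi-4})); hence the monotone convergence theorem upgrades the $l$-uniform version of (\ref{Eq (Section 4); Qualitative Higher Integrability}) to the same inequality with $\psi_{4,\,\alpha}$ in place of $\psi_{4,\,\alpha,\,l}$. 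It remains only to replace $C\iint_{Q}\lvert\buf_{\varepsilon}\rvert^{2}v_{\varepsilon}^{\alpha+2-p}\eta^{2}$ by $F^{2(\alpha+2)/p}+1$: by Young's inequality $\lvert\buf_{\varepsilon}\rvert^{2}v_{\varepsilon}^{\alpha+2-p}\le C\lvert\buf_{\varepsilon}\rvert^{2(\alpha+2)/p}+v_{\varepsilon}^{\alpha+2}$, and $\buf_{\varepsilon}\in L^{q,\,r}(Q)\hookrightarrow L^{2(\alpha+2)/p}(Q)$ with $\lVert\buf_{\varepsilon}\rVert_{L^{q,\,r}(\mathcal Q)}\le F$ by (\ref{Eq (Section 4): Uniform Bounds F and U}) --- the embedding being exactly what the restriction $\alpha<(pq-4)/2$ secures --- so the first piece contributes $CF^{2(\alpha+2)/p}$, while $\iint_{Q}v_{\varepsilon}^{\alpha+2}\eta^{2}\le C\iint_{Q}\psi_{4,\,\alpha}(v_{\varepsilon})v_{\varepsilon}^{2}\eta^{2}+C$ (using the pointwise bound $v_{\varepsilon}^{\alpha+2}\le C+C\psi_{4,\,\alpha}(v_{\varepsilon})v_{\varepsilon}^{2}$ and $v_{\varepsilon}\in L^{\alpha+2}(Q)$), the two remaining terms being absorbed into the right-hand side of (\ref{Eq (Section 4); Quantitative Higher Integrability}) and into the additive $+1$.

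The main obstacle, besides the fact that the whole computation is only formal and must be legitimised via difference quotients or the Steklov average as in Remark~\ref{Rmk: Section 4}, is the bookkeeping in step~(iii): since $p<2$ the upper and lower ellipticity bounds for $\mathcal C_{\varepsilon}$ in (\ref{Eq (Section 2): Ellipticity of Bilinear Forms}) carry mismatched powers of $v_{\varepsilon}$ (roughly $v_{\varepsilon}^{-1}+v_{\varepsilon}^{p-2}$ from above against $v_{\varepsilon}^{p-2}$ from below), so absorbing $E_{1}$ into the diffusive terms forces a split according to whether $\psi_{4,\,\alpha,\,l}'(v_{\varepsilon})>0$ (absorb into $E_{2}$) or $\psi_{4,\,\alpha,\,l}'(v_{\varepsilon})=0$, i.e.\ $v_{\varepsilon}$ in the capped range, where one instead uses (\ref{Eq (Section 4): nabla v-epsilon vs Hessian}) to reduce $\lvert\nabla v_{\varepsilon}\rvert$ to $\lvert\D^{2}\bu_{\varepsilon}\rvert+v_{\varepsilon}$ and absorbs into $E_{3}$; and the sharp threshold $\alpha<(pq-4)/2$ must be invoked precisely at the point where the non-$L^{\infty}$ force term is converted into the constant $F^{2(\alpha+2)/p}$.
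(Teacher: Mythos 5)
Your derivation of (\ref{Eq (Section 4); Qualitative Higher Integrability}) is essentially sound, and the general template (Lemma \ref{Lemma (Section 4): Basic Weak Form} with $\psi=\psi_{4,\alpha,l}$, followed by (\ref{Eq (Section 2): Control of Psi}) and (\ref{Eq (Section 2): Growth of psi-4})) matches the paper. Two remarks, one cosmetic and one fatal.

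On the bookkeeping in step (iii): the split by whether $\psi_{4,\alpha,l}'(v_\varepsilon)$ vanishes is unnecessary. All integrands carry a factor $\psi_{4,\alpha,l}(v_\varepsilon)$ or $\psi_{4,\alpha,l}'(v_\varepsilon)$, so they are supported in $\{v_\varepsilon\ge 1\}$, and there $h_{1,\varepsilon}(v_\varepsilon)=v_\varepsilon^{-1}\le v_\varepsilon^{p-2}=h_{p,\varepsilon}(v_\varepsilon)$ since $p>1$. The two ellipticity exponents therefore coincide up to a factor $2$, and the paper simply applies (\ref{Eq (Section 4): nabla v-epsilon vs Hessian}) to $E_1$ everywhere and absorbs the $|\D^2\bu_\varepsilon|$-piece into $\tfrac{\lambda_0}{8}E_3$, with no case analysis.

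The genuine gap is in your passage to (\ref{Eq (Section 4); Quantitative Higher Integrability}). The claimed embedding $L^{q,r}(Q)\hookrightarrow L^{2(\alpha+2)/p}(Q)$ is false in general: an embedding $L^{q,r}\hookrightarrow L^{s}$ needs \emph{both} $q\ge s$ and $r\ge s$, and the restriction $\alpha<(pq-4)/2$ only gives $2(\alpha+2)/p<q$. The pair $(q,r)$ is subject to (\ref{Eq (Section 3): Weak conditions for q,r}) (or (\ref{Eq (Section 1): Condition for q,r})), which permits $r$ arbitrarily close to $2$; then $2(\alpha+2)/p\ge 4/p>2$ exceeds $r$, and $\buf_\varepsilon\in L^{q,r}$ does \emph{not} yield $\buf_\varepsilon\in L^{2(\alpha+2)/p}$. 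So the $F^{2(\alpha+2)/p}$ term cannot be extracted this way. The paper avoids this by keeping the time cut-off $\phi$ in $\zeta=\eta^2\phi$ (which you drop). Choosing $\phi$ appropriately in (\ref{Eq (Section 4): MID ENERGY}) keeps an $\esssup_{\tau\in I}\int_{B}(v_\varepsilon-1)_+^{\alpha+2}\eta^2\,\d x$ term on the left; then, using H\"older in $x$ with exponents $(q/2,\widehat q)$ and in $t$ with exponents $(r/2,\widehat r)$, and the interpolation-type bound made possible by $(\alpha+2-p)\widehat q<\alpha+2$, one obtains
\[
\iint_{Q}\lvert\buf_\varepsilon\rvert^{2}v_\varepsilon^{\alpha+2-p}\eta^{2}\,\d x\d t
\le C\Bigl[1+\esssup_{\tau\in I}\int_{B}(v_\varepsilon-1)_+^{\alpha+2}\eta^{2}\,\d x\Bigr]^{1-\frac{p}{\alpha+2}}F^{2},
\]
after which Young's inequality with exponents $\frac{\alpha+2}{\alpha+2-p}$ and $\frac{\alpha+2}{p}$ produces the absorbable $\esssup$-term plus $CF^{2(\alpha+2)/p}$. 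This mechanism — absorbing against the sup-in-time norm rather than against a space-time $L^{\alpha+2}$ norm — is precisely what makes the hypothesis $\alpha<(pq-4)/2$ sharp, and it is unavailable with $\zeta=\eta^2$.
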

 \begin{proof}
     We apply Lemma \ref{Lemma (Section 4): Basic Weak Form} with $\psi=\psi_{4,\,\alpha,\,l}$, and $\zeta\coloneqq \eta^{2}\phi$.
     For this choice, we that the supports of the integrals are contained in $\{v_{\varepsilon}>1\}$.
     By (\ref{Eq (Section 2): Control of Psi}),  (\ref{Eq (Section 2): Growth of psi-4}),  we compute (\ref{Eq (Section 4): Weak Form of v-epsilon}) as follows;
     \begin{align*}
         &-\iint_{Q}\Psi_{4,\,\alpha,\,l}(v_{\varepsilon})\eta^{2}\partial_{t}\phi\,\d x\d t+\frac{\lambda_{0}}{4}\iint_{Q}v_{\varepsilon}^{p-2} \left(\lvert \D^{2}\bu_{\varepsilon}\rvert_{\bg}^{2}\psi_{4,\,\alpha,\,l}(v_{\varepsilon})+\lvert\nabla v_{\varepsilon}\rvert_{\bg}^{2}\psi_{4,\,\alpha,\,l}^{\prime}(v_{\varepsilon})v_{\varepsilon} \right)\eta^{2}\phi\,\d x\d t \nonumber \\ 
         &\le C\iint_{Q}\psi_{4,\,\alpha,\,l}(v_{\varepsilon})v_{\varepsilon}^{2} \left(\eta^{2}+\lvert \partial_{t}\eta^{2}\rvert\right) \phi\,\d x\d t+ C\iint_{Q} v_{\varepsilon}^{p-2}\left(\lvert \D^{2} \bu_{\varepsilon}\rvert_{\bg}+v_{\varepsilon}\right)\lvert\nabla\eta\rvert\eta\psi_{4,\,\alpha,\,l}(v_{\varepsilon})v_{\varepsilon}\phi \, \d x\d t\nonumber\\ 
         &\quad +C\iint_{Q}\left(v_{\varepsilon}^{p}+\lvert \buf_{\varepsilon}\rvert^{2}v_{\varepsilon}^{2-p}\right)\left(\psi_{4,\,\alpha,\,l}(v_{\varepsilon})+\psi_{4,\,\alpha,\,l}^{\prime}(v_{\varepsilon})v_{\varepsilon} \right)\eta^{2}\phi\,\d x\d t \\ 
         &\quad \quad +C\iint_{Q}\left(v_{\varepsilon}^{p}+\lvert \buf_{\varepsilon}\rvert v_{\varepsilon} \right) \psi_{4,\,\alpha,\,l}(v_{\varepsilon})\lvert\nabla\eta\rvert\eta\phi \,\d x\d t\\ 
         &\le \frac{\lambda_{0}}{8}\iint_{Q}v_{\varepsilon}^{p-2}\lvert \D^{2}\bu_{\varepsilon}\rvert_{\bg}^{2}\psi_{4,\,\alpha,\,l}(v_{\varepsilon})\eta^{2}\phi \,\d x\d t+C({\mathcal D})\iint_{Q}\psi_{4,\,\alpha,\,l}(v_{\varepsilon})v_{\varepsilon}^{2} \left(\eta^{2}+\lvert \partial_{t}\eta^{2}\rvert+\lvert \nabla\eta\rvert^{2} \right) \phi\,\d x\d t\\ 
         &\quad +C({\mathcal D},\,\alpha)\left[\iint_{Q}v_{\varepsilon}^{p+\alpha}\left(\eta^{2}+\lvert \nabla \eta\rvert^{2}\right)\phi\,\d x\d t+\iint_{Q}\lvert\buf_{\varepsilon}\rvert^{2}v_{\varepsilon}^{\alpha+2-p}\eta^{2}\phi\,\d x\d t\right]
     \end{align*}
     Recalling (\ref{Eq (Section 1): Matrix Gamma}), we have
     \begin{align}\label{Eq (Section 4): MID ENERGY}
         &-\iint_{Q}\Psi_{4,\,\alpha,\,l}(v_{\varepsilon})\eta^{2}\partial_{t}\phi\,\d x\d t+\iint_{Q}v_{\varepsilon}^{p-2} \left(\lvert \D^{2}\bu_{\varepsilon}\rvert^{2}\psi_{4,\,\alpha,\,l}(v_{\varepsilon})+\lvert\nabla v_{\varepsilon}\rvert^{2}\psi_{4,\,\alpha,\,l}^{\prime}(v_{\varepsilon})v_{\varepsilon} \right)\eta^{2}\phi\,\d x\d t \nonumber\\ 
         &\le C({\mathcal D},\,\alpha)\iint_{Q}\left[\psi_{4,\,\alpha,\,l}(v_{\varepsilon})v_{\varepsilon}^{2}\left(\eta^{2}+\lvert\nabla\eta \rvert^{2} +\lvert\partial_{t}\eta^{2}\rvert \right)+v_{\varepsilon}^{\alpha+p}\left(\eta^{2}+\lvert\nabla\eta\rvert^{2}\right)+\lvert \buf_{\varepsilon}\rvert^{2}v_{\varepsilon}^{\alpha+2-p}\eta^{2}\right]\,\d x\d t.
     \end{align}
     Deleting the first non-negative integral of (\ref{Eq (Section 4): MID ENERGY}), and suitably choosing $\phi$, we easily conclude (\ref{Eq (Section 4); Qualitative Higher Integrability}).
     When $v_{\varepsilon}\in L^{2+\alpha}(Q)$, then we may let $l\to \infty$ in (\ref{Eq (Section 4): MID ENERGY}). 
     The standard choices of $\phi$ imply
     \begin{align*}
         &\esssup_{\tau\in I}\int_{B\times\{\tau\}}(v_{\varepsilon}-1)_{+}^{\alpha+2}\eta^{2} \,\d x\d t+\iint_{Q}v_{\varepsilon}^{p-2}\left(\lvert \D\bu_{\varepsilon}\rvert^{2}\psi_{4,\,\alpha}(v_{\varepsilon})+\lvert \nabla v_{\varepsilon}\rvert^{2}\psi_{4,\,\alpha}^{\prime}(v_{\varepsilon})v_{\varepsilon}\right)\eta^{2}\,\d x\d t \\ 
         &\le C({\mathcal D},\,\alpha) \iint_{Q}\left[\psi_{4,\,\alpha}(v_{\varepsilon})v_{\varepsilon}^{2}\left(\eta^{2}+\lvert\nabla\eta \rvert^{2} +\lvert\partial_{t}\eta^{2}\rvert \right)+v_{\varepsilon}^{\alpha+p}\left(\eta^{2}+\lvert\nabla\eta\rvert^{2}\right)+\lvert \buf_{\varepsilon}\rvert^{2}v_{\varepsilon}^{\alpha+2-p}\eta^{2}\right]\,\d x\d t,
     \end{align*}
     where (\ref{Eq (Section 2): Psi-4}) is also used.
     By H\"{o}lder's inequality and Young's inequality, the last integral is estimated as follows;
     \begin{align*}
         C({\mathcal D},\,\alpha)\iint_{Q}\lvert \buf_{\varepsilon}\rvert^{2}v_{\varepsilon}^{\alpha+2-p}\eta^{2} \, \d x\d t&\le C({\mathcal D},\,\alpha)\left[1+\esssup_{\tau\in I}\int_{B\times \{\tau\}}(v_{\varepsilon}-1)_{+}^{\alpha+2}\eta^{2}\,\d x \right]^{1-\frac{p}{\alpha+2}}F^{2}\\ 
         &\le \esssup_{\tau\in I}\int_{B\times\{\tau\}}(v_{\varepsilon}-1)_{+}^{\alpha+2}\eta^{2} \,\d x\d t+C({\mathcal D},\,\alpha)\left(F^{\frac{2(\alpha+2)}{p}}+1 \right),
     \end{align*}
    which computations make sence since $(\alpha+2-p) \widehat{q} <\alpha+2$. From these estimates, we obtain (\ref{Eq (Section 4); Quantitative Higher Integrability}).
 \end{proof}
 Lemma \ref{Lemma (Section 4): L2-estimates for spatial derivatives} provides local $L^{2}$-estimates of $\D({\mathbf G}_{p,\,\varepsilon}(\D\bu_{\varepsilon}))=\D(v_{\varepsilon}^{p-1}\D\bu_{\varepsilon})$, where $\mathbf{G}_{p,\,\varepsilon}$ is given in Lemma \ref{Lemma (Section 2): G-p-epsilon}. 
 This result plays an important role as a starting point of Section \ref{Section: Non-Degenerate}.
 \begin{lemma}\label{Lemma (Section 4): L2-estimates for spatial derivatives}
     Let all of the assumptions of Proposition \ref{Proposition (Section 4): Non-Degenerate Case} be in force, except (\ref{Eq (Section 4): Measure Assumption: Non-Degenerate}).
     Then, we have
     \begin{equation}\label{Eq (Section 4): Energy 1}
         \fiint_{Q_{\sigma\rho}}\left\lvert \D\left(v_{\varepsilon}^{p-1}\D\bu_{\varepsilon} \right) \right\rvert^{2}\,\d x\d t\le \frac{C\mu^{2p}}{\sigma^{n+2}\rho^{2}}\left[\frac{1}{(1-\sigma)^{2}}+\left(1+F^{2}\right)\rho^{2\beta}\right],
     \end{equation}
     \begin{equation}\label{Eq (Section 4): Energy 2}
         \lvert Q_{\sigma\rho}\rvert^{-1}\iint_{S_{\sigma\rho}}\left\lvert \D\left(v_{\varepsilon}^{p-1}\D\bu_{\varepsilon} \right) \right\rvert^{2}\,\d x\d t\le \frac{C\mu^{2p}}{\sigma^{n+2}\rho^{2}}\left[\frac{\nu}{(1-\sigma)^{2}}+\frac{\left(1+F^{2}\right)\rho^{2\beta}}{\nu} \right],
     \end{equation}
     for any $\sigma\in(0,\,1)$ and $\nu\in(0,\,1/4)$, where $C=C({\mathcal D},\,\delta,\,M)\in(1,\,\infty)$ is a constant.
     \end{lemma}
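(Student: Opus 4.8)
The plan is to derive both inequalities from the basic weighted energy identity of Lemma~\ref{Lemma (Section 4): Basic Weak Form}, specialized to cut-off functions adapted to the nested cylinders $Q_{\sigma\rho}\subset Q_\rho$, and then to pass from $\D^2\bu_\varepsilon$ to $\D(v_\varepsilon^{p-1}\D\bu_\varepsilon)$ by the pointwise chain-rule bound $\lvert\D(v_\varepsilon^{p-1}\D\bu_\varepsilon)\rvert\le C(\mathcal D)\,v_\varepsilon^{p-1}(\lvert\D^2\bu_\varepsilon\rvert+v_\varepsilon)$, which follows from~(\ref{Eq (Section 4): F and V})--(\ref{Eq (Section 4): nabla v-epsilon vs Hessian}), the growth~(\ref{Eq (Section 1): Growth of p-Laplace-type operator}), and the Lipschitz bounds on $\bg$. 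So it suffices to control the weighted Hessian energy $\iint v_\varepsilon^{2p-2}\lvert\D^2\bu_\varepsilon\rvert^2\eta^2$, after which the conclusion follows by using the a priori bound $v_\varepsilon\le\mu+\delta\le 2\mu$ on $Q_{2\rho}$ (from~(\ref{Eq (Section 4): Bound Assumption of v-epsilon})--(\ref{Eq (Section 4): delta vs mu})) to trade powers of $v_\varepsilon$ for powers of $\mu$, and noting that since $\delta<\mu\le M$ every factor $\mu^a$ with $a\ge 0$, or with $a<0$ of bounded modulus, is $\le C(\mathcal D,\delta,M)\mu^{2p}$.

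For~(\ref{Eq (Section 4): Energy 1}) I would apply Lemma~\ref{Lemma (Section 4): Basic Weak Form} with $\psi=\psi_{3,\,p,\,l}$ and $l\to\infty$ (so $\psi(v_\varepsilon)=v_\varepsilon^p$ and $\Psi(v_\varepsilon)=v_\varepsilon^{p+2}/(p+2)$, since $v_\varepsilon\le M$), and $\zeta=\eta^2$ with $\eta\in C_{\mathrm c}^1(Q_\rho)$, $\eta\equiv 1$ on $Q_{\sigma\rho}$, $\lvert\nabla\eta\rvert\le C/((1-\sigma)\rho)$, $\lvert\partial_t\eta^2\rvert\le C/((1-\sigma)^2\rho^2)$. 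The choice $\psi(\sigma)=\sigma^p$ is the one for which Remark~\ref{Remark (Section 4): Positivity of E2 and E3} turns the coercive term $E_3$ into a constant multiple of $\iint v_\varepsilon^{2p-2}\lvert\D^2\bu_\varepsilon\rvert^2\eta^2$. I would then move $E_0,E_1$ to the right, estimate $\lvert E_0\rvert\lesssim\iint\Psi(v_\varepsilon)\lvert\partial_t\eta^2\rvert\lesssim\mu^{p+2}\lvert Q_\rho\rvert/((1-\sigma)^2\rho^2)$, bound $\lvert E_1\rvert=\lvert\iint\mathcal C_\varepsilon(\nabla[\Psi(v_\varepsilon)],\nabla\eta^2)\rvert$ by Cauchy--Schwarz for the positive form $\mathcal C_\varepsilon$ (absorbing the resulting $\epsilon'$-multiple of $E_2$ and keeping a term of the same size), and note that with $\psi(v_\varepsilon)=v_\varepsilon^p$ the integrands of $E_4,E_5,E_6$ are polynomials in $v_\varepsilon$ with exponents in $[2,2p+2]$, hence $\lesssim\mu^{p+2}\lvert Q_\rho\rvert$ apart from the force term $\iint\lvert\buf_\varepsilon\rvert^2v_\varepsilon^2\eta^2\le(2\mu)^2\iint_{Q_\rho}\lvert\buf_\varepsilon\rvert^2$. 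The force integral is treated by H\"older's inequality with the conjugate pair $(\widehat q,\widehat r)$ from~(\ref{Eq (Section 1): Exponents beta q-hat r-hat}): $\iint_{Q_\rho}\lvert\buf_\varepsilon\rvert^2\le F^2\lvert B_\rho\rvert^{1/\widehat q}\lvert I_\rho\rvert^{1/\widehat r}\le CF^2\rho^{n+2\beta}$, using the arithmetic identity $n/\widehat q+2/\widehat r=n+2\beta$ equivalent to~(\ref{Eq (Section 1): Condition for q,r}). Collecting the estimates and dividing by $\lvert Q_{\sigma\rho}\rvert=c\sigma^{n+2}\rho^{n+2}$, together with $\iint_{Q_{\sigma\rho}}v_\varepsilon^{2p}\le(2\mu)^{2p}\lvert Q_{\sigma\rho}\rvert$, yields~(\ref{Eq (Section 4): Energy 1}).

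For~(\ref{Eq (Section 4): Energy 2})\ the idea is to localize every integral to $S_{\sigma\rho}=\{v_\varepsilon>k\}\cap Q_{\sigma\rho}$ with $k:=(1-\nu)\mu+\delta$, so that $S_{\rho,\mu}=\{v_\varepsilon>k\}\cap Q_\rho$ exactly. I would apply Lemma~\ref{Lemma (Section 4): Basic Weak Form} with a weight $\psi$ supported in $(k,\infty)$ and comparable to $\sigma^p$ there (for instance $\psi=\psi_{1,\,k,\,\widetilde\varepsilon}\,(\,\cdot\,\wedge l)^p/2$, then $\widetilde\varepsilon\to 0$, $l\to\infty$), with the same cut-off $\eta$. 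On $\{v_\varepsilon>k\}$ one has $0\le v_\varepsilon-k\le\nu\mu$ and $v_\varepsilon\asymp\mu$, hence $\Psi(v_\varepsilon)\le C\nu\mu^{p+2}$ there; this smallness of the potential is the source of the factor $\nu/(1-\sigma)^2$ in the first bracket, entering through $\lvert E_0\rvert\lesssim\nu\mu^{p+2}\lvert Q_\rho\rvert/((1-\sigma)^2\rho^2)$ and — after integrating $E_1$ by parts to expose the factor $\Psi(v_\varepsilon)$ — through $\lvert E_1\rvert$ as well. The coercive term $E_3$ again produces $\gtrsim\iint_{S_{\sigma\rho}}v_\varepsilon^{2p-2}\lvert\D^2\bu_\varepsilon\rvert^2$, and the remaining terms $E_4,E_5,E_6$, which do not carry the small potential, contribute $\le C\mu^{2p}\lvert Q_\rho\rvert$ plus a force term $\le C\mu^2F^2\rho^{n+2\beta}$; these $\nu$-free quantities are absorbed into the second bracket $(1+F^2)\rho^{2\beta}/\nu$ by using $\rho<1$ and $\nu<1$, which give $\rho^{n+2}\le\rho^{n+2\beta}\le\rho^{n+2\beta}/\nu$. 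Reducing $v_\varepsilon^{2p-2}\lvert\D^2\bu_\varepsilon\rvert^2$ and $v_\varepsilon^{2p}$ on $S_{\sigma\rho}$ to powers of $\mu$ and dividing by $\lvert Q_{\sigma\rho}\rvert$ then gives~(\ref{Eq (Section 4): Energy 2}).

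The main obstacle will be the second part: arranging the weight $\psi$ so that the $\nu$-gain genuinely sits in front of the geometric terms ($\partial_t\eta$, $\nabla\eta$) while the $\nu$-free terms can honestly be charged to the $1/\nu$-term, and in particular handling the concentration of $\psi'$ at the threshold $\{v_\varepsilon=k\}$ cleanly — a crude treatment leaves an uncontrolled surface integral, so one discards the favorably-signed concentration part (as is done with $\psi_{1,\,k}$ elsewhere in the paper), derives the estimate for a.e.\ $\nu$ (a regular value of $v_\varepsilon$), and recovers the general $\nu$ by monotone approximation. The rest — the Young and Cauchy--Schwarz splittings and the reduction of powers of $v_\varepsilon$ to powers of $\mu$ via $\delta<\mu\le M$ — is routine.
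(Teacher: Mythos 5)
Your treatment of (\ref{Eq (Section 4): Energy 1}) matches the paper's: both take $\psi(\sigma)=\sigma^{p}$ in Lemma~\ref{Lemma (Section 4): Basic Weak Form} (your $\psi_{3,\,p,\,l}$ reduces to this once $l$ exceeds the a priori bound on $v_{\varepsilon}$), Cauchy--Schwarz $E_{1}$ against the coercive $E_{2}$, and close with H\"{o}lder on the force term.

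Your plan for (\ref{Eq (Section 4): Energy 2}) has a genuine gap: the sharp weight $\psi=\chi_{(k,\infty)}\sigma^{p}$ does not produce the factor $\nu$ in front of $1/(1-\sigma)^{2}$. The bound on $E_{1}$ that stays within one spatial derivative of $\bu_{\varepsilon}$ is Cauchy--Schwarz against $E_{2}$, whose residual contains $\iint{\mathcal C}_{\varepsilon}(\nabla\eta,\nabla\eta)\,\psi(v_{\varepsilon})^{2}v_{\varepsilon}/\psi'(v_{\varepsilon})$. With your weight, $\psi'(\sigma)=p\sigma^{p-1}$ on $\{v_{\varepsilon}>k\}$, so $\psi^{2}v_{\varepsilon}/\psi'\asymp\mu^{p+2}$ there, with no $\nu$; the same holds for the $E_{4}$ multiplier $\psi(v_{\varepsilon})+\psi'(v_{\varepsilon})v_{\varepsilon}=(p+1)v_{\varepsilon}^{p}$ off the threshold. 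Since your left side is $\iint_{S_{\sigma\rho}}v_{\varepsilon}^{2p-2}\lvert\D^{2}\bu_{\varepsilon}\rvert^{2}\eta^{2}$ with a $\nu$-free constant, the $\lvert\nabla\eta\rvert^{2}$-contribution comes out as $\mu^{2p}/[(1-\sigma)^{2}\rho^{2}]$ instead of $\nu\mu^{2p}/[(1-\sigma)^{2}\rho^{2}]$, and (\ref{Eq (Section 4): Energy 2}) is not reached. The paper instead takes $\psi(\sigma)=\sigma^{p}\widetilde\psi(\sigma)$ with the ramp $\widetilde\psi(\sigma)=(\sigma-\delta-(1-2\nu)\mu)_{+}^{2}$, so that $\widetilde\psi'$ enters the denominator of the residual: $\psi^{2}/\psi'=v_{\varepsilon}^{p+1}\widetilde\psi^{2}/(p\widetilde\psi+v_{\varepsilon}\widetilde\psi')\le v_{\varepsilon}^{p}(v_{\varepsilon}-\delta-(1-2\nu)\mu)_{+}^{3}/2\lesssim\nu^{3}\mu^{p+3}$ because $v_{\varepsilon}-\delta-(1-2\nu)\mu\le 2\nu\mu$ and $v_{\varepsilon}\ge\mu/2$ (using $\nu<1/4$), and analogously for the $E_{4}$ multiplier; meanwhile on $S_{\rho}$ the left side carries $\widetilde\psi\ge(\nu\mu)^{2}$. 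Dividing by $(\nu\mu)^{2}$ leaves $\nu$ on the geometric term and $\nu^{-1}$ on the force term. The decisive mechanism is that a ramp makes $v_{\varepsilon}\widetilde\psi'$ dominate $p\widetilde\psi$; a jump destroys it. Finally, the repair you suggest --- integrating $E_{1}$ by parts to expose $\Psi(v_{\varepsilon})$ --- is not admissible here: differentiating the coefficients of ${\mathcal C}_{\varepsilon}(x,t,\D\bu_{\varepsilon})$ in space introduces $g_{s}''(v_{\varepsilon}^{2})$, which is not controlled (only $g_{p}\in C^{1}$ with a modulus of continuity on $g_{p}'$ is assumed, see (\ref{Eq (Section 1): Modulus of Continuity for p-Laplace-type-operator})); the proof of Lemma~\ref{Lemma (Section 4): Basic Weak Form} is deliberately arranged so that $g_{s}$ is differentiated exactly once.
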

     \begin{proof}
     Recalling (\ref{Eq (Section 4): nabla v-epsilon vs Hessian}), we compute $\left\lvert \D\left({\mathbf G}_{p,\,\varepsilon}(\D\bu_{\varepsilon})\right) \right\rvert^{2}\le Cv_{\varepsilon}^{2(p-1)}\left(\lvert \D^{2}\bu_{\varepsilon}\rvert_{\bg}^{2}+v_{\varepsilon}^{2}\right)$.
     To prove (\ref{Eq (Section 4): Energy 1})--(\ref{Eq (Section 4): Energy 2}), we apply Lemma \ref{Lemma (Section 4): Basic Weak Form} with $Q=Q_{\rho}(x_{0},\,t_{0})$ and $\psi(\sigma)\coloneqq \sigma^{p}{\widetilde \psi}(\sigma)$, where $\widetilde{\psi}$ is chosen as either ${\widetilde \psi}(\sigma)\equiv 1$ or ${\widetilde \psi}(\sigma)=(\sigma-\delta-(1-2\nu)\mu)_{+}^{2}$.
     We choose $\zeta\coloneqq \eta^{2}\phi_{\widetilde{\varepsilon}}$ as a test function into (\ref{Eq (Section 4): Weak Form of v-epsilon}), where $\phi$ is chosen as $\phi_{\widetilde{\varepsilon}}(t)=\min \{\,1,\,-(t-t_{0})/\widetilde{\varepsilon} \,\}\,(t_{0}-\rho^{2}\le t\le t_{0})$ with $\widetilde{\varepsilon}$ being sufficiently small.
     Noting $\partial_{t}\phi_{\widetilde{\varepsilon}}\le 0$, we deduce
     \begin{align*}
     &\frac{1}{2}(E_{2}+E_{3}) \\ 
     &\le \iint_{Q}\Psi(v_{\varepsilon})\phi_{\widetilde{\varepsilon}}\lvert \partial_{t}\eta^{2}\rvert \,\d x\d t+2\iint_{Q}\left\lvert {\mathcal C}_{\varepsilon}(x,\,t,\,\D\bu_{\varepsilon})(\nabla v_{\varepsilon},\,\nabla\eta ) \right\rvert\eta\psi(v_{\varepsilon})v_{\varepsilon}\phi_{\widetilde{\varepsilon}}\,\d x\d t \\ 
     &\quad +C(E_{4}+E_{6})+C\iint_{Q}\left(v_{\varepsilon}^{p}\left(1+v_{\varepsilon}^{1-p} \right)+\lvert\buf_{\varepsilon}\rvert v_{\varepsilon} \right)\psi(v_{\varepsilon})\eta\lvert\nabla\eta\rvert\phi_{\widetilde{\varepsilon}}\,\d x\d t\\
     &\le \iint_{Q}\Psi(v_{\varepsilon})\lvert \partial_{t}\eta^{2}\rvert \,\d x\d t+\frac{1}{2}E_{2}+2\iint_{Q}{\mathcal C}_{\varepsilon}(x,\,t,\,\D\bu_{\varepsilon})(\nabla\eta,\,\nabla\eta)\frac{\psi(v_{\varepsilon})^{2}v_{\varepsilon}}{\psi^{\prime}(v_{\varepsilon})}\,\d x\d t\\ 
     &\quad +C\iint_{Q}v_{\varepsilon}^{p-1}\frac{\psi(v_{\varepsilon})^{2}}{\psi^{\prime}(v_{\varepsilon})}\lvert \nabla\eta\rvert^{2}\,\d x\d t+C\iint_{Q}\left(v_{\varepsilon}^{2}+v_{\varepsilon}^{p}\left(1+v_{\varepsilon}^{1-p} \right)^{2}+\lvert\buf_{\varepsilon}\rvert^{2}v_{\varepsilon}^{2-p} \right)\left(\psi(v_{\varepsilon})+\psi^{\prime}(v_{\varepsilon})v_{\varepsilon}\right)\,\d x\d t,
     \end{align*}
     where $\Psi$ is given by (\ref{Eq (Section 2): Def of Psi}) with $C=0$.
     By our choice of $\psi$, the identities 
     \[\psi(v_{\varepsilon})+\psi^{\prime}(v_{\varepsilon})v_{\varepsilon}=v_{\varepsilon}^{p}\left((p+1)\widetilde\psi(v_{\varepsilon})+v_{\varepsilon}{\widetilde \psi}^{\prime}(v_{\varepsilon})\right),\quad  \text{and}\quad \psi(v_{\varepsilon})^{2}\psi^{\prime}(v_{\varepsilon})^{-1}=\frac{v_{\varepsilon}^{p+1}{\widetilde \psi}(v_{\varepsilon})^{2}}{p\widetilde\psi(v_{\varepsilon})+v_{\varepsilon}{\widetilde \psi}^{\prime}(v_{\varepsilon})} \]
     are easily checked.
     Letting $\widetilde{\varepsilon}\to 0$, we have 
     \begin{align*}
     &\frac{\lambda_{0}}{2}\iint_{Q_{\rho}}v_{\varepsilon}^{2p-2}\lvert {\D}^{2}\bu_{\varepsilon}\rvert_{\bg}^{2}{\widetilde\psi}(v_{\varepsilon})\eta^{2} \,\d x\d t\\&\le 
     \iint_{Q_{\rho}}\Psi(v_{\varepsilon})\lvert \partial_{t}\eta^{2}\rvert \,\d x\d t
     +C\iint_{Q_{\rho}}\frac{(v_{\varepsilon}^{p-1}+1)v_{\varepsilon}^{p+1}{\widetilde \psi}(v_{\varepsilon})^{2}}{p{\widetilde \psi}(v_{\varepsilon})+v_{\varepsilon}{\widetilde \psi}^{\prime}(v_{\varepsilon})}\lvert\nabla\eta\rvert^{2} \,\d x\d t
     \\
     &\quad 
     +C\iint_{Q_{\rho}}\left(v_{\varepsilon}^{p+2}+v_{\varepsilon}^{2p}+\left(1+\lvert\buf_{\varepsilon}\rvert^{2} \right)v_{\varepsilon}^{2} \right)\left((p+1){\widetilde \psi}(v_{\varepsilon})+v_{\varepsilon}{\widetilde \psi}^{\prime}(v_{\varepsilon}) \right)\,\d x\d t.
     \end{align*}
     Choosing ${\widetilde\psi}(\sigma)\equiv 1$ implies $\Psi(\sigma)\equiv \sigma^{p+2}/(p+2)$.
     Since $v_{\varepsilon}^{l}\le (2\mu)^{l}\le C_{\delta,\,M,\,l}\mu^{2p}$ holds for each fixed $l\in(0,\,\infty)$, we compute
     \begin{align*}
         \iint_{Q_{\sigma\rho}}\left\lvert \D\left( v_{\varepsilon}^{p-1}\D\bu_{\varepsilon}\right) \right\rvert^{2}\,\d x\d t&
         \le C\mu^{2p}\left[\frac{\lvert Q_{\rho}\rvert}{(1-\sigma)^{2}\rho^{2}}+\iint_{Q_{\rho}}\left(1+\lvert\buf_{\varepsilon}\rvert^{2} \right)\,\d x\d t \right]\\ 
         &\le \frac{C\mu^{2p}}{\rho^{2}}\left[\frac{1}{(1-\sigma)^{2}}+(1+F^{2})\rho^{2\beta} \right]\lvert Q_{\rho}\rvert
     \end{align*}
     by H\"{o}lder's inequality.
     Dividing the resulting inequality by $\lvert Q_{\sigma\rho}\rvert=\sigma^{n+2}\lvert Q_{\rho}\rvert$ completes the proof of (\ref{Eq (Section 4): Energy 1}).
     Choosing $\widetilde\psi(\sigma)=(\sigma-\delta-(1-2\nu)\mu)_{+}^{2}$ yields 
     \[\Psi(v_{\varepsilon})\le (\mu+\delta)^{p+1} \int_{0}^{\mu+\delta}\,(\tau-\delta-(1-2\nu)\mu)_{+}^{2} \d \tau\le C(p,\,\delta,\,M)\nu^{3}\mu^{2p+2}.\]
     Since $\widetilde{\psi}(v_{\varepsilon})\ge (\nu\mu)^{2}$ holds in $S_{\rho}$ and $(1-2\nu)\mu>\mu/2$ follows from $\nu\in(0,\,1/4)$, we have
     \begin{align*}
         (\nu\mu)^{2}\iint_{S_{\sigma\rho}}\left\lvert \D\left(v_{\varepsilon}^{p-1}\D\bu_{\varepsilon} \right) \right\rvert^{2}\,\d x\d t&\le C\mu^{2p}\left[\frac{\nu^{3}\lvert Q_{\rho}\rvert}{(1-\sigma)^{2}\rho^{2}}+\nu\iint_{Q_{\rho}}\left(1+\lvert\buf_{\varepsilon}\rvert^{2} \right)\,\d x\d t \right]\\ 
         &\le \frac{C\mu^{2p+2}}{\rho^{2}}\left[\frac{\nu^{3}}{(1-\sigma)^{2}}+\nu\left(1+F^{2}\right)\rho^{2\beta} \right]\lvert Q_{\rho}\rvert,
     \end{align*}
     from which (\ref{Eq (Section 4): Energy 2}) is concluded.
    \end{proof}

\section{Gradient Bounds}\label{Section: Gradient Bounds}
In Section \ref{Section: Gradient Bounds}, we aim to prove Theorem \ref{Theorem (Section 4): Gradient bounds} from Lemmata \ref{Lemma (Section 4): Caccioppoli estimates for W-k} and \ref{Lemma (Section 4): Energy estimates for higher integrability of v-epsilon}.
\subsection{Higher integrability estimates for $p\in(1,\,2)$}
For $p\in(1,\,2)$, we use (\ref{Eq (Section 4): Weak Sol Bound for Singular p}) and Lemma \ref{Lemma (Section 4): Energy estimates for higher integrability of v-epsilon} to deduce a higher integrability lemma (Lemma \ref{Lemma (Section 5): Higher Integrability of v-epsilon}) as a preliminary.
For this topic, we infer to \cite[Lemma 7.5]{BDGLS} and \cite[Lemmata 3.9--3.10]{Strunk preprint}, which provide similar results for parabolic $p$-Laplace problems.
\begin{lemma}\label{Lemma (Section 5): Higher Integrability of v-epsilon}
    Let $\buf_{\varepsilon}\in L^{\infty}({\mathcal Q})^{N}$ satisfy (\ref{Eq (Section 4): Uniform Bounds F and U}) for some $F\in(0,\,\infty)$.
    Let $\bu_{\varepsilon}$ be a weak solution to (\ref{Eq (Section 2): Approximate System}) in $Q=Q_{R}\Subset {\mathcal Q}$ with $p\in(1,\,2)$.
    Then, $v_{\varepsilon}\in L_{\mathrm{loc}}^{\pi}(Q)$ for any $\pi\in(p,\,\infty)$.
    Moreover, for any $\pi\in (p,\,pq/2)$ and $\theta\in(0,\,1)$, we have 
    \begin{equation}\label{Eq (Section 5): Quantitative Higher Integrability}
        \iint_{Q_{\theta R}}v_{\varepsilon}^{\pi}\,{\mathrm d}x{\mathrm d}t\le C({\mathcal D},\,F,\,\pi,\,\theta)\left[\iint_{Q_{R}}\left(v_{\varepsilon}^{p}+1\right)\,{\mathrm d}x{\mathrm d}t+1\right]
    \end{equation}
\end{lemma}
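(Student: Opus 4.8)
The plan is to run a parabolic higher–integrability (Gehring–type) argument for the approximate system in the spirit of \cite[Lemma 7.5]{BDGLS} and \cite[Lemmata 3.9--3.10]{Strunk preprint}, combined with the energy estimates of Lemma \ref{Lemma (Section 4): Energy estimates for higher integrability of v-epsilon}. The crucial preliminary observation is that on $\{v_{\varepsilon}\ge 1\}$ the approximate flux $\A_{\varepsilon}$ behaves like the singular $p$–Laplace flux: the one–Laplace part $a_{1}g_{1}(\varepsilon^{2}+\lvert\bz\rvert_{\bg}^{2})\bz$ is bounded by $\Gamma_{0}$ by \eqref{Eq (Section 1): Growth of 1-Laplace-type operator}, hence lower order, and by \eqref{Eq (Section 2): Coercivity of A-epsilon}, \eqref{Eq (Section 1): Growth of p-Laplace-type operator} and Lemma \ref{Lemma (Section 2): G-p-epsilon} the effective ellipticity and growth are comparable to $v_{\varepsilon}^{p-2}$, with constants depending only on $\mathcal D$ and not on $\varepsilon\in(0,\,1)$. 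Thus the classical singular–parabolic machinery applies with $\varepsilon$–independent constants, and the only term that has to be tracked is $\buf_{\varepsilon}$; its Hölder estimate is precisely what yields the threshold $pq/2$.

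First I would establish a reverse Hölder inequality on intrinsic cylinders $Q_{\rho}(\lambda^{2-p};\,x_{0},\,t_{0})$. Testing \eqref{Eq (Section 2): Approximate Weak Form} with $\bphi\coloneqq\eta^{p}(\bu_{\varepsilon}-(\bu_{\varepsilon})_{\ast})$ for a cut–off $\eta$ and a slicewise mean value $(\bu_{\varepsilon})_{\ast}$, discarding the nonnegative one–Laplace term and using \eqref{Eq (Section 2): Coercivity of A-epsilon}, gives a Caccioppoli inequality bounding $\fiint v_{\varepsilon}^{p}$ by the $L^{p}$– and $L^{2}$–oscillation of $\bu_{\varepsilon}/\rho$ on the doubled cylinder plus the force term — here I would invoke the interior control of $\bu_{\varepsilon}$ from Section \ref{Section: L-infty} (the local $L^{\infty}$–$L^{2}$ bound, cf.\ Remark \ref{Rmk: Alternative Approach for L-infty}, and, for $p\le p_{\mathrm c}$, the $L^{\infty}$ bound) together with $X_{0}^{p}(0,\,T;\,\Omega)\subset C(\lbrack0,\,T\rbrack;\,L^{2}(\Omega))$. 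Applying the parabolic Poincaré and Sobolev inequalities on these cylinders (cf.\ Remark \ref{Remark (Section 2): Scaling Constant}, in particular \eqref{Eq (Section 2): PS-Ineq for any s}), coupling $\lambda$ to the local $p$–energy of $v_{\varepsilon}$ along the cylinders produced by a Vitali/stopping–time construction, and estimating the force by Hölder with $\|\buf_{\varepsilon}\|_{L^{q,\,r}}\le F$ as in the proof of \eqref{Eq (Section 4); Quantitative Higher Integrability}, one obtains a reverse Hölder inequality with a sub–power $v_{\varepsilon}^{p\theta}$, $\theta<1$, on the right; a stopping–time argument over super–level sets of $v_{\varepsilon}$, as in \cite[Lemma 7.5]{BDGLS} and \cite{Strunk preprint}, then upgrades it to $v_{\varepsilon}\in L^{p+\delta_{0}}_{\mathrm{loc}}(Q)$ for some $\delta_{0}=\delta_{0}(\mathcal D)>0$, with an $\varepsilon$–independent bound.

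From this foothold I would climb to arbitrary exponents. Once $v_{\varepsilon}\in L^{\gamma}_{\mathrm{loc}}(Q)$ with $\gamma$ above the threshold $\gamma_{\ast}\coloneqq(n+2)(2-p)/2$ (reached by iterating the reverse Hölder, whose exponent gain is multiplicative), apply \eqref{Eq (Section 4); Quantitative Higher Integrability} with $\alpha\coloneqq\gamma-2$ on a slightly smaller cylinder: the right–hand side is then controlled linearly by $\iint v_{\varepsilon}^{\gamma}$, $\|\buf_{\varepsilon}\|_{L^{q,\,r}}$ and $1$, while the left–hand side together with the $\esssup$–in–time term produced in its proof dominates $\|\nabla\varphi\|_{L^{2}}^{2}$ and $\|\varphi\|_{L^{2,\,\infty}}^{2}$ for $\varphi\coloneqq\eta v_{\varepsilon}^{(p+\alpha)/2}$. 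Feeding this into \eqref{Eq (Section 2): PS-Ineq for any s} gives $v_{\varepsilon}\in L^{(p+\gamma-2)(1+2/n)}_{\mathrm{loc}}$, i.e.\ a recursion $\gamma\mapsto(p+\gamma-2)(1+2/n)$ that is strictly increasing and unbounded above $\gamma_{\ast}$; absorbing the radius losses by Lemma \ref{Lemma (Section 2): Interpolation Abosorbing lemma} (or by Lemma \ref{Lemma (Section 2): Moser Iteration Lemma} on a dyadic family of cylinders) reaches every $\pi\in(p,\,\infty)$. For the stated quantitative bound one interpolates the intermediate power $v_{\varepsilon}^{\gamma}$ between $L^{p}$ and $L^{\pi}$ and absorbs the resulting $L^{\pi}$–term by Young's inequality, exactly as in the proof of \eqref{Eq (Section 2): Parabolic absorbing}, which keeps the dependence on $\iint_{Q_{R}}(v_{\varepsilon}^{p}+1)$ linear; the constraint $\alpha<(pq-4)/2$ in \eqref{Eq (Section 4); Quantitative Higher Integrability} confines this estimate to $\pi<pq/2$, whereas for $\pi\ge pq/2$ one replaces the $L^{q,\,r}$–bound by $\buf_{\varepsilon}\in L^{\infty}(\mathcal Q)$, which imposes no exponent restriction.

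The hard part is the first step: the reverse Hölder inequality and the stopping–time argument in the \emph{singular} range $p<2$, where intrinsic scaling is unavoidable and the coupling between $\lambda$ and the local $p$–energy of $v_{\varepsilon}$ must be tracked with care (and where, for $p\le p_{\mathrm c}$, one genuinely needs the interior boundedness of $\bu_{\varepsilon}$). Relative to \cite{BDGLS,Strunk preprint} the only new point is bookkeeping — verifying that the one–Laplace part and the $\varepsilon$–regularization are harmless lower–order perturbations for $v_{\varepsilon}\ge 1$ and that all constants are independent of $\varepsilon$, which follows from \eqref{Eq (Section 2): Coercivity of A-epsilon}, \eqref{Eq (Section 1): Growth of p-Laplace-type operator}, \eqref{Eq (Section 1): Growth of 1-Laplace-type operator} and Lemma \ref{Lemma (Section 2): G-p-epsilon}. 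The climb in the second step is then routine given Lemma \ref{Lemma (Section 4): Energy estimates for higher integrability of v-epsilon}.
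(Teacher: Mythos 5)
Your plan takes a genuinely different route from the paper, and it has a gap precisely at the hinge between its two halves.

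The paper does not run any Gehring or stopping-time argument. The entire lemma is a Moser-type \emph{additive} bootstrap powered by the standing $L^{\infty}$ bound $\esssup_{\mathcal Q}\lvert\bu_{\varepsilon}\rvert\le M_{0}$ (assumption (\ref{Eq (Section 4): Weak Sol Bound for Singular p}), which for $p<2$ is secured by Propositions \ref{Proposition (Section 3): A Weak Maximum Principle}--\ref{Proposition (Section 3): Local Boundedness for subcritical cases} and Remark \ref{Rmk: Alternative Approach for L-infty}). Concretely, assuming $v_{\varepsilon}\in L^{p+\alpha}(Q_{\widehat R})$, one controls $\iint\eta^{2}\psi_{4,\alpha,l}(v_{\varepsilon})\lvert\D\bu_{\varepsilon}\rvert^{2}$ by integrating by parts in the space variable, paying for the resulting $\bu_{\varepsilon}$-factors with $M_{0}$, and closing with the Caccioppoli estimate (\ref{Eq (Section 4); Qualitative Higher Integrability}), Cauchy--Schwarz, and Lemma \ref{Lemma (Section 2): Interpolation Abosorbing lemma}. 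This yields $v_{\varepsilon}\in L^{\alpha+2}_{\mathrm{loc}}$, an additive gain $p+\alpha\mapsto\alpha+2$ of size $2-p>0$, independent of where you start. Finitely many iterations from the a priori $L^{p}$ integrability of $\D\bu_{\varepsilon}$ give the qualitative statement, and a second pass with (\ref{Eq (Section 4); Quantitative Higher Integrability}) gives (\ref{Eq (Section 5): Quantitative Higher Integrability}) for $\pi<pq/2$.

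The gap in your plan: your first step (intrinsic reverse H\"{o}lder plus stopping time) produces only $v_{\varepsilon}\in L^{p(1+\delta_{0})}_{\mathrm{loc}}$ for a small universal $\delta_{0}>0$, while your second step --- the recursion $\gamma\mapsto(p+\gamma-2)(1+2/n)$ built from (\ref{Eq (Section 4); Quantitative Higher Integrability}) and (\ref{Eq (Section 2): PS-Ineq for any s}) --- is strictly increasing only for $\gamma>\gamma_{\ast}\coloneqq(n+2)(2-p)/2$, the fixed point of that map. You bridge the two by saying the threshold is ``reached by iterating the reverse H\"{o}lder, whose exponent gain is multiplicative'', but Gehring's lemma is not self-iterating: the reverse H\"{o}lder inequality is a closed inequality for the $p$-energy with a fixed gain $1+\delta_{0}$, and it does not regenerate at the higher exponent without deriving a new Caccioppoli there. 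Since $p<\gamma_{\ast}$ whenever $p<2(n+2)/(n+4)$ --- in particular throughout $(1,p_{\mathrm c}\rbrack$ --- the starting exponent $p(1+\delta_{0})$ falls below $\gamma_{\ast}$ for any small universal $\delta_{0}$, and the bootstrap never starts. The paper sidesteps this exactly by putting the $L^{\infty}$ bound on $\bu_{\varepsilon}$ inside the integration-by-parts step, which converts the gain to the fixed additive $2-p$ and renders the threshold $\gamma_{\ast}$ irrelevant; you invoke the $L^{\infty}$ bound, but only to control oscillations in a first-order Caccioppoli, and that does not close the gap.
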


\begin{proof}
    From $\buf_{\varepsilon}\in L^{\infty}({\mathcal Q})^{N}$ and (\ref{Eq (Section 4); Qualitative Higher Integrability}), we firstly prove $v_{\varepsilon}\in L_{\mathrm{loc}}^{\pi}(Q)$ for any $\pi\in(p,\,\infty)$.
    Let $v_{\varepsilon}\in L^{p+\alpha}(Q_{\widehat{R}})$ for some $\alpha\in\lbrack 0,\,\infty)$ and ${\widehat R}\in (0,\,R\rbrack$.
    Then, we would like to show
    \begin{equation}\label{Eq (Section 5): Qualitative Higher Integrability}
        \iint_{Q_{\theta R}}\psi_{4,\,\alpha,\,l}(v_{\varepsilon})v_{\varepsilon}^{2}\, \d x\d t  \le \frac{C({\mathcal D},\,\alpha,\,\lVert\buf_{\varepsilon} \rVert_{L^{\infty}(Q)})}{[(1-\theta)R]^{2}} \iint_{Q_{R}}\left(v_{\varepsilon}^{p+\alpha}+1\right)  \,\d x\d t
    \end{equation}
    for any $\theta\in(0,\,1)$.
    Arbitrarily fix $\theta\le \theta_{1}<\theta_{2}\le1$, and suitably choose a cut-off function $\eta\in C_{\mathrm c}^{1}(Q_{\theta_{2}R};\,\lbrack 0,\,1\rbrack)$.
    Then, integrating by parts with respect to the space variable, we have
    \begin{align*}
        &\iint_{Q_{\theta_{1}R}}\psi_{4,\,\alpha,\,l}(v_{\varepsilon})v_{\varepsilon}^{2}\,\d x\d t\le \iint_{Q_{\theta_{2}R}}v_{\varepsilon}^{\alpha}\, \d x\d t+ \iint_{Q_{\theta_{2}R}}\eta^{2}\psi_{4,\,\alpha,\,l}(v_{\varepsilon})\lvert \D\bu_{\varepsilon}\rvert^{2}\,\d x\d t \\ 
        &\le \iint_{Q_{\theta_{2}R}}v_{\varepsilon}^{\alpha}\, \d x\d t+2M_{0}\iint_{Q_{\theta_{2}R}}\lvert \nabla\eta\rvert\eta \psi_{4,\,\alpha,\,l}(v_{\varepsilon})\lvert \D\bu_\varepsilon\rvert\,\d x\d t\\ &\quad +c_{n}M_{0}\left[\iint_{Q_{\theta_{2}R}}\eta^{2}\left(\psi_{4,\,\alpha,\,l}(v_{\varepsilon})\lvert \D^{2}\bu_{\varepsilon}\rvert+\psi_{4,\,\alpha,\,l}^{\prime}(v_{\varepsilon})v_{\varepsilon}\lvert\nabla v_{\varepsilon}\rvert \right) \,\d x\d t \right]\\ 
        &\le C(M_{0})\iint_{Q_{\theta_{2}R}}\left(v_{\varepsilon}^{\alpha+1}+1\right)\lvert \nabla\eta\rvert\,\d x\d t\\ 
        &\quad +c_{n}M_{0}\left[\iint_{Q_{R}}v_{\varepsilon}^{p-2}\left[\psi_{4,\,\alpha,\,l}(v_{\varepsilon})\lvert \D^{2}\bu_{\varepsilon}\rvert^{2}+\psi_{4,\,\alpha,\,l}^{\prime}(v_{\varepsilon})v_{\varepsilon} \lvert \nabla v_{\varepsilon} \rvert^{2} \right]\eta^{2}\,\d x\d t \right]^{1/2}\\ 
        &\quad\quad  \cdot \left[\iint_{Q_{\theta_{2}R}}v_{\varepsilon}^{2-p}\left(\psi_{4,\,\alpha,\,l}(v_{\varepsilon})+\psi_{4,\,\alpha,\,l}^{\prime}(v_{\varepsilon})v_{\varepsilon} \right)\eta^{2}\,\d x\d t \right]^{1/2}\\ 
        &\le \frac{1}{2}\iint_{Q_{\theta_{2}R}}\psi_{4,\,\alpha,\,l}(v_{\varepsilon})v_{\varepsilon}^{2}\,\d x\d t+\frac{C({\mathcal D},\,\alpha,\,M_{0},\,\lVert \buf_{\varepsilon}\rVert_{L^{\infty}(Q)})}{[(\theta_{2}-\theta_{1})R]^{2}} \iint_{Q_{\theta_{2}R}}\left(v_{\varepsilon}^{p+\alpha}+1 \right)\,\d x\d t,
    \end{align*}
    where we have used (\ref{Eq (Section 4); Qualitative Higher Integrability}) and Young's inequality to deduce the last estimate.
    The claim (\ref{Eq (Section 5): Qualitative Higher Integrability}) follows from the above estimate and Lemma \ref{Lemma (Section 2): Interpolation Abosorbing lemma}.
    Letting $l\to\infty$ in (\ref{Eq (Section 5): Qualitative Higher Integrability}) and repeatedly using the resulting estimate, we deduce $v_{\varepsilon}\in L_{\mathrm{loc}}^{\pi}(Q_{R})$ for any $\pi\in(p,\,\infty)$, and therefore we are now allowed to use (\ref{Eq (Section 4); Quantitative Higher Integrability}).
    Carrying out similar computations, we have 
    \begin{align*}
        &\iint_{Q_{\theta_{1}R}}\psi_{4,\,\alpha}(v_{\varepsilon})v_{\varepsilon}^{2}\,\d x\d t\le \iint_{Q_{\theta_{2}R}}v_{\varepsilon}^{\alpha}\,\d x\d t+\iint_{Q_{R}}\eta^{2}\psi_{4,\,\alpha}(v_{\varepsilon})\lvert \D\bu_{\varepsilon}\rvert^{2}\,\d x\d t\\ 
        &\le \frac{1}{2}\iint_{Q_{\theta_{2}R}}\psi_{4,\,\alpha}(v_{\varepsilon})v_{\varepsilon}^{2}\,\d x\d t+C({\mathcal D},\,\alpha,\,M_{0})\left[\frac{1}{[(\theta_{2}-\theta_{1})R]^{2}}\iint_{Q_{\theta_{2}R}}\left(v_{\varepsilon}^{p+\alpha}+1\right)\,\d x\d t+F^{\frac{2(\alpha+2)}{p}}+1 \right].
    \end{align*}
    Applying Lemma \ref{Lemma (Section 2): Interpolation Abosorbing lemma} again, and using Young's inequality, we have 
    \begin{align*}
        &\iint_{Q_{\theta R}}v_{\varepsilon}^{p+2}\,\d x\d t\le\iint_{Q_{\theta R}}(\psi_{4,\,\alpha}(v_{\varepsilon})v_{\varepsilon}^{2}+v_{\varepsilon}^{\alpha+1})\,\d x\d t \\ 
        &\le C({\mathcal D},\,\alpha,\,M_{0})\left( [(1-\theta)R]^{-2} \iint_{Q_{R}}\left(v_{\varepsilon}^{p+\alpha}+1 \right)\,\d x\d t+F^{\frac{2(\alpha+2)}{p}}+1\right)
    \end{align*}
    for all $\theta\in(0,\,1)$. Repeatedly using the resulting estimate completes the proof of (\ref{Eq (Section 5): Quantitative Higher Integrability}).
\end{proof}
 \subsection{Local $L^{\infty}$-estimates of spatial gradients}
 \begin{proposition}\label{Proposition (Section 5): Reversed Holder}
     Let $\bu_{\varepsilon}$ be a weak solution to (\ref{Eq (Section 2): Approximate System}).
     For $p\in(1,\,p_{\mathrm c}\rbrack$, let $v_{\varepsilon}\in L^{{\widetilde p}}(Q_{R})$ be additionally in force with
     \begin{equation}\label{Eq (Section 5): Higher Integrability assumption of v-epsilon}
         p_{\mathrm c}\le \frac{n(2-p)}{2}<{\widetilde p}<\infty,\quad \text{and}\quad 2\le {\widetilde p}<\infty.
     \end{equation}
     Then, we have $v_{\varepsilon}\in L_{\mathrm{loc}}^{\infty}(Q)$.
     Moreover, we respectively have 
     \[
        \esssup_{Q_{\theta R}}v_{\varepsilon}\le \frac{C({\mathcal D})}{(1-\theta)^{e/d}}\left[1+\lVert \buf_{\varepsilon}\rVert_{L^{q,\,r}(\Omega_{T})}^{p\pi}+\fiint_{Q_{R}}v_{\varepsilon}^{p}\,\d x\d t \right]^{1/d}
     \]
     when $p\in(p_{\mathrm c},\,\infty)$, and  
     \[
         \esssup_{Q_{\theta R}}v_{\varepsilon}\le \frac{C({\mathcal D},\,\widetilde{p})}{(1-\theta)^{e/d}}\left[1+\lVert \buf_{\varepsilon}\rVert_{L^{q,\,r}(\Omega_{T})}^{\widetilde{p}\pi}+\fiint_{Q_{R}}v_{\varepsilon}^{\widetilde p}\,\d x\d t \right]^{1/d}
     \]
     when $p\in(1,\,p_{\mathrm c}\rbrack$. 
     Here the positive exponents $\pi$, $d$, $e$ are defined as $\pi\coloneqq \max\{\,1/(p-1),\,2/p\,\}\in(0,\,\infty)$, 
     \[d\coloneqq \left\{\begin{array}{cc}
         2 & (p\ge 2), \\ (n+2)(p-p_{\mathrm c}) & (n\ge 3\text{ and }p_{\mathrm c}<p<2),\\ 2-\sigma(2-p) & (n=2\text{ and }1=p_{\mathrm c}<p<2),\\ \widetilde{p}-n(2-p)/2 &(1<p\le p_{\mathrm c}),
     \end{array}  \right.\quad  e\coloneqq \left\{\begin{array}{cc}
         n+2 & (n\ge 3),\\ 2\sigma & (n=2),
     \end{array} \right.  \]
     where we fix a constant $\sigma>2$ that is close to $2$, so that the following inequalities are satisfied;
     \[2<\sigma<1+\frac{q}{2\widehat{r}} \quad \text{and} \quad 2<\sigma<\frac{2}{(2-p)_{+}}.\]
 \end{proposition}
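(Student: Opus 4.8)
The plan is a Moser iteration on the truncated quantity $W_{k}=\sqrt{(v_{\varepsilon}-k)_{+}^{2}+k^{2}}$, based on the Caccioppoli estimate~(\ref{Eq (Section 4): Energy estimate for Moser Iteration}) of Lemma~\ref{Lemma (Section 4): Caccioppoli estimates for W-k}, the parabolic Poincar\'{e}--Sobolev inequalities of Lemma~\ref{Lemma (Secion 2): Interpolation among parabolic function spaces}, the absorbing inequality~(\ref{Eq (Section 2): Parabolic absorbing}) for the force term, and the iteration short-cut Lemma~\ref{Lemma (Section 2): Moser Iteration Lemma}. First I would fix $k\coloneqq 1+\lVert\buf_{\varepsilon}\rVert_{L^{q,\,r}(\Omega_{T})}^{\pi}\in[1,\,\infty)$ and record $v_{\varepsilon}\le W_{k}\le v_{\varepsilon}+k$, $k\le W_{k}$, and $W_{k}\le cv_{\varepsilon}$ on $\{v_{\varepsilon}>k\}$, so that it is enough to bound $\esssup W_{k}$. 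The force contribution $W_{k}^{2-p}\lvert\buf_{\varepsilon}\rvert^{2}W_{k}^{2\alpha}$ on the right of~(\ref{Eq (Section 4): Energy estimate for Moser Iteration}) I would rewrite as $(W_{k}^{\alpha+p/2})^{2}h_{k}^{2}$ with $h_{k}\coloneqq W_{k}^{1-p}\lvert\buf_{\varepsilon}\rvert$; the choice of $k$ forces $\lVert h_{k}\rVert_{L^{q,\,r}(\Omega_{T})}\le1$, so H\"{o}lder's inequality dominates it by $\lVert W_{k}^{\alpha+p/2}\eta\rVert_{L^{2{\widehat q},\,2{\widehat r}}}^{2}$, and since~(\ref{Eq (Section 1): Condition for q,r}) is exactly the admissibility condition of~(\ref{Eq (Section 2): Parabolic absorbing}) with $s=2$ and $(\pi_{1},\,\pi_{2})=(q/2,\,r/2)$, this splits into a piece absorbed into the left of~(\ref{Eq (Section 4): Energy estimate for Moser Iteration}) and a remainder bounded by a constant times $\iint W_{k}^{2\alpha+\max\{2,\,p\}}\eta^{2}$. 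The outcome is a force-free Caccioppoli inequality whose right-hand side is $C(1+\alpha)^{\gamma}\iint W_{k}^{2\alpha+\max\{2,\,p\}}(\eta^{2}+\lvert\nabla\eta\rvert^{2}+\lvert\partial_{t}\eta^{2}\rvert)$.

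Next I would feed the $L^{2,\,\infty}$-in-time bound on $\eta W_{k}^{\alpha+1}$ and the $L^{2}$-bound on $\nabla(\eta W_{k}^{\alpha+p/2})$ into~(\ref{Eq (Section 2): Parabolic Sobolev}) (with $\varphi_{1}=\eta W_{k}^{\alpha+1}$, $\varphi_{2}=\eta W_{k}^{\alpha+p/2}$, $s=2$, gain $\kappa$ equal to $1+2/n$ for $n\ge3$ and an arbitrary exponent in $(1,\,2)$ close to $2$ — parametrised by $\sigma$ — for $n=2$, at the scaling price $\rho^{2{\widetilde\kappa}}$ of Remark~\ref{Remark (Section 2): Scaling Constant}). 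This yields a reverse-H\"{o}lder inequality roughly of the form
\[
\iint_{Q_{r_{1}}}W_{k}^{\,\kappa\beta-c_{\ast}}\le\left[\frac{C(1+\beta)^{\gamma}}{(r_{2}-r_{1})^{e}}\iint_{Q_{r_{2}}}(W_{k}^{\beta}+1)\right]^{\kappa},
\]
in which the \emph{loss} $c_{\ast}\ge0$ — present because the time-$L^{\infty}$-control sits on a lower power of $W_{k}$ than the gradient-control — equals $2-p$ in the singular range. Iterating this along $r_{l}\coloneqq R/2+2^{-l-1}R$, choosing the exponents $\beta=\beta_{l}$ so that $\beta_{l+1}=\kappa\beta_{l}-c_{\ast}$, i.e.\ $\beta_{l}=\beta_{\ast}+\kappa^{l}(\beta_{0}-\beta_{\ast})$ with $\beta_{\ast}=c_{\ast}/(\kappa-1)$, and applying Lemma~\ref{Lemma (Section 2): Moser Iteration Lemma} with $\mu=\beta_{0}-\beta_{\ast}>0$, gives $v_{\varepsilon}\in L^{\infty}_{\mathrm{loc}}(Q)$ together with a bound $\esssup_{Q_{\theta R}}W_{k}\lesssim(1-\theta)^{-e/\mu}(\fint_{Q_{R}}W_{k}^{\beta_{0}}+1)^{1/\mu}$; translating $W_{k}$ back to $v_{\varepsilon}$ via $v_{\varepsilon}\le W_{k}$ and the choice of $k$ then yields the stated inequality with the stated exponents. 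For $p\ge2$ the chain starts at $\beta_{0}=p$ and the loss is harmless ($\beta_{\ast}=p-2$, $\mu=2$); for $p_{c}<p<2$ it cannot start at $L^{p}$ since $p<2$, so I would first invoke the higher-integrability Lemma~\ref{Lemma (Section 5): Higher Integrability of v-epsilon} to pass from $v_{\varepsilon}\in L^{p}$ to a higher $L^{\beta_{0}}_{\mathrm{loc}}$-integrability with $\fint v_{\varepsilon}^{\beta_{0}}\lesssim\fint(v_{\varepsilon}^{p}+1)+1$ — which also converts the final bound into one in terms of $\fint v_{\varepsilon}^{p}$; for $p\le p_{c}$ that lemma is unavailable and the chain is launched instead from the hypothesis $v_{\varepsilon}\in L^{{\widetilde p}}$, so that $\beta_{0}={\widetilde p}$. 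The geometric factors $\lvert Q_{r_{l}}\rvert\asymp R^{n+2}$ (resp.\ $\asymp R^{2\sigma}$ for $n=2$) are absorbed into the constants.

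The main obstacle is the very singular range $p\le p_{c}$: there $\beta_{\ast}=c_{\ast}/(\kappa-1)=n(2-p)/2>0$, so the Moser chain diverges — and the sup bound follows — only if the launching exponent $\beta_{0}$ exceeds $n(2-p)/2$, which is precisely the requirement~(\ref{Eq (Section 5): Higher Integrability assumption of v-epsilon}); when $p>p_{c}$ this holds for free, but for $p\le p_{c}$ it is a genuine hypothesis, and the companion condition ${\widetilde p}\ge2$ is what lets the $W_{k}$-chain be started at all. The remaining difficulties are essentially bookkeeping: checking that the prefactors $(1+\beta_{l})^{\gamma}$ (with $\beta_{l}\asymp\mu\kappa^{l}$), the quasi-constant measure factors, and the $k$-dependent lower-order terms fit the recursive hypotheses of Lemma~\ref{Lemma (Section 2): Moser Iteration Lemma}; pinning down the precise exponents $d$ and $e$ (which differ among $p\ge2$, $p_{c}<p<2$, and $p\le p_{c}$, and for $n=2$ pick up the extra factor governed by $\sigma$); and, in the borderline case $n=2$, choosing $\sigma$ close to $2$ so that simultaneously the Sobolev gain is admissible, the absorbing inequality~(\ref{Eq (Section 2): Parabolic absorbing}) applies — this is where~(\ref{Eq (Section 1): Condition for q,r}) re-enters, being equivalent to $q>2{\widehat r}$ when $n=2$ — and $d=2-\sigma(2-p)$ stays positive, the last again relying on $p>p_{c}$.
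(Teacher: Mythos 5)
Your outline follows the paper's proof in its structural choices: the truncation $W_{k}$ with $k=1+\lVert\buf_{\varepsilon}\rVert_{L^{q,r}}^{\pi}$, the Caccioppoli inequality~(\ref{Eq (Section 4): Energy estimate for Moser Iteration}), the use of~(\ref{Eq (Section 2): Parabolic Sobolev})--(\ref{Eq (Section 2): Parabolic absorbing}) to absorb the force and produce a reverse-H\"{o}lder inequality with the per-step loss $(\kappa-1)(\widehat p-2)+(\widehat p-p)$, iteration via Lemma~\ref{Lemma (Section 2): Moser Iteration Lemma}, the launching-exponent threshold $\widetilde p>n(2-p)/2$ for $p\le p_{\mathrm c}$, and the $\sigma$-parametrisation for $n=2$ (including the observation that~(\ref{Eq (Section 1): Condition for q,r}) reads $q>2\widehat r$ there) --- all exactly as the paper does.

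The genuine gap is how, for $p_{\mathrm c}<p<2$, you pass from the $\fiint W_{k}^{2}$-based bound delivered by the Moser chain (which must start at $\beta_{0}=2$) to the claimed $\fiint v_{\varepsilon}^{p}$-based estimate with exponent $1/d$. Plugging in the quantitative estimate~(\ref{Eq (Section 5): Quantitative Higher Integrability}) of Lemma~\ref{Lemma (Section 5): Higher Integrability of v-epsilon}, as you propose, does not deliver the statement: that estimate is only available for $\pi\in(p,\,pq/2)$ and the hypotheses here do not guarantee $pq/2>2$ (take $n=3$ with $q$ slightly above $3$ and $p$ slightly above $p_{\mathrm c}$); its constant depends on $F=\lVert\buf_{\varepsilon}\rVert_{L^{q,r}(\Omega_T)}$ in an uncontrolled way, incompatible with the clean $\lVert\buf_{\varepsilon}\rVert^{p\pi}$-dependence claimed; and even where it does apply it produces the exponent $1/\mu$ on $\fiint v_{\varepsilon}^{p}$, not $1/d$ with $d=\mu-(2-p)$. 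The paper instead interpolates $\fiint W_{k}^{2}\le (\esssup W_{k})^{2-p}\fiint W_{k}^{p}$, applies Young's inequality (needing precisely $\mu>2-p$, i.e.\ $p>p_{\mathrm c}$), and closes the resulting self-improving inequality over nested cylinders with the Giaquinta--Giusti lemma (Lemma~\ref{Lemma (Section 2): Interpolation Abosorbing lemma}); Lemma~\ref{Lemma (Section 5): Higher Integrability of v-epsilon} is invoked only qualitatively, to know $v_{\varepsilon}\in L^{2}_{\mathrm{loc}}$ before the chain starts. That interpolation--Young--absorbing step is the missing idea, and it is more than bookkeeping: it is what produces the stated $d$. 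A smaller slip in the same spirit: your $h_{k}=W_{k}^{1-p}\lvert\buf_{\varepsilon}\rvert$ pairs the force against $\widetilde\varphi_{2}\coloneqq\eta W_{k}^{\alpha+p/2}$, whose $L^{2,\infty}$-norm is not controlled by~(\ref{Eq (Section 4): Energy estimate for Moser Iteration}) when $p>2$ (only that of $\widetilde\varphi_{1}\coloneqq\eta W_{k}^{\alpha+1}\le\widetilde\varphi_{2}$ is); the paper's $h_{k}\coloneqq W_{k}^{-\min\{2(p-1),\,p\}}\lvert\buf_{\varepsilon}\rvert^{2}$ pairs against $\widetilde\varphi_{1}^{2}$ when $p\ge 2$, precisely so the absorption closes.
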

 \begin{proof}
     We write ${\widehat p}\coloneqq \max\{\,p,\,2\,\}\in\lbrack 2,\,\infty)$.
     We choose $\kappa\coloneqq 1+2/n\in(1,\,2)$ for $n\ge 3$. 
     When $n=2$, we note $\kappa\coloneqq \sigma^{\prime}\in(1,\,2)$ satisfies $\frac{2}{(\kappa-1)q}+\frac{2}{r}<1$. 
     We define $\widetilde{\kappa}\coloneqq 2-\kappa\in(0,\,1)$ for $n=2$, and formally set $\widetilde{\kappa}\coloneqq 0$ for $n\ge 3$, so that $\kappa+\widetilde{\kappa}=1+2/n$ holds for any $n\ge 2$.
     It should be noted that $e=2\kappa^{\prime}$ automatically holds by the definition of $\kappa$.
     Instead of $v_{\varepsilon}$, we consider the function $W_{k}$, defined as in Lemma \ref{Lemma (Section 4): Caccioppoli estimates for W-k} with $k\coloneqq 1+\lVert \buf_{\varepsilon}\rVert_{L^{q,\,r}(Q_{R})}^{\pi}\ge 1$.
     We would like to show that this $W_{k}$ satisfies
     \begin{equation}\label{Eq (Section 5): Local Gradient Bound for super}
         \esssup_{Q_{\theta R}}\,W_{k}\le \left(\frac{C({\mathcal D})}{(1-\theta)^{2\kappa^{\prime}}}\fiint_{Q_{R}}W_{k}^{p}\,\d x\d t\right)^{1/d}
     \end{equation}
     when $p\in(p_{\mathrm c},\,\infty)$, and 
     \begin{equation}\label{Eq (Section 5): Local Gradient Bound for sub}
         \esssup_{Q_{\theta R}}\,W_{k}\le \left(\frac{C({\mathcal D},\,\widetilde{p})}{(1-\theta)^{2\kappa^{\prime}}}\fiint_{Q_{R}}W_{k}^{\widetilde{p}}\,\d x\d t \right)^{1/d}
     \end{equation}
     when $p\in(1,\,p_{\mathrm c}\rbrack$ respectively.
     Then, the desired estimates are easily deduced from (\ref{Eq (Section 5): v vs w}), (\ref{Eq (Section 5): Local Gradient Bound for super})--(\ref{Eq (Section 5): Local Gradient Bound for sub}) and our choice of $k\ge 1$.
     
     For preliminaries, we would like to show that $W_{k}$ satisfies 
     \begin{equation}\label{Eq (Section 5): Reversed Holder for W-k}
         \iint_{Q_{R_{2}}}W_{k}^{\kappa\beta-(\kappa-1)({\widehat p}-2)-({\widehat p}-p)}\,\d x\d t \le \left[\frac{C\beta^{\gamma}R^{2\widetilde\kappa/\kappa}}{(R_{1}-R_{2})^{2}} \iint_{Q_{R_{1}}}W_{k}^{\beta}\,\d x\d t\right]^{\kappa}
     \end{equation}
     for any $\beta\in\lbrack {\widehat p},\,\infty)$ and $0<R_{2}<R_{1}\le R$, provided $W_{k}\in L^{\beta}(Q_{R_{1}})$.
     It should be noted that the assumption $v_{\varepsilon}\in L^{2}(Q_{R})\cap L^{p}(Q_{R})$ is not restrictive for $p\in(1,\,2)$ by Lemma \ref{Lemma (Section 5): Higher Integrability of v-epsilon}.
     In particular, we may let $W_{k}\in L^{\widehat{p}}(Q_{R})$.
     Let $\eta\in C_{\mathrm c}^{1}(Q_{R})$ be non-negative, and use (\ref{Eq (Section 4): Energy estimate for Moser Iteration}) with $\alpha\coloneqq (\beta-{\widehat p})/2\in\lbrack 0,\,\infty)$.
     We introduce ${\widetilde \varphi}_{1}\coloneqq \eta W_{k}^{\alpha+1}\in L^{2,\,\infty}(Q_{R})$, ${\widetilde \varphi}_{2}\coloneqq \eta W_{k}^{\alpha+p/2}\in L^{p}(I_{R};\,W_{0}^{1,\,p}(B_{R}))$, and the non-negative function $h_{k}\coloneqq W_{k}^{-\widetilde{\pi}}\lvert \buf_{\varepsilon}\rvert^{2}\in L^{q/2,\,r/2}(Q_{R})$ with $\widehat{\pi}\coloneqq 2/\pi=\min\{\,2(p-1),\,p\,\}\in(0,\,p\rbrack$, so that the inequality $\lVert h_{k}\rVert_{L^{q/2,\,r/2}(Q_{R})}\le 1$ is satisfied by the definition of $k$.
     Then, (\ref{Eq (Section 4): Energy estimate for Moser Iteration}) is rewritten as
     \begin{align*}
        &\esssup_{t\in I_{R}}\int_{B_{R}}\lvert{\widetilde\varphi}_{1}\rvert^{2}\,\d x\d t+\iint_{Q_{R}}\lvert \nabla{\widetilde\varphi}_{2}\rvert^{2}\,\d x\d t\\ 
        &\le C(1+\alpha)^{2}\left[\iint_{Q_{R}}W_{k}^{2\alpha+{\widehat p}}\left(\eta^{2}+\lvert\nabla\eta\rvert^{2}+\eta\lvert\partial_{t}\eta\rvert \right)\,\d x\d t+\iint_{Q_{R}}h_{k}\varphi_{1}^{2}\,\d x\d t \right]
     \end{align*}
     with $\varphi_{1}\in L^{2,\,\infty}(Q_{R})$ defined as $\varphi_{1}\coloneqq {\widetilde \varphi}_{1}$ for $p\in \lbrack 2,\,\infty)$, and $\varphi_{1}\coloneqq {\widetilde \varphi}_{2}$ for $p\in(1,\,2)$.
     We use (\ref{Eq (Section 2): Parabolic absorbing}) with $s=2$, $(\pi_{1},\,\pi_{2})=(q/2,\,r/2)$.
     Here we choose $\varphi_{1}=\varphi_{2}={\widetilde\varphi}_{1}$ for $p\in\lbrack 2,\,\infty)$, and $\varphi_{1}\le \varphi_{2}\coloneqq \widetilde{\varphi}_{1}$ for $p\in(1,\,2)$ in carrying out absorbing artuments.
     By (\ref{Eq (Section 2): Parabolic Sobolev}) and Remark \ref{Remark (Section 2): Scaling Constant}, we obtain
     \[\iint_{Q_{R}}{\varphi}_{1}^{2(\kappa-1)}\varphi_{2}^{2}\,\d x\d t\le \left[C(1+\alpha)^{\gamma}R^{2\widetilde{\kappa}/\kappa}\iint_{Q_{R}}W_{k}^{2\alpha+{\widehat p}}\left(\eta^{2}+\lvert\nabla\eta\rvert^{2}+\lvert\partial_{t}\eta\rvert \right)\,\d x\d t \right]^{\kappa}.\]
     Here the exponent $\gamma=\gamma(n,\,\kappa,\,q,\,r)\in \lbrack 2,\,\infty)$ is determined by Lemma \ref{Lemma (Secion 2): Interpolation among parabolic function spaces}.
     Suitably choosing a cut-off function $\eta\in C_{\mathrm c}^{1}(Q_{R_{1}};\,\lbrack 0,\,1\rbrack)$, we conclude (\ref{Eq (Section 5): Reversed Holder for W-k}).
     
     For each $l\in{\mathbb Z}_{\ge 0}$, we set $R_{l}\coloneqq \theta R+(1-\theta)2^{-l}R$, and consider
     \(Y_{l}\coloneqq \left(\iint_{Q_{R_{l}}}W_{k}^{p_{l}}\,\d x\d t \right)^{1/p_{l}}\),
     where the sequence $\{p_{l}\}_{l=0}^{\infty}\subset \lbrack {\widehat p},\,\infty)$ satisfies $p_{l+1}=\kappa p_{l}-(\kappa-1)({\widehat p}-2)-({\widehat p}-p)$ for all $l\in{\mathbb Z}_{\ge 0}$, and $p_{0}\in \lbrack \widehat{p},\,\infty)$ is chosen later.
     It is easy to calculate $p_{l}=\kappa^{l}(p_{0}-c_{0})+c_{0}$ with $c_{0}\coloneqq \widehat{p}-2+(\widehat{p}-p)/(\kappa-1)$.
     The reversed H\"{o}lder inequality (\ref{Eq (Section 5): Reversed Holder for W-k}) allows us to apply Lemma \ref{Lemma (Section 2): Moser Iteration Lemma} with $\mu\coloneqq p_{0}-c_{0}$, $A\coloneqq C({\mathcal D})(1-\theta)^{-2}R^{-2(1-\widetilde{\kappa}/\kappa)}$ and $B\coloneqq 4\kappa^{\gamma\kappa}$, provided $p_{0}>c_{0}$ and $p_{0}\ge \widehat{p}$.
     The resulting estimate from Lemma \ref{Lemma (Section 2): Moser Iteration Lemma} is 
     \begin{equation}\label{Eq (Section 5): Result from Iteration}
        \esssup_{Q_{\theta R}}\, W_{k}\le \limsup_{l\to\infty} Y_{l}\le A^{\frac{\kappa^{\prime}}{\mu}}B^{\frac{(\kappa^{\prime})^{2}}{\mu}}Y_{0}^{\frac{p_{0}}{\mu}}=\left(\frac{C({\mathcal D})}{(1-\theta)^{2\kappa^{\prime}}}\fiint_{Q_{R}}W_{k}^{p_{0}}\,\d x\d t \right)^{1/\mu}.
     \end{equation}
     For $p\in\lbrack 2,\,\infty)$, which yields $c_{0}=p-2$, we choose $p_{0}\coloneqq p$ and therefore we have $\mu=2$. 
     The claim (\ref{Eq (Section 5): Local Gradient Bound for super}) is easily deduced by (\ref{Eq (Section 5): Result from Iteration}). 
     For $p\in(p_{\mathrm c},\,2)$, which implies $c_{0}=(2-p)/(\kappa-1)$, we choose $p_{0}\coloneqq 2$.
     Then, the corresponding $\mu=2-(2-p)/(\kappa-1)$ satisfies $\mu>2-p>0$.
     In fact, we have $\mu-2+p=(n+2)(p-p_{\mathrm c})/2>0$ for $n\ge 3$, and $\mu-2+p=-\sigma(2-p)+2>0$ when $n=2$.
     Combining (\ref{Eq (Section 5): Result from Iteration}) with Young's inequality, we have
     \begin{align*}
         \esssup_{Q_{\theta R}}W_{k}&\le \left(\frac{C({\mathcal D})}{(1-\theta)^{2\kappa^{\prime}}}\fiint_{Q_{R}}W_{k}^{2}\,\d x\d t \right)^{1/\mu}\\ 
         &\le \frac{1}{2}\esssup_{Q_{R}}W_{k}+\left(\frac{C({\mathcal D})}{(1-\theta)^{2\kappa^{\prime}}}\fiint_{Q_{R}}W_{k}^{p}\,\d x\d t \right)^{1/(\mu-2+p)}
     \end{align*}
     It should be noted that these computations can be made, even when $\theta R$ and $R$ are respectively replaced by $\theta_{1}R$ and $\theta_{2}R$ with $\theta\le \theta_{1}<\theta_{2}\le 1$.
     Therefore, (\ref{Eq (Section 5): Local Gradient Bound for super}) follows from Lemma \ref{Lemma (Section 2): Interpolation Abosorbing lemma}.
     In the remaining case $p\in(1,\,p_{\mathrm c})$, which clearly yields $n\ge 3$ and $c_{0}=n(2-p)/2$, we choose $p_{0}\coloneqq {\widetilde p}\ge 2$, so that $\mu=p_{0}-c_{0}>0$ holds.
     Then, (\ref{Eq (Section 5): Local Gradient Bound for sub}) immediately follows from (\ref{Eq (Section 5): Result from Iteration}).
 \end{proof}
 
 \subsection{Proofs of Theorem \ref{Theorem (Section 4): Gradient bounds} and a corollary}
 We provide the proof of Theorem \ref{Theorem (Section 4): Gradient bounds}.
 \begin{proof}[Proof of Theorem \ref{Theorem (Section 4): Gradient bounds}]
     The case $p\in(p_{\mathrm c},\,\infty)$ is clear by Proposition \ref{Proposition (Section 5): Reversed Holder} and (\ref{Eq (Section 4): Uniform Bounds F and U}).
     For $p\in(1,\,p_{\mathrm c}\rbrack$, we fix a subdomain $\widehat{Q}$ that satisfies $\widetilde{\mathcal Q}\Subset \widehat{\mathcal Q}\Subset {\mathcal Q}$.
     By Lemma \ref{Lemma (Section 5): Higher Integrability of v-epsilon}, $v_{\varepsilon}\in L^{{\widetilde p}}(\widehat{Q})$ holds for some new exponent ${\widetilde p}$ satisfying ${\widetilde p}\in(n(2-p)/2,\,pq/2)$ and ${\widetilde p}\ge 2$.
     In particular, there exists a constant $C=C(D,\,U,\,F,\,M_{0},\,{\widetilde p},\,\widehat{\mathcal Q},\,{\mathcal Q})$ such that $\lVert v_{\varepsilon}\rVert_{L^{\widetilde{p}}(\widehat{\mathcal Q}) }\le C$ holds uniformly for $\varepsilon\in(0,\,1)$.
     The proof for $p\in(1,\,p_{\mathrm c}\rbrack$ is also completed by this bound estimate and Proposition \ref{Proposition (Section 5): Reversed Holder}.
 \end{proof}

 Following \cite[Lemma 3.1]{BDMi}, we deduce Lemma \ref{Lemma (Section 5): Local Holder continuity of u-epsilon} from Theorem \ref{Theorem (Section 4): Gradient bounds}.
 \begin{lemma}\label{Lemma (Section 5): Local Holder continuity of u-epsilon}
     Under the assumptions of Theorem \ref{Theorem (Section 4): Gradient bounds}, $\bu_{\varepsilon}$ is $(1,\,1/2)$-H\"{o}lder continuous in each fixed ${\widetilde Q}\Subset {\mathcal Q}$, uniformly for $\varepsilon\in(0,\,1)$.
 \end{lemma}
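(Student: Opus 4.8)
The plan is to deduce everything from the $\varepsilon$-uniform gradient bound (\ref{Eq (Section 4): Local Gradient bound}) of Theorem~\ref{Theorem (Section 4): Gradient bounds}, combined with a time–oscillation estimate obtained by testing the weak form (\ref{Eq (Section 2): Approximate Weak Form}) against a spatial mollifier. First I would record two immediate consequences of (\ref{Eq (Section 4): Local Gradient bound}). By the ellipticity (\ref{Eq (Section 1): Matrix Gamma}) of $\bg$, one has $\lvert\D\bu_{\varepsilon}\rvert\le\gamma_{0}^{-1/2}\lvert\D\bu_{\varepsilon}\rvert_{\bg}\le\gamma_{0}^{-1/2}v_{\varepsilon}\le\gamma_{0}^{-1/2}\mu_{0}$ a.e.\ in $\widetilde{\mathcal Q}$, so each time slice $\bu_{\varepsilon}(\,\cdot\,,t)$ is Lipschitz in $x$ with a constant depending only on $\mathcal D$ and $\mu_{0}$. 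Moreover the flux coefficient in (\ref{Eq (Section 2): Approximate Weak Form}) is bounded uniformly in $\varepsilon$: since $g_{1}(v_{\varepsilon}^{2})v_{\varepsilon}=1$ and $\lvert\D\bu_\varepsilon\rvert\le\gamma_0^{-1/2}v_\varepsilon$, while (\ref{Eq (Section 1): Growth of p-Laplace-type operator}), (\ref{Eq (Section 1): Bound Assumptions for Coefficients}) and $v_{\varepsilon}\le\mu_{0}$ control $a_{p}g_{p}(v_{\varepsilon}^{2})\lvert\D\bu_{\varepsilon}\rvert\le C(\mathcal D)v_{\varepsilon}^{p-1}\le C(\mathcal D,\mu_{0})$, we get
\begin{equation*}
 \Bigl\lvert{\textstyle\sum_{s=1,\,p}}\,a_{s}g_{s}(v_{\varepsilon}^{2})\gamma_{\alpha\beta}\partial_{x_{\alpha}}u_{\varepsilon}^{j}\Bigr\rvert\le C(\mathcal D,\mu_{0})\quad\text{a.e.\ in }\widetilde{\mathcal Q},\qquad\text{for all }\alpha,\,j.
\end{equation*}
This is exactly the step where the very singular one-Laplace term becomes harmless.

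Next I would fix an intermediate cylinder $\widehat{\mathcal Q}$ with $\widetilde{\mathcal Q}\Subset\widehat{\mathcal Q}\Subset{\mathcal Q}$ and a radius $\rho_{0}\in(0,1)$ below the parabolic distance from $\widetilde{\mathcal Q}$ to $\partial\widehat{\mathcal Q}$. Given $(x_{1},\tau_{0}),(x_{1},\tau_{1})\in\widetilde{\mathcal Q}$ with $\tau_{0}<\tau_{1}$, set $\theta\coloneqq\lvert\tau_{1}-\tau_{0}\rvert^{1/2}$ and assume $\theta\le\rho_{0}$. For each $j$, pick a non-negative $\zeta\in C_{\mathrm c}^{\infty}(B_{\theta}(x_{1}))$ with $\int\zeta\,\d x=1$, $\lVert\zeta\rVert_{L^{\infty}}\le c(n)\theta^{-n}$, $\lVert\nabla\zeta\rVert_{L^{\infty}}\le c(n)\theta^{-n-1}$, and test (\ref{Eq (Section 2): Approximate Weak Form}) with the vector field whose $j$-th component is $\zeta(x)$ times a temporal cutoff converging to $\chi_{(\tau_{0},\tau_{1})}$, using $\bu_{\varepsilon}\in C([0,T];L^{2})^{N}$ to pass to the limit in the time–boundary terms. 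This yields
\begin{equation*}
 \int_{B_{\theta}(x_{1})}\!\!\bigl(u_{\varepsilon}^{j}(x,\tau_{1})-u_{\varepsilon}^{j}(x,\tau_{0})\bigr)\zeta\,\d x=-\!\int_{\tau_{0}}^{\tau_{1}}\!\!\!\int_{B_{\theta}(x_{1})}\!\!{\textstyle\sum_{s=1,\,p}}\,a_{s}g_{s}(v_{\varepsilon}^{2})\gamma_{\alpha\beta}\partial_{x_{\alpha}}u_{\varepsilon}^{j}\,\partial_{x_{\beta}}\zeta\,\d x\d t+\!\int_{\tau_{0}}^{\tau_{1}}\!\!\!\int_{B_{\theta}(x_{1})}\!\!f_{\varepsilon}^{j}\zeta\,\d x\d t.
\end{equation*}
By the flux bound the first term on the right is at most $C(\mathcal D,\mu_{0})\lvert\tau_{1}-\tau_{0}\rvert\,\lvert B_{\theta}\rvert\,\theta^{-n-1}=C(\mathcal D,\mu_{0})\theta$; by H\"older's inequality and (\ref{Eq (Section 4): Uniform Bounds F and U}) the force term is at most $C\,F\,\theta^{-n}\lvert B_{\theta}\rvert^{1-1/q}\lvert\tau_{1}-\tau_{0}\rvert^{1-1/r}=C\,F\,\theta^{\,2-n/q-2/r}\le C\,F\,\theta$, where the final inequality uses $\theta\le1$ and the fact that (\ref{Eq (Section 1): Condition for q,r}) forces $2-n/q-2/r>1$.

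Finally, since $\int\zeta\,\d x=1$ and $\bu_{\varepsilon}(\,\cdot\,,\tau_{i})$ is Lipschitz in $x$ with constant $C(\mathcal D,\mu_{0})$, replacing each weighted average $\int u_{\varepsilon}^{j}(\,\cdot\,,\tau_{i})\zeta\,\d x$ by the pointwise value $u_{\varepsilon}^{j}(x_{1},\tau_{i})$ costs only $C(\mathcal D,\mu_{0})\theta$, so
\begin{equation*}
 \lvert u_{\varepsilon}^{j}(x_{1},\tau_{1})-u_{\varepsilon}^{j}(x_{1},\tau_{0})\rvert\le C(\mathcal D,\mu_{0},F)\,\lvert\tau_{1}-\tau_{0}\rvert^{1/2}.
\end{equation*}
Together with the spatial Lipschitz estimate this gives the $(1,\tfrac12)$-H\"older bound for any pair of points of $\widetilde{\mathcal Q}$ at parabolic distance $\le\rho_{0}$, and a standard chaining argument over a finite subcover of $\widetilde{\mathcal Q}$ extends it to all pairs with a constant additionally depending on $\widetilde{\mathcal Q}$. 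I expect the only genuinely delicate point to be the rigorous justification of the testing identity at fixed time levels; as in Remark~\ref{Rmk: Section 4} this is handled through the Steklov average and the membership $\bu_{\varepsilon}\in C([0,T];L^{2})^{N}$, after which all remaining estimates are elementary and $\varepsilon$-independent precisely because Theorem~\ref{Theorem (Section 4): Gradient bounds} has already furnished the $\varepsilon$-uniform bound $\mu_{0}$.
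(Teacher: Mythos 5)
Your proposal is correct and takes essentially the same route as the paper: test the weak formulation against a compactly supported spatial mollifier paired with a temporal cutoff approximating a characteristic function, control the resulting flux integral by the $\varepsilon$-uniform bound on $v_\varepsilon$ (hence on $\lvert\A_\varepsilon\rvert$) from Theorem~\ref{Theorem (Section 4): Gradient bounds}, and control the force integral by $\|\buf_\varepsilon\|_{L^{q,r}}$ and the exponent~$\beta$. The only cosmetic difference is that you adapt the mollifier radius to $\theta=\lvert\tau_1-\tau_0\rvert^{1/2}$ and invoke the spatial Lipschitz bound directly to pass from the mollified average to a pointwise value, whereas the paper keeps a fixed $\rho$-scale mollifier and deduces the Campanato-type growth $\fiint_{Q_\rho}\lvert\bu_\varepsilon-(\bu_\varepsilon)_{Q_\rho}\rvert^2\le C\rho^2$ via the Poincar\'e--Sobolev inequality; both arguments rest on the same uniform $L^\infty$ gradient estimate and are interchangeable.
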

 \begin{proof}
     For given $Q_{\rho}(x_{0},\,t_{0})=B_{\rho}(x_{0},\,t_{0})\times I_{\rho}(t_{0})\subset {\widetilde Q}$, we choose and fix a non-negative function $\widetilde{\eta}\in C_{\mathrm c}^{1}(B_{\rho})$ satisfying 
     \[\fint_{B_{\rho}(x_{0})}\widetilde{\eta}\,\d x=1,\quad \lVert \widetilde{\eta}\rVert_{L^{\infty}(B_{\rho})}+\rho\lVert\nabla \widetilde{\eta}\rVert_{L^{\infty}(B_{\rho})}\le C(n).\]
     For each $t\in I_{\rho}(t_{0})$, we define 
     \[\widetilde{\bu}_{\varepsilon}(t)\coloneqq \fint_{B_{\rho}(x_{0})}\widetilde{\eta}\bu_{\varepsilon}\,\d x\in{\mathbb R}^{N}.\]
     We claim the following estimate;
     \begin{equation}\label{Eq (Section 5): Claim for Local Holder}
        \sup_{\tau_{1},\,\tau_{2}\in I_{\rho}(t_{0})}\,\lvert \widetilde{\bu}_{\varepsilon}(\tau_{1})-\widetilde{\bu}_{\varepsilon}(\tau_{2}) \rvert^{2} \le C({\mathcal D},\,F,\,\mu_{0})\rho.
     \end{equation}
     To prove (\ref{Eq (Section 5): Claim for Local Holder}), we may let $t_{0}-\rho^{2}<\tau_{1}<\tau_{2}<t_{0}$ without loss of generality.
     We choose a piecewise linear function as 
     \begin{equation}\label{Eq (Section 5): Piecewise Linear}
        \phi_{\widetilde{\varepsilon}}(t)\coloneqq \left(\min\left\{\,1,\,(t-\tau_{1})/\widetilde{\varepsilon},\,-(t-\tau_{2})/\widetilde{\varepsilon} \,\right\}\right)_{+}
     \end{equation}
     for $t\in I_{\rho}(t_{0})$, where we will later let $\widetilde{\varepsilon}\to 0$.
     For each $i\in\{\,1,\,\dots\,,\,N\,\}$, we test $\bphi\coloneqq (\delta^{ij}\widetilde{\eta}\phi_{\widetilde{\varepsilon}})_{j}$ into (\ref{Eq (Section 2): Approximate Weak Form}).
     Integrating by parts and summing over $i\in\{\,1,\,\dots\,,\,N\,\}$, we have 
     \begin{align*}
         \left\lvert \iint_{Q_{\rho}} \partial_{t}\phi_{\widetilde{\varepsilon}} \widetilde{\eta} \bu_{\varepsilon}\,\d x\d t \right\rvert&\le \gamma_{0}^{-2}\iint_{Q_{\rho}}\lvert \A_{\varepsilon}(x,\,t,\,\D\bu_{\varepsilon}) \rvert\lvert \nabla \widetilde{\eta}\rvert\,\d x\d t+\iint_{Q_{\rho}}\lvert \buf_{\varepsilon}\rvert \widetilde{\eta}\,\d x\d t \\ &\le C({\mathcal D},\,\mu_{0})\rho^{n+1}+C({\mathcal D})F\rho^{n+1+\beta}\le C({\mathcal D},\,F,\,\mu_{0})\lvert B_{\rho}\rvert\rho,
     \end{align*}
     where we have used H\"{o}lder's inequality, (\ref{Eq (Section 1): Growth of p-Laplace-type operator}), (\ref{Eq (Section 1): Bound Assumptions for Coefficients}), (\ref{Eq (Section 1): Growth of 1-Laplace-type operator}), and (\ref{Eq (Section 4): Local Gradient bound}).
     Letting $\widetilde{\varepsilon}\to 0$ completes the proof of (\ref{Eq (Section 5): Claim for Local Holder}).
     Using (\ref{Eq (Section 4): Local Gradient bound}), (\ref{Eq (Section 5): Claim for Local Holder}), and the Poincar\'{e}--Sobolev inequality, we have
     \begin{align*}
        & \fiint_{Q_{\rho}}\lvert \bu_{\varepsilon}-(\bu_{\varepsilon})_{Q_{\rho}} \rvert^{2}\,\d x\d t\\ 
        &\le 3\left[\fiint_{Q_{\rho}}\lvert\bu_{\varepsilon}(x,\,t) -\widetilde{\bu}_{\varepsilon}(t) \rvert^{2}\,\d x\d t+\fiint_{I\times I}\lvert\widetilde{\bu}_{\varepsilon}(t)-\widetilde{\bu}_{\varepsilon}(\tau) \rvert^{2}\,\d t\d\tau+\fiint_{Q_{\rho}}\lvert \widetilde{\bu}_{\varepsilon}(t)-(\bu_{\varepsilon})_{Q_{\rho}} \rvert^{2}\,\d x\d t \right]\\
        &\le C({\mathcal D})\left[\rho^{2}\fiint_{Q_{\rho}}\lvert \D\bu_{\varepsilon}\rvert^{2}\,\d x\d t+\sup_{\tau_{1},\,\tau_{2}\in I_{\rho}}\lvert \widetilde{\bu}_{\varepsilon}(\tau_{1})-\widetilde{\bu}_{\varepsilon}(\tau_{2})\rvert^{2}\right]\le C({\mathcal D},\,F,\,\mu_{0})\rho^{2}.
     \end{align*}
     From this Campanato-type growth estimate, the local H\"{o}lder continuity of $\bu_{\varepsilon}$ is easy to conclude.
 \end{proof}

\section{Degenerate case}\label{Section: Degenerate}
 Section \ref{Section: Degenerate} is dedicated to showing Proposition \ref{Proposition (Section 4): Degenerate Case}.
 The starting point is Lemma \ref{Lemma (Section 4): Caccioppoli estimates for U-delta-epsilon}, which states that the function $U_{\delta,\,\varepsilon}$ is in a certain parabolic De Giorgi class.
 A fundamental source of the parabolic De Giorgi class is \cite[Chapter II, \S 7]{LSU MR0241822}, based on De Giorgi's truncation and level set estimates. 
 We follow \cite[Chapter II \S 7]{LSU MR0241822} to give the proof of Proposition \ref{Proposition (Section 4): Degenerate Case}.
 However, some proofs are briefly sketched, since they can be completed very similarly to \cite[\S 5]{T-supercritical}, where the case $q=r>n+2$ is discussed.
 Removing the condition $q=r$ makes us give a slightly different proof in Lemma \ref{Lemma (Section 6): Density Lemma 2}, where we have to use another decay lemma (Lemma \ref{Lemma (Section 2): Geometric Decay Lemma}).
 \subsection{Expansion of positivity}
 In the proof of Proposition \ref{Proposition (Section 4): Degenerate Case}, we often assume
 \begin{equation}\label{Eq (Section 6): Inclusion Assumption}
    \widetilde{I}_{\frac{3}{2}\rho}(\gamma;\,\tau_{0})=\left(\tau_{0},\,\tau_{0}+\frac{9}{4}\gamma\rho^{2}\right] 
    \subset (t_{0}-2\rho^{2},\,t_{0}\rbrack=I_{\sqrt{2}\rho}(t_{0}),
 \end{equation}
 \begin{equation}\label{Eq (Section 6): Assumtion for Exp of Pos}
    \left\lvert \left\{ x\in B_{\rho}\mathrel{}\middle|\mathrel{}U_{\delta,\,\varepsilon}(x,\,t)\le (1-\widehat{\nu})\mu^{2} \right\} \right\rvert\ge \widetilde{\nu}\lvert B_{\rho}\rvert\quad \text{for a.e.~}t\in \widetilde{I}_{\frac{3}{2}\rho}(\gamma;\,\tau_{0}),
 \end{equation}
 hold for some $\gamma\in(0,\,\nu/9)$ and $\widehat{\nu},\,\widetilde{\nu}\in(0,\,1\rbrack$. 
 These conditions are verified by following Lemma \ref{Lemma (Section 6): Expansion of Positivity}.
 \begin{lemma}\label{Lemma (Section 6): Expansion of Positivity}
     If all of the assumptions of Proposition \ref{Proposition (Section 4): Degenerate Case} hold, then there exist $\rho_{\star}=\rho_{\star}({\mathcal D},\,\delta,\,M)\in(0,\,1)$, $\tau_{0}\in(t_{0}-\rho^{2},\,t_{0}-\nu\rho^{2}/2)$, $\gamma_{\star}=\gamma_{\star}({\mathcal D},\,\delta,\,M,\,\nu)\in(0,\,\nu/9)$ and $\widehat{\nu}=\widehat{\nu}(n,\,\nu)\in(0,\,\nu/2)$ such that (\ref{Eq (Section 6): Inclusion Assumption})--(\ref{Eq (Section 6): Assumtion for Exp of Pos}) hold with $\widetilde{\nu}=\nu/8$ and $\gamma=\gamma_{\star}$, provided $\rho\le \rho_{\star}$.
 \end{lemma}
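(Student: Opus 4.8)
The plan is to argue in two stages: \emph{(i)} a slicing argument which, out of the degenerate measure hypothesis (\ref{Eq (Section 4): Measure Assumption: Degenerate}), produces a single past time level $\tau_{0}$ at which a fixed fraction of $B_{\rho}$ lies in a sub-level set of $U_{\delta,\,\varepsilon}$; and \emph{(ii)} a forward-in-time propagation of this single-slice information over a short interval, carried out with the $L^{2}$ energy estimate (\ref{Eq (Section 4): De Giorgi Energy 2}), which is available because $U_{\delta,\,\varepsilon}$ obeys a uniformly parabolic differential inequality on $\{v_{\varepsilon}\ge\delta\}$ (see Lemma \ref{Lemma (Section 4): Caccioppoli estimates for U-delta-epsilon}). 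Once $\tau_{0}$ and $\gamma_{\star}$ are fixed, the inclusion (\ref{Eq (Section 6): Inclusion Assumption}) is automatic: with $\gamma_{\star}<\nu/9$ one has $\tfrac{9}{4}\gamma_{\star}\rho^{2}<\tfrac{\nu}{4}\rho^{2}<t_{0}-\tau_{0}$ and $\tau_{0}>t_{0}-\rho^{2}$, hence $\widetilde{I}_{\frac{3}{2}\rho}(\gamma_{\star};\,\tau_{0})=(\tau_{0},\,\tau_{0}+\tfrac{9}{4}\gamma_{\star}\rho^{2}\rbrack\subset(t_{0}-2\rho^{2},\,t_{0}\rbrack=I_{\sqrt{2}\rho}(t_{0})$.

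For stage \emph{(i)}, note that on $Q_{\rho}\setminus S_{\rho,\,\mu}$ we have $(v_{\varepsilon}-\delta)_{+}\le(1-\nu)\mu$, so $U_{\delta,\,\varepsilon}=(v_{\varepsilon}-\delta)_{+}^{2}\le(1-\nu)^{2}\mu^{2}\le(1-\nu)\mu^{2}$; combined with $\lvert Q_{\rho}\setminus S_{\rho,\,\mu}\rvert>\nu\lvert Q_{\rho}\rvert$ from (\ref{Eq (Section 4): Measure Assumption: Degenerate}), this shows that $\phi(t)\coloneqq\lvert\{x\in B_{\rho}\mid U_{\delta,\,\varepsilon}(x,\,t)\le(1-\nu)\mu^{2}\}\rvert$ satisfies $\int_{t_{0}-\rho^{2}}^{t_{0}}\phi\,\d t>\nu\rho^{2}\lvert B_{\rho}\rvert$. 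Splitting this integral at $t_{0}-\tfrac{\nu}{2}\rho^{2}$ and bounding $\phi\le\lvert B_{\rho}\rvert$ on the short piece, we obtain some $\tau_{0}\in(t_{0}-\rho^{2},\,t_{0}-\tfrac{\nu}{2}\rho^{2})$ with $\phi(\tau_{0})\ge\tfrac{\nu}{2}\lvert B_{\rho}\rvert$.

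For stage \emph{(ii)}, set $k\coloneqq(1-\nu)\mu^{2}$ and $W\coloneqq(U_{\delta,\,\varepsilon}-k)_{+}$, so that $0\le W\le\nu\mu^{2}$ on $Q_{2\rho}$ by (\ref{Eq (Section 4): Bound Assumption of v-epsilon}). For a small $\epsilon=\epsilon(n,\,\nu)\in(0,\,1)$, pick a cut-off $\widetilde{\eta}\in C_{\mathrm c}^{1}(B_{(1+\epsilon)\rho}(x_{0}))$ with $\widetilde{\eta}\equiv1$ on $B_{\rho}(x_{0})$, $0\le\widetilde{\eta}\le1$, and $\lvert\nabla\widetilde{\eta}\rvert\le C(n)(\epsilon\rho)^{-1}$; since $B_{(1+\epsilon)\rho}(x_{0})\subset B_{2\rho}(x_{0})$, where $\bu_{\varepsilon}$ solves the system, the estimate (\ref{Eq (Section 4): De Giorgi Energy 2}) is valid on $B_{(1+\epsilon)\rho}(x_{0})\times(\tau_{0},\,\tau\rbrack$ for every $\tau\in\widetilde{I}_{\frac{3}{2}\rho}(\gamma_{\star};\,\tau_{0})$. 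Using that $W(\cdot,\,\tau_{0})=0$ on the set from stage \emph{(i)} (of measure $\ge\tfrac{\nu}{2}\lvert B_{\rho}\rvert$), that $0\le W\le\nu\mu^{2}$, that $\delta<\mu\le M$, and H\"older's inequality for the forcing integral — which yields the factor $\rho^{2\beta}$ because $(q,\,r)$ satisfies (\ref{Eq (Section 1): Condition for q,r}) — one arrives, for a.e.\ such $\tau$, at
\[
\int_{B_{(1+\epsilon)\rho}(x_{0})\times\{\tau\}}(\widetilde{\eta} W)^{2}\,\d x\le\nu^{2}\mu^{4}\bigl[(1+\epsilon)^{n}-\tfrac{\nu}{2}\bigr]\lvert B_{\rho}\rvert+C({\mathcal D},\,\delta,\,M)\Bigl[\frac{\nu^{2}\mu^{4}\gamma_{\star}}{\epsilon^{2}}+M^{4}(1+F^{2})\rho^{2\beta}\Bigr]\lvert B_{\rho}\rvert.
\]

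Finally I would convert this into a measure bound. For $s\in(0,\,1)$, on $\{x\in B_{\rho}\mid U_{\delta,\,\varepsilon}(x,\,\tau)>(1-s\nu)\mu^{2}\}$ one has $W(\cdot,\,\tau)>(1-s)\nu\mu^{2}$ and $\widetilde{\eta}\equiv1$, so Chebyshev's inequality together with the previous display and $\mu\ge\delta$ gives
\[
\bigl\lvert\{x\in B_{\rho}\mid U_{\delta,\,\varepsilon}(x,\,\tau)>(1-s\nu)\mu^{2}\}\bigr\rvert\le\frac{(1+\epsilon)^{n}-\tfrac{\nu}{2}}{(1-s)^{2}}\lvert B_{\rho}\rvert+\frac{C({\mathcal D},\,\delta,\,M)}{(1-s)^{2}}\Bigl[\frac{\gamma_{\star}}{\epsilon^{2}}+\frac{M^{4}(1+F^{2})\rho^{2\beta}}{\nu^{2}\delta^{4}}\Bigr]\lvert B_{\rho}\rvert.
\]
Now I fix the parameters in order: first $\epsilon=\epsilon(n,\,\nu)$ and $s=s(n,\,\nu)\in(0,\,\tfrac{1}{2})$ small enough that $[(1+\epsilon)^{n}-\tfrac{\nu}{2}]\,(1-s)^{-2}\le1-\tfrac{\nu}{4}$ (possible since both $(1+\epsilon)^{n}$ and $(1-s)^{-2}$ tend to $1$); then $\gamma_{\star}=\gamma_{\star}({\mathcal D},\,\delta,\,M,\,\nu)\in(0,\,\nu/9)$ small enough that the $\gamma_{\star}$-term is $\le\tfrac{\nu}{16}\lvert B_{\rho}\rvert$; then $\rho_{\star}$ small (in terms of the data) so that for $\rho\le\rho_{\star}$ the $\rho^{2\beta}$-term is $\le\tfrac{\nu}{16}\lvert B_{\rho}\rvert$. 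The resulting bound $\le(1-\tfrac{\nu}{8})\lvert B_{\rho}\rvert$ is exactly (\ref{Eq (Section 6): Assumtion for Exp of Pos}) for a.e.\ $t\in\widetilde{I}_{\frac{3}{2}\rho}(\gamma_{\star};\,\tau_{0})$, with $\widehat{\nu}\coloneqq s\nu\in(0,\,\nu/2)$ and $\widetilde{\nu}=\nu/8$. The one genuinely delicate point is this ordering of choices: the cut-off must reach slightly beyond $B_{\rho}$ (both the single-slice information and the claimed conclusion concern $B_{\rho}$, while $\widetilde{\eta}$ must vanish on the boundary of its support), which costs the loss factor $(1+\epsilon)^{n}>1$, and this has to be beaten by the slack $\nu/2$ furnished by the slicing lemma before one shrinks $\gamma_{\star}\rho^{2}$ and $\rho_{\star}$ to absorb the Caccioppoli and forcing errors; uniformity of all constants in $\varepsilon$ and $\mu$ is preserved throughout by $\mu\in(\delta,\,M]$.
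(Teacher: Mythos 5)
Your proposal is correct and follows essentially the same two-stage structure as the paper: a slicing/averaging argument to isolate a good initial time slice $\tau_{0}$ where $U_{\delta,\,\varepsilon}\le(1-\nu)\mu^{2}$ on a fraction $\ge\nu/2$ of $B_{\rho}$, then the time-slicewise Caccioppoli estimate (\ref{Eq (Section 4): De Giorgi Energy 2}) plus Chebyshev to propagate this forward over $\widetilde{I}_{\frac{3}{2}\rho}(\gamma_{\star};\,\tau_{0})$. The only substantive variation is the direction of the cut-off: the paper takes $\widetilde{\eta}$ supported in $B_{\rho}$ and $\equiv1$ on $B_{(1-\sigma)\rho}$, so the annulus $B_{\rho}\setminus B_{(1-\sigma)\rho}$ is added back in at the final time via $\lvert A_{\theta_{0}\nu,\,\rho}(\tau)\rvert\le\lvert B_{\rho}\setminus B_{(1-\sigma)\rho}\rvert+\lvert A_{\theta_{0}\nu,\,(1-\sigma)\rho}(\tau)\rvert$, whereas you take $\widetilde{\eta}$ supported in $B_{(1+\epsilon)\rho}$ and $\equiv1$ on $B_{\rho}$, so the annulus cost $((1+\epsilon)^{n}-1)\lvert B_{\rho}\rvert$ shows up in the crude bound of the initial-time integral instead. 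Both are valid — the outward variant relies, as you correctly note, on the Caccioppoli estimate being available on $B_{(1+\epsilon)\rho}\subset B_{2\rho}$ (where the weak-solution hypotheses of Proposition \ref{Proposition (Section 4): Degenerate Case} in fact hold, notwithstanding the formal $Q\subset Q_{\rho}$ wording in Lemma \ref{Lemma (Section 4): Caccioppoli estimates for U-delta-epsilon}; the paper itself applies that lemma with $Q=Q_{2\rho}$ in the proof of Lemma \ref{Lemma (Section 6): Density Lemma 1}). Your parameter ordering (first $\epsilon,\,s$ to beat the $\nu/2$ slack, then $\gamma_{\star}$, then $\rho_{\star}$) and the resulting dependencies match the paper's, and the resulting threshold $1-\nu/8$ agrees.
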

 \begin{proof}
    It suffices to prove (\ref{Eq (Section 6): Assumtion for Exp of Pos}), since (\ref{Eq (Section 6): Inclusion Assumption}) immediately follows from $\tau_{0}\in (t_{0}-\rho^{2},\,t_{0}-\nu\rho^{2}/2)$ and $\gamma\in(0,\,\nu/9)$.
    We also note that there exists a number $\tau_{0}\in (t-\rho^{2},\,t_{0}-\nu\rho^{2}/2)$ such that 
    \begin{equation}\label{Eq (Section 6): Positivity at tau0}
        \lvert \left\{x\in B_{\rho}\mid U_{\delta,\,\varepsilon}(x,\,\tau_{0})\le (1-\nu)\mu^{2} \right\}\rvert\ge \frac{\nu}{2}\lvert B_{\rho}\rvert,
    \end{equation}
    since otherwise we would have 
    \[\lvert Q_{\rho}\setminus S_{\rho}\rvert\le \lvert\{(x,\,t)\in Q_{\rho}\mid U_{\delta,\,\varepsilon}(x,\,t)\le (1-\nu)\mu^{2} \}\rvert \le \left(1-\frac{\nu}{2}\right)\rho^{2}\cdot \frac{\nu}{2} \lvert B_{\rho}\rvert+\frac{\nu}{2}\rho^{2}\cdot \lvert B_{\rho}\rvert\le \nu\lvert Q_{\rho}\rvert,\] 
    which would cause a contradiction with (\ref{Eq (Section 4): Measure Assumption: Degenerate}).
    Let $\sigma\in(0,\,1)$ and $\theta_{0}\in (0,\,1/2)$ be chosen later, and corresponding to these numbers, we fix a cutoff function $\widetilde{\eta}\in C_{\mathrm c}^{1}(B_{\rho};\,\lbrack 0,\,1\rbrack)$ satisfying $\widetilde{\eta}|_{B_{(1-\sigma)\rho}}\equiv 1$, and introduce the super-level set $A_{\theta_{0}\nu,\,R}(\tau)\coloneqq \left\{x\in B_{R}(x_{0})\mathrel{}\middle|\mathrel{} U_{\delta,\,\varepsilon}(x,\,\tau)>(1-\theta_{0}\nu)\mu^{2} \right\}$ for $R\in(0,\,\rho\rbrack$ and $\tau\in \widetilde{I}_{\frac{3}{2}\rho}(\gamma;\,\tau_{0})$.
    We use (\ref{Eq (Section 4): De Giorgi Energy 2}) with $Q=B_{\rho}\times \widetilde{I}_{\frac{3}{2}\rho}(\gamma;\,\tau_{0})$ and $k\coloneqq (1-\nu)\mu^{2}$, which implies $(U_{\delta,\,\varepsilon}-k)_{+}\le \nu\mu^{2}$.
    Keeping in mind that $U_{\delta,\,\varepsilon}-k\ge (1-\theta_{0})\nu\mu^{2}$ in $A_{\theta_{0}\nu,\,(1-\theta)\rho}$ and using (\ref{Eq (Section 6): Positivity at tau0}), we have 
    \begin{align*}
        &(1-\theta_{0})^{2}(\nu\mu^{2})^{2} \lvert A_{\theta_{0}\nu,\,(1-\sigma)\rho}(\tau) \rvert\\ 
        &\le (\nu\mu^{2})^{2}\lvert \left\{x\in B_{\rho}\mid U_{\delta,\,\varepsilon}(x,\,\tau_{0})>(1-\nu)\mu^{2} \right\}\rvert\\ 
        &\quad +C({\mathcal D},\,\delta,\,M)\left[\frac{(\nu\mu)^{2}}{(\sigma\rho)^{2}}\lvert Q\rvert+\mu^{4}\left(1+F^{2}\right)\lvert B_{\rho}(x_{0})\rvert^{1/\widehat{q}}\left\lvert \widetilde{I}_{\frac{3}{2}\rho}(\gamma;\,\tau_{0})\right\rvert^{1/\widetilde{r}} \right]\\ 
        &\le (\nu\mu^{2})^{2}\left(1-\frac{\nu}{2}+\frac{C\gamma}{\sigma^{2}}+\frac{C\left(1+F^{2}\right)\gamma^{1/\widehat{r}}\rho^{2\beta}}{\nu^{2}} \right)\lvert B_{\rho}\rvert
    \end{align*}
    for some constant $C_{0}=C_{0}({\mathcal D},\,\delta,\,M)\in(1,\,\infty)$.
    Combining the above estimate with $\lvert A_{\theta_{0}\nu,\,\rho}(\tau)\rvert\le \lvert B_{\rho}\setminus B_{(1-\sigma)\rho}\rvert+\lvert A_{\theta_{0}\nu,\,(1-\sigma)\rho}(\tau) \rvert$ yields
    \[\lvert A_{\theta_{0}\nu,\,\rho}(\tau)\rvert\le \left(n\sigma+\frac{1-\nu/2}{(1-\theta_{0})^{2}}+\frac{C_{0}\gamma}{(1-\theta_{0})^{2}\sigma^{2}}+\frac{C_{0}\left(1+F^{2}\right)\gamma^{1/\widehat{r}}\rho^{2\beta}}{\nu^{2}} \right)\lvert B_{\rho}\rvert.\]
    Determining $\sigma\in(0,\,1)$, $\theta_{0}\in(0,\,1/2)$, $\gamma\in(0,\,\nu/9)$, and $\rho_{\ast}\in(0,\,1)$ such that 
    \[n\sigma\le \frac{\nu}{24},\quad \frac{1-\nu/2}{(1-\theta_{0})^{2}}\le 1-\frac{\nu}{4},\quad \frac{C_{0}\gamma}{\sigma^{2}(1-\theta_{0})^{2}}\le \frac{\nu}{24},\quad \text{and}\quad \frac{C_{0}\left(1+F^{2} \right)\gamma^{1/\widehat{r}} \rho_{\star}^{2\beta} }{\nu^{2}(1-\theta_{0})^{2}}\le \frac{\nu}{24}\]
    are all satisfied, we deduce $\lvert A_{\theta_{0}\nu,\,\rho}(\tau) \rvert\le (1-\nu/8)\lvert B_{\rho}\rvert=(1-\widetilde{\nu})\lvert B_{\rho}\rvert$ for a.e.~$\tau\in \widetilde{I}_{\frac{3}{2}\rho}(\gamma;\,\tau_{0})$, provided $\rho\le \rho_{\star}$.
    This completes the proof of (\ref{Eq (Section 6): Assumtion for Exp of Pos}) with $\widehat{\nu}\coloneqq \theta_{0}\nu\in(0,\,\nu/2)$. 
 \end{proof}
 \subsection{Density of level sets}
 \begin{lemma}\label{Lemma (Section 6): Density Lemma 1}
     In addition to the assumptions of Proposition \ref{Proposition (Section 4): Degenerate Case}, let (\ref{Eq (Section 6): Inclusion Assumption})--(\ref{Eq (Section 6): Assumtion for Exp of Pos}) and $\rho^{\beta}\le 2^{-l_{\star}}\gamma^{-\frac{1}{2\widehat{r}}} \widehat{\nu}$ hold for some fixed $\tau_{0}\in (t_{0}-\rho^{2},\,t_{0}-\nu\rho^{2}/2)$, $l_{\star}\in{\mathbb N}$, $\gamma\in(0,\,\nu/9)$, and $\widehat{\nu},\,\widetilde{\nu}\in(0,\,1\rbrack$.
     Then, we have 
     \begin{equation}\label{Eq (Section 6): Density Result 1}
         \left\lvert \left\{(x,\,t)\in Q_{\frac{3}{2}\rho}(\gamma;\,\tau_{0})\mathrel{}\middle|\mathrel{}U_{\delta,\,\varepsilon}(x,\,t)\ge \left(1-2^{-l_{\star}}\widehat{\nu} \right)\mu^{2}  \right\} \right\rvert\le \frac{C_{\star}}{\widetilde{\nu}\sqrt{\gamma l_{\star}}} \left\lvert \widetilde{Q}_{\frac{3}{2}\rho}(\gamma;\,\tau_{0}) \right\rvert
     \end{equation}
     for some $C_{\star}=C_{\ast}({\mathcal D},\,F,\,\delta,\,M)\in (1,\,\infty)$.
 \end{lemma}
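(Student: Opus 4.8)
The plan is to run the classical De Giorgi level-set energy argument of \cite[Chapter II, \S 7]{LSU MR0241822} for the subsolution $U_{\delta,\,\varepsilon}$, adapted to the present parabolic De Giorgi class (\ref{Eq (Section 4): De Girogi Energy 1})--(\ref{Eq (Section 4): De Giorgi Energy 2}) from Lemma \ref{Lemma (Section 4): Caccioppoli estimates for U-delta-epsilon}. First I would fix the increasing family of truncation levels
\[
k_{l}\coloneqq \left(1-2^{-l}\widehat{\nu}\right)\mu^{2},\qquad l=0,\,1,\,\dots,\,l_{\star},
\]
so that $k_{l}\uparrow$ and $k_{l+1}-k_{l}=2^{-l-1}\widehat{\nu}\mu^{2}$, and introduce the corresponding super-level sets $A_{l}(t)\coloneqq\{x\in B_{\frac32\rho}\mid U_{\delta,\,\varepsilon}(x,\,t)>k_{l}\}$. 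Fixing a cutoff $\widetilde{\eta}\in C_{\mathrm c}^{1}(B_{\frac32\rho})$ with $\widetilde{\eta}\equiv1$ on $B_{\rho}$ and $\lvert\nabla\widetilde{\eta}\rvert\le C/\rho$, the expansion-of-positivity hypothesis (\ref{Eq (Section 6): Assumtion for Exp of Pos}) gives, for a.e.\ $t\in\widetilde{I}_{\frac32\rho}(\gamma;\,\tau_{0})$, the De Giorgi isoperimetric inequality
\[
(k_{l+1}-k_{l})\lvert A_{l+1}(t)\rvert
\le \frac{C(n)}{\widetilde{\nu}}\,\rho\!\!\int_{A_{l}(t)\setminus A_{l+1}(t)}\!\!\lvert\nabla U_{\delta,\,\varepsilon}\rvert\,\d x ,
\]
using that $\{x\mid U_{\delta,\,\varepsilon}(x,\,t)\le(1-\widehat{\nu})\mu^{2}\}$ occupies a fixed fraction $\widetilde{\nu}$ of $B_{\rho}$. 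Squaring, summing in $l$ from $0$ to $l_{\star}-1$, integrating in $t$, and applying Cauchy--Schwarz in the form $\bigl(\sum a_{l}\bigr)^{2}\le l_{\star}\sum a_{l}^{2}$ telescopes the gradient term to $\iint\lvert\nabla U_{\delta,\,\varepsilon}\rvert^{2}$ over $\{k_{0}<U_{\delta,\,\varepsilon}<k_{l_{\star}}\}$.

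The next step is to bound that gradient integral by the Caccioppoli estimate (\ref{Eq (Section 4): De Girogi Energy 1}) applied at level $k=k_{0}=(1-\widehat{\nu})\mu^{2}$ on $Q=\widetilde{Q}_{\frac32\rho}(\gamma;\,\tau_{0})$: since $(U_{\delta,\,\varepsilon}-k_{0})_{+}\le\widehat{\nu}\mu^{2}\le\mu^{2}$ on $A_{0}$, the first term on the right of (\ref{Eq (Section 4): De Girogi Energy 1}) is controlled by $C(\mathcal D,\delta,M)\,\mu^{4}\bigl(\rho^{-2}+\gamma^{-1}\rho^{-2}\bigr)\lvert\widetilde{Q}_{\frac32\rho}\rvert$ because the cutoff contributes $\lvert\partial_{t}\eta^{2}\rvert\lesssim(\gamma\rho^{2})^{-1}$, and the forcing term is handled by Hölder's inequality against $\buf_{\varepsilon}\in L^{q,\,r}$ using (\ref{Eq (Section 4): Uniform Bounds F and U}), yielding a contribution $C(\mathcal D,F,\delta,M)\,\mu^{4}\gamma^{-1/\widehat r}\rho^{2\beta-2}\lvert\widetilde{Q}_{\frac32\rho}\rvert$. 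Dividing everything by $(k_{l_{\star}}-k_{0})^{2}\ge(\widehat{\nu}\mu^{2}/2)^{2}$ — actually by the telescoped factor $\sum_{l}(k_{l+1}-k_{l})^{2}\asymp(\widehat{\nu}\mu^{2})^{2}$ — and using $\lvert A_{l_{\star}}(t)\rvert\le\lvert A_{l}(t)\rvert$ for all $l<l_{\star}$, one arrives at
\[
l_{\star}\,\Bigl\lvert\{(x,\,t)\in Q_{\frac32\rho}(\gamma;\,\tau_{0})\mid U_{\delta,\,\varepsilon}\ge k_{l_{\star}}\}\Bigr\rvert^{2}
\le \frac{C}{\widetilde{\nu}^{2}}\,\frac{\mu^{8}}{(\widehat{\nu}\mu^{2})^{2}}\Bigl(\frac{1}{\gamma}+\frac{(1+F^{2})\rho^{2\beta}}{\gamma^{1/\widehat r}}\Bigr)\rho^{-2}\lvert B_{\frac32\rho}\rvert\cdot\gamma\rho^{2}\,\lvert B_{\frac32\rho}\rvert ,
\]
and here the smallness assumption $\rho^{\beta}\le 2^{-l_{\star}}\gamma^{-1/(2\widehat r)}\widehat{\nu}$ absorbs the second bracket into the first, so that taking square roots and recalling $\lvert\widetilde{Q}_{\frac32\rho}(\gamma;\,\tau_{0})\rvert\asymp\gamma\rho^{n+2}$ gives exactly (\ref{Eq (Section 6): Density Result 1}) with $C_{\star}$ of the asserted dependence; note the two cylinders $Q_{\frac32\rho}(\gamma;\,\tau_{0})$ and $\widetilde Q_{\frac32\rho}(\gamma;\,\tau_{0})$ differ only by a time-reflection, so their measures coincide.

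The main obstacle I anticipate is bookkeeping the powers of $\mu$, $\widehat\nu$, $\gamma$, and $\rho$ so that the forcing term genuinely gets absorbed by the geometric term under the stated hypothesis $\rho^{\beta}\le 2^{-l_{\star}}\gamma^{-1/(2\widehat r)}\widehat\nu$, rather than under some stronger condition; in particular one must be careful that the De Giorgi truncation is applied at the \emph{fixed} level $k_{0}$ (independent of $l$) so that the Caccioppoli right-hand side does not itself depend on $l_{\star}$, leaving $l_{\star}$ to appear only through the Cauchy--Schwarz factor. A secondary technical point is that (\ref{Eq (Section 4): De Girogi Energy 1}) is stated for a space-time cutoff $\eta$ while the telescoping argument wants an estimate uniform in the time-slices; this is resolved exactly as in \cite[\S 5]{T-supercritical} by combining (\ref{Eq (Section 4): De Girogi Energy 1}) with the slice-wise inequality (\ref{Eq (Section 4): De Giorgi Energy 2}), both of which are already at our disposal. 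Everything else — the isoperimetric inequality on slices, the choice of $\widetilde\eta$, and the final division — is routine and parallels \cite[Chapter II, \S 7]{LSU MR0241822} and \cite[\S 5]{T-supercritical} closely.
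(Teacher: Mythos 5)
The overall skeleton you set up — slicewise isoperimetric inequality using (\ref{Eq (Section 6): Assumtion for Exp of Pos}), Cauchy--Schwarz in space--time, Caccioppoli from Lemma \ref{Lemma (Section 4): Caccioppoli estimates for U-delta-epsilon}, and a summation over $l$ — is the right framework, and the choice of levels $k_{l}=(1-2^{-l}\widehat{\nu})\mu^{2}$ is correct. However, there is a genuine gap that kills the factor $1/\sqrt{l_{\star}}$, and it is located exactly where you flag a "subtlety".

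You insist that the Caccioppoli estimate (\ref{Eq (Section 4): De Girogi Energy 1}) should be applied at the \emph{fixed} level $k_{0}$ "so that the Caccioppoli right-hand side does not itself depend on $l_{\star}$". This is backwards. The paper applies (\ref{Eq (Section 4): De Girogi Energy 1}) at the \emph{varying} level $k=k_{l}$, and the point is precisely that the Caccioppoli right-hand side then carries a factor $4^{-l}$ because $(U_{\delta,\varepsilon}-k_{l})_{+}\le 2^{-l}\widehat{\nu}\mu^{2}$. This $4^{-l}$ cancels the $4^{-l}$ coming from $(k_{l+1}-k_{l})^{2}$ on the isoperimetric side, producing an $l$-\emph{uniform} recursion
\[
\lvert A_{l+1}\rvert^{2}\le \frac{C_{\star}^{2}}{\gamma\widetilde{\nu}^{2}}\lvert \widetilde{Q}\rvert\bigl(\lvert A_{l}\rvert-\lvert A_{l+1}\rvert\bigr)\quad\text{for every }l\le l_{\star}-1,
\]
and it is from this uniform recursion that $l_{\star}$ appears, via $l_{\star}\lvert A_{l_{\star}}\rvert^{2}\le\sum_{l=0}^{l_{\star}-1}\lvert A_{l+1}\rvert^{2}\le \frac{C_{\star}^{2}}{\gamma\widetilde{\nu}^{2}}\lvert\widetilde{Q}\rvert\lvert A_{0}\rvert$, i.e.\ a Chebyshev/telescoping argument, \emph{not} the Cauchy--Schwarz inequality $(\sum a_{l})^{2}\le l_{\star}\sum a_{l}^{2}$. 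With your fixed-level choice, the isoperimetric inequality after Cauchy--Schwarz reads $4^{-l-1}(\widehat{\nu}\widetilde{\nu}\mu^{2})^{2}\lvert A_{l+1}\rvert^{2}\le C\rho^{2}(\lvert A_{l}\rvert-\lvert A_{l+1}\rvert)\iint\lvert\nabla(U_{\delta,\varepsilon}-k_{0})_{+}\rvert^{2}$, and since $\sum_{l}4^{-l}$ converges, summing produces only $\lvert A_{l_{\star}}\rvert\lesssim\frac{1}{\widetilde{\nu}\sqrt{\gamma}}\lvert\widetilde{Q}\rvert$ with no $l_{\star}$-improvement. The geometric spacing of the levels makes the fixed-level route structurally incapable of producing $\sqrt{l_{\star}}$.

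A useful sanity check you could have run: the hypothesis $\rho^{\beta}\le 2^{-l_{\star}}\gamma^{-1/(2\widehat{r})}\widehat{\nu}$ carries a factor $2^{-l_{\star}}$. In the paper's argument this is exactly what is needed to absorb the forcing term into the geometric term $4^{-l}\widehat{\nu}^{2}\mu^{4}$ at the deepest level $l=l_{\star}-1$, since one checks $4^{l}\gamma^{1/\widehat r}\rho^{2\beta}\widehat{\nu}^{-2}\le 4^{l-l_{\star}}\le 1$ for all $l\le l_{\star}-1$. In your version, applied only at $k_{0}$, the absorption just needs $\gamma^{1/\widehat r}\rho^{2\beta}\le\widehat{\nu}^{2}$ and the factor $2^{-l_{\star}}$ in the hypothesis is never used — a strong indication that the argument as you organize it is too lossy. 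A secondary point: the slice-wise energy (\ref{Eq (Section 4): De Giorgi Energy 2}) is not needed here; the proof of Lemma \ref{Lemma (Section 6): Density Lemma 1} uses only (\ref{Eq (Section 4): De Girogi Energy 1}), after the isoperimetric inequality has been integrated in time.
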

 We recall a well-known isoperimetric inequality of the form
 \begin{equation}\label{Eq (Section 6): Isoperimetric Ineq}
     (d-c)\lvert\{x\in B_{\rho}\mid w(x)>d\} \rvert\le \frac{C(n)\rho^{n+1}}{\lvert\{x\in B_{\rho}\mid w(x)<c\} \rvert}\int_{B_{\rho}\cap\{c<w<d\}}\lvert \nabla w\rvert\,\d x,
 \end{equation}
 which holds for any function $w\in W^{1,\,1}(B_{\rho})$ and real numbers $-\infty<c<d<\infty$.
 \begin{proof}
     For each $l\in{\mathbb Z}_{\ge 0}$, we introduce $k_{l}\coloneqq \left( 1-2^{-l}\widehat{\nu}\right)\mu^{2}$ and $A_{l}\coloneqq \{(x,\,t)\in \widetilde{Q}_{\frac{3}{2}\rho}(\gamma;\,\tau_{0})\mid U_{\delta,\,\varepsilon}(x,\,t)>k_{l} \}$.
     Since $k_{l+1}-k_{l}=2^{-l-1}\widehat{\nu}\mu^{2}$ clearly holds for any $l\in{\mathbb Z}_{\ge 0}$, using (\ref{Eq (Section 6): Assumtion for Exp of Pos}) and applying (\ref{Eq (Section 6): Isoperimetric Ineq}) to $U_{\delta,\,\varepsilon}(\,\cdot\,,\,t)\in W^{1,\,1}(B_{\frac{3}{2}\rho})$ yield
     \[\frac{\widehat{\nu}\widetilde{\nu}\mu^{2}}{2^{l+1}}\left\lvert \left\{ x\in B_{\frac{3}{2}\rho}\mathrel{}\middle|\mathrel{}U_{\delta,\,\varepsilon}>k_{l+1} \right\}\right\rvert \le C(n)\rho\int_{B_{\frac{3}{2}\rho}\cap \{k_{l}<U_{\delta,\,\varepsilon}(\,\cdot\,,\,t)<k_{l+1}\}}\lvert \nabla(U_{\delta,\,\varepsilon}(x,\,t)-k_{l})_{+} \rvert\,\d x\d t\]
     for a.e.~$t\in I_{\frac{3}{2}\rho}(\gamma;\,\tau_{0})$.
     In particular, using the Cauchy--Schwarz inequality, we easily deduce 
     \[\frac{(\widehat{\nu}\widetilde{\nu})^{2}\mu^{4} }{4^{l+1}}\lvert A_{l+1}\rvert^{2}\le C(n)\rho^{2}\left(\lvert A_{l}\rvert-\lvert A_{l+1}\rvert \right)\iint_{Q_{\frac{3}{2}\rho}(\gamma;\,\tau_{0})}\lvert \nabla(U_{\delta,\,\varepsilon}-k_{l})_{+} \rvert^{2}\,\d x\d t.\]
     We suitably choosing a cutoff function $\eta\in C_{\mathrm c}^{1}(Q_{2\rho}(x_{0},\,t_{0});\,\lbrack 0,\,1\rbrack)$ satisfying $\eta|_{\widetilde{Q}_{\frac{3}{2}\rho}(\gamma;\,x_{0},\,t_{0})}\equiv 1$.
     Using (\ref{Eq (Section 4): De Girogi Energy 1}) with $Q=Q_{2\rho}(x_{0},\,t_{0})$ and $k=k_{l}$, we compute the last integral as 
     \begin{align*}
        &\iint_{\widetilde{Q}_{\frac{3}{2}\rho(\gamma;\,\tau_{0})}}\lvert \nabla(U_{\delta,\,\varepsilon}-k_{l})_{+} \rvert^{2}\,\d x\d t\\ 
        &\le C({\mathcal D},\,\delta,\,M)\left[4^{-l}\widehat{\nu}^{2}\mu^{4}\left\lvert Q_{2\rho}(x_{0},\,t_{0}) \right\rvert+\mu^{4}\left(1+F^{2}\right)\lvert B_{2\rho}(x_{0}) \rvert^{1/\widehat{q}}\left\lvert I_{2\rho}(t_{0}) \right\rvert^{1/\widehat{r}} \right]\\ 
        &\le \frac{C({\mathcal D},\,F,\,\delta,\,M)(\widehat{\nu}\mu^{2})^{2}}{4^{l}\gamma\rho^{2}}\left[1+\frac{4^{l}\gamma^{1/\widehat{r}}\rho^{2\beta}}{\widehat{\nu}^{2}} \right] \left\lvert \widetilde{Q}_{\frac{3}{2}\rho}(\gamma;\,\tau_{0}) \right\rvert,
     \end{align*}
     by $(U_{\delta,\,\varepsilon}-k_{l})_{+}\le 2^{-l}\widehat{\nu}\mu^{2}$.
     If $\rho^{\beta}\le 2^{-l_{\star}}\widehat{\nu}\gamma^{-1/(2\widehat{r})}$ holds, then the last two estimates imply
     \[\lvert A_{l+1}\rvert^{2}\le \frac{C_{\star}^{2}}{\gamma\widetilde{\nu}^{2}}\left\lvert \widetilde{Q}_{\frac{3}{2}\rho}(\gamma;\,\tau_{0}) \right\rvert (\lvert A_{l}\rvert -\lvert A_{l+1}\rvert)\]
     for every $l\in\{\,0,\,\dots\,,\,l_{\star}-1 \,\}$. Therefore, we deduce
     \[l_{\star}\lvert A_{l_{\star}}\rvert^{2}\le \sum_{l=0}^{l_{\star}-1}\lvert A_{l}\rvert^{2}\le \frac{C_{\star}^{2}}{\gamma\widetilde{\nu}^{2}}\left\lvert \widetilde{Q}_{\frac{3}{2}\rho}(\gamma;\,\tau_{0}) \right\rvert \lvert A_{0}\rvert\le \frac{C_{\star}^{2}}{\gamma\widetilde{\nu}^{2}}\left\lvert \widetilde{Q}_{\frac{3}{2}\rho}(\gamma;\,\tau_{0}) \right\rvert^{2}.\]
     The desired estimate (\ref{Eq (Section 6): Density Result 1}) immediately follows from the above inequality.
 \end{proof}
 \begin{lemma}\label{Lemma (Section 6): Density Lemma 2}
     In addition to the assumptions of Proposition \ref{Proposition (Section 4): Degenerate Case}, let 
     \begin{equation}\label{Eq (Section 7): level set Assumption}
        \left\lvert \left\{ (x,\,t)\in\widetilde{Q}_{\frac{3}{2}\rho}(\gamma;\,x_{0},\,\tau_{0})\mathrel{}\middle|\mathrel{}U_{\delta,\,\varepsilon}(x,\,t)\ge (1-\nu_{0})\mu^{2} \right\} \right\rvert\le \alpha\left\lvert \widetilde{Q}_{\frac{3}{2}\rho}(\gamma;\,x_{0},\,\tau_{0}) \right\rvert
     \end{equation}
     hold for some $\alpha,\,\gamma,\,\nu_{0}\in(0,\,1)$.
     There exists sufficiently small $\alpha_{0}=\alpha_{0}({\mathcal D},\,F,\,\delta,\,M,\,\gamma)\in(0,\,1)$ and $\rho_{\star}=\rho_{\star}({\mathcal D},\,F,\,\delta,\,M)\in(0,\,1)$ such that $\alpha\le \alpha_{0}$ and $\rho^{\beta}\le \nu_{0}$ imply
     \begin{equation}\label{Eq (Section 7): Density Result 2}
         \esssup_{\widehat{Q}}\,U_{\delta,\,\varepsilon}\le \left(1-\frac{\nu_{0}}{2}\right)\mu^{2},\quad \text{where}\quad \widehat{Q}\coloneqq B_{\rho}(x_{0})\times I_{\rho}\left(\gamma;\,\tau_{0}+\frac{9}{4}\gamma\rho^{2}\right).
     \end{equation}
 \end{lemma}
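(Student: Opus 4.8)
The plan is a De Giorgi iteration on a family of nested parabolic cylinders interpolating between $\widetilde{Q}_{\frac{3}{2}\rho}(\gamma;\,x_{0},\,\tau_{0})$ and $\widehat{Q}$, closed off by the two-sequence fast geometric convergence lemma (Lemma \ref{Lemma (Section 2): Geometric Decay Lemma}) rather than a single-sequence iteration. The two-sequence version is forced on us because $q\neq r$ in general: the force contribution $\mu^{4}\iint_{A_{k}}(1+\lvert\buf_{\varepsilon}\rvert^{2})\eta^{2}$ in the De Giorgi-class estimate (\ref{Eq (Section 4): De Girogi Energy 1}) of Lemma \ref{Lemma (Section 4): Caccioppoli estimates for U-delta-epsilon}, after H\"older's inequality and (\ref{Eq (Section 4): Uniform Bounds F and U}), produces a mixed-norm factor of $\chi_{A_{k}}$ that is not a single power of a single super-level measure, and Lemma \ref{Lemma (Section 2): Dimensionless Lemma} is designed precisely to bookkeep the two dimensionless quantities $Y,\,Z$ together with their comparisons (\ref{Eq (Section 2): Dimensionless Comparisons}).

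First I would fix levels $k_{j}\coloneqq \bigl(1-\tfrac{\nu_{0}}{2}-2^{-j-2}\nu_{0}\bigr)\mu^{2}$, so that $k_{0}=\bigl(1-\tfrac{3\nu_{0}}{4}\bigr)\mu^{2}\ge(1-\nu_{0})\mu^{2}$ and $k_{j}\uparrow\bigl(1-\tfrac{\nu_{0}}{2}\bigr)\mu^{2}$, together with nested cylinders $Q_{j}\coloneqq B_{\rho_{j}}(x_{0})\times J_{j}$, where $\rho_{j}\coloneqq(1+2^{-j-1})\rho$ decreases from $\tfrac{3}{2}\rho$ to $\rho$ and the forward-in-time intervals $J_{j}\subset[\tau_{0},\,\tau_{0}+\tfrac{9}{4}\gamma\rho^{2})$ decrease to the time interval of $\widehat{Q}$; all $Q_{j}$ lie inside the domain $Q_{2\rho}(x_{0},\,t_{0})$ of $\bu_{\varepsilon}$ by the placement of $\tau_{0}$ and the bound $\gamma<\nu/9$. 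Choosing cutoffs $\eta_{j}\in C_{\mathrm c}^{1}(Q_{j};\,[0,\,1])$ with $\eta_{j}\equiv1$ on $Q_{j+1}$, $\lvert\nabla\eta_{j}\rvert\le C2^{j}/\rho$, $\lvert\partial_{t}\eta_{j}\rvert\le C4^{j}/(\gamma\rho^{2})$, I apply (\ref{Eq (Section 4): De Girogi Energy 1}) with $k=k_{j}$, $\eta=\eta_{j}$, and use $(U_{\delta,\,\varepsilon}-k_{j})_{+}\le\nu_{0}\mu^{2}$ on its support (which follows from $U_{\delta,\,\varepsilon}=(v_{\varepsilon}-2\delta)_{+}^{2}\le\mu^{2}$ by (\ref{Eq (Section 4): Bound Assumption of v-epsilon})) to bound $\eta_{j}(U_{\delta,\,\varepsilon}-k_{j})_{+}$ in $L^{2,\,\infty}\cap L^{2}(W_{0}^{1,\,2})$. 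Then the embedding (\ref{Eq (Section 2): Embedding for level set}) and a Chebyshev inequality at level $k_{j+1}$, combined with the comparisons (\ref{Eq (Section 2): Dimensionless Comparisons}) to express every occurring mixed norm of $\chi_{A_{k_{j}}}$ through $Y_{j}$ and $Z_{j}$, yield — after cancelling the powers of $\mu$, $\rho$, $\gamma$ (both inequalities are dimensionless and $(k_{j+1}-k_{j})^{-2}=C4^{j}(\nu_{0}\mu^{2})^{-2}$) — a recursive system of the form
\[
Y_{j+1}\le C\,B^{j}\left(Y_{j}^{1+\upsilon}+Y_{j}^{\upsilon}Z_{j}^{1+\varkappa}\right),\qquad Z_{j+1}\le C\,B^{j}\left(Y_{j}+Z_{j}^{1+\varkappa}\right)
\]
for suitable $\upsilon,\,\varkappa\in(0,\,\infty)$, $B\in(1,\,\infty)$ and $C=C({\mathcal D},\,F,\,\delta,\,M,\,\gamma)$; the $\nu_{0}$-dependence of the force coefficient is of the form $(1+F^{2})\rho^{2\beta}\nu_{0}^{-2}$, which the hypothesis $\rho^{\beta}\le\nu_{0}$ makes bounded, so that $C$ is indeed independent of $\nu_{0}$.

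To start the iteration I would use (\ref{Eq (Section 7): level set Assumption}): since $A_{k_{0}}\subset\{U_{\delta,\,\varepsilon}\ge(1-\nu_{0})\mu^{2}\}$ and $Q_{0}\subset\widetilde{Q}_{\frac{3}{2}\rho}(\gamma;\,x_{0},\,\tau_{0})$, one gets $Y_{0}\le C\alpha$ and then, by (\ref{Eq (Section 2): Dimensionless Comparisons}), $Z_{0}\le C\alpha^{\min\{1/\widehat{q},\,1/\widehat{r}\}}$. Picking $\alpha_{0}=\alpha_{0}({\mathcal D},\,F,\,\delta,\,M,\,\gamma)$ (and $\rho\le\rho_{\star}$ for the elementary absorptions and to keep the $Q_{j}$ in the domain) small enough that $Y_{0}\le\varTheta$ and $Z_{0}\le\varTheta^{1/(1+\varkappa)}$, where $\varTheta$ is the threshold of Lemma \ref{Lemma (Section 2): Geometric Decay Lemma}, that lemma gives $Y_{j}\to0$, i.e.\ $\lvert\{U_{\delta,\,\varepsilon}>(1-\tfrac{\nu_{0}}{2})\mu^{2}\}\cap\widehat{Q}\rvert=0$, which is (\ref{Eq (Section 7): Density Result 2}). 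I expect the main obstacle to be the bookkeeping in the middle paragraph: verifying that the exponents $\upsilon,\,\varkappa$ emerging from (\ref{Eq (Section 2): Embedding for level set})--(\ref{Eq (Section 2): Dimensionless Comparisons}) are strictly positive and that the various powers of $\gamma$ and of $\rho^{\beta}$ (absorbed via $\rho^{\beta}\le\nu_{0}$) can be swallowed into $C$ without destroying the superlinear structure demanded by Lemma \ref{Lemma (Section 2): Geometric Decay Lemma}; the remainder is standard De Giorgi machinery.
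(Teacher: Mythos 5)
Your proposal follows the paper's proof essentially step for step: the same nested levels and shrinking cylinders, the same use of Lemma \ref{Lemma (Section 2): Dimensionless Lemma} to track the pair $(Y_l,\,Z_l)$, the same Caccioppoli input from (\ref{Eq (Section 4): De Girogi Energy 1}), the same two-sequence closure via Lemma \ref{Lemma (Section 2): Geometric Decay Lemma}, and the same observation that $\rho^\beta\le\nu_0$ is exactly what renders the force coefficient harmless. The exponent bookkeeping you flag as the main remaining task resolves, as in the paper, to $\upsilon=\tfrac{1}{n+2}$ and $\varkappa=\min\{\tfrac{\widehat q-1}{2},\,\tfrac{\widehat r-1}{2},\,\tfrac{2\beta}{n+2\beta}\}$.
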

 We use Lemmata \ref{Lemma (Section 2): Geometric Decay Lemma}--\ref{Lemma (Section 2): Dimensionless Lemma} to prove Lemma \ref{Lemma (Section 6): Density Lemma 2}.
 Although \cite[Chapter II, Lemma 7.2]{LSU MR0241822} provides a similar result, our proof is slightly different. In particular, we treat the dimensionless quantities introduced in Lemma \ref{Lemma (Section 2): Dimensionless Lemma}.
 \begin{proof}
     We set $\upsilon\coloneqq \frac{1}{n+2}$, $\varkappa\coloneqq \min\left\{\,\frac{{\widehat q}-1}{2},\,\frac{{\widehat r}-1}{2},\,\frac{2\beta}{n+2\beta} \,\right\}\in(0,\,1)$.
     For each $l\in{\mathbb Z}_{\ge 0}$, we choose $k_{l}\coloneqq [1-(2^{-1}+2^{-l-1})\nu_{0}]\mu^{2}\in\lbrack (1-\nu_{0})\mu^{2},\, (1-\nu_{0}/2)\mu^{2})$, $\rho_{l}\coloneqq \left(1+2^{-l-1}\right)\rho\in(\rho,\,3\rho/2\rbrack$, $\tau_{l}\coloneqq \tau_{0}+\frac{9}{4}\gamma\rho^{2}-\gamma\rho_{l}^{2}\in \lbrack \tau_{0},\,\tau_{0}+\frac{5}{4}\gamma\rho^{2})$, $I_{l}\coloneqq (\tau_{l},\,\tau_{0}+\frac{9}{4}\gamma\rho^{2} \rbrack\subset (t_{0}-4\rho^{2},\,t_{0}\rbrack$, and
     \[Y_{l}\coloneqq \frac{\lVert A_{l} \rVert_{L^{1}(I_{l})}}{\lvert Q_{l}\rvert}\in\lbrack0,\,1\rbrack ,\quad Z\coloneqq \gamma^{\frac{c_{q,\,r}n}{n+2}}\frac{\lVert A_{l}^{1/{\widehat q}} \rVert_{L^{{\widehat r}}(I_{l})}}{\lvert Q_{l}\rvert^{\frac{n+2\beta}{n+2}}}\in\lbrack 0,\,1\rbrack,\]
     where $c_{q,\,r}\coloneqq 1/{\widehat q}-1/{\widehat r}$, and $A_{l}(t)\coloneqq \lvert\{x\in B_{l}\mid U_{\delta,\,\varepsilon}(x,\,t)>k_{l} \} \rvert$ for $t\in I_{l}$.
     Fix $l\in{\mathbb Z}_{\ge 0}$, and choose a suitable cut-off function $\eta\in C_{\mathrm c}^{1}(Q_{l};\,\lbrack 0,\,1\rbrack)$ such that $\eta|_{Q_{l+1}}\equiv 1$.
     Then, we have
     \begin{align*}
         2^{-(l+2)}\nu_{0}\mu^{2}\lVert A_{l+1} \rVert_{L^{1}(I_{l+1})} &=(k_{l+1}-k_{l})\lVert A_{l+1} \rVert_{L^{1}(I_{l+1})}\le \lVert \eta(U_{\delta,\,\varepsilon}-k_{l})_{+}\rVert_{L^{1}(I_{l})}\\ 
         &\le \left(\iint_{Q_{l}}\left[\eta(U_{\delta,\,\varepsilon}-k_{l})_{+}\right]^{2+\frac{4}{n}}\,{\mathrm d}X \right)^{\frac{n}{2(n+2)}}  \lVert A_{l}\rVert_{L^{1}(I_{l})}^{1-\frac{n}{2(n+2)}}\\ 
         &\le \frac{C({\mathcal D},\,F,\,\delta,\,M) \nu_{0}\mu^{2}}{\sqrt{\gamma}} \left[\frac{2^{l}\lVert A_{l}\rVert_{L^{1}(I_{l})}^{1/2}}{\rho}+\frac{\sqrt{\gamma}}{\nu_{0}}\lVert A^{1/{\widehat q}}\rVert_{L^{{\widehat r}}(I_{l})}^{1/2} \right]\lVert A_{l}\rVert_{L^{1}(I_{l})}^{\frac{1}{2}+\frac{1}{n+2}}.
     \end{align*}
     by (\ref{Eq (Section 2): PS-Ineq for any s}) with $s=2$ and (\ref{Eq (Section 4): De Girogi Energy 1}).
     Dividing by $\lvert Q_{l+1}\rvert$ and using (\ref{Eq (Section 2): Dimensionless Comparisons}) and (\ref{Eq (Section 2): c-q-r}), we get
     \[Y_{l+1}\le \frac{C({\mathcal D},\,F,\,\delta,\,M)\cdot 4^{l}}{\gamma^{1/2-1/(n+2)}}\left[Y_{l}^{1+\frac{1}{n+2}}+\frac{\gamma^{1/(2\widehat{r})}\rho^{\beta}}{\nu_{0}}Y_{l}^{\frac{1}{n+2}}Z_{l}^{\frac{1+\min\{\,\widehat{q},\,\widehat{r}\,\}}{2}} \right].\]
     where $c_{1}\in\lbrack 0,\,\infty)$ depends on $q$ and $r$.
     We use (\ref{Eq (Section 2): Embedding for level set}) and (\ref{Eq (Section 4): De Girogi Energy 1}) to compute
     \begin{align*}
         4^{-(l+2)}(\nu_{0}\mu^{2})^{2}\lVert A_{l+1}^{1/{\widehat q}} \rVert_{L^{{\widehat r}}(I_{l+1})}&=(k_{l+1}-k_{l})^{2}\lVert A_{l+1}^{1/{\widehat q}} \rVert_{L^{{\widehat r}}(I_{l+1})} \\ 
         &\le \lVert \eta(U_{\delta,\,\varepsilon}-k_{l})_{+} \rVert_{L^{2{\widehat q},\,2{\widehat r}}(Q_{l})}^{2} \\ 
         &\le \frac{C({\mathcal D},\,F,\,\delta,\,M)(\nu_{0}\mu^{2})^{2} }{\gamma} \left[\frac{4^{l}}{\rho^{2}} \lVert A_{l}\rVert_{L^{1}(I_{l})}+\frac{\gamma}{\nu_{0}^{2}}\lVert A_{l}^{1/{\widehat q}} \rVert_{L^{{\widehat r}}(I_{l})} \right]\lVert A^{1/{\widehat q}}\rVert_{L^{\widehat r}(I_{l})}^{\frac{2\beta}{n+2\beta}}. 
     \end{align*}
     Dividing by $\lvert Q_{l+1}\rvert^{\frac{n+2\beta}{n+2}}$, and using (\ref{Eq (Section 2): Dimensionless Comparisons}) and (\ref{Eq (Section 2): c-q-r}), we have
     \[Z_{l+1}\le \frac{C({\mathcal D},\,F,\,\delta,\,M)\cdot 16^{l}}{\gamma^{1-2/(n+2)}}\left[Y_{l}+\frac{\gamma^{1/\widehat{r}}\rho^{2\beta}}{\nu_{0}^{2}} Z_{l}^{1+\frac{2\beta}{n+2\beta}} \right].\]
     Let $\rho^{\beta}\le \gamma^{1/(2\widehat{r})}\nu_{0}$, so that $Y_{l}$ and $Z_{l}$ satisfies the recursice inequalities found in Lemma \ref{Lemma (Section 2): Geometric Decay Lemma} with $A\coloneqq C({\mathcal D},\,F,\,\delta,\,M) \gamma^{-n/(n+2)}$, $B\coloneqq 16$, $\upsilon\coloneqq \frac{1}{n+2}$, $\varkappa\coloneqq \min\{\,(\widehat{q}-1)/2,\,(\widehat{r}-1)/2,\,2\beta/(n+2\beta) \,\}\in(0,\,1)$.
     Keeping in mind that $Y_{0}\le \alpha$ and $Z_{0}\le C(n,\,q,\,r)Y_{0}^{\min\{1/{\widehat q},\,1/{\widehat r}\}}$ hold by (\ref{Eq (Section 7): level set Assumption}) and (\ref{Eq (Section 2): Dimensionless Comparisons}), we choose a sufficiently small $\alpha_{0}=\alpha_{0}({\mathcal D},\,\delta,\,M,\,\gamma)\in(0,\,1)$ such that all of the assumptions of Lemma \ref{Lemma (Section 2): Geometric Decay Lemma} are satisfied.
     In particular, we conclude (\ref{Eq (Section 7): Density Result 2}) as a consequence of Lemma \ref{Lemma (Section 2): Geometric Decay Lemma}.
 \end{proof}

 \subsection{Proof of Proposition \ref{Proposition (Section 4): Degenerate Case}}
 We outline the proof of Proposition \ref{Proposition (Section 4): Degenerate Case}.
 Although the detailed discussions appear almost similar to \cite[Proposition 2.9]{T-supercritical}, we provide the sketch of the proof for the reader's convenience.
 \begin{proof}[Proof of Proposition \ref{Proposition (Section 4): Degenerate Case}]
     Hereinafter we at least let $\rho\le \rho_{\star}$, where $\rho_{\star}({\mathcal D},\,\delta,\,M,\,\nu)\in(0,\,1)$ is given by Lemma \ref{Lemma (Section 6): Expansion of Positivity}.
     By Lemma \ref{Lemma (Section 6): Expansion of Positivity}, there exists $\tau_{0}\in (t_{0}-\rho^{2},\,t_{0}-\nu\rho^{2}/2)$ such that (\ref{Eq (Section 6): Inclusion Assumption})--(\ref{Eq (Section 6): Assumtion for Exp of Pos}) hold with $\gamma=\gamma_{\star}({\mathcal D},\,\delta,\,M,\,\nu)\in(0,\,\nu/9)$, $(\widehat{\nu},\,\widetilde{\nu})=(\theta_{0}\nu,\,\nu/8)$ for some $\theta_{0}=\theta_{0}(n,\,\nu)\in(0,\,1/2)$.
     Corresponding to $\tau_{0}$ and $\gamma_{\star}$, we set $A\coloneqq (t_{0}-\tau_{0})\rho^{-2}\in(\nu/2,\,1)$ and define $i_{\star}\in{\mathbb N}$ as the unique natural number satisfying $9\gamma_{\star}i_{\star}\ge 4A>9\gamma_{\star}(i_{\star}-1)$, which implies $i_{\star}\ge 4A/(9\gamma_{\star})\eqqcolon d_{\star}\in(1,\,\infty)$.
     We choose $\gamma_{\star\star}\coloneqq 4A/(9i_{\star})\in (2\nu/(9i_{\star}),\,\gamma_{\star}\rbrack$, and determine the natural number $l_{\star}=l_{\star}({\mathcal D},\,\delta,\,M,\,\nu)\in{\mathbb N}$ such that both 
     \[\frac{C_{\star}({\mathcal D},\,\delta,\,M,\,\nu)}{(\nu/8)\sqrt{\gamma_{\star\star}l_{\star}}} \le \alpha_{0}({\mathcal D},\,\delta,\,M,\,\gamma_{\star\star})\quad \text{and} \quad \frac{C_{\star}({\mathcal D},\,\delta,\,M,\,\nu)}{\sqrt{(\gamma_{\star\star}/3)l_{\star}}} \le \alpha_{0}({\mathcal D},\,\delta,\,M,\,\gamma_{\star\star}/3)\]
     hold, where $C_{\star}\in(1,\,\infty)$ and $\alpha_{0}\in (0,\,1)$ are given by Lemmata \ref{Lemma (Section 6): Density Lemma 1}--\ref{Lemma (Section 6): Density Lemma 2}. Finally, we determine the radius $\widetilde{\rho}\in(0,\,\rho_{\star}\rbrack$ that satisfies
     \[\widetilde{\rho}^{\beta}\le 2^{-[(9i_{\star}-8)l_{\star}+9(l_{\star}-1)]}\gamma_{\star\star}^{-1/(2\widehat{r})} \theta_{0}\nu.\]
     Hereinafter, let $\rho\le \widetilde{\rho}$ be in force.
     We would like to prove that 
     \begin{equation}\label{Eq (Section 6): Induction Claim}
         \esssup_{B_{\frac{3}{2}\rho}(x_{0})\times I_{k}}\,U_{\delta,\,\varepsilon}\le \left( 1-2^{-(l_{\star}+1)k}\theta_{0}\nu \right)\mu^{2}
     \end{equation}
     for every $k\in\{\,1,\,\dots\,,\,9i_{\star}-8\,\}$, where $I_{k}\coloneqq (\tau_{0}+\frac{5}{4}\gamma_{\star\star}\rho^{2},\, T_{k}\rbrack\subset (\tau_{0}+\frac{5}{4}\gamma_{\star\star}\rho^{2},\,t_{0}\rbrack$ with $T_{k}\coloneqq \tau_{0}+(2+k/4)\gamma_{\star\star}\rho^{2}\in\lbrack \tau_{0}+\frac{9}{4}\gamma_{\star\star}\rho^{2},\, t_{0}\rbrack$.
     For $k=1$, we apply Lemma \ref{Lemma (Section 6): Density Lemma 1}--\ref{Lemma (Section 6): Density Lemma 2} with $(\gamma,\,\widetilde{\nu},\,\widehat{\nu},\,\nu_{0})=(\gamma_{\star\star},\,\nu/8,\,\theta_{0}\nu,\,2^{-l_{\star}}\theta_{0}\nu)$.
     As a result, the proof of (\ref{Eq (Section 6): Induction Claim}) is completed for the special case $i_{\star}=1$.
     When $i_{\star}>1$, let (\ref{Eq (Section 6): Induction Claim}) holds for all $k\le k_{0}\in\{\,1,\,\dots\,,\,9l_{\star}-9\,\}$.
     We replace $\widetilde{I}_{\frac{3}{2}\rho}(\gamma;\,\tau_{0})$ by $\widetilde{I}_{\frac{3}{2}\rho}(\gamma;\,T_{k_{0}}-\frac{1}{4}\gamma_{\star\star}\rho^{2})$, and apply Lemmata \ref{Lemma (Section 6): Density Lemma 1}--\ref{Lemma (Section 6): Density Lemma 2} with $(\gamma,\,\widetilde{\nu},\,\widehat{\nu},\,\nu_{0})\coloneqq (\gamma_{\star\star}/3,\,1,\,2^{-(l_{\star}+1)k_{0}}\theta_{0}\nu,\,2^{-(l_{\star}+1)k_{0}-l_{\star}}\theta_{0}\nu)$.
     The resulting estimate yields (\ref{Eq (Section 6): Induction Claim}) with $k=k_{0}+1$, which completes the proof of (\ref{Eq (Section 6): Induction Claim}) by induction.
     Noting $T_{9i_{\star}-8}=t_{0}$ and $\tau_{0}+\frac{5}{4}\gamma_{\star\star}\rho^{2}-t_{0}=(1-5/(9i_{\star}))(\tau_{0}-t_{0})\le \frac{4}{9}(\tau_{0}-t_{0})\le -(\sqrt{\nu}\rho/3)^{2}$, from (\ref{Eq (Section 6): Induction Claim}) we conclude 
     \[\esssup_{Q_{\sqrt{\nu}\rho/3}(x_{0},\,t_{0})}\,\lvert \G_{\delta,\,\varepsilon}(\D\bu_{\varepsilon})\rvert_{\bg}^{2}\le \left(1-2^{-(l_{\star}+1)(9i_{\star}-8)}\theta_{0}\nu\right)\mu^{2}\le \left(1-2^{-(l_{\star}+1)(9d_{\star}-8)}\theta_{0}\nu\right)\mu^{2}.\]
     Hence, we conclude (\ref{Eq (Section 4): Oscillation result}) with $\kappa\coloneqq \max\left\{\,(\sqrt{\nu}/6)^{\beta},\,\sqrt{1-2^{-(l_{\star}+1)(9d_{\star}-8)}\theta_{0}\nu} \right\}\in\lbrack (\sqrt{\nu}/6)^{\beta},\, 1)$.
 \end{proof}

 \section{Non-degenerate case}\label{Section: Non-Degenerate}
 Section \ref{Section: Non-Degenerate} aims to show Proposition \ref{Proposition (Section 4): Non-Degenerate Case}.
 Under the assumptions in Proposition \ref{Proposition (Section 4): Non-Degenerate Case}, we would like to show growth estimates for the $L^{2}$-mean oscillation, defined as
 \[\Phi(\tau\rho)\coloneqq \fiint_{Q_{\tau\rho}}\left\lvert \D\bu_{\varepsilon}-(\D\bu_{\varepsilon})_{Q_{\tau\rho}}\right\rvert^{2}\,\d x\d t\quad \text{for }\tau\in(0,\,1\rbrack.\]
 \subsection{Lower estimates for an integral average}
 The first step is to deduce a lower bound estimate of $\lvert (\D\bu_{\varepsilon})_{Q_{\rho}(x_{0},\,t_{0})}\rvert_{\bg(x_{0},\,t_{0})}$ from (\ref{Eq (Section 4): Measure Assumption: Non-Degenerate}).
 Here we keep in mind the following inequalities;
 \begin{equation}\label{Eq (Section 7): Non-Degenerate over S}
 \lvert \D\bu_{\varepsilon}\rvert_{\bg}>\frac{3}{4}\delta+(1-\nu)\mu,\quad    \lvert \D\bu_{\varepsilon}\rvert>\gamma_{0}\left(\frac{3}{4}\delta+(1-\nu)\mu\right)\quad \text{a.e.~in }S_{\rho},
 \end{equation}
 since $(\delta/4)+\gamma_{0}^{-1}\lvert \D\bu_{\varepsilon}\rvert\ge (\delta/4)+\lvert \D\bu_{\varepsilon} \rvert_{\bg}>\varepsilon+\lvert \D\bu_{\varepsilon} \rvert_{\bg} \ge v_{\varepsilon}>\delta+(1-\nu)\mu$ over $S_{\rho}$.
 \begin{lemma}\label{Lemma (Section 7): Energy decay from level set}
 Under the assumptions of Proposition \ref{Proposition (Section 4): Non-Degenerate Case}, where $\nu\in(0,\,1/4)$ is arbitrarily fixed, there exists a constant $C_{\dagger}=C_{\dagger}({\mathcal D},\,\delta,\,M)\in(1,\,\infty)$ such that
    \begin{equation}\label{Eq (Section 7): Non-Degenerate Energy 1}
        \Phi(\sigma\rho)\le C_{\dagger}\mu^{2}\left[(1-\sigma)+\frac{\sqrt{\nu}}{(1-\sigma)^{3}}+\frac{(1+F)\rho^{\beta}}{(1-\sigma)^{2}\sqrt{\nu}}\right]
    \end{equation}
 holds for any $\sigma\in(0,\,1)$.
 \end{lemma}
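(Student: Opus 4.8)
\emph{Strategy.} The plan is to transfer the oscillation of $\D\bu_\varepsilon$ to that of the regularised gradient $\mathbf{w}_\varepsilon:=\mathbf{G}_{p,\,\varepsilon}(\D\bu_\varepsilon)=v_\varepsilon^{p-1}\D\bu_\varepsilon$, which is exactly the quantity controlled by the Caccioppoli-type bounds (\ref{Eq (Section 4): Energy 1})--(\ref{Eq (Section 4): Energy 2}) of Lemma \ref{Lemma (Section 4): L2-estimates for spatial derivatives}, and then to extract the powers of $\nu$ from the measure assumption (\ref{Eq (Section 4): Measure Assumption: Non-Degenerate}). First I would record that on $S_{\sigma\rho}$ there is genuine non-degeneracy: since $\delta<\mu$ and $\nu<1/4$, (\ref{Eq (Section 7): Non-Degenerate over S}) gives $v_\varepsilon\ge\tfrac34\mu$ and $|\D\bu_\varepsilon|\ge c(\gamma_0)\mu$ there, while $v_\varepsilon\le\mu+\delta\le 2\mu\le M$ on $Q_{2\rho}$; hence on $S_{\sigma\rho}$ the field $\mathbf{G}_{p,\,\varepsilon}$ is bi-Lipschitz with constants comparable to $\mu^{p-1}$, so that by Lemma \ref{Lemma (Section 2): G-p-epsilon} one has $|\D\bu_\varepsilon-\bxi|\le C(\mathcal D)\,\mu^{1-p}\,|\mathbf{w}_\varepsilon-\mathbf{G}_{p,\,\varepsilon}(\bxi)|$ a.e.\ on $S_{\sigma\rho}$, for every $\bxi\in\mathbb{R}^{Nn}$ (this uses $p>1$, so $(|\D\bu_\varepsilon|\vee|\bxi|)^{p-1}\ge|\D\bu_\varepsilon|^{p-1}\gtrsim\mu^{p-1}$).

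\emph{Reduction.} By Lemma \ref{Lemma (Section 2): Average = L-2 minimizer}, $\Phi(\sigma\rho)\le\fiint_{Q_{\sigma\rho}}|\D\bu_\varepsilon-\bxi|^2\,\d x\,\d t$ for the particular choice $\bxi:=\mathbf{G}_{p,\,\varepsilon}^{-1}\big((\mathbf{w}_\varepsilon)_{Q_{\sigma\rho}}\big)$, which is well defined and satisfies $|\bxi|\le C(\mathcal D,\delta,M)$ because $|\mathbf{w}_\varepsilon|\le C(\mathcal D,\delta,M)$ on $Q_{2\rho}$ and $|\mathbf{G}_{p,\,\varepsilon}^{-1}(\cdot)|\le c(\mathcal D)^{-1}|\cdot|^{1/p}$ (Lemma \ref{Lemma (Section 2): G-p-epsilon}). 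Splitting $Q_{\sigma\rho}=S_{\sigma\rho}\cup(Q_{\sigma\rho}\setminus S_{\sigma\rho})$, on $S_{\sigma\rho}$ I would use the bi-Lipschitz bound above (noting $\mathbf{G}_{p,\,\varepsilon}(\bxi)=(\mathbf{w}_\varepsilon)_{Q_{\sigma\rho}}$), and on $Q_{\sigma\rho}\setminus S_{\sigma\rho}=Q_{\sigma\rho}\setminus S_\rho$ I would use the crude pointwise bound $|\D\bu_\varepsilon-\bxi|\le C(\mathcal D,\delta,M)$ together with $|Q_{\sigma\rho}\setminus S_\rho|\le|Q_\rho\setminus S_\rho|\le\nu|Q_\rho|$ from (\ref{Eq (Section 4): Measure Assumption: Non-Degenerate}). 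This reduces the problem to estimating $\mu^{2-2p}\fiint_{Q_{\sigma\rho}}|\mathbf{w}_\varepsilon-(\mathbf{w}_\varepsilon)_{Q_{\sigma\rho}}|^2\,\d x\,\d t$, up to an admissible error of size $C(\mathcal D,\delta,M)\,\nu$.

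\emph{Oscillation of $\mathbf{w}_\varepsilon$.} The mean oscillation of $\mathbf{w}_\varepsilon$ is then handled by a parabolic Poincar\'e inequality of the type recalled in Remark \ref{Remark (Section 2): Scaling Constant} (see also \cite[Chapter VIII]{DiBenedetto-monograph MR1230384}). The spatial contribution $(\sigma\rho)^2\fiint_{Q_{\sigma\rho}}|\nabla\mathbf{w}_\varepsilon|^2$ is estimated by splitting the energy integral over $S_{\sigma\rho}$ --- where (\ref{Eq (Section 4): Energy 2}) already carries the factor $\nu$ --- and over $Q_{\sigma\rho}\setminus S_\rho$, where (\ref{Eq (Section 4): Energy 1}) is combined with the smallness of $|Q_{\sigma\rho}\setminus S_\rho|$ and H\"older's inequality to recover a power of $\nu$; the temporal contribution cannot be absorbed by $\nabla\mathbf{w}_\varepsilon$ and must be controlled directly from the approximate system (\ref{Eq (Section 2): Approximate Weak Form}), testing against a suitable cut-off supported in the spatial annulus $B_\rho\setminus B_{\sigma\rho}$ and invoking the growth bounds (\ref{Eq (Section 1): Growth of p-Laplace-type operator}) and (\ref{Eq (Section 1): Growth of 1-Laplace-type operator}), the uniform bound (\ref{Eq (Section 4): Uniform Bounds F and U}) on $\buf_\varepsilon$, and $v_\varepsilon\le M$. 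This last step is what produces the term $(1+F)\rho^{\beta}/(1-\sigma)^2$ and, after balancing against the width of the time slab, the linear term $(1-\sigma)$. Collecting the three contributions, cancelling $\mu^{2-2p}$ against the prefactor $\mu^{2p}$ in (\ref{Eq (Section 4): Energy 1})--(\ref{Eq (Section 4): Energy 2}), and discarding higher powers of $\nu$ and $\rho$, yields (\ref{Eq (Section 7): Non-Degenerate Energy 1}).

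\emph{Main obstacle.} I expect the hard part to be the temporal term in the parabolic Poincar\'e step: unlike in the stationary case, controlling the time oscillation of the spatial averages of $\D\bu_\varepsilon$ (equivalently of $\mathbf{w}_\varepsilon$) forces one back to the equation, and the bookkeeping that simultaneously tracks the powers of $(1-\sigma)$, of $\nu$, and of $\rho^{\beta}$ --- so that the three structurally different error terms in (\ref{Eq (Section 7): Non-Degenerate Energy 1}) come out with exactly the stated exponents --- is delicate. A secondary technical point is to make sure that the complement $Q_{\sigma\rho}\setminus S_{\sigma\rho}$, where $\mathbf{G}_{p,\,\varepsilon}$ is not bi-Lipschitz and $\nabla\mathbf{w}_\varepsilon$ need not be pointwise small, is always disposed of by the $L^{\infty}$-bound on $\D\bu_\varepsilon$ together with its small measure, never by a gradient estimate on that set.
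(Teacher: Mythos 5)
Your high-level plan --- transfer the oscillation to $\mathbf{w}_\varepsilon=\mathbf{G}_{p,\varepsilon}(\D\bu_\varepsilon)$, use the bi-Lipschitz property of $\mathbf{G}_{p,\varepsilon}$ on the super-level set, estimate the spatial part from Lemma~\ref{Lemma (Section 4): L2-estimates for spatial derivatives}, and test the equation for the temporal part --- does match the shape of the paper's argument. But there is a genuine gap in the step that is supposed to produce the crucial factor $\sqrt\nu$, and the way you have set things up makes it unrecoverable.

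You reduce to bounding $\mu^{2-2p}\fiint_{Q_{\sigma\rho}}|\mathbf{w}_\varepsilon-(\mathbf{w}_\varepsilon)_{Q_{\sigma\rho}}|^2$ and then propose to feed the \emph{$L^2$} gradient integral $\fiint|\nabla\mathbf{w}_\varepsilon|^2$ into a parabolic Poincar\'e inequality, splitting it into $S_{\sigma\rho}$ (where (\ref{Eq (Section 4): Energy 2}) provides a $\nu$-gain) and the complement $Q_{\sigma\rho}\setminus S_{\sigma\rho}$, where you claim ``(\ref{Eq (Section 4): Energy 1}) is combined with the smallness of $|Q_{\sigma\rho}\setminus S_\rho|$ and H\"older's inequality to recover a power of $\nu$.'' This does not work: from the $L^2$ bound (\ref{Eq (Section 4): Energy 1}) one only has $\iint_{Q_{\sigma\rho}\setminus S_{\sigma\rho}}|\nabla\mathbf{w}_\varepsilon|^2\le\iint_{Q_{\sigma\rho}}|\nabla\mathbf{w}_\varepsilon|^2$, and H\"older's inequality cannot extract any measure factor from $\iint_{E}|g|^2$ when $g$ is only known to be in $L^2$. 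So the complement contributes a term of size $\mu^2/(1-\sigma)^2$ with \emph{no} $\nu$, and (\ref{Eq (Section 7): Non-Degenerate Energy 1}) fails. The paper's actual mechanism is different: one first reduces the $L^2$ oscillation of $\D\bu_\varepsilon$ to its \emph{$L^1$} oscillation using the $L^\infty$ bound $|\D\bu_\varepsilon|,|\boldsymbol\Theta|\lesssim\mu$, which creates a prefactor $\mu$ and then only requires an $L^1$ estimate of $|\nabla\mathbf{w}_\varepsilon|$; and at the $L^1$ level the restriction to $Q_{\sigma\rho}\setminus S_{\sigma\rho}$ can be handled by Cauchy--Schwarz, producing $|Q_{\sigma\rho}\setminus S_{\sigma\rho}|^{1/2}\|\nabla\mathbf{w}_\varepsilon\|_{L^2}$ and hence the $\sqrt\nu$. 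Your reduction to an $L^2$ quantity squanders this $L^1$/$L^2$ mismatch, which is precisely the engine of the lemma.

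A secondary structural issue: testing (\ref{Eq (Section 2): Approximate Weak Form}) against cut-offs of the form $\partial_{x_\kappa}(\eta\phi)$ gives control on the time variation of the spatial averages of $\D\bu_\varepsilon$ (i.e.\ of $\boldsymbol\Sigma_1(t)$), because the time derivative in the system falls on $\bu_\varepsilon$, not on the nonlinear flux $\mathbf{w}_\varepsilon$. Your set-up works throughout with $\mathbf{w}_\varepsilon$ about its spacetime average, so you would also need to control the temporal oscillation of $\boldsymbol\Sigma_p(t)=\fint v_\varepsilon^{p-1}\D\bu_\varepsilon$, which does not come out of the equation directly. The paper sidesteps this by a two-step decomposition --- $\D\bu_\varepsilon$ against $\boldsymbol\Theta(t)$ for the spatial part, then $\boldsymbol\Sigma_1(t)$ against its time average for the temporal part --- and uses Lemma~\ref{Lemma (Section 2): Average = L-2 minimizer} to switch comparison points. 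Also note that the Poincar\'e--Sobolev inequality in Remark~\ref{Remark (Section 2): Scaling Constant} requires $W_0^{1,s}$ boundary values, which $\mathbf{w}_\varepsilon-(\mathbf{w}_\varepsilon)_{Q_{\sigma\rho}}$ lacks, so it is not the right tool here either.
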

 \begin{proof}
    Fix $\sigma\in(0,\,1)$, and define the ${\mathbb R}^{Nn}$-valued functions in $I_{\rho}$ as follows;
    \[{\boldsymbol\Sigma}_{1}(t)\coloneqq \fint_{B_{\sigma\rho}}\D\bu_{\varepsilon}(x,\,t)\,{\mathrm d}x,\,{\boldsymbol\Sigma}_{p}(t)\coloneqq \fint_{B_{\sigma\rho}}\left(v_{\varepsilon}^{p-1}\D\bu_{\varepsilon} \right)(x,\,t)\,{\mathrm d}x,\,{\boldsymbol\Theta}(t)\coloneqq {\mathbf G}_{p,\,\varepsilon}^{-1}({\boldsymbol\Sigma}_{p}(t)).\]
    The proof of (\ref{Eq (Section 7): Non-Degenerate Energy 1}) is reduced to the following estimates;
    \begin{equation}\label{Eq (Section 7): Non-Degenerate Energy 2}
        \fiint_{Q_{\sigma\rho}}\left\lvert \D\bu_{\varepsilon}(x,\,t)-{\boldsymbol\Theta}(t) \right\rvert\,{\mathrm d}x{\mathrm d}t\le \frac{C\mu}{\sigma^{n+2}}\left(\frac{\sqrt{\nu}}{1-\sigma}+\frac{(1+F)\rho^{\beta}}{\sqrt{\nu}}\right),
    \end{equation}
    \begin{equation}\label{Eq (Section 7): Non-Degenerate Energy 3}
        \esssup_{\tau_{1},\,\tau_{2}\in I_{\sigma\rho}}\lvert {\boldsymbol\Sigma}_{1}(\tau_{1})-{\boldsymbol\Sigma}_{1}(\tau_{2}) \rvert\le \frac{C\mu}{\sigma^{n}}\left[(1-\sigma)+\frac{\sqrt{\nu}}{(1-\sigma)^{3}}+\frac{(1+F)\rho^{\beta}}{(1-\sigma)^{2}\sqrt{\nu}} \right].
    \end{equation}
    In fact, we compute 
    \begin{align*}
        \Phi(\sigma\rho)&\le 2\fiint_{Q_{\sigma\rho}}\left\lvert \D\bu_{\varepsilon}-{\boldsymbol\Sigma}_{1}(t) \right\rvert^{2}\,\d x\d t+2\fiint_{Q_{\sigma\rho}}\left\lvert {\boldsymbol\Sigma}_{1}(t)-(\D \bu_{\varepsilon})_{Q_{\sigma\rho}} \right\rvert^{2}\,\d x\d t\\
        &\le 2\fiint_{Q_{\sigma\rho}}\left\lvert \D\bu_{\varepsilon}-{\boldsymbol\Theta}(t) \right\rvert^{2}\,\d x\d t+2\fiint_{Q_{\sigma\rho}}\left\lvert {\boldsymbol\Sigma}_{1}(t)-({\boldsymbol\Sigma}_{1})_{I_{\sigma\rho}} \right\rvert^{2}\,\d x\d t\\ 
        &\le C\mu\left[\fiint_{Q_{\sigma\rho}}\left\lvert \D\bu_{\varepsilon}(x,\,t)-{\boldsymbol\Theta}(t) \right\rvert\,{\mathrm d}x{\mathrm d}t+\esssup_{\tau_{1},\,\tau_{2}\in I_{\sigma\rho}}\lvert {\boldsymbol\Sigma}_{1}(\tau_{1})-{\boldsymbol\Sigma}_{1}(\tau_{2}) \rvert\right].
    \end{align*}
    Combining this estimate with (\ref{Eq (Section 7): Non-Degenerate Energy 2})--(\ref{Eq (Section 7): Non-Degenerate Energy 3}) implies
    \[
        \Phi(\sigma\rho)\le \frac{C\mu^{2}}{\sigma^{n+2}}\left[(1-\sigma)+\frac{\sqrt{\nu}}{(1-\sigma)^{3}}+\frac{(1+F)\rho^{\beta}}{(1-\sigma)^{2}\sqrt{\nu}}\right].
    \]
    Noting $\lvert Q_{\rho}\setminus Q_{\sigma\rho}\rvert=(1-\sigma^{n+2})\lvert Q_{\rho}\rvert\le (n+2)(1-\sigma)\lvert Q_{\rho}\rvert$, and 
    \begin{align*}
        \Phi(\rho)&\le \fiint_{Q_{\rho}}\lvert \D\bu_{\varepsilon}-(\D\bu_{\varepsilon})_{Q_{\sigma\rho}} \rvert^{2}\,\d x\d t\\ 
        &=\sigma^{n+2}\Phi(\sigma\rho)+\lvert Q_{\rho}\rvert^{-1}\iint_{Q_{\rho}\setminus Q_{\sigma\rho}}\lvert \D\bu_{\varepsilon}-(\D\bu_{\varepsilon})_{Q_{\sigma\rho}}\rvert^{2}\,\d x\d t,
    \end{align*}
    we find $C_{\dagger}\in(1,\,\infty)$ satisfying (\ref{Eq (Section 7): Non-Degenerate Energy 1}).

    In the proof of (\ref{Eq (Section 7): Non-Degenerate Energy 2})--(\ref{Eq (Section 7): Non-Degenerate Energy 3}), we note that (\ref{Eq (Section 4): Measure Assumption: Non-Degenerate}) implies 
    \begin{equation}\label{Eq (Section 7): Sublevel-set is small}
        \lvert Q_{\sigma\rho}\setminus S_{\sigma\rho} \rvert\le \sigma^{-(n+2)}\nu\lvert Q_{\sigma\rho}\rvert\quad \text{for any }\sigma\in(0,\,1).
    \end{equation}
    Recalling Lemma \ref{Lemma (Section 2): G-p-epsilon} and using (\ref{Eq (Section 1): Matrix Gamma}), (\ref{Eq (Section 7): Sublevel-set is small}), we compute the left-hand side of (\ref{Eq (Section 7): Non-Degenerate Energy 2}) as follows;
    \begin{align*}
        &\frac{1}{\lvert Q_{\sigma\rho}\rvert}\left(\iint_{S_{\sigma\rho}}\left\lvert \D\bu_{\varepsilon}(x,\,t)-{\boldsymbol\Theta}(t) \right\rvert\,{\mathrm d}x{\mathrm d}t+\iint_{Q_{\sigma\rho}\setminus S_{\sigma\rho}}\left\lvert \D\bu_{\varepsilon}(x,\,t)-{\boldsymbol\Theta}(t) \right\rvert\,{\mathrm d}x{\mathrm d}t \right)\\ 
        &\le \frac{C}{\mu^{p-1}}\frac{1}{\lvert Q_{\sigma\rho}\rvert}\iint_{S_{\sigma\rho}}\left\lvert v_{\varepsilon}^{p-1}\D\bu_{\varepsilon}-{\boldsymbol\Sigma}_{p}(t) \right\rvert\,\d x\d t+\frac{C\nu\mu}{\sigma^{n+2}}\\ 
        &\le \frac{C}{\mu^{p-1}}\cdot(\sigma\rho)\fiint_{Q_{\sigma\rho}}\left\lvert \D\left[v_{\varepsilon}^{p-1}\D\bu_{\varepsilon} \right] \right\rvert\,\d x\d t+\frac{C\nu\mu}{\sigma^{n+2}}.
    \end{align*}
    We use (\ref{Eq (Section 4): Energy 1})--(\ref{Eq (Section 4): Energy 2}), (\ref{Eq (Section 7): Sublevel-set is small}), and H\"{o}lder's inequality to compute
    \begin{align*}
        &\fiint_{Q_{\sigma\rho}}\left\lvert \D\left[v_{\varepsilon}^{p-1}\D\bu_{\varepsilon} \right] \right\rvert\,\d x\d t\\ 
        &\le \left(\frac{1}{\lvert Q_{\sigma\rho}\rvert}\iint_{S_{\sigma\rho}}\left\lvert \D\left[v_{\varepsilon}^{p-1}\D\bu_{\varepsilon} \right] \right\rvert^{2}\,\d x\d t \right)^{\frac{1}{2}}+\frac{\nu^{\frac{1}{2}}}{\sigma^{\frac{n}{2}+1}}\left(\fiint_{Q_{\sigma\rho}}\left\lvert \D\left[v_{\varepsilon}^{p-1}\D\bu_{\varepsilon} \right] \right\rvert^{2}\,\d x\d t\right)^{\frac{1}{2}}\\ 
        &\le \frac{C\mu^{p}}{\sigma^{\frac{n}{2}+1}\rho}\left[\frac{\sqrt{\nu}}{1-\sigma}+\frac{(1+F)\rho^{\beta}}{\sqrt{\nu}}+\frac{\sqrt{\nu}}{\sigma^{\frac{n}{2}+1}}\left(\frac{1}{1-\sigma}+(1+F)\rho^{\beta}\right) \right].
    \end{align*}
    Combining these estimates, we obtain (\ref{Eq (Section 7): Non-Degenerate Energy 2}).
    To prove (\ref{Eq (Section 7): Non-Degenerate Energy 3}), we may let $\tau_{1}<\tau_{2}$. 
    For sufficiently small $\widetilde{\varepsilon}>0$, which will be diminished later, we define the piecewise linear function $\phi_{\widetilde{\varepsilon}}\colon I_{\sigma\rho}\to \lbrack 0,\,1\rbrack$ as (\ref{Eq (Section 5): Piecewise Linear}).
    We set $\widetilde\sigma\coloneqq (\sigma+1)/2\in (1/2,\,1)\cap(\sigma,\,1)$ and choose a cut-off function $\eta\in C_{\mathrm c}^{2}(B_{\sigma\rho};\,\lbrack 0,\,1\rbrack)$ that is supported in $B_{\widetilde{\sigma}\rho}$, and satisfies $\eta|_{B_{\sigma\rho}}=1$ and $\lVert \nabla\eta \rVert_{L^{\infty}(B_{\rho})}^{2}+\lVert \nabla^{2}\eta \rVert_{L^{\infty}(B_{\rho})}\le \frac{c}{(1-\sigma)^{2}\rho^{2}}$.
    We fix $j\in\{\,1,\,\dots\,N\,\}$ and $\kappa\in\{\,1,\,\dots\,,\,n\,\}$, and test a cut-off function $\bphi$, defined as $\bphi(x,\,t)\coloneqq (\delta^{ij}\partial_{x_{\kappa}}(\phi_{\widetilde{\varepsilon}}(t)\eta(x)))_{i}$, into (\ref{Eq (Section 2): Approximate Weak Form}).
    Integrating by parts, letting $\widetilde{\varepsilon}\to 0$, and recalling our choice of $\eta$, we have
    \begin{align*}
        &\left\lvert ({\boldsymbol\Sigma}(\tau_{1})-{\boldsymbol\Sigma}(\tau_{2}))_{\kappa}^{j} \right\rvert\\ 
        &\le \frac{\gamma_{0}^{-2}}{\lvert B_{\sigma\rho}\rvert}\iint_{Q_{{\widetilde\sigma}\rho}}\left\lvert {\mathbf A}_{\varepsilon}(x,\,t,\,\D\bu_{\varepsilon})-{\mathbf A}_{\varepsilon}(x,\,t,\,{\boldsymbol\Theta}(t)) \right\rvert\lvert \nabla^{2}\eta\rvert\,\d x\d t\\ 
        &\quad +
        \frac{\gamma_{0}^{-2}}{\lvert B_{\sigma\rho}\rvert}\iint_{Q_{{\widetilde\sigma}\rho}}\lvert {\mathbf A}_{\varepsilon}(x,\,t,\,{\boldsymbol\Theta}(t))-{\mathbf A}_{\varepsilon}(x_{0},\,t,\,{\boldsymbol\Theta}(t))\rvert\lvert \nabla^{2}\eta\rvert\,\d x\d t\\ 
        &\quad \quad +\frac{1}{\lvert B_{\sigma\rho}\rvert}\iint_{Q_{\sigma\rho}}\lvert \buf_{\varepsilon}\rvert\lvert\nabla\eta\rvert\,\d x\d t+C\mu\frac{\lvert B_{{\widetilde\sigma}\rho}\setminus B_{\sigma\rho} \rvert}{\lvert B_{\sigma\rho}\rvert}\\ 
        &\le \frac{C{\widetilde\sigma}^{n+2}}{\sigma^{n}(1-\sigma)^{2}}\fiint_{Q_{{\widetilde\sigma}\rho}}\lvert {\mathbf A}_{\varepsilon}(x,\,t,\,\D\bu_{\varepsilon})-{\mathbf A}_{\varepsilon}(x,\,t,\,{\boldsymbol\Theta}(t)) \rvert\,\d x\d t\\ 
        &\quad +\frac{C\mu\rho}{\sigma^{n}(1-\sigma)^{2}}
        +\frac{CF\rho^{\beta}}{\sigma^{n}(1-\sigma)}+C(1-\sigma)\mu.
    \end{align*}
    Decomposing $Q_{{\widetilde\sigma}\rho}=S_{\widetilde\sigma\rho}\cup(Q_{{\widetilde\sigma}\rho}\setminus S_{\widetilde\sigma\rho})$, and using (\ref{Eq (Section 2): Continuity outside the origin}), (\ref{Eq (Section 7): Non-Degenerate Energy 2}), and (\ref{Eq (Section 7): Sublevel-set is small}) with $\sigma$ replaced by $\widetilde\sigma$, we compute
    \begin{align*}
        &\fiint_{Q_{{\widetilde\sigma}\rho}}\lvert {\mathbf A}_{\varepsilon}(x,\,t,\,\D\bu_{\varepsilon})-{\mathbf A}_{\varepsilon}(x,\,t,\,{\boldsymbol\Theta}(t)) \rvert\,\d x\d t\\ 
        &\le C\mu\frac{\lvert Q_{\widetilde\sigma\rho}\setminus S_{\widetilde\sigma\rho} \rvert}{\lvert Q_{\widetilde\sigma\rho}\rvert}+C\fiint_{Q_{\widetilde\sigma\rho}}\lvert \D\bu_{\varepsilon}(x,\,t)-{\boldsymbol\Theta}(t) \rvert\,\d x\d t\\
        &\le \frac{C\nu\mu}{\widetilde\sigma^{n+2}}+\frac{C\mu}{\widetilde\sigma^{n+2}}\left(\frac{\sqrt{\nu}}{1-\widetilde{\sigma}}+\frac{(1+F)\rho^{\beta}}{\sqrt{\nu}} \right).    
    \end{align*}
    Recalling our choice of ${\widetilde\sigma}$, and summing over $j\in \{\,1,\,\dots\,,\,N\,\},\,\kappa\in\{\,1,\,\dots\,,\,n\,\}$ we conclude (\ref{Eq (Section 7): Non-Degenerate Energy 3}).    
 \end{proof}
 \begin{lemma}\label{Lemma (Section 7): Energy Decay and Average Estimate}
    Let $u_{\varepsilon}$ be a weak solution to (\ref{Eq (Section 2): Approximate System}) with $\varepsilon\in(0,\,\delta/4)$, and assume that (\ref{Eq (Section 4): Uniform Bounds F and U}) and (\ref{Eq (Section 4): Bound Assumption of v-epsilon})--(\ref{Eq (Section 4): delta vs mu}) hold for some constants $F$ and $M$.
    For each $\theta\in(0,\,\gamma_{0}^{2}/256)$, set
    \[\sigma\coloneqq 1-\frac{\theta}{3C_{\dagger}},\quad \nu\coloneqq \left(\frac{\theta(1-\sigma)^{3}}{3C_{\dagger}}\right)^{2}=\left(\frac{\theta}{3C_{\dagger}}\right)^{8}<\frac{\gamma_{0}^{16}}{10^{23}},\quad \rho_{\ast}\coloneqq\min\left\{\,\frac{1}{16c_{\dagger}},\,\left(\frac{(1-\sigma)^{2}\sqrt{\nu}}{3C_{\dagger}(1+F)}\right)^{\frac{1}{\beta}}\,\right\},\] 
    where $c_{\dagger}\in(0,\,\infty)$ and $C_{\dagger}\in(1,\,\infty)$ are the constants respectively from Lemmata \ref{Lemma (Section 2): Continuity on Norms} and \ref{Lemma (Section 7): Energy decay from level set}.
    Then, we have
    \begin{equation}\label{Eq (Section 7): Measure Result Final}
        \Phi(\rho)\le \theta\mu^{2},\quad \text{and}\quad \lvert (\D\bu_{\varepsilon})_{Q_{\rho}(x_{0},\,t_{0})}\rvert_{\bg(x_{0},\,t_{0})}\ge \frac{\mu}{2}+\delta,
    \end{equation}
    provided (\ref{Eq (Section 4): Measure Assumption: Non-Degenerate}) and $\rho\le \rho_{\ast}$.
 \end{lemma}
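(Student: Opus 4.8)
The plan is to treat the two assertions separately. The bound $\Phi(\rho)\le\theta\mu^{2}$ is essentially a substitution into the decay estimate (\ref{Eq (Section 7): Non-Degenerate Energy 1}) of Lemma \ref{Lemma (Section 7): Energy decay from level set}. First I would pass from $\Phi(\sigma\rho)$ to $\Phi(\rho)$ by the routine splitting
\[
\Phi(\rho)\le\sigma^{n+2}\Phi(\sigma\rho)+\frac{1}{\lvert Q_{\rho}\rvert}\iint_{Q_{\rho}\setminus Q_{\sigma\rho}}\lvert\D\bu_{\varepsilon}-(\D\bu_{\varepsilon})_{Q_{\sigma\rho}}\rvert^{2}\,\d x\d t,
\]
using $\lvert\D\bu_{\varepsilon}\rvert\le\gamma_{0}^{-1/2}v_{\varepsilon}\le\gamma_{0}^{-1/2}M$ on $Q_{\rho}$ and $\lvert Q_{\rho}\setminus Q_{\sigma\rho}\rvert\le(n+2)(1-\sigma)\lvert Q_{\rho}\rvert$ to bound the last term by $C(n,\gamma_{0})(1-\sigma)\mu^{2}$. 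Then the three terms of (\ref{Eq (Section 7): Non-Degenerate Energy 1}) together with this splitting error are each $\le\tfrac13\theta\mu^{2}$ precisely because $1-\sigma=\theta/(3C_{\dagger})$, $\nu$ and $\rho_{\ast}$ have been chosen that way (enlarging $C_{\dagger}$ at the outset, if necessary, so that it dominates $c_{\dagger}$ and the dimensional constant $C(n,\gamma_{0})$ produced above; up to such a harmless adjustment of absolute constants in the definition of $\rho_{\ast}$, the arithmetic is mechanical). So the first assertion is bookkeeping; the substance is the nondegeneracy of the average, which I treat next.

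Write $\mathbf{m}\coloneqq(\D\bu_{\varepsilon})_{Q_{\rho}(x_{0},t_{0})}\in{\mathbb R}^{Nn}$. By (\ref{Eq (Section 4): Measure Assumption: Non-Degenerate}) the super-level set $S_{\rho,\mu}$ has relative measure at least $1-\nu$, and on it the elementary inequalities (\ref{Eq (Section 7): Non-Degenerate over S}) give $\lvert\D\bu_{\varepsilon}\rvert_{\bg(x,t)}>\tfrac34\delta+(1-\nu)\mu$. Since $\iint_{Q_{\rho}}\lvert\D\bu_{\varepsilon}-\mathbf{m}\rvert^{2}\,\d x\d t=\Phi(\rho)\lvert Q_{\rho}\rvert\le\theta\mu^{2}\lvert Q_{\rho}\rvert$, Chebyshev's inequality gives $\lvert\{\lvert\D\bu_{\varepsilon}-\mathbf{m}\rvert>2\sqrt{\theta}\mu\}\rvert\le\tfrac14\lvert Q_{\rho}\rvert<(1-\nu)\lvert Q_{\rho}\rvert\le\lvert S_{\rho,\mu}\rvert$, since $\nu<\tfrac34$; hence a positive-measure set of points $(x_{\ast},t_{\ast})\in S_{\rho,\mu}$ satisfies $\lvert\D\bu_{\varepsilon}(x_{\ast},t_{\ast})-\mathbf{m}\rvert\le2\sqrt{\theta}\mu$, and I fix one. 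Applying the norm-continuity estimate (\ref{Eq (Section 2): Continuity of Norm}) to $\mathbf{m}$ at the admissible point $(x_{\ast},t_{\ast})\in Q_{\rho}(x_{0},t_{0})$, together with the $\gamma_{0}$-equivalence of norms and $\lvert\mathbf{m}\rvert\le\gamma_{0}^{-1/2}(\mu+\delta)<2\gamma_{0}^{-1/2}\mu$, would yield
\[
\lvert\mathbf{m}\rvert_{\bg(x_{0},t_{0})}\ge\lvert\D\bu_{\varepsilon}(x_{\ast},t_{\ast})\rvert_{\bg(x_{\ast},t_{\ast})}-\gamma_{0}^{-1/2}\lvert\D\bu_{\varepsilon}(x_{\ast},t_{\ast})-\mathbf{m}\rvert-c_{\dagger}\rho\lvert\mathbf{m}\rvert>\tfrac34\delta+(1-\nu)\mu-\tfrac18\mu-\tfrac{2c_{\dagger}\rho}{\sqrt{\gamma_{0}}}\mu,
\]
where I use $\theta<\gamma_{0}^{2}/256$ (so $\gamma_{0}^{-1/2}\cdot2\sqrt{\theta}<\tfrac18\gamma_{0}^{1/2}\le\tfrac18$). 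For the last term, the second entry of the minimum in $\rho_{\ast}$ is essential: it forces $\rho^{\beta}\lesssim(\theta/C_{\dagger})^{6}$, hence $\rho\lesssim\gamma_{0}^{12}$ and $2c_{\dagger}\gamma_{0}^{-1/2}\rho$ is a tiny absolute fraction. Combining this with $\nu<\gamma_{0}^{16}/10^{23}$ and $\mu>\delta$ gives $\lvert\mathbf{m}\rvert_{\bg(x_{0},t_{0})}>\tfrac12\mu+\delta$, since the remaining slack $(\tfrac38-\nu-2c_{\dagger}\gamma_{0}^{-1/2}\rho)\mu\ge(\tfrac38-\nu-2c_{\dagger}\gamma_{0}^{-1/2}\rho)\delta\ge\tfrac14\delta$ is exactly what upgrades $\tfrac34\delta$ to $\delta$.

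The conceptual content is short and the work lies entirely in the chained smallness conditions: $\theta$ controls the oscillation of $\D\bu_{\varepsilon}$, $\nu$ — a high power of $\theta$, hence a very high power of $\gamma_{0}$ — controls the measure of the sub-level set, and $\rho_{\ast}$, higher still, controls both the $\rho^{\beta}$-error term in Lemma \ref{Lemma (Section 7): Energy decay from level set} and the metric-distortion factor $c_{\dagger}\rho$. The main obstacle is therefore to stack these so that the $\gamma_{0}$-losses incurred when passing between the Euclidean and $\bg(x,t)$-norms, and when moving the base point of the metric, never exceed the slack — about $\tfrac14\mu$ — left by the pointwise bound $v_{\varepsilon}>\delta+(1-\nu)\mu$ on $S_{\rho,\mu}$; the hypothesis $\theta<\gamma_{0}^{2}/256$ is calibrated precisely so that this slack survives.
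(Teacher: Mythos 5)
Your first assertion is handled the same way as the paper (modulo a small discrepancy: the paper's proof of Lemma~\ref{Lemma (Section 7): Energy decay from level set} has already absorbed the $\Phi(\rho)$-versus-$\Phi(\sigma\rho)$ splitting into the definition of $C_{\dagger}$, so (\ref{Eq (Section 7): Non-Degenerate Energy 1}) effectively already reads as an estimate for $\Phi(\rho)$; redoing the splitting adds a harmless extra $(1-\sigma)$ term). The second assertion is where you diverge genuinely. The paper uses the averaged quantity $\fiint_{Q_{\rho}}\lvert\D\bu_{\varepsilon}\rvert_{\bg}$ as the intermediary: it is bounded below by $\tfrac58\mu+\delta$ using (\ref{Eq (Section 4): Measure Assumption: Non-Degenerate}) and (\ref{Eq (Section 7): Non-Degenerate over S}) directly, and then compared to $\lvert(\D\bu_{\varepsilon})_{Q_{\rho}}\rvert_{\bg(x_{0},t_{0})}$ via the Cauchy--Schwarz inequality on $\Phi(\rho)$ plus (\ref{Eq (Section 2): Continuity of Norm}). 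You instead run a pointwise argument: Chebyshev on $\Phi(\rho)\le\theta\mu^{2}$ produces a good point $(x_{\ast},t_{\ast})\in S_{\rho,\mu}$ where $\D\bu_{\varepsilon}$ is within $2\sqrt{\theta}\mu$ of the mean, and you push the pointwise lower bound $v_{\varepsilon}(x_{\ast},t_{\ast})>\delta+(1-\nu)\mu$ through the triangle inequality and (\ref{Eq (Section 2): Continuity of Norm}). The two routes consume the same ingredients and the same smallness parameters; your version buys a slightly more explicit accounting (you carry $\lvert\mathbf{m}\rvert<2\gamma_{0}^{-1/2}\mu$, where the paper implicitly uses $\lvert\mathbf{m}\rvert\le\mu$, a step that tacitly relies on the second entry of the minimum in $\rho_{\ast}$, which you correctly identify as the binding one when $\gamma_{0}$ is small), at the cost of introducing a Lebesgue-point selection that the averaging argument avoids. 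Both are correct; the paper's averaging argument is marginally cleaner because it never leaves $L^{1}$, whereas yours requires fixing a representative point of a measurable function, which is fine but needs a sentence of care.
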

 \begin{proof}
    The former claim of (\ref{Eq (Section 7): Measure Result Final}) immediately follows from (\ref{Eq (Section 7): Non-Degenerate Energy 1}) and our choice of $\sigma$, $\nu$ and $\rho_{\ast}$.
    To prove the latter one, we use (\ref{Eq (Section 4): Measure Assumption: Non-Degenerate}) and (\ref{Eq (Section 7): Non-Degenerate over S}) to compute
    \begin{align*}
        \fiint_{Q_{\rho}}\lvert \D\bu_{\varepsilon} \rvert_{\bg}\,\d x\d t&\ge \frac{\lvert S_{\rho}\rvert}{\lvert Q_{\rho}\rvert}\essinf_{S_{\rho}}\,\lvert\D\bu_{\varepsilon}\rvert_{\bg}\ge (1-\nu)\left((1-\nu)\mu+\frac{3}{4}\delta \right)\\& \ge\frac{3-11\nu}{4}\mu+\delta\ge \frac{5}{8}\mu+\delta,
    \end{align*}
    where we note that $\mu>\delta$ and $\nu\le 1/22$ yield the last two inequalities.
    We use the Cauchy--Schwarz inequality, (\ref{Eq (Section 2): Continuity of Norm}), and the former claim of (\ref{Eq (Section 7): Measure Result Final}) to obtain
    \begin{align*}
        &\left\lvert \fiint_{Q_{\rho}}\lvert \D\bu_{\varepsilon}\rvert_{\bg}\,\d x\d t-\lvert (\D\bu_{\varepsilon})_{Q\rho} \rvert_{\bg(x_{0},\,t_{0})} \right\rvert\\ 
        &\le\fiint_{Q_{\rho}}\lvert \D\bu_{\varepsilon}-(\D\bu_{\varepsilon})_{Q_{\rho}} \rvert_{\bg}\,\d x\d t+\fiint_{Q_{\rho}}\left\lvert\lvert (\D\bu_{\varepsilon})_{Q_{\rho}}\rvert_{\bg}-\lvert(\D\bu_{\varepsilon})_{Q_{\rho}}\rvert_{\bg(x_{0},\,t_{0})} \right\rvert\,\d x\d t\\ 
        &\le \left(\gamma_{0}^{-1}\sqrt{\theta}+c_{\dagger}\rho_{\ast}\right)\mu\le \frac{\mu}{16}+\frac{\mu}{16}= \frac{\mu}{8}.
    \end{align*}
    Combining these estimates with the triangle inequality completes the proof of the latter claim.
 \end{proof}

 \subsection{Higher integrability and comparison estimates}
 Throughout this subsection, we do not necessarily assume (\ref{Eq (Section 4): Measure Assumption: Non-Degenerate}).
 Here we aim to deduce comparison estimates with some classical heat flows (Lemmata \ref{Lemma (Section 7): Comparison Lemma}--\ref{Lemma (Section 7): Energy Estimate from Perturbation}), after deducing a higher integrability estimate (Lemma \ref{Lemma (Section 7): Higher integrability}).
 \begin{lemma}\label{Lemma (Section 7): Higher integrability}
    Let all of the assumptions of Proposition \ref{Proposition (Section 4): Non-Degenerate Case}, except (\ref{Eq (Section 4): Measure Assumption: Non-Degenerate}), be in force.
    If $\bz_{0}\in{\mathbb R}^{Nn}$ satisfies
    \begin{equation}\label{Eq (Section 7): Assumption for Higher Integrability}
        \frac{\mu}{4}+\delta\le \lvert \bz_{0} \rvert_{\bg(x_{0},\,t_{0})}\le \frac{M}{\gamma_{0}},
    \end{equation}
    then there exists a sufficiently small constant $\vartheta=\vartheta({\mathcal D},\,\delta,\,M)>0$ such that
    \begin{equation}\label{Eq (Section 7): Higher Integrability}
        \fiint_{Q_{\rho/2}} \lvert\D\bu_{\varepsilon}-\bz_{0} \rvert^{2(1+\vartheta)}\,\d x\d t\le C\left[\fiint_{Q_{\rho}}\lvert \D\bu_{\varepsilon}-\bz_{0}\rvert^{2}\,\d x\d t+\left(1+F^{2}\right)\rho^{2\beta}\right]^{1+\vartheta}.
    \end{equation}
 \end{lemma}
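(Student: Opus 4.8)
The plan is to run a Gehring-type reverse-H\"{o}lder argument for the shifted gradient $\bw_{\varepsilon}\coloneqq \D\bu_{\varepsilon}-\bz_{0}$. The decisive point is that, although $\A_{\varepsilon}$ degenerates where $\D\bu_{\varepsilon}$ is small, the hypothesis \eqref{Eq (Section 7): Assumption for Higher Integrability} keeps the \emph{frozen} argument $\bz_{0}$ strictly away from the origin, while \eqref{Eq (Section 4): Bound Assumption of v-epsilon} keeps $\lvert\D\bu_{\varepsilon}\rvert_{\bg}\le v_{\varepsilon}\le M$. After restricting $\rho$ below a threshold $\rho_{\ast}=\rho_{\ast}({\mathcal D},\,\delta,\,M)$ (harmless, since subsumed by the standing smallness of $\rho$), \eqref{Eq (Section 2): Continuity of Norm} gives $\delta\le \lvert\bz_{0}\rvert_{\bg(x,\,t)}\le 2M/\gamma_{0}$ throughout $Q_{\rho}(x_{0},\,t_{0})$, so Lemma~\ref{Lemma (Section 2): Continuity of Hessian} with $c_{1}\coloneqq \delta$, $c_{2}\coloneqq 2M/\gamma_{0}$ yields, for a.e.~$(x,\,t)\in Q_{\rho}$ and with $c_{\ast},\,C_{\ast}$ depending only on ${\mathcal D},\,\delta,\,M$,
\[
\langle \A_{\varepsilon}(x,\,t,\,\D\bu_{\varepsilon})-\A_{\varepsilon}(x,\,t,\,\bz_{0})\mid \bw_{\varepsilon} \rangle_{\bg}\ge c_{\ast}\lvert\bw_{\varepsilon}\rvert^{2},\qquad \lvert \A_{\varepsilon}(x,\,t,\,\D\bu_{\varepsilon})-\A_{\varepsilon}(x,\,t,\,\bz_{0})\rvert\le C_{\ast}\lvert\bw_{\varepsilon}\rvert,
\]
\emph{irrespective of how small $v_{\varepsilon}$ itself is}. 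Together with the crude bound $\lvert\A_{\varepsilon}(x,\,t,\,\D\bu_{\varepsilon})\rvert\le C({\mathcal D},\,M)$ on $\{v_{\varepsilon}\le M\}$ (using $(\varepsilon^{2}+\lvert\bz\rvert_{\bg}^{2})^{p/2-1}\le \lvert\bz\rvert_{\bg}^{p-2}$ when $p<2$, and \eqref{Eq (Section 1): Growth of p-Laplace-type operator}, \eqref{Eq (Section 1): Growth of 1-Laplace-type operator}), this places us in the setting of a uniformly parabolic system with bounded measurable coefficients.

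First I would establish a Caccioppoli inequality for $\bw_{\varepsilon}$ on every sub-cylinder $Q_{2r}(y,\,s)\subset Q_{\rho}(x_{0},\,t_{0})$. With $\boldsymbol\ell(x)\coloneqq \mathbf{b}+\bz_{0}(x-y)$ affine in space (so $\D\boldsymbol\ell=\bz_{0}$, $\mathbf{b}$ a suitable mean of $\bu_{\varepsilon}$) and $\eta=\eta_{1}(x)\eta_{2}(t)$ a standard cut-off, I test \eqref{Eq (Section 2): Approximate Weak Form} (the time derivative handled via the Steklov average) with $\bphi\coloneqq \eta^{2}(\bu_{\varepsilon}-\boldsymbol\ell)$ and \emph{subtract} $\iint \langle \A_{\varepsilon}(x,\,t,\,\bz_{0})\mid \D\bphi\rangle_{\bg}$ from both sides. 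Since $\bz_{0}$ is constant, this subtracted term is integrated by parts \emph{in space only} and bounded by $C({\mathcal D},\,\delta,\,M)\iint\eta^{2}\lvert\bu_{\varepsilon}-\boldsymbol\ell\rvert$, using merely the Lipschitz-in-$x$ bounds \eqref{Eq (Section 1): Bound Assumptions for Coefficients} on $a_{s},\,\bg$ and the growth control evaluated at the non-degenerate point $\bz_{0}$; the remaining part is treated by the monotonicity and Lipschitz estimates above, and $\buf_{\varepsilon}$ by H\"{o}lder's inequality together with \eqref{Eq (Section 2): Parabolic Sobolev}--\eqref{Eq (Section 2): Parabolic absorbing} (and Remark~\ref{Remark (Section 2): Scaling Constant} for scaling), where \eqref{Eq (Section 1): Condition for q,r} provides $\beta=1-n/q-2/r>0$ with room to spare in the embedding. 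Absorbing $\iint\eta^{2}\lvert\bu_{\varepsilon}-\boldsymbol\ell\rvert$ via Young's inequality, one arrives at
\[
\esssup_{\tau}\int_{B_{2r}\times\{\tau\}}\eta^{2}\lvert\bu_{\varepsilon}-\boldsymbol\ell\rvert^{2}\,\d x+\iint_{Q_{2r}}\eta^{2}\lvert\bw_{\varepsilon}\rvert^{2}\,\d x\d t\le C\iint_{Q_{2r}}\left(\lvert\nabla\eta\rvert^{2}+\lvert\partial_{t}\eta^{2}\rvert\right)\lvert\bu_{\varepsilon}-\boldsymbol\ell\rvert^{2}\,\d x\d t+C(1+F^{2})r^{n+2+2\beta},
\]
with $C=C({\mathcal D},\,\delta,\,M)$; here the last term collects the force contribution and the ``frozen-coefficient'' remainder (an absolute $\sim r^{n+4}$, dominated by $r^{n+2+2\beta}$ since $r\le1$ and $2\beta<2$).

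Next, by the parabolic Poincar\'{e}--Sobolev inequality adapted to systems with weakly controlled time derivative, I bound $\fiint_{Q_{2r}}\lvert(\bu_{\varepsilon}-\boldsymbol\ell)/r\rvert^{2}$ by $C\left(\fiint_{Q_{2r}}\lvert\bw_{\varepsilon}\rvert^{2\theta_{0}}\right)^{1/\theta_{0}}+C(1+F^{2})r^{2\beta}$ for some $\theta_{0}=\theta_{0}(n)\in(0,\,1)$, where the time oscillation of the spatial averages of $\bu_{\varepsilon}$ is estimated from \eqref{Eq (Section 2): Approximate Weak Form} using again $\lvert\A_{\varepsilon}(x,\,t,\,\D\bu_{\varepsilon})\rvert\le C({\mathcal D},\,M)$, the force bound, and that $\bz_{0}$ is constant; all the extra contributions have the shape $C({\mathcal D},\,\delta,\,M)(1+F^{2})r^{2\beta}$ after using $r\le1$ and $\beta<1$. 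Combining this with the Caccioppoli on $Q_{r}(z)$ (cut-off supported in $Q_{2r}(z)$) and normalising yields the reverse H\"{o}lder inequality with increasing supports
\[
\fiint_{Q_{r}(z)}\lvert\bw_{\varepsilon}\rvert^{2}\,\d x\d t\le C\left(\fiint_{Q_{2r}(z)}\lvert\bw_{\varepsilon}\rvert^{2\theta_{0}}\,\d x\d t\right)^{1/\theta_{0}}+C(1+F^{2})r^{2\beta}
\]
for every $z$, $r$ with $Q_{2r}(z)\subset Q_{\rho}(x_{0},\,t_{0})$, and the inhomogeneous parabolic Gehring lemma (see e.g.~\cite{BDLS-parabolic} and the references therein) upgrades this to \eqref{Eq (Section 7): Higher Integrability} on $Q_{\rho/2}(x_{0},\,t_{0})$ with a gain $\vartheta=\vartheta({\mathcal D},\,\delta,\,M)>0$.

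\textbf{Main obstacle.} The genuine difficulty is the roughness in time of the coefficients $a_{s}$: only $a_{s}\in L^{\infty}(\Omega_{T})$ is assumed (with no control on $\partial_{t}a_{s}$), whereas $\gamma_{\alpha\beta}$ additionally has $\partial_{t}\gamma_{\alpha\beta}\in L^{\infty}$. This forces the whole argument to be organised so that the frozen flux $\A_{\varepsilon}(\,\cdot\,,\,\cdot\,,\,\bz_{0})$ is only ever integrated by parts \emph{in space}, never in time, and so that no oscillation-of-coefficient term in time is produced; the leftover absolute constants must then be kept in the crude form $C(1+F^{2})r^{2\beta}$ and absorbed via $\beta<1$ and the smallness of $\rho$, rather than being balanced against a modulus of continuity in time. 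Everything else --- the Caccioppoli, the parabolic Poincar\'{e}--Sobolev step, the treatment of $\buf_{\varepsilon}$, and Gehring's lemma --- is classical, the essential input being the non-degeneracy of $\bz_{0}$ (not of $\D\bu_{\varepsilon}$) recorded in the first paragraph.
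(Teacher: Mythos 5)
Your proposal is correct and follows essentially the same route as the paper: a Caccioppoli inequality for $\D\bu_{\varepsilon}-\bz_{0}$ obtained by testing with the solution minus an affine function, using the non-degeneracy of the frozen argument $\bz_{0}$ (via Lemma \ref{Lemma (Section 2): Continuity of Hessian}) to get monotonicity and Lipschitz bounds, freezing the coefficients in space only so that no time-regularity of $a_{s}$ is needed, then a parabolic Poincar\'{e}--Sobolev step and Gehring's lemma. The only cosmetic difference is that the paper subtracts a time-dependent weighted spatial mean $\widetilde{\bw}_{\varepsilon}(t)$ (to make the test function admissible against the distributional time derivative) rather than your fixed constant $\mathbf{b}$, which does not change the substance of the argument.
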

 \begin{proof}
    It suffices to prove that the reversed H\"{o}lder inequality
    \begin{align}\label{Eq (Section 7): Claim for Gehring}
        \fiint_{Q_{R}}\lvert \D\bu_{\varepsilon}-\bz_{0} \rvert^{2}\,\d x\d t&\le A\left[\left(\fiint_{Q_{2R}}\lvert\D\bu_{\varepsilon}-\bz_{0} \rvert^{\frac{n+2}{2n}}\,\d x\d t \right)^{\frac{n}{n+2}}+\fiint_{Q_{2R}}\lvert \rho \left(1+\lvert\buf_{\varepsilon}\rvert \right)\rvert^{2}\,\d x\d t \right]\nonumber\\ 
        &\quad +\tau\fiint_{Q_{2R}}\lvert \D\bu_{\varepsilon}-\bz_{0} \rvert^{2}\,\d x\d t.
    \end{align}
    holds for any $Q_{2R}(y_{0},\,s_{0})\subset Q_{\rho}(x_{0},\,t_{0})$.
    Here, $\tau\in(0,\,1)$ is arbitrarily fixed, and the bound $A=A({\mathcal D},\,\delta,\,M,\,\tau)\in(1,\,\infty)$ is independent of $\varepsilon\in(0,\,\delta/4)$.
    Then, the parabolic version of Gehring's lemma implies the existence of the positive exponent $\vartheta=\vartheta({\mathcal D},\,\delta,\,M)$ satisfying
    \[\fiint_{Q_{\rho/2}}\lvert \D\bu_{\varepsilon}-\bz_{0} \rvert^{2(1+\vartheta)}\,\d x\d t\le C\left[\left(\fiint_{Q_{\rho}}\lvert \D\bu_{\varepsilon}-\bz_{0}\rvert^{2}\,\d x\d t \right)^{1+\vartheta}+\fiint_{Q_{\rho}}\lvert \rho \left(1+\lvert\buf_{\varepsilon}\rvert \right)\rvert^{2(1+\vartheta)}\,\d x\d t\right]\] 
    for some constant $C=C({\mathcal D},\,\delta,\,M)\in(0,\,\infty)$.
    Without loss of generality, we may let $2(1+\vartheta)$ be close to $2$, so that H\"{o}lder's inequality can be applied to $\lvert \rho \left(1+\lvert\buf_{\varepsilon}\rvert \right)\rvert^{2(1+\vartheta)}$.
    Hence, (\ref{Eq (Section 7): Higher Integrability}) is easily concluded.

    Fix $Q_{2R}=Q_{2R}(y_{0},\,s_{0})=B_{2R}(y_{0})\times I_{2R}(s_{0})\subset Q_{\rho}(x_{0},\,t_{0})$, and let $\eta\in C_{\mathrm c}^{1}(B_{2R};\,\lbrack 0,\,1\rbrack)$ and $\widetilde{\eta}\in C_{\mathrm c}^{1}(I_{2R}(s_{0});\,\lbrack 0,\,1\rbrack)$ be arbitrarily chosen.
    We introduce the ${\mathbb R}^{N}$-valued functions
    \[\bw_{\varepsilon}(x,\,t)\coloneqq \bu_{\varepsilon}(x,\,t)-\bz_{0}(x-x_{0}),\quad \widetilde{\bw}_{\varepsilon}(t)\coloneqq \left(\int_{B_{2R}}\eta^{2}\,\d x\right)^{-1}\int_{B_{2R}}\bw_{\varepsilon}(x,\,t)\,\d x\]
    for $(x,\,t)\in Q_{2R}(y_{0},\,s_{0})$.
    We consider the function $\bphi$ of the form $\bphi\coloneqq \eta^{2}\widetilde{\eta}^{2}\phi(\bw_{\varepsilon}-\widetilde{\bw}_{\varepsilon})$, where $\phi\colon \lbrack s_{0}-(2R)^{2},\,s_{0}\rbrack\to\lbrack 0,\,1\rbrack$ is a non-increasing function that satisfies $\phi(s_{0})=0$.
    Then, this $\bphi$ satisfies $\int_{B_{2R}}\boldsymbol{\varphi}(x,\,t)\,\d x=0$ for a.e.~$t\in (s_{0}-(2R)^{2},\,s_{0})$, which implies $\int_{B_{2R}}\partial_{t}\boldsymbol{\varphi}(x,\,t)\,\d x=0$ for a.e.~$t\in (s_{0}-(2R)^{2},\,s_{0})$.
    Therefore, $\bphi$ is an admissible test function into the weak formulation
    \begin{align*}
        &-\int_{s_{0}-(2R)^{2}}^{s_{0}}\langle \bw_{\varepsilon}-\widetilde{\bw}_{\varepsilon}\mid \partial_{t}\bphi \rangle\,\d t +\iint_{Q_{2R}}\left\langle \A_{\varepsilon}(x,\,t,\,\D\bu)-\A_{\varepsilon}(x,\,t,\,\bz_{0})\mathrel{}\middle|\mathrel{}\D\bphi\right\rangle_{\bg}\,\d x\d t\\ 
        &=\iint_{Q_{2R}}\langle \buf_{\varepsilon}\mid \bphi\rangle\,\d x\d t-\iint_{Q_{2R}}\left\langle \A_{\varepsilon}(x,\,t,\,\bz_{0}) \mathrel{} \middle|\mathrel{} \D\bphi\right\rangle_{\bg}-\left\langle \A_{\varepsilon}(x_{0},\,t,\,\bz_{0})\mathrel{}\middle|\mathrel{}\D\bphi \right\rangle_{\bg(x_{0},\,t)}\,\d x\d t.
    \end{align*}
    By (\ref{Eq (Section 7): Assumption for Higher Integrability}), we are allowed to use not only (\ref{Eq (Section 2): Continuity of A-epsilon in x}) but also (\ref{Eq (Section 2): Ellipticity outside the origin})--(\ref{Eq (Section 2): Continuity outside the origin}). 
    With this in mind, we integrate by parts and carry out an absorbing argument to get
    \begin{align*}
        &-\iint_{Q_{2R}}\lvert \bw_{\varepsilon}-\widetilde{\bw}_{\varepsilon}\rvert^{2}\eta^{2}\widetilde{\eta}^{2}\partial_{t}\phi\,\d x\d t+\iint_{Q_{2R}}\lvert \D\bw_{\varepsilon}\rvert^{2}\eta^{2}\widetilde{\eta}^{2}\phi\,\d x\d t\\ 
        &\le C({\mathcal D},\,\delta,\,M)\left(\iint_{Q_{2R}}\lvert \bw_{\varepsilon}-\widetilde{\bw}_{\varepsilon}\rvert^{2}\left(\lvert\nabla\eta\rvert^{2}+\lvert\partial_{t}\widetilde{\eta} \rvert+R^{-2}\right)\,\d x\d t+R^{2}\iint_{Q_{2R}}\left(1+\lvert \buf_{\varepsilon} \rvert^{2}\right)\eta^{2}\widetilde{\eta}^{2}\,\d x\d t \right).
    \end{align*}
    Noting $\D\bw_{\varepsilon}=\D\bu_{\varepsilon}-\bz_{0}$ and $2R\le \rho$, we conclude (\ref{Eq (Section 7): Claim for Gehring}) from the last estimate (see \cite[Lemma 6.1]{BDLS-parabolic}, \cite[\S 2]{Giaquinta-Struwe MR0652852}, or \cite[Lemma 6.3]{T-supercritical} for the detailed discussions).
 \end{proof}

 We consider a comparison function $\bv_{\varepsilon}$, which satisfies a classical linear system.
 \begin{lemma}\label{Lemma (Section 7): Comparison Lemma}
    Let $\bu_{\varepsilon}$ be a weak solution to (\ref{Eq (Section 2): Approximate System}) in $\widetilde{\mathcal Q}\Subset {\mathcal Q}\Subset \Omega_{T}$ and let (\ref{Eq (Section 4): Bound Assumption of v-epsilon})--(\ref{Eq (Section 4): delta vs mu}) be in force.
    Assume that the condition 
    \begin{equation}\label{Eq (Section 7): Average Assumption}
        \delta+\frac{\mu}{4}\le \lvert (\D\bu_{\varepsilon})_{Q_{\rho}(x_{0},\,t_{0})}\rvert_{\bg(x_{0},\,t_{0})}
    \end{equation}
    is satisfied.
    Then, there uniquely exists a function $\bv_{\varepsilon}\in \bu_{\varepsilon}+X_{0}^{2}(I_{\rho/2}(t_{0});\,B_{\rho/2}(x_{0}))$ that satisfies 
    \begin{equation}\label{Eq: (Section 7) Dirichlet Boundary}
        -\iint_{Q_{\rho/2}}\langle \bv_{\varepsilon}\mid \partial_{t}{\boldsymbol\varphi}\rangle\,\d x\d t+\iint_{Q_{\rho/2}} {\mathcal B}_{\varepsilon}(x_{0},\,t,\,(\D\bu_{\varepsilon})_{Q_{\rho}})(\D\bv_{\varepsilon},\,\D{\boldsymbol\varphi})\,{\mathrm d}x{\mathrm d}t=0
    \end{equation}
    for any ${\boldsymbol\varphi}\in C_{\mathrm c}^{1}(Q_{\rho/2})^{N}$, and $(\bv_{\varepsilon}-\bu_{\varepsilon})(\,\cdot\,,\,t_{0}-(\rho/2)^{2})=0$ in $L^{2}(B_{\rho/2}(x_{0}))^{N}$.
    Moreover, there exists a constant $C\in(1,\,\infty)$, depending at most on data, such that we have
    \begin{equation}\label{Eq: (Section 7) Basic Heat Estimates}
        \fiint_{Q_{\sigma\rho}}\lvert \D\bv_{\varepsilon}-(\D\bv_{\varepsilon})_{Q_{\sigma\rho}} \rvert^{2}\,\d x\d t\le C\sigma^{2}\fiint_{Q_{\rho/2}}\lvert \D\bv_{\varepsilon}-(\D\bv_{\varepsilon})_{Q_{\rho/2}}\rvert^{2}\,\d x\d t
    \end{equation}
    for all $\sigma\in(0,\,1/2\rbrack$, and
    \begin{equation}\label{Eq: (Section 7) Comparison Estimate}
        \fiint_{Q_{\rho/2}}\lvert \D\bu_{\varepsilon}-\D\bv_{\varepsilon} \rvert^{2}\,\d x\d t\le C({\mathcal D},\,\delta,\,M)\left[\omega\left(\sqrt{\Phi(\rho)}\right)^{\frac{\vartheta}{1+\vartheta}}\Phi(\rho) +\left(1+F^{2}\right)\rho^{2\beta} \right],
    \end{equation}
    where the positive exponent $\vartheta$ is as in Lemma \ref{Lemma (Section 7): Higher integrability}, and the concave function $\omega$ is determined by Lemma \ref{Lemma (Section 2): Continuity of Hessian} with $c_{1}=\delta/4$ and $c_{2}=M_{0}/\gamma_{0}$.
 \end{lemma}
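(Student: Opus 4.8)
The plan is to establish the three assertions --- unique solvability of the linearized Dirichlet problem \eqref{Eq: (Section 7) Dirichlet Boundary}, the interior Campanato decay \eqref{Eq: (Section 7) Basic Heat Estimates}, and the comparison estimate \eqref{Eq: (Section 7) Comparison Estimate} --- in that order. For the first, I observe that by \eqref{Eq (Section 7): Average Assumption} the frozen matrix $\mathbf{z}_\star\coloneqq(\D\bu_\varepsilon)_{Q_\rho(x_0,t_0)}$ satisfies $\delta/4\le\lvert\mathbf{z}_\star\rvert_{\bg(x_0,t_0)}$, and by \eqref{Eq (Section 4): Bound Assumption of v-epsilon}--\eqref{Eq (Section 4): delta vs mu} together with the Cauchy--Schwarz bound \eqref{Eq (Section 1): Matrix Gamma} also $\lvert\mathbf{z}_\star\rvert_{\bg(x_0,t_0)}\le M/\gamma_0$; hence Lemma~\ref{Lemma (Section 2): Ellipticity of Bilinear Forms} applied at $\bz=\mathbf{z}_\star$ yields that $(x_0,t)\mapsto{\mathcal B}_\varepsilon(x_0,t,\mathbf{z}_\star)$ is a bounded, measurable-in-$t$, symmetric bilinear form on ${\mathbb R}^{Nn}$ which is uniformly elliptic with ellipticity constants depending only on ${\mathcal D}$, $\delta$, $M$ (the upper bound uses $h_{1,\varepsilon}(\mathbf{z}_\star)\le(\delta/4)^{-1}$, valid since $\varepsilon<\delta/4$). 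Thus \eqref{Eq: (Section 7) Dirichlet Boundary} is a classical linear parabolic system with bounded measurable coefficients, and the Lions--Magenes / Faedo--Galerkin theory (as cited after \eqref{Eq (Section 2): Coercivity of A-epsilon}) gives a unique $\bv_\varepsilon\in\bu_\varepsilon+X_0^2(I_{\rho/2}(t_0);B_{\rho/2}(x_0))$ with the prescribed initial trace.

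For \eqref{Eq: (Section 7) Basic Heat Estimates}, I use the fact that $\bv_\varepsilon$ solves a linear parabolic system whose principal part is, after freezing in $x$, constant-in-space with the Uhlenbeck-type structure of ${\mathcal B}_\varepsilon$; the spatial derivatives $\partial_{x_\sigma}\bv_\varepsilon$ and differences $\bv_\varepsilon - \ell$ (affine $\ell$) again solve systems of the same type, so standard interior regularity for linear parabolic systems --- energy estimates plus Campanato/Schauder-type iteration, as in the classical references to DiBenedetto's monograph --- gives the self-improving decay \eqref{Eq: (Section 7) Basic Heat Estimates} with $C=C({\mathcal D},\delta,M)$. (Here one also applies Lemma~\ref{Lemma (Section 2): Average = L-2 minimizer} to replace $(\D\bv_\varepsilon)_{Q_{\sigma\rho}}$ by the $L^2$-minimizer when convenient.) This step is essentially off-the-shelf once the uniform ellipticity of the frozen coefficients is in hand.

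The heart of the matter is \eqref{Eq: (Section 7) Comparison Estimate}. I subtract the weak formulations for $\bu_\varepsilon$ and $\bv_\varepsilon$, test with $\bphi = \bu_\varepsilon - \bv_\varepsilon$ (admissible since $\bu_\varepsilon-\bv_\varepsilon\in X_0^2$ with vanishing initial trace), and after integrating the time term by parts obtain, schematically,
\[
\iint_{Q_{\rho/2}}{\mathcal B}_\varepsilon(x_0,t,\mathbf{z}_\star)(\D\bu_\varepsilon-\D\bv_\varepsilon,\D\bu_\varepsilon-\D\bv_\varepsilon)\,\d x\d t
\le \mathrm{I}+\mathrm{II}+\mathrm{III},
\]
where $\mathrm{I}$ collects $\bigl\langle\A_\varepsilon(x,t,\D\bu_\varepsilon)-\A_\varepsilon(x,t,\mathbf{z}_\star)\mid\D(\bu_\varepsilon-\bv_\varepsilon)\bigr\rangle_{\bg}$ minus ${\mathcal B}_\varepsilon(x_0,t,\mathbf{z}_\star)(\D\bu_\varepsilon-\mathbf{z}_\star,\D(\bu_\varepsilon-\bv_\varepsilon))$ --- the linearization error --- plus $\mathrm{II}$ from the $x$-dependence of the coefficients (controlled by \eqref{Eq (Section 2): Continuity of A-epsilon in x}, giving an $O(\rho)$ factor) and $\mathrm{III}$ from the force term $\buf_\varepsilon$ (controlled via H\"older's inequality and $\lVert\buf_\varepsilon\rVert_{L^{q,r}}\le F$, yielding the $(1+F^2)\rho^{2\beta}$ contribution). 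On the set where $\lvert\D\bu_\varepsilon\rvert_{\bg(x_0,t_0)}\ge c_1=\delta/4$ I split $\mathrm{I}$: the piece ${\mathcal B}_\varepsilon(x_0,t,\mathbf{z}_\star)-{\mathcal B}_\varepsilon(x,t,\D\bu_\varepsilon)$ is estimated by \eqref{Eq (Section 2): Continuity of Hessian Matrix} with $\bz_0=\mathbf{z}_\star$, giving a factor $\omega(\lvert\D\bu_\varepsilon-\mathbf{z}_\star\rvert)$; on the remaining set $\{\lvert\D\bu_\varepsilon\rvert_{\bg(x_0,t_0)}<\delta/4\}$, which forces $\lvert\D\bu_\varepsilon-\mathbf{z}_\star\rvert\gtrsim\mu$, I crudely bound the integrand and pay with the measure of that set, again absorbed into $\Phi(\rho)$. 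After an absorbing argument on the left and an application of H\"older's inequality in the form $\fiint\omega(\lvert\D\bu_\varepsilon-\mathbf{z}_\star\rvert)\lvert\D\bu_\varepsilon-\mathbf{z}_\star\rvert^2 \le \bigl(\fiint\omega(\cdots)^{(1+\vartheta)/\vartheta}\bigr)^{\vartheta/(1+\vartheta)}\bigl(\fiint\lvert\D\bu_\varepsilon-\mathbf{z}_\star\rvert^{2(1+\vartheta)}\bigr)^{1/(1+\vartheta)}$, I invoke the reverse-H\"older gain \eqref{Eq (Section 7): Higher Integrability} from Lemma~\ref{Lemma (Section 7): Higher integrability} (applicable with $\bz_0=\mathbf{z}_\star$ thanks to \eqref{Eq (Section 7): Assumption for Higher Integrability}, which is exactly \eqref{Eq (Section 7): Average Assumption} plus the upper bound on $\lvert\mathbf{z}_\star\rvert$) and Jensen's inequality together with concavity of $\omega$ to replace $\fiint\omega(\lvert\D\bu_\varepsilon-\mathbf{z}_\star\rvert)$ by $\omega\bigl(\sqrt{\Phi(\rho)}\bigr)$. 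Collecting terms produces exactly \eqref{Eq: (Section 7) Comparison Estimate}. The main obstacle is this last estimate --- in particular handling the degenerate set where the gradient drops below the ellipticity threshold $\delta/4$ while keeping all constants independent of $\varepsilon\in(0,\delta/4)$, and correctly balancing the higher-integrability exponent $\vartheta$ against the modulus $\omega$ so that the self-improving iteration in Section~\ref{Section: Non-Degenerate} can eventually be closed.
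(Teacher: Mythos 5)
Your proposal follows essentially the same route as the paper: check uniform ellipticity of ${\mathcal B}_{\varepsilon}(x_{0},\,t,\,\bz_{\star})$ via (\ref{Eq (Section 7): Average Assumption}) and (\ref{Eq (Section 4): Bound Assumption of v-epsilon}) to get well-posedness and the classical decay (\ref{Eq: (Section 7) Basic Heat Estimates}), then subtract weak formulations, test with the difference, and split the error into a linearization defect handled by Lemma~\ref{Lemma (Section 2): Continuity of Hessian}, a frozen-coefficient defect handled by Lemma~\ref{Lemma (Section 2): Continuity on Norms}, and a force-term contribution, closing via the H\"older/Jensen/higher-integrability chain with Lemma~\ref{Lemma (Section 7): Higher integrability}. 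One small simplification you missed: the extra case split on $\{\lvert\D\bu_{\varepsilon}\rvert_{\bg(x_{0},t_{0})}<\delta/4\}$ is unnecessary, since Lemma~\ref{Lemma (Section 2): Continuity of Hessian} only requires the lower bound $c_{1}\le\lvert\bz_{0}\rvert_{\bg}$ on the \emph{frozen} argument $\bz_{0}=\bz_{\star}$ (not on $\bz_{1}=\D\bu_{\varepsilon}$), and its proof already absorbs the degenerate regime via the dichotomy $\lvert\bz_{1}-\bz_{0}\rvert_{\bg}\lessgtr c_{1}/2$.
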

 \begin{proof}
    By (\ref{Eq (Section 1): Matrix Gamma}), (\ref{Eq (Section 4): Bound Assumption of v-epsilon}) and (\ref{Eq (Section 7): Average Assumption}), the matrix $\bz_{0}\coloneqq (\D\bu_{\varepsilon})_{Q_{\rho}}\in{\mathbb R}^{Nn}$ satisfies (\ref{Eq (Section 7): Assumption for Higher Integrability}).
    Hence, ${\mathcal B}_{\varepsilon}(x_{0},\,t,\,\bz_{0})$ is uniformly elliptic in the classical sense for a.e.~$t\in (t_{0}-(\rho_{0}/2)^{2},\,t_{0})$, and
    Therefore, the unique existence of (\ref{Eq: (Section 7) Dirichlet Boundary}) under the parabolic Dirichlet boundary is clear (see \cite[Chapitre 2]{Lions-monotone MR0259693} or \cite[Chapter III]{Showalter MR1422252}).
    Since ${\mathcal B}_{\varepsilon}(x_{0},\,t,\,\bz_{0})$ is independent of the spatial variable $x$, we can deduce classical Caccioppoli estimates of higher spatial derivatives of $\bv_{\varepsilon}$.
    The basic estimate (\ref{Eq: (Section 7) Basic Heat Estimates}) is an immediate consequence of these estimates, which are found in the proof of \cite[Lemma 6.3]{BDLS-parabolic} (see also \cite[Lemma 5.1]{Campanato MR0213737} for the classical results in the time-independent cases).

    The weak formulations (\ref{Eq (Section 2): Approximate Weak Form}) and (\ref{Eq: (Section 7) Dirichlet Boundary}) show that the identity
    \begin{align*}
        &-\iint_{Q_{\rho/2}}(\bu_{\varepsilon}-\bv_{\varepsilon})\partial_{t}{\boldsymbol\varphi}\,\d x\d t+\iint_{Q_{\rho/2}}{\mathcal B}_{\varepsilon}(x_{0},\,t,\,\bz_{0})(\D\bu_{\varepsilon}-\D\bv_{\varepsilon},\,\D{\boldsymbol\varphi})\,{\mathrm d}x{\mathrm d}t\\ 
        &=\iint_{Q_{\rho/2}}\left({\mathbf J}_{1}(x,\,t)+{\mathbf J}_{2}(x,\,t) \right)\,\d x\d t+\iint_{Q_{\rho/2}}\langle\buf_{\varepsilon}\mid{\boldsymbol\varphi}\rangle\,\d x\d t
    \end{align*} 
    holds for all ${\boldsymbol\varphi}\in C_{\mathrm c}^{1}(Q_{\rho/2};\,{\mathbb R}^{N})$,
    where 
    \[{\mathbf J}_{1}(x,\,t)\coloneqq {\mathcal B}_{\varepsilon}(x_{0},\,t,\,\bz_{0})(\D\bu_{\varepsilon}-\bz_{0},\,\D{\boldsymbol\varphi})-\left\langle {\mathbf A}_{\varepsilon}(x_{0},\,t,\,\D\bu_{\varepsilon})-{\mathbf A}_{\varepsilon}(x_{0},\,t,\,\bz_{0})\mathrel{}\middle|\mathrel{}\D{\boldsymbol\varphi} \right\rangle_{\bg(x_{0},\,t)},\]
    \[{\mathbf J}_{2}(x,\,t)\coloneqq \langle {\mathbf A}_{\varepsilon}(x_{0},\,t,\,\D\bu_{\varepsilon})\mid \D\bphi \rangle_{\bg(x_{0},\,t)}-\langle \A_{\varepsilon}(x,\,t,\,\D\bu_{\varepsilon})\mid \D\bphi \rangle_{\bg(x,\,t)}.\]
    We test $\bphi\coloneqq \phi_{\widetilde{\varepsilon}}(\bu_{\varepsilon}-\bv_{\varepsilon})$ into this weak formulation, where $\phi_{\widetilde{\varepsilon}}(t)\coloneqq \min \{\,1,\,(t_{0}-t)/\widetilde{\varepsilon} \,\}$ for $t\in I_{\rho/2}(t_{0})$ with $\widetilde{\varepsilon}>0$ being sufficiently small.
    Integrating by parts, and using H\"{o}lder's inequality and the Poincar\'{e} inequality, we have 
    \begin{align*}
        &-\fiint_{Q_{\rho/2}}\lvert \bu_{\varepsilon}-\bv_{\varepsilon} \rvert^{2}\partial_{t}\phi_{\widetilde{\varepsilon}}\,\d x\d t+\fiint_{Q_{\rho/2}}\lvert \D\bu_{\varepsilon}-\D\bv_{\varepsilon}\rvert^{2}\phi_{\widetilde{\varepsilon}}\,\d x\d t\\ 
        &\le C\fiint_{Q_{\rho/2}}\omega(\lvert \D\bu_{\varepsilon}-\bz_{0} \rvert)\lvert \D\bu_{\varepsilon}-\bz_{0}\rvert \lvert \D\bu_{\varepsilon}-\D\bv_{\varepsilon}\rvert\phi_{\widetilde{\varepsilon}} \,\d x\d t\\ 
        &\quad +C\rho\fiint_{Q_{\rho/2}}\lvert \D\bu_{\varepsilon}-\D\bv_{\varepsilon}\rvert\phi_{\widetilde{\varepsilon}}\,\d x\d t+C\fiint_{Q_{\rho/2}}\lvert \buf_{\varepsilon}\rvert\lvert\bu_{\varepsilon}-\bv_{\varepsilon}\rvert\phi_{\widetilde{\varepsilon}}\, \d x\d t \\ 
        &\le C\left[\left(1+F^{2}\right)\rho^{2\beta}+\fiint_{Q_{\rho/2}}\omega(\lvert \D\bu_{\varepsilon}-\bz_{0}\rvert)^{2}\lvert \D\bu_{\varepsilon}-\bz_{0}\rvert^{2}\phi_{\widetilde{\varepsilon}}\,\d x\d t \right]^{1/2}\left(\fiint_{Q_{\rho/2}}\lvert \D\bu_{\varepsilon}-\D\bu_{\varepsilon}\rvert^{2}\phi_{\widetilde{\varepsilon}}\,\d x\d t \right)^{1/2}
    \end{align*}
    where we have used Lemmata \ref{Lemma (Section 2): Continuity of Hessian}--\ref{Lemma (Section 2): Continuity on Norms} and (\ref{Eq (Section 4): Bound Assumption of v-epsilon}) to estimate ${\mathbf J}_{1}$ and ${\mathbf J}_{2}$.
    Deleting the first integral in the right-hand side, making absorptions, and finally letting ${\widetilde\varepsilon}\to 0$, we have 
    \begin{align*}
        &\fiint_{Q_{\rho/2}}\lvert \D\bu_{\varepsilon}-\D\bv_{\varepsilon}\rvert^{2}\,\d x\d t\\ 
        &\le C(1+F^{2})\rho^{2\beta}+C\left[\omega\left(\frac{2M}{\gamma_{0}}\right)^{\frac{\vartheta+2}{\vartheta}}\fiint_{Q_{\rho/2}}\omega(\lvert \D\bu_{\varepsilon}-\bz_{0}\rvert)\,\d x\d t \right]^{\frac{\vartheta}{1+\vartheta}}\left[\fiint_{Q_{\rho/2}}\lvert \D\bu_{\varepsilon}-\bz_{0} \rvert^{2(1+\vartheta)}\,\d x\d t \right]^{\frac{1}{1+\vartheta}}
    \end{align*}
    with $\vartheta>0$ given by Lemma \ref{Lemma (Section 7): Higher integrability}.
    We apply Jensen's inequality to the concave function $\omega$, and use the Cauchy--Schwarz inequality, and (\ref{Eq (Section 7): Higher Integrability}) to deduce (\ref{Eq: (Section 7) Comparison Estimate}).
 \end{proof}
 \begin{lemma}\label{Lemma (Section 7): Energy Estimate from Perturbation}
    For each $\sigma\in(0,\,1/2)$, there exists a sufficiently small $\theta_{0}=\theta_{0}({\mathcal D},\,\delta,\,M,\,\sigma)\in(0,\,\gamma_{0}^{2}/256)$ such that the first result of (\ref{Eq (Section 7): Measure Result Final}) with $\theta\le \theta_{0}$ implies 
    \begin{equation}\label{Eq (Section 7): Energy Decay from Perturbation}
        \Phi(\sigma\rho)\le C_{\ast}\left[\sigma^{2}\Phi(\rho)+\frac{1+F^{2}}{\sigma^{n+2}}\rho^{2\beta} \right]
    \end{equation}
    with $C_{\ast}=C_{\ast}({\mathcal D},\,\delta,\,M)\in(1,\,\infty)$.
 \end{lemma}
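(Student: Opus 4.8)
\textbf{Proof plan for Lemma~\ref{Lemma (Section 7): Energy Estimate from Perturbation}.}
The plan is to combine the comparison estimate of Lemma~\ref{Lemma (Section 7): Comparison Lemma} with the decay estimate \eqref{Eq: (Section 7) Basic Heat Estimates} for the linear comparison flow $\bv_{\varepsilon}$, in the classical Campanato freezing-coefficient style. First I would observe that, by the first part of \eqref{Eq (Section 7): Measure Result Final}, the hypothesis $\theta\le\theta_{0}$ gives $\Phi(\rho)\le\theta\mu^{2}$ together with the lower average bound $\lvert(\D\bu_{\varepsilon})_{Q_{\rho}(x_{0},t_{0})}\rvert_{\bg(x_{0},t_{0})}\ge\mu/2+\delta\ge\delta+\mu/4$; the latter is exactly condition \eqref{Eq (Section 7): Average Assumption}, so Lemma~\ref{Lemma (Section 7): Comparison Lemma} applies and yields the comparison function $\bv_{\varepsilon}$ satisfying \eqref{Eq: (Section 7) Basic Heat Estimates}--\eqref{Eq: (Section 7) Comparison Estimate}.

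Next, for $\sigma\in(0,1/2)$ I would split using Lemma~\ref{Lemma (Section 2): Average = L-2 minimizer} and the triangle inequality:
\begin{align*}
\Phi(\sigma\rho)&\le \fiint_{Q_{\sigma\rho}}\lvert\D\bu_{\varepsilon}-(\D\bv_{\varepsilon})_{Q_{\sigma\rho}}\rvert^{2}\,\d x\d t\\
&\le 2\fiint_{Q_{\sigma\rho}}\lvert\D\bv_{\varepsilon}-(\D\bv_{\varepsilon})_{Q_{\sigma\rho}}\rvert^{2}\,\d x\d t+2\fiint_{Q_{\sigma\rho}}\lvert\D\bu_{\varepsilon}-\D\bv_{\varepsilon}\rvert^{2}\,\d x\d t.
\end{align*}
For the first term I use \eqref{Eq: (Section 7) Basic Heat Estimates}, then bound $\fiint_{Q_{\rho/2}}\lvert\D\bv_{\varepsilon}-(\D\bv_{\varepsilon})_{Q_{\rho/2}}\rvert^{2}$ by $2\fiint_{Q_{\rho/2}}\lvert\D\bu_{\varepsilon}-\D\bv_{\varepsilon}\rvert^{2}+2\fiint_{Q_{\rho/2}}\lvert\D\bu_{\varepsilon}-(\D\bu_{\varepsilon})_{Q_{\rho/2}}\rvert^{2}$ (again Lemma~\ref{Lemma (Section 2): Average = L-2 minimizer}), and note $\fiint_{Q_{\rho/2}}\lvert\D\bu_{\varepsilon}-(\D\bu_{\varepsilon})_{Q_{\rho/2}}\rvert^{2}\le 2^{n+2}\Phi(\rho)$. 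For the second term, and for the comparison term arising on $Q_{\rho/2}$, I invoke \eqref{Eq: (Section 7) Comparison Estimate}. Absorbing constants and using $\sigma<1/2$ to write $1/\sigma^{n+2}$ as the worst factor, this gives
\[
\Phi(\sigma\rho)\le C\,\sigma^{2}\Phi(\rho)+\frac{C}{\sigma^{n+2}}\left[\omega\!\left(\sqrt{\Phi(\rho)}\right)^{\frac{\vartheta}{1+\vartheta}}\Phi(\rho)+\bigl(1+F^{2}\bigr)\rho^{2\beta}\right]
\]
with $C=C({\mathcal D},\delta,M)$.

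The final step is to choose $\theta_{0}$ so the $\omega$-term is absorbed into the main term. Since $\omega$ is a modulus of continuity (concave, $\omega(0)=0$), given $\sigma$ I pick $\theta_{0}=\theta_{0}({\mathcal D},\delta,M,\sigma)\in(0,\gamma_{0}^{2}/256)$ small enough that $C\,\omega(\sqrt{\theta_{0}}\,\mu)^{\vartheta/(1+\vartheta)}\le\sigma^{n+4}$ --- here one uses $\Phi(\rho)\le\theta\mu^{2}\le\theta_{0}\mu^{2}$ and $\mu\le M$ so that $\sqrt{\Phi(\rho)}$ is controlled by a quantity tending to $0$ with $\theta_{0}$. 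Then $\frac{C}{\sigma^{n+2}}\omega(\sqrt{\Phi(\rho)})^{\vartheta/(1+\vartheta)}\Phi(\rho)\le\sigma^{2}\Phi(\rho)$, and collecting terms yields \eqref{Eq (Section 7): Energy Decay from Perturbation} with $C_{\ast}=C_{\ast}({\mathcal D},\delta,M)$. The only mildly delicate point is the bookkeeping of how $\theta_{0}$ is allowed to depend on $\sigma$ (it must, since the absorption threshold shrinks as $\sigma\to0$) while $C_{\ast}$ stays independent of $\sigma$; this is consistent because after absorption the surviving constant comes only from \eqref{Eq: (Section 7) Basic Heat Estimates}, \eqref{Eq: (Section 7) Comparison Estimate} and the combinatorial factors $2^{n+2}$, none of which involve $\sigma$. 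I do not expect any real obstacle beyond this routine freezing-coefficient iteration; the hard analytic work (higher integrability, the comparison estimate, and the decay for the linear flow) has already been carried out in Lemmata~\ref{Lemma (Section 7): Higher integrability}--\ref{Lemma (Section 7): Comparison Lemma}.
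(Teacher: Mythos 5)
Your proposal is correct and follows essentially the same route as the paper's own proof: split $\Phi(\sigma\rho)$ via the $L^{2}$-minimizing property of the average, apply the linear-flow decay \eqref{Eq: (Section 7) Basic Heat Estimates} together with the comparison estimate \eqref{Eq: (Section 7) Comparison Estimate}, and absorb the $\omega(\sqrt{\Phi(\rho)})^{\vartheta/(1+\vartheta)}$-term into $\sigma^{2}\Phi(\rho)$ by shrinking $\theta_{0}$ (using $\sqrt{\Phi(\rho)}\le\theta^{1/2}\mu\le\theta^{1/2}M$). The only cosmetic difference is how the absorption constant is distributed between $\theta_{0}$ and $C_{\ast}$; your bookkeeping remark about why $C_{\ast}$ stays $\sigma$-independent matches the paper's handling.
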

 \begin{proof}
    Let $\bv_{\varepsilon}$ be the function given in Lemma \ref{Lemma (Section 7): Comparison Lemma}.
    By (\ref{Eq: (Section 7) Basic Heat Estimates})--(\ref{Eq: (Section 7) Comparison Estimate}), we compute
    \begin{align*}
        \Phi(\sigma\rho)&\le \fiint_{Q_{\sigma\rho}}\lvert \D\bu_{\varepsilon}-(\D\bv_{\varepsilon})_{Q_{\sigma\rho}}\rvert^{2}\,\d x\d t\\ 
        &\le 2\left[\frac{1}{(2\sigma)^{n+2}}\fiint_{Q_{\rho/2}}\lvert\D\bu_{\varepsilon}-\D\bv_{\varepsilon} \rvert^{2}\,\d x\d t+C\sigma^{2}\fiint_{Q_{\rho/2}}\lvert \D\bv_{\varepsilon}-(\D\bv_{\varepsilon})_{Q_{\rho/2}}\rvert^{2}\,\d x\d t\right]\\ 
        &\le C\left[ \left(\sigma^{2}+\frac{1}{\sigma^{n+2}}\right)\fiint_{Q_{\rho/2}}\lvert \D\bu_{\varepsilon}-\D\bv_{\varepsilon} \rvert^{2}\,\d x\d t+\sigma^{2}\Phi(\rho) \right]\\
        &\le \frac{C_{\ast}}{2}\left[\left(\sigma^{2}+\frac{\omega(\theta^{1/2}M)^{\frac{\vartheta}{1+\vartheta}}}{\sigma^{n+2}} \right)\Phi(\rho)+\frac{1+F^{2}}{\sigma^{n+2}}\rho^{2\beta} \right].
    \end{align*}
    Choosing $\theta_{0}\in(0,\,\gamma_{0}^{2}/256)$ such that $\omega(\theta_{0}^{1/2}M)^{\frac{\vartheta}{1+\vartheta}}\le \sigma^{n+4}$, we conclude (\ref{Eq (Section 7): Energy Decay from Perturbation}).
 \end{proof}

  \subsection{Proof of Proposition \ref{Proposition (Section 4): Non-Degenerate Case}}
  We conclude Section \ref{Section: Non-Degenerate} by giving the proof of Proposition \ref{Proposition (Section 4): Non-Degenerate Case}.
  \begin{proof}[Proof of Proposition \ref{Proposition (Section 4): Non-Degenerate Case}]
    We first note that if $\bG_{2\delta,\,\varepsilon}(x_{0},\,t_{0})$ is well-defined, then (\ref{Eq (Section 4): Growth of Gamma-2delta-epsilon}) is easily shown by (\ref{Eq (Section 1): Matrix Gamma}) and (\ref{Eq (Section 4): Bound Assumption of v-epsilon}).
    Let $c_{\dagger\dagger}$ and $c_{\dagger\dagger\dagger}$ be the positive constants found in Lemmata \ref{Lemma (Section 2): Continuity of G-2delta-epsilon} and \ref{Lemma (Section 2): Campanato Integral Growth Lemma}.
    We choose and fix $\sigma\in(0,\,1/2)$, $\theta\in(0,\,\gamma_{0}^{2}/256)$ such that 
    \[\max\left\{\,\sigma^{\beta},\,C_{\ast}\sigma^{2(1-\beta)}\, \right\}\le \frac{1}{2},\quad \theta\le \min\left\{\,\frac{\gamma_{0}^{2}}{256},\, \theta_{0},\, \frac{\sigma^{n+2+2\beta}}{c_{\dagger\dagger}^{2}c_{\dagger\dagger\dagger}}\right\},\]
    where the constants $C_{\ast}=C_{\ast}({\mathcal D},\,\delta,\,M)\in(1,\,\infty)$ and $\theta_{0}=\theta_{0}({\mathcal D},\,\delta,\,M,\,\sigma)\in(0,\,\gamma_{0}^{2}/256)$ are determined by Lemma \ref{Lemma (Section 7): Energy Estimate from Perturbation}.
    Let the ratio $\nu=\nu({\mathcal D},\,\delta,\,M,\,\theta_{0})$ and the radius $\rho_{\ast}=\rho_{\ast}({\mathcal D},\,\delta,\,M,\,\theta_{0})\in(0,\,1)$ be given by Lemma \ref{Lemma (Section 7): Energy Decay and Average Estimate}, and finally determine $\widehat\rho\in(0,\,1)$ that satisfies
    \[\widehat{\rho}\le \rho_{\ast},\quad \text{and}\quad C_{\ast}(1+F^{2})\widehat{\rho}^{2\beta}\le \frac{\theta\sigma^{n+2+2\beta}}{2}.\]
    Hereinafter we let all of the assumptions in Proposition \ref{Proposition (Section 4): Non-Degenerate Case} be satisfied with $\rho\in(0,\,\widehat\rho\rbrack$.
    By induction for $k\in{\mathbb Z}_{\ge 0}$, we would like to prove the following (\ref{Eq (Section 7): Induction Claim 1})--(\ref{Eq (Section 7): Induction Claim 2});
    \begin{equation}\label{Eq (Section 7): Induction Claim 1}
        \Phi(\rho_{k})\le \sigma^{2k\beta}\theta\mu^{2},
    \end{equation}
    \begin{equation}\label{Eq (Section 7): Induction Claim 2}
        \left\lvert (\D\bu_{\varepsilon})_{Q_{\rho_{k}}} \right\rvert_{\bg(x_{0},\,t_{0})}\ge \delta+\left(\frac{1}{2}-\frac{1}{8}\sum_{j=1}^{k-1}2^{-j}\right)\mu\ge \delta+\frac{\mu}{4},
    \end{equation}
    where $\rho_{k}\coloneqq \sigma^{k}\rho$.
    The assumption (\ref{Eq (Section 4): Measure Assumption: Non-Degenerate}) enables us to apply Lemma \ref{Lemma (Section 7): Energy Decay and Average Estimate}, from which (\ref{Eq (Section 7): Induction Claim 1})--(\ref{Eq (Section 7): Induction Claim 2}) with $k=0$ immediately follows.
    Assume that the claims (\ref{Eq (Section 7): Induction Claim 1})--(\ref{Eq (Section 7): Induction Claim 2}) are valid for an arbitrarily fixed $k\in{\mathbb Z}_{\ge 0}$.
    Then, we are allowed to apply Lemma \ref{Lemma (Section 7): Energy Estimate from Perturbation} with $\rho=\rho_{k}$. 
    Combining with the induction hypothesis (\ref{Eq (Section 7): Induction Claim 1}), we have 
    \[
        \Phi(\rho_{k+1})\le C_{\ast}\left[\sigma^{2}\Phi(\rho_{k})+\frac{1+F^{2}}{\sigma^{n+2}}\rho_{k}^{2\beta}\right]\le C_{\ast}\sigma^{2(1-\beta)}\cdot \sigma^{2\beta}\Phi(\rho_{k})+\frac{C_{\ast}(1+F^{2})\widehat{\rho}^{2\beta}}{\sigma^{n+2}}\cdot \sigma^{2\kappa\beta}\le \sigma^{2(k+1)\beta} \theta\mu^{2}.  
    \]
    This result and the Cauchy--Schwarz inequality imply
    \begin{align*}
        \left\lvert (\D\bu_{\varepsilon})_{Q_{\rho_{k+1}}} -(\D\bu_{\varepsilon})_{Q_{\rho_{k}}} \right\rvert_{\bg(x_{0},\,t_{0})}
        &\le \gamma_{0}^{-1}\fiint_{Q_{\rho_{k+1}}}\left\lvert \D\bu_{\varepsilon}-(\D\bu_{\varepsilon})_{Q_{\rho_{k}}} \right\rvert\,\d x\d t \\ 
        &\le \gamma_{0}^{-1}\sqrt{\Phi(\rho_{k+1})} \le \frac{\sigma^{k\beta}\sqrt{\theta}}{\gamma_{0}}\mu \le \frac{2^{-k}\mu}{8}.
    \end{align*}
    By the triangle inequality and the induction hypothesis (\ref{Eq (Section 7): Induction Claim 2}), we get 
    \[\left\lvert (\D\bu_{\varepsilon})_{Q_{\rho_{k+1}}}\right\rvert_{\bg(x_{0},\,t_{0})}\ge \delta+\left(\frac{1}{2}-\frac{1}{8}\sum_{j=1}^{k-1}2^{-j}\right)\mu-\frac{2^{-k}\mu}{8},\]
    which completes the induction proof of (\ref{Eq (Section 7): Induction Claim 1})--(\ref{Eq (Section 7): Induction Claim 2}).

    For every $\tau\in(0,\,1\rbrack$, there uniquely exists $k\in{\mathbb Z}_{\ge 0}$ such that $\rho_{k+1}<\tau\rho\le \rho_{k}$.
    Using Lemma \ref{Lemma (Section 2): Continuity of G-2delta-epsilon} and (\ref{Eq (Section 7): Induction Claim 1}), and noting $\sigma^{k}<\tau/\sigma$ by our choice of $k$, we have 
    \begin{align*}
        &\fiint_{Q_{\tau\rho}}\left\lvert \boldsymbol{\mathcal G}_{2\delta,\,\varepsilon}(\D\bu_{\varepsilon})-(\boldsymbol{\mathcal G}_{2\delta,\,\varepsilon}(\D\bu_{\varepsilon}))_{Q_{\rho}} \right\rvert^{2}\,\d x\d t\le \fiint_{Q_{\tau\rho}}\left\lvert \boldsymbol{\mathcal G}_{2\delta,\,\varepsilon}(\D\bu_{\varepsilon})-\boldsymbol{\mathcal G}_{2\delta,\,\varepsilon}((\D\bu_{\varepsilon})_{Q_{\rho}}) \right\rvert^{2}\,\d x\d t\\ 
        &\le c_{\dagger\dagger}^{2}\fiint_{Q_{\tau\rho}}\left\lvert\D\bu_{\varepsilon}-(\D\bu_{\varepsilon})_{Q_{\rho_{k}}} \right\rvert^{2}\,\d x\d t \le \frac{c_{\dagger\dagger}^{2}}{\sigma^{n+2}}\fiint_{Q_{\rho_{k}}}\left\lvert\D\bu_{\varepsilon}-(\D\bu_{\varepsilon})_{Q_{\rho_{k}}} \right\rvert^{2}\,\d x\d t\\ 
        &\le c_{\dagger\dagger}^{2}\sigma^{2k\beta-(n+2)}\theta\mu^{2}\le c_{\dagger\dagger}^{2}\tau^{2\beta}\cdot \theta\sigma^{-(n+2+2\beta)}\mu^{2}\le \frac{\tau^{2\beta}}{c_{\dagger\dagger\dagger}}\mu^{2}.
    \end{align*}
    The existence of $\boldsymbol{\Gamma}_{2\delta,\,\varepsilon}(x_{0},\,t_{0})\in{\mathbb R}^{Nn}$ follows from Lemma \ref{Lemma (Section 2): Campanato Integral Growth Lemma} and the last estimate. 
    Moreover, by Lemma \ref{Lemma (Section 2): Campanato Integral Growth Lemma} and our choice of $\theta$, the limit $\boldsymbol{\Gamma}_{2\delta,\,\varepsilon}(x_{0},\,t_{0})$ satisfies (\ref{Eq (Section 4): Campanato-Growth}), which completes the proof.
 \end{proof}
\section{Convergence for the parabolic Dirichlet problems}\label{Section: Convergence Result}
In Section \ref{Section: Convergence Result}, we aim to construct the weak solution of
\begin{equation}\label{Eq (Section 8): Dirichlet Parabolic}
    \left\{\begin{array}{rclcc}
        \partial_{t}u^{j}-\partial_{x_{\beta}}\left(\gamma_{\alpha\beta}a_{s}(x,\,t)g_{s}(\lvert\D\bu_{k}\rvert_{\bg}^{2})\partial_{x_{\alpha}}u^{j} \right)&=&f^{j}& \text{in}& \Omega_{T},\\
        \bu&=&\bv&\textrm{on}&\partial_{\mathrm p}\Omega_{T},
    \end{array} \right.
\end{equation}
the definition of which is given as follows.
\begin{definition}\label{Definition (Section 8): Weak sol Dirichlet boundary}
    For given $\buf\in L^{2}(\Omega_{T})^{N}\cap L^{p^{\prime}}(0,\,T;\,V_{0}^{\prime})^{N}$ and $\bu_{\star}\in X^{p}(0,\,T;\,\Omega)^{N}\cap C(\lbrack 0,\,T\rbrack;\,L^{2}(\Omega))^{N}$, a function $\bu\in \bu_{\star} +X_{0}^{p}(0,\,T;\,\Omega)^{N}$ is called the \textit{weak} solution of (\ref{Eq (Section 8): Dirichlet boundary}) when $\bu$ is a weak solution to (\ref{Eq (Section 1): General System}) in the sense of Definition \ref{Definition (Section 1): A weak solution}, and satisfies $\bu(\,\cdot\,,\,0)=\bu_{\star}(\,\cdot\,,\,0)$ in $L^{2}(\Omega)^{N}$.
\end{definition}
Section \ref{Section: Convergence Result} provides two results.
Firstly, we show a priori stability estimates for the parabolic Dirichlet problem (\ref{Eq (Section 8): Dirichlet Parabolic}), with respect to the external force term $\buf$ and the boundary datum $\bu_{\star}$.
Secondly, we aim to prove that the weak solution of (\ref{Eq (Section 8): Dirichlet Parabolic}) is constructed as a limit function of $\bu_{\varepsilon}\in \bu_{\star}+X_{0}^{p}(0,\,T;\,\Omega)^{N}$, the unique solution of (\ref{Eq (Section 2): Approximate System})--(\ref{Eq (Section 2): Parabolic Dirichlet Boundary}).

The monotonicity of the $(1,\,p)$-Laplace operator plays an important role in Section \ref{Section: Convergence Result}.
More precisely, there exists a constant $c=c({\mathcal D})\in(0,\,1)$ such that 
\begin{align}\label{Eq (Section 8): Strong Monotonicity of A-epsilon}
    \nonumber &\left\langle \A_{\varepsilon}(x,\,t,\,\bz_{1})-\A_{\varepsilon}(x,\,t,\,\bz_{2})\mid \bz_{1}-\bz_{2} \right\rangle_{\bg(x,\,t)}\\ 
    &\ge \left\{\begin{array}{cc}
        c\lvert \bz_{1}-\bz_{2}\rvert^{p} & (2\le p<\infty),\\ c\left(\varepsilon^{2}+\lvert \bz_{1}\rvert^{2}+\lvert \bz_{2}\rvert^{2}\right)^{p/2-1}\lvert \bz_{1}-\bz_{2}\rvert^{2}& (1<p<2)
       \end{array}  \right.
\end{align}
\begin{align}\label{Eq (Section 8): Strong Monotonicity of A}
    \nonumber&\left\langle \A(x,\,t,\,\bz_{1},\,{\mathbf Z}_{1})-\A(x,\,t,\,\bz_{2},\,{\mathbf Z}_{2})\mid \bz_{1}-\bz_{2} \right\rangle_{\bg(x,\,t)}\\ 
    &\ge \left\{\begin{array}{cc}
     c\lvert \bz_{1}-\bz_{2}\rvert^{p} & (2\le p<\infty),\\ c\left(\lvert \bz_{1}\rvert^{2}+\lvert \bz_{2}\rvert^{2}\right)^{p/2-1}\lvert \bz_{1}-\bz_{2}\rvert^{2}& (1<p<2)
    \end{array}  \right.
\end{align}
hold for $(x,\,t)\in\Omega_{T}$, $\bz_{1},\,\bz_{2}\in{\mathbb R}^{Nn}$, and ${\mathbf Z}_{1},\,{\mathbf Z}_{2}\in{\mathbb R}^{Nn}$ satisfying ${\mathbf Z}_{k}\in\partial_{\bg(x,\,t)}\lvert\,\cdot\,\rvert_{\bg(x,\,t)}(\bz_{k})$ ($k\in\{\,1,\,2\,\}$).
The first estimate (\ref{Eq (Section 8): Strong Monotonicity of A-epsilon}) follows from the ellipticity of the biliniear form ${\mathcal B}_{\varepsilon}(x,\,t,\,\bz)$, as described in (\ref{Eq (Section 2): Ellipticity of Bilinear Forms}).
The second estimate (\ref{Eq (Section 8): Strong Monotonicity of A}) is similarly shown by using the strong monotonicity of the $p$-Laplace-type operator and the monotonicity of the subdifferential $\partial_{\bg(x,\,t)}\lvert\,\cdot\,\rvert_{\bg(x,\,t)}$.
We additionally note that 
\begin{equation}\label{Eq (Section 8): Coercivity}
    \langle \A(x,\,t,\,\bz,\,{\mathbf Z})\mid \bz\rangle_{\bg(x,\,t)}\ge \lambda_{0}\lvert \bz \rvert^{p}+\gamma_{0}\lvert \bz\rvert
\end{equation}
holds for $(x,\,t)\in\Omega_{T}$, $\bz\in{\mathbb R}^{Nn}$, and ${\mathbf Z}\in{\mathbb R}^{Nn}$ satisfying ${\mathbf Z}\in\partial_{\bg(x,\,t)}\lvert\,\cdot\,\rvert_{\bg(x,\,t)}(\bz)$.
This is easily shown by (\ref{Eq (Section 1): Ellipticity of p-Laplace-type operator}) and Euler's identity $\langle{\mathbf Z}\mid \bz \rangle_{\bg(x,\,t)}=\lvert \bz\rvert_{\bg(x,\,t)}$ (see also \cite[Theorem 1.8]{Andreu-Vaillo et al}).
We also note that for any $\bphi_{1},\,\bphi_{2}\in C(\lbrack 0,\,T\rbrack;\,L^{2}(\Omega))^{N}\cap L^{p}(0,\,T;\,V(\Omega))^{N}$ with $\bphi_{1}-\bphi_{2}\in L^{p}(0,\,T;\,V_{0}(\Omega))^{N}$, there holds  
\begin{align}\label{Eq (Section 8): L-p V-0}
    &\lVert \bphi_{1}-\bphi_{2} \rVert_{L^{p}(0,\,T;\,V_{0}(\Omega))}^{p}\nonumber \\ 
    &\le c_{p}\left(\lVert \D\bphi_{1} \rVert_{L^{p}(\Omega_{T})}^{p}+\lVert \D\bphi_{2} \rVert_{L^{p}(\Omega_{T})}^{p} \right)+F_{p}(T)\sup_{\tau\in (0,\,T)}\lVert (\bphi_{1}-\bphi_{2})(\,\cdot\,,\,\tau) \rVert_{L^{2}(\Omega)}^{p},
\end{align}
where $F_{p}(T)\coloneqq T$ when $p\le p_{\mathrm c}$ and otherwise $F_{p}(T)=0$.
\subsection{A priori stability estimates}
We discuss the stability estimates concerning the Dirichlet boundary problem
\begin{equation}\label{Eq (Section 8): Dirichlet boundary}
    \left\{\begin{array}{rclcc}
        \partial_{t}u_{k}^{j}-\partial_{x_{\beta}}\left(\gamma_{\alpha\beta}a_{s}(x,\,t)g_{s}(\lvert\D\bu_{k}\rvert_{\bg}^{2})\partial_{x_{\alpha}}u_{k}^{j} \right)&=&f_{k}^{j}& \text{in}& \Omega_{T},\\
        \bu_{k}&=&\bv_{k}&\textrm{on}&\partial_{\mathrm p}\Omega_{T},
    \end{array} \right.
\end{equation}
for $k\in\{\,1,\,2\,\}$.
Here $\buf_{1},\,\buf_{2}\in L^{2}(\Omega_{T})^{N}\cap L^{p^{\prime}}(0,\,T;\,V_{0}^{\prime})^{N}$ and $\bv_{1},\,\bv_{2}\in X^{p}(0,\,T;\,\Omega)^{N}\cap C(\lbrack 0,\,T\rbrack;\,L^{2}(\Omega))^{N}$ are given.
Using (\ref{Eq (Section 8): Strong Monotonicity of A}), we would like to show Lemma \ref{Lemma (Section 8): Stability Estimates}.
Hereinafter, for notational simplicity, we often abbreviate $\A(\D\bu_{k})$ as
$\A(x,\,t,\,\D\bu_{k},\,{\mathbf Z}_{k})$, where the given mapping ${\mathbf Z}_{k}\in L^{\infty}(\Omega_{T})^{Nn}$ is assumed to satisfy ${\mathbf Z}_{k}\in\partial_{\bg(x,\,t)}\lvert\,\cdot\,\rvert_{\bg(x,\,t)}(\D\bu_{k}(x,\,t))$ for a.e.~$(x,\,t)\in\Omega_{T}$.
\begin{lemma}\label{Lemma (Section 8): Stability Estimates}
    Let $\buf_{1},\,\buf_{2}\in L^{2,\,1}(\Omega_{T})^{N}\cap L^{p^{\prime}}(0,\,T;\,V_{0}^{\prime})^{N}$ and $\bv_{1},\,\bv_{2}\in X^{p}(0,\,T;\,\Omega)^{N}\cap C(\lbrack 0,\,T\rbrack;\,L^{2}(\Omega))^{N}$.
    For each $k\in\{\,1,\,2\,\}$, we consider $\bu_{k}\in \bv_{k}+X_{0}^{p}(0,\,T;\,\Omega)^{N}$ the weak solution of (\ref{Eq (Section 8): Dirichlet boundary}).
    Then, we have 
    \begin{align}\label{Eq (Section 8): Boundedness of Du when perturbing f}
        &\sup_{0<\tau<T}\int_{\Omega\times \{\tau\}}\lvert\bu_{k}-\bv_{k}\rvert^{2} \,\d x+ \iint_{\Omega_{T}}\lvert \D\bu_{k}\rvert^{p}\,\d x\d t\nonumber \\ 
        & \le C\left[ \lVert\partial_{t}\bv_{k} \rVert_{L^{p^{\prime}}(0,\,T;\,V_{0}^{\prime})}^{p^{\prime}}+\lVert\D\bv_{k} \rVert_{L^{p}(\Omega_{T})}^{p}+\lVert \buf_{k}\rVert_{L^{2,\,1}(\Omega_{T})}^{2}+1 \right]
    \end{align}
    for each $k\in\{\,1,\,2\,\}$, and
    \begin{align}\label{Eq (Section 8): Stability estimates when perturbing f}
        &\sup_{0<\tau<T}\int_{\Omega\times\{\tau\} }\lvert \bu_{1}-\bu_{2}\rvert^{2}\,\d x+\iint_{\Omega_{T}}\langle \A(\D\bu_{1})-\A(\D\bu_{2}),\,\D\bu_{1}-\D\bu_{2}\rangle_{\bg}\,\d x\d t\nonumber\\ 
        &\le  C\lVert \buf_{1}-\buf_{2}\rVert_{L^{2,\,1}(\Omega_{T})}^{2}+ C\left(\lVert\A(\D\bu_{1}) \rVert_{L^{p^{\prime}}(\Omega_{T})}+\lVert\A(\D\bu_{2}) \rVert_{L^{p^{\prime}}(\Omega_{T})} \right)\lVert \D\bv_{1}-\D\bv_{2} \rVert_{L^{p}(\Omega_{T})}\nonumber \\ 
        &\quad +C\lVert \partial_{t}\bv_{1}-\partial_{t}\bv_{2} \rVert_{L^{p^{\prime}}(0,\,T;\,V_{0}^{\prime}(\Omega))}\left(\lVert \bu_{1}-\bu_{2} \rVert_{L^{p}(0,\,T;\,V_{0}(\Omega))}+\lVert \bv_{1}-\bv_{2} \rVert_{L^{p}(0,\,T;\,V_{0}(\Omega))}\right)\nonumber\\ &\quad \quad +C\sup_{0<\tau<T}\int_{\Omega\times\{\tau\}}\lvert\bv_{1}-\bv_{2} \rvert^{2}\,\d x
    \end{align}
    for some $C=C({\mathcal D},\,\Omega,\,T)\in(0,\,\infty)$.
    In particular, for given $\buf\in L^{2,\,1}(\Omega_{T})^{N}\cap L^{p^{\prime}}(0,\,T;\,V_{0}^{\prime})^{N}$ and $\bu_{\star}\in X^{p}(0,\,T;\,\Omega)^{N}\cap C(\lbrack 0,\,T\rbrack;\,L^{2}(\Omega))^{N}$, the weak solution of (\ref{Eq (Section 8): Dirichlet Parabolic}) is unique.
\end{lemma}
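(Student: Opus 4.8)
\textbf{Proof plan for Lemma \ref{Lemma (Section 8): Stability Estimates}.}
The plan is to run energy arguments on the system \eqref{Eq (Section 8): Dirichlet boundary} by testing with $\bu_k-\bv_k$ (for the a priori bound \eqref{Eq (Section 8): Boundedness of Du when perturbing f}) and with $\bu_1-\bu_2$ (for the stability estimate \eqref{Eq (Section 8): Stability estimates when perturbing f}), exploiting \eqref{Eq (Section 8): Coercivity}, \eqref{Eq (Section 8): Strong Monotonicity of A}, and \eqref{Eq (Section 8): L-p V-0}. First I would prove \eqref{Eq (Section 8): Boundedness of Du when perturbing f}. Since $\bu_k-\bv_k\in X_0^p(0,T;\Omega)^N\subset C([0,T];L^2(\Omega))^N$, it is an admissible test function (justified by the Steklov average as in the Remark after Theorem \ref{Theorem (Section 1): Gradient continuity}), and $(\bu_k-\bv_k)(\cdot,0)=0$. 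Testing \eqref{Eq (Section 8): Dirichlet boundary} and integrating by parts in time gives
\[
\frac12\int_{\Omega\times\{\tau\}}\lvert\bu_k-\bv_k\rvert^2\,\d x+\iint_{\Omega\times(0,\tau)}\langle\A(\D\bu_k)\mid \D\bu_k-\D\bv_k\rangle_{\bg}\,\d x\d t=\iint_{\Omega\times(0,\tau)}\langle\buf_k-\partial_t\bv_k,\,\bu_k-\bv_k\rangle\,\d x\d t.
\]
On the left I would split $\D\bu_k-\D\bv_k=\D\bu_k-(\D\bu_k-\D\bv_k)$ wait — rather keep $\langle\A(\D\bu_k)\mid\D\bu_k\rangle_{\bg}\ge\lambda_0\lvert\D\bu_k\rvert^p+\gamma_0\lvert\D\bu_k\rvert$ by \eqref{Eq (Section 8): Coercivity}, and bound $\lvert\langle\A(\D\bu_k)\mid\D\bv_k\rangle_{\bg}\rvert\le C(1+\lvert\D\bu_k\rvert^{p-1})\lvert\D\bv_k\rvert$ via \eqref{Eq (Section 1): Growth of p-Laplace-type operator}, \eqref{Eq (Section 1): Growth of 1-Laplace-type operator}, absorbing the $\lvert\D\bu_k\rvert^{p-1}\lvert\D\bv_k\rvert$ term into $\lambda_0\lvert\D\bu_k\rvert^p$ by Young's inequality. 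The right-hand side: the $\buf_k$ term is handled by $\lvert\iint\langle\buf_k,\bu_k-\bv_k\rangle\rvert\le\lVert\buf_k\rVert_{L^{2,1}}\sup_\tau\lVert(\bu_k-\bv_k)(\tau)\rVert_{L^2}$ (this is why $L^{2,1}$ is the right space — it pairs with $L^{2,\infty}$), absorbed by Young into $\tfrac14\sup_\tau\lVert(\bu_k-\bv_k)(\tau)\rVert_{L^2}^2$; the $\partial_t\bv_k$ term is bounded by $\lVert\partial_t\bv_k\rVert_{L^{p'}(0,T;V_0')}\lVert\bu_k-\bv_k\rVert_{L^p(0,T;V_0)}$, and then \eqref{Eq (Section 8): L-p V-0} (with $\bphi_1=\bu_k$, $\bphi_2=\bv_k$) lets me re-express $\lVert\bu_k-\bv_k\rVert_{L^p(0,T;V_0)}^p$ in terms of $\lVert\D\bu_k\rVert_{L^p}^p+\lVert\D\bv_k\rVert_{L^p}^p$ and $\sup_\tau\lVert(\bu_k-\bv_k)(\tau)\rVert_{L^2}^p$; a further Young's inequality absorbs both into the left side, yielding \eqref{Eq (Section 8): Boundedness of Du when perturbing f}.

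Next I would prove \eqref{Eq (Section 8): Stability estimates when perturbing f}. Subtract the two weak formulations of \eqref{Eq (Section 8): Dirichlet boundary}; test with $\bphi=\bu_1-\bu_2$, which lies in $(\bv_1-\bv_2)+X_0^p(0,T;\Omega)^N$ and vanishes at $t=0$. Integrating by parts in time,
\begin{align*}
&\frac12\int_{\Omega\times\{\tau\}}\lvert\bu_1-\bu_2\rvert^2\,\d x+\iint_{\Omega\times(0,\tau)}\langle\A(\D\bu_1)-\A(\D\bu_2)\mid\D\bu_1-\D\bu_2\rangle_{\bg}\,\d x\d t\\
&\quad=\iint_{\Omega\times(0,\tau)}\langle\buf_1-\buf_2,\,\bu_1-\bu_2\rangle\,\d x\d t+\frac12\int_{\Omega\times\{\tau\}}\lvert\bv_1-\bv_2\rvert^2\cdot 0\,\d x+(\text{boundary/mixed terms}),
\end{align*}
where the boundary contributions arise because the natural test object is $(\bu_1-\bu_2)-(\bv_1-\bv_2)\in X_0^p$: writing $\bu_1-\bu_2=[(\bu_1-\bu_2)-(\bv_1-\bv_2)]+(\bv_1-\bv_2)$, the truly admissible piece gives the monotonicity term, while the $(\bv_1-\bv_2)$ piece produces the extra terms $\iint\langle\A(\D\bu_1)-\A(\D\bu_2)\mid\D\bv_1-\D\bv_2\rangle_{\bg}$, bounded by $(\lVert\A(\D\bu_1)\rVert_{L^{p'}}+\lVert\A(\D\bu_2)\rVert_{L^{p'}})\lVert\D\bv_1-\D\bv_2\rVert_{L^p}$; a time-derivative term $\iint\langle\partial_t\bv_1-\partial_t\bv_2,\,\cdot\rangle$ bounded via $\lVert\partial_t\bv_1-\partial_t\bv_2\rVert_{L^{p'}(0,T;V_0')}$ times the $L^p(0,T;V_0)$ norms of $\bu_1-\bu_2$ and $\bv_1-\bv_2$; and a $\sup_\tau\lVert(\bv_1-\bv_2)(\tau)\rVert_{L^2}^2$ term from matching initial values at $t=0$, since $(\bu_1-\bu_2)(\cdot,0)=(\bv_1-\bv_2)(\cdot,0)$. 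The $\buf_1-\buf_2$ term is again controlled by $\lVert\buf_1-\buf_2\rVert_{L^{2,1}}\sup_\tau\lVert(\bu_1-\bu_2)(\tau)\rVert_{L^2}$, and the $\sup_\tau\lVert(\bu_1-\bu_2)(\tau)\rVert_{L^2}^2$ produced by Young is absorbed on the left. Collecting all terms gives \eqref{Eq (Section 8): Stability estimates when perturbing f}. Uniqueness for \eqref{Eq (Section 8): Dirichlet Parabolic} then follows by taking $\buf_1=\buf_2=\buf$, $\bv_1=\bv_2=\bu_\star$: the right-hand side of \eqref{Eq (Section 8): Stability estimates when perturbing f} vanishes, and by the strong monotonicity \eqref{Eq (Section 8): Strong Monotonicity of A} the integral $\iint_{\Omega_T}\langle\A(\D\bu_1)-\A(\D\bu_2)\mid\D\bu_1-\D\bu_2\rangle_{\bg}$ controls $\lVert\D\bu_1-\D\bu_2\rVert_{L^p(\Omega_T)}^p$ when $p\ge 2$, and $\iint(\lvert\D\bu_1\rvert^2+\lvert\D\bu_2\rvert^2)^{p/2-1}\lvert\D\bu_1-\D\bu_2\rvert^2$ when $1<p<2$; together with $\sup_\tau\lVert(\bu_1-\bu_2)(\tau)\rVert_{L^2}^2=0$ and a standard Hölder argument in the singular range (using the already-established $L^p$-gradient bounds from \eqref{Eq (Section 8): Boundedness of Du when perturbing f}), this forces $\D\bu_1=\D\bu_2$ a.e.\ and hence $\bu_1=\bu_2$.

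The main obstacle I anticipate is bookkeeping the boundary/mixed terms rigorously: one must justify testing the \emph{difference} of two weak formulations with a function that is not itself in $X_0^p$ but only in an affine translate of it, and carry the Steklov-average regularization through the time-integration by parts so that the initial-data matching $(\bu_1-\bu_2)(\cdot,0)=(\bv_1-\bv_2)(\cdot,0)$ produces precisely the $\sup_\tau\lVert(\bv_1-\bv_2)(\tau)\rVert_{L^2}^2$ term and no uncontrolled boundary flux. A secondary technical point is that $\lVert\A(\D\bu_k)\rVert_{L^{p'}(\Omega_T)}$ must be finite — this is guaranteed by the growth bounds \eqref{Eq (Section 1): Growth of p-Laplace-type operator}, \eqref{Eq (Section 1): Growth of 1-Laplace-type operator} together with the $L^p$-gradient bound already obtained in \eqref{Eq (Section 8): Boundedness of Du when perturbing f}, so the two estimates are naturally proved in sequence. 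Everything else reduces to Young's and Hölder's inequalities and the absorption lemma \eqref{Eq (Section 8): L-p V-0}.
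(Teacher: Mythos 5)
Your plan is correct and follows essentially the same route as the paper: test with $\bw_{k}\coloneqq\bu_{k}-\bv_{k}$ for (\ref{Eq (Section 8): Boundedness of Du when perturbing f}) and with $\bw_{1}-\bw_{2}$ for (\ref{Eq (Section 8): Stability estimates when perturbing f}), using (\ref{Eq (Section 8): Coercivity}), Young's inequality, and (\ref{Eq (Section 8): L-p V-0}), with uniqueness read off from (\ref{Eq (Section 8): Stability estimates when perturbing f}) via (\ref{Eq (Section 8): Strong Monotonicity of A}). The only cosmetic difference is that you test formally with $\bu_{1}-\bu_{2}$ and peel off the inadmissible piece $\bv_{1}-\bv_{2}$, whereas the paper tests directly with the admissible $\bw_{1}-\bw_{2}$ and recovers the $\bu_{1}-\bu_{2}$ and $\bv_{1}-\bv_{2}$ terms at the end by the triangle inequality in $L^{p}(0,\,T;\,V_{0}(\Omega))$ and the parallelogram law in $L^{2}(\Omega)$; the resulting terms are identical.
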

\begin{proof}
    We introduce $\bw_{k}\coloneqq \bu_{k}-\bv_{k}\in X_{0}^{p}(0,\,T;\,\Omega)^{N}$ for each $k\in\{\,1,\,2\,\}$.
    Let $\phi\colon \lbrack 0,\,T\rbrack\to\lbrack 0,\,1\rbrack$ be a non-increasing function that satisfies $\phi(T)=0$.
    We first prove (\ref{Eq (Section 8): Boundedness of Du when perturbing f}) by testing $\bphi\coloneqq \phi \bw_{k}$ into the weak formulation
    \[\int_{0}^{T}\langle\partial_{t}\bw_{k},\,\bphi\rangle\,\d t+\iint_{\Omega_{T}}\left\langle{\mathbf A}(x,\,t,\,\D\bu_{k},\,{\mathbf Z}_{k})\mathrel{}\middle|\mathrel{}\D\bphi  \right\rangle_{\bg}\,\d x\d t=\iint_{\Omega_{T}}\langle \buf_{k}\mid \bphi \rangle\,\d x\d t-\int_{0}^{T}\langle\partial_{t}\bv_{k},\,\bphi \rangle\,\d t.\]
    Integrating by parts, applying (\ref{Eq (Section 8): L-p V-0}) with $(\bphi_{1},\,\bphi_{2})=(\bw_{k},\,0)$, and using Young's inequality, we have 
    \begin{align*}
        &-\frac{1}{2}\iint_{\Omega_{T}}\lvert \bw_{k} \rvert^{2}\partial_{t}\phi\,\d x\d t+\iint_{\Omega_{T}}\left\langle {\mathbf A}(\D\bu_{k})\mathrel{}\middle|\mathrel{}\D\bu_{k} \right\rangle_{\bg}\phi\,\d x\d t\\ 
        &\le \iint_{\Omega_{T}}\lvert {\mathbf A}(\D\bu_{k}) \rvert_{\bg}\lvert \D\bv_{k} \rvert_{\bg}\,\d x\d t+\iint_{\Omega_{T}}\lvert \buf_{k}\rvert\lvert\bw_{k} \rvert\,\d x\d t+\int_{0}^{T}\lVert \partial_{t}\bv_{k}(t) \rVert_{V_{0}^{\prime}}\lVert \bw_{k}(t) \rVert_{V_{0}}\,\d t  \\
        &\le \sigma\left(\sup_{0<\tau<T}\int_{\Omega\times \{\tau\}}\lvert \bw_{k} \rvert^{2}\,\d x+\iint_{\Omega_{T}}\left(1+\lvert \D\bu_{k} \rvert^{p}\right)\,\d x\d t \right) \\ 
        &\quad +C({\mathcal D},\,\sigma)\left[\lVert \partial_{t}\bv_{k}\rVert_{L^{p^{\prime}}(0,\,T;\,V_{0}^{\prime})}^{p^{\prime}}+\lVert \D\bv_{k}\rVert_{L^{p}(\Omega_{T})}^{p}+ \lVert\buf_{k} \rVert_{L^{2,\,1}(\Omega_{T})}^{2}+L_{p}(T) \right]
    \end{align*}
    for any $\sigma\in(0,\,\infty)$, where $L_{p}(T)\coloneqq T^{2/(2-p)}\,(p\le p_{\mathrm c})$ and otherwise $L_{p}(T)\equiv 0$.
    Recalling (\ref{Eq (Section 8): Coercivity}), and suitably choosing $\phi$ and $\sigma>0$, we conclude (\ref{Eq (Section 8): Boundedness of Du when perturbing f}).

    Testing $\bphi\coloneqq \phi(\bw_{1}-\bw_{2})$ into the above weak formulation for each $k\in\{\,1,\,2\,\}$, we have
    \begin{align*}
        &-\frac{1}{2}\iint_{\Omega_{T}}\lvert \bw_{1}-\bw_{2} \rvert^{2}\partial_{t}\phi\,\d x\d t+\iint_{\Omega_{T}}\left\langle \A(\D\bu_{1})-\A(\D\bu_{2})\mathrel{}\middle|\mathrel{}\D\bu_{1}-\D\bu_{2} \right\rangle_{\bg}\phi\,\d x\d t\\ 
        &\le \iint_{\Omega_{T}}\left(\lvert \A(\D\bv_{1}) \rvert_{\bg}+\lvert \A(\D\bv_{2})\rvert_{\bg}\right)\lvert \D\bv_{1}- \D\bv_{2} \rvert_{\bg}\,\d x\d t+\iint_{\Omega_{T}}\lvert\buf_{\varepsilon}\rvert\lvert\bw_{1}-\bw_{2} \rvert\,\d x\d t\\ 
        &\quad +\int_{0}^{T}\lVert \partial_{t}\bv_{1}-\partial_{t}\bv_{2} \rVert_{V_{0}^{\prime}(\Omega)}\lVert \bw_{1}-\bw_{2} \rVert_{V_{0}(\Omega)} \,\d t \\
        &\le \sigma\left(\sup_{0<\tau<T}\int_{\Omega\times \{\tau\}}\lvert \bw_{1}-\bw_{2}\rvert^{2}\,\d x \right)+C(\sigma)T\lVert \buf_{1}-\buf_{2}\rVert_{L^{2,\,1}(\Omega_{T})}^{2}\\
        &\quad + C({\mathcal D})\left(\lVert\A(\D\bu_{1}) \rVert_{L^{p^{\prime}}(\Omega_{T})}+\lVert\A(\D\bu_{2}) \rVert_{L^{p^{\prime}}(\Omega_{T})} \right)\lVert \D\bv_{1}-\D\bv_{2} \rVert_{L^{p}(\Omega_{T})}\nonumber \\ 
        &\quad +C({\mathcal D})\lVert \partial_{t}\bv_{1}-\partial_{t}\bv_{2} \rVert_{L^{p^{\prime}}(0,\,T;\,V_{0}^{\prime}(\Omega))}\lVert \bw_{1}-\bw_{2} \rVert_{L^{p}(0,\,T;\,V_{0}(\Omega))}
    \end{align*}
    for any $\sigma\in(0,\,\infty)$.
    Choosing suitably $\phi$ and sufficiently small $\sigma>0$, we have
    \begin{align*}
    &    \sup_{0<\tau<T}\int_{\Omega\times \{\tau\}}\lvert \bw_{1}-\bw_{2}\rvert^{2}\,\d x+\iint_{\Omega_{T}}\left\langle \A(\D\bu_{1})-\A(\D\bu_{2})\mathrel{}\middle|\mathrel{}\D\bu_{1}-\D\bu_{2}\right\rangle_{\bg}\,\d x\d t\\ 
    &\le C(T)\lVert \buf_{1}-\buf_{2}\rVert_{L^{2,\,1}(\Omega)}^{2}+C({\mathcal D})\left(\lVert\A(\D\bu_{1}) \rVert_{L^{p^{\prime}}(\Omega_{T})}+\lVert\A(\D\bu_{2}) \rVert_{L^{p^{\prime}}(\Omega_{T})} \right)\lVert \D\bv_{1}-\D\bv_{2} \rVert_{L^{p}(\Omega_{T})}\nonumber \\ 
    &\quad +C\lVert \partial_{t}\bv_{1}-\partial_{t}\bv_{2} \rVert_{L^{p^{\prime}}(0,\,T;\,V_{0}^{\prime}(\Omega))}\lVert \bw_{1}-\bw_{2} \rVert_{L^{p}(0,\,T;\,V_{0}(\Omega))}.
    \end{align*}
    The desired estimate (\ref{Eq (Section 8): Stability estimates when perturbing f}) is easily deduced by using the triangle inequality for $L^{p}(0,\,T;\,V_{0}(\Omega))$ and the parallelogram law for $L^{2}(\Omega)$.
    The uniqueness of the weak solution is clear by (\ref{Eq (Section 8): Stability estimates when perturbing f}).
\end{proof}
As a corollary from the stability estimate, Lemma \ref{Lemma (Section 8): Convergence from stability} follows.
\begin{lemma}\label{Lemma (Section 8): Convergence from stability}
    Let the sequences $\{\buf_{k}\}_{k=1}^{\infty}\subset L^{2,\,1}(\Omega_{T})^{N}\cap L^{p^{\prime}}(0,\,T;\,V_{0}^{\prime})^{N}$ and $\{\bv_{k}\}_{k=1}^{\infty}\subset X^{p}(0,\,T;\,\Omega)^{N}\cap C(\lbrack 0,\,T\rbrack;\,L^{2}(\Omega))^{N}$ admit the limit functions $\buf\in L^{2}(\Omega_{T})^{N}\cap L^{p^{\prime}}(0,\,T;\,V_{0}^{\prime})^{N}$ and $\bv_{\star}\in X^{p}(0,\,T;\,\Omega)^{N}\cap C(\lbrack 0,\,T\rbrack;\,L^{2}(\Omega))^{N}$ such that
    \[\begin{array}{rclcc}
        \buf_{k}&\to&\buf &\text{in}&L^{2,\,1}(\Omega_{T})^{N},\\ 
        \buf_{k}&\rightharpoonup & \buf&\text{in}&L^{p^{\prime}}(0,\,T;\,V_{0}^{\prime}(\Omega))^{N},\\ 
        \D\bv_{k}&\to&\D\bu_{\star}& \textrm{in}& L^{p}(\Omega_{T})^{Nn},\\ 
        \partial_{t}\bv_{k}&\to&\partial_{t}\bu_{\star}& \textrm{in}& L^{p^{\prime}}(0,\,T;\,V_{0}^{\prime}(\Omega))^{N},\\ 
        \bv_{k}&\to&\bu_{\star}&\text{in}&C(\lbrack 0,\,T\rbrack;\,L^{2}(\Omega))^{N}.
    \end{array}\]
    For each $k\in{\mathbb N}$, let $\bu_{k}\in \bv_{k}+X_{0}^{p}(0,\,T;\,\Omega)^{N}$ satisfy (\ref{Eq (Section 8): Dirichlet boundary}). 
    Then, there uniquely exists a function $\bu\in \bu_{\star}+X_{0}^{p}(0,\,T;\,\Omega)^{N}$ such that
    \[\begin{array}{rclcc}
        \D\bu_{k} &\to& \D\bu& \text{in}& L^{p}(\Omega_{T})^{Nn},\\ 
        \partial_{t}\bu_{k}&\rightharpoonup& \partial_{t}\bu  &\text{in}&L^{p^{\prime}}(0,\,T;\,V_{0}^{\prime}(\Omega))^{N}, \\ 
        \bu_{k}&\to &\bu & \text{in}&C(\lbrack 0,\,T\rbrack;\,L^{2}(\Omega))^{N},
    \end{array}\]
    by relabelling a sequence if necessary.
    Moreover, $\bu$ is the weak solution of (\ref{Eq (Section 8): Dirichlet Parabolic}).
\end{lemma}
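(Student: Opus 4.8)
\textbf{Proof plan for Lemma~\ref{Lemma (Section 8): Convergence from stability}.}
The plan is to combine the a~priori bounds of Lemma~\ref{Lemma (Section 8): Stability Estimates} with the monotonicity structure \eqref{Eq (Section 8): Strong Monotonicity of A}, and to use Lemma~\ref{Lemma (Section 2): Fundamental Lemma for convergence of solutions}\,\eqref{Item 3/3} to identify the limit of the subgradient term. First I would record the uniform bounds. Applying \eqref{Eq (Section 8): Boundedness of Du when perturbing f} to each $\bu_k$ and using that $\{\buf_k\}$ is bounded in $L^{2,\,1}(\Omega_T)^N$ and $\{\partial_t\bv_k\},\{\D\bv_k\}$ are bounded in their respective spaces (they converge), we obtain a constant, independent of $k$, bounding $\sup_{0<\tau<T}\lVert(\bu_k-\bv_k)(\cdot,\tau)\rVert_{L^2(\Omega)}$ and $\lVert\D\bu_k\rVert_{L^p(\Omega_T)}$; since $\bv_k\to\bu_\star$ in $C([0,T];L^2(\Omega))^N$, the same holds for $\lVert\bu_k(\cdot,\tau)\rVert_{L^2(\Omega)}$. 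The growth bound \eqref{Eq (Section 1): Growth of p-Laplace-type operator}, \eqref{Eq (Section 1): Growth of 1-Laplace-type operator} then gives a uniform bound on $\lVert\A(\D\bu_k)\rVert_{L^{p'}(\Omega_T)}$, and testing the equation against $\bphi\in L^p(0,T;V_0)^N$ yields a uniform bound on $\lVert\partial_t\bu_k\rVert_{L^{p'}(0,T;V_0')}$. Hence, up to a subsequence, $\D\bu_k\rightharpoonup\D\bu$ in $L^p(\Omega_T)^{Nn}$, $\partial_t\bu_k\rightharpoonup\partial_t\bu$ in $L^{p'}(0,T;V_0')^N$, $\A(\D\bu_k)\rightharpoonup\boldsymbol{\mathcal A}_\infty$ in $L^{p'}(\Omega_T)^{Nn}$ for some limits, $\bu_k\to\bu$ in $C([0,T];L^2(\Omega))^N$ by Aubin--Lions when $p>p_{\mathrm c}$ and by the uniform continuity in time coming from the equation together with $\bu\in\bu_\star+X_0^p$ in general, and ${\mathbf Z}_k\stackrel{\star}{\rightharpoonup}{\mathbf Z}$ in $L^\infty(\Omega_T)^{Nn}$. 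Passing to the limit in the weak formulation shows $\bu\in\bu_\star+X_0^p(0,T;\Omega)^N$ solves $\int_0^T\langle\partial_t\bu,\bphi\rangle\,\d t+\iint_{\Omega_T}\langle\boldsymbol{\mathcal A}_\infty\mid\D\bphi\rangle_{\bg}=\iint_{\Omega_T}\langle\buf\mid\bphi\rangle$, and the initial condition $\bu(\cdot,0)=\bu_\star(\cdot,0)$ is preserved by the $C([0,T];L^2)$ convergence.

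The crux is to upgrade the weak convergence $\D\bu_k\rightharpoonup\D\bu$ to strong convergence in $L^p$ and simultaneously identify $\boldsymbol{\mathcal A}_\infty=\A(x,t,\D\bu,{\mathbf Z})$ with ${\mathbf Z}\in\partial_{\bg}\lvert\cdot\rvert_{\bg}(\D\bu)$ a.e. I would do this by a Minty--Browder-type argument adapted to the monotonicity \eqref{Eq (Section 8): Strong Monotonicity of A}. Apply the stability estimate \eqref{Eq (Section 8): Stability estimates when perturbing f}, with $(\bu_1,\bu_2,\buf_1,\buf_2,\bv_1,\bv_2)$ replaced by $(\bu_k,\bu_m,\buf_k,\buf_m,\bv_k,\bv_m)$: the right-hand side tends to $0$ as $k,m\to\infty$ because $\buf_k\to\buf$ in $L^{2,\,1}$, $\D\bv_k\to\D\bu_\star$ in $L^p$, $\partial_t\bv_k\to\partial_t\bu_\star$ in $L^{p'}(0,T;V_0')$, $\bv_k\to\bu_\star$ in $C([0,T];L^2)$, $\lVert\A(\D\bu_k)\rVert_{L^{p'}}$ is uniformly bounded, and (using \eqref{Eq (Section 8): L-p V-0} with the uniform gradient bounds and the $C([0,T];L^2)$ bound) $\lVert\bu_k-\bu_m\rVert_{L^p(0,T;V_0)}$ is uniformly bounded. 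Hence
\[
\iint_{\Omega_T}\langle\A(\D\bu_k)-\A(\D\bu_m)\mid\D\bu_k-\D\bu_m\rangle_{\bg}\,\d x\d t\longrightarrow0.
\]
For $p\ge2$ the lower bound in \eqref{Eq (Section 8): Strong Monotonicity of A} gives directly that $\{\D\bu_k\}$ is Cauchy in $L^p(\Omega_T)^{Nn}$. For $1<p<2$ one argues as usual: write $\lvert\D\bu_k-\D\bu_m\rvert^p$ via Hölder's inequality in terms of $\bigl(\lvert\D\bu_k\rvert^2+\lvert\D\bu_m\rvert^2\bigr)^{p/2-1}\lvert\D\bu_k-\D\bu_m\rvert^2$ and $\bigl(\lvert\D\bu_k\rvert+\lvert\D\bu_m\rvert\bigr)^p$, the first factor being controlled by the monotone quantity above and the second being uniformly bounded in $L^1$; so again $\{\D\bu_k\}$ is Cauchy in $L^p$. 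Thus $\D\bu_k\to\D\bu$ strongly in $L^p(\Omega_T)^{Nn}$ (the limit being the already-identified weak limit), and therefore $\D\bu_k\to\D\bu$ a.e.\ along a further subsequence.

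With strong convergence in hand, the remaining identifications follow from Lemma~\ref{Lemma (Section 2): Fundamental Lemma for convergence of solutions}. Part~\eqref{Item 3/3}, applied with $(\bv_m,{\mathbf Z}_m)=(\D\bu_k,{\mathbf Z}_k)$ and the a.e.\ convergence $\D\bu_k\to\D\bu$, shows that the weak-$\star$ limit ${\mathbf Z}$ of ${\mathbf Z}_k$ satisfies ${\mathbf Z}(x,t)\in\partial_{\bg(x,t)}\lvert\cdot\rvert_{\bg(x,t)}(\D\bu(x,t))$ a.e.; meanwhile, by dominated convergence (the $p$-Laplace part is continuous in $\D\bu_k$ and dominated using the $L^p$-strong convergence) together with the weak-$\star$ convergence of the $1$-Laplace part, $\A(\D\bu_k)\rightharpoonup\A(x,t,\D\bu,{\mathbf Z})$ in $L^{p'}(\Omega_T)^{Nn}$, i.e.\ $\boldsymbol{\mathcal A}_\infty=\A(x,t,\D\bu,{\mathbf Z})$. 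This shows $\bu$ is a weak solution of \eqref{Eq (Section 8): General System} in the sense of Definition~\ref{Definition (Section 1): A weak solution}, hence the weak solution of the Dirichlet problem \eqref{Eq (Section 8): Dirichlet Parabolic}; its uniqueness (Lemma~\ref{Lemma (Section 8): Stability Estimates}) forces the whole sequence, not merely a subsequence, to converge. The main obstacle is the $1<p<2$ bookkeeping: making sure that the $C([0,T];L^2)$ convergence (rather than a compact embedding $W_0^{1,p}\hookrightarrow L^2$, which fails in the subcritical range) suffices both to control the $L^p(0,T;V_0)$ norms entering \eqref{Eq (Section 8): Stability estimates when perturbing f}—via \eqref{Eq (Section 8): L-p V-0}—and to pass to the limit in the time-derivative term; this is exactly where the decomposition \eqref{Eq (Section 8): L-p V-0} and the uniform $\sup_\tau\lVert\bu_k(\cdot,\tau)\rVert_{L^2}$-bound from \eqref{Eq (Section 8): Boundedness of Du when perturbing f} do the work.
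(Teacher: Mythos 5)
Your approach is essentially the paper's: use the boundedness estimate \eqref{Eq (Section 8): Boundedness of Du when perturbing f} for uniform bounds, drive the monotone quantity to zero via the stability estimate \eqref{Eq (Section 8): Stability estimates when perturbing f}, upgrade to strong $L^p$-convergence of $\D\bu_k$ (directly when $p\ge 2$, via H\"older when $1<p<2$), and identify the subgradient in the limit through Lemma~\ref{Lemma (Section 2): Fundamental Lemma for convergence of solutions}\,\eqref{Item 3/3}. The one detour worth flagging: you reach for Aubin--Lions to get $\bu_k\to\bu$ in $C([0,T];L^2(\Omega))^N$, which is both unnecessary and insufficient—Aubin--Lions would only give compactness in $L^p(0,T;L^2(\Omega))$, not uniform-in-time convergence, and it fails outright when $p\le p_{\mathrm c}$. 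But the left-hand side of \eqref{Eq (Section 8): Stability estimates when perturbing f}, applied to the pair $(\bu_k,\bu_m)$ exactly as you do, already contains $\sup_{0<\tau<T}\int_\Omega\lvert\bu_k-\bu_m\rvert^2\,\d x$; since you have shown the right-hand side tends to zero, the sequence $\{\bu_k\}$ is Cauchy in $C([0,T];L^2(\Omega))^N$ simultaneously with $\{\D\bu_k\}$ being Cauchy in $L^p$, with no compactness argument and no case distinction in $p$. This is precisely how the paper obtains the $C([0,T];L^2)$ convergence; reading it off the stability estimate rather than from Aubin--Lions removes the obstacle you flag at the end for the subcritical range.
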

\begin{proof}
    Using (\ref{Eq (Section 8): Boundedness of Du when perturbing f}) and the assumptions of Lemma \ref{Lemma (Section 8): Convergence from stability}, we can check that $\D\bu_{k}\in L^{p}(\Omega_{T})^{Nn}$, $\A(\D\bu_{k})\in L^{p^{\prime}}(\Omega_{T})^{Nn}$, $\partial_{t}\bu_{k}\in L^{p^{\prime}}(0,\,T;\,V_{0}^{\prime}(\Omega))^{N}$, and $\bu_{k}-\bv_{k},\,\bu_{k}-\bu_{l}\in L^{p}(0,\,T;\,V_{0}(\Omega))^{N}$ are uniformly bounded for $k,\,l\in{\mathbb N}$.
    In particular, by (\ref{Eq (Section 8): Strong Monotonicity of A}) and (\ref{Eq (Section 8): Stability estimates when perturbing f}), where we also use H\"{o}lder's inequality when $p\in(1,\,2)$, we conclude that $\{\bu_{k}\}_{k=1}^{\infty}\subset C(\lbrack 0,\,T\rbrack;\,L^{2}(\Omega))^{N}$ and $\{\D\bu_{k}\}_{k=1}^{\infty}\subset L^{p}(\Omega_{T})^{Nn}$ are Cauchy sequences.
    Hence, by taking a subsequence if necessary, we conclude all of the convergence results in Lemma \ref{Lemma (Section 8): Convergence from stability}.
    The identity $\bu(\,\cdot\,,\,0)=\bu_{\star}(\,\cdot\,,\,0)$ in $L^{2}(\Omega)^{N}$ is clear by the third convergence result.
    Using Lemma \ref{Lemma (Section 2): Fundamental Lemma for convergence of solutions} \ref{Item 3/3}, we conclude that $\bu$ is the weak solution of (\ref{Eq (Section 8): Dirichlet Parabolic}).
\end{proof}

\subsection{Convergence of the approximate solutions}
\begin{proposition}\label{Lemma (Section 8): Convergence of Approximate solutions}
    Fix $\bu_{\star}\in X^{p}(0,\,T;\,\Omega)^{N}\cap C(\lbrack 0,\,T\rbrack;\,L^{2}(\Omega))^{N}$. 
    Assume that $\buf_{\varepsilon}\in L^{p^{\prime}}(0,\,T;\,V_{0}^{\prime})^{N}\cap L^{\infty}(\Omega_{T})^{N}$ satisfies
    \begin{equation}\label{Eq (Section 8): Assumption in Convergence of Approximate Solutions}
        \buf_{\varepsilon} \stackrel{\star}{\rightharpoonup} \buf\quad \textrm{in}\quad L^{q,\,r}(\Omega_{T}),\quad \text{and}\quad \buf_{\varepsilon}\rightharpoonup \buf \quad \text{in}\quad L^{p^{\prime}}(0,\,T;\,V_{0}^{\prime})^{N}
    \end{equation}
    as $\varepsilon\to 0$, where $(q,\,r)$ satisfies (\ref{Eq (Section 1): Condition for q,r}).
    Let $\bu_{\varepsilon}$ be the weak solution of (\ref{Eq (Section 2): Approximate System})--(\ref{Eq (Section 2): Parabolic Dirichlet Boundary}).  
    When $n\ge 3$ and $p\in(1,\,p_{\mathrm c}\rbrack$, let $\bu_{\star}\in L^{\infty}(\Omega_{T})^{N}$ be also in force.
    Then, there exists a sequence $\varepsilon_{k}$ such that $\varepsilon_{k}\to 0$,
    \[\partial_{t}\bu_{\varepsilon_{k}}\rightharpoonup \partial_{t}\bu_{0}\quad \text{in}\quad L^{p^{\prime}}(0,\,T;\,V_{0}^{\prime}),\quad \text{and}\quad \D\bu_{\varepsilon}\to \D\bu_{0}\quad \text{in}\quad L^{p}(\Omega_{T})^{Nn}\]
    hold for some unique function $\bu_{0}\in \bu_{\star}+X_{0}^{p}(0,\,T;\,\Omega)$.
    Moreover, $\bu_{0}$ is the weak solution of (\ref{Eq (Section 8): Dirichlet Parabolic}).
\end{proposition}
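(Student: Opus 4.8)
\textbf{Proof proposal for Proposition~\ref{Lemma (Section 8): Convergence of Approximate solutions}.}
The plan is to split the argument according to whether $p$ is supercritical or not, since the mechanism for controlling the time derivative differs. In both cases the starting point is a uniform energy bound: testing \eqref{Eq (Section 2): Approximate Weak Form} with $\bphi = \bu_{\varepsilon}-\bu_{\star}$ (times a suitable non-increasing time cutoff) and using the coercivity \eqref{Eq (Section 2): Coercivity of A-epsilon}, Young's inequality and \eqref{Eq (Section 8): L-p V-0}, one obtains, uniformly in $\varepsilon\in(0,1)$,
\[
\sup_{0<\tau<T}\int_{\Omega\times\{\tau\}}\lvert\bu_{\varepsilon}-\bu_{\star}\rvert^{2}\,\d x + \iint_{\Omega_{T}}\lvert\D\bu_{\varepsilon}\rvert^{p}\,\d x\d t \le C,
\]
where $C$ depends on the norms of $\buf$ and $\bu_{\star}$ appearing in the hypotheses, together with the uniform bound $\lVert\buf_{\varepsilon}\rVert_{L^{p'}(0,T;V_0')}\le C$ coming from \eqref{Eq (Section 8): Assumption in Convergence of Approximate Solutions}. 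From \eqref{Eq (Section 1): Growth of p-Laplace-type operator} and \eqref{Eq (Section 1): Growth of 1-Laplace-type operator} this also yields $\lVert\A_{\varepsilon}(x,t,\D\bu_{\varepsilon})\rVert_{L^{p'}(\Omega_{T})}\le C$, and hence from the equation $\lVert\partial_t\bu_{\varepsilon}\rVert_{L^{p'}(0,T;V_0')}\le C$. Thus, up to a subsequence, $\D\bu_{\varepsilon}\rightharpoonup\D\bu_0$ in $L^{p}$, $\partial_t\bu_{\varepsilon}\rightharpoonup\partial_t\bu_0$ in $L^{p'}(0,T;V_0')$, and $\A_{\varepsilon}(x,t,\D\bu_{\varepsilon})\rightharpoonup\boldsymbol{\mathcal A}$ weakly in $L^{p'}$ for some $\bu_0\in\bu_{\star}+X_0^{p}$ and some $\boldsymbol{\mathcal A}$.

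The core issue is to upgrade the weak convergence of $\D\bu_{\varepsilon}$ to strong $L^p$ convergence, so that Lemma~\ref{Lemma (Section 2): Fundamental Lemma for convergence of solutions}\,\ref{Item 2/3} identifies $\boldsymbol{\mathcal A} = \A(x,t,\D\bu_0,\mathbf{Z}_0)$ with $\mathbf{Z}_0$ a subgradient, and $\bu_0$ is then a weak solution of \eqref{Eq (Section 8): Dirichlet Parabolic}. For $p\in(p_{\mathrm c},\infty)$ I would use the compact embedding $X_0^{p}(0,T;\Omega)\subset L^{p}(0,T;L^{2}(\Omega))$ (Aubin--Lions, valid since $W_0^{1,p}(\Omega)\hookrightarrow L^2(\Omega)$ compactly here): after subtracting a fixed boundary extension this gives $\bu_{\varepsilon}\to\bu_0$ strongly in $L^{p}(0,T;L^{2}(\Omega))$, hence $\bu_{\varepsilon}\to\bu_0$ a.e.\ along a further subsequence. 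Then testing the difference of the weak formulations for $\bu_{\varepsilon}$ and for $\bu_0$ (approximated in the test-function class) with $\bu_{\varepsilon}-\bu_0$, exploiting the monotonicity structure via \eqref{Eq (Section 8): Strong Monotonicity of A-epsilon}, and using \eqref{Eq (Section 3): Conv on A-p-epsilon}--\eqref{Eq (Section 3): Conv on A-1-epsilon} together with the a.e.\ convergence to kill the cross terms, yields
\[
\iint_{\Omega_{T}}\bigl\langle\A_{\varepsilon}(x,t,\D\bu_{\varepsilon})-\A_{\varepsilon}(x,t,\D\bu_0)\mathrel{}\middle|\mathrel{}\D\bu_{\varepsilon}-\D\bu_0\bigr\rangle_{\bg}\,\d x\d t\longrightarrow 0,
\]
which by \eqref{Eq (Section 8): Strong Monotonicity of A-epsilon} forces $\D\bu_{\varepsilon}\to\D\bu_0$ in $L^{p}$ when $p\ge2$ and in a weighted $L^2$ sense, upgraded to $L^p$ via H\"older and the uniform $L^p$ bound, when $p<2$. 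Here the technical care needed is that the monotonicity trick requires $\D\bu_0$ as an argument of $\A_{\varepsilon}$, so one must insert and remove $\A_{\varepsilon}(x,t,\D\bu_0)$ and use \eqref{Eq (Section 3): Dummy Conv} to pass to the limit in those auxiliary terms.

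For $p\in(1,p_{\mathrm c}]$ the Aubin--Lions step is unavailable, and this is the main obstacle. I would instead extract a.e.\ convergence of $\bu_{\varepsilon}$ using the a priori local regularity already established: by Proposition~\ref{Proposition (Section 3): A Weak Maximum Principle} (applied with the uniform bounds on $\buf_{\varepsilon}$ in $L^{q,r}$ supplied by \eqref{Eq (Section 8): Assumption in Convergence of Approximate Solutions} after relabelling, and using $\bu_{\star}\in L^{\infty}$) the $\bu_{\varepsilon}$ are uniformly bounded in $L^{\infty}(\Omega_T)$; then Theorem~\ref{Theorem (Section 4): Gradient bounds} gives uniform local $L^{\infty}$ bounds on $\D\bu_{\varepsilon}$, and Lemma~\ref{Lemma (Section 5): Local Holder continuity of u-epsilon} gives uniform local $C^{1,1/2}$ bounds on $\bu_{\varepsilon}$. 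Hence by Arzel\`a--Ascoli and a diagonal argument, along a subsequence $\bu_{\varepsilon}\to\bu_0$ locally uniformly in $\Omega_T$, and in particular a.e.; the bounded convergence theorem then gives $\bu_{\varepsilon}\to\bu_0$ in $L^{p}(0,T;L^2(\Omega))$. With a.e.\ convergence of $\bu_{\varepsilon}$ secured, the same monotonicity argument as above applies verbatim and delivers $\D\bu_{\varepsilon}\to\D\bu_0$ strongly in $L^{p}$. Finally, passing to the limit in \eqref{Eq (Section 2): Approximate Weak Form} using Lemma~\ref{Lemma (Section 2): Fundamental Lemma for convergence of solutions}\,\ref{Item 2/3} and the weak convergence $\buf_{\varepsilon}\rightharpoonup\buf$, together with $\bu_{\varepsilon}(\cdot,0)=\bu_{\star}(\cdot,0)$ passing to $\bu_0(\cdot,0)=\bu_{\star}(\cdot,0)$ in $L^2$, shows $\bu_0$ is the weak solution of \eqref{Eq (Section 8): Dirichlet Parabolic}; its uniqueness is Lemma~\ref{Lemma (Section 8): Stability Estimates}, so in fact the whole family converges, not merely a subsequence.
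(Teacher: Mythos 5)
Your proposal follows the same overall architecture as the paper's proof: uniform energy bounds, weak compactness, Aubin--Lions for $p>p_{\mathrm c}$, and the a priori regularity chain (Proposition~\ref{Proposition (Section 3): A Weak Maximum Principle}, Theorem~\ref{Theorem (Section 4): Gradient bounds}, Lemma~\ref{Lemma (Section 5): Local Holder continuity of u-epsilon}, Arzel\`a--Ascoli) for $p\le p_{\mathrm c}$, followed by a monotonicity argument and Lemma~\ref{Lemma (Section 2): Fundamental Lemma for convergence of solutions}. The dichotomy in $p$, which is the genuine content of this proposition, is captured correctly. Two remarks, one of which is a real gap.

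First, a presentational point: you say ``testing the difference of the weak formulations for $\bu_{\varepsilon}$ and for $\bu_{0}$'', which presupposes introducing the unknown weak-$L^{p'}$ limit $\boldsymbol{\mathcal A}$ of $\A_{\varepsilon}(x,t,\D\bu_{\varepsilon})$ and the limiting weak formulation. This is a standard Minty route and workable, but it is not what the paper does: the paper tests only the $\bu_{\varepsilon}$-formulation with $\overline\phi(\bu_{\varepsilon}-\bu_{0})$ and isolates the monotonicity term by inserting $\A_{\varepsilon}(\D\bu_0)$, avoiding $\boldsymbol{\mathcal A}$ entirely. Also, the right reference for the term $\iint\langle\A_{\varepsilon}(x,t,\D\bu_{0})\mid\D\bu_{\varepsilon}-\D\bu_{0}\rangle_{\bg}\to 0$ is the ``dummy'' convergence \eqref{Eq (Section 3): Dummy Conv} of Lemma~\ref{Lemma (Section 2): Fundamental Lemma for convergence of solutions}, not \eqref{Eq (Section 3): Conv on A-p-epsilon}--\eqref{Eq (Section 3): Conv on A-1-epsilon}, which presuppose the very strong gradient convergence you are trying to establish.

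Second, and more substantively, your handling of the force term $\iint_{\Omega_T}\langle\buf_{\varepsilon}\mid\bu_{\varepsilon}-\bu_{0}\rangle\,\d x\d t$ for $p\in(p_{\mathrm c},2)$ has a gap. You invoke ``a.e.\ convergence to kill the cross terms'', but $\buf_{\varepsilon}$ only converges weak-$\star$ in $L^{q,r}$, and a.e.\ convergence of $\bu_{\varepsilon}-\bu_{0}$ together with the $L^{p}(0,T;L^{2}(\Omega))$ bound from Aubin--Lions does not directly imply convergence of this pairing when $p<2$: the duality $L^{q,r}\times L^{q',r'}$ needs $\bu_{\varepsilon}-\bu_{0}\to 0$ in $L^{q',r'}$, and $L^{p}(0,T;L^{2})\hookrightarrow L^{r'}(0,T;L^{q'})$ can fail in time when $r'>p$. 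One must either interpolate with the $L^{\infty}(0,T;L^{2})$ bound to upgrade the temporal integrability, or, as the paper does, fix $\overline\phi(t)=(1-t/T)_{+}$ so that ${\mathbf L}_{1}=(2T)^{-1}\lVert\bu_{\varepsilon}-\bu_{0}\rVert_{L^{2}(\Omega_T)}^{2}\ge 0$, and then use Young's inequality to absorb part of the force term into ${\mathbf L}_{1}$, leaving a remainder ${\mathbf R}_{4}$ that vanishes by $\bu_{\varepsilon}\to\bu_{0}$ in $L^{\widetilde q,\widetilde r}$ with carefully chosen $(\widetilde q,\widetilde r)$ reachable from $L^{2,p}$. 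Without one of these devices the supercritical-but-singular regime $p\in(p_{\mathrm c},2)$ is not closed.
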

The proof of Proposition \ref{Lemma (Section 8): Convergence of Approximate solutions} mainly consists of two parts. The first is to show $\bu_{\varepsilon}-\bu_{\star}$ is bounded in $X_{0}^{p}(0,\,T;\,\Omega)^{N}$, so that the limit function $\bu_{0}\in \bu_{\star}+X_{0}^{p}(0,\,T;\,\Omega)^{N}$ is constructed by a standard weak compactness argument.
The second is to show the strong $L^{p}$-convergence of the spatial gradient.
As mentioned in Section \ref{Section: Introduction}, we treat the external force term in two different ways, depending on the value of $p$.
For $p\in(p_{\mathrm c},\,\infty)$, we appeal to the Aubin--Lions lemma to use the compact embedding $X_{0}^{p}(0,\,T;\,\Omega)\subset L^{p}(0,\,T;\,L^{2}(\Omega))$, which is guaranteed by the compact embedding $W_{0}^{1,\,p}(\Omega)\subset L^{2}(\Omega)$.
The computations in the case $p>p_{\mathrm c}$ are based on the slight modification of \cite[Lemma 3.1]{Kuusi-Mingione MR2916967}, which provides strong convergence results for approximate $p$-Laplace flows. 
There, some absorption method is carefully used, although no parabolic compact embedding is used at all.
For $p\in(1,\,p_{\mathrm c}\rbrack$, we never rely on any parabolic compact embedding, since $V_{0}(\Omega)$ is no longer compactly embedded into $L^{2}(\Omega)$.
Instead, we recall some uniform a priori estimates such as Proposition \ref{Proposition (Section 3): A Weak Maximum Principle} and Corollary \ref{Lemma (Section 5): Local Holder continuity of u-epsilon} to deduce the strong convergence of $\bu_{\varepsilon}$.
Since we appeal to the weak maximum principle (Proposition \ref{Proposition (Section 3): A Weak Maximum Principle}), we have to require $\bu_{\star}$ to be in $L^{\infty}$.
This assumption is not restrictive in the proof of Theorem \ref{Theorem (Section 1): Gradient continuity}, since it suffices to use Proposition \ref{Proposition (Section 3): Local Boundedness for subcritical cases} and to consider local approximate problems in a smaller domain.

\begin{proof}
    We first show the following uniform bound estimate;
    \begin{align}\label{Eq (Section 8): Bound for Du-epsilon}
        \nonumber&\sup_{\tau\in(0,\,T)}\lVert (\bu_{\varepsilon}-\bu_{\star})(\,\cdot\,,\,\tau) \rVert_{L^{2}(\Omega)}^{2}+\lVert \D\bu_{\varepsilon} \rVert_{L^{p}(\Omega_{T})}^{p}\\
        &\le C({\mathcal D},\,\Omega,\,T)\left(\lVert\partial_{t}\bu_{\star} \rVert_{L^{p^{\prime}}(0,\,T;\,V_{0}^{\prime})}^{p^{\prime}}+\lVert \D\bu_{\star}\rVert_{L^{p}(\Omega_{T})}^{p}+\lVert \buf_{\varepsilon}\rVert_{L^{2,\,1}(\Omega_{T})}^{2}+1 \right).
    \end{align}
    Let $\phi\colon \lbrack 0,\,T\rbrack\to\lbrack 0,\,1\rbrack$ be a non-increasing function satisfying $\phi(T)=0$.
    We test $\bphi\coloneqq \phi(\bu_{\varepsilon}-\bu_{\star})$ into (\ref{Eq (Section 2): Approximate System}), and integrate by parts.
    By Young's inequality, we have 
    \begin{align*}
        &-\frac{1}{2}\iint_{\Omega_{T}}\lvert\bu_{\varepsilon}-\bu_{\star}\rvert^{2}\partial_{t}\phi\,\d x\d t+\iint_{\Omega_{T}}\langle \A_{\varepsilon}(\D\bu_{\varepsilon})\mid \D\bu_{\varepsilon} \rangle_{\bg}\phi\,\d x\d t  \\ 
        &=\iint_{\Omega_{T}}\langle\A_{\varepsilon}(\D\bu_{\varepsilon})\mid \D\bu_{\star} \rangle_{\bg}\phi\,\d x\d t+\iint_{\Omega_{T}}\langle\buf_{\varepsilon}\mid \bu_{\varepsilon}-\bu_{\star} \rangle\phi\,\d x\d t -\int_{0}^{T}\langle\partial_{t}\bu_{\star},\,\bu_{\varepsilon}-\bu_{\star} \rangle\phi\,\d t\\ 
        &\le \sigma\left(\sup_{\tau\in(0,\,T)}\lVert (\bu_{\varepsilon}-\bu_{\star})(\,\cdot\,,\,\tau) \rVert_{L^{2}(\Omega)}^{2}+\lVert \D\bu_{\varepsilon} \rVert_{L^{p}(\Omega_{T})}^{p}+\lVert \bu_{\varepsilon}-\bu_{\star} \rVert_{L^{p}(0,\,T;\,V_{0})}^{p} \right)+C({\mathcal D})\lVert \D\bu_{\star} \rVert_{L^{1}(\Omega_{T})} \\ 
        &\quad +C({\mathcal D},\,\sigma)\left(\lVert \D\bu_{\star} \rVert_{L^{p}(\Omega_{T})}^{p}+T\lVert \buf_{\varepsilon}\rVert_{L^{2,\,1}(\Omega_{T})}^{2}+\lVert \partial_{t}\bu_{\star}\rVert_{L^{p^{\prime}}(0,\,T;\,V_{0}^{\prime})}^{p^{\prime}} \right)
    \end{align*}
    for any $\sigma\in(0,\,\infty)$.
    Recalling (\ref{Eq (Section 2): Coercivity of A-epsilon}), and suitably choosing $\phi=\phi(t)$ and a sufficiently small number $\sigma>0$, we conclude (\ref{Eq (Section 8): Bound for Du-epsilon}). 
    We note that (\ref{Eq (Section 8): L-p V-0}) with $(\bphi_{1},\,\bphi_{2})=(\bu_{\varepsilon},\,\bu_{\star})$ and (\ref{Eq (Section 8): Assumption in Convergence of Approximate Solutions})--(\ref{Eq (Section 8): Bound for Du-epsilon}) imply that $\bu_{\varepsilon}-\bu_{\star}\in L^{p}(0,\,T;\,V_{0})$ is uniformly bounded. We also have 
    \[\lVert\partial_{t}\bu_{\varepsilon} \rVert_{L^{p^{\prime}}(0,\,T;\,V_{0}^{\prime})}\le C(p,\,\gamma_{0})\left(\lVert \A_{\varepsilon}(\D\bu_{\varepsilon})\rVert_{L^{p^{\prime}}(\Omega_{T})}+\lVert \buf_{\varepsilon} \rVert_{L^{p^{\prime}}(0,\,T;\,V_{0}^{\prime})}\right)\le C({\mathcal D},\,\bu_{\star},\,F).\]
    Thanks to these uniform bound estimates, we construct a function $\bu_{0}\in \bu_{\star}+X_{0}^{p}(0,\,T;\,\Omega)$ satisfying  
    \begin{equation}\label{Eq (Section 8): 3 Weak convergence results}
        \left\{ 
        \begin{array}{ccccc}
            \D\bu_{\varepsilon_{k}}&\rightharpoonup& \D\bu_{0}& \text{in}& L^{p}(\Omega_{T};\,{\mathbb R}^{Nn}),\\ 
            \bu_{\varepsilon_{k}}-\bu_{\star}&\rightharpoonup& \bu_{0}-\bu_{\star} & \text{in}&  L^{p}(0,\,T;\,V_{0})^{N},\\
            \partial_{t}\bu_{\varepsilon_{k}}& \rightharpoonup& \partial_{t}\bu_{0}&  \text{in}&  L^{p^{\prime}}(0,\,T;\,V_{0}^{\prime})^{N},    
        \end{array}\right.
    \end{equation}
    for some decreasing sequence $\{\varepsilon_{k}\}_{k=0}^{\infty}\subset (0,\,1)$ such that $\varepsilon_{k}\to 0$ as $k\to\infty$. 
    We note that the identity $(\bu_{0}-\bu_{\star})(\,\cdot\,,\,0)=0$ in $L^{2}(\Omega)^{N}$ is easy to prove by the second and the third weak convergence results of (\ref{Eq (Section 8): 3 Weak convergence results}). 
    We would like to show 
    \begin{equation}\label{Eq (Section 8): Claim for Strong Convergence}
        \iint_{\Omega_{T}}\langle \A_{\varepsilon_{k}} (x,\,t,\,\D\bu_{\varepsilon_{k}})- \A_{\varepsilon_{k}}(x,\,t,\,\D\bu_{0})\mid \D\bu_{\varepsilon_{k}}-\D\bu_{0} \rangle_{\bg}\to 0
    \end{equation}
    as $k\to\infty$, relabelling a sequence if necessary.
    Then, recalling (\ref{Eq (Section 8): Strong Monotonicity of A-epsilon}), and using H\"{o}lder's inequality and (\ref{Eq (Section 8): Bound for Du-epsilon}) when $p\in(1,\,2)$, we conclude
    \begin{equation}\label{Eq (Section 8): Strong Convergence of Approx sol}
        \D\bu_{\varepsilon_{k}}\to \D\bu_{0}\quad \text{in}\quad L^{p}(\Omega_{T};\,{\mathbb R}^{Nn})
    \end{equation}
    from (\ref{Eq (Section 8): Claim for Strong Convergence}).
    To prove (\ref{Eq (Section 8): Claim for Strong Convergence}), we choose $\overline{\phi}(t)\coloneqq (1-t/T)_{+}\in\lbrack 0,\,1\rbrack$ or $\phi_{\widetilde{\varepsilon}}(t)\coloneqq \min\{\,1,\,-(t-T)/\widetilde{\varepsilon} \,\}$ for $t\in\lbrack 0,\,T\rbrack$, where we will let $\widetilde{\varepsilon}\to 0$.
    We test $\bphi\coloneqq \overline{\phi}(\bu_{\varepsilon}-\bu_{0})$ or $\bphi\coloneqq \phi_{\widetilde{\varepsilon}}(\bu_{\varepsilon}-\bu_{0})$ into (\ref{Eq (Section 2): Approximate Weak Form}).
    Integrating by parts, deleting some non-negative integral if necessary, and finally letting $\widetilde{\varepsilon}\to 0$, we have
    \begin{align*}
       &{\mathbf L}_{1}(\varepsilon)+{\mathbf L}_{2}(\varepsilon)\\ 
       &\coloneqq -\frac{1}{2}\iint_{\Omega_{T}}\lvert \bu_{\varepsilon}-\bu_{0}\rvert^{2}\partial_{t}\overline{\phi}\,\d x\d t+\iint_{\Omega_{T}} \langle \A_{\varepsilon}(x,\,t,\,\D\bu_{\varepsilon})-\A_{\varepsilon}(x,\,t,\,\D\bu_{0}) \mid \D\bu_{\varepsilon}-\D\bu_{0} \rangle_{\bg}\,\d x\d t\\ 
       &\le \iint_{\Omega_{T}}\langle\buf_{\varepsilon}\mid \bu_{\varepsilon}-\bu_{0} \rangle\phi\,\d x\d t-\int_{0}^{T}\langle \partial_{t}\bu_{0},\,\bu_{\varepsilon}-\bu_{0}\rangle\phi\,\d t+\iint_{\Omega_{T}}\langle \A_{\varepsilon}(x,\,t,\,\D\bu_{0})\mid \D\bu_{\varepsilon}-\D\bu_{0} \rangle_{\bg}\phi\,\d x\d t\\ 
       &\eqqcolon {\mathbf R}_{1}(\varepsilon)-{\mathbf R}_{2}(\varepsilon)+{\mathbf R}_{3}(\varepsilon),
    \end{align*}
    where $\phi\coloneqq 1+\overline{\phi}$.
    It suffices to show $\limsup\limits_{k\to\infty}{\mathbf L}_{2}(\varepsilon_{k})\le 0$ to complete the proof of (\ref{Eq (Section 8): Claim for Strong Convergence}), since ${\mathbf L}_{2}(\varepsilon)$ is non-negative.
    We note that ${\mathbf L}_{1}(\varepsilon)=(2T)^{-1}\lVert\bu_{\varepsilon}-\bu_{0} \rVert_{L^{2}(\Omega_{T})}^{2}\ge 0$ by the definition of $\overline{\phi}$.
    Also, $\lim\limits_{k\to \infty}{\mathbf R}_{2}(\varepsilon_{k})=\lim\limits_{k\to \infty}{\mathbf R}_{3}(\varepsilon_{k})=0$ is clear by (\ref{Eq (Section 8): 3 Weak convergence results}) and Lemma \ref{Lemma (Section 2): Fundamental Lemma for convergence of solutions} \ref{Item 1/3}.
    To complete the proof of (\ref{Eq (Section 8): Claim for Strong Convergence}), we deal with ${\mathbf R}_{1}(\varepsilon)$ in two different approaches, depending on whether $p>p_{\mathrm c}$ or not.
    
    For $p>p_{\mathrm c}$, the Aubin--Lions lemma allows us to use the compact embedding $X_{0}^{p}(0,\,T;\,\Omega)\subset L^{p}(0,\,T;\,L^{2}(\Omega))$.
    Hence, by taking a subsequence if necessary, we may let 
    \begin{equation}\label{Eq (Section 8): Strong conv from compact embedding}
        \bu_{\varepsilon_{k}}\to \bu_{0}\quad \text{in}\quad L^{2,\,p}(\Omega_{T})^{N}=L^{p}(0,\,T;\,L^{2}(\Omega))^{N}.
    \end{equation}
    We choose a sufficiently small number $\pi\in(0,\,1)$ such that the exponents
    \[\widetilde{q}\coloneqq \left\{\begin{array}{cc} \displaystyle\frac{2q\pi}{(1+\pi)q-2} & (q<\infty),\\ \displaystyle\frac{2\pi}{1+\pi} & (q=\infty), \end{array} \right. \quad\text{and}\quad {\widetilde r}\coloneqq \left\{\begin{array}{cc} \displaystyle\frac{2r\pi}{(1+\pi)r-2} & (r<\infty),\\ \displaystyle\frac{2\pi}{1+\pi} & (r=\infty), \end{array} \right.\]
    satisfy ${\widetilde q}\le 2$ and ${\widetilde r}\le p$ respectively, where $q>2$ and $r>2$ are used to diminish $\widetilde{q}$ and $\widetilde{r}$ respectively.
    We use H\"{o}lder's inequality and Young's inequality to compute
    \begin{align*}
        \lvert{\mathbf R}_{1}(\varepsilon)\rvert&\le \iint_{\Omega_{T}}\lvert \buf_{\varepsilon}\rvert\lvert \bu_{\varepsilon}-\bu_{0}\rvert^{\pi}\cdot\lvert \bu_{\varepsilon}-\bu_{0}\rvert^{1-\pi}\,{\mathrm d}x{\mathrm d}t \\
        &\le\left(\iint_{\Omega_{T}}\lvert \buf_{\varepsilon}\rvert^{\frac{2}{1+\pi}}\lvert \bu_{\varepsilon}-\bu_{0}\rvert^{\frac{2\pi}{1+\pi}} \,{\mathrm d}x{\mathrm d}t \right)^{\frac{1+\pi}{2}}\left(\iint_{\Omega_{T}}\lvert \bu_{\varepsilon}-\bu_{0} \rvert^{2}\,\d x\d t \right)^{\frac{1-\pi}{2}}  \\ 
        &\le \frac{1}{2T}\iint_{\Omega_{T}}\lvert \bu_{\varepsilon}-\bu_{0}\rvert^{2}\,{\mathrm d}x{\mathrm d}t+C(\pi)T^{\frac{1-\pi}{1+\pi}}\iint_{\Omega_{T}}\lvert \buf_{\varepsilon}\rvert^{\frac{2}{1+\pi}}\lvert \bu_{\varepsilon}-\bu_{0}\rvert^{\frac{2\pi}{1+\pi}}\,{\mathrm d}x{\mathrm d}t\\ 
        &= {\mathbf L}_{1}(\varepsilon)+C(\pi)T^{\frac{1+\pi}{1-\pi}}{\mathbf R}_{4}(\varepsilon), \quad \text{where}\quad {\mathbf R}_{4}(\varepsilon)\coloneqq \iint_{\Omega_{T}}\lvert \buf_{\varepsilon}\rvert^{\frac{2}{1+\pi}}\lvert \bu_{\varepsilon}-\bu_{0}\rvert^{\frac{2\pi}{1+\pi}}\,\d x\d t.
    \end{align*}
    By H\"{o}lder's inquality and our choice of the exponents $\widetilde{q}$ and $\widetilde{r}$, we get
    \[
       0\le  {\mathbf R}_{4}(\varepsilon) \le \int_{0}^{T}\lVert \buf_{\varepsilon}(\,\cdot\,,\,t)\rVert_{L^{q}(\Omega)}^{\frac{2}{1+\pi}}\lVert (\bu_{\varepsilon}-\bu_{0})(\,\cdot\,,\,t) \rVert_{L^{\widetilde{q}}(\Omega)}^{\frac{2\pi}{1+\pi}}\,{\mathrm d}t \le \lVert \buf_{\varepsilon}\rVert_{L^{q,\,r}(\Omega_{T})}^{\frac{2}{1+\pi}}\lVert \bu_{\varepsilon}-\bu_{0}\rVert_{L^{\widetilde{q},\, \widetilde{r}}(\Omega_{T})}^{\frac{2\pi}{1+\pi}}.
    \]
    Combining (\ref{Eq (Section 8): Strong conv from compact embedding}) and the continuous embedding $L^{2,\,p}(\Omega_{T})\subset L^{\widetilde{q},\,\widetilde{r}}(\Omega_{T})$ with the above inequality implies ${\mathbf R}_{4}(\varepsilon_{k})\to 0$.
    Hence, we have 
    \[\limsup_{k\to \infty}{\mathbf L}_{2}(\varepsilon_{k})\le -\lim_{k\to\infty}{\mathbf R}_{2}(\varepsilon_{k})+\lim_{k\to\infty}{\mathbf R}_{3}(\varepsilon_{k})+C(\pi)T^{\frac{1+\pi}{1-\pi}}\lim_{k\to\infty}{\mathbf R}_{4}(\varepsilon_{k}) =0,\]
    which completes the proof of (\ref{Eq (Section 8): Claim for Strong Convergence}) for $p\in(p_{\mathrm c},\,\infty)$.

    For $p\le p_{\mathrm c}$, by taking a subsequence if necessary, we would like to prove the strong convergence
    \begin{equation}\label{Eq (Section 8): Strong conv from a priori regularity}
        \bu_{\varepsilon_{k}}\to \bu_{0}\quad \text{in}\quad L^{\pi}(\Omega_{T})^{N}\quad \text{for any }\pi\in(1,\,\infty).
    \end{equation}
    by utilizing the regularity results in Sections \ref{Section: L-infty} and \ref{Section: Gradient Bounds}.
    In fact, by the assumption $\bu_{\star}\in L^{\infty}(\Omega)^{N}$ and the uniform bound of $\bu_{\varepsilon}\in L^{2}(\Omega_{T})^{N}$, which is clear by (\ref{Eq (Section 8): Bound for Du-epsilon}), we obtain the uniform bound of $\bu_{\varepsilon}\in L^{\infty}(\Omega_{T})^{N}$ by the weak maximum principle (Proposition \ref{Proposition (Section 3): A Weak Maximum Principle}).
    By Lemma \ref{Lemma (Section 5): Local Holder continuity of u-epsilon}, $\bu_{\varepsilon}$ is uniformly continuous in any fixed subcylinder of $\Omega_{T}$.
    Therefore by the Aezel\`{a}--Ascoli theorem and a diagonal argument, it is easy to deduce the everywhere convergence $\bu_{\varepsilon_{k}}\to \bu_{0}$ in $\Omega_{T}$ by taking a subsequence if necessary.
    Hence, ({\ref{Eq (Section 8): Strong conv from a priori regularity}}) immediately follows from the bounded convergence theorem.
    Using (\ref{Eq (Section 8): Assumption in Convergence of Approximate Solutions}) and (\ref{Eq (Section 8): Strong conv from a priori regularity}) with $\pi\ge \max\{\,q^{\prime},\,r^{\prime} \,\}$, we easily verify $\lim\limits_{k\to\infty}{\mathbf R}_{1}(\varepsilon_{k})=0$. 
    Dropping ${\mathbf L}_{1}(\varepsilon)\ge 0$, we conclude $\limsup\limits_{k\to\infty}{\mathbf L}_{2}(\varepsilon_{k})\le 0$. Hence, (\ref{Eq (Section 8): Claim for Strong Convergence}) is shown also for $p\in(1,\,p_{\mathrm c}\rbrack$.

    Since (\ref{Eq (Section 8): Strong Convergence of Approx sol}) is verified, we are allowed to use Lemma \ref{Lemma (Section 2): Fundamental Lemma for convergence of solutions} \ref{Item 3/3}.
    As a consequence, we conclude that the limit function $\bu_{0}$ is the weak solution of (\ref{Eq (Section 8): Dirichlet Parabolic}), which uniquely exists by Lemma \ref{Lemma (Section 8): Stability Estimates}.
\end{proof}
\begin{remark}\label{Remark: Approximation on boundary data} \upshape
    We give the two remarks on Proposition \ref{Lemma (Section 8): Convergence of Approximate solutions}.
    \begin{enumerate}
        \item Proposition \ref{Lemma (Section 8): Convergence of Approximate solutions}, as well as Proposition \ref{Proposition (Section 3): A Weak Maximum Principle}, is valid even when $\Omega_{T}$ is replaced by a parabolic subcylinder ${\mathcal Q}\Subset \Omega_{T}$. 
        This fact is used in the proof of Theorem \ref{Theorem (Section 1): Gradient continuity}, particularly for $p\in(1,\, p_{\mathrm c}\rbrack$.
        \item Proposition \ref{Lemma (Section 8): Convergence of Approximate solutions} provides an existence result of (\ref{Eq (Section 8): Dirichlet Parabolic}), although we have to require some regularity assumptions on $\buf$ or $\bu_{\star}$. 
        Such technical conditions can be removed by utilizing a priori stability estimates shown in Lemma \ref{Lemma (Section 8): Stability Estimates}.
        When $p\in(1,\,p_{\mathrm c}\rbrack$, however, we have to let $\bu_{\star}\in X^{p}(0,\,T;\,\Omega)^{N}\cap C(\lbrack 0,\,T\rbrack;\,L^{2}(\Omega))^{N}$ admit an approximate sequence $\bv_{m}\,(m\in{\mathbb N})$ of the class $X^{p}(0,\,T;\,\Omega)^{N}\cap C(\lbrack 0,\,T\rbrack;\,L^{2}(\Omega))^{N}\cap L^{\infty}(\Omega_{T})^{N}$ such that all of the convergence assumptions in Lemma \ref{Lemma (Section 8): Convergence from stability} are satisfied.
        Although this approximation property appears to hold, in this paper we do not discuss the details.
    \end{enumerate}
\end{remark}
\subsection{Existence results for the Dirichlet boundary problem}
From Lemmata \ref{Lemma (Section 8): Stability Estimates}--\ref{Lemma (Section 8): Convergence from stability} and Proposition \ref{Lemma (Section 8): Convergence of Approximate solutions}, we would like to show the following result.
\begin{corollary}\label{Corollary (Section 8)}
    Let $\buf\in L^{p^{\prime}}(0,\,T;\,V_{0}^{\prime}(\Omega))^{N}\cap L^{2,\,1}(\Omega_{T})^{N}$ and $\bu_{\star}\in C(\lbrack 0,\,T\rbrack;\,L^{2}(\Omega))^{N}\cap X^{p}(0,\,T;\,\Omega)^{N}$ be given. 
    When $p\in(1,\,p_{\mathrm c}\rbrack$, assume that the function $\bu_{\star}$ admits the approximation property as in Remark \ref{Remark: Approximation on boundary data}.
    Then, the weak solution $\bu\in \bu_{\star}+X_{0}^{p}(0,\,T;\,\Omega)^{N}$ of (\ref{Eq (Section 8): Dirichlet Parabolic}), in the sense of Definition \ref{Definition (Section 8): Weak sol Dirichlet boundary}, uniquely exists.
\end{corollary}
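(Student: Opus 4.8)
The plan is to deduce Corollary \ref{Corollary (Section 8)} by combining the a priori stability estimates of Lemma \ref{Lemma (Section 8): Stability Estimates}, the convergence-from-stability principle of Lemma \ref{Lemma (Section 8): Convergence from stability}, and the approximate-solution convergence result Proposition \ref{Lemma (Section 8): Convergence of Approximate solutions}. Uniqueness is already contained in Lemma \ref{Lemma (Section 8): Stability Estimates} via (\ref{Eq (Section 8): Stability estimates when perturbing f}) with $\buf_{1}=\buf_{2}=\buf$ and $\bv_{1}=\bv_{2}=\bu_{\star}$, so the entire effort goes into \emph{existence}. The rough strategy is to first regularize the data $(\buf,\,\bu_{\star})$ so that Proposition \ref{Lemma (Section 8): Convergence of Approximate solutions} applies, produce a weak solution for each regularized datum, and then pass to the limit using Lemma \ref{Lemma (Section 8): Convergence from stability}.

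First I would treat the supercritical range $p\in(p_{\mathrm c},\,\infty)$, where no $L^{\infty}$-assumption on $\bu_{\star}$ is needed. Approximate $\buf$ by $\buf_{m}\in L^{p^{\prime}}(0,\,T;\,V_{0}^{\prime})^{N}\cap L^{\infty}(\Omega_{T})^{N}$ (for instance by truncation and Friedrichs mollification, exactly as discussed around (\ref{Eq (Section 2): Weak Conv of f-epsilon})), so that $\buf_{m}\to\buf$ in $L^{2,\,1}(\Omega_{T})^{N}$ and $\buf_{m}\rightharpoonup\buf$ in $L^{p^{\prime}}(0,\,T;\,V_{0}^{\prime})^{N}$; take $\bv_{m}\coloneqq\bu_{\star}$ for every $m$. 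For each fixed $m$, apply Proposition \ref{Lemma (Section 8): Convergence of Approximate solutions} with external force $\buf_{m}$ and boundary datum $\bu_{\star}$: this requires first mollifying $\buf_{m}$ further in the $\varepsilon$-parameter to obtain $\buf_{m,\,\varepsilon}\in L^{\infty}$ satisfying (\ref{Eq (Section 8): Assumption in Convergence of Approximate Solutions}), which is harmless since $\buf_{m}$ is already bounded. Proposition \ref{Lemma (Section 8): Convergence of Approximate solutions} then produces a weak solution $\bu_{m}\in\bu_{\star}+X_{0}^{p}(0,\,T;\,\Omega)^{N}$ of (\ref{Eq (Section 8): Dirichlet Parabolic}) with data $(\buf_{m},\,\bu_{\star})$. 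The sequence $\{(\buf_{m},\,\bu_{\star})\}_{m}$ satisfies all the convergence hypotheses of Lemma \ref{Lemma (Section 8): Convergence from stability} (the boundary data are stationary), so that lemma yields a limit $\bu\in\bu_{\star}+X_{0}^{p}(0,\,T;\,\Omega)^{N}$ which is the weak solution of (\ref{Eq (Section 8): Dirichlet Parabolic}) with the original datum $(\buf,\,\bu_{\star})$, completing the supercritical case.

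For the subcritical range $p\in(1,\,p_{\mathrm c}\rbrack$ I would argue analogously but exploit the assumed approximation property of $\bu_{\star}$. Choose $\bv_{m}\in X^{p}(0,\,T;\,\Omega)^{N}\cap C(\lbrack0,\,T\rbrack;\,L^{2}(\Omega))^{N}\cap L^{\infty}(\Omega_{T})^{N}$ converging to $\bu_{\star}$ in the sense demanded by Lemma \ref{Lemma (Section 8): Convergence from stability} (these exist precisely by the stipulated approximation property, cf.\ Remark \ref{Remark: Approximation on boundary data}), and simultaneously mollify $\buf$ to obtain $\buf_{m}\in L^{p^{\prime}}(0,\,T;\,V_{0}^{\prime})^{N}\cap L^{\infty}(\Omega_{T})^{N}$ with $\buf_{m}\to\buf$ in $L^{2,\,1}(\Omega_{T})^{N}$ and $\buf_{m}\rightharpoonup\buf$ in $L^{p^{\prime}}(0,\,T;\,V_{0}^{\prime})^{N}$. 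For each $m$, Proposition \ref{Lemma (Section 8): Convergence of Approximate solutions}, now applicable because $\bv_{m}\in L^{\infty}(\Omega_{T})^{N}$, furnishes a weak solution $\bu_{m}\in\bv_{m}+X_{0}^{p}(0,\,T;\,\Omega)^{N}$ of the problem with data $(\buf_{m},\,\bv_{m})$. Lemma \ref{Lemma (Section 8): Convergence from stability} then passes to the limit and produces the weak solution $\bu\in\bu_{\star}+X_{0}^{p}(0,\,T;\,\Omega)^{N}$ of (\ref{Eq (Section 8): Dirichlet Parabolic}) with data $(\buf,\,\bu_{\star})$. Uniqueness in both ranges follows at once from Lemma \ref{Lemma (Section 8): Stability Estimates}.

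The main obstacle is essentially bookkeeping rather than a deep analytic difficulty: one must be careful that the double approximation (in $m$ for the data, and in $\varepsilon$ for the regularized approximate systems inside Proposition \ref{Lemma (Section 8): Convergence of Approximate solutions}) is set up consistently, and that the mollified external forces $\buf_{m,\,\varepsilon}$ retain the weak$^{\star}$ convergence (\ref{Eq (Section 8): Assumption in Convergence of Approximate Solutions}) with the \emph{fixed} pair $(q,\,r)$ satisfying (\ref{Eq (Section 1): Condition for q,r}) — this is where the standard zero-extension-plus-Friedrichs-mollifier recipe from Section \ref{Section: Preliminaries} is invoked. In the subcritical case the only genuinely external input is the postulated approximability of $\bu_{\star}$ by $L^{\infty}$ boundary data; granting that (as the statement does), the argument is a clean three-lemma chain with no further surprises.
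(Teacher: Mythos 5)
Your proposal is correct and follows essentially the same route as the paper's proof: uniqueness from Lemma \ref{Lemma (Section 8): Stability Estimates}, existence by approximating $\buf$ (and, for $p\le p_{\mathrm c}$, also $\bu_{\star}$) so that Proposition \ref{Lemma (Section 8): Convergence of Approximate solutions} applies, and then passing to the limit via Lemma \ref{Lemma (Section 8): Convergence from stability}. The extra bookkeeping you flag about the nested $(m,\,\varepsilon)$-approximation and the $L^{q,\,r}$ weak$^{\star}$ convergence is correctly resolved by noting that $\buf_{m}\in L^{\infty}$ makes (\ref{Eq (Section 8): Assumption in Convergence of Approximate Solutions}) trivial for each fixed $m$.
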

\begin{proof}
    Since the uniqueness of (\ref{Eq (Section 8): Dirichlet Parabolic}) is already shown by Lemma \ref{Lemma (Section 8): Stability Estimates}, it suffices to construct the weak solution of (\ref{Eq (Section 8): Dirichlet Parabolic}).
    We first consider $p\in(p_{\mathrm c},\,\infty)$. 
    We let $\bv_{k}\equiv \bu_{\star}$, and suitably choose $\buf_{k}\in L^{\infty}(\Omega_{T})^{N}$ such that all of the convergence assumptions in Lemma \ref{Lemma (Section 8): Convergence from stability} are satisfied.
    Then by Proposition \ref{Lemma (Section 8): Convergence of Approximate solutions}, the weak solution of (\ref{Eq (Section 8): Dirichlet boundary}) uniquely exists, say $\bu_{k}\in \bv_{k}+X_{0}^{p}(0,\,T;\,V_{0}(\Omega))^{N}=\bu_{\star}+X_{0}^{p}(0,\,T;\,V_{0}(\Omega))^{N}$.
    Letting $k\to\infty$ and using Lemma \ref{Lemma (Section 8): Convergence from stability} complete the proof of Corollary \ref{Corollary (Section 8)}.
    The remaining case $p\in(1,\,p_{\mathrm c}\rbrack$ is similarly shown, with $\bu_{\star}$ also approximated by $\bv_{k}$ that is in the class $X^{p}(0,\,T;\,\Omega)^{N}\cap C(\lbrack 0,\,T\rbrack;\,L^{2}(\Omega))^{N}\cap L^{\infty}(\Omega_{T})^{N}$.
    This condition is required in our proof, since the proof of Proposition \ref{Lemma (Section 8): Convergence of Approximate solutions} for $p\in(1,\,p_{\mathrm c}\rbrack$ essentially relies on the weak maximum principle.
    \end{proof}

\section{Gradient continuity}\label{Section: Gradient Continuity}
 \subsection{Proof of Theorem \ref{Theorem (Section 4): Holder Truncated-Gradient Continuity}}
 We would like to show Theorem \ref{Theorem (Section 4): Holder Truncated-Gradient Continuity}.
 We note that the basic strategy is almost the same with \cite[Theorem 2.8]{T-supercritical}, except the fact that we often compare $\lvert\, \cdot\,\rvert_{\bg}$ with $\lvert\,\cdot\,\rvert$.
 For the reader's convenience, we provide the outline of the proof. 
 \begin{proof}[Proof of Theorem \ref{Theorem (Section 4): Holder Truncated-Gradient Continuity}]
     For given $\delta\in(0,\,1)$ and $M\coloneqq 1+\mu_{0}\in(1,\,\infty)$, we choose and fix $\nu\in(0,\,10^{-23}\gamma_{0}^{16})$ and ${\widehat\rho}\in(0,\,1)$ as in Proposition \ref{Proposition (Section 4): Non-Degenerate Case}.
     Hereinafter we set $\sigma\coloneqq \sqrt{\nu}/6$.
     Corresponding to this $\nu$, we choose $\kappa\in\lbrack (\sigma^{\beta},\,1)$ and $\widetilde{\rho}\in(0,\,1)$ as in Proposition \ref{Proposition (Section 4): Degenerate Case}.
     It suffices to prove that $\bG_{2\delta,\,\varepsilon}(x,\,t)$, defined as (\ref{Eq (Section 4): Limit Average of G-2delta-epsilon}), exists for every $(x,\,t)$, and this limit satisfies
     \begin{equation}\label{Eq (Section 9): Claim for Gamma-2delta-epsilon}
         \fiint_{Q_{\tau \rho}(x_{0},\,t_{0})}\lvert \G_{2\delta,\,\varepsilon}(\D\bu_{\varepsilon})-\bG_{2\delta,\,\varepsilon}(x_{0},\,t_{0}) \rvert^{2}\,\d x\d t\le C({\mathcal D},\,\delta,\,M)\mu_{0}^{2}\tau^{2\alpha}\quad \text{for all }\tau\in(0,\,1\rbrack,
     \end{equation}
     provided $Q_{2\rho}(x_{0},\,t_{0})\subset \widetilde{\mathcal Q}$ and $\rho\le \overline{\rho}\coloneqq \min\{\,\widetilde{\rho},\,\widehat{\rho}\,\}$.
     Here the H\"{o}lder exponent $\alpha$ is defined as
     \begin{equation}
         \alpha\coloneqq \frac{\log \kappa}{\log \sigma}\in(0,\,\beta),\quad \text{so that}\quad \kappa=\sigma^{\alpha}\text{ holds}.
     \end{equation}
     The local H\"{o}lder continuity estimate of $\bG_{2\delta,\,\varepsilon}$ is easily shown from (\ref{Eq (Section 9): Claim for Gamma-2delta-epsilon}) (see \cite[Theorem 2.8]{T-supercritical} for the detailed computations).
     Hereinafter we assume $Q_{2\rho}(x_{0},\,t_{0})\subset \widetilde{\mathcal Q}$ and $\rho\le \overline{\rho}$.
     For each $l\in{\mathbb Z}_{\ge 0}$, we define $\rho_{l}\coloneqq \sigma^{l}\rho$ and $\mu_{l}\coloneqq \kappa^{l}\mu_{0}$.
     We define the set
     \[{\mathcal N}\coloneqq \{l\in{\mathbb Z}_{\ge 0}\mid \mu_{l}>\delta\text{ and }\lvert S_{\rho_{l},\,\mu_{l}}\rvert\le (1-\nu)\lvert Q_{\rho_{l}}\rvert\}\]
     Since $\mu_{l}\to 0$ as $l\to\infty$, ${\mathcal N}$ is a proper subset of ${\mathbb Z}_{\ge 0}$.
     We define $l_{\star}\in{\mathbb Z}_{\ge 0}$ as the minimum number of the non-empty set ${\mathbb Z}_{\ge 0}\setminus{\mathcal N}$.
     Repeatedly applying Proposition \ref{Proposition (Section 4): Degenerate Case}, we have 
     \begin{equation}\label{Eq (Section 9): Consequence from De Giorgi}
         \esssup_{Q_{2\rho_{l}}}\,\lvert \G_{\delta,\,\varepsilon}(\D\bu_{\varepsilon})\rvert_{\bg}\le \mu_{l}.
     \end{equation}
     for all $l\in\{\,0,\,\dots\,,\,l_{\star}\,\}$. 
     For the number $l_{\star}$, we consider the two possible cases. 
    
     The first case is when $\mu_{l_{\star}}\le \delta$ holds, which clearly yields $\G_{2\delta,\,\varepsilon}(\D\bu_{\varepsilon})\equiv 0$ in $Q_{\rho_{l_{\star}}}$.
     Therefore, the limit $\G_{2\delta,\,\varepsilon}(x_{0},\,t_{0})$ obviously exists with $\G_{2\delta,\,\varepsilon}(x_{0},\,t_{0})=0$.
     Moreover, by (\ref{Eq (Section 9): Consequence from De Giorgi}), it is easy to deduce
     \begin{equation}\label{Eq (Section 9): Consequence from Truncation Trick}
         \esssup_{Q_{2\rho_{l}}}\,\lvert \G_{2\delta,\,\varepsilon}(\D\bu_{\varepsilon})\rvert_{\bg} \le \mu_{l}
     \end{equation}
     for all $l\in{\mathbb Z}_{\ge 0}$. 
     For each $\tau\in(0,\,1)$, there uniquely exists $l\in{\mathbb Z}_{\ge 0}$ satisfying $\sigma^{l+1} <\tau\le \sigma^{l}$.
     Using (\ref{Eq (Section 1): Matrix Gamma}) and (\ref{Eq (Section 9): Consequence from Truncation Trick}), we have 
     \begin{align*}
         \fiint_{Q_{\tau\rho}(x_{0},\,t_{0})}\lvert \G_{2\delta,\,\varepsilon}(x_{0},\,t_{0}) \rvert^{2}\,\d x\d t&\le \left(\frac{\mu_{l}}{\gamma_{0}}\right)^{2}=\frac{\sigma^{2\alpha l}\mu^{2}}{\gamma_{0}^{2}}\le \frac{\mu_{0}^{2}\tau^{2\alpha}}{\sigma^{2\alpha}\gamma_{0}^{2}}.
     \end{align*}
     Recalling $\G_{2\delta,\,\varepsilon}(x_{0},\,t_{0})=0$, we conclude (\ref{Eq (Section 9): Claim for Gamma-2delta-epsilon}).

     The second case is when both $\mu_{l_{\star}}>\delta$ and $\lvert S_{\rho_{l_{\star}},\,\mu_{l_{\star}}}\rvert> (1-\nu)\lvert Q_{\rho_{l_{\star}}}\rvert$ hold, which allows us to apply Proposition \ref{Proposition (Section 4): Non-Degenerate Case} to a small cylinder $Q_{\rho_{l_{\star}}}(x_{0},\,t_{0})$ with $\mu=\mu_{l_{\star}}$.
     Therefore, the limit $\G_{2\delta,\,\varepsilon}(x_{0},\,t_{0})$ exists, and this limit satisfies
     \begin{equation}\label{Eq (Section 9): Bound Estimate of G-2delta-epsilon}
         \lvert\bG_{2\delta,\,\varepsilon}(x_{0},\,t_{0}) \rvert\le \frac{\mu_{l_{\star}} }{\gamma_{0}},
     \end{equation}
     Moreover, for any $\tau\in(0,\,\sigma^{l_{\star}}\rbrack$, we have 
     \[\fiint_{Q_{\tau\rho}} \lvert \G_{2\delta,\,\varepsilon}(\D\bu_{\varepsilon})- \bG_{2\delta,\,\varepsilon}(x_{0},\,t_{0}) \rvert^{2}\,\d x\d t\le \left(\frac{\tau}{\sigma^{l_{\star}}}\right)^{2\beta}\mu_{l_{\star}}^{2}\le \left(\frac{\tau}{\sigma^{l_{\star}}}\right)^{2\alpha}\mu_{l_{\star}}^{2}=\mu_{0}^{2}\tau^{2\alpha},\]
     which yields (\ref{Eq (Section 9): Claim for Gamma-2delta-epsilon}).
     To complete the remaining range $\tau\in(\sigma^{l_{\star}},\,1\rbrack$, we note that (\ref{Eq (Section 9): Consequence from Truncation Trick}) is valid for all $l\in\{\,0,\,\dots\,,\,l_{\star}\,\}$, which is clear by (\ref{Eq (Section 9): Consequence from De Giorgi}).
     Choosing the unique number $l\in\{\,0,\,\dots\,,\,l_{\star}-1\,\}$ satisfying $\sigma^{l+1}<\tau\le \sigma^{l}$ and using (\ref{Eq (Section 9): Bound Estimate of G-2delta-epsilon}), we have 
     \begin{align*}
         \fiint_{Q_{\tau\rho}}\lvert \G_{2\delta,\,\varepsilon}(\D\bu_{\varepsilon})- \bG_{2\delta,\,\varepsilon}(x_{0},\,t_{0}) \rvert^{2}\,\d x\d t
         &\le 2\left(\lvert \bG_{2\delta,\,\varepsilon}(x_{0},\,t_{0}) \rvert^{2}+\esssup_{Q_{\rho_{l}}}\,\lvert \G_{2\delta,\,\varepsilon} (\D\bu_{\varepsilon}) \rvert^{2} \right)\\ 
         &\le \frac{4\mu_{l}^{2}}{\gamma_{0}^{2}}=\frac{4\sigma^{2\alpha l}\mu^{2}}{\gamma_{0}^{2}}\le \frac{4}{\sigma^{2\alpha}\gamma_{0}^{2}}\mu_{0}^{2}\tau^{2\alpha},
     \end{align*}
     which completes the proof.
 \end{proof}

 \subsection{Proof of Theorem \ref{Theorem (Section 1): Gradient continuity}}
 In the last subsection, we would like to prove Theorem \ref{Theorem (Section 1): Gradient continuity}.
 \begin{proof}[Proof of Theorem \ref{Theorem (Section 1): Gradient continuity}]
     We choose and fix $\{\buf_{\varepsilon}\}_{0<\varepsilon<1}\subset L^{\infty}(\Omega_{T})^{N}$ such that (\ref{Eq (Section 2): Weak Conv of f-epsilon}) holds.
     We would like to prove that for each fixed $\delta\in(0,\,1)$, the truncated gradient ${\G}_{2\delta}(\D\bu)$ is H\"{o}lder continuous in each fixed $Q^{(0)}\Subset \Omega_{T}$, whose continuity estimate may depend on $\delta$.
     From this regularity result, the continuity of $\D\bu$ in $\Omega_{T}$ easily follows by letting $\delta\to 0$.

     We first consider $p\in(p_{\mathrm c},\,\infty)$. 
     Let $\bu_{\varepsilon}$ be the weak solution of (\ref{Eq (Section 2): Approximate System})--(\ref{Eq (Section 2): Parabolic Dirichlet Boundary}) with $\bu_{\star}=\bu$.
     Then, by Theorems \ref{Theorem (Section 4): Gradient bounds}--\ref{Theorem (Section 4): Holder Truncated-Gradient Continuity} and a standard covering argument, we find out that ${\G}_{2\delta,\,\varepsilon}(\D\bu_{\varepsilon})$ is uniformly H\"{o}lder continuous in $Q^{(0)}$, independent of $\varepsilon\in(0,\,\delta/4)$.
     By the Aezel\`{a}--Ascoli theorem, we can construct a uniform convergence limit of ${\G}_{2\delta,\,\varepsilon}(\D\bu_{\varepsilon})$ over $Q^{(0)}$ as $\varepsilon$ tends to $0$. 
     Let $\bv_{2\delta}$ denote this uniform limit.
     By Proposition \ref{Lemma (Section 8): Convergence of Approximate solutions}, ${\G}_{2\delta,\,\varepsilon}(\D\bu_{\varepsilon})\to \G_{2\delta}(\D\bu)$ a.e.~in $\Omega_{T}$.
     From these convergence results, it follows that the identity $\bv_{2\delta}={\G}_{2\delta}(\D\bu)$ holds a.e.~in $Q^{(0)}$, which completes the proof.

     The case $p\in (1,\,p_{\mathrm c}\rbrack$ is also similarly shown by local approximation arguments.
     More precisely, we fix the parabolic subcylinders $Q^{(0)}\Subset Q^{(1)}\coloneqq B\times I\Subset Q^{(2)}\Subset \Omega_{T}$.
     Let $\bu_{\varepsilon}\in \bu+X_{0}^{p}(I;\,B)$ be the weak solution of (\ref{Eq (Section 2): Approximate System})--(\ref{Eq (Section 2): Parabolic Dirichlet Boundary}) with $\Omega_{T}=\Omega\times (0,\,T)$ replaced by $Q^{(1)}=B\times I$.
     The higher regularity assumption $\bu\in L^{\varsigma}(Q^{(2)})^{N}$ with $\varsigma>\varsigma_{\mathrm c}\ge 2$ and Propositions \ref{Proposition (Section 3): Local Boundedness for subcritical cases} yield $\bu\in L^{\infty}(Q^{(1)})^{N}$.
     Hence by Proposition \ref{Proposition (Section 3): A Weak Maximum Principle}, we also have $\bu_{\varepsilon}\in L^{\infty}(Q^{(1)})^{N}$, whose estimate is independent of $\varepsilon$.
     Applying Theorems \ref{Theorem (Section 4): Gradient bounds}--\ref{Theorem (Section 4): Holder Truncated-Gradient Continuity} and using Proposition \ref{Lemma (Section 8): Convergence of Approximate solutions} with $\Omega_{T}$ replaced by $Q^{(1)}$, we similarly conclude the H\"{o}lder continuity of ${\G}_{2\delta}(\D\bu)$ in $Q^{(0)}$.
 \end{proof}

 From Proposition \ref{Proposition (Section 5): Reversed Holder}, we have local gradient bounds of a weak solution for the supercritical case.
 \begin{corollary}
     Fix $p\in(p_{\mathrm c},\,\infty)$ and $\buf\in L^{q,\,r}(\Omega_{T})^{N}\cap L^{p^{\prime}}(0,\,T;\,V_{0}^{\prime})^{N}$.
     Assume that $\bu$ is a weak solution to (\ref{Eq (Section 1): General System}).
     Then, for any fixed $Q_{R}=Q_{R}(x_{0},\,t_{0})\Subset \Omega_{T}$ with $R\in(0,\,1\rbrack$, we have
     \[\sup_{Q_{R/2}} \lvert \D\bu_{\varepsilon}\rvert\le C({\mathcal D})\left[1+\lVert \buf_{\varepsilon}\rVert_{L^{q,\,r}(\Omega_{T})}^{p\pi}+\fiint_{Q_{R}}\lvert \D\bu\rvert^{p}\,\d x\d t \right]^{1/d},\]
     where the exponents $\pi$ and $d$ are given by Proposition \ref{Proposition (Section 5): Reversed Holder}. 
 \end{corollary}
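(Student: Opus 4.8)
The plan is to obtain the estimate for $\bu$ by transferring the uniform local gradient bound for approximate solutions, which is precisely Proposition~\ref{Proposition (Section 5): Reversed Holder}, to the weak solution via the convergence result of Section~\ref{Section: Convergence Result}. First I would fix $Q_{R}=Q_{R}(x_{0},\,t_{0})\Subset\Omega_{T}$ with $R\in(0,\,1\rbrack$ and choose a family $\{\buf_{\varepsilon}\}_{0<\varepsilon<1}\subset L^{\infty}(\Omega_{T})^{N}$ satisfying (\ref{Eq (Section 2): Weak Conv of f-epsilon}); taking $\buf_{\varepsilon}$ to be the Friedrichs mollification of the zero-extension of $\buf$ yields such a family with the extra property $\lVert\buf_{\varepsilon}\rVert_{L^{q,\,r}(\Omega_{T})}\le\lVert\buf\rVert_{L^{q,\,r}(\Omega_{T})}$ for every $\varepsilon$, since convolution does not increase a mixed Lebesgue norm. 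Next I would invoke Proposition~\ref{Lemma (Section 8): Convergence of Approximate solutions} with $\bu_{\star}=\bu$ --- admissible because $\bu\in X^{p}(0,\,T;\,\Omega)^{N}\cap C(\lbrack 0,\,T\rbrack;\,L^{2}(\Omega))^{N}$ by definition, and because no $L^{\infty}$-hypothesis on $\bu_{\star}$ is needed in the supercritical range --- to produce a sequence $\varepsilon_{k}\to 0$, approximate solutions $\bu_{\varepsilon_{k}}$ of (\ref{Eq (Section 2): Approximate System})--(\ref{Eq (Section 2): Parabolic Dirichlet Boundary}) with boundary datum $\bu$, and a limit $\bu_{0}\in\bu+X_{0}^{p}(0,\,T;\,\Omega)^{N}$ with $\D\bu_{\varepsilon_{k}}\to\D\bu_{0}$ in $L^{p}(\Omega_{T})^{Nn}$, where $\bu_{0}$ solves the Dirichlet problem (\ref{Eq (Section 8): Dirichlet Parabolic}) with datum $\bu_{\star}=\bu$. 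Since $\bu$ itself is trivially a weak solution of that Dirichlet problem, the uniqueness asserted in Lemma~\ref{Lemma (Section 8): Stability Estimates} forces $\bu_{0}=\bu$; after relabelling I may also assume $\D\bu_{\varepsilon_{k}}\to\D\bu$ a.e.\ in $\Omega_{T}$.

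Then I would apply Proposition~\ref{Proposition (Section 5): Reversed Holder} to each $\bu_{\varepsilon_{k}}$ on $Q_{R}$ with $\theta=1/2$. Because $p\in(p_{\mathrm c},\,\infty)$ no higher-integrability hypothesis is required, and the factor $(1-\theta)^{-e/d}$ becomes an absolute constant; this gives
\begin{equation*}
    \esssup_{Q_{R/2}}v_{\varepsilon_{k}}\le C(\mathcal D)\left[1+\lVert\buf_{\varepsilon_{k}}\rVert_{L^{q,\,r}(\Omega_{T})}^{p\pi}+\fiint_{Q_{R}}v_{\varepsilon_{k}}^{p}\,\d x\d t\right]^{1/d}.
\end{equation*}
Using $\lvert\D\bu_{\varepsilon_{k}}\rvert\le\gamma_{0}^{-1/2}v_{\varepsilon_{k}}$, the bound $v_{\varepsilon_{k}}^{p}=(\varepsilon_{k}^{2}+\lvert\D\bu_{\varepsilon_{k}}\rvert_{\bg}^{2})^{p/2}\le C(\gamma_{0},\,p)\bigl(1+\lvert\D\bu_{\varepsilon_{k}}\rvert^{p}\bigr)$ (which follows from $\varepsilon_{k}<1$ and (\ref{Eq (Section 1): Matrix Gamma})), and $\lVert\buf_{\varepsilon_{k}}\rVert_{L^{q,\,r}(\Omega_{T})}\le\lVert\buf\rVert_{L^{q,\,r}(\Omega_{T})}$, I would absorb all constants into $C(\mathcal D)$ to reach the $\varepsilon$-free inequality $\esssup_{Q_{R/2}}\lvert\D\bu_{\varepsilon_{k}}\rvert\le C(\mathcal D)\bigl[1+\lVert\buf\rVert_{L^{q,\,r}(\Omega_{T})}^{p\pi}+\fiint_{Q_{R}}\lvert\D\bu_{\varepsilon_{k}}\rvert^{p}\,\d x\d t\bigr]^{1/d}$.

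Finally I would let $k\to\infty$. On the right-hand side $\fiint_{Q_{R}}\lvert\D\bu_{\varepsilon_{k}}\rvert^{p}\to\fiint_{Q_{R}}\lvert\D\bu\rvert^{p}$ by the strong $L^{p}$-convergence of the gradients; on the left-hand side, a.e.\ convergence gives $\lvert\D\bu(x,\,t)\rvert\le\liminf_{k}\lvert\D\bu_{\varepsilon_{k}}(x,\,t)\rvert\le\liminf_{k}\esssup_{Q_{R/2}}\lvert\D\bu_{\varepsilon_{k}}\rvert$ for a.e.\ $(x,\,t)\in Q_{R/2}$, whence $\esssup_{Q_{R/2}}\lvert\D\bu\rvert\le\liminf_{k}\esssup_{Q_{R/2}}\lvert\D\bu_{\varepsilon_{k}}\rvert$. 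Combining this with the $\varepsilon$-free inequality yields the stated bound with the essential supremum, and since $\D\bu\in C^{0}(\Omega_{T})$ by Theorem~\ref{Theorem (Section 1): Gradient continuity}, the essential supremum coincides with the supremum. The proof is largely bookkeeping: the only points requiring a little care --- and hence the (mild) obstacles --- are the identification $\bu_{0}=\bu$ through the uniqueness of the Dirichlet problem, and the lower-semicontinuity step for the sup-norm under a.e.\ convergence; everything else is a direct quotation of the cited results.
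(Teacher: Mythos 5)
Your proof is correct and follows essentially the same route as the paper's: approximate with $\bu_{\varepsilon}$ solving the Dirichlet problem with boundary datum $\bu$, apply Proposition \ref{Proposition (Section 5): Reversed Holder} with $\theta=1/2$, and pass to the limit using Proposition \ref{Lemma (Section 8): Convergence of Approximate solutions} and the continuity of $\D\bu$ from Theorem \ref{Theorem (Section 1): Gradient continuity}. The only differences are in bookkeeping: you obtain the uniform force bound $\lVert\buf_{\varepsilon}\rVert_{L^{q,\,r}}\le\lVert\buf\rVert_{L^{q,\,r}}$ directly from the norm-contractivity of mollification, whereas the paper first reduces to $q,r<\infty$ and then uses strong $L^{q,\,r}$-convergence of $\buf_{\varepsilon}$; and you spell out the identification $\bu_{0}=\bu$ via the uniqueness in Lemma \ref{Lemma (Section 8): Stability Estimates} and the lower-semicontinuity of the essential supremum under a.e.\ convergence, both of which the paper leaves implicit in its final sentence.
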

 \begin{proof}
     Without loss of generality, we may let $q$ and $r$ be finite, since the remaining case is clear by H\"{o}lder's inequality.
     We choose $\buf_{\varepsilon}\in L^{\infty}(\Omega_{T})^{N}$ that satisfies
     \[\buf_{\varepsilon}\to \buf\quad \text{in}\quad L^{q,\,r}(\Omega_{T})^{N}\quad \text{and} \quad \buf_{\varepsilon}\rightharpoonup \buf\quad \text{in}\quad L^{p^{\prime}}(0,\,T;\,V_{0}^{\prime}(\Omega))\quad \text{as}\quad \varepsilon\to 0.\]
     Let $\bu_{\varepsilon}$ be the weak solution of (\ref{Eq (Section 2): Approximate System})--(\ref{Eq (Section 2): Parabolic Dirichlet Boundary}) with $\bu_{\star}=\bu$.
     Then, Proposition \ref{Proposition (Section 5): Reversed Holder} clearly yields 
     \[\esssup_{Q_{R/2}}\,\lvert \D\bu_{\varepsilon}\rvert\le C({\mathcal D})\left[1+\lVert \buf_{\varepsilon}\rVert_{L^{q,\,r}(\Omega_{T})}^{p\pi}+\fiint_{Q_{R}}\lvert \D\bu_{\varepsilon}\rvert^{p}\,\d x\d t \right]^{1/d}.\]
     Since both $\D\bu_{\varepsilon}\to \D\bu$ in $L^{p}(\Omega_{T})$ and $\D\bu\in C^{0}(\Omega_{T};\,{\mathbb R}^{Nn})$ are shown by Proposition \ref{Lemma (Section 8): Convergence of Approximate solutions} and Theorem \ref{Theorem (Section 1): Gradient continuity}, the proof is completed by letting $\varepsilon\to 0$.
 \end{proof}
 
 \addcontentsline{toc}{section}{References}
\bibliographystyle{siamplain}

\begin{thebibliography}{10}
\bibitem{Acerbi-Fusco}
E.~Acerbi, and N.~Fusco.
\newblock Regularity for minimizers of nonquadratic functionals: the case {$1<p<2$}.
\newblock {\em J. Math. Anal. Appl.}, 140(1):115--135, 1989.

\bibitem{Alikakos-Evans}
N.~D.~Alikakos, and L.~C.~Evans.
\newblock Continuity of the gradient for weak solutions of a degenerate parabolic equation.
\newblock {\em J. Math. Pures Appl. (9)}, 62(3):253--268, 1983.

\bibitem{Andreu-Vaillo et al}
F.~Andreu-Vaillo, V.~Caselles, and J.~M.~Maz\'{o}n.
\newblock {\em Parabolic quasilinear equations minimizing linear growth functionals.}
\newblock Progress in Mathematics, 223, Birkh\"{a}user Verlag, Basel, 2004.

\bibitem{Ambrosio-PdN}
P.~Ambrosio, and A.~Passarelli~di Napoli.
\newblock Regularity results for a class of widely degenerate parabolic equations.
\newblock {\em Adv. Calc. Var.}, 17(3):805--829, 2024.

\bibitem{BDGLS}
V.~B\"{o}gelein, F.~Duzaar, U.~Gianazza, N.~Liao, and C.~Scheven.
\newblock H\"{o}lder continuity of the gradient of solutions to doubly non-linear parabolic equations. 
\newblock {\em arXiv preprint arXiv:2305.08539.v1}, 2023.


\bibitem{BDGPdN}
V.~B{\"o}gelein, F.~Duzaar, R.~Giova, and A.~Passarelli~di Napoli.
\newblock Higher regularity in congested traffic dynamics.
\newblock {\em Math. Ann.}, 385(3-4):1823--1878, 2023.

\bibitem{BDGPdN-p}
V.~B{\"o}gelein, F.~Duzaar, R.~Giova, and A.~Passarelli~di Napoli.
\newblock Gradient regularity for a class of widely degenerate parabolic systems.
\newblock {\em SIAM J. Math. Anal.}, 56(4):5017--5078, 2024.

\bibitem{BDLS-boundary}
V.~B\"{o}gelein, F.~Duzaar, N.~Liao, and C.~Scheven.
\newblock Boundary regularity for parabolic systems in convex domains.
\newblock {\em J. Lond. Math. Soc. (2)}, 105(3):1702--1751, 2022.

\bibitem{BDLS-parabolic}
V.~B\"{o}gelein, F.~Duzaar, N.~Liao, and C.~Scheven.
\newblock Gradient {H}\"{o}lder regularity for degenerate parabolic systems.
\newblock {\em Nonlinear Anal.}, 225:Paper No. 113119, 61, 2022.

\bibitem{BDM MR3073153}
V.~B\"ogelein, F.~Duzaar, and P.~Marcellini.
\newblock Parabolic systems with {$p,q$}-growth: a variational approach.
\newblock {\em Arch. Ration. Mech. Anal.}, 210(1):219-267, 2013.

\bibitem{BDMi}
V.~B\"{o}gelein, F.~Duzaar, and G.~Mingione.
\newblock The regularity of general parabolic systems with degenerate diffusion.
\newblock {\em Mem. Amer. Math. Soc.}, 221(1041):vi+143, 2013.


\bibitem{Campanato MR0213737}
S.~Campanato. 
\newblock Equazioni paraboliche del secondo ordine e spazi {${\cal L}\sp{2,\,\theta }\,(\Omega ,\,\delta )$}.
\newblock {\em Ann. Mat. Pura Appl. (4)}, 73:55--102, 1966.

\bibitem{Choe MR1135917}
H.~J.~Choe.
\newblock H\"{o}lder regularity for the gradient of solutions of certain singular parabolic systems.
\newblock {\em Comm. Partial Differential Equations}, 16(11):1709--1732, 1991.


\bibitem{CDFM}
C.~De Filippis, and G.~Mingione.
\newblock Regularity for double phase problems at nearly linear growth.
\newblock {\em Arch. Ration. Mech. Anal.}, 247(5):Paper No.~85, 50, 2023.

\bibitem{FDFP}
F.~De Filippis, and M.~Piccinini.
\newblock Regularity for multi-phase problems at nearly linear growth.
\newblock {\em J. Differential Equations}, 410:832--868, 2024.

\bibitem{Colombo-Figalli MR3133426}
M.~Colombo and A.~Figalli.
\newblock Regularity results for very degenerate elliptic equations.
\newblock {\em J. Math. Pures Appl. (9)}, 101(1):94--117, 2014.

\bibitem{Dibenedetto-elliptic MR709038}
E.~DiBenedetto.
\newblock {$C^{1+\alpha}$} local regularity of weak solutions of degenerate elliptic equations.
\newblock {\em Nonlinear Anal.}, 7(8):827--850, 1983.

\bibitem{DiBenedetto-monograph MR1230384}
E.~DiBenedetto.
\newblock {\em Degenerate Parabolic Equations}.
\newblock Universitext. Springer-Verlag, New York, 1993.

\bibitem{DiBenedetto-Friedman MR743967}
E.~DiBenedetto, and A.~Friedman.
\newblock Regularity of solutions of nonlinear degenerate parabolic systems.
\newblock {\em J. Reine Angew. Math.}, 349:83--128, 1984.

\bibitem{DiBenedetto-Friedman MR783531}
E.~DiBenedetto, and A.~Friedman.
\newblock {H}\"{o}lder estimates for nonlinear degenerate parabolic systems.
\newblock {\em J. Reine Angew. Math.}, 357:1--22, 1985.

\bibitem{DiBenedetto-Friedman MR814022}
E.~DiBenedetto, and A.~Friedman.
\newblock Addendum to: ``{H}\"{o}lder estimates for nonlinear degenerate parabolic systems''.
\newblock {\em J. Reine Angew. Math.}, 363:217--220, 1985.

\bibitem{DiBenedetto-Gianazza-Vespri MR2865434}
E.~DiBenedetto, U.~Gianazza and V.~Vespri.
\newblock Harnack's inequality for degenerate and singular parabolic equations.
\newblock Springer Monographs in Mathematics. Springer, New York, 2012.

\bibitem{DiBenedetto-Herrero MR1066761}
E.~DiBenedetto, and M.~A.~Herrero.
\newblock Nonnegative solutions of the evolution {$p$}-{L}aplacian equation. {I}nitial traces and {C}auchy problem when {$1<p<2$}.
\newblock {\em Arch. Ration. Mech. Anal.}, 111(3):225--290, 1990.

\bibitem{Duvaut-Lions MR0521262}
G.~Duvaut, and J.-L. Lions.
\newblock {\em Inequalities in Mechanics and Physics}, volume 219 of {\em Grundlehren der Mathematischen Wissenschaften}.
\newblock Springer-Verlag, Berlin-New York, 1976.

\bibitem{Evans MR672713}
L.~C.~Evans.
\newblock A new proof of local {$C^{1,\alpha}$} regularity for solutions of certain degenerate elliptic p.d.e.
\newblock {\em J. Differential Equations}, 45(3):356--373, 1982.

\bibitem{Fuchs-Mingione}
M.~Fuchs, and G.~Mingione.
\newblock Full {$C^{1,\alpha}$}-regularity for free and constrained local minimizers of elliptic variational integrals with nearly linear growth.
\newblock {\em Manuscripta Math.}, 102(2):227--250, 2000.

\bibitem{Fuchs-Seregin Monograph}
M.~Fuchs, and G.~Seregin.
\newblock {\em Variational methods for problems from plasticity theory and for generalized {N}ewtonian fluids}.
\newblock Lecture Notes in Mathematics, 1749, Springer-Verlag, Berlin, 2000.



\bibitem{Giaquinta-Giusti}
M.~Giaquinta, and E.~Giusti.
\newblock On the regularity of the minima of variational integrals.
\newblock {\em Acta Math.}, 148:31--46, 1982.



\bibitem{Giaquinta-Struwe MR0652852}
M.~Giaquinta, and M.~Struwe.
\newblock On the partial regularity of weak solutions of nonlinear parabolic systems.
\newblock {\em Math. Z.}, 179(4):437--451, 1982.

\bibitem{Giga-Tsubouchi MR4408168}
Y.~Giga, and S.~Tsubouchi.
\newblock Continuity of derivatives of a convex solution to a perturbed one-{L}aplace equation by {$p$}-{L}aplacian.
\newblock {\em Arch. Ration. Mech. Anal.}, 244(2):253--292, 2022.


\bibitem{Kuusi-Mingione MR2916967}
T.~Kuusi, and G.~Mingione.
\newblock Potential estimates and gradient boundedness for nonlinear parabolic systems.
\newblock {\em Rev. Mat. Iberoam.}, 28(2):535--576, 2012.

\bibitem{LSU MR0241822}
O.~A. Lady\v{z}enskaja, V.~A. Solonnikov, and N.~N. Ural'ceva.
\newblock {\em Linear and Quasilinear Equations of Parabolic Type}.
\newblock Translations of Mathematical Monographs, Vol. 23. American Mathematical Society, Providence, R.I., 1968.
\newblock Translated from the Russian by S. Smith.

\bibitem{Lewis MR721568}
J.~L.~Lewis.
\newblock Regularity of the derivatives of solutions to certain degenerate elliptic equations.
\newblock {\em Indiana Univ. Math. J.}, 32(6):849--858, 1983.


\bibitem{Lions-monotone MR0259693}
J.-L. Lions.
\newblock {\em Quelques m\'{e}thodes de r\'{e}solution des probl\`emes aux limites non lin\'{e}aires}.
\newblock Dunod, Paris; Gauthier-Villars, Paris, 1969.

\bibitem{Manfredi-D MR2635642}
J.~J. Manfredi.
\newblock {\em Regularity of the Gradient for a Class of Nonlinear Possibly Degenerate Elliptic Equations}.
\newblock ProQuest LLC, Ann Arbor, MI, 1986.
\newblock Thesis (Ph.D.)--Washington University in St. Louis.

\bibitem{Mingione-survey MR2291779}
G.~Mingione.
\newblock Regularity of minima: an invitation to the dark side of the calculus of variations.
\newblock {\em Appl. Math.}, 51(4):355--426, 2006.

\bibitem{Mingione-R survey}
G.~Mingione, and V.~R\v adulescu.
\newblock Recent developments in problems with nonstandard growth and nonuniform ellipticity
\newblock {\em J. Math. Anal. Appl.}, 501(1): Paper No. 125197, 41, 2021.

\bibitem{Santambrogio Vespri MR2728558}
F.~Santambrogio, and V.~Vespri.
\newblock Continuity in two dimensions for a very degenerate elliptic equation.
\newblock {\em Nonlinear Anal.}, 73(12):3832--3841, 2010.

\bibitem{Showalter MR1422252}
R.~E. Showalter.
\newblock {\em Monotone operators in {B}anach space and nonlinear partial differential equations}, volume~49 of {\em Mathematical Surveys and Monographs}.
\newblock American Mathematical Society, Providence, RI, 1997.

\bibitem{spohn1993surface}
H.~Spohn.
\newblock Surface dynamics below the roughening transition.
\newblock {\em Journal de Physique I}, 3(1):69--81, 1993.

\bibitem{Strunk preprint}
M.~Strunk.
\newblock Gradient regularity for a class of doubly nonlinear parabolic partial differential equations.
\newblock {\em arXiv preprint arXiv:2407.05631v2}, 2025.

\bibitem{Tolksdorf MR727034}
P.~Tolksdorf.
\newblock Regularity for a more general class of quasilinear elliptic equations.
\newblock {\em J. Differential Equations}, 51(1):126--150, 1984.


\bibitem{T-scalar}
S.~Tsubouchi.
\newblock Continuous differentiability of weak solutions to very singular elliptic equations involving anisotropic diffusivity.
\newblock {\em Adv. Calc. Var.}, 17(3):881--939, 2024.

\bibitem{T-system}
S.~Tsubouchi.
\newblock A weak solution to a perturbed one-{L}aplace system by $p$-{L}aplacian is continuously differentiable.
\newblock {\em Math. Ann.}, 388(2):1261--1322, 2024.

\bibitem{T-subcritical}
S.~Tsubouchi.
\newblock Gradient continuity for the parabolic $(1,\,p)$-Laplace equation under the subcritical case.
\newblock {\em Ann. Math. Pure Appl.}, 204:261--287,  2025.

\bibitem{T-supercritical}
S.~Tsubouchi.
\newblock Continuity of the spatial gradient of weak solutions to very singular parabolic equations involving the one-Laplacian.
\newblock {\em arXiv preprint arXiv:2306.06868v6}, 2025.

\bibitem{Uhlenbeck}
K.~Uhlenbeck.
\newblock Regularity for a class of non-linear elliptic systems.
\newblock {\em Acta Math.}, 138(3-4):219--240, 1977.

\bibitem{Ural'ceva MR0244628}
N.~N.~Ural'ceva.
\newblock Degenerate quasilinear elliptic systems.
\newblock {\em Zap. Nau\v{c}n. Sem. Leningrad. Otdel. Mat. Inst. Steklov. (LOMI)}, 7:184--222, 1968.

\bibitem{Wiegner MR0886719}
M.~Wiegner.
\newblock On {$C_\alpha$}-regularity of the gradient of solutions of degenerate parabolic systems.
\newblock {\em Ann. Mat. Pura Appl. (4)}, 145:385--405, 1986.
\end{thebibliography}

\end{document}